%
%

\documentclass{memo-l}


\usepackage{graphicx}
\usepackage{ifthen}
\usepackage{xparse}
\usepackage{longtable}
\usepackage{boldline}
\usepackage[table]{xcolor}
\usepackage{enumitem}
\usepackage[noadjust,nosort]{cite}
\usepackage{upref}
\usepackage[font={small},labelfont=md]{subfig}
\usepackage{amssymb}
\usepackage{mathtools}
\usepackage{mathrsfs}
\usepackage{dsfont}
	\newcommand{\one}{\mathds{1}}
\usepackage{stackengine}
	\newcommand\barbelow[1]{\stackunder[1pt]{$#1$}{\rule{1.96ex}{.075ex}}}
	\newcommand\barabove[1]{\stackon[1pt]{$#1$}{\rule{1.96ex}{.075ex}}}
\usepackage{hyperref}
	\hypersetup{colorlinks=true,linkcolor=blue,citecolor=blue,urlcolor=black}	
	
\newtheorem{thm}{Theorem}[chapter]
\newtheorem{lemma}[thm]{Lemma}
\newtheorem{prop}[thm]{Proposition}
\newtheorem{question}[thm]{Question}
\newtheorem{cor}[thm]{Corollary}
\newtheorem{claim}[thm]{Claim}
\newtheorem{theirthm}{Theorem} 

\theoremstyle{definition}
\newtheorem{defn}[thm]{Definition}
\newtheorem{eg}[thm]{Example}

\theoremstyle{remark}
\newtheorem{remark}[thm]{Remark}

\numberwithin{section}{chapter}
\numberwithin{equation}{chapter}


\newcommand{\stackref}[2]{
\readlist*\mylist{#1}
\stackrel{\mbox{\footnotesize\foreachitem\x\in\mylist[]{\ifnum\xcnt=1\else,\fi\eqref{\x}}}}{#2}
}
\newcommand{\stackrefp}[2]{
\readlist*\mylist{#1}
\stackrel{\hphantom{\mbox{\footnotesize\foreachitem\x\in\mylist[]{\ifnum\xcnt=1\else,\fi\eqref{\x}}}}}{#2}
}
\newcommand{\stackrefpp}[3]{
\readlist*\mylist{#1}
\readlist*\mylistt{#2}
\stackrel{\parbox{\widthof{\footnotesize\foreachitem\x\in\mylistt[]{\ifnum\xcnt=1\else,\fi\eqref{\x}}}}{\centering\footnotesize\foreachitem\x\in\mylist[]{{\ifnum\xcnt=1\else,\fi\eqref{\x}}}}}{#3}
}

\newcommand{\eq}[1]{\begin{align*} #1 \end{align*}}
\newcommand{\eeq}[1]{\begin{align} \begin{split} #1 \end{split} \end{align}}
\newcommand{\eeqs}[2]{\begin{subequations}\label{#1}\begin{align} #2 \end{align}\end{subequations}}

\newcommand{\eps}{\varepsilon}
\newcommand{\vphi}{\varphi}

\newcommand{\E}{\mathbb{E}}

\newcommand{\N}{\mathbb{N}}
\renewcommand{\P}{\mathbb{P}}
\newcommand{\Q}{\mathbb{Q}}
\newcommand{\R}{\mathbb{R}}
\renewcommand{\S}{\mathbb{S}}
\newcommand{\T}{\mathbb{T}}
\newcommand{\Z}{\mathbb{Z}}

\renewcommand{\AA}{\mathcal{A}}
\newcommand{\BB}{\mathcal{B}}

\newcommand{\FF}{\mathcal{F}}

\newcommand{\HH}{\mathcal{H}}
\newcommand{\II}{\mathcal{I}}

\newcommand{\KK}{\mathcal{K}}
\newcommand{\LL}{\mathcal{L}}

\newcommand{\OO}{\mathcal{O}}
\newcommand{\PP}{\mathcal{P}}

\newcommand{\RR}{\mathcal{R}}
\renewcommand{\SS}{\mathcal{S}}
\newcommand{\TT}{\mathcal{T}}

\newcommand{\WW}{\mathcal{W}}

\newcommand{\YY}{\mathcal{Y}}


\newcommand{\BBB}{\mathscr{B}}
\newcommand{\CCC}{\mathscr{C}}
\newcommand{\DDD}{\mathscr{D}}

\newcommand{\FFF}{\mathscr{F}}

\newcommand{\OOO}{\mathscr{O}}
\newcommand{\PPP}{\mathscr{P}}

\newcommand{\TTT}{\mathscr{T}}

\newcommand{\Asf}{\mathsf{A}}
\newcommand{\Bsf}{\mathsf{B}}
\newcommand{\Csf}{\mathsf{C}}
\newcommand{\Dsf}{\mathsf{D}}
\newcommand{\Esf}{\mathsf{E}}

\newcommand{\Usf}{\mathsf{U}}
\newcommand{\Vsf}{\mathsf{V}}

\newcommand{\vc}[1]{{\boldsymbol #1}}

\newcommand{\wt}[1]{\widetilde{#1}}
\newcommand{\wh}[1]{\widehat{#1}}


\newcommand{\KL}[3][]{D_{\mathrm{KL}}#1(#2\: #1|#1|\: #3#1)}

\DeclareMathOperator*{\esssup}{ess\,sup}


\DeclareMathOperator{\e}{e} 
\newcommand{\cc}{\mathrm{c}} 
\newcommand{\dd}{\mathrm{d}} 
\newcommand{\oo}{\mathrm{o}} 
\newcommand{\pp}{\mathfrak{R}}
\newcommand{\ww}{\mathrm{w}} 
\DeclareMathOperator{\TV}{TV} 
\newcommand{\bv}{\mathrm{bv}} 


\DeclareMathOperator{\Geo}{Geo}

\newcommand{\pert}{\mathrm{pert}}
\renewcommand{\v}{\mathbf{v}}	
\newcommand{\bxi}{{\boldsymbol\xi}}
\newcommand{\zetab}{{\boldsymbol\zeta}}
\newcommand{\Xib}{{\boldsymbol\Xi}}
\newcommand{\af}{\mathfrak{h}}
\newcommand{\tf}{\mathfrak{t}}
\newcommand{\mf}{\mathfrak{m}}
\newcommand{\qf}{\mathfrak{q}}
\newcommand{\Avc}{\vc A}
\newcommand{\Rvc}{\vc R}
\newcommand{\Lvc}{\vc L}
\newcommand{\Qvc}{\vc Q}
\newcommand{\Bbf}{\mathbf{B}}

\DeclareRobustCommand{\SkipTocEntry}[5]{} 

\makeindex

\begin{document}

\frontmatter

\title[Variational Formula and Applications in FPP]{Empirical Measures, Geodesic Lengths, and a Variational Formula in First-Passage Percolation}

\author{Erik Bates}
\address{Department of Mathematics \newline\indent University of Wisconsin--Madison \newline\indent Van Vleck Hall \newline\indent 480 Lincoln Drive \newline\indent Madison, Wisconsin 53706-1324 \newline}
\email{ewbates@wisc.edu}
\urladdr{https://www.ewbates.com/}
\thanks{This research was partially supported by NSF grant DMS-1902734}

\date{}

\subjclass[2020]{Primary 60K35; Secondary 60K37, 60J80, 82B43}

\keywords{First-passage percolation, empirical measure, time constant, variational formula, geodesic length, branching random walk}


\begin{abstract}
This monograph resolves---in a dense class of cases---several open problems concerning geodesics in i.i.d.~first-passage percolation on $\mathbb{Z}^d$.
Our primary interest is in the empirical measures of edge-weights observed along geodesics from $0$ to $n{\boldsymbol\xi}$, where ${\boldsymbol\xi}$ is a fixed unit vector.
For various dense families of edge-weight distributions, we prove that these empirical measures converge weakly to a deterministic limit as $n\to\infty$, answering a question of \mbox{Hoffman}.
These families include arbitrarily small $L^\infty$-perturbations of any given distribution, almost every finitely supported distribution, uncountable collections of continuous distributions, and certain discrete distributions whose atoms can have any prescribed sequence of probabilities.
Moreover, the constructions are explicit enough to guarantee examples possessing certain features, for instance: both continuous and discrete distributions whose support is all of $[0,\infty)$, and distributions given by a density function that is $k$-times differentiable.
All results also hold for ${\boldsymbol\xi}$-directed infinite geodesics.
In comparison, we show that if $\mathbb{Z}^d$ is replaced by the infinite $d$-ary tree, then any distribution for the weights admits a unique limiting empirical measure along geodesics.
In both the lattice and tree cases, our methodology is driven by a new variational formula for the time constant, which requires no assumptions on the edge-weight distribution.
Incidentally, this variational approach also allows us to obtain new convergence results for geodesic lengths, which have been unimproved in the subcritical regime since the seminal 1965 manuscript of \mbox{Hammersley} and \mbox{Welsh}.
\end{abstract}

\maketitle

\tableofcontents

\counterwithout{footnote}{chapter}
\counterwithout{figure}{chapter}

\chapter*{Outline of Manuscript}

In a broad view, this monograph proposes a method for studying asymptotic properties of geodesics in first-passage percolation (FPP). 
We are motivated by several well-known open problems.
Chief among these is the following question: as the endpoint of a geodesic is brought to infinity in a fixed direction, does the empirical measure of edge-weights appearing along the path converge weakly to some limit?
In brief, we show in Section \ref{main_results} that the answer is \textit{yes} 
for edge-weight distributions belonging to any one of a variety of dense collections.
Rather than rush the reader to these statements, we provide here a roadmap which highlights our methodology and its implications for other long-standing problems in FPP.
On the other hand, for the fastest possible entry into our main results, see Theorem \ref{dense_thm} after reading \eqref{hat_nu_plus_def} and Definition \ref{good_def}.

For the purposes of this outline, we use the following notation despite having not yet defined the model:
$\tau_e\geq0$ is an edge-weight, $\LL$ is the law of $\tau_e$, and $F$ its distribution function.
For a unit vector $\bxi$, the time constant in the $\bxi$-direction is $\mu_{\bxi}$.
The geodesics under consideration are either finite and between $0$ and $[n\bxi]$ (with $n\to\infty$), or infinite and ${\bxi}$-directed. 
Finally, $p_\cc(\Z^d)$ is the critical probability for Bernoulli bond percolation on $\Z^d$.
A complete list of symbols is provided on page \pageref{list_of_symbols}.

\addtocontents{toc}{\SkipTocEntry}
\section*{Chapter \ref{intro} (\textit{Introduction: Definitions and Main Questions})}
The FPP model is formally defined in Section \ref{model_def}, the time constant $\mu_\bxi$ in Section \ref{time_constant_sec}, and geodesics in Section \ref{geo_intro}.
After these preliminaries, Section \ref{empirical_intro} introduces the notion of empirical measures and the main motivation for this monograph: Question \ref{main_q}, which was paraphrased above.
We also offer a first example:
\setlength{\leftmargini}{2.5em}
\begin{itemize}
\item Theorems \ref{zero_thm} and \ref{zero_thm_infty}: If $F(0)\geq p_\cc(\Z^d)$, then empirical measures along geodesics converge weakly to the Dirac delta mass at $0$.
\end{itemize}
On the other hand, if $0<F(0)<p_\cc(\Z^d)$, then variability in the number of zero-weight edges prevents convergence of empirical measures; we thus turn our focus to empirical measures of only the \textit{positive}-weight edges, rephrasing Question \ref{main_q} as Question \ref{main_q_2}.
Before developing our approach to answering these questions, we discuss in Section \ref{geo_length_sec} the related matter of geodesic lengths.
The following statements are proved in Section \ref{one_d_subsec} en route to our main results concerning empirical measures:
\begin{itemize}
\item Theorem \ref{krs_thm}: Replace every edge-weight $\tau_e$ by $\tau_e+h$.
Then the scaled geodesic length converges for all but countably many $h>0$.
\item Theorem \ref{new_krs_thm}: Replace every $\tau_e$ by $\tau_e +h\one_{\{\tau_e>0\}}$.
The scaled number of positive-weight edges converges for all but countably many $h>0$.
\end{itemize}
The first theorem was originally obtained in restricted cases by \mbox{Hammersley} and \mbox{Welsh} \cite{hammersley-welsh65}.
Partial removals of the restrictions have only been realized by improved generality for the time constant;
our version has removed all restrictions.

Meanwhile, the second theorem is a new observation.
By allowing for an atom at zero, it works to make progress on the following open problem discussed in Example \ref{supercritical_length_eg}. 
If $0<F(0)<p_\cc(\Z^d)$, does the scaled length of the shortest geodesic from $0$ to $[n\bxi]$ converge?
In both theorems, however, the perturbation of $\tau_e$ introduces a ``mass gap" above $0$.
We will give the first proof that this gap is not a necessary feature, a fact that is unachievable from the approach of \mbox{Hammersley} and \mbox{Welsh}.
\begin{itemize}
\item Theorem \ref{no_gap_thm}: There exist edge-weight distributions satisfying $F(h)>F(0)$ for all $h>0$, such that the scaled geodesic length converges.
\end{itemize}

\addtocontents{toc}{\SkipTocEntry}
\section*{Chapter \ref{var_form_sec} (\textit{Variational Formula for the Time Constant})} 

The manuscript's theoretical engine is a new variational formula for $\mu_\bxi$, stated in \eqref{var_form_eq} as part of Theorem \ref{var_form_thm}.
This formula is advantaged in three key ways: it requires no assumptions on $\LL$, it minimizes a \textit{linear} functional, and the underlying constraint set has no dependence on $\LL$.
These features combine with the following fact (stated imprecisely) to make for an evidently useful tool in studying asymptotic properties of geodesics:
\begin{itemize}
\item Theorem \ref{minimizers_thm}: If $F(0)<p_\cc(\Z^d)$, then every sequence of geodesics admits a subsequence whose empirical measures converge to a minimizer of \eqref{var_form_eq}.
An analogous statement holds for infinite ${\bxi}$-directed geodesics.
\end{itemize}

\addtocontents{toc}{\SkipTocEntry}
\section*{Chapter \ref{applications} (\textit{Applications of Variational Formula})}

As will be described in Section \ref{sketch_and_statement}, our perspective is to view $\tau_e$ as some deterministic function $\tau(U_e)$ of a uniform random variable $U_e$ on $[0,1]$.
The simple but crucial observation---stated as Lemma \ref{concavity_lemma}---is that the variational formula \eqref{var_form_eq} is concave in the (nonnegative) function $\tau$.
It is then not difficult to prove that empirical measures along geodesics have a unique limit when $\tau$ belongs to any of the following families: 
\begin{itemize}
\item Theorem \ref{finite_emp}: almost every function taking finitely many values.
\item Theorem \ref{countable_emp}: various functions taking countably many values.
\item Theorem \ref{continuous_emp}: a dense $G_\delta$ subset of $C^k([0,1],[0,\infty))$ for any $k\geq0$.
\item Theorem \ref{bounded_emp}:  a dense $G_\delta$ set of $L^\infty$-perturbations of any given $\tau$.
\end{itemize}
See Remarks \ref{optimality_remark}, \ref{good_examples_1}, \ref{good_examples_2}, \ref{mix_remark} and Examples \ref{dense_discrete}, \ref{prescribed_discrete}, \ref{continuous_eg}, \ref{better_choice_eg} for further interpretation.

\addtocontents{toc}{\SkipTocEntry}
\section*[Variational formula and applications in FPP]{Chapter \ref{tree_sec} (\textit{First-Passage Percolation on $d$-ary Tree})}

If the lattice $\Z^d$ is replaced by the infinite complete $d$-ary tree $\T_d$,
then the constraint set for our variational formula is a sublevel set of relative entropy.
In particular, this set is strictly convex and so admits a unique minimizer.
This allows us to completely answer Question \ref{main_q} in the tree case:
\begin{itemize}
\item Theorem \ref{tree_thm}: In FPP on $\T_d$, \textit{any} weight distribution admits a unique limit for the empirical measures along geodesics. 
\end{itemize}

\addtocontents{toc}{\SkipTocEntry}
\section*[Variational formula and applications in FPP]{Chapter \ref{modification_sec} (\textit{Negative Weights and Passage Times along Geodesics})}

Of possible independent interest are three utilitarian results. 
The first of these allows us to handle edge-weight distributions supported at $0$, the second permits us to forgo any moment assumption, and the third enables us to parlay results about finite geodesics into ones about infinite geodesics.
\begin{itemize}
\item Proposition \ref{negative_thm}: If $F(0)<p_\cc(\Z^d)$, then a shape theorem holds even if edge-weights are perturbed slightly in the negative direction.
\item Proposition \ref{replacement_thm}: Passage times along geodesics between $0$ and $[n\bxi]$ scale to the time constant $\mu_\bxi$ once a small number of edges are removed.
\item Proposition \ref{addition_thm}: When appropriately scaled, the passage time along any $\bxi$-directed infinite geodesic converges to $\mu_\bxi$.
\end{itemize}
These statements are not novel in spirit, but were unavailable from the literature in their present form.
The proofs are largely independent from the rest of the manuscript and use mostly standard tools. 

\addtocontents{toc}{\SkipTocEntry}
\section*[Variational formula and applications in FPP]{Chapters \ref{constraint_sec}, \ref{var_form_proof}, and \ref{tree_proof} (\textit{remaining proofs})} 

The constraint set for the variational formula is constructed in Chapter \ref{constraint_sec}, and then Chapter \ref{var_form_proof} proves the actual formula in tandem with Theorem \ref{minimizers_thm}.
Finally, the proof of Theorem \ref{tree_thm} is given in Chapter \ref{tree_proof} and incidentally leads to the following deterministic statement unrelated to FPP:
\begin{itemize}
\item Theorem \ref{weak_to_strong_thm}:
If a sequence of probability measures on $[0,1]$ converges weakly, and their relative entropies with respect to some fixed Radon probability measure are uniformly bounded, then the sequence converges strongly.
\end{itemize}
I have not been able to locate any previous observation of this fact.

\chapter*{Acknowledgments}
I am very grateful to \mbox{Louigi} \mbox{Addario-Berry}, \mbox{Sourav} \mbox{Chatterjee}, \mbox{Alexander} \mbox{Dunlap}, \mbox{Shirshendu} \mbox{Ganguly}, \mbox{Nicholas} \mbox{Miller}, \mbox{Timo} \mbox{Sepp{\"a}l{\"a}inen}, \mbox{Xiao} \mbox{Shen}, and \mbox{Bernd} \mbox{Sturmfels} for invaluable discussions.
I also kindly thank \mbox{Michael} \mbox{Damron}, \mbox{Christopher} \mbox{Hoffman}, \mbox{Arjun} \mbox{Krishnan},  \mbox{Firas} \mbox{Rassoul-Agha}, \mbox{Allan} \mbox{Sly}, and \mbox{Zhe} \mbox{Wang} for their feedback on a preliminary draft.
I thank a referee whose insightful comments and questions lead to numerous improvements. 
Finally, special thanks are due to \mbox{Wai-Kit} \mbox{Lam}, whose careful reading, thoughtful discussion, and pointers to several useful references significantly improved the manuscript.

\mainmatter
\chapter{Introduction: Definitions and Main Questions} \label{intro}

\section{First-passage percolation model} \label{model_def}
Let $E(\Z^d)$ \label{edges_def}
denote the undirected edges of the integer lattice $\Z^d$, $d\geq 2$.
Consider a family of i.i.d.~nonnegative random variables $\{\tau_e :\, e\in E(\Z^d)\}$, \label{edge_weights_def}
called the \textit{edge-weights}, defined on some complete probability space $(\Omega,\FF,\P)$. \label{cps_def}
The shared law will be denoted by $\LL$, \label{weight_law_def_1}
and the associated distribution function will be written as 
\eeq{ \label{distribution_fnc_def}
F(t) \coloneqq \P(\tau_e \leq t) = \LL((-\infty,t]).
}
For each pair $x,y\in\Z^d$, let $\PP(x,y)$ \label{path_xy_def}
denote the collection of all self-avoiding nearest-neighbor paths starting at $x$ and ending at $y$.
(A path is viewed as a set of edges.)
The \textit{passage time} between $x$ and $y$ is the random quantity
\eeq{ \label{fpp_def}
T(x,y) \coloneqq \inf_{\gamma \in \PP(x,y)} T(\gamma), \quad \text{where} \quad T(\gamma) \coloneqq \sum_{e\in\gamma} \tau_e.
}
We allow the empty path in $\PP(x,x)$ so that $T(x,x) = 0$.
So that we are not restricted to integer coordinates, for general $x\in\R^d$ we will write $[x]$ \label{integer_approx_def}
to denote the unique element of $\Z^d$ such that $x\in[x]+[0,1)^d$.
With this notation, we take $\PP(x,y) = \PP([x],[y])$ for $x,y\in\R^d$, so that $T(x,y)=T([x],[y])$.

\section{Time constant and limit shape} \label{time_constant_sec}
A classical result is the existence of a \textit{time constant}, which records the law of large numbers for passage times between $0$ and $[n\v]$.
In the theorem quoted below, $p_\cc(\Z^d)$ denotes the critical probability for bond percolation on $\Z^d$. \label{pczd_def}

\begin{theirthm} \label{time_constant_thm}
\textup{\cite[Thm.~3.1, Thm.~6.1]{kesten86}}
For any $\v\in\R^d$, there is a constant $\mu_{\v}=\mu_{\v}(\LL)\in[0,\infty)$ such that
\eeq{ \label{time_constant_def}
\frac{T(0,n \v)}{n} \to \mu_\v \quad \text{in probability as $n\to\infty$}.
}
Furthermore, $\mu_{\v}>0$ if and only if $F(0) < p_\cc(\Z^d)$.
\end{theirthm}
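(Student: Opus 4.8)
My starting point is subadditivity. For any $x,y,z\in\Z^d$ one has $T(x,z)\le T(x,y)+T(y,z)$, and translation invariance of the i.i.d.\ weights makes $\{T(m\v,n\v):0\le m\le n\}$ a stationary subadditive array. When $\E[\tau_e]<\infty$ this array is integrable, so Kingman's subadditive ergodic theorem gives $T(0,n\v)/n\to\mu_\v:=\inf_{n\ge1}n^{-1}\E[T(0,n\v)]$ almost surely. To remove the moment hypothesis I truncate: for $M>0$ put $\tau_e^{(M)}:=\tau_e\wedge M$ and let $T^{(M)}$ be the associated passage time. The bounded case yields $T^{(M)}(0,n\v)/n\to\mu_\v^{(M)}$ a.s., with $\mu_\v^{(M)}=\inf_{n}n^{-1}\E[T^{(M)}(0,n\v)]$ nondecreasing in $M$; set $\mu_\v:=\sup_M\mu_\v^{(M)}\in[0,\infty]$. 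Since $T\ge T^{(M)}$, the lower bound $\liminf_n T(0,n\v)/n\ge\mu_\v$ is immediate. For the matching upper bound in probability I would concatenate: split the segment from $0$ to $[n\v]$ into $\sim n/k$ pieces of displacement $k$, dominate $T(0,n\v)$ by the sum of the block passage times, and choose --- in this order --- a truncation level $M$ with $\mu_\v^{(M)}$ close to $\mu_\v$, a scale $k$ with $\E[T^{(M)}(0,k\v)]\le(\mu_\v+\eps)k$, and if necessary a larger $M$; the decisive input is that in the truncated model the geodesics have length $O(k)$ with high probability, so reinstating the untruncated weights changes a block's passage time only on a negligible event once $M$ is large. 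A law of large numbers across the $\sim n/k$ blocks then gives $\limsup_n T(0,n\v)/n\le\mu_\v+\eps$ in probability.

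\textbf{Finiteness of $\mu_\v$.}
This is the crux, since no moment assumption is available. Fix $a$ so large that $\P(\tau_e>a)$ lies below the threshold for subcriticality of Bernoulli bond percolation on $\Z^d$; call $e$ \emph{bad} if $\tau_e>a$ and \emph{good} otherwise. Starting from the straight lattice path $\pi_n$ from $0$ to $[n\v]$ (of length $O(n)$), I build a path avoiding all bad edges: whenever $\pi_n$ would enter a connected cluster $\mathcal{C}$ of bad edges, reroute along good edges going around $\mathcal{C}$, adding at most of order $|\mathcal{C}|$ edges. The new path has passage time at most $a$ times its length, and in the subcritical regime the cluster of a fixed edge has an exponentially small tail, so the expected total added length is $O(n)$; hence $\E[T^{(M)}(0,n\v)]\le\E[T(0,n\v)]\le Cn$ with $C$ independent of $M$, whence $\mu_\v^{(M)}\le C$ and $\mu_\v\le C<\infty$. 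The delicate part is the combinatorial geometry of the reroute --- showing one can always circumvent a finite bad cluster using good edges with controlled extra cost, even when clusters are nested, which in $d=2$ in particular needs care. This is the content of Kesten's argument, which I would follow.

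\textbf{The dichotomy.}
Declare $e$ \emph{open} if $\tau_e=0$; since $\tau_e\ge0$, this has probability $F(0)$. If $F(0)>p_\cc(\Z^d)$, open edges percolate, so a.s.\ there is a unique infinite open cluster $\mathcal{C}_\infty$. Connecting $0$ and $[n\v]$ to $\mathcal{C}_\infty$ by shortest lattice paths --- of random but $n$-independent, exponentially tight length, hence finite passage time --- and travelling for free between the two contact points inside $\mathcal{C}_\infty$ shows $T(0,n\v)=o_\P(n)$, so $\mu_\v=0$. The boundary case $F(0)=p_\cc(\Z^d)$ has no infinite open cluster to exploit and needs a separate, more delicate argument; this is where the exact threshold $p_\cc(\Z^d)$ enters, and I would cite the relevant percolation estimates rather than reprove them. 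For the converse, suppose $F(0)<p_\cc(\Z^d)$ and pick $c>0$ with $F(c)<p_\cc(\Z^d)$ (possible by right-continuity of $F$), so the edges with $\tau_e\le c$ form a subcritical Bernoulli bond percolation while every other edge costs at least $c$. I would run a block renormalization at a large fixed scale $L$: calling a box ``bad'' when it supports a long crossing by edges with $\tau_e\le c$ --- an event of small probability by subcriticality --- the set of bad boxes is stochastically dominated by a subcritical percolation, so a path from $0$ to $[n\v]$ that uses fewer than $\lambda n$ edges of weight $>c$ would trace out an open path of length $\gtrsim n/L$ in this renormalized model, an event of probability at most $e^{-cn}$. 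Hence $\P(T(0,n\v)<\lambda n)\to0$ for a suitable $\lambda>0$, so $\mu_\v\ge\lambda>0$.

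\textbf{Main obstacle.}
The hardest aspect is working with no moment assumption. This is what makes the uniform linear bound $\E[T^{(M)}(0,n\v)]\le Cn$ --- the rerouting construction around subcritical bad clusters --- genuinely nontrivial, and it is also what forces the careful ordering of truncation level and scale in the concatenation step that yields $\limsup_n T(0,n\v)/n\le\mu_\v$ in probability. A secondary difficulty is the critical case $F(0)=p_\cc(\Z^d)$ of the dichotomy, where the absence of an infinite open cluster demands finer percolation input.
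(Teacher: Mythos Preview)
The paper does not prove this theorem; it is quoted as a known result from Kesten's 1986 lecture notes (the \texttt{theirthm} environment is reserved for cited results), so there is no ``paper's own proof'' to compare against. Your sketch is a faithful outline of the classical argument: subadditivity plus truncation for existence, the Cox--Durrett--Kesten rerouting construction for finiteness without moment assumptions, and percolation-based arguments for the dichotomy.

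That said, the paper does develop closely related machinery in Section~\ref{modification_sec} in order to prove its own Proposition~\ref{negative_thm}, which generalizes Theorem~\ref{time_constant_thm} to allow bounded negative perturbations. There the existence of the time constant is obtained via subadditivity of the \emph{shell} passage times $\wh T$ (Lemma~\ref{subadditive_lemma}), which circumvents the moment issue entirely since $\wh T$ has exponential tails regardless of $\LL$; this is morally the same rerouting idea you describe, packaged differently. For the direction $F(0)<p_\cc(\Z^d)\Rightarrow\mu_\v>0$, Kesten's actual argument (his Proposition~5.8, quoted here as \eqref{kesten_tail}) is a direct Peierls-type path-counting bound rather than the block renormalization you propose, though either route works.
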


If $F(0)<p_\cc(\Z^d)$, then $\v\mapsto\mu_{\v}$ is a norm on $\R^d$, and
the unit ball $B_0\coloneqq \{\v\in\R^d:\mu_{\v} \leq 1\}$ \label{limit_shape_def}
under this norm is called the \textit{limit shape}.
If instead $F(0)\geq p_\cc(\Z^d)$, then $\mu_{\v} = 0$ for all ${\v}$, in which case $B_0=\R^d$.
In either case, we have $\mu_{\alpha\v}=\alpha\mu_\v$ for any $\alpha\geq0$, and so $\mu_{(\cdot)}$ is completely determined by its values on the unit sphere $\S^{d-1}$. \label{unit_sphere_def}
Therefore, we will henceforth restrict our attention to any fixed unit vector $\bxi\in\S^{d-1}$.

\begin{remark}
It is common in FPP to restrict attention to the case when $\bxi$ is equal to $\mathbf{e}_1$, the first standard basis vector. \label{sbv_def}
In order to make our results available for general directions, we will not afford ourselves this convenience.
Consequently, one annoying detail is that the location $[n\bxi]$ can change at non-integer values of $n$, and in no periodic fashion.
To avoid rogue subsequences, we specify now that \underline{all limits as $n\to\infty$ hold even if $n$ varies \textit{continuously}}, unless $n$ is explicitly declared an integer.
We continue to use the symbol $n$ only to match standard notation, and the reader is encouraged to imagine that $n$ is a positive integer.
Indeed, nothing is lost conceptually in doing so, as most quantities we consider will change at only countably many values of $n$.
In any case, the symbols $i$, $j$, $k$, $\ell$, will always denote integers.
\end{remark}

\section{Geodesics, finite and infinite} \label{geo_intro}
We say that $\gamma\in\PP(x,y)$ is a \textit{geodesic} if $T(\gamma) = T(x,y)$; that is, $\gamma$ achieves the infimum in \eqref{fpp_def}.
Let $\Geo(x,y)$ denote the set of all \label{geo_def}
geodesics between $[x]$ and $[y]$.
To give our results context, it is important to know that geodesics actually exist.

\begin{theirthm}[{\cite[Cor.~1.3]{wierman-reh78}, \cite[Sec.~9.23]{kesten86}, \cite[Thm.~2]{zhang95}}] 
\label{geo_exist_thm}
Almost surely, $\Geo(x,y)$ is nonempty for all $x,y\in\Z^d$, provided one of the following holds:
\begin{enumerate}[label=\textup{(\alph*)}]
\item \label{geo_exist_thm_a} $d=2$;
\item \label{geo_exist_thm_b} $F(0)<p_\cc(\Z^d)$;
\item \label{geo_exist_thm_c} $F(0)>p_\cc(\Z^d)$; 
\item \label{geo_exist_thm_d} $F(0)=F(h)$ for some $h>0$.
\end{enumerate}
\end{theirthm}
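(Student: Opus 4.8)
Though this theorem is classical, here is the line of argument I would follow. The plan is to turn the infimum in \eqref{fpp_def} into a finite minimization. Fix $x,y\in\Z^d$. Almost surely $T(x,y)<C$ where $C:=1+\sum_{e\in\gamma_0}\tau_e<\infty$ and $\gamma_0\in\PP(x,y)$ is a fixed ``staircase'' path, so any path competing for the infimum may be taken to satisfy $T(\gamma)\le C$. It then suffices to exhibit a random $M<\infty$ such that, almost surely, the infimum in \eqref{fpp_def} is unchanged upon restricting to paths with at most $M$ edges: there are only finitely many self-avoiding paths on at most $M$ edges, so that infimum is attained and a geodesic exists. A union bound over the countably many pairs $x,y$ then proves the theorem. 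Everything thus reduces to bounding the length of a near-optimal path, and the four hypotheses supply four different mechanisms for doing so.

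Case \ref{geo_exist_thm_b}: by right-continuity of $F$ choose $a>0$ with $F(a)<p_\cc(\Z^d)$; then $\{e:\tau_e\le a\}$ is a subcritical Bernoulli configuration, so by sharpness of the phase transition its cluster sizes have exponentially decaying tails. A self-avoiding $\gamma$ with $T(\gamma)\le C$ contains at most $\lfloor C/a\rfloor$ ``heavy'' edges of weight exceeding $a$, and these split $\gamma$ into at most $1+\lfloor C/a\rfloor$ ``light'' segments, each a self-avoiding path confined to one subcritical cluster and hence no longer than that cluster's size. A Borel--Cantelli argument over $\Z^d$ gives a random radius beyond which every $\{\tau_e\le a\}$-cluster is small relative to its distance from $x$, and inserting this into the resulting bookkeeping bounds both how far $\gamma$ can stray from $x$ and how many edges it uses, which yields $M$. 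Case \ref{geo_exist_thm_c} needs a different idea, because positive weights may fail to be bounded below: here there is a unique infinite cluster $\CC_\infty$ of $\{e:\tau_e=0\}$, travel along $\CC_\infty$ costs nothing, and the finite clusters of this supercritical configuration have exponentially small radius; one uses this to show that a near-optimal path meets $\CC_\infty$ after finitely many edges and, after a self-avoiding shortcut through $\CC_\infty$, can be chosen of bounded length. Case \ref{geo_exist_thm_a} I would handle by invoking directly the two-dimensional argument of Wierman and Reh, which is valid for every $\LL$ when $d=2$.

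The subtlest situation is case \ref{geo_exist_thm_d}, which in particular covers the critical density $F(0)=p_\cc(\Z^d)$ left out of \ref{geo_exist_thm_b} and \ref{geo_exist_thm_c} and where cheap clusters may be not merely large but infinite, so no length bound of the above type can hold. Instead: $F(0)=F(h)$ forces every weight into $\{0\}\cup(h,\infty)$, so a path $\gamma$ with $T(\gamma)\le C$ uses at most $\lfloor C/h\rfloor$ positive edges; after deleting any positive edge whose removal still leaves $x$ and $y$ joined through the zero-weight edges, we may assume each positive edge used is essential, so the at most $\lfloor C/h\rfloor$ positive edges link a bounded number of $\{\tau_e=0\}$-clusters in a tree-like chain. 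Those clusters are the fixed clusters of $x$ and of $y$, the unique infinite cluster if it exists, and a bounded number of finite clusters chained off these; since a finite cluster has finitely many boundary edges, only finitely many such configurations can occur, so the set of attainable values $T(\gamma)\le C$ is finite and the infimum is a minimum. I expect the genuine work to lie exactly here --- in the combinatorics of which zero-clusters a bounded supply of positive edges can reach once $\CC_\infty$ may be present --- and, for the separate reason noted above, in case \ref{geo_exist_thm_c}; a pleasant feature of this route to \ref{geo_exist_thm_d} is that it needs no information about whether percolation occurs at criticality.
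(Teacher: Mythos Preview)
Your treatment of cases \ref{geo_exist_thm_a}--\ref{geo_exist_thm_c} matches the paper's: it simply cites the literature, and your sketches for \ref{geo_exist_thm_b} and \ref{geo_exist_thm_c} are the standard arguments behind those citations.

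For case \ref{geo_exist_thm_d}---the only one the paper actually argues---your combinatorial enumeration is correct but genuinely different from the paper's softer extraction argument. The paper takes $\gamma_k\in\PP(x,y)$ with $T(\gamma_k)\searrow T(x,y)$; if $|\gamma_k|$ stays bounded along a subsequence, some path repeats infinitely often and is the geodesic, while if $|\gamma_k|\to\infty$, a diagonal selection of initial and terminal edges yields disjoint infinite self-avoiding paths $\gamma^x,\gamma^y$ from $x$ and $y$ with $T(\gamma^x)+T(\gamma^y)\le T(x,y)$. Since $\tau_e\ge h$ whenever $\tau_e>0$, each uses only finitely many nonzero edges and therefore eventually enters an infinite zero-cluster, whose uniqueness lets one splice the two paths into a finite geodesic. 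Note that the paper \emph{derives} the existence of $\CC_\infty$ from $|\gamma_k|\to\infty$ rather than assuming it, so its argument is also agnostic about whether percolation occurs at criticality---this is not a distinguishing advantage of your route. Your approach---exploring outward from the clusters of $x$ and of $y$ through finite clusters only, stopping at $\CC_\infty$, so that every positive edge in a minimal chain is a boundary edge of one of finitely many finite clusters---does go through once you spell out that bounded depth plus finite branching keeps each exploration finite (this is exactly the ``genuine work'' you flag). Your argument buys the stronger conclusion that $\{T(\gamma):\gamma\in\PP(x,y),\,T(\gamma)\le C\}$ is finite; the paper's is shorter and avoids casing on whether $\CC_\infty$ exists.
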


\begin{remark}
For $d\geq3$, it remains a long-standing open problem to prove the existence of geodesics if $F(0)=p_\cc(\Z^d)$ without further assumptions.
There does exist a sufficient condition for existence (see \cite[Thm.~8.1.8]{zhang99} and \cite[pg.~74]{auffinger-damron-hanson17}), although all the scenarios currently known to satisfy this condition are already included above in \ref{geo_exist_thm_a} or \ref{geo_exist_thm_d}.
Nevertheless, in the very special cases for which Theorem \ref{geo_exist_thm} is not known, the results of this manuscript concerning geodesics can be read as conditional on their existence.
The same comment applies to infinite geodesics, which are discussed below.
\end{remark}

\begin{remark}
I am not aware of any written proof for case \ref{geo_exist_thm_d}, but it is essentially trivial: Take any sequence of paths $\gamma_k\in\PP(x,y)$ such that $T(\gamma_k)\searrow T(x,y)$ as $k\to\infty$.
If $|\gamma_k|$ does not diverge to $\infty$, then there is some $\gamma\in\PP(x,y)$ equal to $\gamma_k$ for  infinitely many $k$, and so $T(\gamma) = T(x,y)$.
Otherwise, take some pair of edges $e_1^x$ and $e_1^y$, containing $x$ and $y$ respectively, that appear as the first and last edges of $\gamma_k$ for infinitely many $k$.
After passing to a subsequence of such $k$, we can identify $e_2^x$ and $e_2^y$, adjacent to $e_1^x$ and $e_1^y$ respectively, that appear simultaneously in infinitely many $\gamma_k$.
Repeating this indefinitely, we will identify infinite paths $\gamma^{x}$ and $\gamma^{y}$ starting at $x$ and $y$.
Since $|\gamma_k|\to\infty$, these paths will never intersect, and so $T(\gamma^{x})+T(\gamma^{y}) = T(x,y)$.
Because $\tau_e\geq h$ whenever $\tau_e>0$, each of $\gamma^{x}$ and $\gamma^{y}$ can only use finitely many nonzero-weight edges; in particular, both paths eventually enter an infinite connected cluster of zero-weight edges.
As this cluster is almost surely unique (e.g.~\cite[Thm.~8.1]{grimmett99}), we can link the two points of entry (via zero-weight edges) to form a finite path $\gamma$ of passage time $T(\gamma) = T(x,y)$.
\end{remark}

An infinite path of the form $\Gamma = \{e_1,e_2,\dots\}$, where $e_1$ is incident with $e_2$, $e_2$ with $e_3$, and so on, is an \textit{infinite geodesic} if for each $\ell$, the subpath 
\eeq{ \label{sub_of_infinite}
\Gamma^{(\ell)} \coloneqq \{e_1,\dots,e_\ell\}
} 
is a finite geodesic. 
Let us write $x_\ell$ for the vertex shared by $e_\ell$ and $e_{\ell+1}$.
Then $\Gamma$ is said to be $\bxi$-\textit{directed} 
if $x_\ell/\|x_\ell\|_2\to \bxi$ as $\ell\to\infty$.
More generally, for $\Xib\subset\S^{d-1}$, we say $\Gamma$ is $\Xib$-\textit{directed} if the limit points of the sequence $(x_\ell/\|x_\ell\|_2)_{\ell\geq1}$ are contained in $\Xib$.

It is known in $d=2$ that for each linear face of the limit shape $B_0$, corresponding say to the directions $\Xib\subset\S^{1}$, there is at least one $\Xib$-directed geodesic starting at the origin \cite[Thm.~1.1]{damron-hanson14}.
For a conjectural picture and the most recent results, 
the reader is referred to \cite{ahlberg-hoffman?,alexander?,brito-damron-hanson?}.

\begin{remark} \label{LPP_remark}
The methods of this monograph could be adapted to last-passage percolation (LPP) and potentially other stochastic optimization models.
In one respect, the LPP setting is simpler than FPP, as the paths under consideration all have the same length: see \cite{martin06} for a definition.
Consequently, more is known regarding the existence and coalescence of infinite geodesics than in FPP; for the state of the art, see \cite{janjigian-rassoul-seppalainen?} and references therein.
On the other hand, ($1+1$)-dimensional LPP admits integrable models whose rich algebraic structure or stationarity properties allow for much finer asymptotic analysis.
For one such case, similar objectives as those of this manuscript are pursued by Martin, Sly, and Zhang \cite{martin-sly-zhang?}; the methods, however, are very different.
Whereas we develop results for abstract dense collections of weight distributions, \cite{martin-sly-zhang?} focuses on a fixed, exactly solvable model.
\end{remark}

\section{Empirical measures associated to paths} \label{empirical_intro}
For a topological space $\T$ with its \mbox{Borel} sigma-algebra $\BB$, \label{borel_1}
a measurable $f \colon \T\to\R$, and a measure $\nu$ on $(\T,\BB)$, let us write 
\eeq{ \label{f_nu_def} 
\langle f,\nu\rangle \coloneqq \int_\T f(t)\ \nu(\dd t).
}
Recall that a net of finite measures $(\nu_\alpha)$ on $(\T,\BB)$ is said to \textit{converge weakly} to $\nu$, and we write $\nu_\alpha\Rightarrow\nu$, if
\eq{ 
\lim_{\alpha} \langle \vphi,\nu_\alpha\rangle = \langle \vphi,\nu\rangle \quad \text{for every bounded, continuous $\vphi \colon \T\to\R$}.
}
Let $\PPP$ denote the space of \mbox{Borel} probability measures on $[0,\infty)$, \label{PPP_def}
equipped with the topology of weak convergence.
Given a realization of the edge-weights $\tau_e$, each finite, nonempty path $\gamma$ can be associated to an element of $\PPP$, namely the following \textit{empirical measure}:
\eeq{ \label{hat_nu_def}
\hat\nu_\gamma \coloneqq \frac{1}{|\gamma|}\sum_{e\in\gamma}\delta_{\tau_e},
}
where $|\gamma|$ is the \textit{length} of $\gamma$ (the number of edges it contains), \label{gamma_length_def}
and $\delta_t$ is the Dirac delta measure at $t\in[0,\infty)$. \label{dirac_def}
In this notation, the passage time along $\gamma$ can be expressed as
\eeq{ \label{T_gamma}
T(\gamma) = |\gamma|\int_0^\infty t\ \hat\nu_\gamma(\dd t) = |\gamma|\langle t,\hat\nu_\gamma\rangle.
}
The main purpose of this monograph is to address the following question, a version of which was raised by C. Hoffman during a 2015 workshop at the American Institute of Mathematics \cite{aim_15}.
Recall that $\bxi$ denotes a unit vector. 

\begin{question} \label{main_q}
If $\gamma_n \in \Geo(0,n\bxi)$, does $\hat\nu_{\gamma_n}$ converge weakly as $n\to\infty$?
\end{question}

Our answer is generically \textit{yes}, where ``generically" is given a variety of meanings in Section \ref{main_results}.
Nevertheless, we should be clear from the outset that exceptional scenarios do exist (see Remark \ref{optimality_remark}), a fact which makes Question \ref{main_q} all the more intriguing.
For a straightforward first example, though, we look to the so-called critical and supercritical cases.

\begin{thm}
\label{zero_thm}
If $F(0) \geq p_\cc(\Z^d)$, then almost surely the following holds for any sequence $(x_k)_{k\geq1}$ in $\Z^d$ such that $\|x_k\|_2\to\infty$:
\eq{
\hat\nu_{\gamma_k} \Rightarrow \delta_0 \quad \text{as $k\to\infty$}, \quad \text{for any choice of $\gamma_k\in\Geo(0,x_k)$}.
}
\end{thm}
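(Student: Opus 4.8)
\noindent\emph{Sketch of the approach.} The plan is to strip off the topological content first and then reduce everything to a single law of large numbers. For the first step, observe that each $\hat\nu_{\gamma_k}$ and the target $\delta_0$ are supported on $[0,\infty)$ with $\delta_0$ sitting at the left endpoint, so $\hat\nu_{\gamma_k}\Rightarrow\delta_0$ follows once we show $\hat\nu_{\gamma_k}\big([\epsilon,\infty)\big)\to0$ for every fixed $\epsilon>0$: for bounded continuous $\vphi$ one bounds $|\langle\vphi,\hat\nu_{\gamma_k}\rangle-\vphi(0)|$ by $\sup_{|s|\le\epsilon}|\vphi(s)-\vphi(0)|+2\|\vphi\|_\infty\,\hat\nu_{\gamma_k}([\epsilon,\infty))$ and then lets $\epsilon\to0$. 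By Markov's inequality together with \eqref{T_gamma}, $\hat\nu_{\gamma_k}\big([\epsilon,\infty)\big)\le\epsilon^{-1}\langle t,\hat\nu_{\gamma_k}\rangle=\epsilon^{-1}T(0,x_k)/|\gamma_k|$, and since any path in $\PP(0,x_k)$ uses at least $\|x_k\|_1\ge\|x_k\|_2$ edges, this is at most $T(0,x_k)/(\epsilon\|x_k\|_2)$, uniformly over $\gamma_k\in\Geo(0,x_k)$. Hence the theorem reduces to the single claim that, almost surely, $T(0,x)/\|x\|_2\to0$ as $\|x\|_2\to\infty$.

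This reduced claim is exactly the shape theorem in the regime where the limit shape is all of $\R^d$: by Theorem \ref{time_constant_thm} the hypothesis $F(0)\ge p_\cc(\Z^d)$ forces $\mu_\v\equiv0$, and what remains is to promote the in-probability statement of that theorem to an almost-sure bound that is uniform over the endpoint $x$ and valid with no moment assumption on $\LL$. I would argue this as follows. Suppose first that $F(0)>p_\cc(\Z^d)$. Then the zero-weight edges almost surely contain a unique infinite cluster $\CC_0$, and any two of its vertices are joined by a zero-weight path, so $T(y,y')=0$ for all $y,y'\in\CC_0$; consequently $T(0,x)\le Z(0)+Z(x)$, where $Z(y)$ denotes the (almost surely finite) passage time from $y$ to $\CC_0$. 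Since $Z(x)$ is equal in law to $Z(0)$, it is $o(\|x\|_2)$ in probability, and a Borel--Cantelli argument over $x\in\Z^d$---using exponential decay of the diameter of the finite cluster of $0$ in supercritical Bernoulli percolation, and the fact that a near-optimal escape route detours around atypically heavy edges rather than crossing them---upgrades this to the required uniform almost-sure decay. The critical case $F(0)=p_\cc(\Z^d)$, of genuine relevance only in $d=2$ (and otherwise read conditionally on the existence of the $\gamma_k$, cf.\ Theorem \ref{geo_exist_thm}), instead follows from the known sublinear growth of critical first-passage times.

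The reduction in the first paragraph is routine; the one substantive ingredient is the uniform, moment-free shape theorem for $\mu_\bxi\equiv0$, and there the point I expect to require care is heavy-tailed $\LL$: the control of $Z(\cdot)$ cannot come from integrability of $\tau_e$ but only from the geometry of (super)critical percolation clusters, exploiting that atypically large weights are sparse enough to be avoided at negligible cost. Once that is in hand, the stated convergence and its ``for any sequence $(x_k)$ and any choice of $\gamma_k$'' uniformity are immediate.
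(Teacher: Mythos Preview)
Your reduction in the first paragraph is correct, but the ``single claim'' you reduce to---that $T(0,x)/\|x\|_2\to 0$ almost surely and uniformly over $x$---is too strong: it is \emph{false} without a moment assumption. The paper states this explicitly just after Theorem~\ref{zero_thm}: ``absent any moment assumption on $\tau_e$, the ratio $T(0,n\bxi)/n$ may actually be unbounded in $n$,'' and the accompanying footnote records that almost-sure convergence of $T(0,n\bxi)/n$ holds if and only if $\E\min(\tau_e^{(1)},\dots,\tau_e^{(2d)})<\infty$. Your heuristic that ``a near-optimal escape route detours around atypically heavy edges'' breaks precisely at the endpoint: every path terminating at $x$ must use one of the $2d$ edges incident to $x$, so $T(0,x)\ge\min_{e\ni x}\tau_e$ deterministically, and no geometric detour can avoid this. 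When the minimum of $2d$ i.i.d.\ copies of $\tau_e$ has infinite mean, a Borel--Cantelli argument shows that $\min_{e\ni[n\bxi]}\tau_e>cn$ infinitely often along any fixed direction, forcing $\limsup_n T(0,n\bxi)/n=\infty$. Thus your bound $Z(x)\le T(0,x)$ and the control of $Z(x)$ you sketch cannot be salvaged in general.

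The theorem is nonetheless true because the empirical measure is insensitive to a sublinear number of bad edges even when those edges carry the bulk of the passage time. The paper's proof makes this precise via Proposition~\ref{replacement_thm}: one deletes $a_0(\gamma_k)\le A_0$ edges near $0$ and $a_1(\gamma_k)\le A_{n_k}$ edges near $x_k$, with $A_0$ of order one and $A_n=o(n^\eps)$, so that the \emph{trimmed} passage time $T(\gamma_k^{(a_0,a_1)})/\|x_k\|_2\to 0$ almost surely (this is exactly where the shell construction enters). Your Markov argument then applies to the trimmed empirical measure, giving $\hat\nu_{\gamma_k^{(a_0,a_1)}}\Rightarrow\delta_0$, and since the deleted fraction $|{\gamma_k'}|/|\gamma_k|\to 0$, a total-variation comparison (Lemma~\ref{tv_bound_lemma}) transfers this to $\hat\nu_{\gamma_k}\Rightarrow\delta_0$. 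This two-step structure---trim first, then apply Markov---is the content of Theorem~\hyperref[var_formula_thm_super_c1]{\ref*{var_formula_thm_super}\ref*{var_formula_thm_super_c1}}, and it is not an artifact of the paper's framework but a genuine necessity in the moment-free setting.
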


Indeed, Theorem \ref{zero_thm} is fairly clear if \eqref{time_constant_def} is known to hold almost surely, for then it is not difficult to deduce $\hat\nu_{\gamma_n}\Rightarrow\delta_{0}$ from \eqref{T_gamma}.\footnote{The convergence in \eqref{time_constant_def} is almost sure if and only if $\E\min(\tau_e^{(1)},\dots,\tau_e^{(2d)})<\infty$, where $\tau_e^{(1)},\dots,\tau_e^{(2d)}$ are i.i.d.~copies of $\tau_e$, at least when $\mathbf{v}$ has rational coordinates \cite[Lem.~2.3]{auffinger-damron-hanson17}.}
But absent any moment assumption on $\tau_e$, the ratio $T(0,n\bxi)/n$ may actually be unbounded in $n$. 
Fortunately, Proposition \ref{replacement_thm} says we can recover almost sure convergence to the time constant by deleting $O(1)$ many edges from the beginnings of geodesics, and $o(n^\eps)$ many edges from the ends.
The simple proof of Theorem \ref{zero_thm} can then be carried out in all cases, as we do in Theorem \hyperref[var_formula_thm_super_c1]{\ref*{var_formula_thm_super}\ref*{var_formula_thm_super_c1}}.

\begin{eg}[\textit{Supercritical nonzero atom}]  \label{nonzero_atom_eg}
A situation similar to Theorem \ref{zero_thm} can occur in certain directions when there is $\mathfrak{b}>0$ such that
\eeq{ \label{inf_supp_nonzero}
F(\mathfrak{b}) > 0 \quad \text{and} \quad F(\mathfrak{b}-) = 0.
}
Namely, if $F(\mathfrak{b})$ exceeds the critical probability for oriented bond percolation on $\Z^2$, then $\mu_{\bxi} = \mathfrak{b}\|\bxi\|_1$ for all $\bxi$ within the so-called percolation cone (see \cite{durrett84}).
This corresponds to a flat edge in the limit shape, as discovered by \mbox{Durrett} and \mbox{Liggett} \cite[Thm.~9]{durrett-liggett81} and identified precisely by \mbox{Marchand} \cite[Thm.~1.3]{marchand02}.
Considering that \eqref{inf_supp_nonzero} implies every $\gamma\in\PP(0,x)$ satisfies $T(\gamma)\geq \mathfrak{b}\|x\|_1$, the equality $\mu_{\bxi} = \mathfrak{b}\|\bxi\|_1$ forces $\hat\nu_{\gamma_n}\to\delta_{\mathfrak{b}}$ for any choice of $\gamma_n\in\Geo(0,n\bxi)$.
\end{eg}

With the critical and supercritical cases resolved in Theorem \ref{zero_thm}, we can focus on answering Question \ref{main_q} in the subcritical regime.
But first we need to account for the complication created by an atom at zero.
Indeed, when $0 < F(0) < p_\cc(\Z^d)$, Question \ref{main_q} has a negative answer for the reason that a geodesic will occasionally pass through a box consisting entirely of zero-weight edges.
By following either longer or shorter routes through such boxes, the geodesic can raise or lower the proportion of its edges which have weight $0$.
Recent work of \mbox{Krishnan}, \mbox{Rassoul-Agha}, and \mbox{Sepp\"al\"ainen} \cite{krishnan-rassoul-seppalainen?} makes this idea precise, to the effect of extending results from \cite{steele-zhang03}.

This discussion leads us to ask a refined version of Question \ref{main_q}.
Let us define the following empirical measure, modified from \eqref{hat_nu_def} to now include only nonzero-weight edges:
\eeq{ \label{hat_nu_plus_def}
\hat\nu_\gamma^+ \coloneqq \frac{1}{|\{e\in\gamma:\, \tau_e\neq 0\}|}\sum_{e\in\gamma:\, \tau_e\neq 0}\delta_{\tau_e}.
}
We take the convention that $\hat\nu_\gamma^+$ is the zero measure if $\tau_e=0$ for all $e\in\gamma$.

\begin{question} \label{main_q_2}
If $F(0)<p_\cc(\Z^d)$ and $\gamma_n\in\Geo(0,n\bxi)$, does $\hat\nu_{\gamma_n}^+$ converge weakly as $n\to\infty$?
\end{question}

Notice that $\hat\nu_\gamma^+ = \hat\nu_\gamma$ when $F(0) = 0$, in which case Questions \ref{main_q} and \ref{main_q_2} are equivalent.
So let us make the following definition,
recalling that $\LL\in\PPP$ is the law of $\tau_e$.

\begin{defn} \label{good_def}
For $\bxi\in\S^{d-1}$, we will write $\LL\in\PPP_\mathrm{emp}(\bxi)$ if either 
\begin{itemize}
\item[(a)] $F(0)\geq p_\cc(\Z^d)$; or
\item[(b)] $F(0)<p_\cc(\Z^d)$ and there is a deterministic measure $\hat\nu^+ = \hat\nu^+(\LL,\bxi)\in\PPP$ such that, almost surely we have the following weak convergence:
\eq{
\hat\nu_{\gamma_n}^+ \Rightarrow \hat\nu^+ \quad \text{as $n\to\infty$}, \quad \text{for any choice of $\gamma_n\in\Geo(0,n\bxi)$}.
}
\end{itemize}
\end{defn}

\begin{eg} \label{trivial_emp_eg}
Any distribution of the form $\LL=p\delta_0+(1-p)\delta_t$, with $p\in(0,1)$ and $t>0$, trivially belongs to $\PPP_\mathrm{emp}(\bxi)$.
Indeed, either $p\geq p_\cc(\Z^d)$ or $\hat\nu_{\gamma_n}^+=\delta_t$ for all $n$ large enough that $T(0,n\bxi)>0$.
\end{eg}

After introducing in Chapter \ref{var_form_sec} a variational formula for the time constant, we will state and prove several results in Chapter \ref{applications} to the effect that $\PPP_\mathrm{emp}(\bxi)$ is dense in $\PPP$.
Here is one result that already shows this fact, where ``dense" is with respect to the topology of weak convergence.

\begin{thm} \label{finite_emp_0}
Fix any positive integer $N$, $\bxi\in\S^{d-1}$, and $p_0,p_1,\dots,p_N\in[0,1]$ satisfying 
$\sum_{i=0}^N p_i = 1$.
For Lebesgue-almost every $(t_1,\dots,t_N)\in(0,\infty)^N$, we have
$p_0\delta_0 + \sum_{i=1}^N p_i\delta_{t_i} \in \PPP_\mathrm{emp}(\bxi)$.
\end{thm}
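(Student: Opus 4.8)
My plan is to let the concavity of the time constant do the work. If $p_0 = F(0) \ge p_\cc(\Z^d)$ the target distribution already satisfies case~(a) of Definition~\ref{good_def} (cf.\ Theorem~\ref{zero_thm}), so assume henceforth that $p_0 < p_\cc(\Z^d)$; discarding any index with $p_i = 0$ (it affects neither the distribution nor any empirical measure), we may also assume $p_i > 0$ for all $i$, so $N \ge 1$. Realize all distributions of the prescribed shape on one probability space carrying i.i.d.\ uniforms $\{U_e : e \in E(\Z^d)\}$: given $\mathbf t = (t_1,\dots,t_N) \in (0,\infty)^N$, let $\tau_{\mathbf t} \colon [0,1] \to [0,\infty)$ be the step function equal to $0$ on a block $I_0$ of length $p_0$ and to $t_i$ on a block $I_i$ of length $p_i$, and set $\tau_e \coloneqq \tau_{\mathbf t}(U_e)$. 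Then $\tau_e$ has the desired law, the map $\mathbf t \mapsto \tau_{\mathbf t}$ is linear, and any finite path $\gamma$ carrying a nonzero edge satisfies $\hat\nu_\gamma^+ = \sum_{i=1}^N \frac{|\{e \in \gamma :\, U_e \in I_i\}|}{\sum_{j=1}^N |\{e \in \gamma :\, U_e \in I_j\}|}\, \delta_{t_i}$.

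For a fixed realization and fixed $n$,
\eq{
g_n(\mathbf t) \coloneqq \frac1n T(0,[n\bxi]) = \frac1n \inf_{\gamma \in \PP(0,[n\bxi])} \sum_{i=1}^N t_i\, \big|\{e \in \gamma :\, U_e \in I_i\}\big|
}
is an infimum of linear functions of $\mathbf t$, hence concave on $(0,\infty)^N$; moreover $0 \le g_n \le (\max_i t_i)\,\|[n\bxi]\|_1/n$, so $\{g_n\}$ is deterministically bounded on compact subsets of $(0,\infty)^N$. Since $\tau_{\mathbf t}$ is bounded, $g_n(\mathbf t)$ converges almost surely to the deterministic quantity $g(\mathbf t) \coloneqq \mu_\bxi(\tau_{\mathbf t})$, for each fixed $\mathbf t$ (the moment condition for a.s.\ convergence to the time constant, recalled after Theorem~\ref{zero_thm}, is trivial here, and the continuous-$n$ subtlety is covered by Proposition~\ref{replacement_thm}); hence $g$ is concave and finite on $(0,\infty)^N$, and $g > 0$ there since $p_0 < p_\cc(\Z^d)$ (Theorem~\ref{time_constant_thm}). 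This concavity is Lemma~\ref{concavity_lemma} rewritten in these coordinates. On the almost sure event $\Omega_0$ that $g_n(\mathbf q) \to g(\mathbf q)$ for all $\mathbf q \in \Q^N \cap (0,\infty)^N$, the uniform bounds on compacts make $\{g_n\}$ equi-Lipschitz on compacts, so Arzel\`a--Ascoli forces $g_n \to g$ locally uniformly on $(0,\infty)^N$.

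Now fix any $\mathbf t^\star \in (0,\infty)^N$ at which the concave function $g$ is differentiable; this excludes only a Lebesgue-null set of $\mathbf t^\star$, and that null set will be the sole restriction in the statement. Work on $\Omega_0$, intersected with the almost sure event that geodesics exist (Theorem~\ref{geo_exist_thm}\ref{geo_exist_thm_b}); let $\gamma_n \in \Geo(0,[n\bxi])$ be arbitrary for the weights $\tau_{\mathbf t^\star}$, and set $\mathbf a_n \coloneqq \big(\tfrac1n|\{e \in \gamma_n :\, U_e \in I_1\}|, \dots, \tfrac1n|\{e \in \gamma_n :\, U_e \in I_N\}|\big)$. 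Testing the single competitor $\gamma_n$ in the infimum gives $g_n(\mathbf t) \le \mathbf t \cdot \mathbf a_n$ for all $\mathbf t$, with equality at $\mathbf t^\star$ since $\gamma_n$ is a geodesic there; thus $\mathbf a_n$ is a supergradient of $g_n$ at $\mathbf t^\star$. The classical fact that supergradients of a convergent family of concave functions converge to the gradient of a differentiable limit then yields $\mathbf a_n \to \mathbf c \coloneqq \nabla g(\mathbf t^\star)$, with components $c_i \ge 0$, simultaneously for every choice of the $\gamma_n$. Since $\sum_{i=1}^N t_i^\star c_i = \lim_n \tfrac1n T(0,[n\bxi]) = g(\mathbf t^\star) > 0$ and each $t_i^\star > 0$, we have $\sum_i c_i > 0$, and therefore
\eq{
\hat\nu_{\gamma_n}^+ \ \Rightarrow\ \frac{1}{\sum_{j=1}^N c_j}\sum_{i=1}^N c_i\, \delta_{t_i^\star} \qquad \text{as } n \to \infty,
}
a deterministic measure. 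This is precisely condition~(b) of Definition~\ref{good_def}, so $p_0 \delta_0 + \sum_{i=1}^N p_i \delta_{t_i^\star} \in \PPP_\mathrm{emp}(\bxi)$ for Lebesgue-a.e.\ $(t_1,\dots,t_N)$.

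The core of the argument is just the concavity together with the supergradient-convergence theorem; the two points that need care are (i) the almost sure, continuous-$n$ convergence of $T(0,[n\bxi])/n$ to $\mu_\bxi$, where boundedness of $\tau_{\mathbf t}$ and Proposition~\ref{replacement_thm} come in, and (ii) the deterministic upgrade, on $\Omega_0$, from pointwise to locally uniform convergence of the $g_n$, so that the supergradient theorem may be applied pathwise---both routine. Alternatively one could route through Theorem~\ref{minimizers_thm}: every subsequential weak limit $\rho$ of $\tfrac1n\sum_{e\in\gamma_n}\delta_{U_e}$ minimizes the variational formula~\eqref{var_form_eq}, and at a differentiability point of $g$ its superdifferential is a singleton, which forces $\rho(I_i) = c_i$ for all such $\rho$; that route additionally requires the minimizers to charge none of the finitely many block-endpoints, which the supergradient argument avoids, and this is why I would build the proof around the latter.
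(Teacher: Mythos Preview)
Your argument is correct and is a genuinely different route from the paper's. The paper proves this statement as Theorem~\ref{finite_emp} by going through the full variational machinery: it invokes Theorem~\ref{var_form_thm} to write $\mu_\bxi(\tau_{\mathbf t})=\inf_{\sigma\in\RR^\bxi}\langle\tau_{\mathbf t},\sigma\rangle$ with a deterministic constraint set $\RR^\bxi$, uses Lemma~\ref{finite_d_lemma} to conclude that at a differentiability point the values $\sigma(\II_i)=\partial f/\partial t_i$ are constant over the minimizing set $\RR^\bxi_{\tau_{\mathbf t}}$, and then appeals to Theorem~\ref{minimizers_thm} to say that every subsequential limit of $\sigma_{\gamma_n}$ lands in $\RR^\bxi_{\tau_{\mathbf t}}$. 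You instead work directly with the random finite-$n$ concave functions $g_n$, upgrade pointwise convergence on rationals to locally uniform convergence via equi-Lipschitz bounds, and apply the classical supergradient-convergence theorem for concave functions pathwise. Your route is more elementary and self-contained for this finite-dimensional statement, bypassing the construction of $\RR^\bxi$ entirely; the paper's route, on the other hand, is what unifies this result with the infinite-dimensional Theorems~\ref{countable_emp}, \ref{continuous_emp}, \ref{bounded_emp}, where one cannot write $T(0,n\bxi)/n$ as an infimum of finitely many linear functionals and must instead lean on the abstract constraint set and Mazur's theorem.

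One small remark on your closing comment: your worry that the Theorem~\ref{minimizers_thm} route would require minimizers not to charge block-endpoints is unfounded, since Theorem~\ref{var_form_thm}\ref{var_form_thm_c} already forces every $\sigma\in\RR^\bxi$ to be absolutely continuous with respect to Lebesgue measure, so finite sets carry no mass. This does not affect your main argument, which stands on its own.
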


This result is later stated and proved as Theorem \ref{finite_emp}.
Our other main results, namely Theorems \ref{countable_emp}, \ref{continuous_emp}, \ref{bounded_emp}, manifest further dense families belonging to $\PPP_\mathrm{emp}(\bxi)$. 
Their exact statements, however, are best read after we have introduced certain notations related to the variational formula, hence our postponing these results until Section \ref{main_results}.
Nevertheless, they allow us to make the following, stronger denseness statement.
Said in the most concise way, it establishes that $\PPP_\mathrm{emp}(\bxi)$ is dense in $\PPP$ with respect to the $\infty$-Wasserstein distance.

\begin{thm} \label{dense_thm}
Fix any $\bxi\in\S^{d-1}$.
For every edge-weight distribution $\LL\in\PPP$ and every $\eps>0$, there is $\LL'\in\PPP_\mathrm{emp}(\bxi)$ that can be coupled with $\LL$ using random variables $X\sim\LL$ and $Y\sim\LL'$ satisfying
\eq{
|X-Y|\leq \eps \qquad \text{and} \qquad X = 0 \iff Y = 0.
}
\end{thm}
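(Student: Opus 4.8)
The plan is to deduce this from Theorem~\ref{bounded_emp} together with the elementary fact that the $\infty$-Wasserstein distance between two laws is attained by their monotone (quantile) coupling, and that this coupling automatically respects the event $\{X=0\}$. Write $q_\LL\colon[0,1]\to[0,\infty)$ for the quantile function of $\LL\in\PPP$, so that $q_\LL(U)\sim\LL$ whenever $U$ is uniform on $[0,1]$; right-continuity of $F$ gives $q_\LL(u)=0$ exactly for $u\le F(0)$. Consequently, if $\LL$ and $\LL'$ have equal mass at $0$, then the pair $X\coloneqq q_\LL(U)$, $Y\coloneqq q_{\LL'}(U)$ satisfies $X=0\iff Y=0$ and $|X-Y|\le\|q_\LL-q_{\LL'}\|_\infty$. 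Hence it suffices to produce, for the given $\LL$ and $\eps$, some $\LL'\in\PPP_\mathrm{emp}(\bxi)$ with $\LL'(\{0\})=\LL(\{0\})$ and $\|q_\LL-q_{\LL'}\|_\infty\le\eps$.

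If $F(0)\ge p_\cc(\Z^d)$ we simply take $\LL'=\LL$ and $X=Y$, since then $\LL\in\PPP_\mathrm{emp}(\bxi)$ by Definition~\ref{good_def}(a). So assume $F(0)<p_\cc(\Z^d)$, and put $p_0\coloneqq F(0)$, noting $p_0<1$. First I would open an artificial mass gap by setting $\sigma\coloneqq q_\LL+\tfrac{\eps}{2}\one_{(p_0,1]}$, a nonnegative measurable function on $[0,1]$ with $\sigma=0$ on $[0,p_0]$, $\sigma\ge\tfrac{\eps}{2}$ on $(p_0,1]$, and $\|\sigma-q_\LL\|_\infty\le\tfrac{\eps}{2}$; the law of $\sigma(U)$ still has mass $p_0<p_\cc(\Z^d)$ at $0$. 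Since $\sigma$ is bounded below by $\eps/2$ off $[0,p_0]$, any perturbation of $\sigma$ of $L^\infty$-norm at most $\eps/4$ that is supported on $(p_0,1]$ stays positive there, hence again vanishes precisely on $[0,p_0]$.

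Now apply Theorem~\ref{bounded_emp} with base function $\sigma$: within the ball of such perturbations there is a dense $G_\delta$ set of functions $\tau'$ for which the law $\LL'$ of $\tau'(U)$ belongs to $\PPP_\mathrm{emp}(\bxi)$, and this set is nonempty since the ball is a nonempty complete metric space. Fix one such $\tau'$. Then $\tau'=0$ on $[0,p_0]$ and $\tau'>0$ on $(p_0,1]$, so $\LL'(\{0\})=p_0=\LL(\{0\})$, while $\|\tau'-q_\LL\|_\infty\le\|\tau'-\sigma\|_\infty+\|\sigma-q_\LL\|_\infty\le\tfrac{\eps}{4}+\tfrac{\eps}{2}<\eps$. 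Taking $X=q_\LL(U)$ and $Y=\tau'(U)$ then yields the asserted coupling.

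The step that requires care is this last invocation: one needs the perturbation class in Theorem~\ref{bounded_emp} to be takeable as ``$L^\infty$-small and supported on $\{\sigma>0\}$'', so that the zero set — equivalently the value $F(0)$ — is preserved, and one needs the statement to tolerate a base function of unbounded essential supremum in case $\LL$ is heavy-tailed. Both are features of the framework set up in Section~\ref{main_results}, after which the argument above is bookkeeping. (When $\LL$ has bounded support one can bypass Theorem~\ref{bounded_emp}: partition $(0,\infty)$ into finitely many intervals of length at most $\eps$, relocate the $\LL$-mass in each interval to a common interior point, and invoke Theorem~\ref{finite_emp_0} to make a generic — hence admissible — choice of these points; the $\infty$-Wasserstein bound and the matching of the atom at $0$ are then immediate.)
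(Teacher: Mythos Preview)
Your approach is the paper's: Remark~\ref{good_examples_1} simply records that Theorem~\ref{dense_thm} follows from either Theorem~\ref{continuous_emp} or Theorem~\ref{bounded_emp}, and you have spelled out the deduction via the latter, including the coupling $(q_\LL(U),\tau'(U))$ that the paper leaves implicit.

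Two small points are worth tidying. First, your opening paragraph reduces the problem to bounding $\|q_\LL-q_{\LL'}\|_\infty$, but $\tau'$ need not be nondecreasing, so it is not $q_{\LL'}$; what you actually use (and what suffices) is the direct coupling $X=q_\LL(U)$, $Y=\tau'(U)$ together with $\|q_\LL-\tau'\|_\infty<\eps$ and the matching of zero sets. Second, Theorem~\ref{bounded_emp} as \emph{stated} draws its conclusion only for $\vc t\in\DDD\cap\ell^1(\N)_+$, a set that is not obviously rich near $0$ since $\ell^1(\N)_+$ has empty interior; your gap trick is precisely what lets one drop the nonnegativity constraint on $\vc t$ and take any $\vc t\in\DDD$ with $\|\vc t\|_1<\eps/4$, but this relies on the \emph{proof} of Theorem~\ref{bounded_emp} rather than its statement---as you correctly flag. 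If you want to cite only statements, Theorem~\ref{continuous_emp} gives the cleaner route: take base $\tau=q_\LL$, pick $\vphi\in\DDD$ with $\|\vphi\|_\infty<\eps$ (possible since $\DDD$ is dense in the open cone of strictly positive $C^k$ functions), and couple via $(q_\LL(U),(q_\LL+\bar\vphi)(U))$.
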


Finally, one can extend Questions  \ref{main_q} and \ref{main_q_2} to infinite geodesics, and we address the case when these geodesics have the correct asymptotic direction.
Recall the notation $\Gamma^{(\ell)} = \{e_1,\dots,e_\ell\}$ from \eqref{sub_of_infinite}, and let $\Geo_\infty$ \label{geo_infty_def}
be the set of all infinite geodesics.
First, we have the following analogue of Theorem \ref{zero_thm}.

\begin{thm}
\label{zero_thm_infty}
If $F(0) \geq p_\cc(\Z^d)$, then almost surely
\eq{
\hat\nu_{\Gamma^{(\ell)}} \Rightarrow \delta_0 \quad \text{as $\ell\to\infty$}, \quad \text{for every $\Gamma\in\Geo_\infty$}.
}
\end{thm}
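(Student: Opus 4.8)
The plan is to reduce Theorem \ref{zero_thm_infty} to Theorem \ref{zero_thm} by controlling the passage times along finite subpaths of an infinite geodesic. Fix $\Gamma\in\Geo_\infty$ with vertices $x_0=0,x_1,x_2,\dots$. Since $F(0)\geq p_\cc(\Z^d)$, Theorem \ref{time_constant_thm} gives $\mu_{\v}=0$ for all $\v$. The key identity is \eqref{T_gamma}, which reads $T(\Gamma^{(\ell)}) = |\Gamma^{(\ell)}|\,\langle t,\hat\nu_{\Gamma^{(\ell)}}\rangle = \ell\,\langle t,\hat\nu_{\Gamma^{(\ell)}}\rangle$, because $|\Gamma^{(\ell)}|=\ell$. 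So the weak convergence $\hat\nu_{\Gamma^{(\ell)}}\Rightarrow\delta_0$ will follow once I show that $T(\Gamma^{(\ell)})/\ell\to 0$ almost surely, uniformly over all infinite geodesics $\Gamma$ — indeed $\delta_0$ is the only probability measure on $[0,\infty)$ with mean $0$, and weak convergence to $\delta_0$ on $[0,\infty)$ is equivalent to the means tending to $0$ (the map $\nu\mapsto\langle t\wedge M,\nu\rangle$ is weakly continuous for each $M$, and $\langle t\wedge M,\hat\nu_{\Gamma^{(\ell)}}\rangle\leq T(\Gamma^{(\ell)})/\ell\to 0$ forces all mass to escape to $0$; one then checks tightness is automatic since these are probability measures and $\langle t\wedge M,\cdot\rangle\to 0$ pins the limit to $\delta_0$).

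The remaining task is therefore the bound $\sup_{\Gamma\in\Geo_\infty} T(\Gamma^{(\ell)})/\ell \to 0$ a.s. Since $\Gamma^{(\ell)}$ is a finite geodesic from $0$ to $x_\ell$, we have $T(\Gamma^{(\ell)}) = T(0,x_\ell)$, so it suffices to control $T(0,x_\ell)/\ell$. Here $\|x_\ell\|_1\leq\ell$, so $x_\ell$ ranges over lattice points within $\ell^1$-distance $\ell$ of the origin. The natural statement to invoke is a shape-theorem-type bound: in the critical/supercritical regime, $\sup_{\|x\|_1\leq m} T(0,x) = o(m)$ a.s. This is essentially the content behind Proposition \ref{replacement_thm} (passage times along geodesics scale to $\mu_\bxi=0$ after removing $O(1)$ edges at the start and $o(n^\eps)$ at the end) and Proposition \ref{addition_thm} (the scaled passage time along any $\bxi$-directed infinite geodesic converges to $\mu_\bxi$), but since we do not assume directedness here, the cleanest route is to use the uniform version directly: for each $\eps>0$, almost surely $T(0,x)\leq \eps\|x\|_1 + C$ for all $x\in\Z^d$ with some random finite $C=C(\eps)$. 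Granting this, $T(\Gamma^{(\ell)}) = T(0,x_\ell)\leq \eps\|x_\ell\|_1 + C \leq \eps\ell + C$, so $\limsup_\ell T(\Gamma^{(\ell)})/\ell \leq \eps$ for every $\Gamma$ simultaneously; letting $\eps\downarrow 0$ along a sequence finishes the proof.

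The main obstacle is establishing the uniform sublinear bound $\sup_{\|x\|_1\leq m} T(0,x)=o(m)$ a.s. without any moment assumption on $\tau_e$, since — as the text emphasizes after Theorem \ref{zero_thm} — $T(0,n\bxi)/n$ need not even be bounded pointwise. The workaround mirrors the one described there: rather than bound $T(0,x)$ directly, one truncates the edge-weights at a large level $M$, defining $\tau_e^M=\tau_e\wedge M$ and the corresponding passage time $T^M$. For the truncated model all moments are finite, so the subadditive ergodic theorem yields an a.s. (and, by standard arguments, uniform over directions) shape theorem, and since the truncated time constant is still $0$ when $F(0)\geq p_\cc(\Z^d)$ (truncation does not change $F(0)$), one gets $\sup_{\|x\|_1\leq m}T^M(0,x)=o(m)$ a.s. The point is then that a geodesic $\Gamma^{(\ell)}$, being length-$\ell$, uses at most $\ell$ edges, and the number of those edges with $\tau_e>M$ is negligible: one argues via the cluster structure of $\{e:\tau_e=0\}$ (or $\{e:\tau_e\leq h\}$ for small $h$, using $F(0)\geq p_\cc$ so that for $h$ with $F(h)>p_\cc$ this set percolates) that an optimal path can route through the infinite percolating cluster and avoid high-weight edges except near its endpoints, incurring only $o(\ell)$ extra cost — this is exactly the mechanism of Propositions \ref{replacement_thm} and \ref{addition_thm}. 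Invoking those propositions (which are proved independently in Section \ref{modification_sec}), the sublinear bound transfers from $T^M$ back to $T$, and the argument concludes as above.
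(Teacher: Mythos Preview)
Your core strategy---show $T(\Gamma^{(\ell)})/\ell\to 0$ using Proposition~\ref{addition_thm}, then conclude $\langle t,\hat\nu_{\Gamma^{(\ell)}}\rangle\to 0$ and apply Markov's inequality---is exactly the paper's argument (see the proof of Theorem~\hyperref[var_formula_thm_super_c2]{\ref*{var_formula_thm_super}\ref*{var_formula_thm_super_c2}}). However, the write-up takes a detour that introduces a genuinely false intermediate claim.

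The claim ``$\sup_{\|x\|_1\leq m} T(0,x)=o(m)$ a.s.'' (equivalently, $T(0,x)\leq \eps\|x\|_1+C(\eps)$ for all $x$) is \emph{not} true in general without a moment assumption. Every path from $0$ to $x$ must use some edge incident to $x$; if the tail of $\LL$ is heavy enough, there will a.s.\ be infinitely many $x$ for which all $2d$ incident edges have weight exceeding $\|x\|_1$, so $T(0,x)\geq\|x\|_1$ for those $x$ and no such bound can hold. This is precisely why Proposition~\ref{replacement_thm} deletes edges near the endpoints. Your truncation sketch does not rescue the claim: truncation controls $T^M$, but transferring back to $T$ fails at those bad vertices $x$, since they are not assumed to lie on any infinite geodesic.

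The fix is simply to drop the detour. You write that Proposition~\ref{addition_thm} cannot be invoked ``since we do not assume directedness,'' but this misreads the proposition: the statement \eqref{sup_infinite} is already uniform over all $\bxi\in\S^{d-1}$ and over all $\Gamma\in\Geo_\infty$ with the given starting point, with no directedness hypothesis. Applying \eqref{sup_infinite} directly (with $\mu_\bxi=0$) gives $T(\Gamma^{(\ell)})/n_\ell\to 0$; since $\ell\geq\|x_\ell-x_0\|_1\geq n_\ell-\sqrt{d}$, this yields $T(\Gamma^{(\ell)})/\ell\to 0$ as desired. One minor correction: $\Gamma\in\Geo_\infty$ need not start at $0$; the paper notes after Proposition~\ref{addition_thm} that \eqref{sup_infinite} holds simultaneously for every starting vertex $x\in\Z^d$ by countability.
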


\begin{remark}
If $d=2$ and $F(0)>p_\cc(\Z^2) = 1/2$, then it is not difficult to reason that all infinite geodesics eventually consist of \textit{entirely} zero-weight edges (because the complement of the infinite open cluster has no infinite component).
This statement holds also for $d\geq3$ so long as $F(0)$ is sufficiently close to $1$ \cite{grimmett-holroyd-kozma14}, although it is not clear if the eventually-always-zero property of geodesics is in effect whenever $F(0)>p_\cc(\Z^d)$.
When $F(0)=p_\cc(\Z^d)$, the existence of infinite geodesics is itself unclear.
\end{remark}

Given Theorem \ref{zero_thm_infty}, it makes sense to extend Definition \ref{good_def} as follows.
For $\bxi\in\S^{d-1}$, let $\Geo_\infty(\bxi)$ denote the set of all $\bxi$-directed infinite geodesics.
\label{geo_infty_bxi_def}

\begin{defn} \label{good_def_infty}
For $\bxi\in\S^{d-1}$, we will write $\LL\in\PPP_\mathrm{emp}^\infty(\bxi)$ if either 
\begin{itemize}
\item[(a)] $F(0)\geq p_\cc(\Z^d)$; or
\item[(b)] $F(0)<p_\cc(\Z^d)$ and there is a deterministic $\hat\nu^+_\infty = \hat\nu^+_\infty(\LL,\bxi)\in\PPP$ such that, almost surely we have
\eq{
\hat\nu_{\Gamma^{(\ell)}}^+ \Rightarrow \hat\nu^+_\infty \quad \text{as $\ell\to\infty$}, \quad \text{for every $\Gamma\in\Geo_\infty(\bxi)$}.
}
\end{itemize}
\end{defn}

It seems reasonable to believe that $\PPP_\mathrm{emp}(\bxi) = \PPP_\mathrm{emp}^\infty(\bxi)$ with $\hat\nu^+ = \hat\nu^+_\infty$.
While we are unable to show this in general, the aforementioned dense families identified in Section \ref{main_results} also belong to $\PPP_\mathrm{emp}^\infty(\bxi)$.
In particular, Theorem \ref{dense_thm} still holds if we demand $\LL'\in\PPP_\mathrm{emp}(\bxi)\cap\PPP_\mathrm{emp}^\infty(\bxi)$.
Finally, for comparison:

\begin{eg}
In \cite[pg.~673]{chaika-krishnan19}, \mbox{Chaika} and \mbox{Krishnan} exhibit a stationary ergodic FPP model admitting infinite geodesics with neither an asymptotic direction nor a limiting empirical measure.
\end{eg}

\section{Lengths of geodesics} \label{geo_length_sec} 

Here we discuss a problem very much related to Question \ref{main_q}.
The following question was posed by M. \mbox{Damron}, also at the 2015 AIM workshop \cite{aim_15}.
Its origins can be traced back to the landmark paper of \mbox{Hammersley} and \mbox{Welsh} \cite[Sec.~8.2]{hammersley-welsh65} credited with initiating the study of FPP.
The problem was also promoted in the early work of \mbox{Smythe} and \mbox{Wierman} \cite[Chap.~VIII]{smythe-wierman78}, and more recently in \cite[Question 4.1.4]{auffinger-damron-hanson17}.

\begin{question} \label{cor_q}
If $\gamma_n \in \Geo(0,n\bxi)$, does $|{\gamma_n}|/n$ converge as $n\to\infty$?
\end{question}

Just as we did with Question \ref{main_q}, we will refine our inquiry after first noting some counterexamples.

\begin{eg}[\textit{Non-critical atom at zero}] \label{supercritical_length_eg}
If $F(0)>0$, then the answer to Question \ref{cor_q} is \textit{no}. 
This is by the same discussion as preceded \eqref{hat_nu_plus_def}.
Nevertheless, one can instead consider the quantities
\eeq{ \label{Ln_def} 
\barbelow{N}_n^\bxi\coloneqq \inf\{|\gamma|:\, \gamma\in\Geo(0,n\bxi)\}, \ \
\barabove{N}_n^\bxi \coloneqq \sup\{|\gamma| :\,\gamma\in\Geo(0,n\bxi)\}.
}
In the case $F(0)>p_\cc(\Z^d)$, \mbox{Zhang} \cite[Thm.~4]{zhang95} showed that $n^{-1}\barbelow{N}_n^{\mathbf{e}_1}$ converges to a deterministic constant, 
improving previous results from \cite{zhang93,zhang-zhang84}.
The case $0 < F(0) < p_\cc(\Z^d)$ remains unsettled.
\end{eg}

\begin{eg}[\textit{Critical atom at zero}] \label{critical_length}
For $d=2$ and $F(0) = p_\cc(\Z^2)=1/2$, it was shown by \mbox{Damron} and \mbox{Tang} \cite{damron-tang19} that geodesics have superlinear length, verifying a conjecture of \mbox{Kesten} \cite[Sec.~9.24]{kesten86}.
\end{eg}

To motivate our next definition, consider the unsettled case mentioned in Example \ref{supercritical_length_eg}. 
While this problem remains challenging, one possible approach is to study the decomposition
$|\gamma| = |\gamma|_0 + |\gamma|_+$, 
where 
\eeq{ \label{gamma_length_0_def} 
|\gamma|_0 \coloneqq |\{e\in\gamma :\,\tau_e = 0\}|, \qquad
|\gamma|_+ \coloneqq |\{e\in\gamma :\,\tau_e > 0\}|.
}
One can hope (possibly falsely) that the convergence of $\barbelow{N}_n^\bxi/n$ holds at least when $\LL$ belongs to the following class.

\begin{defn} \label{good_length_def}
For $\bxi\in\S^{d-1}$, we will write $\LL\in\PPP_\mathrm{length}(\bxi)$ if there is a deterministic constant $\lambda = \lambda(\LL,\bxi)$ such that, almost surely,
\eq{
\lim_{n\to\infty}\frac{|\gamma_n|_+}{n} = \lambda \quad \text{for any choice of $\gamma_n\in\Geo(0,n\bxi)$}.
}
\end{defn}

For completeness, we include the following (partial) companion result to Theorem \ref{zero_thm}.
A short proof can be found in Section \ref{length_lemma_proof}.
The supercritical case is a straightforward consequence of \cite[Thm.~1]{zhang95}, and the second scenario is easy once we have Proposition \ref{replacement_thm}.
The general critical case remains open.

\begin{thm} \label{zero_length_thm}
If $F(0) > p_\cc(\Z^d)$ or $F(0)=F(h)=p_\cc(\Z^d)$ for some $h>0$, then almost surely the following holds for any sequence $(x_k)_{k\geq1}$ in $\Z^d$ such that $\|x_k\|_2\to\infty$: 
\eeq{ \label{zero_length_thm_eq}
\lim_{k\to\infty} \frac{|\gamma_k|_+}{\|x_k\|_2}=0 \quad \text{for any choice of $\gamma_k\in\Geo(0,x_k)$}.
}
In particular, $\LL\in\PPP_\mathrm{length}(\bxi)$ for every $\bxi\in\S^{d-1}$, with $\lambda=0$.
\end{thm}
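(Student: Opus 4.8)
The plan is to treat the two hypotheses separately. In both cases the point is to force geodesics to spend only a vanishing fraction of their edges on positive-weight bonds, after which the ``in particular'' clause is automatic: applying the main statement to $x_j=[n_j\bxi]$ for an arbitrary sequence $n_j\to\infty$ and using $\|[n_j\bxi]\|_2=n_j+O(1)$ gives $|\gamma_{n_j}|_+/n_j\to0$ for every $\gamma_{n_j}\in\Geo(0,n_j\bxi)$, hence $\LL\in\PPP_\mathrm{length}(\bxi)$ with $\lambda=0$.

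For the supercritical case $F(0)>p_\cc(\Z^d)$, the zero-weight subgraph is supercritical, and I would simply quote \cite[Thm.~1]{zhang95}, which controls the number of positive-weight edges that a geodesic is compelled to use in this phase, bounding it by a sublinear function of the distance between its endpoints. The only thing to add is the upgrade from one endpoint at a time to an arbitrary sequence $x_k$: for fixed $\eps>0$, the probability that some geodesic from $0$ to $x$ uses at least $\eps\|x\|_2$ positive edges decays faster than any power of $\|x\|_2$ (a consequence of the exponential tails for finite clusters and chemical distances in supercritical percolation), so Borel--Cantelli over $x\in\Z^d$ shows that almost surely only finitely many $x$ are exceptional; intersecting over a sequence $\eps\downarrow0$ yields the claim for every sequence $x_k$ and every choice of $\gamma_k$.

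For the critical case with a mass gap, $F(0)=F(h)=p_\cc(\Z^d)$ with $h>0$, the mechanism is different and rests on the structural fact that $\LL$ puts no mass on $(0,h]$: thus $\tau_e>0$ forces $\tau_e\geq h$, whence
\[
T(\gamma)\;\geq\;h\,|\gamma|_+\qquad\text{for every path }\gamma .
\]
In particular every $\gamma\in\Geo(0,x)$ obeys $|\gamma|_+\leq T(0,x)/h$, so it suffices---and this handles all geodesics to $x$ simultaneously---to show $T(0,x)=o(\|x\|_2)$ almost surely. Since $F(0)=p_\cc(\Z^d)$, Theorem \ref{time_constant_thm} gives $\mu_\bxi=0$ for every $\bxi$. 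Now invoke Proposition \ref{replacement_thm}: from any geodesic between $0$ and $[n\bxi]$ one may delete $O(1)$ edges at the start and $o(n^\eps)$ at the end so that the remaining passage time, divided by $n$, tends to $\mu_\bxi=0$; the trimmed path then has passage time $o(n)$, hence at most $o(n)$ positive edges by the displayed inequality, and restoring the deleted edges leaves the count $o(n)$. Together with the same Borel--Cantelli bookkeeping over $x\in\Z^d$ as above, this gives $T(0,x)=o(\|x\|_2)$ almost surely, completing the argument.

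Consistent with the proof being short, I do not expect a serious obstacle; the one point needing care is that \emph{no moment assumption} on $\tau_e$ is available, so truncating the weights---which only lowers passage times, giving lower rather than upper bounds---is useless, and one must genuinely route geodesics around large-weight edges. This is exactly what Proposition \ref{replacement_thm} (and behind it the non-moment machinery of Section \ref{modification_sec}) is built to do, so once that proposition is granted the remaining steps---the mass-gap reduction, the Borel--Cantelli passage to arbitrary endpoint sequences, and the deduction of $\lambda=0$---are routine.
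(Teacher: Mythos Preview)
Your approach matches the paper's: Zhang's theorem plus Borel--Cantelli for the supercritical case, and the mass-gap inequality $T(\gamma)\ge h|\gamma|_+$ combined with Proposition~\ref{replacement_thm} for the critical-with-gap case.

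One point to clean up: in the critical case you write that it suffices to show $T(0,x)=o(\|x\|_2)$ almost surely, and your final sentence claims this is what you have established. That statement is false in general without a moment assumption---edges near $0$ or $x$ can carry arbitrarily large weight---and indeed you yourself note earlier that one must ``genuinely route geodesics around large-weight edges.'' What you actually (and correctly) argue in between is the paper's argument: the \emph{trimmed} passage time $T(\gamma_k^{(a_0,a_1)})$ is $o(n_k)$, hence the trimmed path has at most $o(n_k)$ positive edges, and since only $a_0(\gamma_k)+a_1(\gamma_k)=o(n_k)$ edges were removed, $|\gamma_k|_+=o(n_k)$ as well. So drop the claim about $T(0,x)$ and keep the trimming argument; also, no extra Borel--Cantelli over $x\in\Z^d$ is needed in this case, since the convergence in \eqref{adjusted_convergence} is already uniform over $\bxi\in\S^{d-1}$.
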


\begin{remark}
If $\tau_e$ is bounded and $F(0)=F(h) = p_\cc(\Z^d)$ for some $h>0$, then it is immediate from \cite[Thm.~B]{chayes91} that $|\gamma_n|_+ = O(n^\eps)$ for all $\eps>0$. 
See also \cite[Rmk.~3]{kesten93}.
If we further assume $d=2$, then $|\gamma_n|_+$ is approximately $\log n$ \cite{chayes-chayes-durrett86,kesten-zhang97,damron-lam-wang17}, and there is a related body of work on the triangular lattice.
For Bernoulli($1/2$) site weights, \mbox{Yao} \cite{yao14,yao18} proved an explicit limit: $|\gamma_n|_+/\log n$ converges to $1/(\sqrt{3}\pi)$ in probability but not almost surely.
To obtain the latter, one needs to instead consider the point-to-line passage time (rather than point-to-point), and for this quantity a universality result was obtained by \mbox{Damron}, \mbox{Hanson}, and \mbox{Lam} \cite{damron-hanson-lam?}.
\end{remark}

When $F(0) < p_\cc(\Z^d)$, Question \ref{cor_q} is simply a relaxation of Question \ref{main_q_2}.
This is intuitively clear from \eqref{T_gamma}: if a sequence of geodesics admits a limiting empirical measure, then $T(\gamma_n)/n\to\mu_\bxi$ implies $|\gamma_n|/n$ must converge as well.
We make this precise in the following lemma, the proof of which can be found in Section \ref{length_lemma_proof}.
In the interest of generality, we state the result for any sequence $(n_k)_{k\geq1}$ of real numbers such that $n_k\to\infty$ as $k\to\infty$, rather than insisting on a geodesic for every $n$.
See also Remark \ref{good_to_length_generality_remark}.

\begin{lemma} \label{good_to_length}
If $F(0)<p_\cc(\Z^d)$, then with probability one the following implications hold for every $\bxi\in\S^{d-1}$, any sequence of real numbers $(n_k)_{k\geq1}$ tending to $+\infty$, and any choice of $\gamma_{n_k}\in\Geo(0,n_k\bxi)$:
\begin{align}
\label{good_to_length_a}
\hat\nu_{\gamma_{n_k}} \Rightarrow \hat\nu \quad \text{as $k\to\infty$} \quad &\implies \quad 
\lim_{k\to\infty} \frac{|\gamma_{n_k}|}{n_k} = \frac{\mu_\bxi}{\int_0^\infty t\, \hat\nu(\dd t)},
\intertext{and}
\label{good_to_length_b}
\hat\nu_{\gamma_{n_k}}^+ \Rightarrow \hat\nu^+ \quad \text{as $k\to\infty$} \quad &\implies \quad
\lim_{k\to\infty} \frac{|\gamma_{n_k}|_+}{n_k} = \frac{\mu_\bxi}{\int_0^\infty t\, \hat\nu^+(\dd t)}.
\end{align}
\end{lemma}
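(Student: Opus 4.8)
The argument rests on the exact identities $T(\gamma)=|\gamma|\,\langle t,\hat\nu_\gamma\rangle$ from \eqref{T_gamma} and $T(\gamma)=|\gamma|_+\,\langle t,\hat\nu_\gamma^+\rangle$ (the latter because zero-weight edges contribute nothing to either side), which rearrange to $|\gamma_{n_k}|/n_k=(T(\gamma_{n_k})/n_k)\big/\langle t,\hat\nu_{\gamma_{n_k}}\rangle$ and $|\gamma_{n_k}|_+/n_k=(T(\gamma_{n_k})/n_k)\big/\langle t,\hat\nu_{\gamma_{n_k}}^+\rangle$. Thus the lemma reduces to showing that the common numerator tends to $\mu_\bxi$ and each denominator tends to the corresponding integral $\int_0^\infty t\,\hat\nu(\dd t)$ or $\int_0^\infty t\,\hat\nu^+(\dd t)$, the degenerate values $0$ and $+\infty$ of these integrals producing exactly the claimed limits $+\infty$ and $0$ (here one uses $\mu_\bxi\in(0,\infty)$, which holds because $F(0)<p_\cc(\Z^d)$). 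Since $[n_k\bxi]\in\Z^d$ and $\v\mapsto\mu_\v$ is a continuous norm in this regime, it suffices to work with endpoints in $\Z^d$ and only countably many almost sure events, so the quantifier over $\bxi$ is harmless.

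\textbf{Trimming.} Because $T(\gamma_{n_k})=T(0,[n_k\bxi])$ need not scale to $\mu_\bxi$ almost surely without a moment assumption, I first replace $\gamma_{n_k}$ by the subpath $\gamma'_{n_k}$ obtained by deleting the $O(1)$ leading edges and $o(n_k)$ trailing edges furnished by Proposition \ref{replacement_thm}, so that $T(\gamma'_{n_k})/n_k\to\mu_\bxi$ almost surely. Two lower bounds show this trimming is negligible: $|\gamma_{n_k}|\geq\|[n_k\bxi]\|_1\geq n_k/2$ for large $k$, and applying Theorem \ref{time_constant_thm} to the Bernoulli weights $\one\{\tau_e>0\}$ (whose atom at $0$ has mass $F(0)<p_\cc(\Z^d)$) yields a positive time constant and hence $|\gamma_{n_k}|_+\geq c\,n_k$ for some $c>0$ and all large $k$. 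Consequently $|\gamma'_{n_k}|/|\gamma_{n_k}|\to1$, $|\gamma'_{n_k}|_+/|\gamma_{n_k}|_+\to1$, and (since removing a vanishing fraction of edges does not alter a weak limit, boundedness of test functions absorbing even large deleted weights) $\hat\nu_{\gamma'_{n_k}}\Rightarrow\hat\nu$ and $\hat\nu^+_{\gamma'_{n_k}}\Rightarrow\hat\nu^+$. So it is enough to prove the two limits with $\gamma'_{n_k}$ in place of $\gamma_{n_k}$.

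\textbf{Controlling the denominators.} The lower bounds $\liminf_k\langle t,\hat\nu_{\gamma'_{n_k}}\rangle\geq\int_0^\infty t\,\hat\nu(\dd t)$ and its $+$ analogue follow at once from the portmanteau theorem, as $t\mapsto t$ is nonnegative and continuous. The matching upper bounds are the crux, and amount to excluding escape of mass. For $M\in\N$ let $\tau_e^{(M)}\coloneqq\tau_e\wedge M$, with passage time $T^{(M)}$ and time constant $\mu_\bxi^{(M)}$; these weights are bounded, so the shape theorem gives $T^{(M)}(0,x)/\|x\|_2\to\mu^{(M)}_{x/\|x\|_2}$ almost surely, and a standard limiting argument gives $\mu_\bxi^{(M)}\uparrow\mu_\bxi$ as $M\to\infty$. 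Since $\sum_{e\in\gamma'_{n_k}}(\tau_e-M)^+=T(\gamma'_{n_k})-T^{(M)}(\gamma'_{n_k})$ and $T^{(M)}(\gamma'_{n_k})\geq T^{(M)}(0,[n_k\bxi])-o(n_k)$ (because $\gamma'_{n_k}$ joins points within $O(1)$ of $0$ and within $o(n_k)$ of $[n_k\bxi]$ while $\tau^{(M)}\leq M$), dividing by $|\gamma'_{n_k}|$ (resp.\ $|\gamma'_{n_k}|_+$), both of which are $\geq c\,n_k$, and letting $k\to\infty$ yields $\limsup_k\langle(t-M)^+,\hat\nu_{\gamma'_{n_k}}\rangle\leq(\mu_\bxi-\mu_\bxi^{(M)})/c$, and the same bound for $\hat\nu^+_{\gamma'_{n_k}}$. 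Writing $\langle t,\cdot\rangle=\langle t\wedge M,\cdot\rangle+\langle(t-M)^+,\cdot\rangle$, using weak convergence on the bounded term, and then sending $M\to\infty$ (so $\mu_\bxi-\mu^{(M)}_\bxi\to0$ and $\langle t\wedge M,\hat\nu\rangle\uparrow\int_0^\infty t\,\hat\nu(\dd t)$) gives $\langle t,\hat\nu_{\gamma'_{n_k}}\rangle\to\int_0^\infty t\,\hat\nu(\dd t)$ and $\langle t,\hat\nu^+_{\gamma'_{n_k}}\rangle\to\int_0^\infty t\,\hat\nu^+(\dd t)$, these limits being $+\infty$ when the integral is infinite.

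\textbf{Conclusion and the main difficulty.} Combining with $T(\gamma'_{n_k})/n_k\to\mu_\bxi$, the identities above give $|\gamma'_{n_k}|/n_k\to\mu_\bxi/\int_0^\infty t\,\hat\nu(\dd t)$ and $|\gamma'_{n_k}|_+/n_k\to\mu_\bxi/\int_0^\infty t\,\hat\nu^+(\dd t)$ in $[0,\infty]$, and the ratios from the trimming step transfer both limits back to $\gamma_{n_k}$, proving \eqref{good_to_length_a} and \eqref{good_to_length_b}. The one genuine obstacle is the no-escape-of-mass estimate: because $T(0,n\bxi)/n$ can fail to be bounded, the denominators cannot be controlled directly, and the proof must route through the trimmed geodesic $\gamma'_{n_k}$—whose passage time does scale, by Proposition \ref{replacement_thm}—together with the monotone convergence $\mu_\bxi^{(M)}\uparrow\mu_\bxi$ of the capped time constants.
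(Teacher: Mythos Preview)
Your argument is correct, but it takes a different route from the paper's proof. The paper works inside its uniform-variable coupling \eqref{function_coupling} and invokes Theorem \hyperref[var_formula_thm_sub_c1]{\ref*{var_formula_thm_sub}\ref*{var_formula_thm_sub_c1}}: along any subsequence it extracts a further subsequence with $\sigma_{\gamma_{n_{k_j}}}\Rightarrow\sigma\in\RR^\bxi_\infty$ and $\langle\tau,\sigma\rangle=\mu_\bxi$, then reads off $|\gamma_{n_{k_j}}|/n_{k_j}\to\langle 1,\tau_*(\sigma)\rangle=\langle\tau,\sigma\rangle/\langle t,\hat\tau_*(\sigma)\rangle=\mu_\bxi/\langle t,\hat\nu\rangle$ from the pushforward convergences \eqref{to_minimizers_push}. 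No truncation or separate tightness estimate is needed because the no-escape-of-mass work is already baked into Lemma~\ref{test_fnc_lemma} and the construction of $\RR^\bxi$.

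Your proof, by contrast, stays entirely in the $\tau$-world and never touches the coupling or the variational formula. You control the unbounded integrand $t$ directly: truncate the weights at level $M$, use the shape theorem for the bounded weights $\tau_e\wedge M$ to get $\limsup_k\langle(t-M)^+,\hat\nu_{\gamma'_{n_k}}\rangle\leq(\mu_\bxi-\mu_\bxi^{(M)})/c$, and send $M\to\infty$ via continuity of the time constant. This is more elementary and self-contained---it would work even without Sections~\ref{constraint_sec}--\ref{var_form_proof}---whereas the paper's proof is a quick corollary of its central machinery and serves to illustrate that machinery's reach. Both proofs rely on Proposition~\ref{replacement_thm} to handle the absence of a moment assumption.
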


\begin{eg} \label{trivial_length_eg}
As in Example \ref{trivial_emp_eg}, consider $\LL = p\delta_0+(1-p)\delta_t$ with $t>0$ and $p<p_\cc(\Z^d)$.
In this case, for every path $\gamma$ we have $T(\gamma) = t|\gamma|_+$, and for $\gamma_n\in\Geo(0,n\bxi)$ we have $T(\gamma_n)/n\to\mu_\bxi$ as $n\to\infty$.
It follows that $|\gamma_n|_+/n \to \mu_\bxi/t$, in agreement with \eqref{good_to_length_b}.
Furthermore, if we identify a sequence $(n_k)_{k\geq1}$ such that $\hat\nu_{\gamma_{n_k}}\Rightarrow\hat\nu$, where $\hat\nu = (1-p')\delta_0 + p'\delta_t$, then $|\gamma_{n_k}|_+/|\gamma_{n_k}| \to p'$.
Hence $|\gamma_{n_k}|/n_k\to \mu_\bxi/(tp')$, in agreement with \eqref{good_to_length_a}.
\end{eg}

The crucial consequence of Lemma \ref{good_to_length} is the following implication:
\eeq{ \label{good_to_length_implication}
F(0)<p_\cc(\Z^d),\ \LL\in\PPP_\mathrm{emp}(\bxi) \quad \implies \quad \LL\in\PPP_\mathrm{length}(\bxi).
}
In particular, the upcoming results of Section \ref{main_results} will provide many examples of $\LL\in\PPP_\mathrm{length}(\bxi)$.
Having relaxed to Question \ref{cor_q}, though, we are able to obtain stronger results: Theorems \ref{krs_thm} and \ref{new_krs_thm} stated below.

If $F(0)=0$, then $|\gamma_n|_+=|\gamma_n|$, and so determining whether or not $\LL$ belongs to $\PPP_\mathrm{length}(\bxi)$ 
is equivalent to answering Question \ref{cor_q}.
In this case, there is the following theorem due essentially to \mbox{Hammersley} and \mbox{Welsh} \cite[Thm.~8.2.3]{hammersley-welsh65}, with refinements given by \mbox{Smythe} and \mbox{Wierman} \cite[Thm.~7.9 and 8.2]{smythe-wierman78} and more recently by \mbox{Krishnan}, \mbox{Rassoul-Agha}, and \mbox{Sepp\"al\"ainen} \cite[Thm.~2.2 and 2.3]{krishnan-rassoul-seppalainen?}.
The result as stated here offers a further improvement, namely the elimination of all moment assumptions.
For $h\in\R$, denote by $\LL\oplus h$ the law of $\tau_e+h$. \label{variable_law_1}

\begin{thm} \label{krs_thm}
Fix any $\bxi\in\S^{d-1}$.
For every $\LL\in\PPP$, there are at most countably many values of $h>0$ for which $\LL\oplus h\notin\PPP_\mathrm{length}(\bxi)$.
\end{thm}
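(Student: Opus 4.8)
The plan is to differentiate the time constant in the shift parameter. For $h\geq 0$ write $T_h$ for the passage time built from the weights $\tau_e+h$, and set $g(h)\coloneqq\mu_{\bxi}(\LL\oplus h)$, which lies in $[0,\infty)$ by Theorem \ref{time_constant_thm}. Since for a fixed finite path $\gamma$ one has $T_h(\gamma)=T_0(\gamma)+h|\gamma|$, affine in $h$, the passage time $T_h(0,n\bxi)=\inf_\gamma T_h(\gamma)$ is concave in $h$ for every $n$; passing to the limit in \eqref{time_constant_def} (along a subsequence on which the relevant finite-$n$ passage times converge almost surely, which is all that is needed since concavity asserts only inequalities among $g(h_1),g(h_2),g(h_3)$) shows $g$ is concave on $[0,\infty)$. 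It is also nondecreasing, and $g(h)\geq h\|\bxi\|_1>0$ for $h>0$ because every path from $0$ to $[n\bxi]$ uses at least $\|[n\bxi]\|_1$ edges. A concave function on $(0,\infty)$ has matching one-sided derivatives off a countable set $D\subset(0,\infty)$, and I claim $\LL\oplus h\in\PPP_{\mathrm{length}}(\bxi)$ for every $h\notin D$; this is exactly the theorem.

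First I would promote the convergence $T_h(0,n\bxi)/n\to g(h)$ from ``in probability'' to ``almost surely''. For each $h>0$ the law $\LL\oplus h$ is supported in $[h,\infty)$, so $F(0)=0<p_\cc(\Z^d)$ and Theorem \ref{time_constant_thm} applies; Proposition \ref{replacement_thm} then yields the almost-sure statement, since deleting $O(1)$ edges near $0$ and $o(n^\eps)$ edges near $[n\bxi]$ alters $T_h(0,n\bxi)$ by only $o(n)$ and no moment hypothesis on $\LL$ is needed. Running this for every rational $h>0$ on one almost-sure event, and using that $h\mapsto T_h(0,n\bxi)$ is nondecreasing while $g$ is continuous on $(0,\infty)$, I would conclude $T_h(0,n\bxi)/n\to g(h)$ almost surely, simultaneously for all $h>0$.

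Then fix $h_0\notin D$ and work on that event. Pick any $\gamma_n\in\Geo(0,n\bxi)$ for the model $\LL\oplus h_0$ (nonempty a.s.\ by Theorem \ref{geo_exist_thm}, case \ref{geo_exist_thm_b}); every edge of $\gamma_n$ is positive-weight, so $|\gamma_n|_+=|\gamma_n|$. Using $\gamma_n$ as a competing path in the $h$- and $h'$-models for $0<h<h_0<h'$ gives the deterministic sandwich
\[
\frac{T_{h'}(0,n\bxi)-T_{h_0}(0,n\bxi)}{h'-h_0}\ \leq\ |\gamma_n|\ \leq\ \frac{T_{h_0}(0,n\bxi)-T_h(0,n\bxi)}{h_0-h}.
\]
Dividing by $n$ and letting $n\to\infty$,
\[
\frac{g(h')-g(h_0)}{h'-h_0}\ \leq\ \liminf_{n}\frac{|\gamma_n|}{n}\ \leq\ \limsup_{n}\frac{|\gamma_n|}{n}\ \leq\ \frac{g(h_0)-g(h)}{h_0-h};
\]
sending $h\uparrow h_0$ and $h'\downarrow h_0$ collapses both outer quantities to $g'(h_0)$ because $h_0\notin D$. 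Hence $|\gamma_n|/n\to g'(h_0)$ almost surely, for any choice of geodesics, which is exactly $\LL\oplus h_0\in\PPP_{\mathrm{length}}(\bxi)$ with $\lambda=g'(h_0)$ in the sense of Definition \ref{good_length_def}.

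The hard part is the second step: with no moment assumption the ratio $T_h(0,n\bxi)/n$ need not converge almost surely on its own, so the whole scheme leans on Proposition \ref{replacement_thm} to provide that almost-sure control while perturbing the passage time by only $o(n)$. The concavity, the countable bad set $D$, and the sandwich are in the classical spirit of Hammersley and Welsh; the one genuinely new ingredient is this replacement lemma, which is what lets the argument run with no integrability hypothesis whatsoever.
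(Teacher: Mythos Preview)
Your approach follows the classical Hammersley--Welsh route (concavity of $h\mapsto g(h)$ plus a sandwich bound on $|\gamma_n|$), whereas the paper argues via its variational formula: Lemma~\ref{one_d_lemma} shows that at points of differentiability $\langle 1,\sigma\rangle$ is constant over the minimizer set $\RR^\bxi_{\tau+h}$, and Theorem~\ref{minimizers_thm} forces every subsequential limit of $\sigma_{\gamma_n}$ to land in that set, so $|\gamma_n|/n \sim \langle 1,\sigma_{\gamma_n}\rangle$ converges. Your route is more direct and bypasses the empirical-measure machinery entirely.

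There is, however, a genuine gap in your second paragraph. You assert that Proposition~\ref{replacement_thm} yields $T_h(0,n\bxi)/n \to g(h)$ almost surely because ``deleting $O(1)$ edges near $0$ and $o(n^\eps)$ edges near $[n\bxi]$ alters $T_h(0,n\bxi)$ by only $o(n)$.'' This is not justified: the deleted edges near $[n\bxi]$ lie in $E_{\mathrm{int}}([n\bxi])$, and their total weight is controlled by $\Delta_{\mathrm{int}}([n\bxi])$ from~\eqref{deltas_def}. Without a moment hypothesis on $\tau_e$ this variable can have infinite mean, and one cannot conclude $\Delta_{\mathrm{int}}([n\bxi])/n \to 0$ almost surely. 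Indeed, the footnote after Theorem~\ref{zero_thm} records that $T(0,n\bxi)/n$ converges almost surely \emph{if and only if} $\E\min(\tau_e^{(1)},\dots,\tau_e^{(2d)}) < \infty$; shifting by $h$ does nothing to restore this condition. So $T_h(0,n\bxi)/n$ may genuinely be unbounded, and your monotonicity/density argument for simultaneous convergence over all $h>0$ fails at its first step.

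The gap is reparable within your framework. Instead of aiming for almost-sure convergence of $T_h(0,n\bxi)/n$, run the sandwich on the truncated geodesic $\bar\gamma_n = \gamma_n^{(a_0,a_1)}$: one has $T_h(\bar\gamma_n) = T_{h_0}(\bar\gamma_n) + (h-h_0)|\bar\gamma_n|$, with $T_{h_0}(\bar\gamma_n)/n\to g(h_0)$ by Proposition~\ref{replacement_thm}, and since $\bar\gamma_n$ connects $\SS(0)$ to $\SS([n\bxi])$ one also has $T_h(\bar\gamma_n) \geq \wh T(0,n\bxi)$ in the $h$-environment. Proposition~\ref{negative_thm}\ref{negative_thm_c} then supplies the needed almost-sure limits and closes the sandwich for $|\bar\gamma_n| = |\gamma_n| - o(n)$. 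This keeps the argument elementary but requires the shell construction itself, not merely the output of Proposition~\ref{replacement_thm}.
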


In general, one cannot improve on this result.
Indeed, for distributions $\LL$ with at least two atoms, \cite[Thm.~2.7]{krishnan-rassoul-seppalainen?} constructs (in a very simple way) a countable, dense set of $h$ for which $\LL\oplus h\notin\PPP_\mathrm{length}(\bxi)$.
For continuous distributions, however, the nature of the exceptional set remains unknown; it may even be empty.

Unfortunately, Theorem \ref{krs_thm} says nothing about the case $h=0$.
In particular, the open problem from Example \ref{supercritical_length_eg} remains unaddressed, since $\LL\oplus h$ clearly has no atom at zero when $h>0$.
Nevertheless, by considering $|\lambda_n|_+$ instead of $|\lambda_n|$, we are able to obtain examples of $\LL\in\PPP_\mathrm{length}(\bxi)$ with $F(0)>0$. 
This offers a simple but apparently new shift in paradigm with respect to Question \ref{cor_q}.
Let $\LL\oplus h\one_{\{t>0\}}$ denote the law of $\tau_e + h\one_{\{\tau_e>0\}}$. \label{variable_law_2}

\begin{thm} \label{new_krs_thm}
Fix any $\bxi\in\S^{d-1}$.
For every $\LL\in\PPP$, there are at most countably many values of $h>0$ for which $\LL\oplus h\one_{\{t>0\}}\notin\PPP_\mathrm{length}(\bxi)$.
\end{thm}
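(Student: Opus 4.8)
The plan is to run the classical Hammersley--Welsh convexity scheme, in parallel with the proof of Theorem~\ref{krs_thm}, but replacing $|\gamma|$ by $|\gamma|_+$ and the perturbation $\tau_e+h$ by $\tau_e+h\one_{\{\tau_e>0\}}$. Write $\LL^{(h)}\coloneqq\LL\oplus h\one_{\{t>0\}}$ and let $T^{(h)}$ denote the passage time in the model with weights $\tau_e+h\one_{\{\tau_e>0\}}$; note that $F^{(h)}(0)=F(0)$ and that every positive atom of $\LL^{(h)}$ lies in $[h,\infty)$. First I would dispose of the regime $F(0)\geq p_\cc(\Z^d)$, where I expect \emph{no} exceptional $h$: if $F(0)>p_\cc(\Z^d)$ then $F^{(h)}(0)=F(0)>p_\cc(\Z^d)$ and Theorem~\ref{zero_length_thm} gives $\LL^{(h)}\in\PPP_\mathrm{length}(\bxi)$ with $\lambda=0$; if $F(0)=p_\cc(\Z^d)$ then, since the positive weights of $\LL^{(h)}$ exceed $h$, one has $F^{(h)}(0)=F^{(h)}(h/2)=p_\cc(\Z^d)$, so Theorem~\ref{zero_length_thm} applies once more. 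Hence I may assume $F(0)<p_\cc(\Z^d)$, so that $\mu_\bxi(\LL^{(h)})>0$ for all $h\geq0$.

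Next I would set $g(h)\coloneqq\mu_\bxi(\LL^{(h)})$ and show $g$ is concave. For a fixed realization of $\{\tau_e\}$ and any path $\gamma$ we have $T^{(h)}(\gamma)=T(\gamma)+h|\gamma|_+$, so $T^{(h)}(0,n\bxi)=\inf_\gamma(T(\gamma)+h|\gamma|_+)$ is an infimum of affine functions of $h$, hence concave in $h$; since $n^{-1}T^{(h)}(0,n\bxi)\to g(h)$ in probability by Theorem~\ref{time_constant_thm}, and an inequality between sequences converging in probability passes to the limit, $g$ is a finite, nondecreasing, concave function on $[0,\infty)$. A finite concave function is differentiable outside a countable set $D\subseteq(0,\infty)$, so it remains to prove that $\LL^{(h_0)}\in\PPP_\mathrm{length}(\bxi)$ with $\lambda=g'(h_0)$ for every $h_0\in(0,\infty)\setminus D$.

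Fix such an $h_0$ and any geodesic $\gamma_n$ from $0$ to $[n\bxi]$ for the weights $\LL^{(h_0)}$. Applying Proposition~\ref{replacement_thm} to $\LL^{(h_0)}$, I would pass to a subpath $\hat\gamma_n\subseteq\gamma_n$ obtained by deleting $O(1)$ edges at the start and $o(n^\eps)$ at the end --- so that $|\gamma_n|_+=|\hat\gamma_n|_++o(n)$ --- with endpoints $u_n$ (at bounded distance from $0$) and $v_n$ (at distance $o(n^\eps)$ from $[n\bxi]$, hence $v_n/\|v_n\|_2\to\bxi$), for which $n^{-1}T^{(h_0)}(\hat\gamma_n)\to g(h_0)$ almost surely. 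It then suffices to show $n^{-1}|\hat\gamma_n|_+\to g'(h_0)$ a.s. Since $\hat\gamma_n$ is one fixed path, for every $h$ we have the \emph{exact} identity $|\hat\gamma_n|_+=(h-h_0)^{-1}\bigl(T^{(h)}(\hat\gamma_n)-T^{(h_0)}(\hat\gamma_n)\bigr)$. Take $h'<h_0<h''$. Using $T^{(h')}(\hat\gamma_n)\geq T^{(h')}(u_n,v_n)$ together with a uniform almost sure lower bound $\liminf_n n^{-1}T^{(h')}(u_n,v_n)\geq g(h')$ --- such one-sided bounds need no moment assumption, e.g.\ via truncating the weights at level $M$ and using that the truncated time constants increase to $\mu_\bxi(\LL^{(h')})$, the bounded displacement of $u_n$ being harmless --- one obtains $\limsup_n n^{-1}|\hat\gamma_n|_+\leq(h_0-h')^{-1}(g(h_0)-g(h'))$, and letting $h'\uparrow h_0$ through rationals gives $\limsup_n n^{-1}|\hat\gamma_n|_+\leq g'(h_0)$. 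The symmetric argument with $h''$ gives $\liminf_n n^{-1}|\hat\gamma_n|_+\geq g'(h_0)$. As none of these bounds depends on the choice of $\gamma_n$, this yields $\LL^{(h_0)}\in\PPP_\mathrm{length}(\bxi)$ and completes the proof.

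The main obstacle, and the place where this departs from the classical treatments (and from Krishnan--Rassoul-Agha--Sepp\"al\"ainen), is the absence of moment assumptions: then $n^{-1}T^{(h)}(0,n\bxi)$ converges only in probability --- it may even be almost surely unbounded --- whereas $\PPP_\mathrm{length}(\bxi)$ demands almost sure convergence of $n^{-1}|\gamma_n|_+$ for \emph{every} choice of geodesic. The devices that substitute for Kesten's a.s.\ convergence are Proposition~\ref{replacement_thm}, which produces a slightly truncated geodesic whose scaled passage time does converge a.s., and the fact that lower bounds on passage times (unlike upper bounds) survive without moments; the care lies in reconciling the exact length identity above with these modified quantities while keeping all estimates uniform over the choice of $\gamma_n$.
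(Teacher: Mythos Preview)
Your argument is correct and is a genuinely different route from the paper's. You run the Hammersley--Welsh scheme directly: establish concavity of $g(h)=\mu_\bxi(\LL^{(h)})$, use the exact identity $|\hat\gamma_n|_+=(h-h_0)^{-1}\bigl(T^{(h)}(\hat\gamma_n)-T^{(h_0)}(\hat\gamma_n)\bigr)$ on the truncated geodesic from Proposition~\ref{replacement_thm}, and obtain the needed one-sided almost sure bounds on $n^{-1}T^{(h')}(u_n,v_n)$ by truncating the weights and invoking the bounded-weight shape theorem together with $\mu_\bxi^{(h'),M}\uparrow g(h')$. The handling of $F(0)\geq p_\cc(\Z^d)$ via Theorem~\ref{zero_length_thm} matches the paper exactly.

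The paper instead routes everything through the variational formula. It applies Lemma~\ref{one_d_lemma} with $\psi=\one_{\{\tau>0\}}$ to deduce that, at a point of differentiability $h$, the value $\langle\one_{\{\tau>0\}},\sigma\rangle$ is constant over all minimizers $\sigma\in\RR^\bxi_{\tau^{(h+)}}$; Theorem~\ref{minimizers_thm} then forces every subsequential limit of $\sigma_{\gamma_n}$ into this minimizing set, so every subsequential limit of $|\gamma_n|_+/n=\langle\one_{\{\tau>0\}},\sigma_{\gamma_n}\rangle$ equals the common value $\lambda_+$. Your approach is more elementary and self-contained---it avoids constructing $\RR^\bxi$, the compactness argument, and the weak-limit machinery of Section~\ref{constraint_sec}---at the cost of some hands-on work with truncated weights and moving endpoints. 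The paper's approach buys uniformity: once Lemma~\ref{one_d_lemma} and Theorem~\ref{minimizers_thm} are in place, the same two lines prove Theorems~\ref{krs_thm} and~\ref{new_krs_thm} (and indeed any perturbation by a bounded $\psi$), and the method extends seamlessly to the finite- and infinite-dimensional statements of Section~\ref{main_results}.
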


\begin{remark}
Note that geodesics do exist in the environments generated by both $\LL\oplus h$ and $\LL\oplus h\one_{\{t>0\}}$.
In the first case, we have $F(0)=0$; in the second, $F(0)=F(h)$.
Hence parts \ref{geo_exist_thm_b} and \ref{geo_exist_thm_d} of Theorem \ref{geo_exist_thm} apply.
Furthermore, if $F(0)<p_\cc(\Z^d)$, then Proposition \ref{slightly_negative_lemma} will allow us to extend Theorems \ref{krs_thm} and \ref{new_krs_thm} to $h>-\af$, where $\af$ is a small positive number depending on $\LL$ and $d$.
\end{remark}

In Theorems \ref{krs_thm} and \ref{new_krs_thm}, the price we pay to guarantee membership in $\PPP_\mathrm{length}(\bxi)$ is the introduction of a ``mass gap" above zero, i.e.~$F(h)=F(0)$ for some $h>0$.
Given that the only affirmative cases to date for Question \ref{cor_q} have come from the argument of Hammersley and Welsh, there have been no examples of distributions belonging to $\PPP_\mathrm{length}(\bxi)$ without this gap.
The following result fills this void.

\begin{thm} \label{no_gap_thm}
For any $\bxi\in\S^{d-1}$ and any $p_0\in[0,1]\setminus\{p_\cc(\Z^d)\}$, there exists $\LL\in\PPP_\mathrm{length}(\bxi)$ such that $F(0)=p_0$ and $F(h)>p_0$ for all $h>0$.
\end{thm}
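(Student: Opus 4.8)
The plan is to treat the cases $p_0 > p_\cc(\Z^d)$ and $p_0 < p_\cc(\Z^d)$ separately, exhibiting in each an explicit distribution and leaning on results already in place.

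In the supercritical case $p_0 > p_\cc(\Z^d)$, take $\LL = p_0\,\delta_0 + (1-p_0)\,\nu$, where $\nu$ is any probability measure supported on $(0,\infty)$ that charges every interval $(0,h]$ --- for instance an exponential law. Then $F(0) = p_0 > p_\cc(\Z^d)$, while $F(h) = p_0 + (1-p_0)\,\nu\big((0,h]\big) > p_0$ for all $h>0$; and Theorem~\ref{zero_length_thm} applies directly to give $\LL \in \PPP_\mathrm{length}(\bxi)$, with $\lambda = 0$. (The conclusion is of course only meaningful when $p_0 < 1$.)

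In the subcritical case $0 \le p_0 < p_\cc(\Z^d)$, the strategy is instead to produce $\LL \in \PPP_\mathrm{emp}(\bxi)$ with $F(0) = p_0$ and no mass gap above $0$, and then to invoke the implication \eqref{good_to_length_implication}, which upgrades this to $\LL \in \PPP_\mathrm{length}(\bxi)$. Such an $\LL$ is furnished by the dense families of Section~\ref{main_results}. If $p_0 = 0$, one takes a continuous distribution with support all of $[0,\infty)$ from the dense $G_\delta$ set of Theorem~\ref{continuous_emp} --- this is one of the examples advertised there --- whence $F(h) > 0 = p_0$ for all $h>0$. If $0 < p_0 < p_\cc(\Z^d)$, one uses the discrete construction behind Theorem~\ref{countable_emp} and Example~\ref{prescribed_discrete}: prescribe atom probabilities $p_0, p_1, p_2, \dots$ with each $p_i>0$ and $\sum_{i\ge 1} p_i = 1-p_0$, place the mass $p_0$ at the origin, and place the remaining atoms at locations $t_1 > t_2 > \cdots$ chosen from the associated ``good'' set and satisfying $t_i \downarrow 0$. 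Then $F(0) = p_0$, and since positive mass accumulates at $0$ we get $F(h) > p_0$ for every $h>0$.

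The only point needing genuine care is the last clause above: one must know that the set of atom-location sequences for which the variational formula of Section~\ref{var_form_sec} has a \emph{unique} minimizer --- equivalently, for which the resulting $\LL$ lies in $\PPP_\mathrm{emp}(\bxi)$ --- is large enough to contain a sequence with $t_i \downarrow 0$. This is exactly what concavity of the variational formula in the weight function $\tau$ (Lemma~\ref{concavity_lemma}) provides: concavity forces uniqueness of the minimizer outside a meager (respectively null) subset of the relevant parameter space, and such a small exceptional set cannot contain every strictly decreasing null sequence with the prescribed masses. I expect this to be the main obstacle, but one that is already handled inside the proofs of Theorems~\ref{countable_emp} and~\ref{continuous_emp}; so in the end Theorem~\ref{no_gap_thm} reduces to selecting the right instance of those results and combining it with the supercritical observation.
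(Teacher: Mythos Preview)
Your proposal is correct and follows essentially the same route as the paper: the supercritical case via Theorem~\ref{zero_length_thm}, and the subcritical case by exhibiting $\LL\in\PPP_\mathrm{emp}(\bxi)$ with no gap above zero (from Theorems~\ref{countable_emp} or~\ref{continuous_emp}, as in Examples~\ref{dense_discrete} and~\ref{better_choice_eg}) and then invoking \eqref{good_to_length_implication}.

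One clarification: the concern in your final paragraph is misplaced because you have slightly misread the mechanism of Theorem~\ref{countable_emp}. The ``good'' set $\DDD_\eps$ there is a set of perturbation sequences $\vc t\in\BBB_\bv$, not a set of atom locations; the atoms sit at $\beta_i t_i$. One fixes a summable sequence $(\beta_i)$ \emph{in advance} (as in Example~\ref{prescribed_discrete}), and then chooses $\vc t\in\DDD_\eps$ near, say, the constant sequence $2$. Since $\vc t$ has bounded variation, the $t_i$ remain bounded and bounded away from zero, so $\beta_i t_i\to 0$ automatically from $\beta_i\to 0$. There is no need to find $t_i\downarrow 0$ inside the good set, and hence no obstacle of the kind you describe. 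The paper's Example~\ref{dense_discrete} uses the same idea with the $\beta_i$ enumerating the positive rationals instead, which also forces accumulation at $0$. Your split into $p_0=0$ versus $p_0>0$ is harmless but unnecessary: both examples already accommodate an atom of any mass $p_0\in[0,p_\cc(\Z^d))$ at the origin.
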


The edge-weight distributions claimed here include both discrete and continuous distributions, or even a combination of the two: see Examples \ref{dense_discrete}, \ref{better_choice_eg} and Remark \ref{mix_remark}, respectively.
For $p_0<p_\cc(\Z^d)$, the proof of Theorem \ref{no_gap_thm} is to simply pair these examples---which actually show that $\LL$ belongs to $\PPP_\mathrm{emp}(\bxi)$---with \eqref{good_to_length_implication}. 
If $F(0)>p_\cc(\Z^d)$, then Theorem \ref{zero_length_thm} automatically gives $\LL\in\PPP_\mathrm{length}(\bxi)$.

Finally, it is worth mentioning that regardless of the answer to Question \ref{cor_q}, one can ask for upper and lower bounds on the lengths of geodesics. 
If $F(0)<p_\cc(\Z^d)$, then a result of Kesten \cite[Prop.~5.8]{kesten86} gives $\barabove{N}_n^\bxi \leq Cn$ for all large $n$ and some deterministic $C$.
On the other hand, there is the trivial bound $\barbelow{N}_n^\bxi \geq \|[n\bxi]\|_1$.
Under an additional moment assumption on $\tau_e$, \mbox{Krishnan}, \mbox{Rassoul-Agha}, and \mbox{Sepp\"al\"ainen} \cite[Thm.~2.5]{krishnan-rassoul-seppalainen?} recently improved this to $\barbelow{N}_n^\bxi \geq (1+\delta)\|n\bxi\|_1$ for all large $n$, where $\delta>0$ is deterministic and uniform in $\bxi$.

\chapter{Variational Formula for the Time Constant} \label{var_form_sec}

\section{Coupling the environment to uniform variables} \label{coupling_env_sub}
Recall that $\LL$ is the law of $\tau_e$, and $F$ is the associated distribution function. 
Our variational formula arises from viewing the environment $(\tau_e)_{e\in E(\Z^d)}$ as given by
\eeq{ \label{function_coupling}
\tau_e = \tau(U_e),
}
where $\tau \colon  [0,1]\to[0,\infty)$ is measurable, \label{tau_measurable_def_1}
and $(U_e)_{e\in E(\Z^d)}$ \label{U_e_def_1}
is a collection of i.i.d.~uniform $[0,1]$-valued random variables supported on $(\Omega,\FF,\P)$.
Our assuming such a coupling poses no loss of generality, for if $\tau$ is equal to\footnote{If $\tau_e$ is unbounded, then $F^{-1}(1) = \infty$, but one can choose an arbitrary value for $\tau(1)$ without changing the distribution of $\tau(U_e)$.}
\eeq{ \label{inverse_cdf}
F^{-1}(u) \coloneqq \inf\{t \in \R :\,F(t) \geq u\},
}
then $\tau(U_e)$ does indeed have $F$ as its distribution function.
In other words, if $\Lambda$ denotes Lebesgue measure on $[0,1]$, \label{lebesgue_meas_def}
then $\LL$ has been realized as the pushforward measure $\tau_*(\Lambda)$, defined by
\eq{
[\tau_*(\Lambda)](B) \coloneqq \Lambda(\tau^{-1}B), \quad \text{measurable $B\subset[0,\infty)$}.
}
The representation \eqref{function_coupling} with $\tau = F^{-1}$ has occasionally been used to couple passage times for different edge-weight distributions (e.g.~\cite[Sec.~2]{cox80}, \cite[pg.~811]{cox-kesten81}, and \cite[pg.~226]{kesten86}).
But here---and this is crucial---we allow $F^{-1}$ to be replaced by any other $\tau$ such that $\tau_*(\Lambda) = \LL$.
Moreover, the underlying $U_e$'s will be more than just a technical device for us; we will reduce questions about empirical measures to questions about the family $(U_e)_{e\in E(\Z^d)}$.

\section{Statement of theorem} \label{sketch_and_statement}
Ignoring technical details, the idea for our variational formula is as follows.
For simplicity, we assume $\bxi=\mathbf{e}_1$.
Associate to each $\gamma\in\PP(0,n\mathbf{e}_1)$ an empirical measure with respect to the $U_e$'s:
\eeq{ \label{sigma_gamma_0}
\sigma_\gamma \coloneqq \frac{1}{n}\sum_{e\in\gamma}\delta_{U_e}.
}
Note that we have scaled by $n$ instead of $|\gamma|$.
The reason for doing so is that under the coupling \eqref{function_coupling}, we now have
\eq{
\frac{T(\gamma)}{n} = \int_0^1\tau(u)\ \sigma_\gamma(\dd u) = \langle\tau,\sigma_\gamma\rangle.
}
In this notation, the time constant can be expressed as
\eq{
\mu_{\mathbf{e}_1} = \lim_{n\to\infty} \frac{T(0,n\mathbf{e}_1)}{n} = \lim_{n\to\infty} \inf_{\gamma\in\PP(0,n\mathbf{e}_1)} \langle\tau,\sigma_\gamma\rangle.
}
Now we have the luxury of working with measures on the compact set $[0,1]$.
In particular, given any choice of $\gamma_n\in\Geo(0,n\mathbf{e}_1)$, there exists some sequence $(\gamma_{n_k})_{k\geq1}$ such that $\sigma_{\gamma_{n_k}}$ converges weakly to some measure $\sigma_\infty$.
From this we obtain
\eeq{ \label{mu_is_limit}
\langle\tau,\sigma_\infty\rangle = \lim_{k\to\infty} \langle\tau,\sigma_{\gamma_{n_k}}\rangle = \mu_{\mathbf{e}_1}.
}
But we could have used the same compactness trick with any sequence of paths, not just geodesics.
So if we define $\RR_\infty^{\mathbf{e}_1}$ as the set of all (finite) positive measures $\sigma_\infty$ on $[0,1]$ that can be obtained as a limit $\sigma_{\gamma_{n_k}}\Rightarrow \sigma_\infty$ for \textit{some} sequence of paths $\gamma_{n_k}\in\PP(0,n_k\mathbf{e}_1)$, then in general we have
\eeq{ \label{mu_is_lower_bd}
\langle \tau,\sigma_\infty\rangle = \lim_{k\to\infty}\langle\tau,\sigma_{\gamma_{n_k}}\rangle =
\lim_{k\to\infty} \frac{T(\gamma_{n_k})}{n_k} \geq \mu_{\mathbf{e}_1}.
}
Viewing \eqref{mu_is_limit} and \eqref{mu_is_lower_bd} together, we deduce
\eq{
\mu_{\mathbf{e}_1} = \inf_{\sigma_\infty\in\RR_\infty^{\mathbf{e}_1}}\langle \tau,\sigma_\infty\rangle.
}
In order for this variational formula to be useful, however, we need to know that $\RR_\infty^{\mathbf{e}_1}$ is a \textit{deterministic} set. This fact is established in Theorem \ref{deterministic_limits_thm}.
While the technical aspects of the proof are somewhat delicate, little intuition is lost in regarding this fact as nothing more than a consequence of Kolmogorov's zero-one law.

To now make this discussion formal, let $\Sigma$ \label{Sigma_def}
be the set of finite, positive \mbox{Borel} measures $\sigma$ on $[0,1]$  with total mass at least $1$. 
When we wish to normalize a measure $\nu$ 
to be a probability measure, we will write $\hat \nu = \nu/\langle 1,\nu\rangle$ if $\nu\neq0$; otherwise $\hat\nu=0$. \label{normalization_def}
By $\mu_\bxi(\tau)$ we mean the time constant \eqref{time_constant_def} associated to the edge-weight law $\LL = \tau_*(\Lambda)$, where $\Lambda$ is Lebesgue measure on $[0,1]$.

\begin{thm} \label{var_form_thm}
For each $\bxi\in\S^{d-1}$, there is a deterministic subset $\RR^\bxi\subset\Sigma$ such that:

\begin{enumerate}[label=\textup{(\alph*)}]

\item \label{var_form_thm_a}
For any measurable $\tau \colon  [0,1]\to[0,\infty)$, the $\bxi$-direction time constant under the coupling \eqref{function_coupling} is given by
\eeq{ \label{var_form_eq}
\mu_\bxi(\tau) = \inf_{\sigma\in\RR^\bxi} \langle \tau,\sigma\rangle. 
}

\item \label{var_form_thm_b}
If $F(0)\neq p_\cc(\Z^d)$, then the set of minimizers is nonempty:
\eeq{ \label{minimizers_def}
\RR^\bxi_\tau \coloneqq \{\sigma\in\RR^\bxi :\,\langle\tau,\sigma\rangle = \mu_\bxi(\tau)\} \neq \varnothing.
}

\item \label{var_form_thm_c}
Finally, there is a constant $C = C(d)$ such that
\eeq{ \label{abs_cont_quantitative}
\hat\sigma(B) \leq C(\log \Lambda(B)^{-1})^{-1} \quad \text{for all $\sigma\in\RR^\bxi$, measurable $B\subset[0,1]$.}
}

\end{enumerate}
\end{thm}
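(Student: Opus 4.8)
The plan is to realize $\RR^\bxi$ as an explicit set of empirical measures and then extract the three assertions from, respectively, a compactness-plus-law-of-large-numbers argument, the same argument applied to geodesics, and a large-deviation estimate. Fix $\bxi$ (taking $\bxi=\mathbf{e}_1$ below for concreteness; the general case differs only in bookkeeping for the drift of $[n\bxi]$). Under the coupling \eqref{function_coupling}, attach to each $\gamma\in\PP(0,[n\mathbf{e}_1])$ the $U$-empirical measure $\sigma_\gamma\coloneqq n^{-1}\sum_{e\in\gamma}\delta_{U_e}$, so that $T(\gamma)/n=\langle\tau,\sigma_\gamma\rangle$, and let $\RR^\bxi$ be the set of all finite positive Borel measures on $[0,1]$ arising as a weak limit $\sigma_{\gamma_{n_k}}\Rightarrow\sigma$ for some $n_k\to\infty$ and paths $\gamma_{n_k}\in\PP(0,[n_k\mathbf{e}_1])$. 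Since $|\gamma|\geq\|[n\mathbf{e}_1]\|_1\geq n-1$, every such limit has total mass at least $1$, so $\RR^\bxi\subseteq\Sigma$; and $\RR^\bxi$ is closed, by a diagonal argument. The first point to settle is that $\RR^\bxi$ is almost surely deterministic; this is the forthcoming Theorem \ref{deterministic_limits_thm}, which I would prove by a zero-one law: the origin plays no distinguished role, so $\RR^\bxi$ is unchanged when the base point of the paths is sent to infinity transversally, at which point ``$\sigma\in\RR^\bxi$'' becomes a tail event for the i.i.d.\ family $(U_e)_{e\in E(\Z^d)}$ and Kolmogorov's theorem applies.

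\emph{Part \ref{var_form_thm_c}.} This is where the real work lies, and I would prove it first since \ref{var_form_thm_a} and \ref{var_form_thm_b} rest on it. There are at most $(2d)^m$ self-avoiding paths of length $m$ from a fixed vertex, while for such a fixed path the counts of its edges landing in the cells of the $N$-th dyadic partition of $[0,1]$ form the type of $m$ i.i.d.\ uniform draws into $2^N$ cells; the method-of-types bound together with a union bound over paths gives
\[
\P\big(\exists\, \gamma,\ |\gamma|=m:\ D_N(\hat\sigma_\gamma)\geq t\big)\ \leq\ (2d)^m\,(m+1)^{2^N}\,e^{-mt},
\]
where $D_N$ is relative entropy against Lebesgue measure on the $N$-th partition. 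For fixed $N$ this is summable in $m$ as soon as $t>\log(2d)$, so Borel--Cantelli yields, almost surely, $D_N(\hat\sigma_\gamma)\leq\log(2d)+1$ for every long enough path $\gamma$ --- simultaneously for all $N\in\N$ --- together with the single-interval consequence $\hat\sigma_\gamma(I)\leq C(d)(\log|I|^{-1})^{-1}$ for all dyadic intervals $I$. Passing to a limit $\sigma\in\RR^\bxi$: the interval bound and the portmanteau inequality force $\hat\sigma$ to charge no dyadic point, so each $D_N$ is continuous at $\hat\sigma$, whence $D(\hat\sigma\,\|\,\Lambda)=\sup_N D_N(\hat\sigma)\leq\log(2d)+1$. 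Finally \ref{var_form_thm_c} follows from the entropy bound on small sets: feeding $\lambda\one_B$ into the Donsker--Varadhan inequality and taking $\lambda=\log\Lambda(B)^{-1}$ gives $\hat\sigma(B)\leq(\log(2d)+1+\log 2)(\log\Lambda(B)^{-1})^{-1}$, which is \eqref{abs_cont_quantitative} (the case ``$\Lambda(B)$ bounded away from $0$'' being trivial for the constant $C=C(d)$ so obtained).

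\emph{Parts \ref{var_form_thm_a} and \ref{var_form_thm_b}.} Assume first that $\tau$ is bounded. For the lower bound in \eqref{var_form_eq}: any $\sigma\in\RR^\bxi$ equals $\lim_k\sigma_{\gamma_{n_k}}$, and $\langle\tau,\sigma_{\gamma_{n_k}}\rangle=T(\gamma_{n_k})/n_k\geq T(0,[n_k\mathbf{e}_1])/n_k\to\mu_{\mathbf{e}_1}(\tau)$ by Theorem \ref{time_constant_thm} (along a subsequence carrying almost-sure convergence). Since $\tau$ is only measurable, weak continuity is unavailable; instead one upgrades $\sigma_{\gamma_{n_k}}\Rightarrow\sigma$ to $\langle\tau,\sigma_{\gamma_{n_k}}\rangle\to\langle\tau,\sigma\rangle$ by a layer-cake argument fed by the no-concentration bound of part \ref{var_form_thm_c}, applied both to $\sigma$ and to the $\hat\sigma_{\gamma_{n_k}}$ (which satisfy it down to scales $\gtrsim e^{-c|\gamma_{n_k}|}$), in the spirit of Theorem \ref{weak_to_strong_thm}; hence $\langle\tau,\sigma\rangle\geq\mu_{\mathbf{e}_1}(\tau)$. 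For the matching upper bound and a minimizer: geodesics $\gamma_n\in\Geo(0,[n\mathbf{e}_1])$ exist because $F(0)\neq p_\cc(\Z^d)$ (Theorem \ref{geo_exist_thm}) and behave well by Proposition \ref{replacement_thm}, so $T(\gamma_n)/n\to\mu_{\mathbf{e}_1}(\tau)$; compactness of measures on $[0,1]$ (bounded total mass) extracts $\sigma_{\gamma_{n_k}}\Rightarrow\sigma^\ast\in\RR^\bxi$, and the same convergence gives $\langle\tau,\sigma^\ast\rangle=\mu_{\mathbf{e}_1}(\tau)$; recording which $\sigma^\ast$ arise this way is precisely Theorem \ref{minimizers_thm}. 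The unbounded case follows by truncation: the identity for $\tau\wedge M$ passes to $M\to\infty$ because $\mu_{\mathbf{e}_1}(\tau\wedge M)\uparrow\mu_{\mathbf{e}_1}(\tau)$ (a standard truncation estimate for the time constant) while part \ref{var_form_thm_c} keeps the minimizers $\sigma^\ast_M$ of bounded total mass, so that monotone convergence yields $\langle\tau,\sigma\rangle\geq\mu_{\mathbf{e}_1}(\tau)$ for every $\sigma\in\RR^\bxi$ and a subsequential limit of the $\sigma^\ast_M$ attains the infimum --- with the additional input of Propositions \ref{negative_thm} and \ref{replacement_thm} needed when $F(0)>0$, where geodesics may be long.

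\emph{Main obstacle.} The crux is part \ref{var_form_thm_c}: getting a constant depending on $d$ alone forces one to pit the $(2d)^m$ path count against the large-deviation rate \emph{uniformly over all dyadic scales}, and then to pass to the weak limit exactly where the relevant functionals become continuous --- namely because the limit is atomless, which is itself part of what must be shown. The one other genuine subtlety is conceptual rather than computational: membership in a set of subsequential weak limits is not obviously a tail event, so the determinism of $\RR^\bxi$ must be extracted by exploiting translation-covariance. Granted those inputs, \ref{var_form_thm_a} and \ref{var_form_thm_b} are soft --- a compactness-plus-law-of-large-numbers argument in which the weak-to-strong upgrade supplied by \ref{var_form_thm_c} neutralizes the mere measurability of $\tau$.
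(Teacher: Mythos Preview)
Your architecture matches the paper's exactly: $\RR^\bxi$ is the set of weak limits of path empirical measures, determinism comes from Kolmogorov's zero-one law, part \ref{var_form_thm_c} from a large-deviation estimate pitting the $(2d)^m$ path count against a concentration rate, and parts \ref{var_form_thm_a}--\ref{var_form_thm_b} from compactness plus a weak-to-strong upgrade powered by \ref{var_form_thm_c}. Three differences in execution are worth noting. First, for \ref{var_form_thm_c} you go through a full relative-entropy bound (method of types on dyadic partitions, then Donsker--Varadhan to extract the single-set estimate), whereas the paper's Lemma~\ref{abs_cont_lemma} proves the single-set bound directly via a binomial Chernoff estimate plus outer regularity of $\Lambda$; your route is valid and more conceptual, and indeed the paper remarks after Theorem~\ref{var_form_thm} that the entropy bound also holds. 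Second, your determinism sketch via ``sending the basepoint to infinity transversally'' is not what the paper does and would need more to be made rigorous: the paper instead shows (Claims~\ref{measurability_claim_1}--\ref{measurability_claim_2}) that $W(\sigma,\RR^\bxi_\infty\cap\Sigma_M)$ is unchanged when finitely many $U_e$'s are altered---each such change perturbs $\sigma_\gamma$ by $O(1/n)$ in total variation---and packages this via the Hausdorff metric on $\KK(\Sigma_M)$ before invoking Lemma~\ref{deterministic_lemma}. Third, in the supercritical case $F(0)>p_\cc(\Z^d)$ your compactness step for \ref{var_form_thm_b} is not justified: geodesics need not have bounded scaled length, and Propositions~\ref{negative_thm} and~\ref{replacement_thm} do not supply such a bound. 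The paper (proof of Theorem~\ref{var_formula_thm_super}) instead constructs explicit paths through the infinite zero-weight cluster, with length controlled by the Antal--Pisztora chemical-distance estimate, to produce a $\sigma\in\RR^\bxi$ with $\langle\tau,\sigma\rangle=0$; this is a genuine extra ingredient you would need.
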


\begin{remark}
Part \ref{var_form_thm_c}, which is proved in Theorem \ref{deterministic_limits_thm} separately from \ref{var_form_thm_a} and \ref{var_form_thm_b}, implies that every $\sigma\in\RR^\bxi$ is absolutely continuous with respect to Lebesgue measure $\Lambda$, and \eqref{abs_cont_quantitative} gives some control on the Radon--Nikodym derivative $\dd\hat\sigma/\dd\Lambda$.
Using the proof method of Proposition \ref{tree_prop_1}, one can also show that for every $\sigma\in\RR^\bxi$, the relative entropy of $\hat\sigma$ with respect to $\Lambda$ is at most $\log \kappa_d$, where $\kappa_d$ is the connective constant for self-avoiding walks on $\Z^d$ (see \cite[Sec.~6.2]{lawler13}).
It is also worth mentioning that the method by which we obtain \eqref{abs_cont_quantitative} could be used to directly establish tightness (and thus subsequential limits) for the family $(\hat\nu_{\gamma_n})_{n\in[0,\infty)}$ from Question \ref{main_q}.
Therefore, the fact that \eqref{function_coupling} allows us to consider a compact space is less important---although extremely convenient technically---than the fact that the constraint set $\RR^\bxi$ in \eqref{var_form_eq} is non-random and has no dependence on $\tau$.
\end{remark}

An earlier variational formula for the time constant was proved for bounded, stationary-ergodic edge-weights by \mbox{Krishnan} \cite{krishnan14,krishnan16}, whose view of FPP as a homogenization problem led to $\bxi\mapsto\mu_\bxi$ being understood as solving a certain Hamilton--Jacobi--Bellman equation.
In the setting of  LPP and directed polymers (the positive-temperature version of LPP), two different methodologies were pursued in \cite{rassoul-seppalainen-yilmaz13,rassoul-seppalainen14,rassoul-seppalainen-yilmaz17,georgiou-rassoul-seppalainen16}, yielding ``cocycle" and ``entropy" variational formulas.
In yet another direction, directed polymers also enjoy a Markovian structure---not present in FPP---which has led to ``endpoint" variational formulas \cite{bates18,bates-chatterjee20,broker-mukherjee19II,bakhtin-seo20}.
Unlike the cocycle and entropy formulas, these have not yet met analogous expressions in zero-temperature models.
 
The ``cocycle" branch of \cite{rassoul-seppalainen-yilmaz13,rassoul-seppalainen14,rassoul-seppalainen-yilmaz17,georgiou-rassoul-seppalainen16} has also been developed for FPP \cite{janjigian-nurbavliyev-rassoul?,krishnan-rassoul-seppalainen??} and bears connections to the formula of Krishnan.
Meanwhile, the ``entropy" branch led to an LPP formula \cite[Thm.~7.3]{georgiou-rassoul-seppalainen16} very similar to \eqref{var_form_eq}; see also \cite[Thm.~7.2, disp.~(1.4)]{georgiou-rassoul-seppalainen16} and the follow-up paper \cite{rassoul-seppalainen-yilmaz17II}.
Despite this similarity, the approach of \cite{georgiou-rassoul-seppalainen16} is entirely different from ours: the authors derive their LPP variational formula from one for the positive-temperature model, which in turn is obtained using large deviations principles from \cite{rassoul-seppalainen-yilmaz13,rassoul-seppalainen14} for empirical measures.
In this monograph, however, we work directly in a zero-temperature setting; this allows us to not only generate a variational formula, but also prove that empirical measures associated to geodesics converge to its minimizers.
This is discussed in Section \ref{empirical_connection}.

There is currently no nontrivial edge-weight distribution for which the exact numerical value of $\mu_{\mathbf{e}_1}$ is known; see \cite[Ques.~2.1.1]{auffinger-damron-hanson17}.
If the set $\RR^{\bxi}$ were given a sufficiently explicit description---certainly a difficult problem---in principle \eqref{var_form_eq} could enable the computation of $\mu_\bxi$ for any distribution.
More important, it remains a compelling but unrealized possibility that one could use variational formulas to derive geometric properties of the limit shape $B_0$ from Section \ref{time_constant_sec}.
In turn, these properties are fundamental to the study of fluctuations, coalescence of geodesics, and existence of infinite geodesics (e.g.~see \cite[Chap.~3--5]{auffinger-damron-hanson17}).
These connections underscore the importance and likely difficulty of understanding how the sets $\RR^\bxi$ and $\RR^\bxi_\tau$ vary with the direction $\bxi$.
This question is left unexplored in the present work.

\section{Connection to empirical measures of geodesics} \label{empirical_connection}
As the proof sketch preceding Theorem \ref{var_form_thm} suggests, the minimizing set $\RR^\bxi_\tau$ is related to geodesics through \eqref{mu_is_limit}.
But the relation \eqref{mu_is_limit} involves just a single test function, namely $\tau$ from the coupling \eqref{function_coupling}.\footnote{The reader would be correct to raise the following concern regarding \eqref{mu_is_limit}.  We do not assume anything more than measurability of the function $\tau$, but  \textit{a priori} the weak convergence $\sigma_{\gamma_{n_k}}\Rightarrow\sigma_\infty$ implies $\langle f,\sigma_{\gamma_{n_k}}\rangle\to\langle f,\sigma_\infty\rangle$ only for bounded, continuous $f$.  This is where Lemma \ref{test_fnc_lemma} enters: because we can specify $\tau$ before realizing the $U_e$'s, the weak convergence will \textit{almost surely} apply to $\tau$, or to any other specified function $f$.
For the deterministic implications of these statements, see Corollary \ref{weak_to_strong_cor}.}
That is, the mean of the empirical measure converges to the mean of some limiting measure.
It stands to reason, though, that by using the full strength of the weak convergence $\sigma_{\gamma_{n_k}}\Rightarrow\sigma_\infty$, we can conclude that the empirical measures themselves converge.
Indeed, this is the claim of Theorem \ref{minimizers_thm} stated below.

Generalizing \eqref{sigma_gamma_0}, we define
\eeq{ \label{sigma_gamma}
\sigma_\gamma \coloneqq \frac{1}{\|x-y\|_2}\sum_{e\in\gamma}\delta_{U_e}, 
\quad \gamma\in\PP(x,y).
}
Recall the empirical measures $\hat\nu_\gamma$ and $\hat\nu^+_\gamma$ from \eqref{hat_nu_def} and \eqref{hat_nu_plus_def}.
In order to match the scaling for $\sigma_\gamma$, let us also define
\eeq{ \label{nu_def}
\nu_\gamma \coloneqq \frac{1}{\|x-y\|_2}\sum_{e\in\gamma}\delta_{\tau_e}, \quad
\nu_\gamma^+ \coloneqq \frac{1}{\|x-y\|_2}\sum_{e\in\gamma:\, \tau_e\neq 0}\delta_{\tau_e}, \quad \gamma\in\PP(x,y).
}
Under the coupling \eqref{function_coupling}, we have $\nu_\gamma = \tau_*(\sigma_\gamma)$ and $\hat\nu_\gamma = \hat\tau_*(\sigma_\gamma) = \tau_*(\hat\sigma_\gamma)$, where $\tau_*(\sigma)$ is the pushforward of the measure $\sigma$ by the map $\tau$:
\eeq{ \label{push_def}
\langle f,\tau_*(\sigma)\rangle = \langle f\circ \tau,\sigma\rangle \quad \text{for measurable $f \colon \R\to\R$}.
}
We will write $\tau_*^+(\sigma)$ for the measure defined by
\eeq{ \label{pos_push_def}
\langle f,\tau_*^+(\sigma)\rangle = \langle (f\circ \tau)\one_{\{\tau(u)\neq0\}},\sigma\rangle \quad \text{for measurable $f \colon \R\to\R$},
}
so that $\nu_\gamma^+ = \tau_*^+(\sigma_\gamma)$ and $\hat\nu_\gamma^+ = \hat\tau_*^+(\sigma_\gamma)$.\footnote{Note that $\hat\tau_*^+(\sigma)\neq\tau_*^+(\hat\sigma)$ whenever $\langle\one_{\{\tau(u)=0\}},\sigma\rangle > 0$, but $\hat\tau_*^+(\sigma) = \hat\tau_*^+(\hat\sigma).$}
We can now state our convergence result for empirical measures associated to geodesics.
Recall the notation $\Gamma^{(\ell)}$ from \eqref{sub_of_infinite} for subsets of infinite paths, and note that the case $F(0)\geq p_\cc(\Z^d)$ is already addressed in Theorems \ref{zero_thm} and \ref{zero_thm_infty}.

\begin{thm} \label{minimizers_thm}
Assume $F(0)<p_\cc(\Z^d)$ and that the function $\tau \colon [0,1]\to[0,\infty)$ satisfies $\tau_*(\Lambda) = \LL$.\footnote{We do distinguish $\tau$ and $\tau'$ even if $\tau=\tau'$ Lebesgue-almost everywhere.
The reason for this distinction is very minor: while $\mu_\bxi(\tau) = \mu_\bxi(\tau')$ in Theorem \ref{var_form_thm}, the probability-one events $\Omega_{\tau}^\bxi$ and $\Omega_{\tau'}^\bxi$ in Theorem \ref{minimizers_thm} may differ.}
Under the coupling \eqref{function_coupling}, for each $\bxi\in\S^{d-1}$ there is a probability-one event $\Omega^\bxi_\tau$ on which the following statements hold:
\begin{enumerate}[label=\textup{(\alph*)}]

\item  \label{minimizers_thm_a}
For any sequence $(n_k)_{k\geq1}$ of real numbers with $n_k\to\infty$ as $k\to\infty$, and any choice of $\gamma_{n_k}\in\Geo(0,n_k\bxi)$, there is a subsequence $(\gamma_{n_{k_j}})_{j\geq1}$ such that
\eeq{ \label{optimizer_weak_limit}
\sigma_{\gamma_{n_{k_j}}}\Rightarrow\sigma \quad \text{as $j\to\infty$}, \quad \text{for some $\sigma\in\RR^\bxi_\tau$},
}
in which case
\eeq{ \label{weak_pushforwards}
\nu_{\gamma_{n_{k_j}}}\Rightarrow \tau_*(\sigma), \quad
\hat\nu_{\gamma_{n_{k_j}}}\Rightarrow \hat\tau_*(\sigma), \quad
\nu_{\gamma_{n_{k_j}}}^+\Rightarrow \tau_*^+(\sigma), \quad
\hat\nu_{\gamma_{n_{k_j}}}^+\Rightarrow \hat\tau_*^+(\sigma).
}

\item \label{minimizers_thm_b}
For any increasing sequence $(\ell_k)_{k\geq1}$ of positive integers and any $\Gamma\in\Geo_\infty(\bxi)$, there is a subsequence $(\ell_{k_j})_{j\geq1}$ such that
\eq{
\sigma_{\Gamma^{(\ell_{k_j})}}\Rightarrow\sigma \quad \text{as $j\to\infty$}, \quad \text{for some $\sigma\in\RR^\bxi_\tau$},
}
in which case
\eq{ 
\nu_{\Gamma^{(\ell_{k_j})}}\Rightarrow \tau_*(\sigma), \quad
\hat\nu_{\Gamma^{(\ell_{k_j})}}\Rightarrow \hat\tau_*(\sigma), \quad
\nu_{\Gamma^{(\ell_{k_j})}}^+\Rightarrow \tau_*^+(\sigma), \quad
\hat\nu_{\Gamma^{(\ell_{k_j})}}^+\Rightarrow \hat\tau_*^+(\sigma).
}
\end{enumerate}
\end{thm}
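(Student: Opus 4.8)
The plan is to assemble the event $\Omega^\bxi_\tau$ as a countable intersection of full-probability events and to produce the subsequential limit by soft compactness on the compact interval $[0,1]$. I will carry out part~\ref{minimizers_thm_a} in detail and then indicate the (parallel) changes needed for part~\ref{minimizers_thm_b}. The first input is an a priori \emph{linear} bound on geodesic length: since $F(0)<p_\cc(\Z^d)$, a standard renormalization comparison with subcritical percolation gives $\sup_{\gamma\in\Geo(0,n\bxi)}|\gamma|=O(n)$ almost surely, with no moment hypothesis on $\tau_e$ needed (a geodesic automatically avoids heavy edges). Consequently, on a full-probability event the total masses $\langle 1,\sigma_{\gamma_{n_k}}\rangle=|\gamma_{n_k}|/\|[n_k\bxi]\|_2$ lie in a fixed interval $[1,C]$ for all large $k$ (the lower bound $1$ is immediate from $|\gamma|\geq\|[n_k\bxi]\|_1\geq\|[n_k\bxi]\|_2$). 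Because $[0,1]$ is compact, norm-bounded sets of finite Borel measures on it are sequentially weak-$*$ compact and metrizable, so along a subsequence $\sigma_{\gamma_{n_{k_j}}}\Rightarrow\sigma$ for some finite positive measure $\sigma$ of total mass in $[1,C]$; in particular $\sigma\in\Sigma$.

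It remains to show $\sigma\in\RR^\bxi_\tau$. That $\sigma\in\RR^\bxi$ is immediate from Theorem~\ref{deterministic_limits_thm}: on a full-probability event, \emph{every} weak limit of empirical measures $\sigma_{\gamma_{n_k}}$ along paths $\gamma_{n_k}\in\PP(0,n_k\bxi)$ lies in the deterministic set $\RR^\bxi$. For minimality, fix $M\in\N$ and set $\tau^{(M)}\coloneqq\min(\tau,M)$, a bounded measurable function on $[0,1]$ that is prescribed before the $U_e$'s are sampled. Lemma~\ref{test_fnc_lemma} then furnishes a full-probability event on which the weak convergence upgrades to $\langle \tau^{(M)},\sigma_{\gamma_{n_{k_j}}}\rangle\to\langle \tau^{(M)},\sigma\rangle$. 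On the other hand, Proposition~\ref{replacement_thm} lets us delete $O(1)$ edges from the start and $o(n^\eps)$ from the end of $\gamma_n$ so that the remaining path has passage time $\sim\mu_\bxi n$; splitting the sum $\langle \tau^{(M)},\sigma_{\gamma_n}\rangle=\|[n\bxi]\|_2^{-1}\sum_{e\in\gamma_n}\min(\tau_e,M)$ over the retained and deleted edges and bounding each deleted term by $M$ gives $\langle \tau^{(M)},\sigma_{\gamma_n}\rangle\leq\mu_\bxi+o(1)$. Hence $\langle \tau^{(M)},\sigma\rangle\leq\mu_\bxi$ for all $M$, and monotone convergence yields $\langle\tau,\sigma\rangle\leq\mu_\bxi$; combined with the lower bound $\langle\tau,\sigma\rangle\geq\mu_\bxi(\tau)$ contained in \eqref{var_form_eq} for $\sigma\in\RR^\bxi$, we obtain $\langle\tau,\sigma\rangle=\mu_\bxi(\tau)$, i.e.\ $\sigma\in\RR^\bxi_\tau$.

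For the four convergences in \eqref{weak_pushforwards}, fix a countable family $\{\vphi_m\}_{m\geq 1}$ of bounded continuous functions on $[0,\infty)$ determining weak convergence in $\PPP$, and invoke Lemma~\ref{test_fnc_lemma} simultaneously for the countably many prescribed bounded measurable functions $1$, $\one_{\{\tau\neq0\}}$, $\vphi_m\circ\tau$, and $(\vphi_m\circ\tau)\one_{\{\tau\neq0\}}$: on a full-probability event, $\langle f,\sigma_{\gamma_{n_{k_j}}}\rangle\to\langle f,\sigma\rangle$ for each such $f$. Testing $\vphi_m\circ\tau$ together with the constant $1$ gives $\nu_{\gamma_{n_{k_j}}}=\tau_*(\sigma_{\gamma_{n_{k_j}}})\Rightarrow\tau_*(\sigma)$ as finite measures (absolute continuity of $\sigma$, part~\ref{var_form_thm_c} of Theorem~\ref{var_form_thm}, rules out escape of mass under $\tau$), and normalizing---legitimate since $\langle 1,\sigma\rangle\geq1$---yields $\hat\nu_{\gamma_{n_{k_j}}}\Rightarrow\hat\tau_*(\sigma)$. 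Testing $(\vphi_m\circ\tau)\one_{\{\tau\neq0\}}$ gives $\nu_{\gamma_{n_{k_j}}}^+=\tau_*^+(\sigma_{\gamma_{n_{k_j}}})\Rightarrow\tau_*^+(\sigma)$, and normalizing by $\langle\one_{\{\tau\neq0\}},\sigma_{\gamma_{n_{k_j}}}\rangle$ gives $\hat\nu_{\gamma_{n_{k_j}}}^+\Rightarrow\hat\tau_*^+(\sigma)$; this last division is legitimate because $\langle\one_{\{\tau\neq0\}},\sigma\rangle>0$, which follows from $\langle\tau,\sigma\rangle=\mu_\bxi>0$ (using $F(0)<p_\cc(\Z^d)$ once more). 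Part~\ref{minimizers_thm_b} runs identically: for $\Gamma\in\Geo_\infty(\bxi)$ with $\Gamma^{(\ell)}$ a geodesic from $0$ to its endpoint $x_\ell$, one has $\ell=O(\|x_\ell\|_2)$ by the same renormalization bound applied uniformly over directions in a small cone around $\bxi$; appending $o(\|x_\ell\|_2)$ edges to reach $[\|x_\ell\|_2\bxi]$ places the weak limit in $\RR^\bxi$; and Proposition~\ref{addition_thm} supplies $T(\Gamma^{(\ell)})/\|x_\ell\|_2\to\mu_\bxi$ in place of Proposition~\ref{replacement_thm}. Setting $\Omega^\bxi_\tau$ equal to the intersection of all full-probability events used above completes the proof.

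The main obstacle is not the compactness---on $[0,1]$ that comes for free---but the passage from weak convergence of the random measures $\sigma_{\gamma_{n_k}}$, which a priori controls only bounded \emph{continuous} test functions, to the evaluation against $\sigma$ of the genuinely discontinuous functions $\one_{\{\tau\neq0\}}$ and $\vphi_m\circ\tau$ (and of $\tau$ itself, which may moreover be unbounded). This upgrade is exactly the content of Lemma~\ref{test_fnc_lemma}, and it is available only because, in the coupling \eqref{function_coupling}, the function $\tau$ may be fixed before the variables $(U_e)$ are sampled. The one other point that takes genuine work is the a priori estimate $|\gamma_n|=O(n)$, which is what keeps the \emph{unnormalized} measures $\sigma_{\gamma_n}$ tight; it can fail without a moment or atom-at-zero assumption, but $F(0)<p_\cc(\Z^d)$ rescues it via the subcritical percolation structure of $\{e:\tau_e\leq\delta\}$ for a suitable small $\delta$.
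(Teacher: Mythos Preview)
Your proposal is correct and follows essentially the same route as the paper, which packages the argument as the more general Theorem~\ref{var_formula_thm_sub} and then specializes: the linear length bound (via the subcritical structure, formalized as Lemma~\ref{eta_choice_lemma}/Proposition~\ref{geo_length_cor}), compactness on $[0,1]$ (Lemma~\ref{compactness_lemma}), membership in $\RR^\bxi$ via Theorem~\ref{deterministic_limits_thm} and Lemma~\ref{alternative_R_def}, minimality via Proposition~\ref{replacement_thm} together with Lemma~\ref{test_fnc_lemma}, and the pushforward statements via what the paper isolates as Lemma~\ref{pushforward_lemma} (exactly your countable-test-function argument). Your truncation $\tau^{(M)}$ is a fine alternative to the paper's direct use of Lemma~\ref{test_fnc_lemma}\ref{test_fnc_lemma_d}.

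One small oversight in part~\ref{minimizers_thm_b}: an infinite geodesic $\Gamma\in\Geo_\infty(\bxi)$ need not start at $0$, whereas $\RR^\bxi_\infty$ is defined through paths in $\PP(0,\cdot)$. The paper handles this by prepending a fixed finite path $\pi$ from $0$ to the first vertex of $\Gamma$, which changes only finitely many edges and hence does not affect the weak limit; your ``appending $o(\|x_\ell\|_2)$ edges to reach $[\|x_\ell\|_2\bxi]$'' addresses the wrong end and is in fact unnecessary once the start is fixed, since Lemma~\ref{alternative_R_def}\ref{alternative_R_def_a} already allows endpoints $x_\ell$ with $x_\ell/\|x_\ell\|_2\to\bxi$.
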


Theorems \ref{zero_thm}, \ref{zero_thm_infty}, \hyperref[var_form_thm]{\ref*{var_form_thm}(a,b)}, \ref{minimizers_thm} are proved in Section \ref{main_final_proof_sec} following the more general Theorems \ref{var_formula_thm_super} and \ref{var_formula_thm_sub}, which allow sequences that are not strictly geodesics.
In the next section, we will combine Theorem \ref{minimizers_thm} with elementary convex analysis to identify various families in $\PPP_\mathrm{emp}(\bxi)\cap\PPP_\mathrm{emp}^\infty(\bxi)$.
Namely, we will determine cases in which $\tau_*^+(\sigma)$ is constant over $\sigma\in\RR^\bxi_\tau$.
By Theorem \ref{minimizers_thm}, this is sufficient to show that empirical measures along geodesics have a unique, almost sure limit.

\begin{remark} \label{realization_remark}
This approach to proving convergence of empirical measures is certainly sufficient but may not be necessary.
More precisely, it is not clear whether every $\sigma\in\RR^\bxi_\tau$ can be realized as the weak limit $\sigma_{\gamma_{n_k}}\Rightarrow\sigma$ for some sequence of $\gamma_{n_k}\in\Geo(0,n_k\bxi)$.
Hence it remains a possibility that $\LL\in\PPP_\mathrm{emp}(\bxi)$ despite $\tau_*(\sigma)$ not being constant over $\RR^\bxi_\tau$.
If, however, every $\sigma\in\RR^\bxi_\tau$ can be realized as a weak limit with geodesics, then we would be able to make additional progress on the open problem from Example \ref{supercritical_length_eg}.
\end{remark}

\begin{remark} \label{abs_cont_remark}
By Theorem \hyperref[var_form_thm_c]{\ref*{var_form_thm}\ref*{var_form_thm_c}}, every $\sigma\in\RR^\bxi$ is absolutely continuous with respect to Lebesgue measure, which implies that any subsequential limit $\hat\nu$ of empirical measures is absolutely continuous with respect to $\LL$.
When these empirical measures are those of geodesics, it is generally expected that the reverse is also true: $\LL$ is absolutely continuous with respect to any such limit.
This was effectively proved by \mbox{van den Berg} and \mbox{Kesten} \cite{vandenberg-kesten93} when $\E(\tau_e)<\infty$; see \cite[Rmk.~2.15]{vandenberg-kesten93}.
In $d=2$, Marchand \cite{marchand02} showed that the assumption of a finite mean is not needed.
In addition, there has been recent work on tail estimates for empirical measures and their limits \cite{janjigian-lam-shen?}.
\end{remark}

\chapter{Applications of Variational Formula} \label{applications}

\section{Concavity of variational formula}

The utility of Theorem \ref{var_form_thm} comes from the following fact, which is a more general version of \cite[disp.~(6.5.2)]{hammersley-welsh65}.

\begin{lemma} \label{concavity_lemma}
For any $\tau_0,\tau_1:[0,1]\to[0,\infty)$ and $\alpha\in[0,1]$, we have
\eeq{ \label{concavity}
\mu_\bxi((1-\alpha)\tau_0 + \alpha\tau_1) \geq (1-\alpha)\mu_\bxi(\tau_0) + \alpha\mu_\bxi(\tau_1).
}
\end{lemma}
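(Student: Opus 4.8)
The plan is to use the variational formula \eqref{var_form_eq} directly, exploiting that it expresses $\mu_\bxi(\tau)$ as an \emph{infimum of a linear functional} of $\tau$ over a constraint set $\RR^\bxi$ that does \emph{not} depend on $\tau$. This is exactly the structural feature that makes concavity immediate: an infimum of linear (hence concave) functions is concave.

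Concretely, fix $\tau_0,\tau_1\colon[0,1]\to[0,\infty)$ and $\alpha\in[0,1]$, and set $\tau_\alpha\coloneqq(1-\alpha)\tau_0+\alpha\tau_1$, which is again a measurable map $[0,1]\to[0,\infty)$, so Theorem \ref{var_form_thm}\ref{var_form_thm_a} applies to it. For each $\sigma\in\RR^\bxi$, linearity of the pairing $\langle\cdot,\sigma\rangle$ in its first argument (evident from \eqref{f_nu_def}, since $\sigma$ is a measure and integration is linear) gives
\eq{
\langle\tau_\alpha,\sigma\rangle=(1-\alpha)\langle\tau_0,\sigma\rangle+\alpha\langle\tau_1,\sigma\rangle\geq(1-\alpha)\inf_{\sigma'\in\RR^\bxi}\langle\tau_0,\sigma'\rangle+\alpha\inf_{\sigma'\in\RR^\bxi}\langle\tau_1,\sigma'\rangle=(1-\alpha)\mu_\bxi(\tau_0)+\alpha\mu_\bxi(\tau_1).
}
Taking the infimum over $\sigma\in\RR^\bxi$ on the left-hand side and invoking \eqref{var_form_eq} for $\tau_\alpha$ yields precisely \eqref{concavity}. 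Note the one point requiring a word of care: the pairing $\langle\tau_i,\sigma\rangle$ is an integral of a nonnegative function, so it lies in $[0,\infty]$; the manipulation above is valid in $[0,\infty]$ without any finiteness assumption, and in fact $\mu_\bxi(\tau_i)<\infty$ by Theorem \ref{time_constant_thm}, so no subtraction or indeterminate form ever arises.

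I do not anticipate a genuine obstacle here — the lemma is, as the paper advertises, a near-immediate consequence of the form of \eqref{var_form_eq}. The only thing worth being scrupulous about is confirming that $\tau_\alpha$ takes values in $[0,\infty)$ (so that the theorem is applicable to it) and that no moment or integrability hypothesis is secretly needed; both are clear since $\RR^\bxi$ and the formula \eqref{var_form_eq} carry no assumptions on $\tau$ whatsoever. One could remark that this argument is exactly the abstract reason why \cite[disp.~(6.5.2)]{hammersley-welsh65} holds — their special case corresponds to a less flexible coupling — but for the proof itself nothing beyond the display above is required.
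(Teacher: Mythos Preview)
Your proof is correct and essentially the same as the paper's: both use the variational formula \eqref{var_form_eq} and the fact that the infimum of a sum of nonnegative linear functionals over a $\tau$-independent set dominates the sum of the infima. The paper organizes this as ``homogeneity plus superadditivity'' while you do it in one line, but the content is identical.
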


\begin{proof}
First observe that $\mu_\bxi(\cdot)$ is homogeneous: for every $\alpha\geq0$, we have
\eq{
\mu_\bxi(\alpha\tau) = \inf_{\sigma\in\RR^\bxi}\langle\alpha\tau,\sigma\rangle = \alpha\inf_{\sigma\in\RR^\bxi}\langle\tau,\sigma\rangle
= \alpha\mu_\bxi(\tau).
}
Also, $\mu_\bxi(\cdot)$ is superadditive:
\eq{
\mu_\bxi(\tau_0+\tau_1) &= \inf_{\sigma\in\RR^\bxi} \langle\tau_0+\tau_1,\sigma\rangle \\
&\geq
\inf_{\sigma\in\RR^\bxi}\langle\tau_0,\sigma\rangle+\inf_{\sigma\in\RR^\bxi}\langle\tau_1,\sigma\rangle
= \mu_\bxi(\tau_0)+\mu_\bxi(\tau_1).
}
The concavity statement \eqref{concavity} is immediate from these two properties.
\end{proof}

To leverage this concavity, we will restrict $\mu_\bxi$ to various function spaces.
As usual, concavity implies some level of differentiability depending on the dimension of the space.
We will always interpret this differentiability as some uniqueness property for the minimizing set $\RR^\bxi_\tau$ from \eqref{minimizers_def}, with a tradeoff between dimension and the scope of the differentiability.
To witness this tradeoff, compare Lemma \ref{one_d_lemma} (one-dimensional), Lemma \ref{finite_d_lemma} (finite-dimensional), and Theorems \ref{countable_emp}, \ref{continuous_emp}, \ref{bounded_emp} (infinite-dimensional).

The conceptual link between differentiability of the time constant and properties of geodesics is not new.
But to my knowledge, this relationship has previously been developed only in the context of Question \ref{cor_q}. 
That is, only the derivative in the ``direction" $\tau \equiv 1$ was considered (i.e.~the effect of adding a constant $h$ to every $\tau_e)$.
A key contribution of this monograph is to relate the ``full" derivative to the more complex Questions \ref{main_q} and \ref{main_q_2}. 
In the process, we can still return to the question of geodesic lengths to recover Theorem \ref{krs_thm} and also establish the new Theorem \ref{new_krs_thm}.
On this topic, let us give some additional background.

\mbox{Hammersley} and \mbox{Welsh} \cite[Sec.~8.2]{hammersley-welsh65} noticed that the map $h\mapsto \mu_\bxi(\tau+h)$ is concave and thus has right and left derivatives at all $h>0$.
This observation was refined by \mbox{Smythe} and \mbox{Wierman} \cite[Sec.~8.1]{smythe-wierman78} to include $h=0$ when $F(0)$ is small enough\footnote{The requirement was $F(0)<\kappa_d^{-1}$, where $\kappa_d$ is the connective constant for self-avoiding walks on $\Z^d$; see \cite[Sec.~6.2]{lawler13}.}, resulting in
\eeqs{derivative_ineq}{ 
\label{derivative_ineq_a}
D^+\mu_\bxi(\tau+h)\big|_{h=0}&\leq \liminf_{n\to\infty} \frac{1}{n}\barbelow{N}_n^\bxi, \\
\label{derivative_ineq_b}
D^-\mu_\bxi(\tau+h)\big|_{h=0} &\geq \limsup_{n\to\infty} \frac{1}{n}\barabove{N}_n^\bxi \quad \mathrm{a.s.},
}
where $\barbelow{N}_n^\bxi$ and $\barabove{N}_n^\bxi$ were defined in Example \ref{supercritical_length_eg}.
\mbox{Kesten} \cite[Cor.~to Thm.~3]{kesten80} showed that ``small enough" actually extends all the way to criticality; that is, $F(0)<p_\cc(\Z^d)$ is sufficient for \eqref{derivative_ineq} (proving the same for $F(0)\geq p_\cc(\Z^d)$ is not hard; see Section \ref{length_lemma_proof}).
The key input to Kesten's argument was to show that the likelihood of finding a self-avoiding path of length $n$ with passage time less than $h_0 n$, decays exponentially in $n$ if $h_0>0$ is chosen small enough \cite[Prop.~5.8]{kesten86}.
In fact, we will build on this fundamental estimate in Chapter \ref{modification_sec}.
The goal is to eliminate the hypothesis $\E(\tau_e)<\infty$, which was needed in all the works just mentioned in order to apply the subadditive ergodic theorem.
Indeed, because of Proposition \ref{replacement_thm}, we do not need any moment assumption at all.

\begin{thm} \label{derivative_ineq_thm}
If $F(0-)=0$, then \eqref{derivative_ineq} holds for all $\bxi\in\S^{d-1}$.
\end{thm}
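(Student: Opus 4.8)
The plan is to combine the elementary identity $T^h(\gamma)=T(\gamma)+h\,|\gamma|$ — where $T^h$ denotes the passage time built from the shifted weights $\tau_e+h$ — with the concavity of $h\mapsto\mu_\bxi(\tau+h)$. Concavity holds by Lemma \ref{concavity_lemma}, or directly from \eqref{var_form_eq}, which exhibits this map as $\inf_{\sigma\in\RR^\bxi}\big(\langle\tau,\sigma\rangle+h\langle1,\sigma\rangle\big)$, an infimum of affine functions of $h$. Since $F(0-)=0$, the shifted weights are nonnegative for every $h\geq0$, so Theorem \ref{time_constant_thm} supplies $\mu_\bxi(\tau+h)$; when $F(0)<p_\cc(\Z^d)$, Propositions \ref{negative_thm} and \ref{slightly_negative_lemma} additionally make sense of $\mu_\bxi(\tau+h)$ and of a shape theorem for small $h<0$. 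Concavity guarantees that the difference quotients $\tfrac1h\big(\mu_\bxi(\tau+h)-\mu_\bxi(\tau)\big)$ increase to $D^+\mu_\bxi(\tau+h)|_{h=0}$ as $h\downarrow0$ and, when defined, decrease to $D^-\mu_\bxi(\tau+h)|_{h=0}$ as $h\uparrow0$.

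For \eqref{derivative_ineq_a} I would fix $h>0$, take $\gamma_n\in\Geo(0,n\bxi)$ of minimal length (so $|\gamma_n|=\barbelow{N}_n^\bxi$), and compare $T^h(0,n\bxi)$ with $T^h(\gamma_n)=T(0,n\bxi)+h\,\barbelow{N}_n^\bxi$; dividing by $n$ and passing to the limit should give $\mu_\bxi(\tau+h)-\mu_\bxi(\tau)\leq h\,\liminf_{n\to\infty}\barbelow{N}_n^\bxi/n$, and then $h\downarrow0$ yields \eqref{derivative_ineq_a}. For \eqref{derivative_ineq_b} I would instead take $\gamma_n$ of maximal length ($|\gamma_n|=\barabove{N}_n^\bxi$) and run the same comparison with $h<0$; the reversed sign turns it into $\limsup_{n\to\infty}\barabove{N}_n^\bxi/n\leq\tfrac1h\big(\mu_\bxi(\tau+h)-\mu_\bxi(\tau)\big)$, so $h\uparrow0$ gives \eqref{derivative_ineq_b}. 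If $F(0)\geq p_\cc(\Z^d)$, then for any $h<0$ the weights $\tau_e+h$ are negative with probability at least $F(0)\geq p_\cc(\Z^d)$, so they percolate, $\mu_\bxi(\tau+h)=-\infty$, hence $D^-\mu_\bxi(\tau+h)|_{h=0}=+\infty$ and \eqref{derivative_ineq_b} is automatic; meanwhile \eqref{derivative_ineq_a} goes through verbatim using $\mu_\bxi(\tau)=0$. One can alternatively derive both bounds from \eqref{var_form_eq}, for which the one-sided derivatives at $h=0$ are controlled by $\langle1,\sigma\rangle$ over the minimizing set $\RR^\bxi_\tau$, and Theorem \ref{minimizers_thm} links these total masses to subsequential limits of $|\gamma_n|/n$ along minimal- and maximal-length geodesics. (Where $\Geo(0,n\bxi)$ is not known to be nonempty — only when $d\geq3$ and $F(0)=p_\cc(\Z^d)$ with no mass gap — the statement is read conditionally, as elsewhere in the paper.)

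The one genuine difficulty is that, without a moment hypothesis, Theorem \ref{time_constant_thm} gives only convergence in probability, whereas the comparisons above require the almost sure limits $T^h(0,n\bxi)/n\to\mu_\bxi(\tau+h)$ together with almost sure control of $\barbelow{N}_n^\bxi/n$ and $\barabove{N}_n^\bxi/n$; this is exactly the point at which earlier treatments invoked $\E\min(\tau_e^{(1)},\dots,\tau_e^{(2d)})<\infty$. The remedy is Proposition \ref{replacement_thm}: deleting $O(1)$ edges from the start and $o(n^\eps)$ edges from the end of a geodesic produces a path whose scaled passage time converges to $\mu_\bxi$ almost surely, and since the deletions change the length by $o(n)$ they do not disturb $\liminf\barbelow{N}_n^\bxi/n$ or $\limsup\barabove{N}_n^\bxi/n$. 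I would therefore carry out all comparisons between these trimmed paths, writing $T^h(0,n\bxi)\leq T(\gamma_n')+h\,|\gamma_n'|+\rho_n$, where $\gamma_n'$ is a trimmed $\tau$-geodesic — of minimal length for \eqref{derivative_ineq_a}, maximal length for \eqref{derivative_ineq_b} — and $\rho_n=o(n)$ almost surely is the cost of reconnecting $\gamma_n'$ to $0$ and $[n\bxi]$ (for $h<0$ this reconnection estimate is part of Proposition \ref{negative_thm}). Establishing each inequality almost surely for all $h$ in a fixed countable sequence tending to $0$, and then using the monotone convergence of the difference quotients, upgrades the bounds to $D^{\pm}\mu_\bxi(\tau+h)|_{h=0}$ on a single almost sure event; intersecting over a countable dense set of directions (the random quantities involved depending on $(n,\bxi)$ only through $[n\bxi]\in\Z^d$) gives the statement for every $\bxi\in\S^{d-1}$. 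I expect this trimming bookkeeping — tracking the endpoints and lengths of the truncated geodesics and bounding the reconnection cost $\rho_n$ — to be the delicate part, with the convexity manipulations being routine.
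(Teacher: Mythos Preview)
Your proposal is correct, and in fact you already name the route the paper takes: what you call the ``alternative'' via \eqref{var_form_eq} and Theorem~\ref{minimizers_thm} is precisely the paper's argument. In the subcritical case the paper observes that $|\gamma_n|/n$ is essentially $\langle 1,\sigma_{\gamma_n}\rangle$, invokes Theorem~\ref{minimizers_thm} to show every subsequential weak limit of $\sigma_{\gamma_n}$ lies in $\RR^\bxi_\tau$, and thereby sandwiches $\liminf|\gamma_n|/n$ and $\limsup|\gamma_n|/n$ between $\inf_{\sigma\in\RR^\bxi_\tau}\langle1,\sigma\rangle$ and $\sup_{\sigma\in\RR^\bxi_\tau}\langle1,\sigma\rangle$; Lemma~\ref{one_d_lemma}(c) with $\psi\equiv1$ then delivers \eqref{derivative_ineq}. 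In the critical and supercritical cases the left-derivative inequality is dispatched via Proposition~\ref{slightly_negative_lemma_2} as you say, while for the right derivative the paper again uses compactness and the variational formula (extracting a limit $\sigma_*\in\RR^\bxi_\tau$ from a bounded-length subsequence) rather than a direct comparison.

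Your primary route---the direct Hammersley--Welsh comparison with trimmed geodesics---would also work, but the reconnection step you flag as delicate genuinely is: reattaching $\gamma_n'$ to $[n\bxi]$ through the deleted edges would reinstate exactly the heavy weights the trimming removed, so one must reroute through the shell as in Lemma~\ref{for_tightness_lemma} or pass to $\wh T$. The paper's approach buys you a two-line proof because all of that bookkeeping is already absorbed into Theorem~\ref{minimizers_thm}; your approach would recover the classical argument in full generality but at the cost of reproving a chunk of Section~\ref{modification_sec}.
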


Notice that to interpret \eqref{derivative_ineq_b}, one must allow slightly negative edge-weights.
The proposition stated below permits this consideration.
Given some measurable function $\tau^\pert \colon [0,1]\to\R$, \label{pert_fnc_1}
we will write $\wt T$ for the passage time when each $\tau_e=\tau(U_e)$ is replaced by $\tilde\tau_e = \tilde\tau(U_e)$, where $\tilde \tau = \tau + \tau^\pert$.

\begin{prop} \label{slightly_negative_lemma}
If $F(0)<p_\cc(\Z^d)$, then there exists $\af = \af(\LL,d)>0$ such that the following statements hold whenever 
$\|\tau^\pert\|_\infty \leq \af$.
\begin{enumerate}[label=\textup{(\alph*)}]

\item \label{slightly_negative_lemma_a}
For every $\bxi\in\S^{d-1}$, there is $\mu_\bxi = \mu_\bxi(\tilde\tau,\bxi)\in[\af,\infty)$ such that
\eq{ 
\frac{\wt T(0,n\bxi)}{n} \to \mu_\bxi \quad \text{in probability as $n\to\infty$}.
}

\item \label{slightly_negative_lemma_c}
The variational formula \eqref{var_form_eq} still applies,
\eq{
\mu_\bxi(\tilde \tau) = \inf_{\sigma\in\RR^\bxi}\langle\tilde \tau,\sigma\rangle,
}
and $\RR^\bxi_{\tilde \tau} \coloneqq \{\sigma\in\RR^\bxi :\,\langle \tilde\tau,\sigma\rangle = \mu_\bxi(\tilde\tau)\}$ is still nonempty.

\end{enumerate}
\end{prop}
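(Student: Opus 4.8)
The plan is to deduce part \ref{slightly_negative_lemma_a} from the shape theorem for slightly negative perturbations (Proposition \ref{negative_thm}) together with a one-sided comparison against Theorem \ref{time_constant_thm}, and then to obtain part \ref{slightly_negative_lemma_c} by rerunning the proofs of Theorem \ref{var_form_thm}\ref{var_form_thm_a},\ref{var_form_thm_b} essentially word for word. The structural reason this works is that the constraint set $\RR^\bxi\subset\Sigma$ is built with no reference to the weight function, so the identity \eqref{var_form_eq} is really a statement pairing the deterministic set $\RR^\bxi$ against an arbitrary measurable function; once one knows a finite positive time constant exists for $\tilde\tau$, its possible negativity is harmless.

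\emph{Part \ref{slightly_negative_lemma_a}.} First fix $\delta>0$ with $F(\delta)<p_\cc(\Z^d)$, which is possible by right-continuity of $F$ since $F(0)<p_\cc(\Z^d)$, and impose $\af<\delta$ from the start; then $\P(\tilde\tau_e\le0)\le\P(\tau(U_e)\le\af)=F(\af)<p_\cc(\Z^d)$, so the cluster of non-positive edges in the $\tilde\tau$-environment is still subcritical. The convergence $\wt T(0,n\bxi)/n\to\mu_\bxi(\tilde\tau)$, even almost surely, is then Proposition \ref{negative_thm}. Finiteness of $\mu_\bxi(\tilde\tau)$ is immediate: since $\tilde\tau(u)=\tau(u)+\tau^\pert(u)\le\tau(u)+\af$ pointwise, every $\gamma\in\PP(0,n\bxi)$ satisfies $\wt T(\gamma)\le\sum_{e\in\gamma}(\tau_e+\af)$, whose infimum over such $\gamma$ is the passage time of the environment of law $\LL\oplus\af$; hence $\limsup_n\wt T(0,n\bxi)/n\le\mu_\bxi(\LL\oplus\af)<\infty$ by Theorem \ref{time_constant_thm}. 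For the lower bound $\mu_\bxi(\tilde\tau)\ge\af$, the multiscale estimate behind Proposition \ref{negative_thm} yields constants $c_1=c_1(\LL,d)>0$ and $\af_1=\af_1(\LL,d)>0$ with $\mu_\bxi(\tilde\tau)\ge c_1$ for all $\bxi$ whenever $\|\tau^\pert\|_\infty\le\af_1$; taking $\af\coloneqq\min(\delta,\af_1,c_1)$ then finishes part \ref{slightly_negative_lemma_a}. The same renormalization supplies the ``linear length'' fact used below: almost surely, any path from $0$ to $[n\bxi]$ that is within $O(1)$ of being $\wt T$-minimizing has only $O(n)$ edges.

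\emph{Part \ref{slightly_negative_lemma_c}.} With part \ref{slightly_negative_lemma_a} established I would copy the proof of Theorem \ref{var_form_thm}\ref{var_form_thm_a}. For the inequality $\inf_{\sigma\in\RR^\bxi}\langle\tilde\tau,\sigma\rangle\ge\mu_\bxi(\tilde\tau)$: given $\sigma\in\RR^\bxi$, choose $\gamma_{n_k}\in\PP(0,n_k\bxi)$ with $\sigma_{\gamma_{n_k}}\Rightarrow\sigma$; because $\tilde\tau$ is a single measurable function specified before the $U_e$'s and differs from the nonnegative $\tau$ only by the bounded $\tau^\pert$, the test-function lemma (Lemma \ref{test_fnc_lemma}, cf.\ Corollary \ref{weak_to_strong_cor}) gives $\langle\tilde\tau,\sigma\rangle=\lim_k\langle\tilde\tau,\sigma_{\gamma_{n_k}}\rangle=\lim_k\wt T(\gamma_{n_k})/n_k\ge\lim_k\wt T(0,n_k\bxi)/n_k=\mu_\bxi(\tilde\tau)$, the last step by part \ref{slightly_negative_lemma_a}. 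For the reverse inequality together with $\RR^\bxi_{\tilde\tau}\neq\varnothing$: pick $\gamma_n\in\PP(0,n\bxi)$ with $\wt T(\gamma_n)\le\wt T(0,n\bxi)+1$; the linear-length bound from part \ref{slightly_negative_lemma_a} keeps $\|\sigma_{\gamma_n}\|=|\gamma_n|/n$ bounded, so along a subsequence $\sigma_{\gamma_{n_k}}\Rightarrow\sigma$ for some $\sigma\in\RR^\bxi$, and then $\langle\tilde\tau,\sigma\rangle=\lim_k\wt T(\gamma_{n_k})/n_k=\mu_\bxi(\tilde\tau)$, exhibiting $\sigma$ as a minimizer. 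As in Theorem \ref{minimizers_thm}, the only place the original argument touched $\tau\ge0$ was through existence of the time constant, now provided by part \ref{slightly_negative_lemma_a}.

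The real obstacle is the renormalization hidden in part \ref{slightly_negative_lemma_a}: one must control the subcritical percolation of low-$\tilde\tau$ edges on every scale so as to show that (i) every sufficiently long path accumulates passage time at a positive rate uniform in $\tau^\pert$, and (ii) near-optimal paths remain of linear length. This is exactly where $F(0)<p_\cc(\Z^d)$ and the smallness of $\af$ are indispensable, and it is precisely the content packaged in Proposition \ref{negative_thm}; granting that, part \ref{slightly_negative_lemma_c} is a routine transcription of the already-established proof of the variational formula.
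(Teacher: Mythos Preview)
Your approach is essentially the same as the paper's: part \ref{slightly_negative_lemma_a} is deduced from Proposition \ref{negative_thm}, and part \ref{slightly_negative_lemma_c} from rerunning the proof of the variational formula (which the paper packages as Theorem \ref{var_formula_thm_sub}). One small correction: Proposition \ref{negative_thm} gives almost sure convergence for the \emph{shell} passage times $\wh T(0,n\bxi)/n$, while for $\wt T$ it only gives tightness of the difference $\wh T-\wt T$, hence convergence in probability---your claim ``even almost surely'' for $\wt T$ is not justified, and this matters in part \ref{slightly_negative_lemma_c} where you write $\lim_k \wt T(0,n_k\bxi)/n_k = \mu_\bxi(\tilde\tau)$ along a sequence $n_k$ not of your choosing. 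The paper handles this via Proposition \ref{replacement_thm} (deleting $o(n)$ edges to recover almost sure convergence) and a truncation to bounded $\tau$; your outline can be repaired either the same way or by passing to a further subsequence along which in-probability convergence upgrades to almost sure.
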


Part \ref{slightly_negative_lemma_a} is a special case of Proposition \hyperref[negative_thm_b]{\ref*{negative_thm}(b,c)}, and part \ref{slightly_negative_lemma_c} is included in Theorem \ref{var_formula_thm_sub}.
Under certain moment assumptions, statements like \ref{slightly_negative_lemma_a} have been proved before when $\tau^\pert$ is a constant function, for instance \cite[Thm.~5.13]{smythe-wierman78} and \cite[disp.~(4.2)]{kesten80}.
The full generalization to \ref{slightly_negative_lemma_a} requires some additional analysis aided by modern inputs.
The argument uses anyway a setup already required for Propositions \ref{replacement_thm} and \ref{addition_thm}.
On a technical note, Proposition \ref{slightly_negative_lemma} is possible because we have stipulated in our definition \eqref{fpp_def} of passage time that all paths under consideration are self-avoiding (otherwise a single negative-weight edge could be traversed repeatedly to generate passage times of $-\infty$).
Outside the subcritical regime, however, the restriction to self-avoiding paths does not prevent this degeneracy; the following statement is also proved in Chapter \ref{modification_sec}.

\begin{prop} \label{slightly_negative_lemma_2}
If $F(0)\geq p_\cc(\Z^d)$ and $\tau^\pert \equiv -h$ with $h>0$, then almost surely $\wt T(x,y)=-\infty$ for all $x\neq y$.
\end{prop}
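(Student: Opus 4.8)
The starting point is the identity $\wt T(\gamma) = T(\gamma) - h\,|\gamma|$, valid for every path $\gamma$ under the perturbation $\tilde\tau_e = \tau_e - h$. Hence, to force $\wt T$ to equal $-\infty$ between two prescribed points, it suffices to exhibit self-avoiding paths $\gamma$ joining them with $|\gamma|\to\infty$ and $T(\gamma) = \oo(|\gamma|)$: such a $\gamma$ has $\wt T(\gamma) = \oo(|\gamma|) - h|\gamma| \to -\infty$. The hypothesis $F(0)\ge p_\cc(\Z^d)$ is exactly what produces these. By Theorem \ref{time_constant_thm} it is equivalent to $\mu_\v(\LL) = 0$ for every $\v$, so $T(0,n\mathbf{e}_1)/n\to 0$ in probability; choosing a sufficiently fast sequence $n_k\to\infty$ and applying Borel--Cantelli yields, almost surely, paths $\gamma_k\in\PP(0,n_k\mathbf{e}_1)$ with $|\gamma_k|\ge n_k$ and $T(\gamma_k)\le \tfrac{h}{2}n_k$, so $\wt T(0,n_k\mathbf{e}_1)\le\wt T(\gamma_k)\to-\infty$. (No moment assumption is needed here, Kesten's theorem being moment-free; Proposition \ref{replacement_thm} could alternatively be invoked.)

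It remains to pass to a fixed endpoint and to all pairs simultaneously. A union bound over the countably many pairs together with translation invariance reduces everything to showing $\P(\wt T(0,z)=-\infty)=1$ for each fixed $z\ne 0$; one also checks that $\{\wt T(0,z)=-\infty\}$ is unchanged by altering finitely many edge-weights, hence is a tail event, so positive probability would in fact suffice. To build a long, low-weight, \emph{self-avoiding} path from $0$ to $z$, I would splice two excursions routed through opposite half-spaces: a low-weight path $\alpha_k$ from $0$ out to $n_k\mathbf{e}_1$ confined to $\{x_2\ge 1\}$ (with $\OO(1)$-length connectors at its two endpoints, which lie on $\{x_2=0\}$), and a low-weight path $\beta_k$ from $n_k\mathbf{e}_1$ back to $z$ confined to $\{x_2\le-1\}$ (with $\OO(1)$-length connectors near $n_k\mathbf{e}_1$ and near $z$, and a routine local adjustment of the pieces so they avoid one another near the origin). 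By construction $\alpha_k$ and $\beta_k$ meet only at $n_k\mathbf{e}_1$, so $\gamma_k:=\alpha_k\cup\beta_k$ is self-avoiding with $|\gamma_k|\ge 2n_k-\OO(1)$ and $T(\gamma_k)=T(\alpha_k)+T(\beta_k)$. That the legs can be made low-weight is a half-space shape theorem: the half-space time constant $\mu_{\mathbf{e}_1}^{+}$ satisfies $\mu_{\mathbf{e}_1}^{+}\le\mu_{\mathbf{e}_1}^{\mathrm{slab}_m}$ for every slab $\{0\le x_2\le m\}$, and $\mu_{\mathbf{e}_1}^{\mathrm{slab}_m}\downarrow\mu_{\mathbf{e}_1}=0$ as $m\to\infty$ --- equivalently, $p_\cc$ of a half-space equals $p_\cc(\Z^d)$ (Grimmett--Marstrand), so when $F(0)>p_\cc(\Z^d)$ there is an infinite zero-weight cluster inside a slab of the half-space through which $\alpha_k$ and $\beta_k$ run at zero cost, the boundary case $F(0)=p_\cc(\Z^d)$ following by continuity of the time constant or by first passing to the at-least-critical percolation of edges with $\tau_e\le\eps$ for small $\eps>0$. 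Then $\wt T(\gamma_k)=T(\gamma_k)-h|\gamma_k|\le\oo(n_k)-2hn_k+\OO(1)\to-\infty$, whence $\wt T(0,z)\le\inf_k\wt T(\gamma_k)=-\infty$.

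The crux is the self-avoidance. A long low-weight \emph{walk} between two fixed points is immediate --- run a geodesic out to a far point and another back --- but extracting a long self-avoiding sub-walk is not free once weights may be negative, since excising a loop can only raise $\wt T$. Routing the two halves through disjoint half-spaces sidesteps this, at the price of needing passage times to remain sublinear when confined to a half-space; this is the one genuinely nontrivial ingredient, supplied by the percolation facts above. In the range $F(0)>p_\cc(\Z^d)$ one can bypass most of this by working directly with the almost surely unique infinite cluster of zero-weight edges.
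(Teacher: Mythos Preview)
Your approach is essentially the paper's: the key construction---routing two legs through disjoint regions so that their concatenation is automatically self-avoiding---is precisely what the paper does. The paper splits into $d=2$ (using a doubly-infinite open path together with large open circuits) and $d\ge3$ (using two disjoint slabs via Grimmett--Marstrand); your half-space version handles both at once, which is a bit cleaner, though for $d=2$ note that the slab justification you give fails (two-dimensional ``slabs'' are strips with $p_\cc=1$), and one must instead invoke directly that the half-plane and full plane share the same $p_\cc$.

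The substantive gap is in the critical case $F(0)=p_\cc(\Z^d)$. Your ``pass to the $\{\tau_e\le\eps\}$-percolation'' reduction works only when $F(\eps)>p_\cc$ for arbitrarily small $\eps>0$; if $\LL$ has a mass gap above zero (i.e.\ $F(\eps)=p_\cc$ for all small $\eps$) and $h$ lies below that gap, then the $\{\tau_e\le\eps\}$-percolation is still only critical and you cannot extract supercritical structure in the half-space at cost $\le\eps<h$. Your alternative of ``continuity of the time constant'' would require knowing that the \emph{half-space} time constant vanishes at criticality, which is plausible but not a standard citation and not something you can read off from Grimmett--Marstrand. The paper's fix is to force supercriticality by artificially zeroing out an additional $\eps$-fraction of edges (in the uniform coupling, those with $U_e\in[p_\cc,p_\cc+\eps)$), run the supercritical construction in the modified environment, and then use a Chernoff bound over all self-avoiding paths from the origin to show that the modified edges contribute at most $\tfrac{h}{2}|\gamma|$ to $T(\gamma)$. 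That step grafts directly onto your half-space construction and closes the gap.
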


Propositions \ref{slightly_negative_lemma} and \ref{slightly_negative_lemma_2} will allow us to prove Theorem \ref{derivative_ineq_thm} in Section \ref{length_lemma_proof}.
As a final comment, one consequence of \eqref{derivative_ineq} is that if
\eeq{ \label{non_differentiable_implication}
\liminf_{n\to\infty}\frac{1}{n}\barbelow{N}_n^\bxi<\limsup_{n\to\infty}\frac{1}{n}\barabove{N}_n^\bxi \quad \text{with positive probability},
}
then $D^+\mu_\bxi(\tau+h)\big|_{h=0}<D^-\mu_\bxi(\tau+h)\big|_{h=0}$.
This strategy was used by Steele and Zhang to exhibit a point of \textit{non}-differentiability for $\LL = \mathrm{Bernoulli}(p)$ and $d=2$, with $p \in (\frac{1}{2}-\eps,\frac{1}{2})$.\footnote{\mbox{Steele} and \mbox{Zhang} worked with $\bxi=\mathbf{e}_1$, and strictly speaking, they did not obtain \eqref{non_differentiable_implication}.
Rather, they proved the analogous inequality for paths that are optimal among those constrained to the rectangle $(0,n)\times(-3n,3n)$. 
Because these paths still achieve the correct time constant, this is sufficient to conclude $D^+\mu_\bxi(\tau+h)\big|_{h=0}<D^-\mu_\bxi(\tau+h)\big|_{h=0}$.}
In \cite[Thm.~2.6]{krishnan-rassoul-seppalainen?}, this result is extended to all edge-weight distributions with an atom at the origin, subject to $F(0)<p_\cc(\Z^d)$ and a moment bound.
For LPP on the complete directed graph, a similar differentiability question is studied in \cite{foss-konstantopoulous-pyatkin?}.

\section{Differentiability and uniqueness of minimizers} \label{main_results}
In general, the proper setting for us will be an open, convex subset $\CCC$ of a Banach space $\BBB$.
Recall that a continuous, concave function $f \colon \CCC\to\R$ is G\^{a}teaux differentiable at $v\in\CCC$ if and only if there exists a unique continuous linear functional $D f[v]$ such that
\eq{
f(w)-f(v) \leq D f[v](w-v) \quad \text{for all $w\in\CCC$}.
}
For example, see \cite[Prop.~1.8]{phelps93}.
Of course, when $\BBB=\R^N$, this notion coincides with usual differentiability of concave functions.

\subsection{One-dimensional spaces} \label{one_d_subsec}
When $\CCC=(0,\infty)$, we obtain the strongest conclusion with respect to differentiability.
Here and elsewhere, ``countable" means finite or countably infinite.

\begin{lemma} \label{one_d_lemma}
Fix $\bxi\in\S^{d-1}$ and measurable functions $\tau,\psi\colon[0,1]\to[0,\infty)$.
\begin{enumerate}[label=\textup{(\alph*)}]
\item \label{one_d_lemma_a}
The map $[0,\infty)\to[0,\infty)$ given by $h\mapsto f(h)=\mu_\bxi(\tau+h\psi)$ is concave. 
In particular, $f$ is differentiable in $(0,\infty)$ off a countable set $\CCC_{\mathrm{bad}} = \CCC_{\mathrm{bad}}(\LL,\bxi,\tau,\psi)$.
Furthermore,
\eeqs{derivative_ineq_minimizers}{ \label{derivative_ineq_minimizers_a}
D^+f(h) \leq \inf_{\sigma\in\RR^\bxi_{\tau+h\psi}}\langle\psi,\sigma\rangle \quad \text{for all $h\geq0$}, \\
\label{derivative_ineq_minimizers_b}
D^-f(h) \geq \sup_{\sigma\in\RR^\bxi_{\tau+h\psi}}\langle\psi,\sigma\rangle \quad \text{for all $h>0$}.
}

\item \label{one_d_lemma_b}
If $D f^+(h)=D f^-(h)$, then $\langle \psi,\sigma\rangle=D f(h)$ for all $\sigma\in\RR^\bxi_{\tau+h\psi}$.

\item \label{one_d_lemma_c}
If $\Lambda(\{u\in[0,1]:\tau(u)=0\}) <p_\cc(\Z^d)$ and $\|\psi\|_\infty\leq 1$, then parts \ref{one_d_lemma_a} and \ref{one_d_lemma_b} hold even for $h\in(-\af,\infty)$, where $\af>0$ is the constant from Proposition \ref{slightly_negative_lemma}.

\end{enumerate}
\end{lemma}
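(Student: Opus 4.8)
The plan is to extract everything from the variational formula \eqref{var_form_eq}, which in the variable $h$ exhibits $\mu_\bxi(\tau+h\psi)$ as a pointwise infimum of functions that are affine in $h$. For part~\ref{one_d_lemma_a}: since $\tau+h\psi\ge 0$ for every $h\ge 0$, Theorem~\ref{var_form_thm}(a) gives
\[
f(h)=\mu_\bxi(\tau+h\psi)=\inf_{\sigma\in\RR^\bxi}\bigl(\langle\tau,\sigma\rangle+h\langle\psi,\sigma\rangle\bigr),
\]
an infimum of affine functions of $h$ with nonnegative slopes $\langle\psi,\sigma\rangle$, hence concave; equivalently, concavity is immediate from Lemma~\ref{concavity_lemma} applied to the nonnegative functions $\tau+h_0\psi$ and $\tau+h_1\psi$. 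As $f$ is finite-valued by Theorem~\ref{time_constant_thm}, the standard fact that a finite concave function on an interval possesses one-sided derivatives everywhere, equal off a countable set, produces $\CCC_\mathrm{bad}$. For the derivative bounds, fix $h\ge 0$; if $\RR^\bxi_{\tau+h\psi}=\varnothing$ the inequalities hold vacuously under the conventions $\inf\varnothing=+\infty$, $\sup\varnothing=-\infty$, so take $\sigma\in\RR^\bxi_{\tau+h\psi}$. Then $\langle\tau+h\psi,\sigma\rangle=f(h)$, while for every $h'\ge 0$ the variational formula (applied with the test function $\tau+h'\psi\ge 0$) yields
\[
f(h')\;\le\;\langle\tau+h'\psi,\sigma\rangle\;=\;f(h)+(h'-h)\langle\psi,\sigma\rangle .
\]
Dividing by $h'-h$ and letting $h'\downarrow h$ gives $D^+f(h)\le\langle\psi,\sigma\rangle$; for $h>0$, dividing and letting $h'\uparrow h$ reverses the inequality and gives $D^-f(h)\ge\langle\psi,\sigma\rangle$. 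Taking the infimum, respectively the supremum, over $\sigma\in\RR^\bxi_{\tau+h\psi}$ is exactly \eqref{derivative_ineq_minimizers}.

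Part~\ref{one_d_lemma_b} is then a one-line squeeze. At a point $h>0$ where $D^+f(h)=D^-f(h)=:Df(h)$, the two lines of \eqref{derivative_ineq_minimizers} force
\[
\sup_{\sigma\in\RR^\bxi_{\tau+h\psi}}\langle\psi,\sigma\rangle\;\le\;D^-f(h)\;=\;D^+f(h)\;\le\;\inf_{\sigma\in\RR^\bxi_{\tau+h\psi}}\langle\psi,\sigma\rangle ,
\]
so all three quantities coincide and $\langle\psi,\sigma\rangle=Df(h)$ for every $\sigma\in\RR^\bxi_{\tau+h\psi}$.

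For part~\ref{one_d_lemma_c} I would simply swap the two inputs used above. When $\Lambda(\{u:\tau(u)=0\})<p_\cc(\Z^d)$ and $\|\psi\|_\infty\le 1$, any $h$ with $|h|<\af$ makes $\tau^\pert:=h\psi$ satisfy $\|\tau^\pert\|_\infty\le|h|<\af$, so Proposition~\ref{slightly_negative_lemma} supplies, in place of Theorem~\ref{var_form_thm}: a finite time constant $\mu_\bxi(\tau+h\psi)$, the identity $\mu_\bxi(\tau+h\psi)=\inf_{\sigma\in\RR^\bxi}\langle\tau+h\psi,\sigma\rangle$ over the \emph{same} constraint set $\RR^\bxi$, and nonemptiness of $\RR^\bxi_{\tau+h\psi}$. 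Since $\psi$ is bounded and every $\sigma\in\RR^\bxi\subset\Sigma$ has finite total mass, the slopes satisfy $\langle\psi,\sigma\rangle\le\|\psi\|_\infty\langle 1,\sigma\rangle<\infty$, so $f$ is again an infimum of genuine affine functions, now on the interval $(-\af,\infty)$. Hence the arguments for \ref{one_d_lemma_a} and \ref{one_d_lemma_b} carry over verbatim with $[0,\infty)$ replaced by $(-\af,\infty)$.

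I do not anticipate a genuine obstacle: the proof is bookkeeping around Theorem~\ref{var_form_thm} and Proposition~\ref{slightly_negative_lemma}. The one point that needs care is that Theorem~\ref{var_form_thm}(a) is stated only for nonnegative $\tau$, so extending to $h\in(-\af,0)$ truly requires Proposition~\ref{slightly_negative_lemma}, and specifically the feature --- not merely the existence of some variational formula --- that it leaves the constraint set $\RR^\bxi$ unchanged; the affine-in-$h$ structure, and therefore concavity and the minimizer comparison, rests on this.
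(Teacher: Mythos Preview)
Your proposal is correct and follows essentially the same approach as the paper: both derive the key inequality $f(h')\le f(h)+(h'-h)\langle\psi,\sigma\rangle$ for $\sigma\in\RR^\bxi_{\tau+h\psi}$ directly from the variational formula, then read off the one-sided derivative bounds and squeeze for part~\ref{one_d_lemma_b}, with Proposition~\ref{slightly_negative_lemma} supplying the extension to $h\in(-\af,\infty)$. Your framing of $f$ as a pointwise infimum of affine functions is a slightly more structural way to see concavity, but the argument is otherwise identical to the paper's.
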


\begin{proof}
Let us write $\tau^{(h)} = \tau+h\psi$ and $f(h)=\mu_\bxi(\tau^{(h)})$.
Lemma \ref{concavity_lemma} can be used to show $f$ is concave on $[0,\infty)$. 
Then observe that every $\sigma\in\RR^\bxi_{\tau^{(h)}}$ satisfies the following whenever $h,h+\eps\geq0$:
\eq{
f(h+\eps)-f(h)
&= \inf_{\sigma'\in\RR^\bxi}\langle\tau^{(h+\eps)},\sigma'\rangle - \langle\tau^{(h)},\sigma\rangle  \\
&\leq \langle\tau^{(h+\eps)}-\tau^{(h)},\sigma\rangle = \eps\langle\psi,\sigma\rangle.
}
Since $f$ is concave, we conclude from this inequality that \eqref{derivative_ineq_minimizers} holds, thus completing the proof of \ref{one_d_lemma_a}.
When $D f^+(h)=D f^-(h)$, the quantity $\langle\psi,\sigma\rangle$ must be equal to $D f(h)$ for every $\sigma\in\RR^\bxi_{\tau^{(h)}}$; this proves part \ref{one_d_lemma_b}.
Finally, we obtain \ref{one_d_lemma_c} using the same arguments supplemented by Proposition \ref{slightly_negative_lemma}.
\end{proof}

We can now obtain Theorems \ref{krs_thm} and \ref{new_krs_thm}.
Recall the various empirical measures $\sigma_\gamma$, $\nu_\gamma$, and $\nu_\gamma^+$ from \eqref{sigma_gamma} and \eqref{nu_def}, defined for $\gamma\in\PP(0,n\bxi)$.

\begin{remark} \label{more_direct_remark}
The inequalities in \eqref{derivative_ineq} can be easily generalized to $h>0$ by considering geodesics in the shifted environment $\{\tau_e+h:\, e\in E(\Z^d)\}$, and then Theorem \ref{krs_thm} would follow from the concavity of $h\mapsto\mu_\bxi(\tau+h)$.
While the proof offered below does not speak in these terms, it is conceptually the same.
The reason for our more abstract exposition is to unify Theorems \ref{krs_thm} and \ref{new_krs_thm}, and also to provide a first example of the functional analytic methodology used in the rest of Chapter \ref{applications}.
\end{remark}

\begin{proof}[Proof of Theorems \ref{krs_thm} and \ref{new_krs_thm}]
Let $\tau$ be such that $\tau_*(\Lambda) = \LL$.
For $h>0$, let us write $\tau^{(h)} = \tau + h$ and $\tau^{(h+)}=\tau+h\one_{\{\tau>0\}}$.
Then $\tau^{(h)}_*(\Lambda) = \LL\oplus h$ and $\tau^{(h+)}_*(\Lambda) = \LL\oplus h\one_{\{t>0\}}$.
By Lemma \ref{one_d_lemma} with $\psi\equiv 1$, if  $h>0$ avoids a certain countable set, then
\eq{
\lambda = \langle 1,\sigma\rangle 
\quad \text{is constant over $\sigma\in\RR^\bxi_{\tau^{(h)}}$}.
} 
Similarly, applying Lemma \ref{one_d_lemma} with $\psi\equiv \one_{\{\tau>0\}}$ shows
\eq{
\lambda_+=\langle \one_{\{\tau(u)>0\}},\sigma\rangle \quad \text{is constant over $\sigma\in\RR^\bxi_{\tau^{(h+)}}$},
}
again assuming $h>0$ avoids a certain countable set.
Now take any $\gamma_n\in\Geo(0,n\bxi)$ in the edge-weight environment $(\tau_e+h)_{e\in E(\Z^d)}$.
By Theorem \ref{minimizers_thm}, it is almost surely the case that within any sequence $(n_k)_{k\geq1}$ satisfying $n_k\to\infty$ as $k\to\infty$, there is a subsequence $({n_{k_\ell}})_{\ell\geq1}$ such that $\sigma_{\gamma_{n_{k_\ell}}}\Rightarrow\sigma$ as $\ell\to\infty$, for some $\sigma\in\RR^\bxi_{\tau^{(h)}}$.
Upon realizing that $|\gamma_n|/\|[n\bxi]\|_2 = \langle 1,\nu_{\gamma_n}\rangle$ and $\|[n\bxi]\|_2/n\to1$ as $n\to\infty$, we now have
\eq{
\lim_{\ell\to\infty} \frac{|\gamma_{n_{k_\ell}}|}{n_{k_\ell}} 
= \lim_{\ell\to\infty} \langle 1,\nu_{\gamma_{n_{k_\ell}}}\rangle
\stackref{weak_pushforwards}{=} \langle 1,\tau_*^{(h)}(\sigma)\rangle 
\stackref{push_def}{=} \langle1,\sigma\rangle = \lambda. 
}
As this holds for any $(n_k)_{k\geq1}$ such that $n_k\to\infty$, we conclude $|\gamma_n|/n\to\lambda$ as $n\to\infty$.

Similarly, consider any geodesics $\gamma_n\in\Geo(0,n\bxi)$ in the environment $(\tau_e+h\one_{\{\tau_e>0\}})_{e\in E(\Z^d)}$.
If $\P(\tau_e=0) \geq p_\cc(\Z^d)$, then Theorem \ref{zero_length_thm} tells us $\LL\oplus h\one_{\{t>0\}}\in\PPP_\mathrm{length}(\bxi)$ with $\lambda_+ = 0$.
Otherwise, Theorem \ref{minimizers_thm} permits us to take, along any sequence $n_k\to\infty$, a further subsequence such that $\sigma_{\gamma_{n_{k_\ell}}}\Rightarrow\sigma$ as $\ell\to\infty$, for some $\sigma\in\RR^\bxi_{\tau^{(h+)}}$.
We then have
\eq{
\lim_{\ell\to\infty} \frac{|\gamma_{n_{k_\ell}}|_+}{n_{k_\ell}}
= \lim_{\ell\to\infty} \langle 1,\nu_{\gamma_{n_{k_\ell}}}^+\rangle
\stackref{weak_pushforwards}{=} \langle 1,\tau_*^{(+h)+}(\sigma)\rangle
&\stackref{pos_push_def}{=} \langle \one_{\{\tau^{(+h)}(u)>0\}},\sigma\rangle \\
&\stackrefp{pos_push_def}{=}\langle \one_{\{\tau(u)>0\}},\sigma\rangle = \lambda_+.
}
As this holds for every $(n_k)_{k\geq1}$ with $n_k\to\infty$, we conclude $|\gamma_n|_+/n\to\lambda_+$.
\end{proof}

\begin{remark}
Echoing Remark \ref{more_direct_remark}, the key to the above proof is concavity of the time constant with respect to shifting the edge-weights.
Interestingly, this concavity is strict, a fact recently shown in \cite[Thm.~2.2]{krishnan-rassoul-seppalainen?}
using so-called ``modification arguments" originating in \cite{vandenberg-kesten93}.
The authors ask (see their Open Problem 3.5) whether strict concavity can be proved for other types of perturbations to the edge-weights, for instance those of Lemma \ref{one_d_lemma}.
Answering this question may lead to information about geodesic statistics other than length.
\end{remark}

\subsection{Finite-dimensional spaces}
When $\CCC=(0,\infty)^N$ for any positive integer $N$, we obtain the following by the same proof as in Lemma \ref{one_d_lemma}, now using the fact that concave functions on $\R^N$ are differentiable almost everywhere.

\begin{lemma} \label{finite_d_lemma}
Fix $\bxi\in\S^{d-1}$ and measurable functions $\tau_0,\psi_1,\dots,\psi_N\colon [0,1]\to[0,\infty)$.
Write $\vc \psi = (\psi_1,\dots,\psi_N)$ and $\vc t\cdot\vc \psi = t_1\psi_1+\cdots+t_N\psi_N$ for $\vc t\in \R^N$.
\begin{enumerate}[label=\textup{(\alph*)}]
\item \label{finite_d_lemma_a}
The map $[0,\infty)^N\to[0,\infty)$ given by $\vc t\mapsto f(\vc t) = \mu_\bxi(\tau_0+\vc t\cdot\vc \psi)$ is concave.
In particular, $f$ is differentiable in $(0,\infty)^N$ off a Lebesgue null set $\CCC_{\mathrm{bad}} = \CCC_{\mathrm{bad}}(\LL,\bxi,\tau_0,\vc\psi)$.

\item \label{finite_d_lemma_b}
For $\vc t\in(0,\infty)^N\setminus\CCC_\mathrm{bad}$ and every $\sigma\in\RR^\bxi_{\tau_0+\vc t\cdot\vc\psi}$, we have
\eq{
\langle \vc s\cdot\vc\psi,\sigma\rangle = D f[\vc t](\vc s) \quad \text{for all $\vc s\in\R^N$}.
}
In particular, for each $i=1,\dots,N$, we have
\eq{ 
\langle \psi_i,\sigma\rangle = \frac{\partial f}{\partial t_i}[\vc t]  \quad \text{for all $\sigma\in\RR^\bxi_{\tau_0+\vc t\cdot\vc\psi}$}.
}
\end{enumerate}
\end{lemma}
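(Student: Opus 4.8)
The plan is to run the proof of Lemma~\ref{one_d_lemma} essentially verbatim, with the one-dimensional differentiation fact replaced by the classical statement that a finite concave function on an open convex subset of $\R^N$ is differentiable off a Lebesgue-null set.

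First I would prove part~\ref{finite_d_lemma_a}. The map $\vc t\mapsto\vc t\cdot\vc\psi$ is linear, and since each $\psi_i\geq0$ it sends $[0,\infty)^N$ into nonnegative functions; hence for $\vc t_0,\vc t_1\in[0,\infty)^N$ and $\alpha\in[0,1]$ we have $\tau_0+((1-\alpha)\vc t_0+\alpha\vc t_1)\cdot\vc\psi=(1-\alpha)(\tau_0+\vc t_0\cdot\vc\psi)+\alpha(\tau_0+\vc t_1\cdot\vc\psi)$, and Lemma~\ref{concavity_lemma} applied to the two nonnegative functions $\tau_0+\vc t_i\cdot\vc\psi$ gives $f((1-\alpha)\vc t_0+\alpha\vc t_1)\geq(1-\alpha)f(\vc t_0)+\alpha f(\vc t_1)$. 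By Theorem~\ref{time_constant_thm}, $f$ is real-valued (indeed $[0,\infty)$-valued), so on the open convex set $(0,\infty)^N$ it is a finite concave function, hence locally Lipschitz and in particular continuous; by the classical theorem on a.e.\ differentiability of such functions it is (Fr\'echet, hence G\^ateaux) differentiable off a Lebesgue-null set $\CCC_{\mathrm{bad}}=\CCC_{\mathrm{bad}}(\LL,\bxi,\tau_0,\vc\psi)$. This establishes part~\ref{finite_d_lemma_a}.

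For part~\ref{finite_d_lemma_b}, I would fix $\vc t\in(0,\infty)^N\setminus\CCC_{\mathrm{bad}}$ and $\sigma\in\RR^\bxi_{\tau_0+\vc t\cdot\vc\psi}$, so that $\langle\tau_0+\vc t\cdot\vc\psi,\sigma\rangle=f(\vc t)$; note $\langle\psi_i,\sigma\rangle<\infty$ for each $i$, since $t_i>0$ and $\langle t_i\psi_i,\sigma\rangle\leq\langle\tau_0+\vc t\cdot\vc\psi,\sigma\rangle=f(\vc t)<\infty$. Then for every $\vc t'\in(0,\infty)^N$ the variational formula~\eqref{var_form_eq} gives
\[
f(\vc t')=\inf_{\sigma'\in\RR^\bxi}\langle\tau_0+\vc t'\cdot\vc\psi,\sigma'\rangle\leq\langle\tau_0+\vc t'\cdot\vc\psi,\sigma\rangle=f(\vc t)+\langle(\vc t'-\vc t)\cdot\vc\psi,\sigma\rangle.
\]
Thus $\vc s\mapsto\langle\vc s\cdot\vc\psi,\sigma\rangle$ is a continuous linear functional on $\R^N$ satisfying $f(\vc t')-f(\vc t)\leq\langle(\vc t'-\vc t)\cdot\vc\psi,\sigma\rangle$ for all $\vc t'$ in the open convex set $(0,\infty)^N$. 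Since $f$ is G\^ateaux differentiable at $\vc t$, the uniqueness clause of the differentiability criterion recalled above forces $Df[\vc t](\vc s)=\langle\vc s\cdot\vc\psi,\sigma\rangle$ for all $\vc s\in\R^N$; taking $\vc s$ to be the $i$-th standard basis vector gives $\langle\psi_i,\sigma\rangle=\partial f/\partial t_i[\vc t]$, which is manifestly independent of the choice of $\sigma\in\RR^\bxi_{\tau_0+\vc t\cdot\vc\psi}$ (and the statement is vacuous if this minimizing set is empty).

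I do not expect a genuine obstacle. The two points deserving care are: (i) the supergradient inequality above holds a priori only for $\vc t'\in(0,\infty)^N$, but that is precisely the domain over which the G\^ateaux criterion is formulated, so no extension to all of $\R^N$ is needed; and (ii) one must invoke the $\R^N$ version of almost-everywhere differentiability of finite concave functions, which is classical (e.g.\ via monotonicity of the subdifferential). Everything else is a transcription of the one-dimensional argument.
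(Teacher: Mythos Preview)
Your proposal is correct and follows essentially the same approach as the paper, which simply states that the lemma is obtained ``by the same proof as in Lemma~\ref{one_d_lemma}, now using the fact that concave functions on $\R^N$ are differentiable almost everywhere.'' Your write-up in fact supplies more detail than the paper (finiteness of $\langle\psi_i,\sigma\rangle$, the vacuous-if-empty remark, and the observation that the supergradient inequality on the open domain suffices), all of which is accurate.
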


We can now prove our first main result regarding Question \ref{main_q_2}.
It says that almost every distribution supported on $N$ points, and possibly also $0$, 
admits a unique limit for empirical measures along geodesics.

\begin{thm} \label{finite_emp}
Fix $\bxi\in\S^{d-1}$ and $p_0,p_1,\dots,p_N\in[0,1]$ satisfying 
$\sum_{i=0}^N p_i = 1$.
For Lebesgue-almost every $\vc t = (t_1,\dots,t_N)\in(0,\infty)^N$, we have
\eq{
p_0\delta_0 + \sum_{i=1}^N p_i\delta_{t_i} \in \PPP_\mathrm{emp}(\bxi)\cap\PPP_\mathrm{emp}^\infty(\bxi).
}
\end{thm}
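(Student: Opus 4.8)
The plan is to exhibit each law $\LL_{\vc t} \coloneqq p_0\delta_0 + \sum_{i=1}^N p_i\delta_{t_i}$ via a step-function coupling $\tau_{\vc t}$ that depends affinely on $\vc t$, apply the finite-dimensional differentiability result (Lemma \ref{finite_d_lemma}), and transfer the resulting uniqueness of the minimizers' relevant statistics through Theorem \ref{minimizers_thm}. First I would dispose of the case $p_0 \ge p_\cc(\Z^d)$: here $F(0) = p_0 \ge p_\cc(\Z^d)$ for every $\vc t$, so $\LL_{\vc t} \in \PPP_\mathrm{emp}(\bxi) \cap \PPP_\mathrm{emp}^\infty(\bxi)$ by clause (a) of Definitions \ref{good_def} and \ref{good_def_infty}, and there is nothing left to prove. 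So assume $p_0 < p_\cc(\Z^d)$. Partition $[0,1]$ into consecutive intervals $I_0, I_1, \dots, I_N$ with $\Lambda(I_i) = p_i$ (an empty interval whenever $p_i = 0$), set $\psi_i \coloneqq \one_{I_i}$ for $1 \le i \le N$ and $\tau_0 \equiv 0$, and for $\vc t \in (0,\infty)^N$ put $\tau_{\vc t} \coloneqq \tau_0 + \vc t \cdot \vc\psi = \sum_{i=1}^N t_i \one_{I_i}$. Then $(\tau_{\vc t})_*(\Lambda) = \LL_{\vc t}$, so $\tau_{\vc t}$ is an admissible coupling in the sense of \eqref{function_coupling}.

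Next I would apply Lemma \ref{finite_d_lemma} with this fixed $\tau_0$ and $\vc\psi$. It supplies a single Lebesgue-null set $\CCC_\mathrm{bad} \subset (0,\infty)^N$, valid simultaneously for all $\vc t$, such that for every $\vc t \in (0,\infty)^N \setminus \CCC_\mathrm{bad}$ and every $\sigma \in \RR^\bxi_{\tau_{\vc t}}$ the number $\langle \psi_i, \sigma\rangle = \langle \one_{I_i}, \sigma\rangle$ equals $\partial f/\partial t_i[\vc t]$, hence does not depend on the choice of minimizer $\sigma$. Since $\tau_{\vc t}$ is a step function with $\tau_{\vc t}^{-1}(\{t_i\}) = I_i$ (up to a null set), the definition \eqref{pos_push_def} gives
\eq{
(\tau_{\vc t})_*^+(\sigma) = \sum_{i=1}^N \langle \one_{I_i}, \sigma\rangle\, \delta_{t_i}
}
for each such $\sigma$, so this measure is a fixed $\nu^+$ independent of $\sigma$. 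It is nonzero: any $\sigma \in \RR^\bxi_{\tau_{\vc t}}$ satisfies $\sum_{i=1}^N t_i \langle \one_{I_i}, \sigma\rangle = \langle \tau_{\vc t}, \sigma\rangle = \mu_\bxi(\tau_{\vc t})$, which is positive because $F(0) = p_0 < p_\cc(\Z^d)$. Consequently its normalization $\hat\nu^+$ is a fixed probability measure over all $\sigma \in \RR^\bxi_{\tau_{\vc t}}$.

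Finally, I would fix $\vc t \in (0,\infty)^N \setminus \CCC_\mathrm{bad}$ and invoke Theorem \ref{minimizers_thm} under the coupling $\tau_{\vc t}$. Part (a) gives a probability-one event on which every sequence $n_k \to \infty$ and every choice $\gamma_{n_k} \in \Geo(0, n_k\bxi)$ admit a subsequence along which $\hat\nu^+_{\gamma_{n_{k_j}}} \Rightarrow \hat\nu^+$, the limiting measure being the normalization of $(\tau_{\vc t})_*^+(\sigma)$ for the relevant $\sigma \in \RR^\bxi_{\tau_{\vc t}}$, which by the previous paragraph is the single measure $\hat\nu^+$ regardless of $\sigma$. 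Since the limit does not depend on the subsequence, the usual ``every subsequence has a further subsequence converging to the common limit'' argument upgrades this to $\hat\nu^+_{\gamma_n} \Rightarrow \hat\nu^+$ as $n \to \infty$ for every choice of $\gamma_n$, i.e.\ $\LL_{\vc t} \in \PPP_\mathrm{emp}(\bxi)$. The identical argument using part (b) of Theorem \ref{minimizers_thm}, applied along any increasing $\ell_k$ and any $\Gamma \in \Geo_\infty(\bxi)$, yields $\hat\nu^+_{\Gamma^{(\ell)}} \Rightarrow \hat\nu^+$, so $\LL_{\vc t} \in \PPP_\mathrm{emp}^\infty(\bxi)$ as well. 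As $\CCC_\mathrm{bad}$ is Lebesgue-null, this is exactly the claimed statement.

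I do not anticipate a genuine obstacle: all the analytic substance is already contained in Lemma \ref{finite_d_lemma} (concavity implies almost-everywhere differentiability, which implies uniqueness of the directional statistics $\langle \psi_i, \sigma\rangle$) and in Theorem \ref{minimizers_thm} (subsequential convergence of empirical measures to minimizers). The only points needing care are organizational: choosing the affine parametrization so that the finitely many test directions $\psi_i$ separate the atoms of $\LL_{\vc t}$, keeping $\CCC_\mathrm{bad}$ a single null set good for all $\vc t$ at once, and observing that for a step-function coupling the constancy of the $N$ scalars $\langle \one_{I_i}, \sigma\rangle$ is literally the constancy of the whole push-forward $(\tau_{\vc t})_*^+(\sigma)$.
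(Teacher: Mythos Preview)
Your proposal is correct and follows essentially the same approach as the paper's proof: the same step-function coupling $\tau_{\vc t}=\sum_i t_i\one_{I_i}$, the same appeal to Lemma \ref{finite_d_lemma} to obtain constancy of $\sigma(I_i)$ over $\RR^\bxi_{\tau_{\vc t}}$, and the same conclusion via Theorem \ref{minimizers_thm}. Your version is slightly more explicit in spelling out the subsequence-to-full-sequence upgrade and the nonvanishing of $\nu^+$, but the argument is the same.
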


\begin{proof}
Consider the partition of $[0,1)$ into intervals $\biguplus_{i=0}^N\II_i$, where
\eeq{ \label{interval_def}
\II_i \coloneqq \bigg[\sum_{j=0}^{i-1} p_j,\sum_{j=0}^ip_j\bigg), \quad i \in \{0,1,\dots,N\}.
}
By design, we have $\Lambda(\II_i) = p_i$, where $\Lambda$ is Lebesgue measure.
Set $\psi_i = \one_{\II_i}$ for $i\geq1$ and write $\tau = \vc t\cdot\vc \psi$ as in Lemma \ref{finite_d_lemma} (here $\tau_0\equiv0$).
Under the coupling $\tau_e = \tau(U_e)$ from \eqref{function_coupling}, the edge-weight distribution is
\eq{
\LL=\tau_*(\Lambda) = p_0\delta_0+\sum_{i=1}^N p_i\delta_{t_i}.
} 
If $p_0\geq p_\cc(\Z^d)$, then we trivially have $\LL\in\PPP_\mathrm{emp}(\bxi)\cap\PPP_\mathrm{emp}^\infty(\bxi)$ by Definitions \ref{good_def} and \ref{good_def_infty}.
Otherwise, Theorem \hyperref[var_form_thm_b]{\ref*{var_form_thm}\ref*{var_form_thm_b}} says $\RR^\bxi_\tau$ is nonempty.
In this latter scenario, whenever $Df[\vc t]$ exists as in Lemma \hyperref[finite_d_lemma_b]{\ref*{finite_d_lemma}\ref*{finite_d_lemma_b}}, we have
\eq{
\sigma(\II_i) = \langle \psi_i,\sigma\rangle = \frac{\partial f}{\partial t_i}[\vc t] \quad \text{for all $\sigma\in\RR^\bxi_\tau$, $i\in\{1,\dots,N\}$}.
}
In particular, $\sigma(\II_i)$ takes the same value for every $\sigma\in\RR^\bxi_\tau$.
Since $\tau$ is constant on each $\II_i$ and equal to zero on $[0,1]\setminus\biguplus_{i=1}^N\II_i$, it follows that $\tau_*^+(\sigma)$ is the same measure for every $\sigma\in\RR^\bxi_\tau$.
By Theorem \ref{minimizers_thm}, we conclude $\tau_*(\Lambda) \in \PPP_\mathrm{emp}(\bxi)\cap\PPP_\mathrm{emp}^\infty(\bxi)$.
\end{proof}

\begin{remark} \label{optimality_remark}
For $p_0<p_\cc(\Z^d)$, Theorem \ref{finite_emp} is optimal in that the set of $\vc t\in(0,\infty)^N$ for which the conclusion is false has Hausdorff dimension at least $N-1$.
Indeed, whenever $\LL$ contains atoms satisfying a suitable linear relation with integer coefficients (for example,
$3t_1 = t_2$), the result \cite[Thm.~6.2]{krishnan-rassoul-seppalainen?} says that $\LL\notin\PPP_\mathrm{length}(\bxi)$, in which case  $\LL\notin\PPP_\mathrm{emp}(\bxi)$ by \eqref{good_to_length_implication}.
In $\R^N$, the union of varieties associated to these relations is a set of Hausdorff dimension $N-1$.
\end{remark}

\subsection{Infinite-dimensional spaces, I}
Turning to the infinite-dimensional case, we will offer two different methods for leveraging the variational formula \eqref{var_form_thm} with Lemma \ref{concavity_lemma}.
The first is to adapt the approach of Theorem \ref{finite_emp} to generate further examples of discrete distributions belonging to $\PPP_\mathrm{emp}(\bxi)\cap\PPP_\mathrm{emp}^\infty(\bxi)$, but now having infinite support.
Let $\BBB_\bv$ \label{bv_banach_space}
denote the Banach space of real-valued sequences $\vc t = (t_1,t_2,\dots)$ of bounded variation:
\eq{
\|\vc t\|_\bv \coloneqq |t_1| + \sum_{i=1}^\infty |t_{i+1}-t_i| < \infty.
}
Let $\BBB_\bv^\eps$ be the subset of $\BBB_\bv$ consisting of $\vc t$ such that $t_i\geq\eps$ for all $i$.

\begin{thm} \label{countable_emp}
Fix $\bxi\in\S^{d-1}$, $p_0\in[0,1)$, and any two sequences of positive numbers $\vc p = (p_1,p_2,\dots)$, $\vc \beta = (\beta_1,\beta_2,\dots)$ such that
\eeq{ \label{B0_assumption}
\sum_{i=0}^\infty p_i = 1 \qquad \text{and} \qquad \sum_{i=1}^\infty \frac{\beta_i}{\log p_i^{-1}}< \infty.
}
For every $\eps>0$, there is a $G_\delta$ subset $\DDD_\eps=\DDD_\eps(\LL,\bxi,\vc p,\vc \beta)\subset\BBB_\mathrm{bv}^0$ such that every $\vc t\in\BBB_\bv^\eps$ is a limit point of $\DDD_{\eps}$, and
\eq{
p_0\delta_0 + \sum_{i=1}^\infty p_i\delta_{\beta_i t_i} \in \PPP_\mathrm{emp}(\bxi)\cap\PPP_\mathrm{emp}^\infty(\bxi) \quad \text{for every $\vc t\in\DDD_{\eps}$}.
}
\end{thm}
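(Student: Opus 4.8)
The plan is to run the proof of Theorem~\ref{finite_emp} in the separable Banach space $\BBB_\bv$ instead of $(0,\infty)^N$, using Mazur's theorem on G\^ateaux differentiability of continuous convex functions on separable Banach spaces \cite{phelps93} in place of the finite-dimensional fact that concave functions are differentiable almost everywhere. First I would partition $[0,1)=\biguplus_{i=0}^\infty\II_i$ into half-open intervals with $\Lambda(\II_i)=p_i$ for $i\ge1$, so that $\Lambda(\II_0)=p_0$, and for $\vc t=(t_1,t_2,\dots)\in\BBB_\bv^0$ set $\tau_{\vc t}\coloneqq\sum_{i\ge1}\beta_i t_i\,\one_{\II_i}$, a measurable map $[0,1]\to[0,\infty)$ with $(\tau_{\vc t})_*(\Lambda)=p_0\delta_0+\sum_{i\ge1}p_i\delta_{\beta_it_i}$. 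If $p_0\ge p_\cc(\Z^d)$, then by Theorem~\ref{time_constant_thm} the time constant vanishes and every such law automatically lies in $\PPP_\mathrm{emp}(\bxi)\cap\PPP_\mathrm{emp}^\infty(\bxi)$ by Definition~\ref{good_def}(a) and Definition~\ref{good_def_infty}(a); here one may simply take $\DDD_\eps=\BBB_\bv^0$. So assume from now on that $p_0<p_\cc(\Z^d)$.

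Since $\vc t\mapsto\tau_{\vc t}$ is linear and takes values in nonnegative functions on $\BBB_\bv^0$, Lemma~\ref{concavity_lemma} shows that $f(\vc t)\coloneqq\mu_\bxi(\tau_{\vc t})$ is concave. I would work on the norm-open, convex set $\CCC\coloneqq\{\vc t\in\BBB_\bv:\inf_{i\ge1}t_i>\eps/2\}$, which is open because the coordinate maps $\vc t\mapsto t_i$ are $1$-Lipschitz on $\BBB_\bv$, and which satisfies $\BBB_\bv^\eps\subseteq\CCC\subseteq\BBB_\bv^0$. Theorem~\ref{var_form_thm}\ref{var_form_thm_c} gives $\sigma(\II_i)=\langle1,\sigma\rangle\hat\sigma(\II_i)\le C\langle1,\sigma\rangle/\log p_i^{-1}$ for every $\sigma\in\RR^\bxi$, so fixing any $\sigma_0\in\RR^\bxi$ and using the second hypothesis in~\eqref{B0_assumption} yields $f(\vc t)\le\langle\tau_{\vc t},\sigma_0\rangle=\sum_{i\ge1}\beta_i t_i\sigma_0(\II_i)\le C\langle1,\sigma_0\rangle\|\vc t\|_\bv\sum_{i\ge1}\beta_i/\log p_i^{-1}<\infty$. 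Thus $f$ is finite everywhere; being concave, finite, and nonnegative, it is locally Lipschitz, in particular continuous, on $\CCC$. Since $\BBB_\bv$ is separable — it is isometrically isomorphic to $\R\oplus\ell^1$ via $\vc t\mapsto(t_1,(t_{i+1}-t_i)_{i\ge1})$ — Mazur's theorem applied to $-f$ produces a dense $G_\delta$ set $G\subseteq\CCC$ of G\^ateaux differentiability points of $f$. I set $\DDD_\eps\coloneqq G$: it is a $G_\delta$ subset of $\BBB_\bv^0$ (as $\CCC$ is relatively open in $\BBB_\bv^0$), and being dense in $\CCC\supseteq\BBB_\bv^\eps$, every point of $\BBB_\bv^\eps$ is a limit point of $\DDD_\eps$.

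It remains to treat $\vc t\in\DDD_\eps$. Because $\Lambda(\tau_{\vc t}^{-1}\{0\})=\Lambda(\II_0)=p_0\ne p_\cc(\Z^d)$, Theorem~\ref{var_form_thm}\ref{var_form_thm_b} gives $\RR^\bxi_{\tau_{\vc t}}\ne\varnothing$. Arguing exactly as in Lemma~\ref{finite_d_lemma}\ref{finite_d_lemma_b}: for $\sigma\in\RR^\bxi_{\tau_{\vc t}}$ and $\vc s\in\BBB_\bv$, the inequality $f(\vc t+r\vc s)-f(\vc t)\le\langle\tau_{\vc t+r\vc s}-\tau_{\vc t},\sigma\rangle=r\langle\tau_{\vc s},\sigma\rangle$, valid for all small $r$ of either sign, combines with G\^ateaux differentiability at $\vc t$ to force $Df[\vc t](\vc s)=\langle\tau_{\vc s},\sigma\rangle$; taking $\vc s=\vc e_i$ (the $i$-th standard unit sequence, which lies in $\BBB_\bv$ with $\|\vc e_i\|_\bv=2$) gives $\beta_i\sigma(\II_i)=Df[\vc t](\vc e_i)$, so $\sigma(\II_i)$ is the same for every $\sigma\in\RR^\bxi_{\tau_{\vc t}}$. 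Since $\tau_{\vc t}$ is constant on each $\II_i$ and vanishes on $\II_0$, the finite measure $(\tau_{\vc t})_*^+(\sigma)=\sum_{i\ge1}\sigma(\II_i)\,\delta_{\beta_it_i}$ — nonzero because $\langle\tau_{\vc t},\sigma\rangle=\mu_\bxi(\tau_{\vc t})>0$ forces some $\sigma(\II_i)>0$ — is therefore constant over $\RR^\bxi_{\tau_{\vc t}}$; so is its normalization, which thus lies in $\PPP$. By Theorem~\ref{minimizers_thm}\ref{minimizers_thm_a},\ref{minimizers_thm_b}, almost surely every subsequential weak limit of $\hat\nu^+_{\gamma_n}$, and of $\hat\nu^+_{\Gamma^{(\ell)}}$ for $\Gamma\in\Geo_\infty(\bxi)$, equals this common value; a standard subsequence-of-a-subsequence argument (using metrizability of $\PPP$) then upgrades this to convergence of the full sequence, so $(\tau_{\vc t})_*(\Lambda)\in\PPP_\mathrm{emp}(\bxi)\cap\PPP_\mathrm{emp}^\infty(\bxi)$, as desired.

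The main obstacle is the infinite-dimensional upgrade: one needs a \emph{continuous} concave function on an open convex subset of a \emph{separable} Banach space for Mazur's theorem to apply. This is exactly where hypothesis~\eqref{B0_assumption} enters — together with the Radon--Nikodym-type bound of Theorem~\ref{var_form_thm}\ref{var_form_thm_c} it keeps $f$ finite, hence (with concavity and nonnegativity) continuous, even though $\tau_{\vc t}$ may be unbounded — and the open set $\CCC$ must be chosen so as to be simultaneously norm-open in $\BBB_\bv$, to contain $\BBB_\bv^\eps$, and to keep the weight functions nonnegative so that Lemma~\ref{concavity_lemma} and Theorem~\ref{var_form_thm} apply. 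Everything else is a careful but essentially routine transcription of the proof of Theorem~\ref{finite_emp}.
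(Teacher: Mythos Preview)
Your proof is correct and follows the same overall architecture as the paper: partition $[0,1)$ into intervals $\II_i$ of mass $p_i$, realize $\vc t\mapsto f(\vc t)=\mu_\bxi(\tau_{\vc t})$ as a continuous concave function on an open convex subset of the separable space $\BBB_\bv$, apply Mazur's theorem, and at G\^ateaux differentiability points read off that $\sigma(\II_i)=\beta_i^{-1}Df[\vc t](\vc e_i)$ is constant over $\RR^\bxi_{\tau_{\vc t}}$ before invoking Theorem~\ref{minimizers_thm}. Your open set $\CCC=\{\inf_i t_i>\eps/2\}$ coincides with the paper's $\CCC_\eps=\bigcup_{j\ge1}\{t_i\ge\eps/2+j^{-1}\ \forall i\}$.

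The one substantive difference is how continuity of $f$ is obtained. The paper argues that $\vc t^{(j)}\to\vc t$ in $\BBB_\bv$ forces $\tau_{\vc t^{(j)}}\to\tau_{\vc t}$ pointwise and then invokes the model-specific result \cite[Thm.~6.9]{kesten86}; you instead use the purely convex-analytic fact that a concave, finite, nonnegative function on an open convex set in a Banach space is automatically continuous. Your route is more self-contained. Combined with your direct two-sided sandwich for the directional derivative (rather than the paper's verification, via \eqref{abs_cont_quantitative} and \eqref{B0_assumption}, that $\vc s\mapsto\langle\tau_{\vc s},\sigma\rangle$ is a \emph{bounded} linear functional), your argument in fact never uses hypothesis~\eqref{B0_assumption} in an essential way: your appeal to it for finiteness of $f$ is redundant since $\mu_\bxi<\infty$ always holds by Theorem~\ref{time_constant_thm}, and the sandwich for $\vc s=\vc e_i$ only needs $\langle\tau_{\vc e_i},\sigma\rangle=\beta_i\sigma(\II_i)<\infty$, which is automatic.
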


Before proving Theorem \ref{countable_emp}, let us elaborate on two types of interesting examples it produces. 

\begin{eg}[\textit{Discrete distribution with full support}] \label{dense_discrete}
Choose any $\vc p$ and $\vc \beta$ such that $\beta_1,\beta_2,
\dots$ is an enumeration of the positive rationals, and \eqref{B0_assumption} holds (for example, $p_i \asymp \e^{-i^3}$ with $\beta_i \leq i$ for all $i$).
By Theorem \ref{countable_emp}, there is some $\vc t \in \DDD_{2}$ within distance $1$ of the constant sequence equal to $2$; hence $|t_i| \in [1,3]$ for all $i\geq1$.
Because $\vc t\in\BBB_\bv$, $t_i$ must converge to some $t_\infty\in[1,3]$ as $i\to\infty$.
It follows that $\{t_i\beta_i\}_{i=1}^\infty$ is dense in $[0,\infty)$.
That is, we have identified a discrete distribution in  $\PPP_\mathrm{emp}(\bxi)\cap\PPP_\mathrm{emp}^\infty(\bxi)$ whose support is all of $[0,\infty)$, including an atom at $0$ of mass $p_0$.
\end{eg}

\begin{eg}[\textit{Discrete distribution with prescribed probabilities}] \label{prescribed_discrete}
In the previous example, we imposed a decay condition on $\vc p$ so that $\vc \beta$ could enumerate an unbounded set.
Alternatively, we can let $\vc p$ be arbitrary while choosing any summable $\vc \beta$ in order to satisfy \eqref{B0_assumption}.
In this way, Theorem \ref{countable_emp} produces discrete distributions in $\PPP_\mathrm{emp}(\bxi)\cap\PPP_\mathrm{emp}^\infty(\bxi)$ whose atoms have any desired sequence of probabilities $\vc p$.
\end{eg}

\begin{proof}[Proof of Theorem \ref{countable_emp}]
Consider the partition $[0,1)= \biguplus_{i=0}^\infty \II_i$, where $\II_i$ is again given by \eqref{interval_def}.
Set $\psi_i = \beta_i\one_{\II_i}$ and write $\tau = \vc t\cdot\vc\psi = \sum_{i=1}^\infty t_i\psi_i$ for $\vc t\in\BBB_\bv$ so that under the coupling \eqref{function_coupling}, we have
\eq{
\LL = \tau_*(\Lambda) = p_0\delta_0 + \sum_{i=1}^\infty p_i\delta_{\beta_it_i}.
}
Note that $\tau$ is indeed a measurable function $[0,1]\to[0,\infty)$ because for any particular $u\in[0,1]$, there is at most  one $i$ such that $\psi_i(u)$ is nonzero.
Using Lemma \ref{concavity_lemma}, it is easy to check that the map $\BBB_\bv^0\to[0,\infty)$ given by $\vc t\mapsto f(\vc t) = \mu_\bxi(\vc t\cdot\vc\psi)$ is concave.
Furthermore, this map is continuous: if $(\vc t^{(j)})_{j\geq1}$ is a sequence of elements in $\BBB_\bv$ such that $\|\vc t^{(j)}-\vc t\|_\bv\to0$ as $j\to\infty$, then $\vc t^{(j)}\cdot\vc\psi$ converges to $\vc t\cdot\vc\psi$ pointwise everywhere, and thus $\mu_\bxi(\vc t^{(j)}\cdot\vc\psi)\to\mu_\bxi(\vc t\cdot\vc\psi)$ by \cite[Thm.~6.9]{kesten86}.
For the purposes of applying convex function theory, we will restrict $f$ to
\eeq{ \label{C_eps_def}
\CCC_\eps \coloneqq\bigcup_{j=1}^\infty \Big\{\vc t\in\BBB_\bv :\,t_i \geq \frac{\eps}{2}+j^{-1} \text{ for all $i\geq1$}\Big\},
}
which is a convex, open subset of $\BBB_\bv$ containing $\BBB_\bv^\eps$.
Since $\BBB_\bv$ is separable, Mazur's Theorem (see \cite[Thm.~1.20]{phelps93}) guarantees the existence of a dense $G_\delta$ subset $\DDD_{\eps}\subset\CCC_\eps$ such that $f$ is G\^{a}teaux differentiable at all $\vc t\in\DDD_{\eps}$.

Now consider any $\vc t\in\DDD_{\eps}$, and
let us continue to write $\tau = \vc t\cdot\vc\psi$.
Observe that $\Lambda(\{u\in[0,1]:\tau(u)=0\}) = \Lambda(\II_0)=p_0$. 
If $p_0\geq p_\cc(\Z^d)$, then $\LL\in\PPP_\mathrm{emp}(\bxi)\cap\PPP_\mathrm{emp}^\infty(\bxi)$ by Definitions \ref{good_def} and \ref{good_def_infty}.
So let us assume $p_0<p_\cc(\Z^d)$, in which case Theorem \hyperref[var_form_thm_b]{\ref*{var_form_thm}\ref*{var_form_thm_b}} ensures $\RR^\bxi_\tau$ is nonempty.
For every $\sigma\in\RR^\bxi_{\tau}$ and $\vc s\in\CCC_\eps$, we have
\eq{
\mu_\bxi(\vc s\cdot\vc\psi)-\mu_\bxi(\vc t\cdot\vc\psi)
&\stackrefp{abs_cont_quantitative}{=}\inf_{\sigma'\in\RR^\bxi}\langle\vc s\cdot\vc\psi,\sigma'\rangle - \langle \vc t\cdot\vc\psi,\sigma\rangle \\
&\stackrefp{abs_cont_quantitative}{\leq}  \langle (\vc s-\vc t)\cdot\vc\psi,\sigma\rangle \\
&\stackrefp{abs_cont_quantitative}{=} \sum_{i=1}^\infty \beta_i(s_i-t_i)\sigma(\II_i) \\
&\stackref{abs_cont_quantitative}{\leq} \|\vc s-\vc t\|_\bv \sum_{i=1}^\infty \frac{C\langle 1,\sigma\rangle\beta_i}{\log p_i^{-1}}.
}
Since the sum in the final expression is finite by \eqref{B0_assumption}, we have shown that the map $\vc s\mapsto\langle\vc s\cdot\vc\psi,\sigma\rangle$ is equal to $Df[\vc t]$ for every $\sigma\in\RR^\bxi_{\tau}$.
For each $i \geq 1$, we can apply this map to the element $\vc s^{(j)}\in\BBB_\bv$ given by $s_i^{(j)} = \beta_j^{-1}\one_{\{i=j\}}$, to determine
\eq{
\sigma(\II_j) = \langle \one_{\II_j},\sigma\rangle = \langle \vc s^{(j)}\cdot\vc\psi,\sigma\rangle = Df[\vc t](\vc s^{(j)}) \quad \text{for all $\sigma\in\RR^\bxi_{\tau}$.}
}
In particular, $\sigma(\II_j)$ is constant over $\sigma\in\RR^\bxi_{\tau}$.
Since $\tau$ is constant on each $\II_j$ and equal to zero on $[0,1]\setminus\biguplus_{j=1}^\infty\II_j$, it follows that $\tau_*^+(\sigma)$ is the same measure for each $\sigma\in\RR^\bxi_{\tau}$.
By Theorem \ref{minimizers_thm}, we conclude $\LL\in\PPP_\mathrm{emp}(\bxi)\cap\PPP_\mathrm{emp}^\infty(\bxi)$.
\end{proof}

\begin{remark} \label{baire_remark}
It is a standard exercise using Baire's theorem to show that if $\DDD$ is a $G_\delta$ subset of a Banach space $\BBB$, and $\DDD$ is also dense in some open subset of $\BBB$, then $\DDD$ is uncountable.
In the specific case of Theorem \ref{countable_emp}, we realized $\DDD_{\eps}$ as a dense $G_\delta$ subset of the open set $\CCC_\eps$ from \eqref{C_eps_def}; hence $\DDD_{\eps}$ is uncountable.
The same is true for the analogous sets in Theorems \ref{continuous_emp}, \ref{bounded_emp}.
\end{remark}

\begin{remark} \label{prescribed_probabilities_remark}
Continuing on Remark \ref{optimality_remark} and Example \ref{prescribed_discrete}, we point out that Theorems \ref{finite_emp} and \ref{countable_emp} both work for arbitrary probabilities $p_0$, $p_1$, and so on.
As a tentative heuristic, then, obstructions to a discrete distribution's belonging to $\PPP_\mathrm{emp}(\bxi)$ arise because of the locations of its atoms rather than these atoms' weights.
\end{remark}

\subsection{Infinite-dimensional spaces, II} \label{infinite_dim}
Our second approach is as follows:
\begin{itemize}
\item[1.] Start with a fixed, measurable $\tau \colon  [0,1]\to[0,\infty)$.
\item[2.] Consider all perturbations of $\tau$ by functions $\vphi:[0,1]\to[0,\infty)$ which belong to a Banach space $\BBB$ (or rather an open, convex subset $\CCC\subset\BBB$) that separates measures on $[0,1]$.
\item[3.] Appeal to convex analysis to guarantee that the map $\vphi\mapsto\mu_\bxi(\tau+\vphi)$ is G\^{a}teaux differentiable on a dense subset of $\CCC$.
\item[4.] Because $\CCC$ separates measures, conclude that differentiability at $\vphi$ implies $(\tau+\vphi)_*(\Lambda)\in\PPP_\mathrm{emp}(\bxi)\cap\PPP_\mathrm{emp}^\infty(\bxi)$.
\end{itemize}

\begin{remark} \label{good_examples_1}
The reader may find the upcoming Theorems \ref{continuous_emp} and \ref{bounded_emp} more transparent by simply setting $\tau\equiv0$.
Nevertheless, including a general $\tau$ in the statements does qualitatively widen the set of examples we have of $\LL\in\PPP_\mathrm{emp}(\bxi)\cap\PPP_\mathrm{emp}^\infty(\bxi)$.
For instance, for any bounded-moment condition one might wish to impose, there is $\LL\in\PPP_\mathrm{emp}(\bxi)\cap\PPP_\mathrm{emp}^\infty(\bxi)$ failing that assumption.
Moreover, both Theorem \ref{continuous_emp} and Theorem \ref{bounded_emp} imply Theorem \ref{dense_thm}.
\end{remark}

\begin{remark} \label{good_examples_2}
A finer point is that the procedure outlined above offers no guarantee of preserving the zero set of $\tau$.
That is, we may begin with a distribution such that $F(0)>0$ but be left with $F(0)=0$ after perturbation, which would defeat the purpose of our having modified Question \ref{main_q} to Question \ref{main_q_2}. 
To avoid this possibility, we will replace the interval $[0,1]$ by $[p_0,1]$ and assume that all relevant functions are identically zero on $[0,p_0)$.
This approach was already featured in Theorems \ref{finite_emp} and \ref{countable_emp}.
In this way, within every class of functions $\tau$ considered (piecewise constant, continuous, differentiable, \textit{etc}.) and for every $p_0\in[0,1)$, we obtain examples of $\LL=\tau_*(\Lambda)\in\PPP_\mathrm{emp}(\bxi)\cap\PPP_\mathrm{emp}^\infty(\bxi)$ with $F(0)=p_0$.
\end{remark}

For the choice of $\BBB$, we offer two flavors.
The first is the Banach space $C^k([p_0,1]) = C^k([p_0,1],\R)$ of functions $\vphi:[p_0,1]\to\R$ with $k$ continuous derivatives, equipped with the norm
\eq{
\|\vphi\|_{C^k} \coloneqq \sum_{j=0}^k \|\vphi^{(k)}\|_\infty.
}
Let us extend each $\vphi\in C^k([p_0,1])$ to all of $[0,1]$ by defining 
\eeq{ \label{extend_def}
\bar \vphi(u)=\begin{cases} 0 &0\leq u<p_0, \\
\vphi(u) &p_0\leq u\leq 1.
\end{cases}
}
We may not have $\bar\vphi\in C^k([0,1])$, but this will not be of concern. 

\begin{thm} \label{continuous_emp}
Fix $\bxi\in\S^{d-1}$.
Given $k\geq0$ and $p_0\in[0,1)$, let
\eq{
\CCC=\{\vphi \in C^k([p_0,1]) :\,\vphi(u)>0 \text{ for all $u\in[p_0,1]$}\}.
}
For any measurable function $\tau \colon  [0,1]\to[0,\infty)$ with $\tau|_{[0,p_0)} \equiv 0$,
there exists a dense $G_\delta$ subset $\DDD = \DDD(\LL,\bxi,k,p_0,\tau)\subset\CCC$ such that
\eq{
(\tau+\bar\vphi)_*(\Lambda)\in\PPP_\mathrm{emp}(\bxi)\cap\PPP_\mathrm{emp}^\infty(\bxi)
\quad \text{for every $\vphi\in\DDD$}.
}
\end{thm}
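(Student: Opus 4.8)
The plan is to run the four-step scheme of Section~\ref{infinite_dim} with $\BBB=C^k([p_0,1])$. Define $f\colon\CCC\to[0,\infty)$ by $f(\vphi)=\mu_\bxi(\tau+\bar\vphi)$; this is well defined because $\tau+\bar\vphi$ is a nonnegative measurable function on $[0,1]$ (as $\tau\geq0$ and, by \eqref{extend_def}, $\bar\vphi\geq0$ on $[p_0,1]$ and $\bar\vphi\equiv0$ on $[0,p_0)$ for $\vphi\in\CCC$). Since $\vphi\mapsto\bar\vphi$ is linear, $\tau+\overline{(1-\alpha)\vphi_0+\alpha\vphi_1}=(1-\alpha)(\tau+\bar\vphi_0)+\alpha(\tau+\bar\vphi_1)$, so Lemma~\ref{concavity_lemma} gives concavity of $f$. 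For continuity, $\|\psi\|_\infty\leq\|\psi\|_{C^k}$, hence $\vphi_j\to\vphi$ in $C^k([p_0,1])$ forces $\tau+\bar\vphi_j\to\tau+\bar\vphi$ pointwise everywhere on $[0,1]$, whence $f(\vphi_j)\to f(\vphi)$ by \cite[Thm.~6.9]{kesten86}, exactly as in the proof of Theorem~\ref{countable_emp}. As $\CCC$ is a convex open subset of the separable Banach space $C^k([p_0,1])$, Mazur's theorem \cite[Thm.~1.20]{phelps93} provides a dense $G_\delta$ set $\DDD\subset\CCC$ on which $f$ is G\^{a}teaux differentiable; this $\DDD$ is the one in the statement.

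Next I would unpack what differentiability at $\vphi\in\DDD$ says about the minimizer set. Put $\LL=(\tau+\bar\vphi)_*(\Lambda)$. Since $\vphi>0$ on $[p_0,1]$ and $\tau\geq0$, the set $\{u:(\tau+\bar\vphi)(u)=0\}$ is exactly $[0,p_0)$, so $F(0)=p_0$; if $p_0\geq p_\cc(\Z^d)$ we are done by Definitions~\ref{good_def} and \ref{good_def_infty}, so assume $p_0<p_\cc(\Z^d)$, in which case part~\ref{var_form_thm_b} of Theorem~\ref{var_form_thm} gives $\RR^\bxi_{\tau+\bar\vphi}\neq\varnothing$. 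Fix $\sigma\in\RR^\bxi_{\tau+\bar\vphi}$ and consider the functional $L_\sigma(\psi)\coloneqq\langle\bar\psi,\sigma\rangle=\int_{[p_0,1]}\psi\,\dd\sigma$ on $C^k([p_0,1])$, which is continuous since $|L_\sigma(\psi)|\leq\sigma([p_0,1])\,\|\psi\|_{C^k}$ and $\sigma([p_0,1])<\infty$. For every $w\in\CCC$, part~\ref{var_form_thm_a} of Theorem~\ref{var_form_thm} together with $\sigma\in\RR^\bxi$ and optimality of $\sigma$ gives $f(w)-f(\vphi)=\inf_{\sigma'\in\RR^\bxi}\langle\tau+\bar w,\sigma'\rangle-\langle\tau+\bar\vphi,\sigma\rangle\leq\langle\tau+\bar w,\sigma\rangle-\langle\tau+\bar\vphi,\sigma\rangle=\langle\overline{w-\vphi},\sigma\rangle=L_\sigma(w-\vphi)$. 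Thus $L_\sigma$ is a continuous linear supergradient of $f$ at $\vphi$, and by the uniqueness built into the definition of G\^{a}teaux differentiability, $L_\sigma=Df[\vphi]$; in particular $L_\sigma$ is independent of the choice of $\sigma$. Because $C^k([p_0,1])$ contains the polynomials, which are dense in $C([p_0,1])$ and hence separate finite Borel measures on $[p_0,1]$, the restriction $\sigma|_{[p_0,1]}$ is one and the same measure for all $\sigma\in\RR^\bxi_{\tau+\bar\vphi}$.

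To finish I would push this through Theorem~\ref{minimizers_thm}. Since $\one_{\{(\tau+\bar\vphi)(u)\neq0\}}=\one_{[p_0,1]}(u)$, formula \eqref{pos_push_def} shows $\langle g,(\tau+\bar\vphi)_*^+(\sigma)\rangle=\int_{[p_0,1]}g\bigl((\tau+\bar\vphi)(u)\bigr)\,\sigma(\dd u)$ depends on $\sigma\in\RR^\bxi_{\tau+\bar\vphi}$ only through $\sigma|_{[p_0,1]}$; hence $(\tau+\bar\vphi)_*^+(\sigma)$, and so its normalization, is constant over the minimizer set. Applying Theorem~\ref{minimizers_thm} with the function $\tau+\bar\vphi$ in the role of ``$\tau$'': along every sequence $n_k\to\infty$ and every choice of geodesics, a subsequence of $\hat\nu^+_{\gamma_{n_k}}$ converges weakly to this common measure, and likewise along every $\bxi$-directed infinite geodesic; since the limit never depends on the subsequence, the full sequences converge almost surely, so $\LL=(\tau+\bar\vphi)_*(\Lambda)\in\PPP_\mathrm{emp}(\bxi)\cap\PPP_\mathrm{emp}^\infty(\bxi)$.

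This is essentially the mechanism of Theorem~\ref{countable_emp} recast in the ``perturbation'' framework of Section~\ref{infinite_dim}, and the one point needing genuine care is the continuity of $f$ in the $C^k$-norm, which I reduce to Kesten's continuity theorem via $\|\cdot\|_\infty\leq\|\cdot\|_{C^k}$, together with the bookkeeping that perturbation does not enlarge the zero set of the edge-weight function — which is precisely why one perturbs only by functions supported on $[p_0,1]$ and uses the extension-by-zero device \eqref{extend_def} (cf.\ Remark~\ref{good_examples_2}). Everything else — concavity via Lemma~\ref{concavity_lemma}, the Mazur step, and the supergradient identification — is routine once Theorems~\ref{var_form_thm} and \ref{minimizers_thm} are in hand.
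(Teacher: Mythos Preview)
Your proof is correct and follows essentially the same approach as the paper: concavity from Lemma~\ref{concavity_lemma}, continuity via $\|\cdot\|_\infty\leq\|\cdot\|_{C^k}$ and \cite[Thm.~6.9]{kesten86}, Mazur's theorem for the dense $G_\delta$ set of differentiability points, identification of the supergradient $\psi\mapsto\langle\bar\psi,\sigma\rangle$ with $Df[\vphi]$, and then separation of measures on $[p_0,1]$ to conclude via Theorem~\ref{minimizers_thm}. The only cosmetic differences are that you invoke polynomial density while the paper cites density of $C^k$ in $C^0$, and you make the identity $F(0)=p_0$ explicit where the paper leaves it implicit.
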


\begin{proof} 
Lemma \ref{concavity_lemma} shows that $\vphi\mapsto f(\vphi) = \mu_\bxi(\tau+\bar\vphi)$ is concave on $\CCC$, since
\eq{
\vphi = (1-\alpha)\vphi_0 + \alpha\vphi_1 \quad \implies \quad \tau+\bar\vphi = (1-\alpha)(\tau+\bar\vphi_0) + \alpha(\tau+\bar\vphi_1).
}
Furthermore, if $(\vphi_i)_{i\geq1}$ is a sequence in $\CCC$ such that $\|\vphi_i-\vphi\|_{C^k}\to0$ as $i\to\infty$, then clearly $\|(\tau+\bar\vphi_i)-(\tau-\bar\vphi)\|_\infty\to0$, which in turn implies $f(\vphi_i)\to f(\vphi)$ by \cite[Thm.~6.9]{kesten86}.
Since $C^k([p_0,1])$ is separable and $\CCC$ is open and convex, it now follows from Mazur's Theorem (again, see \cite[Thm.~1.20]{phelps93}) that there is a dense $G_\delta$ subset $\DDD\subset\CCC$ such that $f$ is G\^{a}teaux differentiable at all $\vphi\in\DDD$.
 
Consider any $\vphi\in\CCC$ that is a point of differentiability.
If $p_0\geq p_\cc(\Z^d)$, then we already have $(\tau+\bar\vphi)_*(\Lambda)\in\PPP_\mathrm{emp}(\bxi)\cap\PPP_\mathrm{emp}^\infty(\bxi)$ by Definitions \ref{good_def} and \ref{good_def_infty}.
So let us assume $p_0< p_\cc(\Z^d)$ and consider any $\sigma\in\RR^\bxi_{\tau+\bar\vphi}$.
For every $\phi\in\CCC$, we have
\eq{ 
f(\phi) - f(\vphi) &= \inf_{\sigma'\in\RR^\bxi}\langle\tau+\bar\phi,\sigma'\rangle - \langle \tau+\bar\vphi,\sigma\rangle 
\leq \langle\bar\phi-\bar\vphi,\sigma\rangle.
}
Considering that $\phi\mapsto\langle\bar\phi,\sigma\rangle$ is indeed a continuous linear functional on $C^k([p_0,1])$, 
we see that this map is precisely $Df[\vphi]$.
In particular, the quantity $\langle\bar\phi,\sigma\rangle$ assumes the same value for every $\sigma\in\RR^\bxi_{\tau+\bar\vphi}$.
Because $C^k([p_0,1])$ is dense in $C^0([p_0,1])$, this is enough to conclude that $\sigma$ restricted to $[p_0,1]$ is the same measure for every $\sigma\in\RR^\bxi_{\tau+\bar\vphi}$.
Since $(\tau+\bar\vphi)(u) = 0$ for all $u\in[0,p_0)$, it follows that 
$(\tau+\bar\vphi)_*^+(\sigma)$ is the same measure for every $\sigma\in\RR^\bxi_{\tau+\bar\vphi}$.
In light of Theorem \ref{minimizers_thm}, we conclude $(\tau+\bar\vphi)_*(\Lambda)\in\PPP_\mathrm{emp}(\bxi)\cap\PPP_\mathrm{emp}^\infty(\bxi)$.
\end{proof}

\begin{eg}[\textit{Continuous distribution with differentiable density}] \label{continuous_eg}
Let $\tau$ be any nonnegative, nondecreasing function that vanishes on $[0,p_0]$ and is $k$-times continuously differentiable on $(0,1)$.
For $k\geq1$, the proof of Theorem \ref{continuous_emp} works without modification if $\CCC$ is replaced by
\eq{
\CCC' \coloneqq \{\vphi \in C^k([p_0,1]) :\,\vphi(u),\vphi'(u)>0 \text{ for all $u\in[p_0,1]$}\}.
}
Whenever $\vphi\in\CCC'$, the sum $\tau+\bar\vphi$ is strictly increasing on $[p_0,1]$ and again $k$-times continuously differentiable on $(p_0,1)$.
Regarding $\tau+\bar\vphi$ as the inverse of some cumulative distribution function---in the sense of \eqref{inverse_cdf}---we conclude that
$(\tau+\bar\vphi)_*(\Lambda)$ is the law of $XY$, where $X\sim\mathrm{Bernoulli}(1-p_0)$ and $Y$ is an independent continuous random variable bearing a density (with respect to Lebesgue measure) possessing $k-1$ continuous derivatives.
If we further assume that $\tau$ is unbounded, then so too is $Y$.
\end{eg}

Our second flavor for $\BBB$ is realized as follows.
We continue to use the notation $\bar\vphi$ from {\eqref{extend_def} to extend functions on $[p_0,1]$ to all of $[0,1]$.
Recall that $\Sigma$ is the set of all finite measures on $[0,1]$ with total mass at least $1$.
Let $\Sigma_\Lambda\subset\Sigma$ denote the set of those $\sigma$ that are absolutely continuous with respect to Lebesgue measure $\Lambda$.
This subset is of interest to us because Theorem \hyperref[var_form_thm_c]{\ref*{var_form_thm}\ref*{var_form_thm_c}} implies $\RR^\bxi\subset\Sigma_\Lambda$.
Now consider any countable family $\FFF$ of measurable functions $\psi \colon [p_0,1]\to[0,\infty)$ satisfying the following two conditions:
\begin{enumerate}[label=\textup{(\roman*)}]

\item \label{family_2} 
$\|\psi\|_\infty \leq 1$ for all $\psi\in\FFF$. 

\item \label{family_3}
If $\sigma_1,\sigma_2\in\Sigma_{\Lambda}$ are such that $\langle\bar\psi,\sigma_1\rangle=\langle\bar\psi,\sigma_2\rangle$ for every $\psi\in\FFF$, then $\sigma_1|_{[p_0,1]}=\sigma_2|_{[p_0,1]}$.
\end{enumerate}

\begin{eg} \label{choice_eg}
One suitable choice for $\FFF$ is the set of indicator functions for intervals of the form $[q,1]$, where $q\in[p_0,1]$ is a rational number.
Another possibility is any countable, dense subset of $C^0([p_0,1],[0,\infty))$ with each element scaled to satisfy (ii).
\end{eg}

Let $\ell^1(\N)$ denote the Banach space of sequences $\vc  t = ( t_1, t_2,\dots)$ such that
$\|\vc  t\|_1 \coloneqq \sum_{i=1}^\infty |t_i| < \infty$.
We will write $\ell^1(\N)_{+}$ \label{ell1_banach_space}
for the subset of those $\vc t$ with $ t_i\geq 0$ for all $i$.
For a sequence of functions $\psi_1,\psi_2,\dots$, we use the notation $\vc\psi = (\psi_1,\psi_2,\dots)$ and
$\vc t\cdot\bar{\vc\psi} = \sum_{i=1}^\infty  t_i\bar\psi_i$.
Note that when $\vc t\in\ell^1(\N)$ and each $\psi_i$ belongs to an $\FFF$ satisfying condition \ref{family_2}, we have
\eeq{ \label{1_to_infty_bd}
\|\vc t\cdot\bar{\vc\psi}\|_\infty \leq \|\vc t\|_1.
}

\begin{thm} \label{bounded_emp}
Fix $\bxi\in\S^{d-1}$ and $p_0\in[0,p_\cc(\Z^d))$.
Let $\FFF = (\psi_i)_{i\geq1}$ be a sequence of measurable functions $[0,p_0]\to[0,\infty)$ satisfying \ref{family_2} and \ref{family_3} shown above.
For any measurable $\tau \colon  [0,1]\to[0,\infty)$ with $\{u:\,  \tau(u)=0\} = [0,p_0]$, there is a convex, open set $\CCC=\CCC(\LL,\bxi,p_0,\FFF,\tau)\subset \ell^1(\N)$ that contains $\ell^1(\N)_+$ and is such that:
\begin{enumerate}[label=\textup{(\alph*)}]

\item \label{bounded_emp_a}
The map $\CCC\to(0,\infty)$ given by $\vc t\mapsto f(\vc t)=\mu_\bxi(\tau+\vc t\cdot\bar{\vc\psi})$ is well-defined and satisfies
\eq{
\mu_\bxi(\tau+\vc t\cdot\vc\psi) = \inf_{\sigma\in\RR^\bxi}\langle \tau+\vc t\cdot\vc\psi,\sigma\rangle.
}
Moreover, the set $\RR^\bxi_{\tau+\vc t\cdot\vc\psi} = \{\sigma\in\RR^\bxi :\,\langle \tau+\vc t\cdot\vc\psi,\sigma\rangle = \mu_\bxi(\tau+\vc t\cdot\vc\psi)\}$ is nonempty for every $\vc t\in\CCC$.

\item  \label{bounded_emp_b}
There exists a dense $G_\delta$ subset $\DDD\subset\CCC$ such that 
%
\eq{
(\tau+\vc t\cdot\bar{\vc\psi})_*(\Lambda)\in\PPP_\mathrm{emp}(\bxi)\cap\PPP_\mathrm{emp}^\infty(\bxi) \quad
\text{for every $\vc t\in\DDD\cap\ell^1(\N)_+$}.
}

\end{enumerate}
\end{thm}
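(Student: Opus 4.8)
The argument follows the template already used for Theorems \ref{finite_emp}, \ref{countable_emp}, and \ref{continuous_emp}: realize $\mu_\bxi$ as a concave function on an open convex subset of a separable Banach space, invoke Mazur's theorem to obtain generic G\^ateaux differentiability, and then convert differentiability at $\vc t$ into the assertion that $(\tau+\vc t\cdot\bar{\vc\psi})_*^+(\sigma)$ is the same measure for every $\sigma\in\RR^\bxi_{\tau+\vc t\cdot\bar{\vc\psi}}$, which by Theorem \ref{minimizers_thm} forces convergence of the relevant empirical measures. What is genuinely new here, compared with the finite-dimensional cases, is that $\ell^1(\N)$ is infinite-dimensional and the perturbations $\vc t\cdot\bar{\vc\psi}$ need not be of one sign, so Proposition \ref{slightly_negative_lemma} must be used to make sense of $f$ on an open neighbourhood of $\ell^1(\N)_+$.

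For part \ref{bounded_emp_a}: since $\{u:\tau(u)=0\}=[0,p_0]$ we have $F(0)=p_0<p_\cc(\Z^d)$, so Proposition \ref{slightly_negative_lemma} supplies a constant $\af>0$; the key point is that any nonnegative perturbation of $\tau$ that vanishes on $[0,p_0)$ again has $F(0)=p_0$ and is stochastically no smaller than $\LL$, so the same threshold $\af$ remains valid after such a perturbation. I would then take
\eq{
\CCC \coloneqq \Big\{\vc t\in\ell^1(\N):\ \textstyle\sum_{i\,:\,t_i<0}|t_i|<\af\Big\},
}
which is open and convex (a sublevel set of the continuous convex map $\vc t\mapsto\|\vc t^-\|_1$) and contains $\ell^1(\N)_+$. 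For $\vc t\in\CCC$, decomposing $\vc t=\vc t^+-\vc t^-$ and using \eqref{1_to_infty_bd}, the function $\tau+\vc t\cdot\bar{\vc\psi}$ is a perturbation of the nonnegative function $\tau+\vc t^+\cdot\bar{\vc\psi}$ by something of sup-norm at most $\|\vc t^-\|_1<\af$; hence Proposition \ref{slightly_negative_lemma} yields the time constant, the variational formula over $\RR^\bxi$, nonemptiness of $\RR^\bxi_{\tau+\vc t\cdot\bar{\vc\psi}}$, and $\mu_\bxi(\tau+\vc t\cdot\bar{\vc\psi})\geq\af>0$, so $f\colon\CCC\to(0,\infty)$ is well-defined. (For $\vc t\in\ell^1(\N)_+$ one may bypass Proposition \ref{slightly_negative_lemma} and quote Theorem \ref{var_form_thm} directly, since then $F(0)=p_0\neq p_\cc(\Z^d)$.)

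For part \ref{bounded_emp_b}: the variational formula from part \ref{bounded_emp_a} writes $f(\vc t)=\inf_{\sigma\in\RR^\bxi}\langle\tau+\vc t\cdot\bar{\vc\psi},\sigma\rangle$ as an infimum of functionals affine in $\vc t$, so $f$ is concave on $\CCC$; it is also continuous, because $\|\vc t^{(j)}-\vc t\|_1\to0$ forces $\|\vc t^{(j)}\cdot\bar{\vc\psi}-\vc t\cdot\bar{\vc\psi}\|_\infty\to0$ by \eqref{1_to_infty_bd}, and the time constant depends continuously on the weight function in sup-norm (\cite[Thm.~6.9]{kesten86}, together with Proposition \ref{slightly_negative_lemma} for the slightly-signed range). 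Separability of $\ell^1(\N)$ and Mazur's theorem \cite[Thm.~1.20]{phelps93} then give a dense $G_\delta$ set $\DDD\subset\CCC$ on which $f$ is G\^ateaux differentiable. Fix $\vc t\in\DDD\cap\ell^1(\N)_+$ and set $\tilde\tau\coloneqq\tau+\vc t\cdot\bar{\vc\psi}$, a nonnegative function whose zero set has measure $p_0<p_\cc(\Z^d)$, so $\RR^\bxi_{\tilde\tau}\neq\varnothing$. For any $\sigma\in\RR^\bxi_{\tilde\tau}$ and any $\vc s\in\CCC$,
\eq{
f(\vc s)-f(\vc t)\ \leq\ \langle\tau+\vc s\cdot\bar{\vc\psi},\sigma\rangle-\langle\tau+\vc t\cdot\bar{\vc\psi},\sigma\rangle\ =\ \sum_{i=1}^\infty (s_i-t_i)\,\langle\bar\psi_i,\sigma\rangle,
}
and the right-hand side is a bounded linear functional of $\vc s-\vc t$ since $\sup_i\langle\bar\psi_i,\sigma\rangle\leq\langle1,\sigma\rangle<\infty$ (using \ref{family_2}); by uniqueness of the G\^ateaux derivative this functional is $Df[\vc t]$, so evaluating at the standard basis vectors gives $\langle\bar\psi_i,\sigma\rangle=Df[\vc t](\vc e_i)$ for all $i$, independent of $\sigma\in\RR^\bxi_{\tilde\tau}$. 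Therefore any $\sigma,\sigma'\in\RR^\bxi_{\tilde\tau}$ satisfy $\langle\bar\psi,\sigma\rangle=\langle\bar\psi,\sigma'\rangle$ for all $\psi\in\FFF$; since $\RR^\bxi\subset\Sigma_\Lambda$ by Theorem \hyperref[var_form_thm_c]{\ref*{var_form_thm}\ref*{var_form_thm_c}}, property \ref{family_3} gives $\sigma|_{[p_0,1]}=\sigma'|_{[p_0,1]}$. Because $\tilde\tau$ vanishes Lebesgue-a.e.\ precisely on $[0,p_0]$, the measure $\tilde\tau_*^+(\sigma)$ depends only on $\sigma|_{[p_0,1]}$, hence equals a common measure for all $\sigma\in\RR^\bxi_{\tilde\tau}$ (nonzero, as $\mu_\bxi(\tilde\tau)>0$), and so does its normalization. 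By Theorem \ref{minimizers_thm}, every subsequential weak limit of $\sigma_{\gamma_n}$ for $\gamma_n\in\Geo(0,n\bxi)$ — and likewise along $\bxi$-directed infinite geodesics — lies in $\RR^\bxi_{\tilde\tau}$ and pushes forward under $\hat\tau_*^+$ to this common limit; since all subsequential limits agree, $\hat\nu_{\gamma_n}^+$ (resp.\ $\hat\nu_{\Gamma^{(\ell)}}^+$) converges, i.e.\ $(\tilde\tau)_*(\Lambda)=\tau+\vc t\cdot\bar{\vc\psi})_*(\Lambda)\in\PPP_\mathrm{emp}(\bxi)\cap\PPP_\mathrm{emp}^\infty(\bxi)$.

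The main obstacle is part \ref{bounded_emp_a}: constructing an open convex subset $\CCC\subset\ell^1(\N)$ that contains all of the unbounded set $\ell^1(\N)_+$ and yet on which the possibly slightly-signed weight function $\tau+\vc t\cdot\bar{\vc\psi}$ still admits a time constant and the variational formula. This forces one to verify that the threshold $\af$ of Proposition \ref{slightly_negative_lemma} is not destroyed by first perturbing $\tau$ by an arbitrary nonnegative element of $\vc t^+\cdot\bar{\vc\psi}$ — i.e.\ that $\af$ may be chosen to depend only on $d$ and on the gap $p_\cc(\Z^d)-F(0)=p_\cc(\Z^d)-p_0$, which is preserved under such perturbations. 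Everything downstream is a routine adaptation of the finite-dimensional arguments, with \eqref{1_to_infty_bd} handling the $\ell^1$-to-sup-norm passage and \ref{family_3} converting equality of test integrals into equality of the restricted measures on $[p_0,1]$.
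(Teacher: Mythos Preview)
Your proof is correct and matches the paper's approach essentially line for line: same choice of $\CCC=\{\vc t:\sum_i(-t_i\vee0)<\af\}$, same appeal to Proposition \ref{slightly_negative_lemma} and \eqref{1_to_infty_bd} for part \ref{bounded_emp_a}, same concavity/continuity/Mazur argument for part \ref{bounded_emp_b}, and same use of condition \ref{family_3} together with Theorem \hyperref[var_form_thm_c]{\ref*{var_form_thm}\ref*{var_form_thm_c}} to pin down $\sigma|_{[p_0,1]}$. Your closing remark on the uniformity of $\af$ is a point the paper also leaves implicit; it is resolved not by $\af$ depending only on $p_\cc(\Z^d)-F(0)$ but by the monotonicity of the constant $h_0$ in \cite[Prop.~5.8]{kesten86} (hence of $\af$ in Lemma \ref{eta_choice_lemma}) under stochastic domination, so replacing $\tau$ by the pointwise larger $\tau+\vc t^+\cdot\bar{\vc\psi}$ can only increase the admissible $\af$.
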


\begin{proof}
If $\LL = \tau_*(\Lambda)$, then the assumption on $\tau$ gives $F(0)<p_\cc(\Z^d)$.
Therefore, the existence of $\CCC\supset\ell^1(\N)_+$ satisfying part \ref{bounded_emp_a} is immediate from Proposition \ref{slightly_negative_lemma} and \eqref{1_to_infty_bd}.
For instance, with $\af>0$ denoting the constant from Lemma \ref{slightly_negative_lemma}, we can take
\eq{
\CCC \coloneqq \bigg\{\vc t \in \ell^1(\N) :\,\sum_{i = 1}^\infty (- t_i\vee0) < \af\bigg\}.
}
We can now mostly repeat the proof of Theorem \ref{continuous_emp} to obtain part \ref{bounded_emp_b}.

As before, concavity of $f$ is immediate from Lemma \ref{concavity_lemma}.
Meanwhile, the continuity of $f$ is verified as follows. 
Whenever $\|\vc t^{(j)}-\vc t\|_1\to0$ as $j\to\infty$, we have $\|(\tau+\vc t^{(j)}\cdot\vc\psi)-(\tau+\vc t\cdot\vc\psi)\|_\infty\to0$ by \eqref{1_to_infty_bd}.
Now appeal to \cite[Thm.~6.9]{kesten86} once more to obtain continuity of $f$.\footnote{Technically, the theorem referenced applies only for $\vc t\in\ell^1(\N)_+$, but it can be modified to work for $\vc t\in\CCC$, provided one has Lemma \ref{eta_choice_lemma}.
No circular logic is created here, as Lemma \ref{eta_choice_lemma} is needed for the proofs of Theorems \ref{var_form_thm} and \ref{minimizers_thm}, as well as Proposition \ref{slightly_negative_lemma}. See also Footnote \ref{also_for_negative_fn}.}
Given concavity and continuity, Mazur's Theorem again provides a dense $G_\delta$ subset $\DDD\subset\CCC$ such that $f$ is G\^{a}teaux differentiable at every $\vc t\in\DDD$.

Now consider any $\vc t\in\ell^1(\N)_+$ that is a point of differentiability.
For every $\sigma\in\RR^\bxi_{\tau+\vc t\cdot\bar{\vc\psi}}$ and $\vc s\in\CCC$, we have
\eeq{ \label{unique_majorization}
f(\vc s)-f(\vc t) &= \inf_{\sigma'\in\RR^\bxi}\langle\tau+\vc s\cdot\bar{\vc\psi},\sigma'\rangle - \langle \tau+\vc t\cdot\bar{\vc\psi},\sigma\rangle 
\leq \langle(\vc s-\vc t)\cdot\bar{\vc\psi},\sigma\rangle.
}
Observe that $\vc s\mapsto\langle\vc s\cdot\bar{\vc\psi},\sigma\rangle$ is linear on $\ell^1(\N)$, and continuous by \eqref{1_to_infty_bd}.
Because of \eqref{unique_majorization}, the assumption of differentiability implies that this map is equal to $Df[\vc t]$ for every $\sigma\in\RR^\bxi_{\tau+\bar\vphi}$.
Because $\FFF$ satisfies condition \ref{family_3}, it follows that $\sigma$ restricted to $[p_0,1]$ is the same measure for every $\sigma\in\RR^\bxi_{\tau+\vc t\cdot\bar{\vc\psi}}$.
Given the hypothesis $(\tau+\vc t\cdot\bar{\vc\psi})(u) = 0$ for all $u\in[0,p_0)$, we deduce that 
$(\tau+\vc t\cdot\bar{\vc\psi})_*^+(\sigma)$ is the same measure for every $\sigma\in\RR^\bxi_{\tau+\vc t\cdot\bar{\vc\psi}}$.
In light of Theorem \ref{minimizers_thm}, we conclude $(\tau+\vc t\cdot\bar{\vc\psi})_*(\Lambda)\in\PPP_\mathrm{emp}(\bxi)\cap\PPP_\mathrm{emp}^\infty(\bxi)$.
\end{proof}

\begin{eg}[\textit{Continuous distribution with full support}] \label{better_choice_eg}
Consider the following modification of the first choice for $\FFF$ in Example \ref{choice_eg}.
To begin, enumerate the rationals in $(p_0,1)$ as $q_1$, $q_2$, and so on.
Next define functions $\Psi, \Psi^{(k)} \colon [0,1]\to[0,1]$ by
\eq{
\Psi(u) \coloneqq \begin{cases} 0 &\rlap{$0$}{\hphantom{p_0}}\leq u<p_0, \\
\frac{u-p_0}{1-p_0} &p_0\leq u\leq 1,
\end{cases} \qquad
\Psi^{(k)}(u) \coloneqq \begin{cases} 0 &\rlap{$0$}{\hphantom{p_0}}\leq u<q_i, \\
\frac{q_k-p_0}{1-p_0} &\rlap{$q_k$}{\hphantom{p_0}}\leq u\leq 1.
\end{cases}
}
For each $i$, let $\psi^{(k)}_1,\psi^{(k)}_2,\dots$ be a sequence of continuous, nonnegative, strictly increasing functions on $[p_0,1]$ such that $\psi^{(k)}_j\leq \Psi$ for all $j$, and $\psi^{(k)}_j$ converges pointwise almost everywhere on $[p_0,1]$ to $\Psi^{(k)}$ as $j\to\infty$.
For an example illustration, see Figure \ref{better_choice_fig}.
We claim that $\FFF = \{\psi_j^{(k)} :\,j,k\geq1\}$ satisfies conditions \ref{family_2} and \ref{family_3}.

\begin{figure}[b]
\centering
\includegraphics[width=0.7\textwidth]{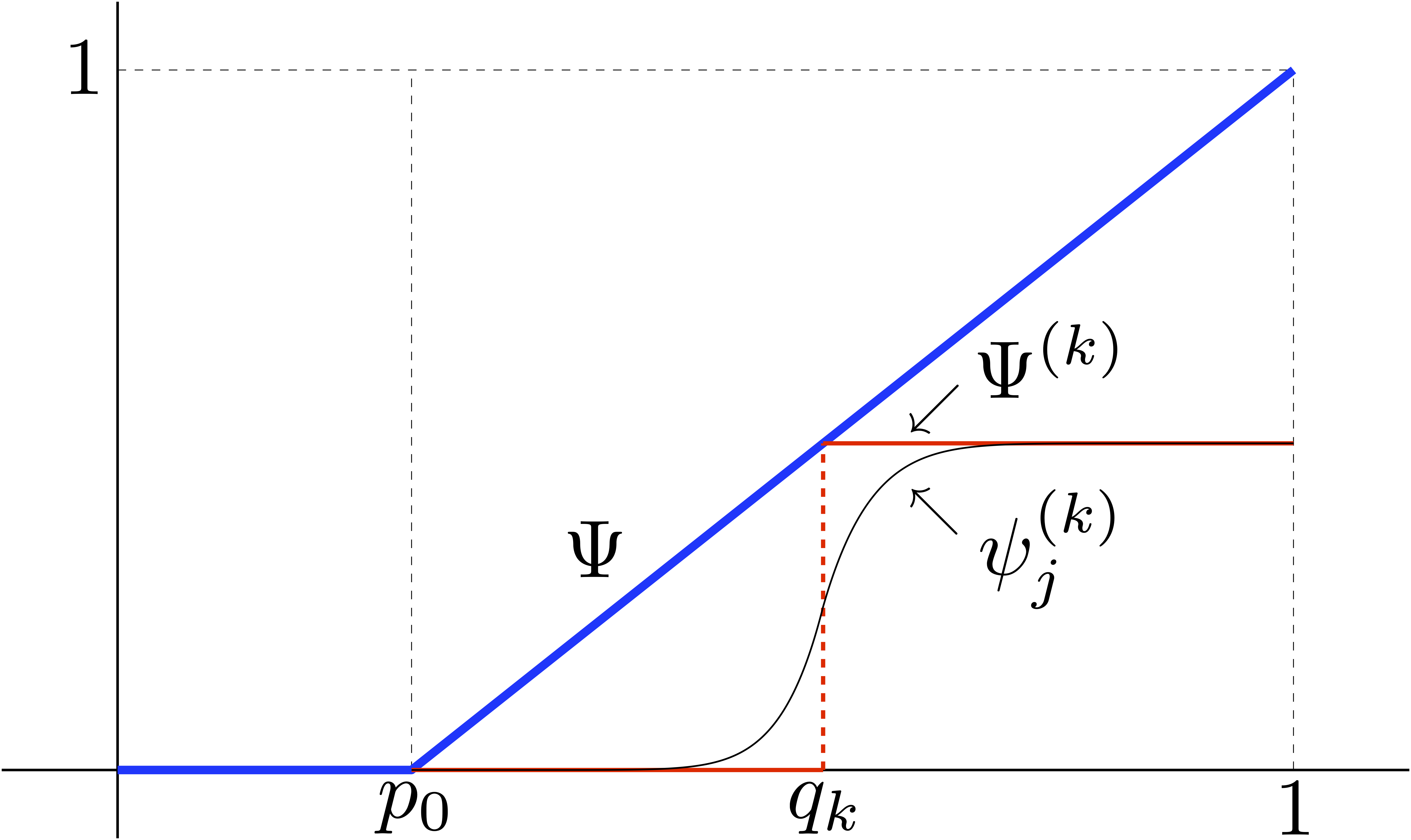}
\caption{Illustration for Example \ref{better_choice_eg}.
The thick curve on the diagonal is $\Psi$, the piecewise constant function is $\Psi^{(k)}$, and the strictly increasing curve approximating $\Psi^{(k)}$ is $\psi_j^{(k)}$.
As $j\to\infty$, we demand $\psi_j^{(k)}(u)\to\Psi^{(k)}(u)$ for Lebesgue-almost every $u\in[p_0,1]$.}
\label{better_choice_fig}
\end{figure}

Since $\psi^{(k)}_j\leq \Psi$ and $\|\Psi\|_\infty = 1$, condition \ref{family_2} is clear.
For \ref{family_3}, recall that every $\sigma\in\Sigma_\Lambda$ admits a density $\dd\sigma/\dd\Lambda\in L^1(\Lambda)$.
By dominated convergence (the dominating function being $\dd\sigma/\dd\Lambda$), we have
\eq{
\lim_{j\to\infty} \langle \bar\psi^{(k)}_j,\sigma\rangle 
= \lim_{j\to\infty} \int_0^1 \bar\psi^{(k)}_j(u)\frac{\dd\sigma}{\dd\Lambda}(u)\ \dd u 
&= \int_0^1 \Psi^{(k)}(u)\frac{\dd\sigma}{\dd\Lambda}(u) \\ 
&= \frac{q_k-p_0}{1-p_0}\langle \one_{[q_k,1]},\sigma\rangle.
}
Therefore, if $\langle \bar\psi_j^{(k)},\sigma_1\rangle = \langle\bar \psi_j^{(k)},\sigma_2\rangle$ for all $j$, then $\langle \one_{[q_k,1]},\sigma_1\rangle = \langle \one_{[q_k,1]},\sigma_2\rangle$.
In turn, if the latter equality holds for all $k$, then $\sigma_1|_{[p_0,1]} =\sigma_2|_{[p_0,1]}$ as desired.

Now let $\tau$ be any continuous function that vanishes on $[0,p_0]$, is strictly increasing on $[p_0,1)$, and diverges to $\infty$ at $1$.
Enumerate the elements of $\FFF$ as $\psi_1,\psi_2,\dots$, and consider any $\vc t$ belonging to the set $\DDD\cap\ell^1(\N)_+$ guaranteed by Theorem \ref{bounded_emp}.
Because each $\psi_i$ is continuous and strictly increasing on $[p_0,1]$, so too is $\tau+\vc t\cdot\bar{\vc\psi}$.
Furthermore, for each $u\in[p_0,1]$, we have
\eq{
\tau(u) \leq (\tau+\vc t\cdot\bar{\vc\psi})(u) \leq \tau(u) + \|\vc t\|_1\Psi(u) = \tau(u) + \frac{u-p_0}{1-p_0}.
}
Consequently, we have
\eq{
(\tau+\vc t\cdot\bar{\vc\psi})\big|_{[0,p_0]} \equiv 0, \quad
\lim_{u\searrow p_0}(\tau+\vc t\cdot\bar{\vc\psi})(u)=0, \quad 
\lim_{u\nearrow1}(\tau+\vc t\cdot\bar{\vc\psi})(u) = \infty,
} 
which means $(\tau+\vc t\cdot\bar{\vc\psi})_*(\Lambda)$ is an element of $\PPP_\mathrm{emp}(\bxi)\cap\PPP_\mathrm{emp}^\infty(\bxi)$ that has an atom at $0$ of mass $p_0$, and is otherwise continuous and supported on all of $[0,\infty]$.
\end{eg}

\begin{remark}[\textit{Distributions with mixed properties}] \label{mix_remark}
The various theorems in the infinite-dimensional setting could be combined to generate examples of $\LL\in\PPP_\mathrm{emp}(\bxi)\cap\PPP_\mathrm{emp}^\infty(\bxi)$ containing both discrete and continuous components.
More precisely, we could take as given any probabilities $p_0+p_\mathrm{bv}+p_{C^k}+p_{\ell^1}=1$ and any function $\tau \colon  [0,1]\to[0,\infty)$ that vanishes on $[0,p_0+p_\mathrm{bv})$ and is strictly positive on $\big((1+p_\cc(\Z^d))(1-p_{\ell^1}),1\big]$.
Now perturb $\tau$ in the several ways we have seen, depending on the subinterval of $[0,1]$. 
Within $[p_0,p_0+p_\mathrm{bv})$, add the scaled indicator functions from the proof of Theorem \ref{countable_emp}; on $[p_0+p_\mathrm{bv},p_0+p_\mathrm{bv}+p_{C^k}]$, use a strictly positive $\vphi\in C^k$ as in Theorem \ref{continuous_emp}; and on $(1-p_{\ell^1},1]$, perturb by a sum of functions in $\FFF$ as in Theorem \ref{bounded_emp}.  
Applying the relevant convex function theory in the single Banach space $\BBB_\mathrm{bv}\times C^k([p_0+p_\mathrm{bv},1-p_{\ell^1}]) \times \ell^1(\N)$, we can obtain $\LL\in\PPP_\mathrm{emp}(\bxi)\cap\PPP_\mathrm{emp}^\infty(\bxi)$ which, for instance, exhibits the characteristics of Examples \ref{dense_discrete}, \ref{continuous_eg}, \ref{better_choice_eg} simultaneously, each one on the corresponding subinterval.
\end{remark}

\chapter{First-Passage Percolation on $d$-ary Tree} \label{tree_sec}

It is possible to strengthen our results regarding empirical measures when $\Z^d$ is replaced by the infinite (complete) $d$-ary tree $\T_d$, $d\geq2$. \label{tree_d_def}
That is, $\T_d$ is the rooted tree in which every node has exactly $d$ children.
For $x,y\in\T_d$, let us write $y\leq x$ if $y$ is an ancestor of $x$ (allowing $y=x$).
Also, $|x|$ will denote the generation number of $x$, i.e.~the graph distance from $x$ to the root, \label{gen_number_def}
which we denote by $0$.
Once we assign each $x\in\T_d$ with an i.i.d.~random variable $\tau_x$ (not necessarily nonnegative) \label{vertex_weight_def}
having law $\LL$, \label{weight_law_def_2}
we define
\eeq{ \label{tree_FPP_model}
T(x) \coloneqq \sum_{0<y\leq x}\tau_y, \qquad T_n \coloneqq \inf_{x:\, |x|=n} T(x).
}
(Here $n$ is an integer.) 
We will write 
\eeq{ \label{geo_n_def} 
\Geo_n = \{x\in \T_d :\,|x|=n,\, T(x) = T_n\}.
}
Under a suitable moment assumption, there is once again a time constant.

\begin{theirthm} \label{tree_time_constant_thm}
\textup{\cite[Thm.~1.3]{shi15}}
If $\E(\e^{-\alpha\tau_x})<\infty$ for some $\alpha>0$, then
\eeq{ \label{tree_time_constant_def}
\lim_{n\to\infty} \frac{T_n}{n} = \mu_{\T_d} \quad \mathrm{a.s.},
}
where
\eq{ 
\mu_{\T_d} 
= -\inf_{\alpha>0}\frac{\log d + \log \E(\e^{-\alpha\tau_x})}{\alpha}
\in \R.
}
\end{theirthm}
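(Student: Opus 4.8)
The plan is to recognize $(T(x))_{x\in\T_d}$ as a branching random walk and then invoke the first- and second-moment methods. View $\T_d$ as the genealogical tree of a population in which every individual has exactly $d$ children, let the displacement across each edge be an independent copy of $\tau_x\sim\LL$, so that $T(x)=\sum_{0<y\le x}\tau_y$ is the position of $x$ and $T_n=\min_{|x|=n}T(x)$ is the minimal position at generation $n$. Write $\Lambda(\alpha):=\log d+\log\E(\e^{-\alpha\tau_x})$ for $\alpha>0$; the hypothesis $\E(\e^{-\alpha_0\tau_x})<\infty$ for some $\alpha_0>0$ makes $\Lambda$ finite on $(0,\alpha_0]$ and ensures that $\mu_{\T_d}=-\inf_{\alpha>0}\Lambda(\alpha)/\alpha$ is a well-defined real number (without control of the left tail of $\tau_x$ one could have $T_n/n\to-\infty$). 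Throughout, $S_n=\tau_1+\dots+\tau_n$ denotes the associated one-dimensional random walk.

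\textbf{Lower bound: $\liminf_n T_n/n\ge\mu_{\T_d}$ almost surely.} For $\alpha>0$ and $a\in\R$, a union bound over the $d^n$ vertices of generation $n$ followed by the exponential Chebyshev inequality gives
\[
\P(T_n\le an)\ \le\ d^n\,\E(\e^{-\alpha S_n})\,\e^{\alpha a n}\ =\ \exp\!\big(n(\Lambda(\alpha)+\alpha a)\big).
\]
Whenever $a<-\Lambda(\alpha)/\alpha$ the right-hand side is summable in $n$, so Borel--Cantelli yields $\liminf_n T_n/n\ge a$ almost surely; intersecting over a sequence $a_k\uparrow\mu_{\T_d}$ and optimizing over $\alpha>0$ gives the claim. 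This direction does not use the right tail of $\tau_x$ at all.

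\textbf{Upper bound: $\limsup_n T_n/n\le\mu_{\T_d}$ almost surely.} Fix $\eps>0$ and set $a=\mu_{\T_d}+\eps$; it suffices to show that $Z_n:=\#\{x:|x|=n,\ T(x)\le an\}$ is positive for all large $n$. A Cram\'er-type lower bound for the left tail of $S_n$ (again controlled by the light left tail of $\tau_x$) shows $\E Z_n=d^n\,\P(S_n\le an)\to\infty$ exponentially fast, since $a$ lies strictly above $\mu_{\T_d}$. A ``many-to-two'' computation then bounds $\E Z_n^2$: the only dangerous contribution comes from pairs of vertices whose most recent common ancestor sits at some generation $k<n$, and this is tamed by the standard device of restricting $Z_n$ to vertices whose ancestral path stays below the line $t\mapsto at$ (with a small amount of slack, e.g.\ a barrier at level $a't$ for slightly larger $a'$, or $at+n^{2/3}$) throughout, after which one obtains $\E Z_n^2\le C(\E Z_n)^2$ and hence, by Paley--Zygmund, $\P(T_n\le an)\ge\P(Z_n\ge1)\ge c>0$ for all large $n$. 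To upgrade this positive probability to an almost-sure statement, one uses the independence of the $d^m$ subtrees hanging from generation $m$, taken with $m=m(n)=\lceil C\log n\rceil$ so that $d^m\ge n^{1+\delta}$: conditionally on $\{T(v):|v|=m\}$, the events ``the subtree rooted at $v$ has a generation-$n$ descendant lying below $T(v)+a'(n-m)$'' are independent across $v$ and each has probability $\ge c/2$ on the high-probability event that $T(v)\le(\eps/2)n$ (which holds for all such $v$ with probability $1-o(1)$, as $(\eps/2)n$ dwarfs any fixed-length increment of $S$). Hence the probability that no such subtree succeeds is at most $(1-c/2)^{d^m}$, which is summable in $n$; Borel--Cantelli then gives $T_n\le(\eps/2)n+a'(n-m)\le an$ eventually, and letting $\eps\downarrow0$ finishes the proof.

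\textbf{Main obstacle.} The delicate point is entirely in the upper bound, namely the second-moment estimate: controlling the correlations between pairs of particles with a common ancestor, which is what forces the passage to paths constrained to stay below a barrier, and verifying that this restriction still leaves $\E Z_n$ exponentially large. The subsequent upgrade from a $\Theta(1)$ lower bound on $\P(T_n\le an)$ to an almost-sure statement, while requiring the subtree-independence bootstrap above, is routine. One can instead try to establish $\E T_n/n\to\mu_{\T_d}$ by Fekete's lemma, using $T_{n+m}\le T_n+\widetilde T_m$ with $\widetilde T_m$ an independent copy of $T_m$ grafted at a generation-$n$ minimizer, and then promote convergence of the mean to almost-sure convergence by a concentration argument; however, this route is awkward when $\tau_x$ has a heavy right tail (so that $\E T_n$ may be $+\infty$), whereas the second-moment argument only ever sees left-tail events and is unaffected. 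A third route is to run Biggins's additive/derivative martingale at the optimal parameter $\alpha^{\ast}$; all of these share the same essential task of pinning down the fluctuations of $T_n$ around $\mu_{\T_d}n$.
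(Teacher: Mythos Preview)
This statement is not proved in the paper. It is a \texttt{theirthm} environment, i.e.\ a quoted external result (Theorem~C, citing \cite[Thm.~1.3]{shi15}), and the paper simply invokes it as background for the tree case without giving any argument. So there is no ``paper's own proof'' to compare against.

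Your sketch is a reasonable outline of the classical Hammersley--Kingman--Biggins approach to the law of large numbers for the minimum of a branching random walk: the lower bound via union bound plus exponential Chebyshev is exactly right and requires only the one-sided moment hypothesis; the upper bound via a second-moment/Paley--Zygmund argument on a barrier-restricted count, followed by a subtree-independence bootstrap to go from positive probability to almost sure, is the standard route. The one place where your write-up is a bit loose is the claim that the barrier restriction ``still leaves $\E Z_n$ exponentially large'' --- this is true but is itself a nontrivial ballot-type estimate, and in a full proof (as in Shi's notes or Biggins's original) it is the step that absorbs most of the work. Your remark that the subadditivity route via $\E T_n$ is awkward when the right tail is heavy is correct and is precisely why the moment hypothesis is one-sided.
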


As in the lattice case, we assume $\tau_x = \tau(U_x)$ for some measurable $\tau \colon  [0,1]\to\R$, \label{tau_measurable_def_2}
where $(U_x)_{x\in\T_d}$ is a family i.i.d.~uniform random variables on $[0,1]$. \label{U_x_def_1}
We then define, analogous to \eqref{hat_nu_def} and \eqref{sigma_gamma},
\eeq{ \label{empirical_measures_tree} 
\hat\nu_x = \frac{1}{|x|}\sum_{0<y\leq x} \delta_{\tau_y}, \qquad \hat\sigma_x = \frac{1}{|x|}\sum_{0<y\leq x}\delta_{U_y}.
}
Let $\hat\Sigma$ denote the set of all \mbox{Borel} probability measures on $[0,1]$. \label{hat_Sigma_def}
For $\hat\sigma,\pp\in\hat\Sigma$, let us write $\hat\sigma\ll\pp$ if $\hat\sigma$ is absolutely continuous with respect to $\pp$.
Recall that the \textit{relative entropy} or \textit{Kullback--Leibler divergence} of $\hat\sigma$ with respect to $\pp$ is
\eeq{ \label{KL_div_def} 
\KL{\hat\sigma}{\pp} \coloneqq \begin{cases}
\displaystyle\int_0^1\log\frac{\dd\hat\sigma}{\dd\pp}(t)\ \hat\sigma(\dd t) &\text{if $\hat\sigma\ll\pp$}, \\
\infty &\text{otherwise},
\end{cases}
}
where $\dd\hat\sigma/\dd\pp$ is the Radon--Nikodym derivative.
We then have the following result, which assumes a slightly stronger moment assumption than does Theorem \ref{tree_time_constant_thm}.
Recall that $\tau^- = -\tau\vee0$ denotes the negative part of $\tau$.

\begin{thm} \label{tree_thm}
Assume $\LL = \tau_*(\Lambda)$, where $\tau \colon  [0,1]\to\R$ satisfies 
\eq{ 
\langle\e^{\alpha\tau^-(u)^\beta},\Lambda\rangle<\infty \quad \text{for some $\alpha>0$ and $\beta>1$}.
}
\begin{enumerate}[label=\textup{(\alph*)}]
\item \label{tree_thm_a}
The time constant from \eqref{tree_time_constant_def} is given by the variational formula
\eeq{ \label{linear_functional_tree}
\mu_{\T_d}(\tau) = \inf_{\hat\sigma :\, \KL{\hat\sigma}{\Lambda} \leq \log d}\langle \tau,\hat\sigma\rangle.
}

\item \label{tree_thm_b}
The set of minimizers is nonempty: 
\eeq{ \label{minimizers_tree_def} 
\RR_\tau^{\T_d} \coloneqq \{\hat\sigma\in\hat\Sigma :\, \KL{\hat\sigma}{\Lambda}\leq\log d,\, \langle\tau,\hat\sigma\rangle=\mu_{\T_d}(\tau)\} \neq \varnothing.
}

\item \label{tree_thm_c} 
Almost surely, for any increasing sequence of integers $(n_k)_{k\geq1}$ and any sequence $(x_{n_k})_{k\geq1}$ in $\T_d$ such that $|x_{n_k}|=n_k$ and $T(x_{n_k})/n_k\to\mu_{\T_d}$ as $k\to\infty$,
there is a subsequence $(x_{n_{k_j}})_{j\geq1}$ such that
\eq{
\hat\sigma_{x_{n_{k_j}}}\Rightarrow \hat\sigma \quad \text{as $j\to\infty$}, \quad \text{for some $\hat\sigma\in\RR_\tau^{\T_d}$},
}
in which case
\eq{
\hat\nu_{x_{n_{k_j}}}\Rightarrow \tau_*(\hat\sigma).
}


\item \label{tree_thm_d}
Let $\mathfrak{b}\in[-\infty,\infty)$ \label{bfrak_def}
denote the essential infimum of $\LL$.
If $\P(\tau_x = \mathfrak{b})\geq 1/d$, then $\tau_*(\hat\sigma) = \delta_{\mathfrak{b}}$ for every $\hat\sigma\in\RR_\tau^{\T_d}$.
If instead $\P(\tau_x = \mathfrak{b}) < 1/d$, then $\RR_\tau^{\T_d} = \{\hat\sigma_\star\}$, where $\hat\sigma_\star$ is the unique solution to
\eeq{ \label{minimum_RE}
\min \KL{\hat \sigma}{\Lambda} \quad \text{subject to} \quad \langle \tau,\hat\sigma\rangle = \mu_{\T_d}(\tau).
}

\end{enumerate}
\end{thm}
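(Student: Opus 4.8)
The plan is to prove parts (a), (b), (d) by Legendre--Fenchel duality and convexity, and part (c) by a first-moment argument modeled on the proof of Theorem~\ref{minimizers_thm}. Throughout write $\Lambda_\tau(\alpha)\coloneqq\log\langle\e^{-\alpha\tau},\Lambda\rangle$. Since $\alpha t\le \alpha_0 t^\beta+O(1)$ for $t\ge0$ whenever $\beta>1$, the hypothesis $\langle\e^{\alpha_0\tau^-(u)^\beta},\Lambda\rangle<\infty$ guarantees $\Lambda_\tau(\alpha)<\infty$ for \emph{every} $\alpha\ge0$; in particular Theorem~\ref{tree_time_constant_thm} applies and $\mu_{\T_d}(\tau)=-\inf_{\alpha>0}\alpha^{-1}(\log d+\Lambda_\tau(\alpha))$. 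The first step is the Gibbs (Donsker--Varadhan) identity $\Lambda_\tau(\alpha)=\sup_{\hat\sigma\in\hat\Sigma}\{-\alpha\langle\tau,\hat\sigma\rangle-\KL{\hat\sigma}{\Lambda}\}$, valid for unbounded $\tau$ with finite exponential integral and attained precisely at the tilted measure $\hat\sigma^{(\alpha)}$ with density $\e^{-\alpha\tau}/\e^{\Lambda_\tau(\alpha)}$. Rearranging gives $-\alpha^{-1}(\log d+\Lambda_\tau(\alpha))=\inf_{\hat\sigma}\{\langle\tau,\hat\sigma\rangle+\alpha^{-1}(\KL{\hat\sigma}{\Lambda}-\log d)\}$, and taking $\sup_{\alpha>0}$ yields at once that for any feasible $\hat\sigma_0$ (meaning $\KL{\hat\sigma_0}{\Lambda}\le\log d$) one has $\mu_{\T_d}(\tau)\le\langle\tau,\hat\sigma_0\rangle$, i.e.\ the ``$\le$'' half of \eqref{linear_functional_tree}.

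For the reverse inequality and for (b) I would exhibit an explicit minimizer, by analyzing $g(\alpha)\coloneqq\alpha^{-1}(\log d+\Lambda_\tau(\alpha))$. Since $\log d>0$ we have $g(\alpha)\to+\infty$ as $\alpha\downarrow0$, while $g(\alpha)\to-\mathfrak{b}$ as $\alpha\to\infty$ (with $-\mathfrak{b}=+\infty$ if $\mathfrak{b}=-\infty$); a dominated-convergence computation shows $\langle\e^{-\alpha(\tau-\mathfrak{b})},\Lambda\rangle\to\P(\tau_x=\mathfrak{b})$, whence $\inf g=-\mathfrak{b}$ exactly when $\P(\tau_x=\mathfrak{b})\ge 1/d$ and $\inf g<-\mathfrak{b}$ otherwise. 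In the latter case the infimum is attained at some finite $\alpha^\star>0$, and the stationarity relation $\alpha^\star\Lambda_\tau'(\alpha^\star)=\log d+\Lambda_\tau(\alpha^\star)$ with $\Lambda_\tau'(\alpha)=-\langle\tau,\hat\sigma^{(\alpha)}\rangle$ gives $\langle\tau,\hat\sigma^{(\alpha^\star)}\rangle=-g(\alpha^\star)=\mu_{\T_d}(\tau)$ and $\KL{\hat\sigma^{(\alpha^\star)}}{\Lambda}=\log d$, so $\hat\sigma^{(\alpha^\star)}$ is feasible and optimal. When $\P(\tau_x=\mathfrak{b})\ge 1/d$, the same role is played by $\hat\sigma_\star\coloneqq\Lambda(\,\cdot\mid\tau=\mathfrak{b})$, which has $\langle\tau,\hat\sigma_\star\rangle=\mathfrak{b}=\mu_{\T_d}(\tau)$ and $\KL{\hat\sigma_\star}{\Lambda}=-\log\P(\tau_x=\mathfrak{b})\le\log d$. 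This proves (a) and (b). Part (d) is then a short convexity argument: if $\P(\tau_x=\mathfrak{b})\ge 1/d$ then $\mu_{\T_d}(\tau)=\mathfrak{b}=\essinf\tau$, forcing every $\hat\sigma\in\RR_\tau^{\T_d}$ (which is $\ll\Lambda$ as its entropy is finite) to be concentrated on $\{\tau=\mathfrak{b}\}$, so $\tau_*(\hat\sigma)=\delta_{\mathfrak{b}}$; if $\P(\tau_x=\mathfrak{b})<1/d$ then $\mu_{\T_d}(\tau)>\mathfrak{b}$, so $\langle\tau,\cdot\rangle$ is non-constant on the convex set $\{\KL{\cdot}{\Lambda}\le\log d\}$ and hence cannot attain its infimum at an interior point, while strict convexity of relative entropy shows the boundary $\{\KL{\cdot}{\Lambda}=\log d\}$ contains no segment, so the midpoint of two distinct minimizers would be an interior minimizer, a contradiction; thus $\RR_\tau^{\T_d}=\{\hat\sigma_\star\}$ with $\KL{\hat\sigma_\star}{\Lambda}=\log d$, and $\hat\sigma_\star$ solves \eqref{minimum_RE} because any competitor $\hat\sigma'$ with $\langle\tau,\hat\sigma'\rangle=\mu_{\T_d}(\tau)$ and $\KL{\hat\sigma'}{\Lambda}\le\log d$ lies in $\RR_\tau^{\T_d}$ and so equals $\hat\sigma_\star$.

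For part (c) I would run the argument of Theorem~\ref{minimizers_thm} with three ingredients. (i) Weak compactness of $\hat\Sigma$ extracts, from any $(\hat\sigma_{x_{n_k}})$, a weakly convergent subsequence $\hat\sigma_{x_{n_{k_j}}}\Rightarrow\hat\sigma$. (ii) A first-moment bound pins $\hat\sigma$ into $\{\KL{\cdot}{\Lambda}\le\log d\}$: for $\delta>0$ let $G_\delta$ be the weakly open set of probability measures at weak-distance $>\delta$ from $\{\KL{\cdot}{\Lambda}\le\log d\}$; then $\E\#\{x:|x|=n,\ \hat\sigma_x\in G_\delta\}=d^{\,n}\,\P\bigl(\tfrac1n\sum_{i=1}^n\delta_{U_i}\in G_\delta\bigr)$, and by Sanov's theorem together with lower semicontinuity and compactness of the sublevel sets of relative entropy, $\inf_{\overline{G_\delta}}\KL{\cdot}{\Lambda}>\log d$, so this expectation decays exponentially in $n$ and Borel--Cantelli gives $\dist\bigl(\hat\sigma_{x_n},\{\KL{\cdot}{\Lambda}\le\log d\}\bigr)\to0$ almost surely for \emph{every} sequence of level-$n$ vertices, whence $\KL{\hat\sigma}{\Lambda}\le\log d$. (iii) The test-function device of Lemma~\ref{test_fnc_lemma} and Corollary~\ref{weak_to_strong_cor}: because $\tau$ and a countable convergence-determining family may be fixed before the $(U_x)$ are realized, on a probability-one event the weak convergence upgrades so that $\langle\tau,\hat\sigma_{x_{n_{k_j}}}\rangle\to\langle\tau,\hat\sigma\rangle$ and $\hat\nu_{x_{n_{k_j}}}=\tau_*(\hat\sigma_{x_{n_{k_j}}})\Rightarrow\tau_*(\hat\sigma)$. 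Since $\langle\tau,\hat\sigma_{x_{n_{k_j}}}\rangle=T(x_{n_{k_j}})/n_{k_j}\to\mu_{\T_d}(\tau)$, we obtain $\langle\tau,\hat\sigma\rangle=\mu_{\T_d}(\tau)$, which with (ii) and part (a) gives $\hat\sigma\in\RR_\tau^{\T_d}$.

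The step I expect to be the main obstacle is the Legendre analysis of $g$ near $\alpha=\infty$: identifying exactly when the infimum defining $\mu_{\T_d}(\tau)$ is attained at a finite tilt and linking this dichotomy to the atom at $\mathfrak{b}$. This is the real content tying (a), (b), and (d) together, and it is precisely where $\beta>1$ is needed, since without it $\Lambda_\tau(\alpha)$ may be infinite for large $\alpha$ and the relevant tilted measures unavailable. A secondary technical point is making the test-function upgrade in (c) legitimate when $\tau$ is merely measurable and possibly heavy-tailed above, so that weak convergence of the $\hat\sigma_{x_{n_{k_j}}}$ genuinely controls $\langle\tau,\cdot\rangle$ and the pushforwards; I expect this to reduce essentially verbatim to the lattice argument behind Theorem~\ref{minimizers_thm}.
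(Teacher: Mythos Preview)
Your approach is correct, and for parts (a), (b) it is genuinely different from the paper's. The paper does \emph{not} use the explicit Biggins formula from Theorem~\ref{tree_time_constant_thm} together with Legendre--Fenchel duality; instead it transplants the lattice machinery of Sections~\ref{constraint_sec}--\ref{var_form_proof} to the tree, obtaining first an abstract constraint set $\RR^{\T_d}$ (the almost-sure value of the set of weak limits of empirical measures $\hat\sigma_x$), proving $\mu_{\T_d}(\tau)=\inf_{\hat\sigma\in\RR^{\T_d}}\langle\tau,\hat\sigma\rangle$ over that set, and only afterwards identifying $\RR^{\T_d}=\{\hat\sigma:\KL{\hat\sigma}{\Lambda}\le\log d\}$ (Proposition~\ref{tree_prop_1}). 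The harder inclusion $\supset$ requires a supercritical Galton--Watson argument: one shows that for any $\hat\rho$ with $\KL{\hat\rho}{\Lambda}<\log d$, the tree almost surely contains an infinite ray along which the empirical measures stay close to $\hat\rho$. Your duality route bypasses this entirely, which is a real simplification; the price is that you use the tree-specific formula for $\mu_{\T_d}$, whereas the paper's method is deliberately parallel to the $\Z^d$ case and, as a byproduct, yields the full characterization of $\RR^{\T_d}$ needed for Theorem~\ref{weak_to_strong_thm}. Your part~(c) matches the paper's argument closely (your step~(ii) is exactly the Sanov upper-bound half of Proposition~\ref{tree_prop_1}); part~(d) is essentially the same convexity argument as Proposition~\ref{tree_prop_2}.

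Two small corrections. In (c)(iii), Lemma~\ref{test_fnc_lemma} under the one-sided moment hypothesis on $\tau^-$ only yields $\langle\tau,\hat\sigma\rangle\le\liminf_j\langle\tau,\hat\sigma_{x_{n_{k_j}}}\rangle$, not equality; you still reach $\langle\tau,\hat\sigma\rangle=\mu_{\T_d}(\tau)$ by combining this with the reverse inequality $\langle\tau,\hat\sigma\rangle\ge\mu_{\T_d}(\tau)$ coming from part~(a) and your step~(ii), so the conclusion is unaffected. Second, $\beta>1$ is not needed \emph{only} for the Legendre analysis: it also enters the proof of Lemma~\ref{test_fnc_lemma} (via Lemma~\ref{abs_cont_lemma}(b)), which you invoke in (c).
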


In particular, statements \ref{tree_thm_c} and \ref{tree_thm_d} combine to show that every sequence of empirical measures $(\hat\nu_{x_n})_{n\geq1}$, with $x_n\in\Geo_n$, converges to a deterministic limit.
The formulation of \eqref{minimum_RE} is meant to emphasize that $\sigma_*$ can be estimated numerically, for instance using the method of \cite{kruk04}.
The proof of Theorem \ref{tree_thm} hinges on the large deviations principle for the empirical measure of i.i.d.~samples from a given distribution.
The details are presented in Chapter \ref{tree_proof}.

\chapter{Negative Weights and Passage Times along Geodesics} \label{modification_sec}

This lengthy section is included primarily for two reasons: (1) so that in the subcritical case $F(0)<p_\cc(\Z^d)$, we can allow slightly negative edge-weights; and (2) so that nowhere else in the manuscript do we require any moment assumption.
The inputs and key ideas come mostly from percolation theory and are largely independent from this monograph's more central themes.
Before stating the results, let us say a word about why they are needed.

Regarding goal (1), we have yet to prove Proposition \ref{slightly_negative_lemma}, which was needed in the proof of Theorem \ref{bounded_emp} to extend the map $\tau\mapsto\mu_\bxi(\tau)$ to a suitable open set.
Only then could we appeal to differentiability results for concave functions on Banach spaces.
Proposition \hyperref[slightly_negative_lemma_a]{\ref*{slightly_negative_lemma}\ref*{slightly_negative_lemma_a}} is implied by Proposition \ref{negative_thm}, while \hyperref[slightly_negative_lemma_c]{\ref*{slightly_negative_lemma}\ref{slightly_negative_lemma_c}} will be included in Theorem \ref{var_formula_thm_sub}.
The main difficulty of negative edge-weights is the loss of subadditivity; that is, $T$ no longer satisfies the triangle inequality.
While a natural way of bypassing this issue is to instead consider so-called cylinder passage times (see \cite[pg.~136]{kesten86}), doing so would not let us say anything about geodesics.
Instead, we will update the construction of ``shell passage times" due to Cox and \mbox{Durrett} \cite{cox-durrett81} and \mbox{Kesten} \cite{kesten86}, with Lemma \ref{subadditive_lemma} serving as the necessary surrogate for subadditivity. 
Tail bounds such as Proposition \hyperref[negative_thm_a]{\ref*{negative_thm}\ref*{negative_thm_a}}, Lemma \ref{chemical_lemma_2}, and Proposition \ref{geo_length_cor}, as well as the shape theorem \eqref{shape_thm}, may be useful inputs in other studies. 
In particular, they go beyond just saying that shell passage times have finite moments.

Concerning (2): without any moment assumption, we may not have the almost sure convergence $T(0,n\bxi)/n\to\mu_\bxi$, which is crucial to proving the variational formula \eqref{var_form_eq} and the convergence results of Theorem \ref{minimizers_thm}.
Fortunately, Proposition \ref{replacement_thm} says we can delete $o(n)$ many edges in order to recover almost sure convergence to the time constant.
Our proof will make use of the shell construction developed for Proposition \ref{negative_thm}.
Arguments of a similar nature appear in \cite[Sec.~6]{auffinger-damron-hanson15II}.
The principal complication we need to resolve is that honest geodesics need not coincide with geodesics in the shell environment.

\section{Notation and statements of key results}

Consider edge-weights of the form $\tilde\tau_e \coloneqq \tau_e + \tau^\pert_e$, $e\in E(\Z^d)$, \label{tilde_edge_weights_def}
where $\tau_e$ and $\tau_e^\pert$ \label{pert_edge_weights_def}
are random variables on the complete probability space $(\Omega,\FF,\P)$.
For $z\in\Z^d$, we write $\mathfrak{T}_z \colon \Omega\to\Omega$
to denote translation of the environment: 
\eeq{ \label{shift_operator} 
\tau_e(\mathfrak{T}_z\omega) = \tau_{e-z}(\omega), \qquad
\tau_e^\pert(\mathfrak{T}_z\omega) = \tau_{e-z}^\pert(\omega),
}
where the translation of the edge $e = \{x,y\}$ has been denoted by $e-z = \{x-z,y-z\}$.
We assume that
\eeqs{perturbed_general_assumptions}{
\tau_e, {e\in E(\Z^d)}, \text{ are i.i.d.~and nonnegative}, \label{perturbed_general_assumptions_a} \\
(\tilde\tau_e)_{e\in E(\Z^d)} \text{ is stationary and ergodic with respect to every $\mathfrak{T}_{z}$}, \label{perturbed_general_assumptions_b} \\
\P\big(|\tau_e^\pert|\leq \af \text{ for all $e\in E(\Z^d)$}\big) = 1, \label{perturbed_general_assumptions_c}
}
where $\af$ is a nonnegative constant whose purpose is explained by Proposition \ref{negative_thm}.
Notice that these assumptions allow for flexibility in the joint law of $(\tau_e,\tau^\pert_e)$.
For instance, $(\tau_e)_{e\in E(\Z^d)}$ and $(\tau_e^\pert)_{e\in E(\Z^d)}$ could be independent families; in the other extreme, we could have $\tau_e = \tau(U_e)$ and $\tau_e^\pert = \tau^\pert(U_e)$ as in Proposition \ref{slightly_negative_lemma}.
The law $\LL$ and the distribution function $F$ are always associated to just $\tau_e$ as in \eqref{distribution_fnc_def}, rather than $\tilde\tau_e$.
So that we can still on occasion write $T$ as in \eqref{fpp_def} for passage times with respect to the $\tau_e$'s, we define separate notation for passage times with respect the $\tilde\tau_e$'s:
\eeq{ \label{fpp_def_perturbed}
\wt T(x,y) \coloneqq \inf_{\gamma \in \PP(x,y)} \wt T(\gamma), \quad \text{where} \quad \wt T(\gamma) \coloneqq \sum_{e\in\gamma} \tilde \tau_e.
}
There will be no need for us to decorate other notations such as $\mu_\bxi$ and $\Geo$, as \underline{these will always be taken with respect to the $\tilde\tau_e$'s}.

Throughout this section, $C$ and $c$ will denote positive constants depending only on the edge-weight distribution $\LL$ and the dimension $d$. \label{C_c_def}
In particular, these constants will never depend on $n$, the direction $\bxi$, locations in $\Z^d$, or the perturbation variables $(\tau_e^\pert)_{e\in E(\Z^d)}$. 
For notational simplicity, though, we do allow the values of $C$ and $c$ to change from line to line.

Our first result is that if $\tau^\pert_e$ is not allowed to be too large (so that $\tilde\tau_e$ is not too negative), then the FPP model \eqref{fpp_def_perturbed} with self-avoiding paths is still well-behaved.
The following proposition generalizes \cite[Thm.~2.26]{kesten86} and, for our purposes, replaces Theorem \ref{time_constant_thm}.

\begin{prop} \label{negative_thm}
If $F(0)<p_\cc(\Z^d)$, then there exists $\af = \af(\LL,d)>0$ such that whenever \eqref{perturbed_general_assumptions} holds, there is a collection of random variables $\{\wh T(x,y) :\, x,y\in\Z^d\}$, defined in \eqref{wh_T_def}, with the following properties.
(As before, we declare $\wh T(x,y) = \wh T([x],[y])$ for $x,y\in\R^d$.)
\begin{enumerate}[label=\textup{(\alph*)}]

\item \label{negative_thm_a}
For all $x,y\in\Z^d$, we have
\eq{
\P(\wh T(x,y) \leq -s) \leq C\e^{-cs} \quad \text{for all $s\geq0$},
}
as well as
\eq{
\P(\wh T(x,y)\geq s\|x-y\|_\infty) \leq C\e^{-c(s\|x-y\|_\infty)^{1/d}} \quad \text{for all $s\geq \mathfrak{s} =\mathfrak{s} (\LL,d)$}.
}

\item \label{negative_thm_b}
The family $\{\wh T(x,y)-\wt T(x,y) :\, x,y\in\Z^d\}$ is tight. 

\item \label{negative_thm_c}
For every ${\bxi}\in\S^{d-1}$, there is a constant $\mu_\bxi\in[\af,\infty)$ such that
\eeq{ \label{shell_to_mu}
\lim_{n\to\infty} \frac{\wh T(0,n\bxi)}{n} = \mu_\bxi \quad \mathrm{a.s.}\text{ and in $L^p$, $p\in[1,\infty)$}.
}

\item \label{negative_thm_c2}
The map $\bxi\mapsto\mu_{\bxi}$ is continuous, and
\eeq{ \label{shape_thm}
\lim_{n\to\infty}\sup_{\bxi\in\S^{d-1}}\Big|\frac{\wh T(0,n\bxi)}{n} -\mu_\bxi\Big| =0 \quad \mathrm{a.s.}\text{ and in $L^p$, $p\in[1,\infty)$}.
}

\end{enumerate}
If $F(0)\geq p_\cc(\Z^d)$, then the same statements hold with $\af=0$ and $\mu_\bxi\equiv0$.
\end{prop}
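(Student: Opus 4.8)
The plan is to construct the family $\{\wh T(x,y)\}$ as ``shell passage times'' in the spirit of Cox--Durrett \cite{cox-durrett81} and Kesten \cite{kesten86}, and then to verify \ref{negative_thm_a}--\ref{negative_thm_c2} essentially in the order stated. First one fixes the constants. Since $F(0)<p_\cc(\Z^d)$ and $F$ is right-continuous, choose $\af_0>0$ with $F(\af_0)<p_\cc(\Z^d)$ and call an edge \emph{bad} if $\tau_e\leq\af_0$, \emph{good} otherwise; bad edges then form only finite clusters, with exponentially small diameter tails, by subcriticality of bond percolation at density $F(\af_0)$. Independently, fix $M<\infty$ with $F(M)>p_\cc(\Z^d)$, so that the \emph{heavy} edges ($\tau_e>M$) likewise form only finite clusters with exponential diameter tails; these heavy clusters, suitably fattened, are the shells $\mathbf S(x)$. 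Assumptions \eqref{perturbed_general_assumptions_a} and \eqref{perturbed_general_assumptions_c} give $\tilde\tau_e\geq-\af_0$ for every edge, $\tilde\tau_e>0$ on good edges, and $\tilde\tau_e\leq M+\af_0$ on non-heavy edges; the point of the shells is that a path can always be re-routed through $\mathbf S(x)$ using only non-heavy edges, at a cost forming a tight family as $x$ ranges over $\Z^d$. One then defines $\wh T(x,y)$ as the infimum of $\wt T$ over self-avoiding paths joining $\mathbf S(x)$ to $\mathbf S(y)$, together with bounded reconnection corrections appended so that the resulting array is subadditive up to a stationary additive error; this last fact is Lemma~\ref{subadditive_lemma}. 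In the supercritical case $F(0)\geq p_\cc(\Z^d)$ one sets $\af=0$, keeps the heavy-edge shells, and re-routes through the infinite cluster of zero-weight edges.

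For part \ref{negative_thm_a}, the lower tail $\P(\wh T(x,y)\leq-s)\leq C\e^{-cs}$ follows because the only negative contributions to $\wt T$ along a path occur on bad edges, each worth at least $-\af_0$, and the total mass of bad clusters met by a path localized near $\{x,y\}$ has exponential tails --- a greedy lattice-animal estimate, as in \cite{cox-durrett81,kesten86}. The upper tail $\P(\wh T(x,y)\geq s\|x-y\|_\infty)\leq C\e^{-c(s\|x-y\|_\infty)^{1/d}}$ for $s\geq\mathfrak{s}(\LL,d)$ is obtained by constructing an explicit competitor path joining $x$ to $y$ through the infinite cluster of non-heavy edges along a deterministic chain of boxes, bounding its $\wt T$-cost by a sum over the boxes and invoking the classical renormalization bound (Kesten's Proposition~5.8 is the prototype), which produces exactly the exponent $1/d$.

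Part~\ref{negative_thm_b} is where the real difficulty lies, and it is the step I expect to be the main obstacle, for the reason flagged in the introduction to this section: an honest $\wt T$-geodesic need not be a geodesic in the shell model. To bound $\wh T(x,y)-\wt T(x,y)$ one takes a near-optimal self-avoiding path for $\wt T(x,y)$, locates its first exit from $\mathbf S(x)$ and its last entry into $\mathbf S(y)$, and splices in re-routing segments inside the two shells that are self-avoiding and use only non-heavy edges; the difference is then controlled above by the $\wt T$-cost of these two segments, which lie in regions of tight diameter and carry weights at most $M+\af_0$, hence is tight, while its negative part is controlled symmetrically by the bad-cluster estimate. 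One must be careful that the correction is measurable with respect to a bounded neighborhood of $\{x,y\}$ so as to preserve translation-covariance.

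For parts \ref{negative_thm_c} and \ref{negative_thm_c2}: given the stationarity and ergodicity \eqref{perturbed_general_assumptions_b}, the subadditivity surrogate of Lemma~\ref{subadditive_lemma}, and the $L^1$ bound on $\wh T(0,n\bxi)$ from \ref{negative_thm_a}, Kingman's subadditive ergodic theorem (applied in rational directions and then extended by homogeneity) gives $\mu_\bxi\in\R$ with $\wh T(0,n\bxi)/n\to\mu_\bxi$ almost surely, and the two tail bounds give the uniform integrability needed to upgrade this to $L^p$ for every $p\in[1,\infty)$. The bound $\mu_\bxi\geq\af$ is arranged by shrinking $\af$: a renormalization argument in the spirit of Kesten's proof that $F(0)<p_\cc(\Z^d)$ forces $\mu_{\bxi}(\LL)>0$ shows that the perturbed time constant is bounded below by a positive $\delta=\delta(\LL,d)$ whenever the perturbation has size at most $\delta$, so one finally takes $\af:=\min\{\af_0,\delta\}$ (further shrinking $\af_0$ if needed to keep the earlier constants valid). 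Moreover $v\mapsto\mu_v$ extends to a sublinear, hence convex and continuous, function on $\R^d$, giving continuity of $\bxi\mapsto\mu_\bxi$; and the shape theorem \eqref{shape_thm} follows from the now-standard argument of Kesten, using the tail bounds of \ref{negative_thm_a} to obtain equicontinuity of $\wh T(0,n\bxi)/n$ in $\bxi$ uniformly in $n$ together with a finite cover of $\S^{d-1}$ by rational directions. In the supercritical case the same scheme yields $\mu_\bxi\equiv0$, since $\wh T(0,n\bxi)$ is then bounded by the cost of the two shells plus a path through the infinite zero-cluster, which is $o(n)$.
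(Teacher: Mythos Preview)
Your overall architecture---shell passage times in the Cox--Durrett--Kesten style, tail bounds, the subadditive ergodic theorem along rational directions, then an equicontinuity argument for the shape theorem---matches the paper's. But two of the four substantive steps are not correctly sourced, and one description is inverted.

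For the upper tail in \ref{negative_thm_a} you cite ``Kesten's Proposition~5.8'' as the prototype. That proposition gives \emph{lower} bounds on passage times and is precisely what the paper uses (via Lemma~\ref{eta_choice_lemma}) for the \emph{lower} tail, not the upper one. The upper tail requires a completely different input: a chemical-distance bound within the white cluster. The paper couples the white edge process with supercritical Bernoulli percolation via Liggett--Schonmann--Stacey domination, imports the Antal--Pisztora chemical-distance estimate \cite{antal-pisztora96} for the open cluster (Lemma~\ref{chemical_lemma}), and then transfers it to the white cluster (Lemma~\ref{chemical_lemma_2}); the exponent $1/d$ appears only in this last transfer. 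Your ``chain of boxes'' sketch does not supply any of this. Conversely, your lower-tail argument (``total mass of bad clusters met by a path localized near $\{x,y\}$, a greedy lattice-animal estimate'') does not work as stated: a candidate path for $\wh T(x,y)$ is not localized and could meet arbitrarily many bad clusters. The paper's mechanism is that any self-avoiding path $\gamma$ from $w\in\SS(x)$ with $\wt T(\gamma)<0$ has $|\gamma|\leq\Lvc(w;0)$, and $\Lvc(w;0)$ has exponential tails by Kesten's Proposition~5.8; hence $-\wh T(x,y)\leq\af\sup_{w\in\SS(x)}\Lvc(w;0)$, and a union bound over the (exponentially small) shell finishes.

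Two smaller corrections. Your shells should consist of \emph{white} vertices surrounding $x$ (property~\ref{condition_1} of the shell construction), not the heavy cluster itself; otherwise you cannot bound $\tilde\tau_e\leq\tf+\af$ along a reconnection path within $\SS(x)$, which is exactly what the splicing arguments in Lemmas~\ref{for_tightness_lemma} and~\ref{subadditive_lemma} need. And the ``honest $\wt T$-geodesic need not be a geodesic in the shell model'' difficulty you flag for part~\ref{negative_thm_b} is in fact the obstacle for Proposition~\ref{replacement_thm}; part~\ref{negative_thm_b} itself is handled by Lemma~\ref{for_tightness_lemma} applied to near-optimal paths, not geodesics, and is not where the work lies.
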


\begin{remark}
Part \ref{negative_thm_b} ensures that when $\tau^\pert_e=0$ for all $e\in E(\Z^d)$, the time constant $\mu_{\bxi}$ coincides with the one from Theorem \ref{time_constant_thm}.
Also, the notation $\wh T$ is not related to the hat decoration in $\hat\nu$, which denotes the normalization of a measure $\nu$.
\end{remark}

For completeness (and eventually to prove Theorem \ref{derivative_ineq_thm} in Section \ref{length_lemma_proof}), we check the optimality of Proposition \ref{negative_thm} in the critical and supercritical cases.
Unsurprisingly, if $F(0) \geq p_\cc(\Z^d)$, then no positive value for $\af$ can be allowed.
The following is a restatement of Proposition \ref{slightly_negative_lemma_2}.

\begin{prop} \label{negative_thm_2}
If $F(0)\geq p_\cc(\Z^d)$ and $\tau^\pert_e= -h$ for all $e \in E(\Z^d)$, with $h>0$, then almost surely $\wt T(x,y)=-\infty$ for all $x\neq y$.
\end{prop}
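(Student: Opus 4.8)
The plan is as follows. First, by a countable union bound it suffices to fix a pair $x\neq y$ and prove $\P(\wt T(x,y)=-\infty)=1$. I would observe that the event $\{\wt T(x,y)=-\infty\}$ is a tail event: altering finitely many edge-weights shifts each $\wt T(\gamma)=\sum_{e\in\gamma}(\tau_e-h)$, and hence the infimum $\wt T(x,y)$, by a bounded amount, so the event is unchanged. Kolmogorov's zero--one law then reduces the claim to showing that $\P(\wt T(x,y)=-\infty)$ is positive.

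The main engine I would use is the following ``fat light cluster'' mechanism. Suppose $S\subset[0,\infty)$ is a Borel set with $\sup S<h$ and $\P(\tau_e\in S)>p_\cc(\Z^d)$, so the random graph $G_S\coloneqq\{e:\tau_e\in S\}$ is supercritical, with an almost surely unique infinite cluster $\CC_\infty$. I claim (and would prove via a short ergodicity-plus-surgery argument) that almost surely $\CC_\infty$ contains, between any two of its vertices, self-avoiding paths of arbitrarily large length: by ergodicity, translates of a fixed ``fat'' configuration---say a box all of whose edges lie in $S$, which contains self-avoiding paths of length comparable to its volume---occur at arbitrarily distant locations and are moreover joined to $\CC_\infty$ by a $G_S$-path, and a routine loop-erasure-free surgery then links $x$ to $y$ through such a piece. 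Since $\{x\leftrightarrow\infty\text{ in }G_S\}$ and $\{y\leftrightarrow\infty\text{ in }G_S\}$ are increasing events of positive probability, FKG and uniqueness of $\CC_\infty$ give $\P(x,y\in\CC_\infty)>0$; on that event, any self-avoiding $x$--$y$ path $\gamma$ inside $\CC_\infty$ has $\wt T(\gamma)=\sum_{e\in\gamma}(\tau_e-h)\le(\sup S-h)\,|\gamma|\to-\infty$ as $|\gamma|\to\infty$, so $\wt T(x,y)=-\infty$ there. Taking $S=[0,\delta]$ for any $\delta\in(0,h)$ with $F(\delta)>p_\cc(\Z^d)$ settles every case in which $F(h-)>p_\cc(\Z^d)$---in particular all cases with $F(0)>p_\cc(\Z^d)$, and also $F(0)=p_\cc(\Z^d)$ with $\LL((0,h))>0$.

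The one remaining case is $F(0)=p_\cc(\Z^d)$ together with $\LL((0,h))=0$: the zero-weight edges are exactly critical and every positive weight is at least $h$. Here there is no supercritical cluster of strictly light edges, and the detour must pass through large clusters of the critical zero-percolation. My plan here is: pick $\delta>\essinf(\LL|_{(0,\infty)})$, so that $\{e:\tau_e\le\delta\}$ is supercritical with infinite cluster $\II$; condition on the positive-probability event $\{x,y\in\II\}$; use that at $p_\cc(\Z^d)$ the cluster of the origin has infinite expected size (so clusters of arbitrarily large diameter are ubiquitous) and that a large critical cluster contains a self-avoiding (zero-)path whose length exceeds its Euclidean diameter by a fixed power $R\mapsto R^{1+c}$; deduce that for arbitrarily large $R$ there is such a zero-cluster $\ZZ_R$ at Euclidean distance $O(R)$ from $x$ and from $y$ that is connected to $\II$; then apply the Antal--Pisztora chemical-distance estimate inside $\II$ to link $x$ and $y$ to $\ZZ_R$ by paths of length $O(R)$ built from edges of weight $\le\delta$; and finally concatenate, performing surgery, to get a self-avoiding $x$--$y$ path of passage time at most $O(R)\,\delta-hR^{1+c}\to-\infty$. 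One then closes as before via the zero--one law.

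The hard part will be exactly this critical case: pinning down the precise percolation inputs (unbounded critical clusters is immediate from the definition of $p_\cc$, but the statement that large critical clusters carry self-avoiding paths substantially longer than their diameter, with enough uniformity to run the distance estimates, is delicate) and, throughout, executing the path-surgery so that the long detours genuinely persist as self-avoiding paths rather than being shortcut away when spliced to the connectors.
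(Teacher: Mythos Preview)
Your zero--one law reduction is correct and is a nice simplification the paper does not use; the paper instead builds explicit paths valid for all $x,y$ simultaneously.

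For the supercritical case (and the critical case with $\LL((0,h))>0$), your idea---exploit arbitrarily long self-avoiding paths inside an infinite light-edge cluster---is the right one, but the ``routine loop-erasure-free surgery'' you invoke is precisely the nontrivial step. Concatenating a path $x\to B$, a long path inside a fat box $B$, and a path $B\to y$ can collapse under loop-erasure if the two connectors intersect; one-endedness of $\CC_\infty$ alone does not prevent this (think of $\mathbb{N}$). The paper handles this by dimension-specific constructions: in $d=2$, it builds a doubly-infinite self-avoiding open path and routes around large open circuits; in $d\ge 3$, it uses the Grimmett--Marstrand slab theorem to place the $x$-connector and the $y$-connector in disjoint slabs, guaranteeing disjointness before the final splice. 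Your fat-box idea can likely be pushed through, but it is not as routine as stated.

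The genuine gap is your remaining critical case $F(0)=p_\cc(\Z^d)$ with $\LL((0,h))=0$. Your plan hinges on the assertion that critical clusters of Euclidean diameter $R$ carry self-avoiding paths of length $R^{1+c}$, uniformly enough to beat an $O(R)$ connector cost. This is not a known theorem in general $d$; even heuristically it rests on unproven intrinsic-geometry exponents for critical percolation. The paper sidesteps critical percolation entirely with a short trick you are missing: lower the environment to $\tau_e^\downarrow=0$ whenever $F(\tau_e)<p_\cc+\eps$, making the zero set supercritical; apply the supercritical construction to $\tau^\downarrow$; and then observe via a Chernoff bound (over the at most $(2d)^{\ell}$ self-avoiding paths of length $\ell$) that for $\eps$ small, every long self-avoiding path has at most $\frac{h}{2t'}|\gamma|$ modified edges, where $t'=F^{-1}(3/4)$. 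Hence $T(\gamma)\le T^{\downarrow}(\gamma)+\frac{h}{2}|\gamma|$, so $\wt T(\gamma)\le T^{\downarrow}(\gamma_1)+T^{\downarrow}(\gamma_2)-\frac{h}{2}|\gamma|\to-\infty$. This reduction is both simpler and rigorous; I would replace your critical-cluster plan with it.
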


In preparation for our next result, we introduce the following notation.
Given a path $\gamma\in\PP(x,y)$, consider the natural ordering of its edges $e_1,\dots,e_{|\gamma|}$ with initial vertex $x\in e_1$ and terminal vertex $y\in e_{|\gamma|}$.
For positive integers $a_0$ and $a_1$ such that $a_0+a_1\leq|\gamma|$, define 
\eeq{ \label{delete_edges_def} 
\gamma^{(a_0,a_1)}\coloneqq\{e_{1+a_0},\dots,e_{|\gamma|-a_1}\}.
}
That is, $\gamma^{(a_0,a_1)}$ is the subpath of $\gamma$ obtained by deleting its first $a_0$ edges and its last $a_1$ edges.

\begin{prop} \label{replacement_thm}
Assume \eqref{perturbed_general_assumptions}, where in \eqref{perturbed_general_assumptions_c} the constant $\af\geq0$ is from Proposition \ref{negative_thm}.
There exist random nonnegative integers $A_0$ and $(A_n)_{n\in[1,\infty)}$ for which the following statements hold:
\begin{enumerate}[label=\textup{(\alph*)}]

\item \label{replacement_thm_a}
For any $\eps>0$, we almost surely have $A_n/n^\eps\to0$ as $n\to\infty$.

\item \label{replacement_thm_b}
For each $x\in\Z^d\setminus\{0\}$ and $\gamma\in\Geo(0,x)$, there are nonnegative integers $a_0(\gamma)$ and $a_1(\gamma)$ such that 
\eq{
a_0(\gamma)\leq A_0 \quad \text{and} \quad a_1(\gamma)\leq A_n \quad \text{whenever $x = [n\bxi]$ for some $\bxi\in\S^{d-1}$},
}
and the following limit holds almost surely and in $L^p$, $p\in[1,\infty)$:
\eeq{ \label{adjusted_convergence}
\lim_{n\to\infty}\sup_{\substack{\bxi\in\S^{d-1}\\ \gamma\in\Geo(0,n\bxi)}}\Big|\frac{\wt T(\gamma^{(a_0(\gamma),a_1(\gamma))})}{n} - \mu_\bxi\Big| = 0.
}

\item \label{replacement_thm_c}
The random integers $A_0$ and $A_n$ can be chosen to satisfy the bounds
\eq{
\P(A_0\geq r) \leq C\e^{-cr^{1/d}},\ \P(A_n \geq r) \leq Cn^d\e^{-cr^{1/d}} \text{ for all $r\geq1$, $n\in[1,\infty)$}.
}

\item \label{replacement_thm_d}
Finally, if $\esssup\tau_e<\infty$, then part \ref{replacement_thm_b} holds even if one takes $A_0=0$ and $A_n=0$ for all $n\in[1,\infty)$.
\end{enumerate}
\end{prop}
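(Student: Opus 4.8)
The plan is to run the shell construction that underlies Proposition~\ref{negative_thm}, and to choose the deleted edges so that, after trimming, a geodesic starts and ends at \emph{good} vertices --- vertices surrounded by the good-shell structure out of which $\wh T$ is built. Between two good vertices the trimmed geodesic can be compared, up to an $o(n)$ error, with $\wh T$, and hence (via the shape theorem) with $n\mu_\bxi$.

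\emph{Choice of $A_0$, $A_n$ and trimming.} I would first recall from the construction of $\wh T$ that to each $x\in\Z^d$ is attached a random radius $\rho(x)$, namely the smallest radius for which the box $B(x,\rho(x))$ carries a good shell (a separating structure with the percolation and passage-time properties the construction demands), and that $\rho(x)$ has an exponential tail $\P(\rho(x)\ge r)\le C\e^{-cr}$. Set $A_0\coloneqq|B(0,\rho(0))|$ and $A_n\coloneqq\max_{\|x\|_\infty\le n+1}|B(x,\rho(x))|$. Since $[n\bxi]$ always lies in $[-n-1,n+1]^d$, a union bound over the $O(n^d)$ relevant sites together with $\P(|B(x,\rho(x))|\ge r)\le C\e^{-cr^{1/d}}$ gives the tails asserted in part (c); these in turn give $A_n/n^\eps\to0$ a.s.\ for every $\eps>0$ (Borel--Cantelli along the integers plus monotonicity of $n\mapsto A_n$), which is part (a). Now given $\bxi$ and $\gamma\in\Geo(0,[n\bxi])$ with vertices $0=x_0,x_1,\dots$, I would let $a_0(\gamma)$ be the first index at which $\gamma$ meets the good shell of $0$, and $a_1(\gamma)$ the number of edges of $\gamma$ after its last visit to the good shell of $[n\bxi]$. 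Since $\gamma$ is self-avoiding, its initial segment is confined to $B(0,\rho(0))$ and its final segment to $B([n\bxi],\rho([n\bxi]))$, so $a_0(\gamma)\le A_0$ and $a_1(\gamma)\le A_n$; as $0$ lies inside the shell of $0$ and $[n\bxi]$ far outside it (and likewise at the other end), these shells are genuinely crossed for $n$ large, so the endpoints $x'\coloneqq x_{a_0(\gamma)}$ and $y'\coloneqq x_{|\gamma|-a_1(\gamma)}$ of $\gamma^{(a_0(\gamma),a_1(\gamma))}$ are well-defined good vertices with $\|x'\|_\infty\le A_0$, $\|y'-[n\bxi]\|_\infty\le A_n$, hence $\|x'-y'\|_2=n(1+o(1))$ and $(y'-x')/\|x'-y'\|_2\to\bxi$, uniformly in $\bxi$ and $\gamma$.

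\emph{The two-sided estimate.} The lower bound is free: $\gamma^{(a_0,a_1)}$ is a path from $x'$ to $y'$, so $\wt T(\gamma^{(a_0,a_1)})\ge\wt T(x',y')$. For the upper bound I would take a good path $P^\star$ from $x'$ to $y'$ realizing the shell passage time, so $\wt T(P^\star)=\wh T(x',y')+o(n)$, form the concatenation of the deleted initial segment of $\gamma$, then $P^\star$, then the deleted final segment of $\gamma$, and prune it to a self-avoiding path from $0$ to $[n\bxi]$; by Lemma~\ref{subadditive_lemma} the pruning inflates the weight by an amount controlled by the negative-weight clusters met near $x'$ and $y'$, hence by $o(n)$, and minimality of $\gamma$ then forces $\wt T(\gamma^{(a_0,a_1)})\le\wt T(P^\star)+o(n)=\wh T(x',y')+o(n)$. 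To close the argument I would invoke two facts extracted from the proof of Proposition~\ref{negative_thm}: that $|\wt T(x',y')-\wh T(x',y')|=o(n)$ for good endpoints, and that the uniform shape theorem applies --- here to the translated environments $\mathfrak T_{x'}\omega$, made uniform over the $O(n^d)$ admissible positions of $x'$ by means of the tail bounds of that proposition --- so that $\wh T(x',y')=n\mu_\bxi+o(n)$. Combining, $\wt T(\gamma^{(a_0(\gamma),a_1(\gamma))})/n\to\mu_\bxi$ a.s., uniformly in $\bxi\in\S^{d-1}$ and over $\gamma\in\Geo(0,[n\bxi])$, and the $L^p$ statement follows by uniform integrability using part (c) and the tail bounds of Proposition~\ref{negative_thm}. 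For part (d): when $\esssup\tau_e<\infty$ the corrections $\wh T(x,y)-\wt T(x,y)$ at any site are bounded by a constant multiple of the local bad-region size times $\esssup\tilde\tau_e$, whose maximum over $[-n-1,n+1]^d$ is $O(\log n)=o(n)$, so one may keep $A_0=A_n=0$ and compare $\wt T(\gamma)=\wt T(0,[n\bxi])$ with $\wh T(0,[n\bxi])$ directly.

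\emph{Main obstacle.} The delicate point --- the ``principal complication'' flagged before the statement --- is that an honest geodesic need not be a geodesic of the shell environment, so $\wt T(\gamma^{(a_0,a_1)})$ cannot be read off from $\wh T$. This is precisely what trimming to good endpoints fixes: deleting the first $A_0$ and last $A_n$ edges guarantees that no heavy edge sits adjacent to $x'$ or $y'$, after which $\gamma^{(a_0,a_1)}$ can be traded for a shell-adapted path by concatenation and pruning, the pruning loss being controlled by Lemma~\ref{subadditive_lemma}. I expect the genuinely technical work to be making each of these estimates --- the confinement bounds on $a_0,a_1$, the pruning loss, the comparison $\wt T\sim\wh T$ at good endpoints, and the shape theorem --- hold \emph{uniformly} over the polynomially many positions of $x'$ and $y'$ and over all geodesics, rather than merely pointwise in $\bxi$.
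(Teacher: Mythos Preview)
Your approach is correct in outline and matches the paper's strategy: define $A_0,A_n$ via the shell radii, trim geodesics to their first and last shell visits, compare the trimmed passage time with $\wh T$, and invoke the shape theorem. However, you take an unnecessary detour that creates the very ``main obstacle'' you flag. After trimming, the endpoints $x',y'$ of $\gamma^{(a_0,a_1)}$ lie in $\SS(0)$ and $\SS([n\bxi])$ respectively, so $\gamma^{(a_0,a_1)}$ is \emph{itself} a candidate path for $\wh T(0,[n\bxi])=\wt T(\SS(0),\SS([n\bxi]))$. This gives the lower bound $\wt T(\gamma^{(a_0,a_1)})\geq\wh T(0,[n\bxi])$ for free, and the upper bound is exactly Lemma~\ref{for_tightness_lemma}\ref{for_tightness_lemma_a} (which already packages your concatenation-and-pruning argument): since $\gamma$ is a geodesic, $\wt T(\gamma)-\wt T(0,[n\bxi])=0$ kills the first term on the right of \eqref{for_tightness_2}, leaving $|\wt T(\gamma^{(a_0,a_1)})-\wh T(0,[n\bxi])|\leq\Delta_{\mathrm{ext}}(0)+\Delta_{\mathrm{ext}}([n\bxi])$. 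Then \eqref{shape_thm} applies directly to $\wh T(0,[n\bxi])$. There is no need to compare with $\wh T(x',y')$ or to make the shape theorem uniform over translated base points; comparing with $\wh T(0,[n\bxi])$ rather than $\wh T(x',y')$ dissolves the uniformity issue entirely.

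For part~(d) the paper's argument is also simpler than yours: when $\tau_e$ is bounded one takes $\tf\geq\esssup\tau_e$, so every vertex is white and property~\ref{condition_5} makes $\SS(x)$ the nearest-neighbor sphere of $x$. Then $\Avc(x)=1$ identically, trimming removes exactly one edge at each end, and $|\wt T(\gamma)-\wt T(\gamma^{(1,1)})|\leq 2(\tf+\af)$, a fixed constant; no $O(\log n)$ estimate is needed.
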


\begin{remark}
If we only need that \ref{replacement_thm_b} holds for a single deterministic direction $\bxi$, then the factor of $n^d$ can be removed from the second tail bound in \ref{replacement_thm_c}, meaning $(A_n)_{n\in[1,\infty)}$ is tight in this restricted setting.
Irrespective of this comment, if we only demand almost sure convergence in \eqref{adjusted_convergence}, then we can take $A_0=0$.
This is because $A_0$ is only needed to remove high-weight edges near $0$, and
although these edges will not change the almost sure convergence, they can spoil integrability.
\end{remark}

When dealing with infinite geodesics, we will also make use of the following sister result.
I do not believe it has appeared in the literature before.
It says that infinite geodesics almost surely achieve the correct time constant even without any moment assumption on the edge-weights.
For any infinite path, we label its vertices in the order traversed as $x_0,x_1,\dots$.
Also recall the subpath notation $\Gamma^{(\ell)}$ from \eqref{sub_of_infinite}.

\begin{prop} \label{addition_thm}
Assume \eqref{perturbed_general_assumptions}, where in \eqref{perturbed_general_assumptions_c} the constant $\af\geq0$ is from Proposition \ref{negative_thm}.
For any $x\in\Z^d$, we have
\eeq{ \label{sup_infinite}
\lim_{n\to\infty}\sup_{\bxi\in\S^{d-1}}\sup_{\substack{\Gamma\in\Geo_\infty\\
x_0=x,\, x_\ell=x+[n\bxi]}}\Big|\frac{\wt T(\Gamma^{(\ell)})}{n}- \mu_\bxi\Big| = 0 \quad \mathrm{a.s.}
}
In particular, if $\Gamma\in\Geo_\infty(\bxi)$, then
\eeq{ \label{single_infinite}
\lim_{n\to\infty} \frac{\wt T(\Gamma^{(\ell)})}{x_\ell\cdot\bxi} = \mu_\bxi.
}
\end{prop}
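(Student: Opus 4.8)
The plan is to establish \eqref{sup_infinite} first, since \eqref{single_infinite} follows from it by a short limit argument, and to reduce to the base point $x=0$. By the translation invariance in \eqref{perturbed_general_assumptions_b}, the event that \eqref{sup_infinite} holds with base point $x$ has the same probability as the one with base point $0$, so it suffices to treat $x=0$ (and, using countability of $\Z^d$, one then gets all $x$ simultaneously). The key preliminary remark is that the inner supremum in \eqref{sup_infinite} is essentially redundant: if $\Gamma\in\Geo_\infty$ has $x_0=0$ and $x_\ell=[n\bxi]$, then $\Gamma^{(\ell)}$ is by definition a finite geodesic from $0$ to $[n\bxi]$, whence $\wt T(\Gamma^{(\ell)})=\wt T(0,[n\bxi])$. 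Thus \eqref{sup_infinite} amounts to the statement that $\wt T(0,[n\bxi])/n\to\mu_\bxi$, uniformly over those $(n,\bxi)$ for which $[n\bxi]$ lies on \emph{some} infinite geodesic through $0$. I would prove this by a lower and an upper bound on $\wt T(0,[n\bxi])$.

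For the lower bound I would invoke the shell passage times $\wh T$ of Proposition \ref{negative_thm}, which satisfy $\wt T(x,y)\ge\wh T(x,y)$ by construction. Writing $[n\bxi]=[n'\bxi']$ with $n'=\|[n\bxi]\|_2=n+O(1)$ and $\bxi'=[n\bxi]/\|[n\bxi]\|_2=\bxi+O(n^{-1})$, the uniform shape theorem \eqref{shape_thm} together with the continuity of $\bxi\mapsto\mu_\bxi$ (Proposition \ref{negative_thm}\ref{negative_thm_c2}) gives $\wh T(0,[n\bxi])=\mu_\bxi n+o(n)$ uniformly in $\bxi$, hence $\wt T(0,[n\bxi])\ge \mu_\bxi n-o(n)$ uniformly. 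This half uses nothing about infinite geodesics; it is the matching upper bound that does.

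For the upper bound I would exploit that when $[n\bxi]=x_\ell$ lies on an infinite geodesic $\Gamma$ through $0$, it is an \emph{interior} vertex of $\Gamma$, so for every $\ell'>\ell$ the segment $\Gamma^{(\ell')}$ is still a geodesic from $0$ to $x_{\ell'}$ passing through $x_\ell$. I would apply Proposition \ref{replacement_thm} (specifically \eqref{adjusted_convergence}) to $\Gamma^{(\ell')}$, obtaining a trimmed path $(\Gamma^{(\ell')})^{(a_0,a_1)}$ of near-optimal $\wt T$-weight, with $a_0\le A_0$ bounded and $a_1\le A_{\|x_{\ell'}\|}$ growing subpolynomially; since the number of edges of $\Gamma$ between $x_\ell$ and $x_{\ell'}$ is at least $\|x_{\ell'}\|_1-\|x_\ell\|_1$, which dominates $A_0+A_{\|x_{\ell'}\|}$ once $\ell'$ is large, the trimming removes only edges far from $x_\ell$, and splitting the (geodesic) trimmed path at $x_\ell$ localizes the argument to the portion of $\Gamma$ reaching $x_\ell$. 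The remaining point -- which I expect to be the main obstacle -- is to show that the short stretch of $\Gamma$ that enters $x_\ell$ contributes negligible passage time: this is the quantitative form of the assertion that an interior vertex of an infinite geodesic is never ``trapped'' behind a cluster of heavy edges. I would obtain it from the percolation-type estimates underlying Section \ref{modification_sec} -- the stretched-exponential upper tail and exponential lower tail of $\wh T$ in Proposition \ref{negative_thm}\ref{negative_thm_a}, and the tail bound on $A_0,A_n$ in Proposition \ref{replacement_thm}\ref{replacement_thm_c} -- combined with a Borel--Cantelli argument over the $O(n^{d-1})$ candidate endpoints $[n\bxi]$ and the sparse set of vertices that can lie on an infinite geodesic through $0$. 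Morally, this is where the ``no moment assumption'' machinery built for finite geodesics, whose endpoint may be trapped, is transferred to the interior vertices of infinite geodesics, where it cannot be.

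Finally, \eqref{single_infinite} follows from \eqref{sup_infinite}. Given $\Gamma\in\Geo_\infty(\bxi)$, apply \eqref{sup_infinite} with $x=x_0$. Since $\Gamma$ is $\bxi$-directed we have $\|x_\ell\|_2\to\infty$ and $x_\ell/\|x_\ell\|_2\to\bxi$, hence also $(x_\ell-x_0)/\|x_\ell-x_0\|_2\to\bxi$; writing $x_\ell-x_0=[n_\ell\bxi_\ell]$ we get $n_\ell=\|x_\ell-x_0\|_2\to\infty$ and $\bxi_\ell\to\bxi$, so by \eqref{sup_infinite} and continuity of $\mu$, $\wt T(\Gamma^{(\ell)})/n_\ell\to\mu_\bxi$ as $\ell\to\infty$. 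Because $x_0$ is fixed, $x_\ell\cdot\bxi=\|x_\ell\|_2\,(x_\ell/\|x_\ell\|_2)\cdot\bxi=\|x_\ell\|_2(1+o(1))=n_\ell(1+o(1))\to\infty$, and dividing yields $\wt T(\Gamma^{(\ell)})/(x_\ell\cdot\bxi)\to\mu_\bxi$. (When $F(0)\ge p_\cc(\Z^d)$ one has $\mu_\bxi\equiv0$ and the same computation applies, using only $x_\ell\cdot\bxi\to\infty$.)
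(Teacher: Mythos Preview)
Your reduction to $x=0$, the observation that $\wt T(\Gamma^{(\ell)})=\wt T(0,x_\ell)$, and your derivation of \eqref{single_infinite} from \eqref{sup_infinite} are all fine and match the paper. Two remarks on the lower bound: the inequality $\wt T(x,y)\ge\wh T(x,y)$ is not true ``by construction'' when $\af>0$ (the pre- and post-shell portions of a path can contribute negatively); the correct statement carries a correction $-\af(\Avc(x)+\Avc(y))$, which is indeed $o(n)$ uniformly over $\mathbf S_n$ by \eqref{Lx_tail} and Borel--Cantelli, so your lower bound survives.

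The genuine gap is in your upper bound. Routing through Proposition~\ref{replacement_thm} applied to $\Gamma^{(\ell')}$ and splitting at $x_\ell$ leaves you needing a \emph{lower} bound on $\wt T(x_\ell,x_{\ell'-a_1})$, a passage time from the moving point $x_\ell$; neither Proposition~\ref{replacement_thm} nor the tail bounds on $\wh T$, $A_0$, $A_n$ that you cite provide this. (One could recover it via the near-subadditivity \eqref{final_subadditive} of $\wh T$ together with \eqref{shape_thm}, but then you must also keep $n'=\|x_{\ell'}\|$ comparable to $n$, since the Proposition~\ref{replacement_thm} error is $o(n')$, not $o(n)$; none of this is in your sketch.) The paper avoids this detour entirely. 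It compares $\wt T(0,x_\ell)$ directly to $\wt T(0,\SS(x_\ell))$, which differs from $\wh T(0,x_\ell)$ only by the fixed quantity $\Delta_{\mathrm{int}}(0)+\Delta_{\mathrm{ext}}(0)$. The key step---and the precise content of your phrase ``an interior vertex is never trapped behind heavy edges''---is this: since $\Gamma$ is infinite it hits $\SS(x_\ell)$ both at some $x_j$ with $j<\ell$ and at some $x_k$ with $k>\ell$. The geodesic identity $\wt T(x_j,x_\ell)+\wt T(x_\ell,x_k)=\wt T(x_j,x_k)$, together with $\wt T(x_j,x_k)\le(\tf+\af)|\SS(x_\ell)|$ (the shell is connected and white) and $-\wt T(x_\ell,x_k)\le\af\Lvc(x_k;0)$, yields an upper bound on $\wt T(x_j,x_\ell)$---and hence on $|\wt T(0,x_\ell)-\wt T(0,\SS(x_\ell))|$ via \eqref{final_subadditive_tilde}---involving only $|\SS(x_\ell)|$, $\Delta_{\mathrm{ext}}(x_\ell)$, and $\Lvc(x_\ell;0)$. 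These have tails \eqref{condition_2_eq_c}, \eqref{Delta_2_tail}, \eqref{Qx_tail} depending only on $\LL$ and $d$, \emph{not} on the actual edge weights near $x_\ell$, so a union bound over $\mathbf S_n$ and Borel--Cantelli finish the job. That use of the exit point $x_k$ is the concrete mechanism your proposal is missing.
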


The limit in \eqref{sup_infinite} could be made valid in $L^p$, $p\in[1,\infty)$, if we allowed ourselves to delete a random, unit-order number of edges from the beginning of $\Gamma$.
We have not stated the result in this way for two reasons.
First, the main distinction between Propositions \ref{replacement_thm} and \ref{addition_thm} is that in the latter, we do not need to delete \textit{any} edges, even near the vertex $[n\bxi]$.
Second, the statement \eqref{sup_infinite} holds for a \textit{fixed} starting vertex $x_0=x$.
Therefore, the utility of Proposition \ref{addition_thm} comes from the fact that, on a probability-one event, \eqref{sup_infinite} holds simultaneously for every $x\in\Z^d$.
Consequently, \eqref{single_infinite} is valid for any $\Gamma\in\Geo_\infty(\bxi)$, regardless of its starting point.

Sections \ref{shell_review}, \ref{coupling_bernoulli}, \ref{chemical} accumulate a set of preliminary results that will allow us prove Propositions \ref{negative_thm}, \ref{negative_thm_2}, \ref{replacement_thm}, \ref{addition_thm} in Section \ref{reckoning}.

\section{The Cox--Durrett--Kesten shell passage time} \label{shell_review}
Here we review only the essential features of the shell construction; a full treatment can found in \cite[pg.~137--153]{kesten86}.
Fix a constant $\tf $ \label{tfrak_def}
that is large enough (depending on $d$ and $\LL$).
Each site $x\in\Z^d$ is assigned a color. 
We take $Y_x=1$ (\textit{white}) if all edges $e$ containing $x$ have weight $\tau_e\leq \tf $, and $Y_x=0$ (\textit{black}) otherwise.
Let $\WW$ denote the union of all infinite components of the subgraph of $\Z^d$ induced by the white vertices.
\label{infinite_white_cluster}
For each $x\in\Z^d$, a random \textit{shell} $\SS(x) = \SS(x,\omega)\subset\Z^d$ \label{shell_def}
around $x$ can be identified such that:
\begin{enumerate}[label=\textup{(\roman*)}]
\item \label{condition_1}
\cite[disp.~(2.22) and pg.~141]{kesten86} The shell $\SS(x)$ contains only white vertices, does not contain $x$, and almost surely contains some $w\in \WW$.

\item \label{condition_2}
\cite[Lem.~2.23 and 2.24]{kesten86} Almost surely, $\SS(x)$ is finite and connected, in which case every infinite self-avoiding path starting at $x$ must intersect $\SS(x)$.

\item \label{condition_3} \cite[Lem.~2.24]{kesten86}
The following two inequalities hold for all $r\geq1$:
\eeqs{condition_2_eq}{
\P\Big(\inf_{w \in \SS(x)} \|x-w\|_\infty \geq r\Big) &\leq C\e^{-cr}, \\ 
\P\Big(\sup_{w,z\in \SS(x)} \|w-z\|_\infty \geq r\Big) &\leq C\e^{-cr}. \label{x}
}

\item \label{condition_4}
The value of $\SS(x)$ depends only on $(\tau_e)_{e\in E(\Z^d)}$.
Furthermore, for any $x,z\in\Z^d$, we have $\SS(x,\mathfrak{T}_z\omega) = \SS(x-z,\omega)$.

\item \label{condition_5}
If $\tau_e$ is bounded, then we assume $\tf\geq\esssup\tau_e$ so that \ref{condition_1}--\ref{condition_4} are satisfied by $\SS(x) = \{y\in\Z^d:\, \|x-y\|_1 = 1\}$.
\end{enumerate}

We will repeatedly refer back to these properties, in order to demonstrate that the following random variables satisfy Proposition \ref{negative_thm}:
\eeq{ \label{wh_T_def}
\wh T(x,y) \coloneqq \wt T(\SS(x),\SS(y)), \quad x,y\in\Z^d,
}
where $\wt T(S_1,S_2) \coloneqq \inf_{w \in S_1,z\in S_2} \wt T(w,z)$ for $S_1,S_2\subset\Z^d$.
Note that properties \ref{condition_1} and \ref{condition_2} together imply 
\eeq{ \label{S_in_W}
\SS(x)\subset \WW \quad \text{for all $x\in\Z^d$, almost surely.}
}
The following observation is essentially contained in \cite{kesten86}, but we include a proof for completeness.

\begin{lemma} \label{single_cluster_lemma}
Almost surely, $\WW$ has a single  component.
\end{lemma}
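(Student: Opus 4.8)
The plan is to show that $\WW$, the union of all infinite white components, is in fact a single infinite component, by exploiting the standard uniqueness theorem for infinite clusters in Bernoulli percolation together with the fact that being white is a decreasing event in the edge-weights. First I would observe that since $F(0)<p_\cc(\Z^d)$, we may choose $\tf$ large enough that $\P(Y_x=1) = \P(\tau_e\leq\tf \text{ for all } e\ni x)$ is as close to $1$ as we like; in particular, the white sites percolate. The subtlety is that the colors $(Y_x)_{x\in\Z^d}$ are \emph{not} independent across sites — two sites sharing an edge have correlated colors — so one cannot cite the Burton--Keane theorem for i.i.d.\ site percolation verbatim. However, the site process $(Y_x)$ is a factor of the i.i.d.\ edge-weight field with a local rule: $Y_x$ depends only on the $2d$ edges incident to $x$. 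Thus $(Y_x)$ is a finite-range, translation-invariant, ergodic random field, and the Burton--Keane argument applies in this generality: one only needs translation invariance, ergodicity, and the finite-energy property (or simply the Burton--Keane counting argument, which is purely combinatorial and uses only insertion/deletion tolerance). Alternatively—and more cleanly—I would invoke the general uniqueness result of Gandolfi, Keane, and Russo, or Häggström--Jonasson, for ergodic percolation with the finite-energy condition, after checking finite energy: conditioned on the states of all sites outside a finite box, each site still has a positive conditional probability of being white and of being black, because flipping the relevant edge-weights (e.g.\ raising them above $\tf$, or lowering them below $\tf$) has positive probability under $\LL$ as long as $0<F(\tf)<1$, which holds for generic $\tf$; if $F(\tf)=1$ the model is trivial and if $F(\tf-)=1$ one adjusts $\tf$ slightly.

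The key steps, in order, are: (1) record that $(Y_x)_{x\in\Z^d}$ is a stationary, ergodic site-percolation process obtained as a local factor of the i.i.d.\ field $(\tau_e)$; (2) verify the finite-energy property for this process; (3) apply the Burton--Keane / Gandolfi--Keane--Russo uniqueness theorem to conclude that the white subgraph of $\Z^d$ has at most one infinite component almost surely; (4) note that by construction (property \ref{condition_1}, and Kesten's choice of $\tf$) there \emph{is} at least one infinite white component almost surely — this is already used implicitly in the shell construction — so $\WW$ is exactly that unique component. I would then remark that this is consistent with, and indeed underlies, property \ref{condition_1}'s assertion that $\SS(x)$ almost surely meets $\WW$, and with the display \eqref{S_in_W}.

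The main obstacle is step (2)--(3): making sure the standard uniqueness machinery is invoked in a form that genuinely applies to a \emph{dependent} site process with finite-range correlations. The cleanest route is probably not to fuss with site percolation at all, but to run the Burton--Keane argument directly on the random subgraph $\WW\subset\Z^d$: it is translation invariant and ergodic (being a factor of an ergodic field), and the finite-energy / insertion-tolerance hypothesis needed for Burton--Keane holds because, for any finite set of edges, conditionally on all other edge-weights one can with positive probability set those edges to values $\leq\tf$ (making nearby sites white) — this is exactly insertion tolerance for the ``white'' configuration. Since Burton--Keane needs only insertion tolerance (not deletion tolerance) together with translation invariance and ergodicity, this suffices to rule out two or more infinite components. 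I expect the write-up here to be short, essentially a citation to \cite[Thm.~8.1]{grimmett99} or \cite{burton-keane89} after the one-line verification of insertion tolerance; the only genuine content is spelling out that the correlation structure does not obstruct the argument.
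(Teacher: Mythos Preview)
Your Burton--Keane approach has a genuine gap: the site process $(Y_x)$ does \emph{not} satisfy insertion tolerance. Let $x$ and $y$ be adjacent and condition on the event that $Y_y = 0$ while $Y_z = 1$ for every neighbor $z\neq x$ of $y$. Each such white $z$ forces $\tau_{\{y,z\}} \leq \tf$ (since $\{y,z\}$ is incident to the white site $z$), so the blackness of $y$ must come from the one remaining incident edge: $\tau_{\{x,y\}} > \tf$, whence $Y_x = 0$ is determined. This configuration of $(Y_z)_{z \neq x}$ has positive probability whenever $0 < F(\tf) < 1$, so $\P\big(Y_x = 1 \,\big|\, (Y_z)_{z \neq x}\big) = 0$ on a set of positive measure, and your ``one-line verification of insertion tolerance'' fails. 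Working ``at the edge-weight level'' does not immediately rescue this: the hypothesis Burton--Keane actually uses conditions on the \emph{percolation} configuration, and conditioning instead on edge-weights outside a finite set is a different sigma-algebra---the two are not interchangeable here precisely because of the dependence you yourself flagged. One can likely repair the argument by performing the trifurcation surgery on the i.i.d.\ edge-weights in a box and absorbing the range-one boundary effects into a buffer, but this is genuine work rather than a citation to \cite[Thm.~8.1]{grimmett99}.

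The paper sidesteps all of this by exploiting the shell machinery already in hand. From \cite[disp.~(2.30)]{kesten86}, whenever $x$ and $y$ are adjacent there is a white path joining $\SS(x)$ to $\SS(y)$; chaining over adjacent pairs puts all shells in a single white component. Then property~\ref{condition_2}---every infinite self-avoiding path from $x$ meets $\SS(x)$---forces any infinite white component through $x$ to contain $\SS(x)$ and hence to coincide with that single component. No general uniqueness theorem is invoked.
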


\begin{proof}
By \cite[disp.~(2.30)]{kesten86}, if $x$ and $y$ are adjacent, then there exists $z\in\Z^d$ with the following property:
There are nearest-neighbor paths from $z$ to $\SS(x)$ and from $z$ to $\SS(y)$ 
that use only white vertices.
Therefore, $\SS(x)$ and $\SS(y)$ belong to the same  component of $\WW$.
Since this holds for every pair $x,y$ of adjacent vertices, we deduce that $\bigcup_{x\in\Z^d} \SS(x)$ is contained in a single  component of $\WW$.
Finally, if an infinite self-avoiding path passes through $x\in\Z^d$, then it is also passes through $\SS(x)$ by property \ref{condition_2}.
Consequently, there can be no  component of $\WW$ other than the one containing $\bigcup_{x\in\Z^d} \SS(x)$.
\end{proof}

Meanwhile, property \ref{condition_3} can be leveraged in the following manner.
We denote a $d$-dimensional box centered at $x\in\Z^d$ by
\begin{subequations} \label{box_defs}
\begin{align} \label{box_def_a}
\Bbf_r(x) &\coloneqq \{y \in \Z^d :\, \|x-y\|_\infty \leq r\},
\end{align}
and its boundary by
\begin{align} \label{box_def_b}
\partial \Bbf_r(x) &\coloneqq \{y \in \Z^d :\, \|x-y\|_\infty = r\}.
\end{align}
\end{subequations}
It will be useful to define the quantity
\eeq{ \label{radius_x_def} 
\Rvc(x) \coloneqq \sup_{z\in \SS(x)} \|x-z\|_\infty, 
}
so that $\SS(x)\subset \Bbf_{\Rvc(x)}(x)$ and thus $|\SS(x)|\leq (2\Rvc(x)+1)^d$.
Considering that
\eq{
\Rvc(x) \leq \inf_{w\in \SS(x)} \|x - w\|_\infty + \sup_{w,z\in \SS(x)} \|w - z\|_\infty,
}
the inequalities in \eqref{condition_2_eq} give
\eeq{ \label{condition_2_eq_c}
\P(\Rvc(x)>r) 
\leq C\e^{-cr}. 
}
Let $\PP(x)$ denote the set of all self-avoiding paths starting at $x\in\Z^d$. \label{path_x_def}
As demonstrated in the next two lemmas, \eqref{condition_2_eq_c} leads to tail bounds for the following quantities, which are defined for $x\in\Z^d$, $h\in\R$, and $q>0$:
\eeq{ \label{LQ_def}
\Avc(x) &\coloneqq \sup\big\{|\gamma| :\, \text{$\gamma\in\PP(x)$, $\gamma$ avoids $\SS(x)$}\big\}+1, \\
\Lvc(x;h) &\coloneqq \sup\big\{|\gamma| :\, \gamma\in\PP(x),\, \wt T(\gamma)<h|\gamma|\big\}\vee0, \\
\Qvc(x;h,q) &\coloneqq \sup\big\{|\gamma| :\, \gamma\in\PP(x),\, |\{e\in\gamma :\, \tilde\tau_e\leq h\}|\geq q|\gamma|\big\}.
}

\begin{lemma} \label{L_tail_bd_lemma}
For every $x\in\Z^d$ and $r\geq0$, we have
\eeq{ \label{Lx_tail}
\P(\Avc(x) \geq r ) \leq C\e^{-cr^{1/d}}.
}
\end{lemma}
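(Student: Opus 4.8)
The plan is to control $\Avc(x)$ by the size of the shell $\SS(x)$, which we already understand via \eqref{condition_2_eq_c}. Recall $\Avc(x) = \sup\{|\gamma| :\, \gamma\in\PP(x),\ \gamma \text{ avoids } \SS(x)\}+1$. The key observation is that $\SS(x)$ is a finite, connected set (property \ref{condition_2}) that \emph{surrounds} $x$, in the sense that every infinite self-avoiding path from $x$ must intersect it. Hence any self-avoiding path $\gamma$ starting at $x$ and avoiding $\SS(x)$ must stay trapped in the bounded component of $\Z^d\setminus\SS(x)$ containing $x$. Since $\SS(x)\subset\Bbf_{\Rvc(x)}(x)$, that bounded region is contained in $\Bbf_{\Rvc(x)}(x)$, which has at most $(2\Rvc(x)+1)^d$ vertices. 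A self-avoiding path confined to a set of $M$ vertices has length at most $M-1$, so
\[
\Avc(x) \leq (2\Rvc(x)+1)^d.
\]
(One should be slightly careful that the region $\Z^d\setminus\SS(x)$ could have several finite components, or that a path could leave $\Bbf_{\Rvc(x)}(x)$ and re-enter without touching $\SS(x)$; but since $\SS(x)$ separates $x$ from infinity and is connected, the component of $x$ in $\Z^d\setminus\SS(x)$ is finite and, being ``inside'' $\SS(x)$, lies within $\Bbf_{\Rvc(x)}(x)$. A clean way to argue: if $\gamma$ reaches $\partial\Bbf_{\Rvc(x)+s}(x)$ for large $s$ it could be extended to an infinite self-avoiding path avoiding $\SS(x)$ by going straight outward, contradicting \ref{condition_2}. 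This pins $\gamma$ inside a box of radius $O(\Rvc(x))$.)

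Given the deterministic bound $\Avc(x)\leq (2\Rvc(x)+1)^d$, the tail estimate follows from \eqref{condition_2_eq_c}: for $r\geq0$,
\[
\P(\Avc(x)\geq r) \leq \P\big((2\Rvc(x)+1)^d \geq r\big) = \P\Big(\Rvc(x) \geq \tfrac{r^{1/d}-1}{2}\Big) \leq C\e^{-c(r^{1/d}-1)/2} \leq C'\e^{-c'r^{1/d}},
\]
after absorbing constants (and noting the bound is trivial for small $r$, say $r\leq 3^d$, by enlarging $C$). This gives \eqref{Lx_tail}.

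The main obstacle is purely the topological/combinatorial claim that a self-avoiding path from $x$ avoiding $\SS(x)$ is confined to a box of radius comparable to $\Rvc(x)$ — i.e., that $\SS(x)$ genuinely ``encloses'' $x$ rather than merely being something every \emph{infinite} path hits. This should follow from the construction in \cite[pg.~137--153]{kesten86}, where $\SS(x)$ is built precisely as an enclosing surface, together with property \ref{condition_2}. Once that geometric fact is in hand, the rest is the elementary substitution above, and everything else (connectedness, finiteness, the exponential tail on $\Rvc(x)$) has already been recorded in properties \ref{condition_1}--\ref{condition_4} and \eqref{condition_2_eq_c}.
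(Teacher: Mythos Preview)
Your proof is correct and follows essentially the same route as the paper: establish the deterministic bound $\Avc(x)\leq(2\Rvc(x)+1)^d$ and then invoke \eqref{condition_2_eq_c}. The paper's justification of that bound is precisely your parenthetical ``extend to an infinite self-avoiding path by going straight outward'' argument---it notes that any $\gamma$ of length at least $(2\Rvc(x)+1)^d$ must exit $\Bbf_{\Rvc(x)}(x)$ at some vertex $y$, from which an infinite self-avoiding path avoiding $\SS(x)$ can be built, so property~\ref{condition_2} forces $\gamma$ to hit $\SS(x)$ before reaching $y$.
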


\begin{proof}
The claimed bound is immediate from \eqref{condition_2_eq_c} once we show that
\eq{
\Avc(x) \leq (2\Rvc(x)+1)^d \quad \text{on the event that $\SS(x)$ is finite and connected}.
}
Indeed, let $\gamma$ be any self-avoiding path starting at $x$ and having length $|\gamma|$ equal to $(2\Rvc(x)+1)^d = |\Bbf_{\Rvc(x)}(x)|$.
Because of this length assumption, $\gamma$ must reach some vertex $y\notin \Bbf_{\Rvc(x)}(x)$.
Since $\SS(x)\subset \Bbf_{\Rvc(x)}(x)$, it is possible to construct an infinite self-avoiding path that starts at $y$ and never passes through $\SS(x)$.
Therefore, by property \ref{condition_2} of the shell $\SS(x)$, the path $\gamma$ must hit $\SS(x)$ prior to reaching $y$.
We conclude $\Avc(x) \leq |\gamma| = (2\Rvc(x)+1)^d$, as needed.
\end{proof}

The next lemma explains the origin of $\af$ in Proposition \ref{negative_thm}.

\begin{lemma} \label{eta_choice_lemma}
If $F(0)<p_\cc(\Z^d)$, then exist constants $\af = \af(\LL,d)>0$ and $\qf  = \qf (\LL,d)<1$ such that whenever \eqref{perturbed_general_assumptions_a} and \eqref{perturbed_general_assumptions_c} hold, the following inequalities hold for all $x\in\Z^d$, $h\leq \af$, $q\geq \qf $, and $r\geq0$:
\eeq{ \label{Qx_tail}
\P(\Lvc(x;h) \geq r) \leq C\e^{-cr} \quad \text{and} \quad\ \
\P(\Qvc(x;h,q) \geq r) \leq C\e^{-cr}.
}
If $F(0)\geq p_\cc(\Z^d)$, then we take $\af=0$ so that $\Lvc(x;h)=0$ for all $h\leq0$.
\end{lemma}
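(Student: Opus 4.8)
The plan is to first remove the perturbation variables, reducing both inequalities in \eqref{Qx_tail} to a single statement about the i.i.d.\ family $(\tau_e)_{e\in E(\Z^d)}$, and then to recognize that statement as the standard subcritical-percolation fact that a macroscopic self-avoiding path cannot consist mostly of low-weight (``open'') edges. \emph{Step 1 (removing the perturbation):} Since \eqref{perturbed_general_assumptions_c} gives $\tau^\pert_e\geq-\af$ and \eqref{perturbed_general_assumptions_a} gives $\tau_e\geq0$, every $\gamma\in\PP(x)$ satisfies $\wt T(\gamma)=\sum_{e\in\gamma}(\tau_e+\tau^\pert_e)\geq\sum_{e\in\gamma}\tau_e-\af|\gamma|$, and $\{e:\tilde\tau_e\leq h\}\subseteq\{e:\tau_e\leq h+\af\}$, so for $h\leq\af$,
\eq{
\wt T(\gamma)<h|\gamma|\ \Longrightarrow\ \sum_{e\in\gamma}\tau_e<2\af|\gamma|,\qquad
|\{e\in\gamma:\tilde\tau_e\leq h\}|\leq|\{e\in\gamma:\tau_e\leq2\af\}|.
}
\emph{Step 2 (the cheap-edge reduction):} Using right-continuity of $F$ and $F(0)<p_\cc(\Z^d)$, fix $b>0$ with $F(b)<p_\cc(\Z^d)$; call an edge \emph{cheap} if $\tau_e<b$, so that the cheap edges form a subcritical i.i.d.\ Bernoulli bond percolation of parameter $F(b-)\leq F(b)<p_\cc(\Z^d)$. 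Given a target $\qf\in(0,1)$, put $\af\coloneqq\tfrac12 b(1-\qf)$, so $2\af<b$. By Step 1, for $h\leq\af$: if $\wt T(\gamma)<h|\gamma|$ then $|\{e\in\gamma:\tau_e\geq b\}|\leq b^{-1}\sum_{e\in\gamma}\tau_e<(2\af/b)|\gamma|=(1-\qf)|\gamma|$, so more than $\qf|\gamma|$ edges of $\gamma$ are cheap; and if $|\{e\in\gamma:\tilde\tau_e\leq h\}|\geq q|\gamma|$ with $q\geq\qf$, then each such edge is cheap (as $2\af<b$), so again at least $\qf|\gamma|$ edges of $\gamma$ are cheap. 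Hence, for $h\leq\af$ and $q\geq\qf$, both $\Lvc(x;h)$ and $\Qvc(x;h,q)$ are at most
\eq{
M(x)\ \coloneqq\ \sup\big\{|\gamma|:\ \gamma\in\PP(x),\ |\{e\in\gamma:\tau_e<b\}|\geq\qf|\gamma|\big\},
}
and it suffices to prove $\P(M(x)\geq r)\leq C\e^{-cr}$ for all $r\geq0$.

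\emph{Step 3 (the percolation estimate):} If $M(x)\geq r$, then for some integer $\ell\geq r$ there is $\gamma\in\PP(x)$ with $|\gamma|=\ell$ and at least $\qf\ell$ cheap edges, so by a union bound over $\ell$ it is enough to show
\eq{
\P\big(\exists\,\gamma\in\PP(x):\ |\gamma|=\ell,\ |\{e\in\gamma:\tau_e<b\}|\geq\qf\ell\big)\ \leq\ C_0\rho_0^{\,\ell}
}
for some $\rho_0<1$, provided $\qf$ is chosen close enough to $1$ (depending only on $b$ and $d$); summing over $\ell\geq r$ then yields the claim with $c=\log\rho_0^{-1}$. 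The displayed bound is precisely the assertion that, in the subcritical percolation of cheap edges, a self-avoiding path of length $\ell$ is predominantly open with probability exponentially small in $\ell$, and it follows from the standard subcritical machinery — exponential decay of the cluster-size distribution together with a block/renormalization argument at a suitable fixed scale, of exactly the kind carried out in \cite[Sec.~5]{kesten86} (see also \cite[Chap.~6]{grimmett99}). With $\qf$ so fixed and $\af=\tfrac12 b(1-\qf)$ chosen accordingly, Steps 1--3 give the lemma when $F(0)<p_\cc(\Z^d)$. When $F(0)\geq p_\cc(\Z^d)$ we take $\af=0$, whereupon \eqref{perturbed_general_assumptions_c} forces $\tau^\pert_e\equiv0$ and $\wt T(\gamma)=\sum_{e\in\gamma}\tau_e\geq0\geq h|\gamma|$ for every $h\leq0$ and nonempty $\gamma$; hence $\Lvc(x;h)=0$ identically, and there is nothing more to prove.

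The only substantial ingredient is the exponential bound of Step 3 in full subcritical generality. A naive first-moment argument — bounding the number of length-$\ell$ self-avoiding paths from $x$ by $C\kappa_d^{\,\ell}$ (connective constant) and using that a fixed such path has $\mathrm{Binomial}(\ell,F(b-))$ cheap edges — does \emph{not} suffice, because the relevant large-deviation rate tends to $\log F(b-)^{-1}$ as $\qf\to1$, which is smaller than $\log\kappa_d$ whenever $F(b-)>\kappa_d^{-1}$; and this is compatible with $F(b-)<p_\cc(\Z^d)$ (for instance on $\Z^2$). Overcoming this requires replacing the crude path count by the genuinely subcritical geometry of the cheap clusters through renormalization, and that is the step that demands care, although it proceeds along well-established lines.
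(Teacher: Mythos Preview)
Your proposal is correct and follows essentially the same approach as the paper: both reduce the two tail bounds to \cite[Prop.~5.8]{kesten86}, which is precisely the ``standard subcritical machinery'' you invoke in Step~3. The paper's reduction is marginally more direct---it applies Kesten's proposition to $\tau_e\wedge 1$ with the explicit choices $\af=(h_0\wedge1)/4$ and $\qf=(1-3\af)/(1-2\af)$, bypassing your intermediate cheap-edge threshold $b$---but your Bernoulli route is equivalent, and your diagnosis that the naive union bound fails when $F(b-)>\kappa_d^{-1}$ is exactly why that black box is required.
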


\begin{proof}
By \cite[Prop.~5.8]{kesten86}, there exists a constant $h_0 = h_0(\LL,d)>0$ such that
\eeq{ \label{kesten_tail}
\P\Big(\exists\,\gamma\in\PP(x) \text{ such that } |\gamma| \geq r,\, \sum_{e\in\gamma}(\tau_e\wedge1) \leq h_0|\gamma|\Big) \leq C\e^{-cr}.
}
Upon taking $\af = (h_0\wedge1)/4$ and assuming $h\leq \af$ and $|\tau_e^\pert|\leq \af$ for all $e\in E(\Z^d)$, we have
the following for all $r\geq1$:
\eq{
\Lvc(x;h) \geq r \ &\implies \
\exists\,  \gamma\in\PP(x) \text{ s.t.~} |\gamma| \geq r,\, \wt T(\gamma) < h |\gamma|\leq \af|\gamma| \\
&\implies\  
\sum_{e\in\gamma}(\tau_e\wedge 1) \leq T(\gamma) \leq \wt T(\gamma) + \af|\gamma| \leq 2\af|\gamma| \leq h_0|\gamma|.
}
If in addition we set $\qf  = (1-3\af)/(1-2\af)$ and consider $q\geq \qf $, then
\eq{
\Qvc(x;h,q) \geq r \ 
&\implies\ \exists\,  \gamma\in\PP(x) \text{ s.t.~} |\gamma| \geq r,\, |\{e\in\gamma :\, \tilde\tau_e\leq h\}| \geq q|\gamma| \\
&\implies\ |\{e\in\gamma :\, \tau_e\leq2\af\}| \geq \qf |\gamma| \\
&\implies\ \sum_{e\in\gamma}(\tau_e\wedge 1) \leq 2\af  \qf |\gamma| + (1-\qf )|\gamma|
=3\af |\gamma| \leq h_0|\gamma|.
}
Therefore, \eqref{Qx_tail} follows from \eqref{kesten_tail} and \eqref{perturbed_general_assumptions_c}.
We assume $C\geq1$ so that  \eqref{Qx_tail} holds for $r=0$.
\end{proof}

\begin{lemma} \label{Q_trivial_lemma}
Assume \eqref{perturbed_general_assumptions_c}.
Almost surely we have
\eeq{ \label{trivial_neg_bd}
-\wt T(\gamma) \leq \af \min\{\Lvc(x;0),\Lvc(y;0)\} \quad \text{for any $x,y\in\Z^d$, $\gamma\in\PP(x,y)$}.
}
\end{lemma}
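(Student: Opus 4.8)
The plan is a two–case argument according to the sign of $\wt T(\gamma)$, combined with the trivial pointwise lower bound on edge-weights coming from $\tau_e\geq0$. Throughout one works on the probability-one event furnished by \eqref{perturbed_general_assumptions_c}, on which $|\tau^\pert_e|\leq\af$ holds simultaneously for every $e\in E(\Z^d)$; since $x$, $y$, and $\gamma$ will be handled for arbitrary choices, it suffices to verify the inequality pointwise on this event.

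First I would record that, because $\tau_e\geq0$ by \eqref{perturbed_general_assumptions_a}, every edge satisfies $\tilde\tau_e=\tau_e+\tau^\pert_e\geq-\af$, whence for any $x,y\in\Z^d$ and $\gamma\in\PP(x,y)$ we get $\wt T(\gamma)=\sum_{e\in\gamma}\tilde\tau_e\geq-\af|\gamma|$, i.e.\ $-\wt T(\gamma)\leq\af|\gamma|$. If $\wt T(\gamma)\geq0$, then $-\wt T(\gamma)\leq0\leq\af\min\{\Lvc(x;0),\Lvc(y;0)\}$, since $\af\geq0$ and both $\Lvc(x;0)$ and $\Lvc(y;0)$ are nonnegative by definition \eqref{LQ_def}; so \eqref{trivial_neg_bd} is immediate in this case. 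If instead $\wt T(\gamma)<0$, then $\gamma$, viewed as a self-avoiding path issuing from $x$, belongs to $\PP(x)$ and satisfies $\wt T(\gamma)<0=0\cdot|\gamma|$, so $|\gamma|\leq\Lvc(x;0)$ by \eqref{LQ_def}; reversing the orientation of $\gamma$ gives a self-avoiding path issuing from $y$ with the same edge set, hence the same passage time, so likewise $|\gamma|\leq\Lvc(y;0)$. Substituting $|\gamma|\leq\min\{\Lvc(x;0),\Lvc(y;0)\}$ into the bound $-\wt T(\gamma)\leq\af|\gamma|$ from the start of this paragraph yields \eqref{trivial_neg_bd}, completing the argument.

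There is essentially no obstacle here. The one point demanding a moment's care is the observation that a path from $x$ to $y$, being a set of edges, competes simultaneously in the suprema defining $\Lvc(x;0)$ and $\Lvc(y;0)$, which is precisely what produces the minimum of the two rather than just one of them.
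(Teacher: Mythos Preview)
Your proof is correct and follows essentially the same approach as the paper: split into the cases $\wt T(\gamma)\geq0$ (trivial) and $\wt T(\gamma)<0$, in the latter case use the definition of $\Lvc(\cdot;0)$ to bound $|\gamma|$ from both endpoints, and combine with the trivial bound $-\wt T(\gamma)\leq\af|\gamma|$. The paper phrases it slightly more tersely, but the argument is the same.
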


\begin{proof}
We have $\wt T(\gamma)< 0$ only if $\Lvc(x;0)\geq|\gamma|$.
In addition, since $\tau_e\geq0$, we trivially have $\wt T(\gamma) \geq -\af |\gamma|$ by \eqref{perturbed_general_assumptions_c}.
These two observations collectively yield $-\wt T(\gamma) \leq \af \Lvc(x;0)$.
By symmetry, we also have $-\wt T(\gamma) \leq \af \Lvc(y;0)$, hence \eqref{trivial_neg_bd}.
\end{proof}

\begin{lemma} \label{T_hat_lower}
Assume \eqref{perturbed_general_assumptions_a} and \eqref{perturbed_general_assumptions_c}, where $\af \geq0$ is the constant from Lemma \ref{eta_choice_lemma}.
For all $x,y\in\Z^d$, we have
\eeq{ \label{T_hat_lower_eq}
\P(\wh T(x,y)\leq-s) \leq C\e^{-cs} \quad \text{for all $s\geq0$}.
}
\end{lemma}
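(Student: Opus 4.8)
The plan is to dominate the negative part of $\wh T(x,y)$ by a multiple of the random variable $\sup_{w\in\SS(x)}\Lvc(w;0)$ and then apply the exponential tail bound of Lemma \ref{eta_choice_lemma}. First, if $F(0)\geq p_\cc(\Z^d)$, then $\af=0$, so \eqref{perturbed_general_assumptions_c} forces $\tau_e^\pert\equiv0$; hence $\tilde\tau_e=\tau_e\geq0$, $\wh T(x,y)\geq0$, and \eqref{T_hat_lower_eq} holds trivially as long as we keep the convention $C\geq1$. So from now on assume $F(0)<p_\cc(\Z^d)$, in which case Lemma \ref{eta_choice_lemma} provides a constant $\af>0$.

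The key deterministic inequality comes from Lemma \ref{Q_trivial_lemma}: for every $w\in\SS(x)$, $z\in\SS(y)$, and $\gamma\in\PP(w,z)$ we have $-\wt T(\gamma)\leq\af\,\Lvc(w;0)$, and taking the infimum over $\gamma$ and then the supremum over $w$ and $z$ yields
\eq{
-\wh T(x,y) = \sup_{\substack{w\in\SS(x)\\ z\in\SS(y)}}\big(-\wt T(w,z)\big) \leq \af\sup_{w\in\SS(x)}\Lvc(w;0).
}
Hence, for $s\geq0$, writing $r=s/\af$, it suffices to show $\P\big(\sup_{w\in\SS(x)}\Lvc(w;0)\geq r\big)\leq C\e^{-cr}$.

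To prove the latter I would introduce a free parameter $R\geq0$ and split according to the size of the shell. Since $\SS(x)\subset\Bbf_{\Rvc(x)}(x)$ always holds, on the event $\{\Rvc(x)\leq R\}$ the supremum over $\SS(x)$ is at most the supremum over the \emph{deterministic} box $\Bbf_R(x)$. Therefore
\eq{
\P\Big(\sup_{w\in\SS(x)}\Lvc(w;0)\geq r\Big)
&\leq \P\big(\Rvc(x)>R\big) + \sum_{w\in\Bbf_R(x)}\P\big(\Lvc(w;0)\geq r\big) \\
&\leq C\e^{-cR} + (2R+1)^d\,C\e^{-cr},
}
using \eqref{condition_2_eq_c} for the first term and Lemma \ref{eta_choice_lemma} (with $h=0\leq\af$) together with a union bound for the second. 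Choosing $R=r$ and absorbing the polynomial factor $(2r+1)^d$ into the exponential (at the cost of shrinking $c$) gives the bound $C\e^{-cr}$; substituting back $r=s/\af$ and renaming constants yields \eqref{T_hat_lower_eq}.

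The one point requiring care — and essentially the only place the argument could go wrong if handled sloppily — is that $\Rvc(x)$ is measurable with respect to $(\tau_e)$ whereas $\Lvc(w;0)$ depends on the perturbed weights $(\tilde\tau_e)$, so there is no independence to exploit upon conditioning on $\SS(x)$. This is exactly why I would avoid conditioning on $\SS(x)$ and instead use only the a priori containment $\SS(x)\subset\Bbf_{\Rvc(x)}(x)$: the split above is a plain union bound and needs no independence. Everything else — the infimum/supremum manipulation and dominating a polynomial by an exponential — is routine.
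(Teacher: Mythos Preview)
Your proof is correct and follows essentially the same route as the paper: reduce to the deterministic bound $-\wh T(x,y)\leq\af\sup_{w\in\SS(x)}\Lvc(w;0)$ via Lemma~\ref{Q_trivial_lemma}, then split on $\{\Rvc(x)>R\}$ versus $\{\Rvc(x)\leq R\}$ and apply \eqref{condition_2_eq_c} and \eqref{Qx_tail} with a union bound over $\Bbf_R(x)$. The only cosmetic difference is the choice of the cutoff parameter---the paper takes $R=s$ while you take $R=s/\af$---and your explicit remark about avoiding conditioning (since $\Rvc(x)$ and $\Lvc(w;0)$ are not independent) is a helpful clarification that the paper leaves implicit.
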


\begin{proof}
If $F(0)\geq p_\cc(\Z^d)$ so that $\af =0$, then $\wh T(x,y)\geq0$.
Consequently, the claim holds so long as $C\geq1$.
Therefore, let us assume $F(0)<p_\cc(\Z^d)$ so that $\af >0$.
By Lemma \ref{Q_trivial_lemma}, we have
\eq{
\wh T(x,y) = \wt T(\SS(x),\SS(y)) \geq -\af \sup_{w\in \SS(x)} \Lvc(w;0) \quad \mathrm{a.s.}
}
Therefore, for any positive integer $s$, a union bound gives
\eq{
\P(\wh T(x,y)\leq -s)
&\stackrefp{condition_2_eq_c}{\leq} \P(\Rvc(x)>s) 
+ \P\Big(\sup_{w\in \Bbf_s(x)} \Lvc(w;0) \geq \frac{s}{\af }\Big) \\
&\stackref{condition_2_eq_c}{\leq} C\e^{-cs} + 2(2s+1)^d\P\Big(\Lvc(x;0) \geq \frac{s}{ \af }\Big) 
\stackref{Qx_tail}{\leq} C\e^{-cs}.
}
By suitably adjusting $C$, the upper bound trivially extends to all $s\geq0$.
\end{proof}

\section{Coupling with Bernoulli percolation} \label{coupling_bernoulli}

Let $p\in(p_\cc(\Z^d),1)$ be a parameter (sufficiently close to $1$) to be chosen later in Lemma \ref{from_adh_lemma}.
We color each edge $e=\{x,y\}\in E(\Z^d)$ according to the rule
\eq{
Y_e \coloneqq \begin{cases} 1 &\text{if $Y_x=Y_y=1$,} \\
0 &\text{otherwise}.
\end{cases}
}
Note that $\P(Y_e = 1) = \P(\tau_e\geq \tf )^{4d-1}$, and that $Y_{e}$ and $Y_{e'}$ are independent whenever $e$ and $e'$ are distance at least $2$ apart (by which we mean the minimum graph distance between a vertex of $e$ and a vertex of $e'$ is at least $2$).
In other words, $(Y_e)_{e\in E(\Z^d)}$ is a $1$-dependent process, and $\P(Y_e=1)$ can be made arbitrarily close to $1$ by choosing $\tf $ sufficiently large.
It is thus possible, by either \cite[Thm.~0.0]{liggett-schonmann-stacey97} or \cite[Thm.~7.65]{grimmett99}, to take $\tf $ large enough that $(Y_e)_{e\in E(\Z^d)}$ stochastically dominates the Bernoulli$(p)$ product measure on $E(\Z^d)$.\footnote{Strictly speaking, these references deal with $\Z^d$-indexed $k$-dependent processes, but the results can be understood equally well for $E(\Z^d)$-indexed processes.
For example, one could embed $E(\Z^d)\hookrightarrow\Z^{d+1}$ via $\{x,x+\mathbf{e}_i\} \mapsto (x,i)$, extend $Y$ to the remainder of $\Z^{d+1}$ as independent Bernoulli($p$) random variables, apply the result for $\Z^{d+1}$-indexed $(k+d)$-dependent processes, and then reverse the embedding.
}
To be precise, let $(X_e)_{e\in E(\Z^d)}$ be a collection of independent Bernoulli$(p)$ random variables.
In the usual percolation parlance, we say $e$ is \textit{open} if $X_e=1$, and \textit{closed} if $X_e=0$.
An \textit{open cluster} is a component of the subgraph of $\Z^d$ induced by the open edges.
The stochastic domination discussed above means we can couple $(X_e)_{e\in E(\Z^d)}$ and $(Y_e)_{e\in E(\Z^d)}$ in such a way that
\eeq{ \label{open_is_white}
X_e \leq Y_e \quad \text{for all $e\in E(\Z^d)$}.
}
In particular, if $\OO\subset \Z^d$ denotes the infinite open cluster \label{infinite_open_cluster}
(for uniqueness, see \cite[Thm.~8.1]{grimmett99}), then $\OO\subset\WW$.
Our choice of $p$ is dictated by the following lemma.

\begin{lemma} \label{from_adh_lemma}
\textup{\cite[Lem.~6.3]{auffinger-damron-hanson15II}}
There exists $p_0\in(p_\cc(\Z^d),1)$ such that for every $p\in[p_0,1]$, there is some constant $c_p>0$ satisfying
\eeq{ \label{from_adh}
\P(\text{some path $x\to\partial \Bbf_r(x)$ avoids $\OO$}) \leq \e^{-c_pr} \quad \text{$\forall$ $x\in\Z^d$, $r\geq1$}.
}
\end{lemma}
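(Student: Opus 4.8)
The plan is to run a Peierls-type contour argument on the complement of the infinite cluster. Fix $p\in(p_\cc(\Z^d),1)$, write $\OO$ for the a.s.\ unique infinite open cluster of Bernoulli$(p)$ bond percolation, and let $\MM_x$ denote the connected component of $x$ in the subgraph of $\Z^d$ induced by the vertex set $\Z^d\setminus\OO$. If some path from $x$ to $\partial\Bbf_r(x)$ avoids $\OO$, then all of its vertices lie in $\Z^d\setminus\OO$ and hence in $\MM_x$; in particular $\MM_x$ contains a vertex at $\ell^\infty$-distance $r$ from $x$. So it suffices to bound the probability that $\MM_x$ reaches $\ell^\infty$-distance $r$ from $x$.

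Two deterministic observations drive the estimate. First, the edge boundary $\partial_{\mathrm{e}}\MM_x$ (edges with exactly one endpoint in $\MM_x$) consists entirely of \emph{closed} edges: if $e=\{u,w\}$ has $u\in\MM_x$ and $w\notin\MM_x$, then $w$ must lie in $\OO$ (a neighbor of $u$ lying outside $\OO$ would itself be in $\MM_x$), and $e$ cannot be open (else $u$ would be joined by an open edge to $\OO$, forcing $u\in\OO$). Second, a finite connected set $S\subset\Z^d$ that contains two vertices differing by at least $r$ in some coordinate---say, after relabeling, the first---meets at least $r$ of the hyperplanes $\{y_1=j\}$, since along a path in $S$ joining those two vertices the first coordinate attains every intermediate integer value; and within each such hyperplane the slice of $S$ is a nonempty finite subset of $\Z^{d-1}$, which contributes at least $2(d-1)$ edges to $\partial_{\mathrm{e}}S$. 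Hence $|\partial_{\mathrm{e}}S|\geq 2(d-1)r$ whenever $S$ reaches $\ell^\infty$-distance $r$ from one of its points.

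Now invoke the standard fact that for $p$ sufficiently close to $1$ the complement $\Z^d\setminus\OO$ has only finite connected components almost surely; enlarging $p_0<1$ if necessary, assume this holds for all $p\in[p_0,1]$, so that $\MM_x$ is a.s.\ finite. On the event that $\MM_x$ reaches distance $r$ from $x$, the set $S=\MM_x$ is then a finite connected set containing $x$ with $|\partial_{\mathrm{e}}S|\geq 2(d-1)r$, all of whose boundary edges are closed, so a union bound gives
\[
\P\big(\MM_x\text{ reaches distance }r\big)\ \leq\ \sum_{k\geq 2(d-1)r}\ \#\{S\text{ finite connected},\ x\in S,\ |\partial_{\mathrm{e}}S|=k\}\cdot(1-p)^k.
\]
By the classical Peierls estimate there is a constant $C=C(d)$ such that the displayed cardinality is at most $C^k$, so the right-hand side is $\sum_{k\geq 2(d-1)r}(C(1-p))^k$. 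Choosing $p_0$ large enough that $C(1-p_0)\leq\tfrac12$ bounds this by $2^{\,1-2(d-1)r}\leq\e^{-c_pr}$ for a suitable $c_p>0$ and all $r\geq1$; the bound is uniform in $x\in\Z^d$ by translation invariance, which is the assertion of the lemma.

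The main obstacle---and the reason the argument delivers the \emph{linear} rate $\e^{-c_pr}$ rather than a stretched exponential---is the lower bound $|\partial_{\mathrm{e}}\MM_x|\geq 2(d-1)r$: a generic connected set of $n$ vertices only forces a surface of order $n^{(d-1)/d}$, which would be worthless here, whereas \emph{reaching distance $r$} is a genuinely stronger constraint that forces a $\Theta(r)$-sized enclosing wall of closed edges. The two remaining inputs---the Peierls count of the sets $S$ with $|\partial_{\mathrm{e}}S|=k$, and the fact that $\Z^d\setminus\OO$ has only finite components for $p$ near $1$---are classical and can simply be quoted.
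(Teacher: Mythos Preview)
The paper does not prove this lemma; it simply imports it from \cite[Lem.~6.3]{auffinger-damron-hanson15II}, so there is no proof in the paper to compare against. Your Peierls-type approach is the standard route, and both deterministic observations---that $\partial_{\mathrm e}\MM_x$ consists entirely of closed edges, and that a finite connected set spanning $\ell^\infty$-distance $r$ has at least $2(d-1)r$ edge-boundary edges via the hyperplane-slicing argument---are correct and cleanly argued.

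There is, however, a genuine gap in the counting step. Your assertion that
\[
\#\{S\text{ finite connected},\ x\in S,\ |\partial_{\mathrm e}S|=k\}\leq C^k
\]
is false as stated. In $d=2$, take $S_0$ to be a square of side $\sim k/8$ and let $T$ be any set of $\sim k/8$ interior vertices at pairwise $\ell^\infty$-distance at least $3$, each at distance at least $2$ from $\partial S_0$. Then $S=S_0\setminus T$ is connected (one can detour around each removed vertex using its eight $\ell^\infty$-neighbors, all of which remain in $S$), contains $x$, and satisfies $|\partial_{\mathrm e}S|=|\partial_{\mathrm e}S_0|+4|T|\sim k$; yet the number of admissible $T$ is at least $\binom{\Theta(k^2)}{k/8}$, which grows like $k^{\Theta(k)}$ and hence super-exponentially in $k$. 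These ``swiss cheese'' sets defeat your union bound.

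The fix is to note an additional property of $\MM_x$ that you have not used: $\Z^d\setminus\MM_x$ is connected. Indeed, any vertex outside $\MM_x$ lies either in $\OO$ or in some other finite component $\MM'$ of $\Z^d\setminus\OO$, and by the same reasoning as your first observation, every boundary vertex of $\MM'$ is adjacent to $\OO$; hence all of $\Z^d\setminus\MM_x$ is connected through $\OO$. Restricting your union bound to finite connected $S\ni x$ with $\Z^d\setminus S$ connected rules out the swiss-cheese sets, and for such $S$ the edge boundary is (dually) a single $*$-connected surface of plaquettes enclosing $x$. The number of connected plaquette surfaces of size $k$ surrounding $x$ is at most $k\cdot C^k$ by the genuine Peierls estimate (the factor $k$ accounts for the location where a fixed ray from $x$ first crosses the surface), and the rest of your argument then goes through unchanged.
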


We henceforth assume $p\in[p_0,1)$.

\section{Chemical distances in the clusters} \label{chemical}

Call a path \textit{open} if all of its edges are open.
For $x,y\in\Z^d$, let us write $x\leftrightarrow y$ if $x$ and $y$ are connected via some open path.
The minimum length of such a path is called the \textit{chemical distance}: 
\eeq{ \label{chemical_def_1} 
D_\oo(x,y) \coloneqq \inf\{|\gamma|:\, \gamma\in\PP(x,y), \gamma \text{ is open}\}, \quad x,y\in\Z^d.
}

\begin{lemma} \label{chemical_lemma}
There exists a constant $C_1 = C_1(\LL,d)>0$ such that 
\eeq{ \label{exp_decay_5}
\P\Big(D_\oo(x,y) \geq C_1r \text{ for some $x,y\in\OO\cap \Bbf_{r}(0)$}\Big) \leq C\e^{-cr} \quad \text{$\forall$ $r\geq1$}.
}
\end{lemma}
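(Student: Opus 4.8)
The plan is to deduce the estimate from two inputs: the Antal--Pisztora theorem on chemical distances in supercritical Bernoulli percolation, and the exponential bound of Lemma \ref{from_adh_lemma}. Since $p>p_\cc(\Z^d)$, the Antal--Pisztora theorem \cite{antal-pisztora96} supplies constants $\rho=\rho(p,d)\in[1,\infty)$ and $C,c>0$ such that
\eeq{ \label{antal_pisztora}
\P\big(u\leftrightarrow v \text{ and } D_\oo(u,v)>\rho\|u-v\|_1\big) \leq C\e^{-c\|u-v\|_1} \quad \text{for all } u,v\in\Z^d.
}
The idea is then to connect \emph{every} pair $x,y\in\OO\cap\Bbf_r(0)$ through a single ``hub'' vertex placed far from the box, so that each leg of the route joins points at $\ell^1$-distance of order $r$; in this way \eqref{antal_pisztora} need only be invoked for polynomially many pairs, each failing with probability exponentially small in $r$.

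Concretely, I would take the hub region to be $\Bbf_r(3r\mathbf{e}_1)$, which is disjoint from $\Bbf_r(0)$. If $\Bbf_r(3r\mathbf{e}_1)\cap\OO=\varnothing$ then every path from $3r\mathbf{e}_1$ to $\partial\Bbf_r(3r\mathbf{e}_1)$ avoids $\OO$, so by Lemma \ref{from_adh_lemma} this has probability at most $\e^{-c_pr}$; outside that event one fixes any $w_0\in\OO\cap\Bbf_r(3r\mathbf{e}_1)$ (say the first in lexicographic order). For any $x\in\Bbf_r(0)$ and any $w\in\Bbf_r(3r\mathbf{e}_1)$ the first coordinates satisfy $x_1\leq r\leq 2r\leq w_1$, hence $\|x-w\|_1\geq\|x-w\|_\infty\geq r$, while also $\|x-w\|_1\leq\|x\|_1+\|w\|_1\leq dr+(3+d)r=(3+2d)r$.

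Next I would apply \eqref{antal_pisztora} to each of the at most $(2r+1)^{2d}$ pairs $(x,w)$ with $x\in\Bbf_r(0)$, $w\in\Bbf_r(3r\mathbf{e}_1)$: since $\|x-w\|_1\geq r$, the probability that $x\leftrightarrow w$ while $D_\oo(x,w)>\rho(3+2d)r$ is at most $C\e^{-cr}$. On the complement of the hub event and of all these pair events, every $x,y\in\OO\cap\Bbf_r(0)$ satisfies $x\leftrightarrow w_0\leftrightarrow y$ with $D_\oo(x,w_0),D_\oo(w_0,y)\leq\rho(3+2d)r$, so concatenating (and loop-erasing) open paths gives $D_\oo(x,y)\leq 2\rho(3+2d)r$. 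Taking $C_1=2\rho(3+2d)$, the event in the statement is contained in the union of the hub event and the $(2r+1)^{2d}$ pair events; a union bound bounds its probability by $\e^{-c_pr}+(2r+1)^{2d}C\e^{-cr}$, and absorbing the polynomial factor into the exponential (enlarging $C$ and shrinking $c$, valid for all $r\geq1$) yields $\P(\cdots)\leq C\e^{-cr}$.

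The main obstacle is really just the input \eqref{antal_pisztora}, i.e.\ the Antal--Pisztora chemical-distance theorem, which is a substantial result; granting it, the argument above is a routine union bound together with Lemma \ref{from_adh_lemma}. The only point requiring a little care is to place the hub far enough from $\Bbf_r(0)$ (here at distance $3r$) that every pair fed into \eqref{antal_pisztora} is separated by $\ell^1$-distance of order $r$, so that the per-pair failure probability is $\e^{-cr}$ rather than merely $\e^{-c\cdot O(1)}$; otherwise the union bound over nearby pairs would be useless.
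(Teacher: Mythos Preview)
Your proof is correct and follows essentially the same approach as the paper: invoke the Antal--Pisztora chemical-distance theorem, route every pair $x,y\in\OO\cap\Bbf_r(0)$ through a hub vertex in $\OO$ at distance of order $r$ (the paper places it on $\partial\Bbf_{2r}(0)$ rather than in $\Bbf_r(3r\mathbf{e}_1)$), control the probability that the hub region misses $\OO$ via Lemma~\ref{from_adh_lemma}, and take a union bound over polynomially many pairs. The only differences are the exact placement of the hub and the resulting value of $C_1$, which are immaterial.
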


\begin{proof}
By \cite[Thm.~1.1]{antal-pisztora96}, there is a constant $C_0 = C_0(p)>0$ such that
\eeq{ \label{exp_decay_4}
\P\big(0\leftrightarrow y, D_\oo(0,y) \geq C_0\|y\|_1\big)\leq C\e^{-c\|y\|_1} \quad \text{for all $y\in\Z^d$}.
} 
Suppose there exists some $z\in\OO$ such that $\|z\|_\infty = 2r$.
Then every $x\in \Bbf_r(0)$ satisfies
\eq{
r \leq \|x-z\|_\infty\leq \|x-z\|_1\leq d\|x-z\|_\infty \leq d(\|x\|_\infty+\|z\|_\infty) \leq 3dr.
}
Moreover, if $x,y\in\OO\cap \Bbf_r(0)$, then
\eq{
D_\oo(x,y) \leq D_\oo(x,z) + D_\oo(z,y) \leq 2\sup_{x\in\OO\cap \Bbf_r(0)} D_\oo(x,z).
}
From these trivial inequalities, we deduce the following implication:
\eq{
x,y\in\OO\cap \Bbf_r(0), D_\oo(x,y) \geq 6C_0dr \  &\implies \  \sup_{x\in\OO\cap \Bbf_r(0)} D_\oo(x,z) \geq 3C_0dr \\
&\implies \  \sup_{x\in \OO\cap \Bbf_r(0)} \frac{D_\oo(x,z)}{\|x-z\|_1} \geq C_0.
}
In summary, we have
\eeq{ \label{two_terms}
&\P\Big(D_\oo(x,y) \geq 6C_0dr \text{ for some $x,y\in\OO\cap \Bbf_{r}(0)$}\Big) \\
&\leq \P(\OO\cap \Bbf_{2r}(0)=\varnothing) 
+\P\bigg(\bigcup_{\|z\|_\infty = 2r}\bigcup_{x\in \Bbf_r(0)} 
\Big\{x\leftrightarrow z,\frac{D_\oo(x,z)}{\|x-z\|_1} \geq C_0\Big\}\bigg). 
}
Concerning the first term on the right-hand side, observe that if some path $0\to\partial \Bbf_{2r}(0)$ hits $\OO$ before or upon reaching $\partial \Bbf_{2r}(0)$, then $\Bbf_{2r}(0)$ necessarily intersects $\OO$.
Hence
\eq{
\P(\Bbf_{2r}(0)\cap\OO=\varnothing) &\leq \P(\text{every path $0\to\partial \Bbf_{2r}(0)$ avoids $\OO$}) \\
&\leq \P(\text{some path $0\to\partial \Bbf_{2r}(0)$ avoids $\OO$}) 
\stackref{from_adh}{\leq} \e^{-cr}.
}
Meanwhile, the second term on the right-hand side of \eqref{two_terms} is controlled by a simple union bound:
\eq{
&\P\bigg(\bigcup_{\|z\|_\infty = 2r}\bigcup_{x\in \Bbf_r(0)} 
\Big\{x\leftrightarrow z,\frac{D_\oo(x,z)}{\|x-z\|_1} \geq C_0\Big\}\bigg) \\
&\stackref{exp_decay_4}{\leq} |\partial \Bbf_{2r}(0)|\cdot|\Bbf_r(0)|\cdot C \e^{-cr}
\leq C\e^{-cr}.
} 
Using the two previous displays in \eqref{two_terms}, we obtain the desired result with $C_1 = 6C_0d$.
\end{proof}

Recall that an edge $e$ is colored white if and only if both of its vertices are white.
Let us say that a path is \textit{white} if all of its edges are white.
A chemical distance can be defined with respect to these paths: 
\eeq{ \label{chemical_def_2} 
D_\ww(w,z) \coloneqq \inf\{|\gamma| :\, \gamma\in\PP(w,z), \gamma \text{ is white}\}, \quad w,z\in\Z^d.
}
Because of \eqref{open_is_white}, all open paths are necessarily white.
Hence $D_\ww\leq D_\oo$.

\begin{lemma} \label{chemical_lemma_2}
There exists a constant $C_2 = C_2(\LL,d)>0$ such that 
\eeq{ \label{exp_decay_6}
\P\Big(D_\ww(w,z) \geq C_2r \text{ for some $w,z\in\WW\cap \Bbf_{r}(0)$}\Big) \leq C\e^{-cr^{1/d}} \quad \text{$\forall$ $r\geq1$}.
}
\end{lemma}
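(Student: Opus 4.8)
The plan is to bootstrap from Lemma~\ref{chemical_lemma}. Since the coupling \eqref{open_is_white} gives $X_e\le Y_e$ for every edge, every open path is white and every open cluster is contained in a white cluster; in particular $\OO\subseteq\WW$ and $D_\ww\le D_\oo$. Thus Lemma~\ref{chemical_lemma} already controls $D_\ww$ for pairs of vertices lying in $\OO$, and the only thing missing is a quantitative way of moving from an arbitrary white vertex of $\WW$ to a nearby vertex of $\OO$ through white edges. So the heart of the argument is the following claim: for every $w\in\Z^d$ and every $R\ge 1$,
\[
\P\big(w\in\WW \text{ and } D_\ww(w,\OO)>(2R+1)^d\big)\le \e^{-c_pR},
\]
where $c_p$ is as in Lemma~\ref{from_adh_lemma}.

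To prove this claim I would argue as follows. If $w\in\WW$, then the white subgraph containing $w$ is infinite, connected, and locally finite, so by K\"onig's lemma it contains an infinite self-avoiding path $\pi_w$ starting at $w$; this path is white and must exit $\Bbf_R(w)$, so its initial segment up to the first visit to $\partial\Bbf_R(w)$ is a white self-avoiding path from $w$ to $\partial\Bbf_R(w)$ lying in $\Bbf_R(w)$. On the complement of the event ``some path from $w$ to $\partial\Bbf_R(w)$ avoids $\OO$'' --- whose probability is at most $\e^{-c_pR}$ by Lemma~\ref{from_adh_lemma} --- that segment must meet $\OO$ at some vertex $u\in\Bbf_R(w)$, and the portion of $\pi_w$ from $w$ to $u$ is then a white self-avoiding path in $\Bbf_R(w)$, hence of length at most $|\Bbf_R(w)|=(2R+1)^d$. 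This yields $D_\ww(w,\OO)\le(2R+1)^d$ and proves the claim.

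Given the claim, the proof is finished by two union bounds. Fix $r\ge 1$ (the finitely many $r$ with $r<3^d$ can be absorbed into the constant $C$), and choose $R=R(r)$ of order $r^{1/d}$ with $(2R+1)^d\le r$ and $R\ge1$. Let $A_1$ be the event that \emph{some} $w\in\Bbf_r(0)$ admits a path to $\partial\Bbf_R(w)$ avoiding $\OO$; summing the bound of Lemma~\ref{from_adh_lemma} over the $(2r+1)^d$ vertices of $\Bbf_r(0)$ gives $\P(A_1)\le(2r+1)^d\e^{-c_pR}\le C\e^{-cr^{1/d}}$, the polynomial prefactor being harmless because $\e^{\delta r^{1/d}}$ eventually dominates any power of $r$. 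Let $A_2$ be the event of Lemma~\ref{chemical_lemma} applied at scale $2r$, namely that $D_\oo(x,y)\ge 2C_1r$ for some $x,y\in\OO\cap\Bbf_{2r}(0)$; then $\P(A_2)\le C\e^{-2cr}\le C\e^{-cr^{1/d}}$. On $A_1^c\cap A_2^c$, the claim (combined with $(2R+1)^d\le r$ and $\Bbf_R(w)\subseteq\Bbf_{2r}(0)$) shows that each $w\in\WW\cap\Bbf_r(0)$ is joined by a white path of length at most $r$ to some $w'\in\OO\cap\Bbf_{2r}(0)$; hence for $w,z\in\WW\cap\Bbf_r(0)$ with associated $w',z'\in\OO\cap\Bbf_{2r}(0)$,
\[
D_\ww(w,z)\le D_\ww(w,w')+D_\oo(w',z')+D_\ww(z',z)< r+2C_1r+r=(2+2C_1)r,
\]
using $D_\ww\le D_\oo$ on the middle term. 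Taking $C_2=2+2C_1$ shows $\{D_\ww(w,z)\ge C_2 r \text{ for some } w,z\in\WW\cap\Bbf_r(0)\}\subseteq A_1\cup A_2$, and \eqref{exp_decay_6} follows.

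The step I expect to be most delicate is the claim in the second paragraph: one has to be sure that the stopped initial segment of $\pi_w$ really stays inside $\Bbf_R(w)$, is white and self-avoiding (so that its length is at most $|\Bbf_R(w)|$), and that the event fed to Lemma~\ref{from_adh_lemma} --- ``some path $w\to\partial\Bbf_R(w)$ avoids $\OO$'' --- is stated with $w$ ranging over all of $\Bbf_r(0)$, independently of whether $w\in\WW$. Beyond this, the argument uses only Lemmas~\ref{chemical_lemma} and~\ref{from_adh_lemma}, the trivial comparison $D_\ww\le D_\oo$, and elementary counting; in particular it does not invoke the shell construction of Section~\ref{shell_review}.
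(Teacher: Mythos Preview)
Your proof is correct and follows essentially the same approach as the paper's: choose $R\asymp r^{1/d}$, use Lemma~\ref{from_adh_lemma} with a union bound over $\Bbf_r(0)$ to force any path from $w$ to $\partial\Bbf_R(w)$ to meet $\OO$, then apply Lemma~\ref{chemical_lemma} at scale $2r$ and the triangle inequality with $D_\ww\le D_\oo$. The only real difference is how you produce the white path exiting the small box: you invoke K\"onig's lemma on the infinite white component containing $w$, whereas the paper takes the minimal white $w$--$z$ path (which requires Lemma~\ref{single_cluster_lemma} to know $D_\ww(w,z)<\infty$) and looks at its initial and terminal segments. Your variant thus avoids any appeal to the shell construction, at the cost of a one-line K\"onig argument; the constants and the structure of the two union bounds are otherwise identical.
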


\begin{proof}
Let $C_1$ be as in Lemma \ref{chemical_lemma}, and suppose $D_\oo(x,y) \leq 2C_1r$ for every $x,y\in\OO\cap \Bbf_{2r}(0)$.
Suppose further that the following event occurs:
\eq{
\Asf_r \coloneqq \bigcap_{w\in \Bbf_r(0)} \Big\{\text{every path $w\to\partial \Bbf_{\lfloor\frac{r^{1/d}-1}{2}\rfloor}(w)$ intersects $\OO$}\Big\}.
}
Since $|\Bbf_{\lfloor(r^{1/d}-1)/2\rfloor}(w)|\leq r$, this event implies that every self-avoiding path $\gamma$ which starts  in $\Bbf_r(0)$ and has length $|\gamma| \geq r$, must intersect $\OO$.
Now consider any $w,z\in\WW\cap \Bbf_r(0)$.
By Lemma \ref{single_cluster_lemma}, there is a white path $\gamma\in\PP(w,z)$ of length $|\gamma| = D_\ww(w,z)<\infty$.
If $|\gamma| \geq r$, then the discussion from above tells us that $\gamma$ reaches some $x\in\OO$ within $r$ edges of $w$.
Similarly, $\gamma$ reaches some $y\in\OO$ within $r$ edges of $z$.
That is,
\eq{
D_\ww(w,x) \leq r \quad \text{and} \quad D_\ww(y,z) \leq r.
}
Since $x$ and $y$ necessarily belong to $\Bbf_{2r}(0)$, we also have
\eq{
D_\ww(x,y)\leq D_\oo(x,y) \leq 2C_1r.
}
Together, the two previous displays yield $D_\ww(w,z) \leq 2(C_1+1)r$.
We deduce from this argument that
\eq{
&\P\Big(D_\ww(w,z) > 2(C_1+1)r \text{ for some $w,z\in\WW\cap \Bbf_{r}(0)$}\Big) \\
&\stackrefp{exp_decay_5,from_adh}{\leq} \P\Big(D_\oo(x,y) \geq 2C_1r \text{ for some $x,y\in\OO\cap \Bbf_{2r}(0)$}\Big)  + \P(\Omega\setminus\Asf_r) \\
&\stackref{exp_decay_5,from_adh}{\leq}
C\e^{-cr} + (2r+1)^dC\e^{-cr^{1/d}} \leq C\e^{-cr^{1/d}}.
}
Hence $C_2 = 3(C_1+1)$ suffices for \eqref{exp_decay_6}.
\end{proof}

We can now obtain an upper-tail bound on $\wh T(x,y)$ to complement the lower-tail bound from Lemma \ref{T_hat_lower}.

\begin{lemma} \label{T_hat_upper}
Assume \eqref{perturbed_general_assumptions_a} and \eqref{perturbed_general_assumptions_c}, where $\af \geq0$ is the constant from Lemma \ref{eta_choice_lemma}.
There exists $\mathfrak{s}  = \mathfrak{s} (\LL,d)>0$ such that
\eq{ 
\P(\wh T(x,y) \geq s\|x-y\|_\infty) \leq C\e^{-c(s\|x-y\|_\infty)^{1/d}} \quad \text{$\forall$ $x,y\in\Z^d$ and $s\geq \mathfrak{s} $}.
}
\end{lemma}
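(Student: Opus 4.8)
The plan is to bound $\wh T(x,y)=\wt T(\SS(x),\SS(y))$ from above by exhibiting a single cheap path between the two shells. Write $n=\|x-y\|_\infty$; the case $n=0$ is trivial, since $\wt T(w,w)=0$ for any $w\in\SS(x)$, so $\wh T(x,x)\le 0$ and the bound holds once we insist $C\ge1$. Assume then $n\ge1$. The key observations are that both shells lie in the a.s.\ unique infinite white cluster $\WW$ by \eqref{S_in_W}, that every white edge $e$ has $\tau_e\le\tf$ and hence $\tilde\tau_e\le\tf+\af$ by \eqref{perturbed_general_assumptions_c}, and that the white chemical distance $D_\ww$ enjoys the stretched-exponential control of Lemma \ref{chemical_lemma_2}. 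Thus, if $\SS(x)$ and $\SS(y)$ both lie in a common box $\Bbf_m(x)$, then except on an event of probability at most $C\e^{-cm^{1/d}}$ there is a white self-avoiding path joining a vertex $w_0\in\SS(x)$ to a vertex $z_0\in\SS(y)$ of length $<C_2m$, and that path has $\wt T$-cost at most $(\tf+\af)C_2m$. Hence $\wh T(x,y)\le(\tf+\af)C_2m$ on that event.

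Next I would tune $m$ as a function of $s$ and $n$. Setting $\mathfrak{s}:=4C_2(\tf+\af)$ and, for $s\ge\mathfrak{s}$, $m:=\lfloor sn/(2C_2(\tf+\af))\rfloor$, elementary arithmetic gives $2n\le m$, $(\tf+\af)C_2m\le sn/2<sn$, and $m\ge sn/(4C_2(\tf+\af))$. Since $\|x-y\|_\infty=n$, the containment $\SS(x),\SS(y)\subset\WW\cap\Bbf_m(x)$ holds on the event $\{\Rvc(x)\le m-n\}\cap\{\Rvc(y)\le m-n\}$ (note $m-n\ge n\ge1$), whose complement has probability at most $2C\e^{-c(m-n)}\le 2C\e^{-cm/2}$ by \eqref{condition_2_eq_c}. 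Putting this together, the event $\{\wh T(x,y)\ge sn\}$ is contained in $\{\Rvc(x)>m-n\}\cup\{\Rvc(y)>m-n\}\cup\{D_\ww(w,z)\ge C_2m\text{ for some }w,z\in\WW\cap\Bbf_m(x)\}$: indeed, if none of these occurs, then $\wh T(x,y)\le\wt T(w_0,z_0)\le(\tf+\af)C_2m\le sn/2<sn$. A union bound over the three events, using that $m$ is of order $sn\ge1$ so that $\e^{-cm/2}\le\e^{-c'(sn)^{1/d}}$, yields $\P(\wh T(x,y)\ge s\|x-y\|_\infty)\le C\e^{-c(s\|x-y\|_\infty)^{1/d}}$, which is the claim.

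The small bookkeeping item is that Lemma \ref{chemical_lemma_2} and \eqref{condition_2_eq_c} are invoked with boxes centered at $x$ and $y$ rather than at $0$; this is immediate from the stationarity of the white-edge process and property \ref{condition_4} of the shells. The only substantive input is Lemma \ref{chemical_lemma_2} itself, already proved; everything afterward is routine. The (mild) obstacle is precisely the calibration of the box radius $m$: it must be small enough that the white-path cost $(\tf+\af)C_2m$ stays below the threshold $sn$, yet large enough ($m\ge 2n$) to trap both shells — which is exactly the choice $m\asymp sn$ above. Note that no moment assumption on $\tau_e$ is used, because white edges are bounded by $\tf$ by construction.
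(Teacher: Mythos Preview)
Your proof is correct and follows essentially the same approach as the paper: bound $\wh T(x,y)$ above by $(\tf+\af)$ times the white chemical distance between the shells, trap both shells in a box of radius $\asymp sn$, and apply Lemma~\ref{chemical_lemma_2} together with \eqref{condition_2_eq_c} via a union bound. The paper parameterizes slightly differently (it fixes the box radius as $\|x-y\|_\infty+2r$ with $r=\lceil \kappa s\|x-y\|_\infty\rceil$ and $\mathfrak{s}=6C_2(\tf+\af)$, rather than choosing the radius $m$ directly), but the calibration and the substance of the argument are the same.
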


\begin{proof}
Let $C_2$ be the constant from Lemma \ref{chemical_lemma_2}.
Observe that
\eq{
\wh T(x,y) = \wt T(\SS(x),\SS(y)) \stackref{perturbed_general_assumptions_c}{\leq} (\tf + \af )\inf_{w\in \SS(x), z\in \SS(y)} D_\ww(w,z) \quad \mathrm{a.s.},
}
and recall from \eqref{S_in_W} that $\SS(x),\SS(y)\subset\WW$ almost surely.
Furthermore, if $\Rvc(x)\leq r$ and $\Rvc(y)\leq r$, then $\SS(x),\SS(y)\subset \Bbf_{\|x-y\|_\infty+2r}(x)$.
Therefore, for any positive integer $r$ and positive numbers $s$ and $\kappa$, a union bound shows
\eeq{ \label{upper_tail_0}
&\P(\wh T(x,y) \geq s\|x-y\|_\infty) \\
&\leq 2\P(\Rvc(x)>r)
+ \P\bigg(\bigcup_{w,z\in\WW\cap \Bbf_{\|x-y\|_\infty+2r}(x)}\Big\{D_\ww(w,z) \geq \frac{s\|x-y\|_\infty}{\tf + \af }\Big\}\bigg), 
}
where we have used the translation invariance from property \ref{condition_4} and \eqref{perturbed_general_assumptions_a} to write $\P(\Rvc(y)>r)=\P(\Rvc(x)>r)$.
This first term on the right-hand side of \eqref{upper_tail_0} is controlled by \eqref{condition_2_eq_c}.  
Turning our attention to the second term, we set $\mathfrak{s}  = 6C_2(\tf + \af )$ and $\kappa = 1/\mathfrak{s} $, so that the following inequalities hold for all $s\geq \mathfrak{s} $:
\eq{
\frac{1}{\tf +\af } &\geq 3C_2(s^{-1}+\kappa) \\
\implies \frac{s\|x-y\|_\infty}{\tf +\af } &\geq 3C_2(1+\kappa s)\|x-y\|_\infty \\
&\geq C_2(\|x-y\|_\infty + 2\lceil \kappa s\|x-y\|_\infty\rceil).
}
Taking $r = \lceil \kappa s\|x-y\|_\infty\rceil$, the tail bound \eqref{exp_decay_6} now shows
\eq{ 
&\P\Big(D_\ww(w,z) \geq \frac{s\|x-y\|_\infty}{\tf + \af }\text{ for some $w,z\in\WW\cap \Bbf_{\|x-y\|_\infty + 2r}(x)$}\Big) \\
&\leq C\e^{-c((1+2\kappa s)\|x-y\|_\infty)^{1/d}} \leq
C\e^{-c(\kappa s\|x-y\|_\infty)^{1/d}} \quad \text{for all $s\geq \mathfrak{s} $}.
}
Upon inserting the same value of $r$ into \eqref{upper_tail_0} and then absorbing $\kappa$ into the constant $c$, we arrive at
\eeq{
\label{T_hat_upper_eq}
\P(\wh T(x,y) \geq s\|x-y\|_\infty) &\leq C\e^{-c\kappa s\|x-y\|_\infty} + C\e^{-c(\kappa s\|x-y\|_\infty)^{1/d}} \\
&\leq C\e^{-c(s\|x-y\|_\infty)^{1/d}} \quad \text{for all $s\geq \mathfrak{s} $}. 
}
\end{proof}

\begin{remark}
I expect that the correct exponent on $s$ in \eqref{T_hat_upper_eq} is $1$ rather than $1/d$, in agreement with \cite[Thm.~1.2]{antal-pisztora96}.
With sufficient effort, one may be able to adapt the methods of that paper to better understand the geometry of the set $\WW$ and make this improvement.
Absent a present necessity, however, we do not pursue this line of inquiry.
\end{remark}

\section{Final arguments} \label{reckoning}

Proposition \hyperref[negative_thm_a]{\ref*{negative_thm}\ref*{negative_thm_a}} has already been established in Corollaries \ref{T_hat_lower} and \ref{T_hat_upper}.

\subsection{Tightness}
For Propositions \hyperref[negative_thm_b]{\ref*{negative_thm}\ref*{negative_thm_b}}, \ref{replacement_thm}, \ref{addition_thm}, we will use the following lemma.
Let $\SS_\mathrm{int}(x)$ denote the ``interior" of $\SS(x)$, \label{interior_shell_def}
i.e.~the set of all $y\in\Z^d\setminus\SS(x)$ such that every infinite self-avoiding path starting at $y$ must intersect $\SS(x)$.
In particular, since $x\notin\SS(x)$ by property \ref{condition_1}, we have $x\in\SS_\mathrm{int}(x)$ by property \ref{condition_2}.
Let $E_\mathrm{int}(x)$ be the set of all edges with at least one vertex in $\SS_\mathrm{int}(x)$.
Recall the quantity $\Lvc(x;h)$ from \eqref{LQ_def}, and define
\eeq{ \label{deltas_def}
\Delta_{\mathrm{int}}(x) &\coloneqq \sum_{e\in E_\mathrm{int}(x)}|\tilde \tau_e|, \\
\Delta_{\mathrm{ext}}(x) &\coloneqq (\tf + \af )|\SS(x)| +  2\af \sup_{w\in\SS(x)}\Lvc(w;0).
}

\begin{lemma} \label{for_tightness_lemma}
Assume \eqref{perturbed_general_assumptions_c}, where $\af \geq0$ is the constant from Lemma \ref{eta_choice_lemma}.
The following statements hold.

\begin{enumerate}[label=\textup{(\alph*)}]

\item \label{for_tightness_lemma_a}
If $x,y\in\Z^d$ and $\gamma\in\PP(x,y)$ satisfy
$\Avc(x)+\Avc(y) < |\gamma|$,
then $\gamma$ must intersect both $\SS(x)$ and $\SS(y)$.
Furthermore, if we let $\gamma'$ denote the (nonempty) portion of $\gamma$ between its first intersection with $\SS(x)$ and its last intersection with $\SS(y)$, as in Figure \ref{shell_times_1_fig}, then
\begin{align}
\label{for_tightness_2}
|\wt T(\gamma') - \wh T(x,y)| &\leq \wt T(\gamma)-\wt T(x,y) +\Delta_{\mathrm{ext}}(x)+\Delta_{\mathrm{ext}}(y) \quad \mathrm{a.s.}
\end{align}

\item \label{for_tightness_lemma_b}
If $x,y\in\Z^d$ satisfy
\eeq{ \label{for_tightness_condition}
\Rvc(x)+\Rvc(y) < \|x-y\|_\infty,
}
then
\eeq{
\label{for_tightness_3}
|\wt T(x,y) - \wh T(x,y)| &\leq \Delta_{\mathrm{int}}(x)+\Delta_{\mathrm{int}}(y) + \Delta_{\mathrm{ext}}(x)+\Delta_{\mathrm{ext}}(y) \quad \mathrm{a.s.}
}

\end{enumerate}
\end{lemma}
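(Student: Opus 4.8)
In both parts the plan is to compare the left-hand side with $\wh T(x,y)=\wt T(\SS(x),\SS(y))$ by exhibiting, in each direction, a competitor self-avoiding path assembled from pieces whose passage times are individually controlled by the quantities on the right. Throughout we work on the probability-one event where $\Avc,\Rvc,\Lvc(\,\cdot\,;0)$ are finite and $\SS(x)\subset\WW$, so in particular $\wh T(x,y)\in\R$; the only genuinely delicate point is splicing the pieces into an \emph{honest} self-avoiding path, since with possibly negative weights the triangle inequality can fail.

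\textbf{Part (a).} Write $\gamma=(x=v_0,\dots,v_{|\gamma|}=y)$. By the defining property of $\Avc$ (a self-avoiding path of length $\geq\Avc(x)$ started at $x$ must meet $\SS(x)$; property \ref{condition_2}), and since $x\notin\SS(x)$, $y\notin\SS(y)$ by property \ref{condition_1}, the hypothesis $\Avc(x)+\Avc(y)<|\gamma|$ forces a first index $i\leq\Avc(x)$ with $v_i\in\SS(x)$ and a last index $j\geq|\gamma|-\Avc(y)$ with $v_j\in\SS(y)$, and then $i<j$. Put $\gamma_{\mathrm{pre}}=\gamma|_{[0,i]}$, $\gamma'=\gamma|_{[i,j]}$, $\gamma_{\mathrm{post}}=\gamma|_{[j,|\gamma|]}$, so $\wt T(\gamma)=\wt T(\gamma_{\mathrm{pre}})+\wt T(\gamma')+\wt T(\gamma_{\mathrm{post}})$. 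The lower bound is immediate: $\gamma'\in\PP(v_i,v_j)$ with $v_i\in\SS(x)$, $v_j\in\SS(y)$, hence $\wt T(\gamma')\geq\wt T(v_i,v_j)\geq\wt T(\SS(x),\SS(y))=\wh T(x,y)$, so $|\wt T(\gamma')-\wh T(x,y)|=\wt T(\gamma')-\wh T(x,y)$. Substituting $\wt T(\gamma')=\wt T(\gamma)-\wt T(\gamma_{\mathrm{pre}})-\wt T(\gamma_{\mathrm{post}})$, the claim \eqref{for_tightness_2} is equivalent to
\eq{
\wt T(x,y)\leq\wh T(x,y)+\wt T(\gamma_{\mathrm{pre}})+\wt T(\gamma_{\mathrm{post}})+\Delta_{\mathrm{ext}}(x)+\Delta_{\mathrm{ext}}(y).
}
To prove this I would build an $x\to y$ path: follow $\gamma_{\mathrm{pre}}$ into $\SS(x)$ at $v_i$; inside the connected set $\SS(x)$ go from $v_i$ to the left endpoint $w_0$ of an $\eps$-near-optimal path $\eta$ for $\wt T(\SS(x),\SS(y))$; follow $\eta$ to its right endpoint $z_0\in\SS(y)$; inside $\SS(y)$ go from $z_0$ to $v_j$; finish with $\gamma_{\mathrm{post}}$. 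The two in-shell legs use only white edges, so cost at most $(\tf+\af)|\SS(x)|$ and $(\tf+\af)|\SS(y)|$; moreover, after replacing $\eta$ by its portion between its last visit to $\SS(x)$ and first subsequent visit to $\SS(y)$ (the discarded loops start and end in a common shell, so are bounded below via Lemma \ref{Q_trivial_lemma}, costing an extra $\af\sup_{w\in\SS(x)}\Lvc(w;0)+\af\sup_{w\in\SS(y)}\Lvc(w;0)$) we may assume $\eta$ meets $\SS(x)$ only at $w_0$ and $\SS(y)$ only at $z_0$, and then the shell topology (the shortened $\eta$ also avoids $\SS_{\mathrm{int}}(x)\cup\SS_{\mathrm{int}}(y)$, while $\gamma_{\mathrm{pre}}\subset\SS_{\mathrm{int}}(x)\cup\SS(x)$ and $\gamma_{\mathrm{post}}\subset\SS_{\mathrm{int}}(y)\cup\SS(y)$ by minimality of the first/last hits) lets us splice without loss of self-avoidance or passage time --- precisely the substitute for subadditivity provided by Lemma \ref{subadditive_lemma}. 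Since $\Delta_{\mathrm{ext}}(x)=(\tf+\af)|\SS(x)|+2\af\sup_{w\in\SS(x)}\Lvc(w;0)$ absorbs both $(\tf+\af)|\SS(x)|$ and $\af\sup_{w\in\SS(x)}\Lvc(w;0)$, letting $\eps\downarrow0$ gives the displayed inequality.

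\textbf{Part (b).} Now $\Rvc(x)+\Rvc(y)<\|x-y\|_\infty$, so the boxes $\Bbf_{\Rvc(x)}(x)\supseteq\SS_{\mathrm{int}}(x)\cup\SS(x)$ and $\Bbf_{\Rvc(y)}(y)\supseteq\SS_{\mathrm{int}}(y)\cup\SS(y)$ are disjoint, which removes the splicing difficulty entirely. For $\wt T(x,y)\leq\wh T(x,y)+\Delta_{\mathrm{int}}(x)+\Delta_{\mathrm{int}}(y)+\Delta_{\mathrm{ext}}(x)+\Delta_{\mathrm{ext}}(y)$, run the same construction, but now replace $\gamma_{\mathrm{pre}}$ by a shortest path from $x\in\SS_{\mathrm{int}}(x)$ to $\SS(x)$ through $\SS_{\mathrm{int}}(x)$, which uses only edges of $E_{\mathrm{int}}(x)$ and hence costs at most $\Delta_{\mathrm{int}}(x)$ (similarly near $y$); since these legs, the shortened $\eta$, and the in-shell legs meet the shells only at the junction vertices and lie in disjoint boxes, the concatenation is automatically self-avoiding. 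For the reverse bound $\wh T(x,y)\leq\wt T(x,y)+\Delta_{\mathrm{ext}}(x)+\Delta_{\mathrm{ext}}(y)$, take an $\eps$-near-optimal $\gamma\in\PP(x,y)$; because $x\in\SS_{\mathrm{int}}(x)$, $y\in\SS_{\mathrm{int}}(y)$, the boxes are disjoint, and $\SS(x)$ separates $\SS_{\mathrm{int}}(x)$ from infinity (property \ref{condition_2}), $\gamma$ must cross $\SS(x)$ before leaving $\Bbf_{\Rvc(x)}(x)$ and must cross $\SS(y)$, and its first crossing of $\SS(x)$ precedes its last crossing of $\SS(y)$; the middle segment joins $\SS(x)$ to $\SS(y)$, so has passage time $\geq\wh T(x,y)$, while the two discarded end-segments have passage time $\geq-\af\sup_{w\in\SS(x)}\Lvc(w;0)\geq-\Delta_{\mathrm{ext}}(x)$ and $\geq-\Delta_{\mathrm{ext}}(y)$ by Lemma \ref{Q_trivial_lemma}. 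Combining the two directions and letting $\eps\downarrow0$ yields \eqref{for_tightness_3}.

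\textbf{Main obstacle.} The crux, in part (a), is the absence of a triangle inequality: naively concatenating the near-optimal shell-to-shell connection with the excursions $\gamma_{\mathrm{pre}},\gamma_{\mathrm{post}}$ need not produce a self-avoiding path, and deleting the resulting loops could \emph{lower} the passage time. Controlling this --- by shortening the connection so it meets each shell exactly once and lies outside the shell interiors, and then appealing to Lemma \ref{subadditive_lemma} and the shell's separation properties --- is the step requiring care; in part (b) the disjointness of the boxes $\Bbf_{\Rvc(x)}(x)$ and $\Bbf_{\Rvc(y)}(y)$ sidesteps it.
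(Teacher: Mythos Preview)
Your proposal is correct and follows essentially the same strategy as the paper. In part (a), both you and the paper construct the competitor path by taking $\gamma_{\mathrm{pre}}$, an in-shell leg inside $\SS(x)$, the middle portion of a near-optimal $\SS(x)\to\SS(y)$ path (after trimming so it meets each shell exactly once), an in-shell leg inside $\SS(y)$, and $\gamma_{\mathrm{post}}$; the trimmed-off pieces are controlled via Lemma~\ref{Q_trivial_lemma}, and the in-shell legs by $(\tf+\af)|\SS(\cdot)|$. Your reformulation of \eqref{for_tightness_2} as the bound $\wt T(x,y)\le\wh T(x,y)+\wt T(\gamma_{\mathrm{pre}})+\wt T(\gamma_{\mathrm{post}})+\Delta_{\mathrm{ext}}(x)+\Delta_{\mathrm{ext}}(y)$ is exactly what the paper proves, and your identification of self-avoidance as the crux is apt.

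In part (b) there is a mild organizational difference. The paper proves an auxiliary inequality $|\wt T(\gamma')-\wt T(x,y)|\le(\wt T(\gamma)-\wt T(x,y))+\Delta_{\mathrm{int}}(x)+\Delta_{\mathrm{int}}(y)$ alongside \eqref{for_tightness_2}, then obtains \eqref{for_tightness_3} by a triangle inequality through $\wt T(\gamma')$ with $\gamma$ near-optimal. You instead bound each direction separately: the upper bound on $\wt T(x,y)-\wh T(x,y)$ via the same spliced construction with the interior excursions costing at most $\Delta_{\mathrm{int}}$, and the upper bound on $\wh T(x,y)-\wt T(x,y)$ by decomposing a near-optimal $\gamma$ and bounding the end-segments below via Lemma~\ref{Q_trivial_lemma} (using that one endpoint lies in $\SS(x)$ or $\SS(y)$). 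Both routes work; yours is slightly more direct and in fact yields the asymmetric refinement $\wh T(x,y)-\wt T(x,y)\le\Delta_{\mathrm{ext}}(x)+\Delta_{\mathrm{ext}}(y)$, while the paper's route packages the two halves more uniformly. One small remark: your allusion to Lemma~\ref{subadditive_lemma} is a forward reference---that lemma is proved afterward and is not used here---but you are invoking only the \emph{technique}, not the statement, so there is no circularity.
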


\begin{proof}
Assume $\Avc(x)+\Avc(y) < |\gamma|$. 
By the definition of $\Avc(\cdot)$ from \eqref{LQ_def}, the first intersection of $\gamma$ with $\SS(x)$, say at vertex $w'$, must occur within $\Avc(x)$ edges of $x$. 
In particular, $\gamma$ reaches $w'$ \textit{before} its last intersection with $\SS(y)$, say at $z'$, which occurs within $\Avc(y)$ edges of $y$. 
Hence $\gamma'\in\PP(w',z')$ described in the statement of the lemma is well-defined.
Let $\gamma_\mathrm{pre}$ be the portion of $\gamma$ before reaching $w'$, and $\gamma_\mathrm{post}$ the portion after reaching $z'$.
By choice of $w'$ and $z'$, we have that
\eeqs{vertices_okay_1}{
\text{all vertices of $\gamma_\mathrm{pre}$ belong to $\SS_\mathrm{int}(x)\cup\{w'\}$}, \\
\text{all vertices of $\gamma_\mathrm{post}$ belong to $\SS_\mathrm{int}(y)\cup\{z'\}$}, 
}
as well as
\eq{
|\wt T(\gamma') - \wt T(\gamma)|  \leq \Delta_{\mathrm{int}}(x)+\Delta_{\mathrm{int}}(y).
}
Since $\wt T(\gamma)\geq\wt T(x,y)$ by definition of $\wt T$, a triangle inequality now shows
\eeq{ \label{for_tightness_1}
|\wt T(\gamma') - \wt T(x,y)| &\leq \wt T(\gamma) - \wt T(x,y) + \Delta_{\mathrm{int}}(x)+\Delta_{\mathrm{int}}(y).
}

To reach \eqref{for_tightness_2}, we must work a bit harder.
Let $\hat w\in\SS(x)$ and $\hat z\in\SS(z)$ be such that $\wt T(\hat w,\hat z) = \wh T(x,y)$; this is possible because $\SS(x)$ and $\SS(y)$ all almost surely finite by property \ref{condition_2}.
Given any $\eps>0$, we can choose $\hat\gamma\in\PP(\hat w,\hat z)$ such that $\wt T(\hat \gamma) \leq\wh T(x,y)+\eps$.
Following $\hat\gamma$ from $\hat w$ to $\hat z$, let $\hat w_\mathrm{last}$ be the last intersection with $\SS(x)$.
Next let $\hat z_\mathrm{first}$ be the first intersection with $\SS(y)$ that occurs \textit{after} $\hat w_\mathrm{last}$ (although this may be $\hat w_\mathrm{last}$ itself). 
These choices partition $\hat\gamma$ into three subpaths: $\hat\gamma_\mathrm{beg}\in\PP(\hat w,\hat w_\mathrm{last})$, $\hat\gamma_\mathrm{mid}\in\PP(\hat w_\mathrm{last},\hat z_\mathrm{first})$, and $\hat\gamma_\mathrm{end}\in\PP(\hat z_\mathrm{first},\hat z)$.
By construction, we have that
\eeq{ \label{vertices_okay_2}
\parbox{0.7\textwidth}{\centering no vertices of $\hat\gamma_\mathrm{mid}$ other than $\hat w_\mathrm{last}$ and $\hat z_\mathrm{first}$ \\ belong to $\SS(x)\cup\SS_\mathrm{int}(x)\cup\SS(y)\cup\SS_\mathrm{int}(y)$.}
}
Now take any self-avoiding path $\gamma_1$ from $w'$ to $\hat w_\mathrm{last}$ using only vertices in $\SS(x)$ (recall that $\SS(x)$ is connected by property \ref{condition_2}).
Similarly, let $\gamma_2$ be any self-avoiding path from $\hat z_\mathrm{first}$ to $z'$ confined to $\SS(y)$.
By \eqref{vertices_okay_1} and \eqref{vertices_okay_2}, the following path between $x$ and $y$ is self-avoiding (see Figure \ref{shell_times_1_fig}):
\eq{ \tilde\gamma \coloneqq \gamma_\mathrm{pre}\cup\gamma_1\cup\hat\gamma_\mathrm{mid}\cup\gamma_2\cup\gamma_\mathrm{post}.
}
Hence $\tilde\gamma$ is a candidate path for $\wt T(x,y)$, which means
\eq{
\wt T(x,y) &\leq \wt T(\tilde\gamma)
= \wt T(\gamma_\mathrm{pre}) + \wt T(\gamma_1) + \wt T(\hat\gamma_\mathrm{mid})+\wt T(\gamma_2)+\wt T(\gamma_\mathrm{post}) \\
&= \wt T(\gamma_1) + \wt T(\gamma_2) + \wt T(\gamma)-\wt T(\gamma') + \wt T(\hat\gamma)-\wt T(\hat \gamma_\mathrm{beg})-\wt T(\hat\gamma_\mathrm{end}) \\
&\leq \wt T(\gamma_1) + \wt T(\gamma_2) + \wt T(\gamma)-\wt T(\gamma') +\wh T(x,y)+\eps -\wt T(\hat \gamma_\mathrm{beg})-\wt T(\hat\gamma_\mathrm{end}).
}
We can rewrite this inequality as
\eq{
\wt T(\gamma') - \wh T(x,y) \leq \wt T(\gamma_1)+\wt T(\gamma_2) + \wt T(\gamma)-\wt T(x,y) + \eps -\wt T(\hat \gamma_\mathrm{beg})-\wt T(\hat\gamma_\mathrm{end}).
}
In addition, we trivially have $\wt T(\gamma') - \wh T(x,y) \geq 0$ because $w'\in\SS(x)$ and $z'\in\SS(y)$.
We now work to bound from above the first and last pairs of terms on the right-hand side.
For every $e$ connecting two vertices in $\SS(x)$, we have $\tau_e\leq \tf $ because all vertices in $\SS(x)$ are white.
Since we almost surely have $|\tau_e^\pert|\leq \af $ for all $e \in E(\Z^d)$ by \eqref{perturbed_general_assumptions_c}, this observation results in
\eq{
\wt T(\gamma_1) \leq (\tf + \af )|\SS(x)| \quad \text{and} \quad \wt T(\gamma_2) \leq (\tf + \af )|\SS(y)| \quad \mathrm{a.s.}
}
Next observe that Lemma \ref{Q_trivial_lemma} implies
\eq{
-\wt T(\hat\gamma_\mathrm{beg}) \leq \af \Lvc(\hat w;0)\leq\af\sup_{w\in\SS(x)}\Lvc(w;0) \quad \mathrm{a.s.}
}
as well as
\eq{
-\wt T(\hat\gamma_\mathrm{end}) \leq \af\Lvc(\hat z;0)\leq \af \sup_{z\in\SS(y)} \Lvc(z;0) \quad \mathrm{a.s.}
}
The inequality \eqref{for_tightness_2} can now be read from the previous four displays, once $\eps$ is taken to $0$.
This completes the proof of part \ref{for_tightness_lemma_a}.

For \ref{for_tightness_lemma_b}, the assumption \eqref{for_tightness_condition} ensures that $\SS(x)\cup\SS_\mathrm{int}(x)$ is disjoint from $\SS(y)\cup\SS_\mathrm{int}(x)$.
In this case, property \ref{condition_2} of the shell construction forces every $\gamma\in\PP(x,y)$ to intersect both $\SS(x)$ and $\SS(y)$, where again the first intersection with $\SS(x)$ occurs before the last intersection with $\SS(y)$.
Therefore, \eqref{for_tightness_3} is immediate from \eqref{for_tightness_1} and \eqref{for_tightness_2} by choosing $\gamma$ so that $\wt T(\gamma)$ is arbitrarily close to $\wt T(x,y)$.
\end{proof}

\begin{figure}
\centering
\includegraphics[width=0.9\textwidth]{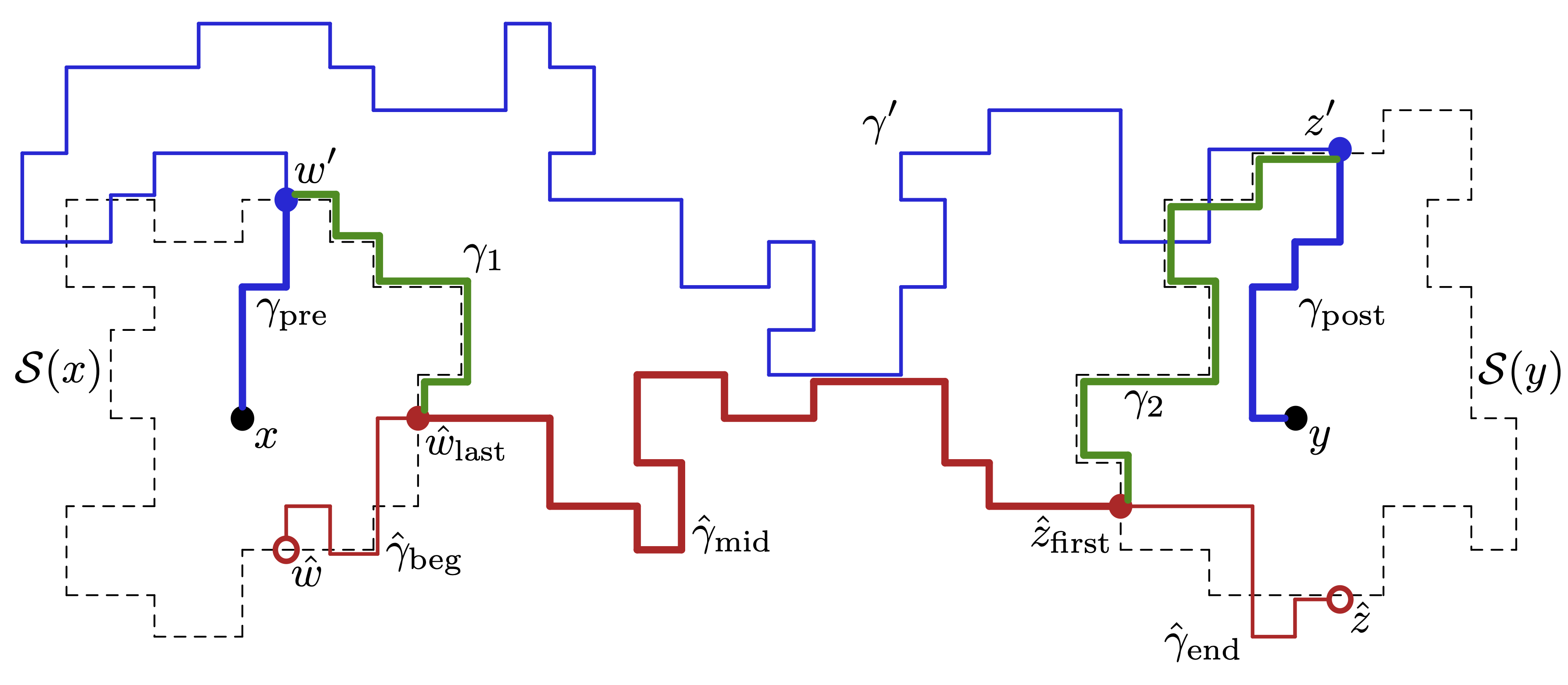}
\caption{Illustration for the proof of Lemma \ref{for_tightness_lemma}. If the top curve $\gamma = \gamma_\mathrm{pre}\cup\gamma'\cup\gamma_\mathrm{post}$ is a geodesic for $\wt T(x,y)$, and the bottom curve $\hat\gamma = \hat\gamma_\mathrm{beg}\cup\hat\gamma_\mathrm{mid}\cup\hat\gamma_\mathrm{end}$ is a geodesic for $\wh T(x,y)$, then the concatenated curve $\tilde\gamma = \gamma_\mathrm{pre}\cup\gamma_1\cup\hat\gamma_\mathrm{mid}\cup\gamma_2\cup\gamma_\mathrm{post}$ is a near-geodesic for $\wh T(x,y)$.}
\label{shell_times_1_fig}
\end{figure}

\noindent \textit{Proof of Proposition \textup{\hyperref[negative_thm_b]{\ref*{negative_thm}\ref*{negative_thm_b}}}}.
Let $\af$ be as in Lemma \ref{eta_choice_lemma}, so that $h>0$ if $F(0)<p_\cc(\Z^d)$, and $h=0$ otherwise.
In either case, we assume from \eqref{perturbed_general_assumptions_c} that $|\tau_e^\pert|\leq \af $ for all $e\in E(\Z^d)$.
The tightness of the collection $\{\wt T(x,y)-\wh T(x,y):\, x,y\in\Z^d\}$ is now deduced as follows.
Let $\eps>0$ be given.
First, \eqref{condition_2_eq_c} allows us to choose $r_1 = r_1(\LL,d)$ sufficiently large that
\eq{ 
\P(\Bsf_x) \geq 1-\eps/6, \quad \text{where}\quad
\Bsf_x \coloneqq \{\Rvc(x)\leq r_1\}. 
}
By \eqref{Qx_tail}---or the fact that $ \af =0$ when $F(0)\geq p_\cc(\Z^d)$---there is $r_2=r_2(\LL,d)$ such that
\eeq{ \label{conclude_tightness_2}
\P(\Csf_x) \geq 1-\eps/6, \quad \text{where} \quad \Csf_x\coloneqq \bigg\{\af \sup_{w\in \Bbf_{r_1}(x)}\Lvc(w;0) \leq r_2\bigg\}.
}
Finally, since $r_1$ has been fixed, there is some $r_3$ such that
\eeq{ \label{conclude_tightness_3}
\P(\Dsf_x) \geq 1-\eps/6, \quad \text{where} \quad \Dsf_x \coloneqq \bigg\{\sum_{E(\Bbf_{r_1}(x))} (\tau_e+ \af )\leq r_3\bigg\},
}
where $E(\Bbf_{r}(x))$ is the set of all edges in which both vertices belong to $\Bbf_{r}(x)$.
By the translation invariance from property \ref{condition_4} and \eqref{perturbed_general_assumptions_a}, the values of $\P(\Bsf_x)$ and $\P(\Dsf_x)$ do not depend on $x$. 
Similarly, the value of $\P(\Dsf_x)$ does not depend on $x$ because of \eqref{perturbed_general_assumptions_a}.
Meanwhile, the lower bound in \eqref{conclude_tightness_2} is valid for all $x$, since the constants in \eqref{Qx_tail} do not depend on $x$.

Consider any $x,y\in\Z^d$ such that $\|x-y\|_\infty > 2r_1$. 
If $\Bsf_x$ and $\Bsf_y$ both occur, then \eqref{for_tightness_condition} holds, in which case we can apply \eqref{for_tightness_3} in conjunction with the events considered in \eqref{conclude_tightness_2} and \eqref{conclude_tightness_3}.
The resulting conclusion is that on the event $\Bsf_x\cap\Bsf_y\cap\Csf_x\cap\Csf_y\cap\Dsf_x\cap\Dsf_y$, which occurs with probability at least $1-\eps$, we have
\eq{
|\wt T(x,y)-\wh T(x,y)| \leq 2(\tf + \af )(2r_1+1)^d + 2r_2 + 2r_3.
}
As $\eps>0$ is arbitrary, we have shown $\{\wt T(x,y)-\wh T(x,y):\, \|x-y\|_\infty>2r_1\}$ is tight.
On the other hand, because of \eqref{perturbed_general_assumptions_b}, the complementary family
$\{\wt T(x,y)-\wh T(x,y):\, \|x-y\|_\infty\leq 2r_1\}$
is also tight, as there are only finitely many distinct distributions represented. \hfill \qedsymbol \\

Another consequence of Lemma \ref{for_tightness_lemma} is the following result on the length of geodesics with respect to $\wt T$.
It is an extension of \cite[Thm.~4.9]{auffinger-damron-hanson17} and will be needed in Chapter \ref{var_form_proof}.

\begin{prop} \label{geo_length_cor}
Assume $F(0)<p_\cc(\Z^d)$, \eqref{perturbed_general_assumptions_a}, and \eqref{perturbed_general_assumptions_c}, where $\af >0$ is the constant from Lemma \ref{eta_choice_lemma}.
There is a constant $\mf  = \mf (\LL,d)$ such that
\eq{
\P\Big(\sup_{\gamma\in{\Geo}(x,y)}|\gamma|\geq \mf \|x-y\|_\infty\Big) \leq C\e^{-c\|x-y\|_\infty^{1/d}} \quad \text{for all $x,y\in\Z^d$}.
}
\end{prop}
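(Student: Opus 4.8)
The plan is to bound the length of an arbitrary geodesic $\gamma\in\Geo(x,y)$ from above by a sum of quantities each of which enjoys at least a stretched-exponential tail, and then take a union bound. The delicate point is to avoid the possibly heavy-tailed quantity $\Delta_{\mathrm{int}}$; this is achieved by invoking Lemma~\ref{for_tightness_lemma}\ref{for_tightness_lemma_a} (whose error term involves only $\Delta_{\mathrm{ext}}$) rather than part~\ref{for_tightness_lemma_b}, and it is exactly what permits us to assume no moment bound on $\tau_e$.

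Write $n=\|x-y\|_\infty$. The case $x=y$ is trivial, so assume $n\geq1$ and fix $\gamma\in\Geo(x,y)$. If $|\gamma|\leq\Avc(x)+\Avc(y)+1$ we stop. Otherwise Lemma~\ref{for_tightness_lemma}\ref{for_tightness_lemma_a} applies: writing $\gamma=\gamma_{\mathrm{pre}}\cup\gamma'\cup\gamma_{\mathrm{post}}$ as there, we have $|\gamma_{\mathrm{pre}}|\leq\Avc(x)$, $|\gamma_{\mathrm{post}}|\leq\Avc(y)$, and, since $\wt T(\gamma)=\wt T(x,y)$ for a geodesic,
\eq{
\wt T(\gamma') \leq \wh T(x,y) + \Delta_{\mathrm{ext}}(x) + \Delta_{\mathrm{ext}}(y).
}
Now $\gamma'\in\PP(w')$ for some $w'\in\SS(x)$, so either $\wt T(\gamma')<\af|\gamma'|$, whence $|\gamma'|\leq\Lvc(w';\af)$ by \eqref{LQ_def}, or $\wt T(\gamma')\geq\af|\gamma'|$, whence $|\gamma'|\leq\wt T(\gamma')/\af$. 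In either case,
\eq{
|\gamma| \leq \Avc(x) + \Avc(y) + \sup_{w\in\SS(x)}\Lvc(w;\af) + \frac{\wh T(x,y) + \Delta_{\mathrm{ext}}(x) + \Delta_{\mathrm{ext}}(y)}{\af}.
}
The right-hand side is a function of the environment alone, independent of $\gamma$, so it also bounds $\sup_{\gamma\in\Geo(x,y)}|\gamma|$; it also dominates $\Avc(x)+\Avc(y)+1$, so the stopping case is covered too.

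It remains to see that each summand is at most $n$ (resp. $\mathfrak{s} n$) off an event of probability $\leq C\e^{-cn^{1/d}}$. Lemma~\ref{L_tail_bd_lemma} gives $\P(\Avc(x)\geq n)\leq C\e^{-cn^{1/d}}$, and the same at $y$. For $\sup_{w\in\SS(x)}\Lvc(w;\af)$ we use \eqref{condition_2_eq_c} to get $\P(\Rvc(x)\geq n)\leq C\e^{-cn}$ --- recall $\SS(x)\subset\Bbf_{\Rvc(x)}(x)$ --- and then, on $\{\Rvc(x)<n\}$, a union bound over $w\in\Bbf_n(x)$ combined with Lemma~\ref{eta_choice_lemma} (with $h=\af$) yields $\P(\sup_{w\in\SS(x)}\Lvc(w;\af)\geq n)\leq C\e^{-cn^{1/d}}$ after relabeling constants. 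The identical argument bounds the term $2\af\sup_{w\in\SS(x)}\Lvc(w;0)$ of $\Delta_{\mathrm{ext}}(x)$, while $(\tf+\af)|\SS(x)|\leq(\tf+\af)(2\Rvc(x)+1)^d$ is controlled directly by \eqref{condition_2_eq_c}; hence $\P(\Delta_{\mathrm{ext}}(x)\geq n)\leq C\e^{-cn^{1/d}}$, likewise at $y$. Finally Lemma~\ref{T_hat_upper} gives $\P(\wh T(x,y)\geq\mathfrak{s} n)\leq C\e^{-cn^{1/d}}$, with $\mathfrak{s}=\mathfrak{s}(\LL,d)$ the constant there. On the complement of the union of these finitely many events, the displayed bound reads $\sup_{\gamma}|\gamma|\leq 3n+(\mathfrak{s}+2)n/\af=\mf n$ for $\mf\coloneqq 3+(\mathfrak{s}+2)/\af$, which is the claim. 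The only real obstacle is the bookkeeping in these union bounds --- in particular controlling $\sup_{w\in\SS(x)}\Lvc(w;\cdot)$ by first imposing a high-probability cap on the shell radius $\Rvc(x)$ --- but nothing beyond the shell estimates of Section~\ref{shell_review} is needed.
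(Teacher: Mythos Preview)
Your proof is correct and follows essentially the same approach as the paper's: both invoke Lemma~\ref{for_tightness_lemma}\ref{for_tightness_lemma_a} on a geodesic, convert the resulting passage-time bound on $\gamma'$ into a length bound via the $\Lvc(w';\af)$ dichotomy, and finish with stretched-exponential tail estimates on $\Avc$, $\Lvc$, $\Delta_{\mathrm{ext}}$, and $\wh T$. The paper's organization differs only cosmetically (bounding $|\gamma\setminus\gamma'|$ through $\Rvc$ rather than $\Avc$, and imposing conditions iteratively rather than summing into one deterministic inequality); one small slip in your write-up is that the displayed sum is not obviously an upper bound in case~1 when $\wh T(x,y)$ is negative, but this is harmless since $\wh T(x,y)+\Delta_{\mathrm{ext}}(x)\geq0$ almost surely by Lemma~\ref{Q_trivial_lemma} and the definition of $\Delta_{\mathrm{ext}}$.
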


\begin{proof}
Recall from \eqref{condition_2_eq_c} that
\eq{
\P\Big(\Rvc(x) \geq \frac{\|x-y\|_\infty}{2}\Big) \leq C\e^{-c\|x-y\|_\infty},
}
and so the probability that \eqref{for_tightness_condition} holds is at least $1-C\e^{-c\|x-y\|_\infty}$.
Suppose that \eqref{for_tightness_condition} does hold, so that---using the notation from Lemma \ref{for_tightness_lemma}---we can invoke \eqref{for_tightness_2} to write the following inequality for any $\gamma\in\Geo(x,y)$:
\eeq{ \label{for_tightness_2_cor}
\wt T(\gamma')-\wh T(x,y) \leq \Delta_{\mathrm{ext}}(x)+\Delta_{\mathrm{ext}}(y) \quad \mathrm{a.s.}
}
Recall the constant $\mathfrak{s} $ from Lemma \ref{T_hat_upper}, and suppose further that
\eeq{ \label{length_cor_eq_prep}
\wh T(x,y) \leq \mathfrak{s} \|x-y\|_\infty.
}
In order to simplify the coming notation, we define the quantity
\eq{
{R_0} = \left\lfloor \frac{1}{2}\Big(\frac{\mathfrak{s} \|x-y\|_\infty}{2(\tf +\af )}\Big)^{1/d} - \frac{1}{2}\right\rfloor.
}
Suppose even further that 
\eeq{ \label{supposing_Rs}
\Rvc(x)\leq {R_0},\quad \Rvc(y)\leq {R_0},
}
and
\eeq{ \label{supposing_sups}
\sup_{w\in \Bbf_{R_0}(x)\cup \Bbf_{R_0}(y)} \Lvc(w;\af ) < \frac{\mathfrak{s} }{\af }\|x-y\|_\infty. 
}
Now we make use of these several suppositions.
First, since every edge in $\gamma\setminus\gamma'$ contains a vertex in either $\SS(x)$ or $\SS(y)$, there is the trivial inequality
\eeq{ \label{length_in_3}
|\gamma| &\stackrefp{supposing_Rs}{\leq} |\gamma'| + |\SS(x)| + |\SS(y)| \\
&\stackrefp{supposing_Rs}{\leq} |\gamma'| + (2\Rvc(x)+1)^d + (2\Rvc(y)+1)^d \\
&\stackref{supposing_Rs}{\leq} |\gamma'| + \frac{\mathfrak{s} \|x-y\|_\infty}{\tf +\af } \leq |\gamma'| + \frac{\mathfrak{s} }{\af }\|x-y\|_\infty.
}
Second, we have
\eeq{ \label{two_delta_ones}
&\Delta_{\mathrm{ext}}(x)+\Delta_{\mathrm{ext}}(y)
= (\tf +\af )\big(|\SS(x)|+|\SS(y)|\big) + \af \sup_{w\in\SS(x)\cup\SS(y)}\Lvc(w;0) \\
&\stackref{length_in_3,supposing_Rs}{\leq} \mathfrak{s} \|x-y\|_\infty + \af \sup_{w\in \Bbf_{R_0}(x)\cup \Bbf_{R_0}(y)} \Lvc(w;\af ) \\
&\stackrefpp{supposing_sups}{length_in_3,supposing_Rs}{\leq} 3\mathfrak{s} \|x-y\|_\infty.
}
Third, by \eqref{supposing_Rs} and \eqref{supposing_sups}, every $\gamma\in\PP(x,y)$ with $|\gamma'|\geq 4(\mathfrak{s} /\af )\|x-y\|_\infty$ must have
\eq{
\wt T(\gamma') > \af |\gamma'| \geq 4\mathfrak{s} \|x-y\|_\infty.
}
On the other hand, combining \eqref{for_tightness_2_cor}, \eqref{length_cor_eq_prep}, and \eqref{two_delta_ones} shows
\eq{
\sup_{\gamma\in\Geo(x,y)}\wt T(\gamma') \leq 4\mathfrak{s} \|x-y\|_\infty.
}
To reconcile these two observations, we must have
\eq{
\sup_{\gamma\in\Geo(x,y)}|\gamma'| \leq \frac{4\mathfrak{s} }{\af }\|x-y\|_\infty.
}
Combining this bound with \eqref{length_in_3}, we arrive at
\eq{
\sup_{\gamma\in\Geo(x,y)}|\gamma| \leq \frac{5\mathfrak{s} }{\af }\|x-y\|_\infty.
}
In summary, we have shown
\eq{
\P\Big(\sup_{\gamma\in\Geo(x,y)}|\gamma| \leq \frac{5\mathfrak{s} }{\af }\|x-y\|_\infty\Big)
\geq \P(\eqref{for_tightness_condition},\eqref{length_cor_eq_prep},\eqref{supposing_Rs},\eqref{supposing_sups} \text{ all hold}).
}
We saw at the beginning of the proof that \eqref{for_tightness_condition} occurs with probability least $1-C\e^{-c\|x-y\|_\infty}$.
By Lemma \ref{T_hat_upper} and \eqref{condition_2_eq_c}, each of \eqref{length_cor_eq_prep} and \eqref{supposing_Rs} occurs with probability at least $1-C\e^{-c\|x-y\|_\infty^{1/d}}$.
Finally, by applying \eqref{Qx_tail} with a union bound over $\Bbf_{R_0}(x) \cup \Bbf_{R_0}(y)$, the probability of \eqref{supposing_sups} is seen to be at least $1-C\e^{-c\|x-y\|_\infty}$.
We conclude that $\mf  = 5\mathfrak{s} /\af $ suffices.
\end{proof}

\subsection{Subadditivity}
The next lemma shows that both $\wh T$ and $\wt T$ are \textit{almost} subadditive.
The inequality \eqref{final_subadditive_tilde} also appeared in \cite[disp.~(5.34)]{smythe-wierman78}.
Recall the quantity $\Delta_\mathrm{ext}(x)$ from \eqref{deltas_def}, and note that the following tail bound is immediate from \eqref{condition_2_eq_c}, Lemma \ref{eta_choice_lemma}, and the fact that $\SS(x)\subset\Bbf_{\Rvc(x)}(x)$:
\eeq{ \label{Delta_2_tail}
\P(\Delta_{\mathrm{ext}}(x)\geq s) \leq C\e^{-cs^{1/d}} \quad \text{for all $x\in\Z^d$ and $s\geq0$}.
}


\begin{lemma} \label{subadditive_lemma}
Assume \eqref{perturbed_general_assumptions_c}.
For any $x,y,z\in\Z^d$, we have
\eeq{ \label{final_subadditive}
\wh T(x,y) \leq \wh T(x,z)+\wh T(z,y) + \Delta_{\mathrm{ext}}(z) \quad \mathrm{a.s.},
}
as well as
\eeq{ \label{final_subadditive_tilde}
\wt T(x,y) \leq \wt T(x,z)+\wt T(z,y)+ 2 \af  \Lvc(z;0) \quad \mathrm{a.s.}
}
\end{lemma}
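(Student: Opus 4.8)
The plan is to prove both inequalities by the same device: concatenate paths that are near-optimal for the right-hand side and then delete sub-paths to restore self-avoidance, arranging matters so that every deleted sub-path has an endpoint at $z$ (or at the shell of $z$), whence Lemma~\ref{Q_trivial_lemma} bounds its passage time from below. I will treat the simpler bound \eqref{final_subadditive_tilde} first.

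For \eqref{final_subadditive_tilde}: fix $\eps>0$ and choose $\gamma_1\in\PP(x,z)$, $\gamma_2\in\PP(z,y)$ with $\wt T(\gamma_1)$, $\wt T(\gamma_2)$ within $\eps$ of $\wt T(x,z)$, $\wt T(z,y)$. Let $w$ be the first vertex traversed by $\gamma_1$ (starting from $x$) that is also visited by $\gamma_2$; it exists since both paths pass through $z$. Write $\gamma_1 = \gamma_1'\cup\rho_1$ with $\gamma_1'\in\PP(x,w)$, $\rho_1\in\PP(w,z)$, and $\gamma_2 = \rho_2\cup\gamma_2'$ with $\rho_2\in\PP(z,w)$, $\gamma_2'\in\PP(w,y)$. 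By minimality of $w$ the path $\gamma' := \gamma_1'\cup\gamma_2'$ is self-avoiding from $x$ to $y$, so $\wt T(x,y)\le\wt T(\gamma') = \wt T(\gamma_1)+\wt T(\gamma_2)-\wt T(\rho_1)-\wt T(\rho_2)$. Since $z$ is an endpoint of each of $\rho_1$ and $\rho_2$, Lemma~\ref{Q_trivial_lemma} gives $-\wt T(\rho_i)\le\af\Lvc(z;0)$, and thus $\wt T(x,y)\le\wt T(\gamma_1)+\wt T(\gamma_2)+2\af\Lvc(z;0)$. Letting $\eps\to0$ proves \eqref{final_subadditive_tilde}.

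For \eqref{final_subadditive}: here I route the concatenation through $\SS(z)$. Using that all three shells are a.s.\ finite (property~\ref{condition_2}), pick $w_1\in\SS(x)$, $w_2,w_3\in\SS(z)$, $w_4\in\SS(y)$ realizing $\wh T(x,z)$ and $\wh T(z,y)$, together with $\gamma_1\in\PP(w_1,w_2)$, $\gamma_2\in\PP(w_3,w_4)$ whose weights are within $\eps$ of these values, and, using that $\SS(z)$ is a.s.\ connected, a self-avoiding path $\eta\in\PP(w_2,w_3)$ lying inside $\SS(z)$. From the walk obtained by traversing $\gamma_1$, then $\eta$, then $\gamma_2$, I extract a self-avoiding path $\gamma' = \alpha\cup\beta\cup\delta\in\PP(w_1,w_4)$: let $v$ be the first vertex of $\gamma_1$ that is visited by $\eta$ or $\gamma_2$ and $\alpha$ the segment of $\gamma_1$ from $w_1$ to $v$; if $v$ lies on $\gamma_2$, set $\beta=\varnothing$ and let $\delta$ be the segment of $\gamma_2$ from $v$ to $w_4$; otherwise $v$ lies on $\eta$, and with $v'$ the first vertex of $\eta$ after $v$ that is visited by $\gamma_2$ (which exists since $w_3$ lies on $\gamma_2$), set $\beta$ to be the segment of $\eta$ from $v$ to $v'$ and $\delta$ the segment of $\gamma_2$ from $v'$ to $w_4$. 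In both cases the remnants discarded from $\gamma_1$ and $\gamma_2$ are self-avoiding paths $\rho_1$ and $\rho_2$ having $w_2$, respectively $w_3$, hence a vertex of $\SS(z)$, as an endpoint, so Lemma~\ref{Q_trivial_lemma} yields $-\wt T(\rho_i)\le\af\sup_{w\in\SS(z)}\Lvc(w;0)$; and since every edge of $\beta\subseteq\eta$ joins two white vertices it has $\tau_e\le\tf$, so $\wt T(\beta)\le(\tf+\af)|\SS(z)|$ by \eqref{perturbed_general_assumptions_c}. As $\gamma'$ runs from $w_1\in\SS(x)$ to $w_4\in\SS(y)$, we conclude
\[
\wh T(x,y)\le\wt T(\gamma') = \wt T(\gamma_1)+\wt T(\gamma_2)+\wt T(\beta)-\wt T(\rho_1)-\wt T(\rho_2)\le\wt T(\gamma_1)+\wt T(\gamma_2)+\Delta_{\mathrm{ext}}(z),
\]
and since $\wt T(\gamma_1)+\wt T(\gamma_2)\le\wh T(x,z)+\wh T(z,y)+2\eps$, letting $\eps\to0$ gives \eqref{final_subadditive}.

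I expect the only step needing real care to be the verification that the spliced path $\gamma'$ in the second argument is genuinely self-avoiding in each of the two cases; this is the familiar first-intersection bookkeeping (as in the proof of Lemma~\ref{for_tightness_lemma}), made slightly fussier because $\gamma_1$ and $\gamma_2$ may enter and leave $\SS(z)$ several times. Everything else is immediate once Lemma~\ref{Q_trivial_lemma} and the whiteness of the vertices of $\SS(z)$ are invoked, and the ``a.s.'' in the statement comes only through the a.s.\ finiteness and connectedness of the shells and through Lemma~\ref{Q_trivial_lemma}.
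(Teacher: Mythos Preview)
Your proposal is correct and follows essentially the same approach as the paper: concatenate near-optimal paths, bridge through $\SS(z)$ when necessary (paying at most $(\tf+\af)|\SS(z)|$), and control the passage times of the deleted overlap segments via Lemma~\ref{Q_trivial_lemma} using that each has an endpoint in $\SS(z)$ (or at $z$ itself for \eqref{final_subadditive_tilde}). The only cosmetic difference is organizational: the paper splits into cases according to whether $\gamma_1$ and $\gamma_2$ already intersect, introducing the bridge only when they do not, whereas you set up the bridge $\eta$ from the outset and split according to whether $\gamma_1$ first meets $\gamma_2$ or $\eta$; both case analyses lead to the same bounds.
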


\begin{proof}
We will argue \eqref{final_subadditive}, and then specify at the end the very minor modification needed to prove \eqref{final_subadditive_2}.
Consider any paths $\gamma_1\in\PP(w_0,w_1)$ and $\gamma_2\in\PP(w_2,w_3)$,
where $w_0\in\SS(x)$, $w_1,w_2\in\SS(z)$, and $w_3\in\SS(y)$.
\begin{figure}
\subfloat[If $\gamma_1$ and $\gamma_2$ intersect, then we set $\gamma = \gamma_1^\mathrm{pre}\cup\gamma_2^\mathrm{post}$]{
\label{subadditive_figure_a}
\centering
\includegraphics[width=0.9\textwidth]{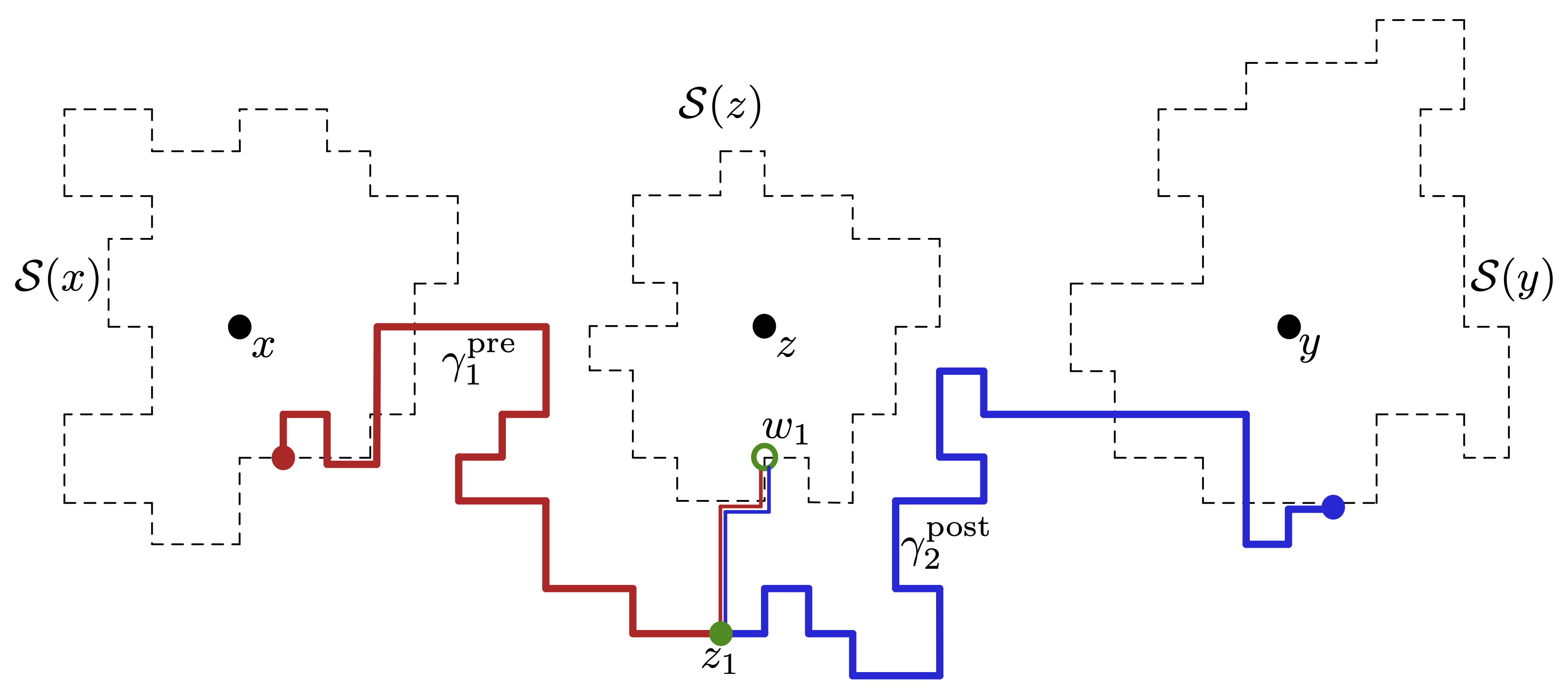}
}
\\
\subfloat[If $\gamma_1$ and $\gamma_2$ do not intersect, then we set $\gamma = \gamma_1^\mathrm{pre}\cup\gamma_\mathrm{bridge}\cup\gamma_2^\mathrm{post}$.  The dotted curve along $\SS(x)$ is the portion of $\gamma_1^+$ not included in $\gamma_\mathrm{bridge}$.]{
\label{subadditive_figure_b}
\centering
\includegraphics[clip, trim = 0in 0.4in 0 0, width=0.9\textwidth]{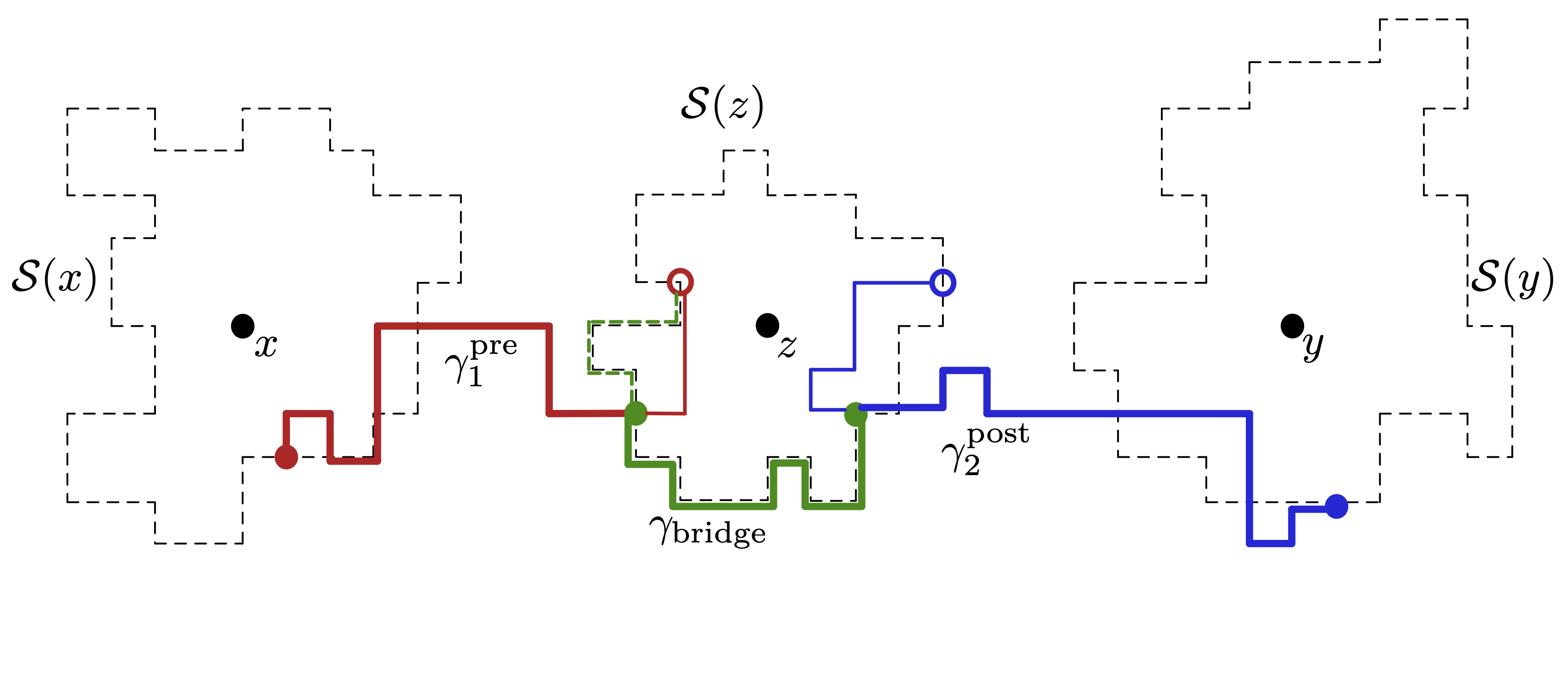}
}
\captionsetup{width=.9\linewidth}
\caption{Illustrations for the proof of Lemma \ref{subadditive_lemma}. The goal is to use paths $\gamma_1$ from $\SS(x)$ to $\SS(z)$ and $\gamma_2$ from $\SS(z)$ to $\SS(y)$, to create a single path $\gamma$ between $\SS(x)$ and $\SS(y)$ whose passage time is close to the sum of passage times for $\gamma_1$ and $\gamma_2$.}
\label{subadditive_figure}
\end{figure}
First suppose that $\gamma_1$ and $\gamma_2$ intersect (in the sense of sharing a vertex), as shown in Figure \ref{subadditive_figure_a}.
Let $\gamma_1^\mathrm{pre}$ be the portion of $\gamma_{1}$ between its initial location and its first intersection with $\gamma_{2}$; call this intersection point $z_1$.
Let $\gamma_1^\mathrm{post}$ be the remaining portion of $\gamma_{1}$ between $z_1$ and its final location $w_1\in \SS(z)$.
We next write $\gamma_2^\mathrm{pre}$ to denote the subpath of $\gamma_2$ between its initial location in $\SS(z)$ and $z_1$, while $\gamma_2^\mathrm{post}$ will denote the portion between $z_1$ and the final location in $\SS(y)$.
Clearly $\gamma_1^\mathrm{pre}\cup\gamma_2^\mathrm{post}$ is a self-avoiding path between $\SS(x)$ and $\SS(y)$, and so
\eeq{ \label{put_together_1}
\wh T(x,y) &\leq \wt T(\gamma_1^\mathrm{pre})+\wt T(\gamma_2^\mathrm{post}) 
= \wt T(\gamma_1)-\wt T(\gamma_1^\mathrm{post})+\wt T(\gamma_2)-\wt T(\gamma_2^\mathrm{pre}).
}
Because $\gamma_1$ terminates at $w_1\in\SS(z)$, we have
\begin{subequations} \label{post_and_pre}
\begin{align}
-\wt T(\gamma_1^\mathrm{post}) 
\stackref{trivial_neg_bd}{\leq} \af  \Lvc(w_1;0) 
&\leq  \af \sup_{w\in\SS(z)}\Lvc(w;0) \quad \mathrm{a.s.}
\end{align}
Similarly, because $\gamma_2$ begins in $\SS(z)$, we have
\begin{align}
-\wt T(\gamma_2^\mathrm{pre}) \leq  \af \sup_{w\in\SS(z)}\Lvc(w;0) \quad \mathrm{a.s.}
\end{align}
\end{subequations}
Together, \eqref{put_together_1} and \eqref{post_and_pre} yield
\eeq{ \label{final_subadditive_1}
\wh T(x,y) &\leq \wt T(\gamma_1) + \wt T(\gamma_2) + 2 \af \sup_{w\in\SS(z)}\Lvc(w;0) \\
&\leq \wt T(\gamma_1)+\wt T(\gamma_2)+\Delta_{\mathrm{ext}}(z) \quad \mathrm{a.s.}
}

If instead $\gamma_1$ and $\gamma_2$ do not intersect, as in Figure \ref{subadditive_figure_b}, then we attach to the end of $\gamma_1$ some path $\gamma_1^+$ which is confined to $\SS(z)$ and terminates at its first intersection with $\gamma_2$; such an intersection exists because $\gamma_2$ begins in $\SS(z)$, and $\SS(z)$ is connected.
Allowing for the possibility that $\gamma_1^+$ intersects $\gamma_1$ again before reaching $\gamma_2$, we let $\gamma_\mathrm{bridge}$ be the portion of $\gamma_1^+$ between its \textit{last} intersection with $\gamma_1$ and its terminal point shared with $\gamma_2$.
Now let $\gamma_1^\mathrm{pre}$ be the portion of $\gamma_1$ between its starting point in $\SS(x)$ and its unique point of intersection with $\gamma_\mathrm{bridge}$, and let $\gamma_1^\mathrm{post}$ be the remaining portion of $\gamma_1$ beyond this point of intersection.
Similarly, let $\gamma_2^\mathrm{pre}$ be the subpath of $\gamma_2$ between its starting point and its unique intersection with $\gamma_\mathrm{bridge}$, while $\gamma_2^\mathrm{post}$ will denote the remaining portion of $\gamma_2$ past this intersection.
Since $\gamma_1$ and $\gamma_2$ were assumed to not intersect, the concatenated path $\gamma_1^\mathrm{pre}\cup\gamma_\mathrm{bridge}\cup\gamma_2^\mathrm{post}$ is self-avoiding, starts in $\SS(x)$, and ends in $\SS(y)$.
Therefore, it is a candidate path for $\wh T(x,y)$, which means
\eq{
\wh T(x,y) &\leq \wt T(\gamma_1^\mathrm{pre})+\wt T(\gamma_\mathrm{bridge})+\wt T(\gamma_2^\mathrm{post}) \\
&= \wt T(\gamma_1)-\wt T(\gamma_1^\mathrm{post})+\wt T(\gamma_\mathrm{bridge})+\wt T(\gamma_2)-\wt T(\gamma_2^\mathrm{pre}).
}
As before, $-\wt T(\gamma_1^\mathrm{post})$ and $-\wt T(\gamma_2^\mathrm{pre})$ are controlled by \eqref{post_and_pre}.
In addition, because $\gamma_\mathrm{bridge}$ traverses only vertices in $\SS(z)$, \eqref{S_in_W} and \eqref{perturbed_general_assumptions_c} together imply
\eq{
\wt T(\gamma_\mathrm{bridge}) 
\leq (\tf + \af )|\SS(x)| \quad \mathrm{a.s.}
}
Together, the two previous displays yield
\eeq{ \label{final_subadditive_2}
\wh T(x,y) \leq \wt T(\gamma_1)+\wt T(\gamma_2) +\Delta_{\mathrm{ext}}(z) \quad \mathrm{a.s.}
}
The claimed inequality \eqref{final_subadditive} now follows from whichever of \eqref{final_subadditive_1} and \eqref{final_subadditive_2} applies, once $\gamma_1$ and $\gamma_2$ are chosen so that $\wt T(\gamma_1)$ and $\wt T(\gamma_2)$ become arbitrarily close to $\wh T(x,z)$ and $\wh T(z,y)$, respectively.

To obtain \eqref{final_subadditive_tilde} instead of \eqref{final_subadditive},
one would simply note that now the endpoint of $\gamma_1\in\PP(x,z)$ is \textit{equal} to the starting point of $\gamma_2\in\PP(z,y)$.
Hence the upper bounds in \eqref{post_and_pre} become $ \af  \Lvc(z;0)$, and $\gamma_\mathrm{bridge}$ is empty.
\end{proof}

To prove Propositions \hyperref[negative_thm_c]{\ref*{negative_thm}\ref*{negative_thm_c}} and \hyperref[negative_thm_c2]{\ref*{negative_thm}\ref*{negative_thm_c2}}, we will need a continuity condition.

\begin{lemma} \label{continuity_lemma}
Assume \eqref{perturbed_general_assumptions_a} and \eqref{perturbed_general_assumptions_c}, where $\af \geq0$ is the constant from Lemma \ref{eta_choice_lemma}.
For any $\eps>0$, there exists $\delta = \delta(\eps,\LL,d)>0$ such that 
\eeq{ \label{sup_approx_moment}
\limsup_{n\to\infty} \E\bigg[\sup_{\substack{\bxi,\zetab\in\S^{d-1}\\ \|\bxi-\zetab\|_2\leq\delta}} \Big|\frac{\wh T(0,n\bxi)-\wh T(0,n\zetab)}{n}\Big|^p\bigg]  
\leq p\eps^p \quad \text{$\forall$ $p\in[1,\infty)$},
}
and
\eeq{ \label{sup_approx_as}
\limsup_{n\to\infty}\bigg[\sup_{\substack{\bxi,\zetab\in\S^{d-1}\\ \|\bxi-\zetab\|_2\leq\delta}} \Big|\frac{\wh T(0,n\bxi)-\wh T(0,n\zetab)}{n}\Big|\bigg] \leq \eps \quad \mathrm{a.s.}
}
\end{lemma}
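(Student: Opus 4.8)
The plan is to use the near-subadditivity of $\wh T$ from Lemma~\ref{subadditive_lemma} to bound the modulus of continuity of $\bxi\mapsto\wh T(0,n\bxi)/n$ by the size of a single \emph{short} shell passage time, and then to control that short passage time via the tail estimates already available. First I would record the elementary geometry: if $\|\bxi-\zetab\|_2\le\delta$ then $[n\bxi],[n\zetab]\in\Bbf_{n+1}(0)$ and $\|[n\bxi]-[n\zetab]\|_\infty\le n\delta+1$ (using $\|v\|_\infty\le\|v\|_2$ and the one-coordinate error from the map $[\,\cdot\,]$). Applying \eqref{final_subadditive} once with intermediate point $[n\zetab]$ and once with $[n\bxi]$, together with the symmetry $\wh T(x,y)=\wh T(y,x)$ (immediate from path reversal in \eqref{fpp_def_perturbed}), yields
\[
\sup_{\substack{\bxi,\zetab\in\S^{d-1}\\ \|\bxi-\zetab\|_2\le\delta}}\bigl|\wh T(0,n\bxi)-\wh T(0,n\zetab)\bigr|\ \le\ \widetilde M_n+D_n\qquad\mathrm{a.s.},
\]
where $\widetilde M_n:=\max\{|\wh T(x,y)|:\,x,y\in\Bbf_{n+1}(0),\ \|x-y\|_\infty\le n\delta+1\}$ and $D_n:=\max_{x\in\Bbf_{n+1}(0)}\Delta_{\mathrm{ext}}(x)$. (The left-hand side is a maximum over finitely many lattice-point pairs, hence measurable.)

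Next I would estimate $\widetilde M_n$. A union bound over the $O(n^{2d})$ admissible pairs $(x,y)$, using the lower tail of Lemma~\ref{T_hat_lower} and the upper tail of Lemma~\ref{T_hat_upper} (and $t>t^{1/d}$ for $t>1$), gives $\P(\widetilde M_n\ge t)\le Cn^{2d}\e^{-ct^{1/d}}$ for every $t\ge \mathfrak{s}(n\delta+1)$, and trivially $\le 1$ for smaller $t$. Writing $\E[(\widetilde M_n/n)^p]=p\int_0^\infty u^{p-1}\P(\widetilde M_n\ge nu)\,du$ and splitting at $u_0=\mathfrak{s}(\delta+1/n)$, the part below $u_0$ contributes at most $u_0^p\to(\mathfrak{s}\delta)^p$, while the part above $u_0$ is at most $pCn^{2d}n^{-p}\int_{\mathfrak{s}n\delta}^\infty v^{p-1}\e^{-cv^{1/d}}\,dv$, which tends to $0$ as $n\to\infty$ because the stretched-exponential tail swamps the polynomial factor. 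Hence $\limsup_n\E[(\widetilde M_n/n)^p]\le(\mathfrak{s}\delta)^p$ for every $p\in[1,\infty)$. The term $D_n$ is handled the same way but is negligible: \eqref{Delta_2_tail} gives $\P(D_n\ge t)\le Cn^d\e^{-ct^{1/d}}$, from which $\E[(D_n/n)^p]\to0$ (indeed $D_n$ is of order $(\log n)^d$ with overwhelming probability). Combining via Minkowski's inequality, $\limsup_n\E[((\widetilde M_n+D_n)/n)^p]^{1/p}\le\mathfrak{s}\delta$, so choosing $\delta$ a small enough multiple of $\eps/\mathfrak{s}$ (say $\delta=\eps/(4\mathfrak{s})$, independent of $p$) yields $\limsup_n\E[\sup_{\|\bxi-\zetab\|_2\le\delta}|(\wh T(0,n\bxi)-\wh T(0,n\zetab))/n|^p]\le(\eps/4)^p\le\eps^p\le p\eps^p$, which is \eqref{sup_approx_moment}.

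For the almost-sure statement \eqref{sup_approx_as} I would feed the same tail bounds into Borel--Cantelli. With $\delta=\eps/(4\mathfrak{s})$, the probabilities $\P(\widetilde M_m\ge\tfrac{\eps}{2}m)$ and $\P(D_m\ge\tfrac{\eps}{2}m)$ are summable over integers $m$, so almost surely $\widetilde M_m<\tfrac{\eps}{2}m$ and $D_m<\tfrac{\eps}{2}m$ for all large $m$; passing from integer to continuous $n$ via the monotonicity $\widetilde M_n\le\widetilde M_{\lceil n\rceil}$, $D_n\le D_{\lceil n\rceil}$ and $n\ge\lfloor n\rfloor$, one gets $\limsup_n(\widetilde M_n+D_n)/n\le\eps$ a.s., which with the pointwise bound from the first paragraph gives \eqref{sup_approx_as}.

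The main obstacle I anticipate is purely technical: verifying that the union-bound prefactor $n^{2d}$ is harmless in the integral for $\E[(\widetilde M_n/n)^p]$ — i.e., that after the substitution $v=nu$ the stretched-exponential decay in $v^{1/d}$ from Lemma~\ref{T_hat_upper} beats $n^{2d}$ uniformly enough that the ``above $u_0$'' piece vanishes for each fixed $p$ — and in making the whole argument robust to $n$ varying continuously. The conceptual content (near-subadditivity, plus the fact that a shell passage time over $\ell_\infty$-distance $\approx n\delta$ is typically $\approx\mathfrak{s}n\delta$) is exactly the mechanism behind the Cox--Durrett shape theorem, so no new idea is needed there; the work is in the bookkeeping.
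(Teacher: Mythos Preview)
Your proposal is correct and follows essentially the same route as the paper: use the near-subadditivity \eqref{final_subadditive} to reduce to $|\wh T(n\bxi,n\zetab)|+\Delta_{\mathrm{ext}}$, then control these by the tail bounds of Lemmas \ref{T_hat_lower}, \ref{T_hat_upper} and \eqref{Delta_2_tail} via a polynomial-in-$n$ union bound, and finish with tail integration for \eqref{sup_approx_moment} and Borel--Cantelli for \eqref{sup_approx_as}.

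One small but pleasant difference: the paper inserts an auxiliary direction $\zetab$ at distance $\in[\delta/2,2\delta]$ from both $\bxi_1,\bxi_2$ so as to guarantee a \emph{lower} bound $\|[n\bxi_i]-[n\zetab]\|_\infty\gtrsim n\delta$, which is what makes the exponent in Lemma~\ref{T_hat_upper} of order $(n\delta)^{1/d}$. You avoid this detour by simply observing that for $t\ge\mathfrak{s}(n\delta+1)\ge\mathfrak{s}\|x-y\|_\infty$ one has $\P(\wh T(x,y)\ge t)\le C\e^{-ct^{1/d}}$ regardless of how small $\|x-y\|_\infty$ is; the price is a union bound over $O(n^{2d})$ pairs in $\Bbf_{n+1}(0)$ rather than $O(n^{2(d-1)})$ pairs in $\mathbf{S}_n$, which is harmless. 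Your monotonicity argument $\widetilde M_n\le\widetilde M_{\lceil n\rceil}$ to pass to continuous $n$ is also a bit cleaner than the paper's counting of values of $\mathbf{S}_n$. The ``obstacle'' you flag is not one: for each fixed $p$, the stretched exponential $\e^{-c(nu)^{1/d}}$ kills any polynomial prefactor in $n$ once $u\ge u_0$, exactly as you say.
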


\begin{proof}
Let $\eps>0$ be given, and recall the constant $\mathfrak{s} $ from Lemma \ref{T_hat_upper}.
Set $\delta=\frac{1}{12}(1\wedge \eps/\mathfrak{s} )$ so that the following implications hold for all $\bxi,\zetab\in\S^{d-1}$, $s\geq0$, and large enough $n$ (depending on $\delta$):
\eeq{ \label{delta_large_n}
&\|\bxi-\zetab\|_2 \in [\delta/2,2\delta]
 \implies  \|[n \bxi]-[n\zetab]\|_2 \in [\delta n/3,3\delta n] \\
  &\implies  (\mathfrak{s} +s)\|[n \bxi]-[n\zetab]\|_\infty \leq (\mathfrak{s} +s)\|[n\bxi]-[n\zetab]\|_2 \leq (\eps+s)n/4.
}
For $n,s\in[0,\infty)$, define the events
\eq{
\Usf_{n,s} &\coloneqq \bigcup_{\substack{\bxi,\zetab\in\S^{d-1} \\ \frac{\delta}{2}\leq\|\bxi-\zetab\|_2\leq 2\delta }}\Big\{|\wh T(n\bxi,n\zetab)| \geq \frac{\eps+s}{4}n\Big\}, \\
\Vsf_{n,s} &\coloneqq \bigcup_{\bxi\in\S^{d-1}} \Big\{\Delta_{\mathrm{ext}}([n\bxi]) \geq \frac{\eps+s}{8}n\Big\}.
}
Note that for $n\in[1,\infty)$, the cardinality of the set 
\eeq{ \label{Sn_def}
\mathbf{S}_n \coloneqq \{[n\bxi]:\, \bxi\in\S^{d-1}\}\subset\Z^d
} 
is at most $Cn^{d-1}$.
Therefore, by a union bound together with \eqref{T_hat_upper_eq} and \eqref{T_hat_lower_eq}, we have
\eeq{ \label{U_event_bd}
\P(\Usf_{n,s}) \leq Cn^{2d-2}\e^{-c((\mathfrak{s} +s)\delta n/(6d))^{1/d}} \quad \text{for all large $n$, $s\in[0,\infty)$},
}
where we have the used the fact from \eqref{delta_large_n} that 
\eq{ 
\|\bxi-\zetab\|_2 \geq \delta/2 \implies \|[n \bxi]-[n\zetab]\|_\infty \geq \|[n\bxi]-[n\zetab]\|_2/(2d) \geq \delta n/(6d).
}
Similarly, by a union bound over $\mathbf{S}_n$ together with \eqref{Delta_2_tail}, we have
\eeq{ \label{V_event_bd}
\P(\Vsf_{n,s}) \leq Cn^{d-1}\e^{-c((\eps+s)n/8)^{1/d}} \quad \text{for all $n\in[1,\infty)$, $s\in[0,\infty)$}.
}
Putting \eqref{U_event_bd} and \eqref{V_event_bd} together yields
\eeq{ \label{UV_events_bd}
\P(\Usf_{n,s}\cup\Vsf_{n,s}) \leq Cn^{2d-2}\e^{-c(\delta s n)^{1/d}}.
}
Now suppose that neither $\Usf_{n,s}$ nor $\Vsf_{n,s}$ occur. 
Given any $\bxi_1,\bxi_2\in\S^{d-1}$ with $\|\bxi-\zetab\|_2\leq \delta$, choose $\zetab\in\S^{d-1}$ such that
both $\|\bxi_1-\zetab\|_2$ and $\|\bxi_2-\zetab\|_2$ lie in the interval $[\delta/2,2\delta]$.
We then have
\eq{
&|\wh T(0,n\bxi_1) - \wh T(0,n\bxi_2)|
\leq|\wh T(0,n\bxi_1) - \wh T(0,n\zetab)| + |\wh T(0,n\bxi_2) - \wh T(0,n\zetab)| \\
&\stackref{final_subadditive}{\leq} |\wh T(n\bxi_1,n\zetab)| + \Delta_{\mathrm{ext}}([n\bxi_1])  + |\wh T(n\bxi_2,n\zetab)| + \Delta_{\mathrm{ext}}([n\bxi_2]) +2 \Delta_{\mathrm{ext}}([n\zetab]) \\
&\stackrefp{final_subadditive}{<} (\eps+s)n \quad \mathrm{a.s.}
}
We have thus argued that
\eeq{ \label{event_with_s}
\sup_{\substack{\bxi_1,\bxi_2\in\S^{d-1}\\ \|\bxi-\zetab\|_2\leq\delta}} |\wh T(0,n\bxi_1)-\wh T(0,n\bxi_2)| \leq (\eps+s)n
\quad \text{on $\Omega\setminus(\Usf_{n,s}\cup \Vsf_{n,s})$}.
} 
Therefore, once we integrate the tail, our bound from \eqref{UV_events_bd} gives
\eq{
&\E\bigg[\sup_{\substack{\bxi,\zetab\in\S^{d-1}\\ \|\bxi-\zetab\|_2\leq\delta}} \Big|\frac{\wh T(0,n\bxi)-\wh T(0,n\zetab)}{n}\Big|^p\bigg]  \\
&\leq p\eps^p + Cn^{2d-2}p\int_0^\infty (\eps+s)^{p-1}\e^{-c(\delta s n)^{1/d}}\, \dd s
}
for all large $n$.
Since the integral vanishes as $n\to\infty$, this establishes \eqref{sup_approx_moment}.

To next prove \eqref{sup_approx_as}, we make the following discretization argument.
As $n$ varies continuously from $0$ to $\infty$, the set $\mathbf{S}_n$ increases in size at only countably many values of $n$.
More precisely, for $n \in [0,r]$ and $r\geq1$, the set $\mathbf{S}_n$ is contained in $\Bbf_{r+\sqrt{d}}(0)$ and thus assumes one of at most $|\Bbf_{r+\sqrt{d}}(0)|\leq Cr^d$ different values.
Consequently, for every large integer $r$, \eqref{U_event_bd} and \eqref{V_event_bd} now show
\eq{ 
\P\bigg(\bigcup_{n\in[r,r+1]}(\Usf_{n,0}\cup\Vsf_{n,0})\bigg) &\leq Cr^{3d-1}\e^{-c(\delta r)^{1/d}}+Cr^{2d-1}\e^{-c(\eps r)^{1/d}}.
}
Applying \mbox{Borel}--\mbox{Cantelli}, we conclude that almost surely for every large integer $r$, the event $\bigcup_{n\in[r,r+1]}(\Usf_{n,0}\cup\Vsf_{n,0})$ does not occur.
In light of \eqref{event_with_s}, the statement \eqref{sup_approx_as} follows.
\end{proof}

\noindent \textit{Proof of Proposition \textup{\hyperref[negative_thm_c]{\ref*{negative_thm}(c,d)}}}.
First consider any $\zetab\in\S^{d-1}$ with rational coordinates, say $\zetab = (a_1/b,\dots,a_d/b)$ with $a_1,\dots,a_d,b\in\Z$.
Let $z = b\zetab \in \Z^d$.
By Lemma \ref{subadditive_lemma}, the family $( X_{j,k})_{0\leq j<k}$ given by
$X_{j,k} = \wh T(jz,+kz)+\Delta_{\mathrm{ext}}(kz)$
is subadditive: $X_{0,k} \leq  X_{0,j}+ X_{j,k}$ whenever $0<j<k$.
In addition, by property \ref{condition_4} together with \eqref{perturbed_general_assumptions_a} and \eqref{perturbed_general_assumptions_b}, we have that
\eq{
(X_{j,j+k})_{k\geq1}\stackrel{\mathrm{dist}}{=} (X_{j+1,j+k+1})_{k\geq1} \quad
&\text{for all $j\geq0$; and} \\
(X_{j\ell,(j+1)\ell})_{j\geq1} \quad &\text{is stationary and ergodic for each $\ell\geq1$}.
} 
Meanwhile, Proposition \hyperref[negative_thm_a]{\ref*{negative_thm}\ref*{negative_thm_a}} leads to the moment bound
\eeq{ \label{T_hat_moment_bound}
\E|\wh T(x,y)|^p \leq C_p\|x-y\|^p_\infty \quad \text{for all $x,y\in\Z^d$ and $p\in[1,\infty)$},
} 
where $C_p$ depends only on $\LL$, $d$, and $p$.
Furthermore, since Lemma \ref{Q_trivial_lemma} gives
\eq{
\wh T(x,y) \geq -\af \sup_{w\in\SS(x)} \Lvc(w;0) \quad \mathrm{a.s.},
}
and $\SS(x)\subset\Bbf_{\Rvc(x)}(x)$, the estimates \eqref{condition_2_eq_c} and \eqref{Qx_tail} can be used to show
\eq{ 
\E[\wh T(x,y)] 
\geq -C \quad \text{for all $x,y\in\Z^d$}.
}
Meanwhile, the tail bound \eqref{Delta_2_tail} implies
\eeq{ \label{Delta_to_0}
\lim_{n\to\infty} \frac{\Delta_{\mathrm{ext}}([nz])}{n} = 0 \quad \mathrm{a.s.}\text{ and in $L^p$ for every $p\in[1,\infty)$}.
}
We have now verified all the hypotheses of the subadditive ergodic theorem \cite[Thm.~1.10]{liggett85}, which gives the existence of some $\mu_\zetab$ such that
\eeq{ \label{X_convergence}
\lim_{k\to\infty}\frac{X_{0,k}}{k} = \frac{\mu_\zetab}{b} \quad \mathrm{a.s.}\text{ and in $L^1$}.
}
It now follows from \eqref{T_hat_moment_bound}, \eqref{Delta_to_0}, and \eqref{X_convergence} that
\eeq{ \label{X_convergence_with_p}
\lim_{k\to\infty} \frac{\wh T(0,kz)}{k} = \frac{\mu_\zetab}{b} \quad\mathrm{a.s.}\text{ and in $L^p$ for every $p\in[1,\infty)$}.
}
To remove the factor of $b$, we observe that for any $n\in\R$,
\eq{
|\wh T(0,\lfloor n\rfloor z)-\wh T(0,nz)|
\stackref{final_subadditive}{\leq}
|\wh T(\lfloor n\rfloor z,nz)| + \Delta_\mathrm{ext}(\lfloor n\rfloor z) + \Delta_\mathrm{ext}([nz]).
}
By \eqref{T_hat_moment_bound} and \eqref{Delta_to_0}, the right-hand side tends to $0$ almost surely and in $L^p$, $p\in[1,\infty)$. 
Therefore, we can upgrade \eqref{X_convergence_with_p} to
\eeq{ \label{X_minus_convergence}
\lim_{n\to\infty}\frac{\wh T(0,n\zetab)}{n} = \mu_\zetab \quad \mathrm{a.s.}\text{ and in $L^p$ for every $p\in[1,\infty)$}.
}
In the case $F(0)<p_\cc(\Z^d)$, Lemma \ref{eta_choice_lemma} can be used with \mbox{Borel}--\mbox{Cantelli} to show $\mu_\zetab\geq\af$.
If instead $F(0)\geq p_\cc(\Z^d)$, then we know $\mu_\zetab=0$ from Theorem \ref{time_constant_thm} together with Proposition \hyperref[negative_thm_b]{\ref*{negative_thm}\ref*{negative_thm_b}}.

To complete the proof, we appeal to \eqref{sup_approx_as} to see that $\zetab\mapsto\mu_{\zetab}$ is continuous on $\S^{d-1}\cap\Q^d$.
Therefore, we can extend the map continuously to all of $\S^{d-1}$.
Upon making this extension, 
we use \eqref{sup_approx_as} once more to conclude that for any $\bxi\in\S^{d-1}$, we have
\eq{ 
\lim_{n\to\infty} \frac{\wh T(0,n\bxi)}{n} = \mu_{\bxi} \coloneqq \lim_{\substack{\zetab\to\bxi\\ \zetab\in\S^{d-1}\cap\Q^d}}\mu_\zetab
\quad\  \mathrm{a.s.}
}
To obtain the uniform statement \eqref{shape_thm}, let $\eps>0$ be arbitrary and take $\delta>0$ as in Lemma \ref{continuity_lemma}.
Let us also assume $\delta>0$ is sufficiently small that $|\mu_{\bxi}-\mu_{\zetab}|\leq\eps$ whenever $\|\bxi-\zetab\|_2\leq\delta$.
Now choose $\zetab_1,\dots,\zetab_m\in\S^{d-1}\cap\Q^d$ such that every $\bxi\in\S^{d-1}$ is within $\ell^2$-distance $\delta$ of some $\zetab_i$, $1\leq i\leq m$.
We then have
\eq{
&\sup_{\bxi\in\S^{d-1}} \Big|\frac{\wh T(0,n\bxi)}{n}-\mu_{\bxi}\Big| \\
&\leq \sup_{\bxi\in\S^{d-1}}\inf_{1\leq i\leq m}\Big(\frac{|\wh T(0,n\bxi) - \wh T(0,n\zetab_i)|}{n} + \Big|\frac{\wh T(0,n\zetab_i)}{n} - \mu_{\zetab_i}\Big| + |\mu_{\zetab_i} - \mu_{\bxi}|\Big) \\
&\leq\sup_{\substack{\bxi,\zetab\in\S^{d-1}\\\|\bxi-\zetab\|_2\leq\delta}} \frac{|\wh T(0,n\bxi)-\wh T(0,n\zetab)|}{n}
+ \sup_{1\leq i\leq m} \Big|\frac{\wh T(0,n\zetab_i)}{n}-\mu_{\zetab_i}\Big| + \eps.
}
By \eqref{sup_approx_as}, the first supremum in the final line is almost surely bounded from above by $\eps$ in the limit $n\to\infty$.
And because of \eqref{sup_approx_moment}, its limiting $L^p$ norm is most $p^{1/p}\eps$.
Meanwhile, by \eqref{X_minus_convergence} the second supremum tends to $0$ almost surely and in $L^p$ for every $p\in[1,\infty)$.
The aggregate conclusion is then
\eq{
\limsup_{n\to\infty}\sup_{\bxi\in\S^{d-1}} \Big|\frac{\wh T(0,n\bxi)}{n}-\mu_{\bxi}\Big| \leq 2\eps \quad \mathrm{a.s.},
}
and also
\eq{
\limsup_{n\to\infty}\E \bigg[\sup_{\bxi\in\S^{d-1}} \Big|\frac{\wh T(0,n\bxi)}{n}-\mu_{\bxi}\Big|^{p}\bigg]^{1/p} \leq (p^{1/p}+1)\eps \quad \text{for every $p\in[1,\infty)$}.
}
As $\eps$ is arbitrary, we indeed have \eqref{shape_thm}. \hfill \qedsymbol

\begin{proof}[Proof of Proposition \ref{replacement_thm}]

Take $A_0 = \Avc(0)$ and $A_n = \sup_{\bxi\in\S^{d-1}}\Avc([n\bxi])$, so that the claimed tail bounds in part \ref{replacement_thm_c} come from \eqref{Lx_tail} (together with a union bound over the set $\mathbf{S}_n$ from \eqref{Sn_def}, in the case of $A_n$).
Part \ref{replacement_thm_a} of the proposition follows from these bounds by Borel--Cantelli, once we note---as in the proof of Lemma \ref{continuity_lemma}---that $\mathbf{S}_n$ changes value at only countably many values of $n$.
In particular, for any $\eps>0$, it is almost surely the case that the event
\eq{
\Esf_n \coloneqq \{\Avc(0) + \Avc([n\bxi]) < n \text{ for all $\bxi\in\S^{d-1}$}\}.
} 
occurs for all sufficiently large $n$.

Now consider any $\bxi\in\S^{d-1}$ and any $\gamma\in\Geo(0,n\bxi)$.
If $|\gamma| \leq A_0+A_n$, which can only occur if $\Esf_n$ fails, then set $a_0(\gamma) = A_0$ and $a_1(\gamma) = |\gamma|-A_0$.
That is, $\gamma^{(a_0(\gamma),a_1(\gamma))}$ is empty, and so $\wt T(\gamma^{(a_0(\gamma),a_1(\gamma))}) = 0$.
On the other hand, if $|\gamma| > A_0+A_n$, then set $a_0(\gamma)$ equal to the number of edges used by $\gamma$ until reaching $\SS(0)$, and set $a_1(\gamma)$ equal to the number of edges remaining when $\gamma$ lasts intersects $\SS([n\bxi])$.
(By definition of $\Avc(\cdot)$ from \eqref{LQ_def}, we have $a_0(\gamma)\leq A_0$ and $a_1(\gamma)\leq A_n$.)
That is, $\gamma^{(a_0(\gamma),a_1(\gamma))}$ is equal to $\gamma'$ as defined in Lemma \hyperref[for_tightness_lemma_a]{\ref*{for_tightness_lemma}\ref*{for_tightness_lemma_a}}, and so \eqref{for_tightness_2} gives
\eq{
\Big|\frac{\wt T(\gamma^{(a_0(\gamma),a_1(\gamma))})}{n} - \mu_\bxi\Big| \leq
\frac{\Delta_{\mathrm{ext}}(0)+\Delta_{\mathrm{ext}}([n\bxi])}{n} + \Big|\frac{\wh T(0,n\bxi)}{n}-\mu_\bxi\Big|.
}
In summary, we have
\eq{
&\sup_{\bxi\in\S^{d-1}}\sup_{\gamma\in\Geo(0,n\bxi)}\Big|\frac{\wt T(\gamma^{(a_0(\gamma),a_1(\gamma))})}{n} - \mu_\bxi\Big| \\
&\leq \Big(\one_{\Omega\setminus\Esf_n}\sup_{\bxi\in\S^{d-1}}\mu_\bxi\Big)+\frac{\Delta_\mathrm{ext}(0)}{n} + \sup_{x\in\mathbf{S}_n} \frac{\Delta_\mathrm{ext}(x)}{n}
+\sup_{\bxi\in\S^{d-1}}\Big|\frac{\wh T(0,n\bxi)}{n}-\mu_\bxi\Big|.
}
By the fact that $\Esf_n$ occurs for all large $n$, the first term on the right-hand side tends to $0$ almost surely and hence trivially in $L^p$, $p\in[1,\infty)$.
The estimate \eqref{Delta_2_tail}, again with a union bound over $\mathbf{S}_n$, can be used to show that the second and third terms also converge to $0$ almost surely and in $L^p$.
The fourth term is covered by \eqref{shape_thm}.
We have thus proved \eqref{adjusted_convergence}.

Finally, if $\tau_e$ is bounded, then we assume $\tf$ is at least $\esssup\tau_e$.
Property \ref{condition_5} implies that $\Avc(x) = 1$ for all $x\in\Z^d$.
In this case, by \eqref{perturbed_general_assumptions_c} we have 
\eq{
|\wt T(\gamma) - \wt T(\gamma^{(a_0(\gamma),a_1(\gamma))})| \leq 2\sup_{e\in E(\Z^d)}|\tilde\tau_e|\leq \af +\tf  \quad \mathrm{a.s.} 
}
Consequently, \eqref{adjusted_convergence} still holds with $a_0(\gamma)=a_1(\gamma)=0$ for all $\gamma$.
\end{proof}

\begin{proof}[Proof of Proposition \ref{addition_thm}]
By the translation invariance from \eqref{perturbed_general_assumptions}, it suffices to prove \eqref{sup_infinite} in the case $x=0$.
First note that $|\wt T(0,\SS([n\bxi])) - \wh T(0,n\bxi)| \leq \Delta_{\mathrm{int}}(0)+\Delta_\mathrm{ext}(0)$, where $\Delta_{\mathrm{int}}(\cdot)$ and $\Delta_\mathrm{ext}(\cdot)$ are defined in \eqref{deltas_def}.
Since $\Delta_{\mathrm{int}}(0)+\Delta_\mathrm{ext}(0)$ is almost surely finite by property \ref{condition_2}, this inequality allows us to rewrite \eqref{shape_thm} as
\eeq{ \label{shell_to_mu_modified}
\lim_{n\to\infty} \sup_{\bxi\in\S^{d-1}}\Big|\frac{\wt T(0,\SS([n\bxi]))}{n} - \mu_\bxi\Big| = 0 \quad \mathrm{a.s.}
}
Next recall from the proof of Proposition \ref{replacement_thm} that
\eq{
\lim_{n\to\infty} \sup_{\bxi\in\S^{d-1}} \frac{\Avc([n\bxi])}{n} = 0 \quad \mathrm{a.s.}
}
Therefore, we may assume $n$ is sufficiently large that $\Avc(0)+\Avc([n\bxi]) < \|[n\bxi]\|_1$ for all $\bxi\in\S^{d-1}$.
Consider any $\Gamma\in\Geo_\infty$ with initial vertex $x_0=0$.
Suppose that $x_\ell = [n\bxi]$ for $\bxi\in\S^{d-1}$, where $\ell$ necessarily satisfies $\ell\geq \|[n\bxi]\|_1>\Avc(0)+\Avc([n\bxi])$.
Consequently, Lemma \hyperref[for_tightness_lemma_a]{\ref*{for_tightness_lemma}\ref*{for_tightness_lemma_a}} implies the existence of $j\in\{0,\dots,\ell-1\}$ such that $x_j\in\SS(x_\ell)$.
Following  the path $\Gamma$ in the forward direction, we can also find $k>\ell$ such that $x_k\in\SS(x_\ell)$, by property \ref{condition_2}.

Now let $w\in \SS(x_\ell)$ be such that $\wt T(0,w) = \wt T(0,\SS(x_\ell))$; here we are again using finiteness of shells. 
Given that $\SS(x_\ell)$ is connected and contains only white vertices, we almost surely have $\wt T(x_{j},x_{k}) \leq (\tf + \af )|\SS(x_\ell)|$ by \eqref{perturbed_general_assumptions_c}, and also $\wt T(x_j,w) \leq (\tf +\af )|\SS(x_\ell)|$ for the same reason.
Since $\Gamma$ is a geodesic, we also know $\wt T(x_{j},x_{k}) = \wt T(x_{j},x_\ell)+\wt T(x_\ell,x_{k})$.
Putting these two observations together, we have
\begin{subequations} \label{w_connection}
\begin{align}
\wt T(w,x_\ell) &\stackref{final_subadditive_tilde}{\leq} \wt T(w,x_j) + \wt T(x_j,x_\ell) + 2\af  \Lvc(x_j;0) \notag \\
&\stackrefp{trivial_neg_bd}{=}  \wt T(w,x_j) + \wt T(x_j,x_k) - \wt T(x_\ell,x_k) + 2\af  \Lvc(x_j;0) \label{w_connection_1}\\
&\stackref{trivial_neg_bd}{\leq} 2(\tf +\af )|\SS(x_\ell)| + \af \Lvc(x_k;0) + 2\af  \Lvc(x_j;0). \notag
\end{align}
In addition, we trivially have
\begin{align} \label{w_connection_2}
-\wt T(w,x_\ell) \stackref{trivial_neg_bd}{\leq} \af  \Lvc(w;0).
\end{align}
\end{subequations}
We now see
\eq{
|\wt T(0,w) - \wt T(0,x_\ell)| 
&\stackref{final_subadditive_tilde}{\leq}  |\wt T(w,x_\ell)| + 2\af \Lvc(w;0) + 2\af \Lvc(x_\ell;0) \\
&\stackref{w_connection}{\leq} 3\Delta_\mathrm{ext}(x_\ell) + 2\af \Lvc(x_\ell;0).
}
Since $\wt T(0,w) = \wt T(0,\SS(x_\ell))$, we have thus shown
\eq{
\sup_{\bxi\in\S^{d-1}}\sup_{\substack{\Gamma\in\Geo_\infty \\
x_0=0,\, x_\ell=[n\bxi]}}\Big|\frac{\wt T(\Gamma^{(\ell)})}{n}- \frac{\wt T(0,\SS([n\bxi]))}{n}\Big|
\leq \sup_{x\in \mathbf{S}_n}\big(3\Delta_\mathrm{ext}(x) + 2\af \Lvc(x;0)\big),
}
where $\mathbf{S}_n$ was defined in \eqref{Sn_def}.
As in the proof of \eqref{sup_approx_as}, we can use the estimates \eqref{Delta_2_tail} and \eqref{Qx_tail}---together with a union bound over $\mathbf{S}_n$ followed by an application of Borel--Cantelli---to show that the right-hand side displayed above tends to $0$ almost surely.
Therefore, \eqref{sup_infinite} follows from \eqref{shell_to_mu_modified}.

To deduce \eqref{single_infinite} from \eqref{sup_infinite}, all that remains to show is the following implication:
\eeq{ \label{direction_implication}
\Gamma\in\Geo_\infty(\bxi) \quad \implies \quad \lim_{\ell\to\infty} \frac{x_\ell\cdot\bxi}{n_\ell} = 1,
}
where $n_\ell$ is any number such that $x_\ell = x_0+[n_\ell\zetab_\ell]$ for some $\zetab_\ell\in\S^{d-1}$.
Indeed, the fact that $x_\ell/\|x_\ell\|_2$ converges to $\bxi$ implies that $\zetab_\ell$ converges to $\bxi$.
For any $\ell$, the Cauchy--Schwarz and the triangle inequalities produce the estimate
\eq{
|[n_\ell\zetab_\ell] \cdot \bxi - n_\ell|
= \Big|\big([n_\ell\zetab_\ell]- (n_\ell\bxi)\big)\cdot\bxi\Big|
&\leq \|[n_\ell\zetab_\ell]-(n_\ell\bxi)\|_2 \\
&\leq \|[n_\ell\zetab_\ell]-n_\ell\zetab_\ell\|_2+n_\ell\|\zetab_\ell-\bxi\|_2 \\
&\leq \sqrt{d} + n_\ell\|\zetab_\ell-\bxi\|_2.
}
Upon dividing by $n_\ell$ and sending $\ell\to\infty$, we conclude that $x_\ell\cdot\bxi/n_\ell\to1$.
\end{proof}

This section's final proof 
is independent of what has come before.

\begin{proof}[Proof of Proposition \ref{negative_thm_2}]
We require two separate arguments: one for $d=2$, and another for $d\geq3$. \\

\noindent \textbf{Case 1: $d=2$.}
First assume $F(0)>p_\cc(\Z^2)=1/2$.
As usual, call an edge $e$ \textit{open} if $\tau_e = 0$; otherwise $e$ is \textit{closed}.
As before, let $\OO$ denote the unique infinite open cluster.  \label{infinite_open_cluster_2}

\begin{claim}
There exists a doubly-infinite self-avoiding path in $\OO$.
\end{claim}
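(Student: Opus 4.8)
\medskip
\noindent\textbf{Proof plan.}
We work in the regime $F(0)>p_\cc(\Z^2)=1/2$, so declaring an edge \emph{open} precisely when $\tau_e=0$ defines a supercritical i.i.d.\ bond percolation on $\Z^2$; let $\OO$ denote its a.s.\ unique infinite open cluster. The event that there exists a doubly-infinite self-avoiding open path is invariant under lattice translations, so by ergodicity of the product measure it has probability $0$ or $1$; moreover any such path, being an infinite connected open subgraph, must lie inside $\OO$. Hence it suffices to exhibit one such path with positive probability.

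The plan is to build two disjoint infinite self-avoiding open paths, $\rho^+$ living in the half-plane $\{x\ge 1\}$ and $\rho^-$ living in $\{x\le -1\}$, and then splice them through $\OO$. For the splicing: both $\rho^+$ and $\rho^-$ lie in the connected graph $\OO$, so there is a shortest open path $\pi$ in $\OO$ joining the vertex set of $\rho^+$ to that of $\rho^-$; minimality forces $\pi$ to be self-avoiding and to meet $\rho^+$ (resp.\ $\rho^-$) only at its initial vertex $u$ (resp.\ terminal vertex $v$). Cutting $\rho^+$ at $u$ and $\rho^-$ at $v$ and keeping the two infinite sub-rays $\rho^+_u,\rho^-_v$, one checks that $\rho^+_u\cup\pi\cup\rho^-_v$ is self-avoiding (the three pieces are pairwise disjoint apart from the junction vertices $u,v$, using $\rho^+\cap\rho^-=\varnothing$), and it is a doubly-infinite self-avoiding open path.

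It remains to produce $\rho^+$, the construction of $\rho^-$ being the mirror image. To keep things self-contained I would build $\rho^+$ by hand: fix a telescoping sequence of axis-parallel rectangles $V_k\subseteq\{x\ge 1\}$, each much wider than tall, with $V_k$ overlapping $V_{k+1}$ in a region that is itself much wider than tall and whose right edge escapes to $x=+\infty$; by the standard supercritical crossing estimates for $\Z^2$ (exponential decay of the radius of finite open clusters, via planar duality), the probabilities that $V_k$ has an open left--right crossing and that the overlap $V_k\cap V_{k+1}$ has an open top--bottom crossing are summably close to $1$ once the dimensions grow fast enough, so with positive probability all these crossings occur simultaneously; on that event each left--right crossing of $V_k$ meets the top--bottom crossing of $V_k\cap V_{k+1}$, which meets the left--right crossing of $V_{k+1}$, so their union is a connected infinite open subgraph of $\{x\ge 1\}$, from which $\rho^+$ is extracted as an infinite self-avoiding open path. (A quicker alternative is to invoke the standard fact that the half-plane $\{x\ge 1\}$ also has critical value $1/2$, hence supercritically contains an infinite open cluster, and to let $\rho^+$ be any infinite self-avoiding open path inside it.)

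The only real work is bookkeeping: choosing the rectangle dimensions so that each individual crossing probability is close to $1$ while the associated error probabilities remain summable in $k$, and then verifying the elementary splicing combinatorics. All of the probabilistic ingredients --- uniqueness of the infinite cluster, exponential decay of finite clusters in the supercritical phase of $\Z^2$, and the crossing estimates they yield --- are classical; see \cite{grimmett99}.
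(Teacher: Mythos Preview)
Your proposal is correct, and the splicing combinatorics and the $0$--$1$ law reduction are clean. The approach, however, is genuinely different from the paper's.

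The paper does not invoke half-plane percolation or crossing estimates. Instead it works directly on the $\mathbf{e}_1$-axis: by ergodicity, a positive density of the sites $n\mathbf{e}_1$ lie in $\OO$, and by the Antal--Pisztora chemical-distance bound \cite{antal-pisztora96}, whenever $N\mathbf{e}_1\in\OO$ with $N$ large, there is some $n\geq(1+\eps)N$ with $n\mathbf{e}_1\in\OO$ and $D_\oo(N\mathbf{e}_1,n\mathbf{e}_1)<N$. The short open path realizing this chemical distance is confined (by its length) to $\{x_1\geq 1\}$. Iterating and concatenating these hops yields, almost surely, an infinite self-avoiding open path that hits the vertical axis only finitely often; the symmetric construction on the negative side gives the second ray, and the two are then joined.

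So both arguments ultimately produce a right-going and a left-going ray living in (essentially) disjoint half-planes, but the paper obtains them via chemical-distance control rather than RSW-type crossing inputs. Your route is arguably more elementary in that it avoids \cite{antal-pisztora96} and leans only on classical $2$D percolation facts (exponential decay via duality, or directly the half-plane critical point); the paper's route is more in keeping with the rest of Section~\ref{modification_sec}, where Antal--Pisztora is already a recurring tool, and it yields the almost-sure statement directly without passing through a $0$--$1$ law.
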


\renewcommand{\qedsymbol}{$\square$ (Claim)}
\begin{proof}
Let $\theta=\P(0\in\OO)$, which is positive because $F(0)>p_\cc(\Z^2)$.
By ergodicity, we know
\eeq{ \label{ergodic_conv}
\lim_{N\to\infty}\frac{1}{N}\sum_{n=0}^{N-1}\one_{\{n\mathbf{e}_1\in\OO\}} = \theta \quad \mathrm{a.s.}
}
In particular, for any $\eps>0$, there almost surely exists $N_0$ sufficiently large that the following implication holds for all $N_1,N_2$:
\eeq{ \label{guaranteed_points}
N_0 \leq N_1 \leq \frac{N_2}{1+\eps} \implies n\mathbf{e}_1\in\OO \text{ for some $n \in \{N_1,\dots,N_2-1\}$}.
}
To see this, assume without loss of generality that $\eps\in(0,1)$, and take $N_0$ large enough that
\eq{
N \geq N_0 \quad \implies \quad \Big(1 - \frac{\eps}{2}\Big)\theta < \frac{1}{N}\sum_{n=0}^{N-1}\one_{\{n\mathbf{e}_1\in\OO\}} < (1+\eps)\Big(1 - \frac{\eps}{2}\Big)\theta.
}
Then whenever $N_0 \leq N_1 \leq N_2(1+\eps)^{-1}$, we must have $n\mathbf{e}_1\in\OO$ for some $n\in\{N_1,\dots,N_2-1\}$, since otherwise we would obtain the following contradiction:
\eq{
\Big(1 - \frac{\eps}{2}\Big)\theta < \frac{1}{N_2}\sum_{n=0}^{N_2-1} \one_{\{n\mathbf{e}_1\in\OO\}} 
&= \frac{1}{N_2}\sum_{n=0}^{N_1-1} \one_{\{n\mathbf{e}_1\in\OO\}} \\
&\leq\frac{1}{(1+\eps)N_1}\sum_{n=0}^{N_1-1} \one_{\{n\mathbf{e}_1\in\OO\}} 
< \Big(1 - \frac{\eps}{2}\Big)\theta.
}
Now recall that \cite[Thm.~1.1]{antal-pisztora96} gives a constant $C_0 = C_0(F(0),d)$ such that
\eq{
\P\big(n\mathbf{e}_1\leftrightarrow N\mathbf{e}_1, D_\oo(n\mathbf{e}_1,N\mathbf{e}_1) \geq C_0(n-N)\Big)\leq C\e^{-c(n-N)} \quad 0\leq N<n,
}
where $x\leftrightarrow y$ means $x$ and $y$ are connected by a path of open edges, and $D_\oo(x,y)$ is the minimum length of such a path.
Set $\eps = 1/C_0$ so that for every $n\geq(1+\eps)N$, we have $C_0(n-N)\geq N$.
By taking a union bound and applying the tail bound shown above, we obtain
\eq{
\P\bigg(\bigcup_{n=\lceil (1+\eps)N\rceil}^{\big\lceil(1+\eps)\lceil (1+\eps)N\rceil\big\rceil} \{n\mathbf{e}_1\leftrightarrow N\mathbf{e}_1, D_\oo(n\mathbf{e}_1,N\mathbf{e}_1) \geq N\}\bigg) \leq C\eps N\e^{-c\eps N}
}
for all $N>0$.
By \mbox{Borel}--\mbox{Cantelli}, it is almost surely the case that the event in the display occurs for only finitely many $N>0$.
Yet \eqref{guaranteed_points} shows
\eq{
\bigcup_{n=\lceil (1+\eps)N\rceil}^{\big\lceil(1+\eps)\lceil (1+\eps)N\rceil\big\rceil} \{n\mathbf{e}_1\leftrightarrow N\mathbf{e}_1\} \quad \text{occurs whenever $N\mathbf{e}_1\in\OO_1$, $N\geq N_0$}.
}
Therefore, there is almost surely some $N_0'>0$ such that
\eq{
N\mathbf{e}_1\in\OO,\, N \geq N_0' \quad \implies \quad D_\oo(n\mathbf{e}_1,N\mathbf{e}_1) < N \quad \text{for some $n\geq(1+\eps)N$}.
}
Since \eqref{ergodic_conv} implies $N\mathbf{e}_1\in\OO$ for infinitely many $N\geq N_0'$ (in fact, we just need to know this for a single $N\geq N_0'$), it follows that there is almost surely some infinite open path in $\OO$ that traverses $n\mathbf{e}_1$ for infinitely many positive $n\in\Z$, yet intersects the vertical axis only finitely many times.
By symmetry, there also exists an open path traversing $n\mathbf{e}_1$ for infinitely many negative $n\in\Z$, but again intersecting the vertical axis only finitely often.
These two paths obviously intersect each other only finitely many times, and so they can be connected to form a doubly-infinite open path.
By omitting loops, this doubly-infinite path can be trimmed to a self-avoiding one.
\end{proof}
\renewcommand{\qedsymbol}{$\square$}

Let $\Gamma$ be any doubly-infinite self-avoiding path in $\OO$.
Consider any two distinct vertices $x,y\in\Z^d$, and let $x_0$ and $y_0$ be any two distinct vertices traversed by $\Gamma$.
Take any $\gamma_1\in\PP(x,x_0)$ and $\gamma_2\in\PP(y,y_0)$ such that $\gamma_1$ and $\gamma_2$ do not intersect.
If either $\gamma_1$ or $\gamma_2$ pass through a vertex common with $\Gamma$ before reaching $x_0$ or $y_0$, respectively, then replace $x_0$ or $y_0$ by the first point of intersection.
In this way, $\gamma_1$ and $\gamma_2$ may be assumed to avoid $\Gamma$ expect at the terminal points $x_0$ and $y_0$, as shown in Figure \ref{2d_fig}.

\begin{figure}
\includegraphics[width=0.9\textwidth]{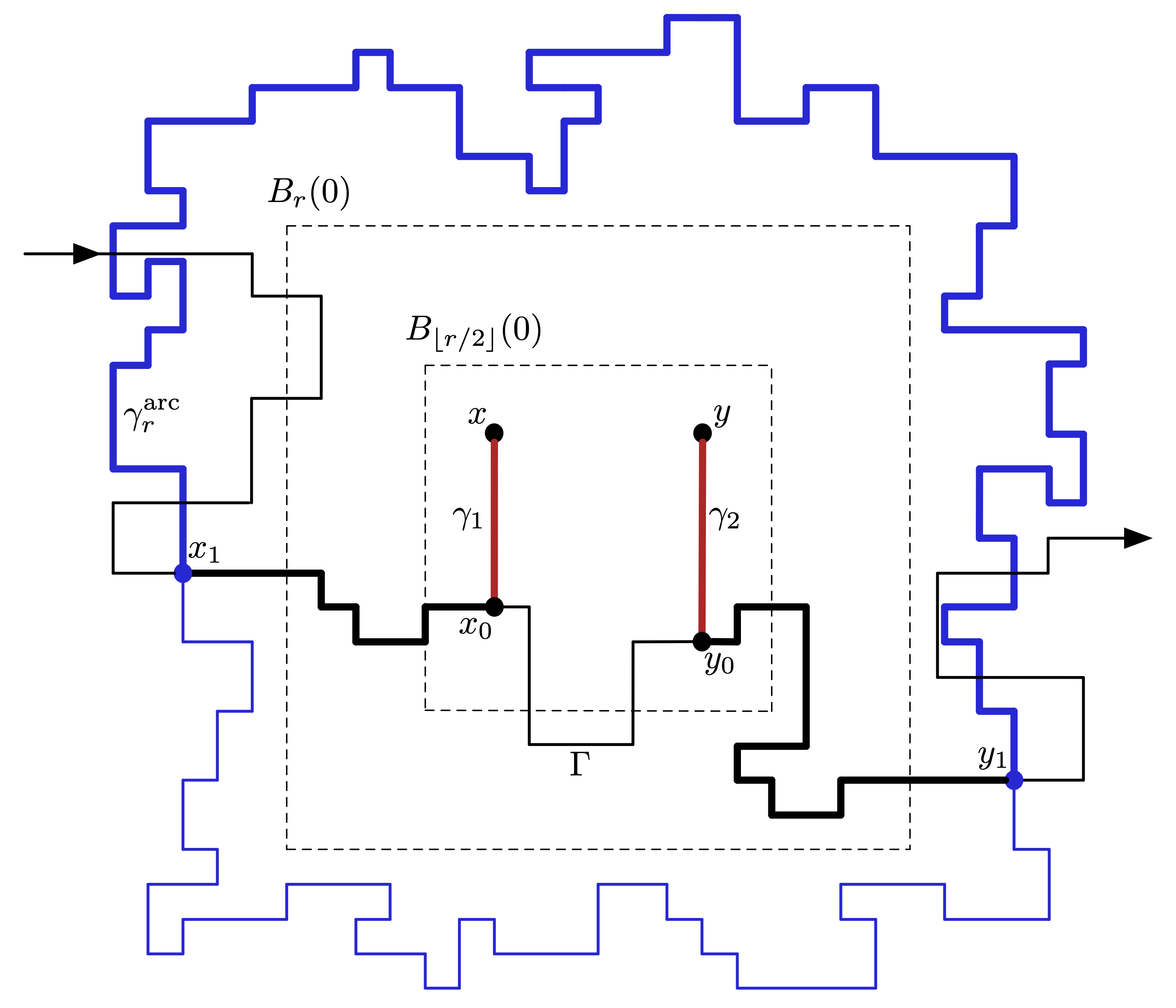}
\caption{Path construction in proof of Proposition \ref{negative_thm_2}, $d=2$.
The desired path $\gamma\in\PP(x,y)$ defined in \eqref{final_path} by concatenation is indicated by thicker curves.
All edges in $\Gamma$ and $\gamma_r^\mathrm{arc}$ have $\tau_e=0$, and $\gamma_1,\gamma_2$ do not vary with $r$.
Therefore, $\gamma$ can be made arbitrarily long while maintaining a fixed passage time $T(\gamma) = T(\gamma_1)+T(\gamma_2)$.
When $h$ is subtracted from each edge-weight, this leads to $\wt T(x,y)=-\infty$.}
\label{2d_fig}
\end{figure}

It is well-known that for supercritical (and critical) Bernoulli percolation in $\Z^2$, there almost surely exists for every $r\geq0$ some loop (or ``circuit") $\gamma_r$ of open edges which contains $\Bbf_r(0)$ in its interior; for example, see \cite[Section 11.7]{grimmett99}.
So let $r$ be large enough that $x_0,y_0\in \Bbf_{\lfloor r/2\rfloor}(0)$, which implies that any such $\gamma_r$ intersects $\Gamma$ at least twice: once as $\Gamma$ enters the interior of the circuit, and once more upon exit.
Here we have chosen an orientation for $\Gamma$; without loss of generality, we assume this orientation is such that $\Gamma$ reaches $x_0$ before $y_0$.
We also assume that $r$ is large enough that the portion of $\Gamma$ between $x_0$ and $y_0$ remains in the interior of $\Bbf_r(0)$.
Now, with $\gamma_r$ as above, let $x_1$ be the last point of intersection between $\gamma_r$ and $\Gamma$ before $\Gamma$ reaches $x_0$. 
Similarly, let $y_1$ be the first point of intersection between $\gamma_r$ and $\Gamma$ after $\Gamma$ has left $y_0$.
By our choice of $r$, the path $\Gamma$ does not intersect $\gamma_r$ at any point between $x_0$ and $y_0$.
Let $\gamma_r^\mathrm{arc}$ be the larger of the two arcs in $\gamma_r$ connecting $x_1$ to $y_1$, so that $|\gamma_r^\mathrm{arc}| \geq 4r$.

Now consider the following concatenation of paths, as shown in Figure \ref{2d_fig}:
\eeq{ \label{final_path}
\gamma:\, x \xrightarrow{\gamma_1} x_0 \xrightarrow{\Gamma} x_1 \xrightarrow{\gamma_r^\mathrm{arc}} y_1 \xrightarrow{\Gamma} y_0 \xrightarrow{\gamma_2} y.
}
With $\tau^\pert_e= -h<0$ for every edge $e$, it is now apparent that
\eeq{ \label{supercritical_tilde}
\wt T(x,y) \leq \wt T(\gamma) = T(\gamma_1) + T(\gamma_2) - h|\gamma| \leq T(\gamma_1) + T(\gamma_2) - 4rh.
}
Since $\gamma_1$ and $\gamma_2$ are fixed, sending $r\to\infty$ shows $\wt T(x,y) = -\infty$.

Now consider the critical case $F(0)=1/2$.
With $\eps\in(0,1/4)$ a small number to be specified below, we define a modified environment $\tau_e^\downarrow$ given by
\eq{
\tau_e^\downarrow = \begin{cases} 0 &\text{if $F(\tau_e)<\frac{1}{2}+\eps$},\\
\tau_e &\text{otherwise}.
\end{cases}
}
In other words, $\tau_e^\downarrow$ replaces certain instances of $\tau_e\neq0$ with the value $0$ in order to create a supercritical environment.
Yet our assumption $\eps<1/4$ ensures
\eq{
\tau_e-\tau_e^\downarrow \leq t' \coloneqq F^{-1}(3/4).
}
With $h>0$ already fixed, we choose $\eps$ be sufficiently small that there exist constants $C,c>0$ satisfying
\eq{
\P\Big(\exists\, \gamma\in\PP(x),\,|\gamma|\geq \ell,\, \sum_{e\in\gamma}\one_{\{\frac{1}{2}<F(\tau_e)<\frac{1}{2}+\eps\}} \geq \frac{h|\gamma|}{2t'}\Big) \leq C\e^{-c\ell} \quad \text{for all $\ell\geq1$}.
}
(This is possible via a standard Chernoff bound for the sum of $\ell$ independent Bernoulli($\eps)$ random variables, using the fact that there are at most $4\cdot 3^{\ell-1}$ self-avoiding paths of length $\ell$ starting at $x$.
For example, see \cite[pg.~23, 24]{boucheron-lugosi-massart13} or refer to \eqref{binomial_chernoff} with $p=h/(2t')$, observing that $h_\eps(p)\to\infty$ as $\eps\searrow0$).
By \mbox{Borel}--\mbox{Cantelli}, it follows that 
\eq{
\sum_{e\in\gamma}(\tau_e-\tau_e^\downarrow) \leq \frac{h}{2\tf }|\gamma| \quad \text{for all $\gamma\in\PP(x)$, $|\gamma|$ sufficiently large}.
}
Combined with \eqref{supercritical_tilde}, which is valid for the edge-weights $(\tau_e^\downarrow)_{e\in E(\Z^d)}$ because $\P(\tau_e^\downarrow=0)>1/2$, this observation shows we still have $\wt T(x,y) = -\infty$. \\

\noindent \textbf{Case 2: $d\geq3$.}
We again begin by assuming $F(0)>p_\cc(\Z^d)$.
Let $x$ and $y$ be any two distinct vertices.
Since the FPP model is invariant under translations and symmetries of the lattice, we may assume without loss generality that $y_3 < 0\leq x_3$.
As in \cite{grimmett-marstrand90}, we define ``slices of thickness $k$":
\eq{
S_+(k) &\coloneqq \{z\in\Z^d :\, 0 \leq z_i \leq k \text{ whenever $3\leq i\leq d$}\}, \\
S_-(k) &\coloneqq \{z\in\Z^d :\, 0 > z_i \geq -(1+k) \text{ whenever $3\leq i\leq d$}\}.
}
The main result of \cite{grimmett-marstrand90} is that if $k$ is sufficiently large, then $S_+(k)$ almost surely contains an infinite cluster $\OO_+$ of sites connected by open edges within $S_+(k)$.
Analogously, there is an infinite open cluster $\OO_-\subset S_-(k)$.\footnote{The article \cite{grimmett-marstrand90} is phrased in terms of site percolation, but its methods work equally well for bond percolation; see \cite[pg.~2]{grimmett-marstrand90}. Another possible reference is \cite[Thm.~7.2]{grimmett99}.}
With $\mathbf{e}_1$ and $\mathbf{e}_3\in\Z^d$ denoting the first and third standard basis vectors, and $\theta \coloneqq \P(0\in\OO_+)=\P(-\mathbf{e}_3\in\OO_-)>0$, independence and ergodicity imply
\eq{
\lim_{N\to\infty} \frac{1}{N}\sum_{n=0}^{N-1} \one_{\{n\mathbf{e}_1\in\OO_+,\, n\mathbf{e}_1-\mathbf{e}_3\in\OO_-,\,\, \{n\mathbf{e}_1,n\mathbf{e}_1-\mathbf{e}_3\}\text{ is open}\}} = \theta^2F(0) \quad \mathrm{a.s.}
}
Now fix any $x_0\in\OO_+$, $y_0\in\OO_-$, and paths $\gamma_1\in\PP(x,x_0)$, $\gamma_2\in\PP(y,y_0)$ that are confined to $S_+(k)$ and $S_-(k)$, respectively; here we assume $k$ is large enough to ensure $x\in S_+(k)$ and $y\in S_-(k)$.
As in the two-dimensional case, we also assume that $x_0$ and $y_0$ are the \textit{first} intersections of $\gamma_1$ and $\gamma_2$ with $\OO_+$ and $\OO_-$, respectively.
By the limit displayed above, almost surely the following holds for infinitely many nonnegative integers $n$:
We can connect $x_0$ to $n\mathbf{e}_1$ via an open path remaining in $ S_+(k)$, then join $n\mathbf{e}_1$ to $n\mathbf{e}_1-\mathbf{e}_3$ by an open edge, and finally connect $n\mathbf{e}_1-\mathbf{e}_3$ to $y_0$ with an open path remaining in $ S_-(k)$.
(See Figure \ref{3d_fig} for an illustration.)
By appending $\gamma_1$ and $\gamma_2$ to the beginning and end of this path, we obtain $\gamma\in\PP(x,y)$ such that
\eeq{ \label{supercritical_tilde_2}
\wt T(x,y) \leq \wt T(\gamma) = T(\gamma_1)+T(\gamma_2) - h|\gamma|.
}
Letting $n\to\infty$ so that $|\gamma|\to\infty$, we conclude $\wt T(x,y) = -\infty$.

\begin{figure}
\subfloat[Connecting $x$ to $n\mathbf{e}_1$; all edges except those of $\gamma_1$ remain in $\OO_+$]{
\label{3d_fig_a}
\includegraphics[width=0.9\textwidth]{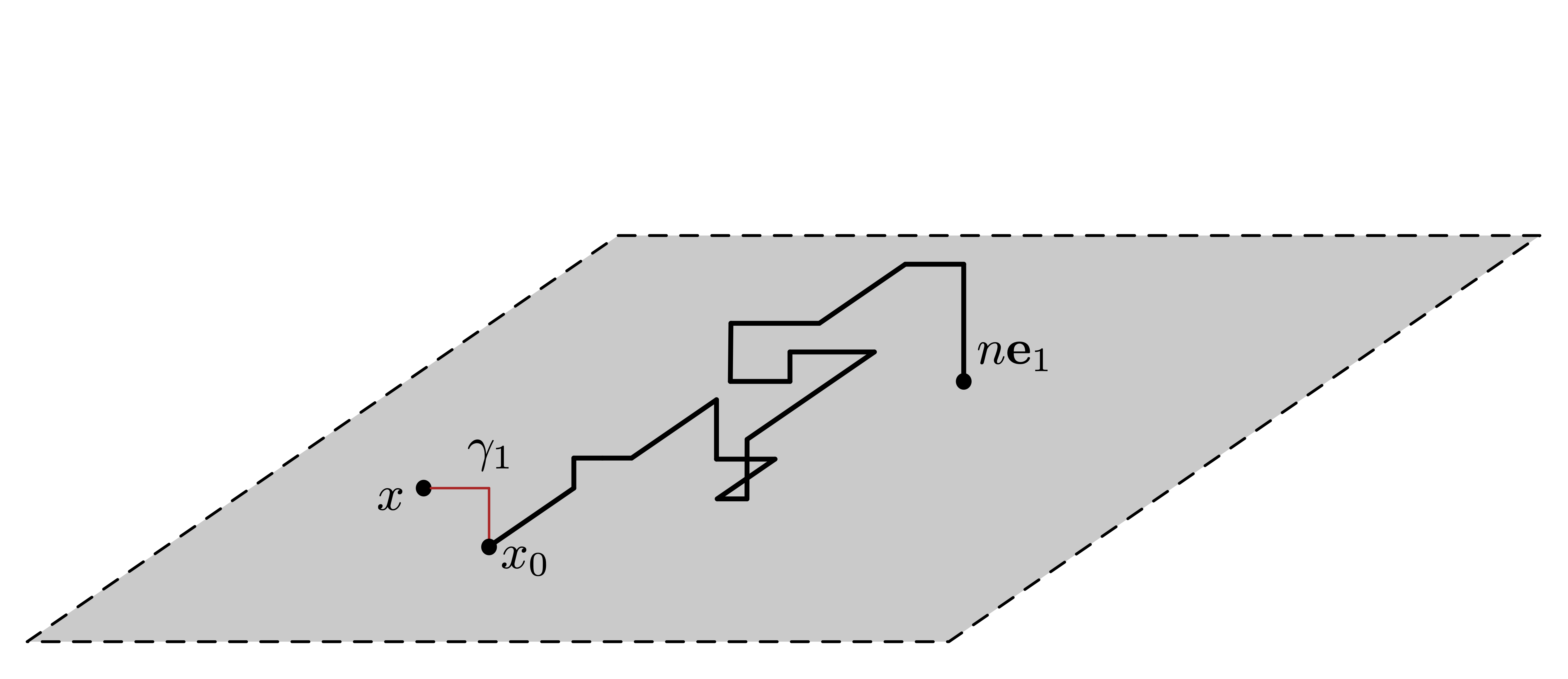}
} \\
\subfloat[Connecting $y$ to $n\mathbf{e}_1-\mathbf{e}_3$; all edges except those of $\gamma_2$ remain in $\OO_-$]{
\label{3d_fig_b}
\includegraphics[width=0.9\textwidth]{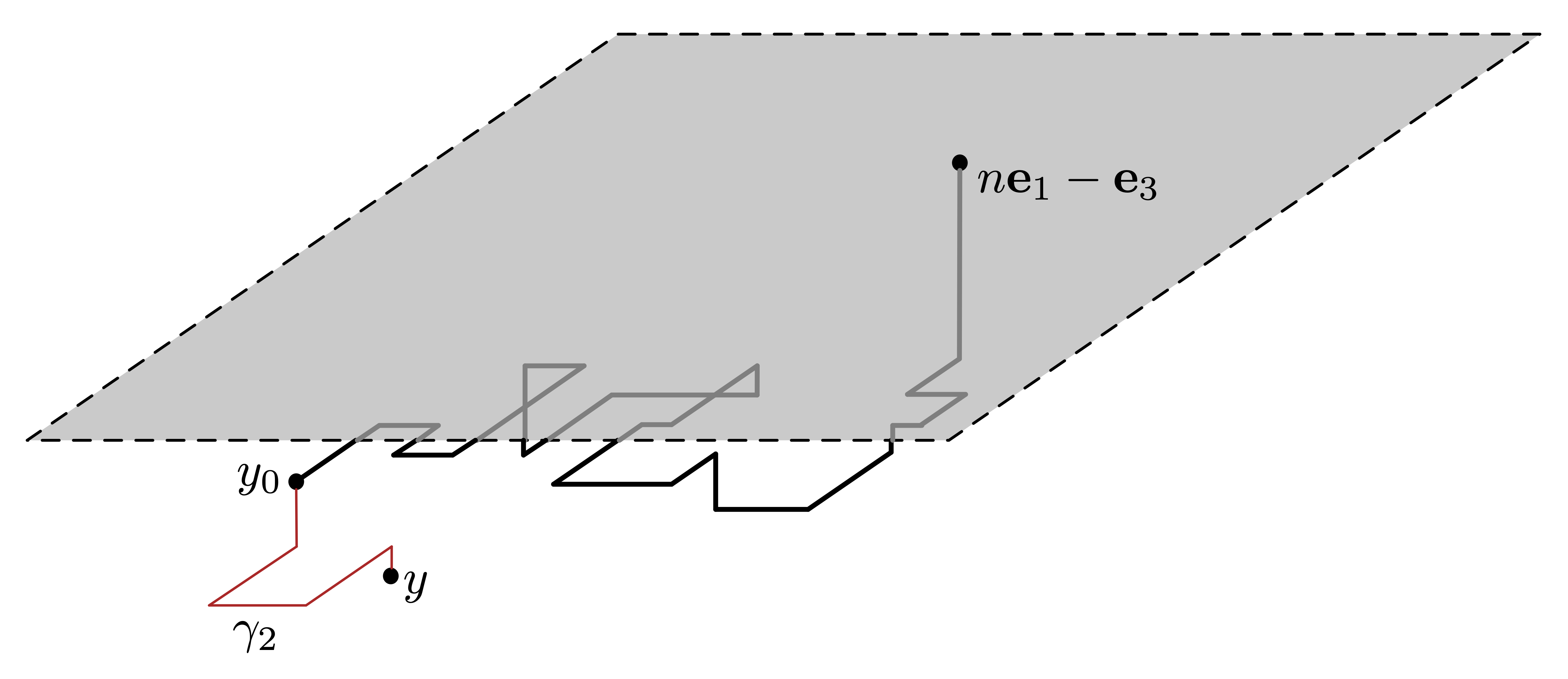}
}\\
\subfloat[Joining the two paths with an open edge between $n\mathbf{e}_1$ and $n\mathbf{e}_1-\mathbf{e}_3$]{
\label{3d_fig_c}
\includegraphics[width=0.9\textwidth]{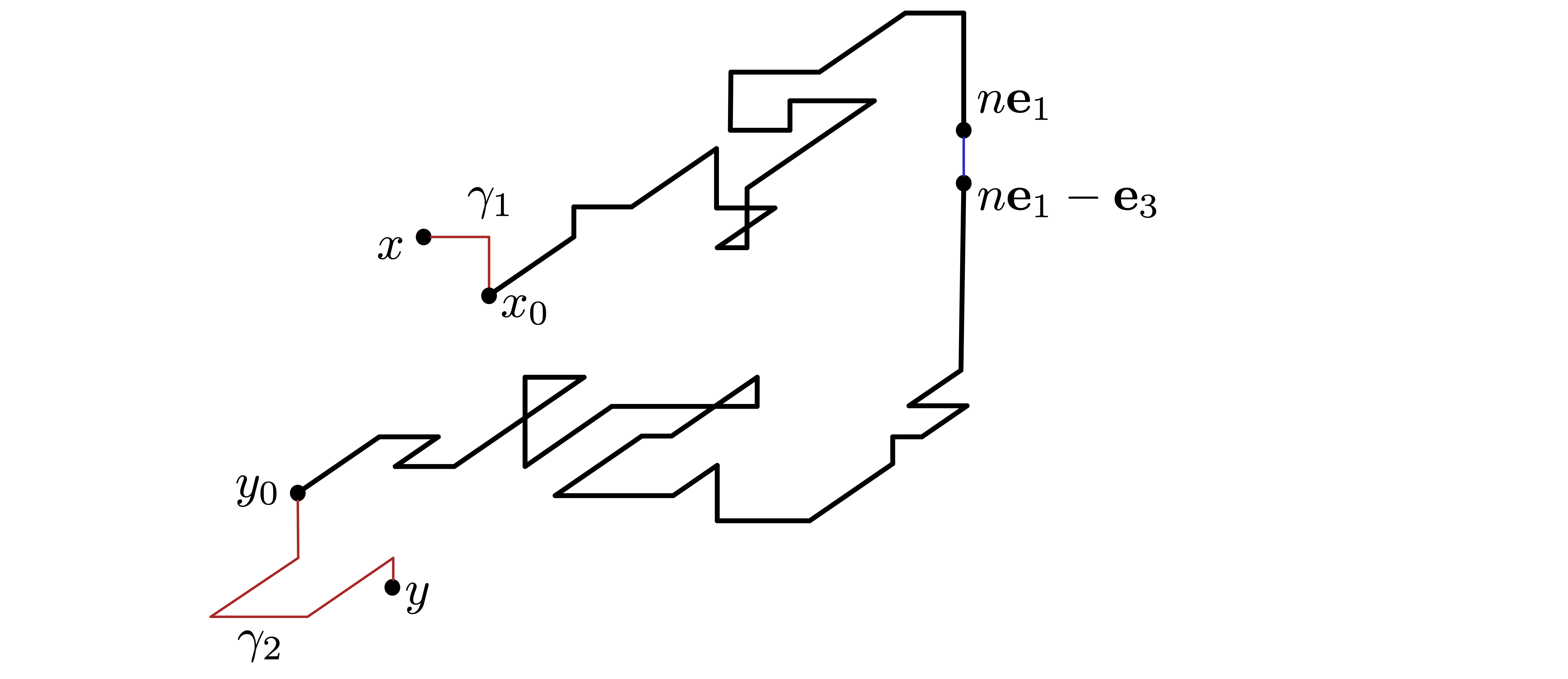}
}
\caption{Path construction in the proof of Proposition \ref{negative_thm_2}, $d\geq3$.
Diagram (a) takes place in $S_+(k)$, and (b) in $S_-(k)$.
Each slice admits an infinite open cluster that is used to create long paths starting at $x$ and $y$, eventually consisting of all zero-weight edges.
Since $S_+(k)\cap S_-(k)=\varnothing$, these paths remain disjoint until they are strategically connected in (c) at adjacent points on the boundaries.}
\label{3d_fig}
\end{figure}

Finally, we can handle the critical case $F(0)=p_\cc(\Z^d)$ exactly as we did in the two-dimensional case, with \eqref{supercritical_tilde_2} replacing \eqref{supercritical_tilde}.
\end{proof}

\chapter{Construction of the Constraint Set} \label{constraint_sec}
The goal of this section is to construct the constraint set $\RR^\bxi$ found in the variational formula \eqref{var_form_eq}.
While that particular set is defined using Lebesgue measure $\Lambda$, here we pursue a slightly more general construction.
Namely, in this section we allow $\Lambda$ to be replaced by any Radon probability measure $\pp$ on $[0,1]$. \label{radon_meas_def}
We maintain this level of generality in order to later prove Theorem \ref{weak_to_strong_thm}, although it will not make any difference in the arguments.

\section{Topological preliminaries} \label{topological_preliminaries}
Recall that $\Sigma$ denotes the set of finite, positive \mbox{Borel} measures on $[0,1]$ having total mass at least $1$.
Let $\hat\Sigma\subset\Sigma$ be the subset consisting of probability measures, i.e.~with mass exactly $1$.
A standard metric on $\hat\Sigma$ is the \textit{Wasserstein distance}, 
\eeq{ \label{wasserstein_def} 
 W(\hat\sigma_1,\hat\sigma_2) \coloneqq \inf_{P\in\hat\Sigma(\hat\sigma_1,\hat\sigma_2)}\int_{[0,1]^2} |s - t|\, P(\dd s,\dd t), \quad \hat\sigma_1,\hat\sigma_2\in\hat\Sigma,
}
where $\hat\Sigma(\hat\sigma_1,\hat\sigma_2)$ is the set of probability measures $P$ on $[0,1]^2$ having $\hat\sigma_1$ and $\hat\sigma_2$ as marginals.
A well-known fact is that if $\hat\sigma_1$ and $\hat\sigma_2$ have distribution functions $G_1$ and $G_2$, then
\eeq{ \label{W_explicit}
 W(\hat\sigma_1,\hat\sigma_2) = \int_0^1 |G_1^{-1}(u)-G_2^{-1}(u)|\, \dd u,
}
where the inverse functions are given by \eqref{inverse_cdf}.
Also, 
\eq{
\lim_{k\to\infty} W(\hat\sigma_k,\hat\sigma)=0 \quad \iff \quad \hat\sigma_k\Rightarrow \sigma.
}
That is, $ W$ metrizes the topology of weak convergence on $\hat\Sigma$; see \cite[Thm.~6.9]{villani09}.
The key feature here is that $[0,1]$ is compact.

It will sometimes be more convenient to control the Wasserstein distance from above by \textit{total variation distance}. 
The following, standard bound can be found in \cite[Thm.~6.15]{villani09}:
\eeq{ \label{w_tv_1}
 W(\hat\sigma_1,\hat\sigma_2)
\leq \TV(\hat\sigma_1,\hat\sigma_2) &\coloneqq \sup_{\text{measurable }B\subset[0,1]} |\hat\sigma_1(B)-\hat\sigma_2(B)|.
}
Also, it is a standard exercise that if $\hat\sigma_1,\hat\sigma_2\in\hat\Sigma$ are each supported on $\{u_1,\dots,u_L\}$, then
\eeq{ \label{w_tv_2}
\TV(\hat\sigma_1,\hat\sigma_2)
&=\frac{1}{2}\sum_{\ell=1}^L |\hat\sigma_1(\{u_\ell\}) - \hat\sigma_2(\{u_\ell\})|.
}
For this last quantity, we have the following bound.

\begin{lemma} \label{tv_bound_lemma}
Let $v_1,\dots,v_{I},w_1,\dots,w_{J},x_1,\dots,x_K\in[0,1]$, and consider the probability measures
\eq{
\hat\sigma_1 = \frac{1}{I+K}\bigg(\sum_{i=1}^{I} \delta_{v_i}+\sum_{k=1}^K \delta_{x_k}\bigg), \quad
\hat\sigma_2 = \frac{1}{J+K}\bigg(\sum_{j=1}^{J} \delta_{w_j}+\sum_{k=1}^K \delta_{x_k}\bigg).
}
We have
\eq{
2\TV(\hat\sigma_1,\hat\sigma_2) \leq K\Big|\frac{1}{I+K}-\frac{1}{J+K}\Big| + \frac{I}{I+K} + \frac{J}{J+K}.
}
\end{lemma}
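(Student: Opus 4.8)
The plan is to reduce the claim to the explicit formula \eqref{w_tv_2} for the total variation distance between two finitely supported probability measures and then apply the triangle inequality term by term; there is no serious obstacle, only a little bookkeeping with multiplicities.

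First I would let $\{u_1,\dots,u_L\}$ be the set of \emph{distinct} values occurring among $v_1,\dots,v_I$, $w_1,\dots,w_J$, $x_1,\dots,x_K$, so that both $\hat\sigma_1$ and $\hat\sigma_2$ are supported on this common finite set (this is the reason to pass to the $u_\ell$ rather than work directly with the possibly-repeated lists). For each $\ell$ introduce the nonnegative integer multiplicities $a_\ell = |\{i : v_i = u_\ell\}|$, $b_\ell = |\{j : w_j = u_\ell\}|$, $c_\ell = |\{k : x_k = u_\ell\}|$, which satisfy $\sum_\ell a_\ell = I$, $\sum_\ell b_\ell = J$, $\sum_\ell c_\ell = K$. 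Then by construction $\hat\sigma_1(\{u_\ell\}) = (a_\ell+c_\ell)/(I+K)$ and $\hat\sigma_2(\{u_\ell\}) = (b_\ell+c_\ell)/(J+K)$, hence
$$\hat\sigma_1(\{u_\ell\}) - \hat\sigma_2(\{u_\ell\}) = \frac{a_\ell}{I+K} - \frac{b_\ell}{J+K} + c_\ell\Bigl(\frac{1}{I+K} - \frac{1}{J+K}\Bigr).$$

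Next I would invoke \eqref{w_tv_2} to write $2\TV(\hat\sigma_1,\hat\sigma_2) = \sum_{\ell=1}^L \bigl|\hat\sigma_1(\{u_\ell\}) - \hat\sigma_2(\{u_\ell\})\bigr|$, substitute the identity above, and apply the triangle inequality inside each summand, separating the three groups of terms. Since $a_\ell,b_\ell,c_\ell \ge 0$, summing over $\ell$ gives
$$2\TV(\hat\sigma_1,\hat\sigma_2) \le \frac{1}{I+K}\sum_\ell a_\ell + \frac{1}{J+K}\sum_\ell b_\ell + \Bigl|\frac{1}{I+K} - \frac{1}{J+K}\Bigr|\sum_\ell c_\ell = \frac{I}{I+K} + \frac{J}{J+K} + K\Bigl|\frac{1}{I+K} - \frac{1}{J+K}\Bigr|,$$
which is exactly the asserted bound.

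The only points requiring any care are: applying \eqref{w_tv_2} to a genuinely common finite support set (handled by passing to the distinct values $u_\ell$ and tracking multiplicities), and the trivial degenerate cases — e.g. $K=0$, which forces $I,J\ge 1$ and makes the right-hand side equal $2\ge 2\TV$ — but these are automatically covered by the same computation provided $\hat\sigma_1$ and $\hat\sigma_2$ are genuine probability measures, i.e. $I+K\ge 1$ and $J+K\ge 1$.
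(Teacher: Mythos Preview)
Your proof is correct and takes essentially the same approach as the paper: both pass to the finite set of distinct values, invoke \eqref{w_tv_2}, and apply the triangle inequality term by term. The only cosmetic difference is that you package the counts as multiplicities $a_\ell,b_\ell,c_\ell$ whereas the paper writes them as sums of indicator functions.
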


\begin{proof}
Let $u_1,\dots,u_L$ be the distinct elements among the concatenated list $v_1,\dots,v_{I}$, $w_1,\dots,w_{J}$, $x_1,\dots,x_K$.
Using \eqref{w_tv_2}, we have
\eq{ 
&2\TV(\hat\sigma_1,\hat\sigma_2) 
= \sum_{\ell=1}^L\bigg|\frac{1}{I+K}\bigg(\sum_{i=1}^I \one_{\{v_i=u_\ell\}}
+\sum_{k=1}^K\one_{\{x_k=u_\ell\}} \bigg) \\
&\hphantom{2\TV(\hat\sigma^1,\hat\sigma^2)= \sum_{\ell=1}^L\bigg|}- \frac{1}{J+K}\bigg(\sum_{j=1}^J\one_{\{w_j=u_\ell\}}
-\sum_{k=1}^K\one_{\{x_k=u_\ell\}}\bigg)\bigg| \\
&\leq \sum_{\ell=1}^L\bigg(\sum_{k=1}^K\Big|\frac{\one_{\{x_k=u_\ell\}}}{I+K}-\frac{\one_{\{x_k=u_\ell\}}}{J+K}\Big| + \sum_{i=1}^I \frac{\one_{\{v_i=u_\ell\}}}{I+K} + \sum_{j=1}^J \frac{\one_{\{w_j=u_\ell\}}}{J+K}\bigg)\\
&= \Big(K\Big|\frac{1}{I+K}-\frac{1}{J+K}\Big| + \frac{I}{I+K} + \frac{J}{J+K}\Big). 
}
\end{proof}

We can extend $ W$ to all of $\Sigma$ by defining
\eq{
W(\sigma_1,\sigma_2) \coloneqq |\langle 1,\sigma_1-\sigma_2\rangle| +  W(\hat\sigma_1,\hat\sigma_2), \quad \sigma_1,\sigma_2\in\Sigma.
}
It is easy to see that $W$ is a valid metric.
For $M\geq1$, let us define
\eeq{ \label{Sigma_M_def}
\Sigma_M \coloneqq \{\sigma\in\Sigma :\, \langle 1,\sigma\rangle\leq M\}.
}

\begin{lemma} \label{compactness_lemma}
For every $M\geq 1$, $(\Sigma_M,W)$ is compact.
\end{lemma}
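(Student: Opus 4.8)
The plan is to realize $(\Sigma_M,W)$ as an isometric copy of a product of two standard compact metric spaces. Consider the map $\Phi\colon\Sigma_M\to[1,M]\times\hat\Sigma$ given by $\Phi(\sigma)=(\langle 1,\sigma\rangle,\hat\sigma)$, where $\hat\sigma=\sigma/\langle 1,\sigma\rangle$ is well-defined because every $\sigma\in\Sigma_M$ has mass at least $1$. Its inverse is $(m,\hat\sigma)\mapsto m\hat\sigma$, which indeed lies in $\Sigma_M$ precisely because $m\in[1,M]$; thus $\Phi$ is a bijection. By the very definition $W(\sigma_1,\sigma_2)=|\langle 1,\sigma_1-\sigma_2\rangle|+W(\hat\sigma_1,\hat\sigma_2)$, if we equip $[1,M]\times\hat\Sigma$ with the $\ell^1$-sum of the Euclidean metric on $[1,M]$ and the Wasserstein metric $W$ on $\hat\Sigma$, then $\Phi$ is an isometry onto this product.

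It therefore suffices to check that both factors are compact. The interval $[1,M]$ is compact in the Euclidean metric. For $(\hat\Sigma,W)$: since $[0,1]$ is compact, the space of Borel probability measures on $[0,1]$ is weak-$*$ compact (Prokhorov's theorem, or equivalently Banach--Alaoglu applied to $C([0,1])^*$ together with the Riesz representation theorem), and---as recalled before the statement, via \cite[Thm.~6.9]{villani09}---the metric $W$ metrizes the topology of weak convergence on $\hat\Sigma$. Hence $(\hat\Sigma,W)$ is a compact metric space. A finite product of compact metric spaces is compact, and the $\ell^1$-sum metric induces the product topology, so $[1,M]\times\hat\Sigma$ is compact, and so is its isometric copy $\Sigma_M$.

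Alternatively, one may argue sequentially, which avoids any appeal to general product facts. Given $(\sigma_k)_{k\geq1}$ in $\Sigma_M$, pass to a subsequence along which $m_k:=\langle 1,\sigma_k\rangle$ converges to some $m\in[1,M]$ and, by compactness of $\hat\Sigma$, $\hat\sigma_k$ converges weakly to some $\hat\sigma\in\hat\Sigma$. Then $W(\sigma_k,m\hat\sigma)=|m_k-m|+W(\hat\sigma_k,\hat\sigma)\to 0$, and $m\hat\sigma\in\Sigma_M$ since $m\in[1,M]$; hence $\Sigma_M$ is sequentially compact, thus compact as it is a metric space. I do not anticipate a real obstacle here: the only points needing care are the observation that $W$ restricted to $\Sigma_M$ is exactly the product metric under $\Phi$ (immediate from the definition of $W$ on $\Sigma$) and the invocation of the standard fact that the probability measures on a compact metric space form a weak-$*$ compact set metrized by $W$.
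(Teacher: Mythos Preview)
Your proposal is correct, and the sequential argument you give as the alternative is essentially identical to the paper's own proof: pass to a subsequence along which $\langle 1,\sigma_k\rangle\to m\in[1,M]$, then to a further subsequence along which $\hat\sigma_k\to\hat\sigma$ in $(\hat\Sigma,W)$, and conclude $\sigma_k\to m\hat\sigma$. Your first presentation via the isometry $\Phi\colon\Sigma_M\to[1,M]\times\hat\Sigma$ is just a structural repackaging of the same idea.
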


\begin{proof}
Because $[0,1]$ is compact, the space $(\hat\Sigma, W)$ is also compact (see e.g. \cite[Rmk.~6.19]{villani09}).
Now consider any sequence $(\sigma_k)_{k\geq1}$ in $\Sigma_M$.
By passing to a subsequence, we may assume that $\langle 1,\sigma_k\rangle$ converges to some $m\in[1,M]$.
By passing to a further subsequence, we may also assume that $\hat\sigma_k$ converges to some $\hat\sigma\in\hat\Sigma$ under $ W$.
It is then clear that $\sigma_k$ converges to $m\hat\sigma$ under $W$.
The (sequential) compactness of $(\Sigma_M,W)$ has been verified.
\end{proof}

We close this preliminary section by considering the space of nonempty, closed subsets of $\Sigma$, which we denote by $\KK(\Sigma)$. \label{KK_def}
This space comes equipped with the usual \textit{Hausdorff metric}, 
\eeq{ \label{hausdorff_def} 
\HH(\AA_1,\AA_2) &\coloneqq \max\Big\{\sup_{\sigma\in \AA_1}W(\sigma,\AA_2),\sup_{\sigma\in \AA_2} W(\sigma,\AA_1)\Big\},
}
where
\eq{
W(\sigma,\AA) \coloneqq \inf_{\sigma'\in \AA_1} W(\sigma,\sigma'), \quad \sigma\in\Sigma,\, \AA \subset \Sigma.
}
For any subset $\Sigma_*\subset\Sigma$, we can restrict $\HH$ to $\KK(\Sigma_*)$, the space of nonempty, (relatively) closed subsets $\AA\subset\Sigma_*$.
In any case, by \cite[Lem.~3.74]{aliprantis-border06} we have
\eeq{ \label{hausdorff_reform}
\HH(\AA_1,\AA_2) = \sup_{\sigma\in\Sigma_*}|W(\sigma,\AA_1)-W(\sigma,\AA_2)|, \quad \AA_1,\AA_2\in\KK(\Sigma_*)
}
When $\Sigma_*=\Sigma_M$, Lemma \ref{compactness_lemma} and \cite[Thm.~3.85(3)]{aliprantis-border06} together imply that $(\KK(\Sigma_M),\HH)$ is compact. 
In addition, it is guaranteed by \cite[Lem.~3.76(3)]{aliprantis-border06} that for any pair $\AA_1,\AA_2\in\KK(\Sigma_M)$, there exist $\sigma_1\in \AA_1$ and $\sigma_2\in \AA_2$ such that $\HH(\AA_1,\AA_2) = W(\sigma_1,\sigma_2)$.

\section{Definition of constraint set}
Let $(U_e)_{e\in E(\Z^d)}$ be a collection of i.i.d. $[0,1]$-valued random variables \label{U_e_def_2}
supported on the complete probability space $(\Omega,\FF,\P)$. 
Assume the law of $U_e$ is given by $\pp$.
We associate to each nonempty path $\gamma$ the following element of $\Sigma$:
\eq{ 
\sigma_\gamma \coloneqq \frac{1}{\|x-y\|_2}\sum_{e\in\gamma}\delta_{U_e}, \quad \gamma\in\PP(x,y).
}
For $n\in[\sqrt{d},\infty)$ and $\eps>0$, define the random set 
\eeq{ \label{prelimit_to_R} 
\RR^{\bxi,\eps}_n \coloneqq \bigcup_{\substack{\zetab\in\S^{d-1}:\, \|\zetab-\bxi\|_2\leq \eps}}\{\sigma_\gamma :\, \gamma\in\PP(0,n\zetab)\}.
}
Then the constraint set we desire for our variational formula is given by
\eeq{ \label{R_infty_def}
\RR^\bxi_\infty \coloneqq \Big\{\sigma\in\Sigma :\, \limsup_{\eps\searrow0}\liminf_{n\to\infty} W(\sigma,\RR^{\bxi,\eps}_n) = 0\Big\}.
}
We can also define directionless versions of these sets: 
\eeq{  \label{directionless_defs} 
\RR_n &\coloneqq \bigcup_{\bxi\in\S^{d-1}} \{\sigma_\gamma:\, \gamma\in\PP(0,n\bxi)\}, \\ 
\RR_\infty &\coloneqq \Big\{\sigma\in\Sigma:\, \liminf_{n\to\infty} W(\sigma,\RR_n) = 0\Big\}.
}
An alternative description of $\RR_\infty^\bxi$ and $\RR_\infty$ can be given as follows.

\begin{lemma} \label{alternative_R_def}
The following statements always hold.
\begin{enumerate}[label=\textup{(\alph*)}]

\item \label{alternative_R_def_a}
For any $\sigma_*\in\Sigma$ and $\bxi_*\in\S^{d-1}$, we have $\sigma_*\in\RR^{\bxi_*}_\infty$ if and only if there exists a sequence $(x_k)_{k\geq1}$ in $\Z^d$, together with $\gamma_k\in\PP(0,x_k)$, such that
\eeq{ \label{W_2_0}
 \lim_{k\to\infty} \|x_k\|_2 = \infty,\quad \lim_{k\to\infty} \frac{x_k}{\|x_k\|_2} = \bxi_*, \quad \text{and} \quad \lim_{k\to\infty} W(\sigma_*,\sigma_{\gamma_{k}}) = 0.
}

\item \label{alternative_R_def_b}
We have $\RR_\infty = \bigcup_{\bxi\in\S^{d-1}}\RR_\infty^{\bxi}$.

\end{enumerate}
\end{lemma}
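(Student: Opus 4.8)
The plan is to prove both parts directly from the definitions \eqref{R_infty_def} and \eqref{directionless_defs}, using the compactness established in Lemma \ref{compactness_lemma} together with two elementary facts about the prelimit sets: (i) any $\sigma_\gamma$ with $\gamma\in\PP(0,n\zetab)$ lies in $\RR_n^{\bxi,\eps}$ whenever $\|\zetab-\bxi\|_2\le\eps$, and (ii) the total mass $\langle 1,\sigma_\gamma\rangle = |\gamma|/\|x-y\|_2$ of such a measure, once we know $\sigma_\gamma$ is $W$-close to a fixed $\sigma_*\in\Sigma$, is bounded by a deterministic constant $M = M(\sigma_*)$, so that all the relevant measures live in the compact space $\Sigma_M$ of \eqref{Sigma_M_def}.

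For part \ref{alternative_R_def_a}, I would argue both implications. For the ``if'' direction: given a sequence $x_k$ with $\|x_k\|_2\to\infty$, $x_k/\|x_k\|_2\to\bxi_*$, and $W(\sigma_*,\sigma_{\gamma_k})\to0$, fix $\eps>0$; then for all $k$ large we have $\|x_k/\|x_k\|_2 - \bxi_*\|_2\le\eps$, so writing $n_k = \|x_k\|_2$ and noting $x_k = n_k\zetab_k$ for $\zetab_k = x_k/n_k\in\S^{d-1}$ (here one should be slightly careful that $[n_k\zetab_k]$ need not equal $x_k$, but $\|x_k - [n_k\zetab_k]\|_\infty\le\sqrt d$ which is negligible after rescaling — alternatively just take $n_k$ so that $x_k = [n_k\zetab_k]$), we get $\sigma_{\gamma_k}\in\RR_{n_k}^{\bxi_*,\eps}$, hence $\liminf_{n\to\infty} W(\sigma_*,\RR_n^{\bxi_*,\eps}) = 0$; letting $\eps\searrow0$ gives $\sigma_*\in\RR_\infty^{\bxi_*}$. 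For the ``only if'' direction: from $\limsup_{\eps\searrow0}\liminf_{n\to\infty}W(\sigma_*,\RR_n^{\bxi_*,\eps}) = 0$, a diagonal extraction produces, for each $m\ge1$, a radius $n_m\to\infty$ and a path $\gamma_m\in\PP(0,x_m)$ with $x_m/\|x_m\|_2$ within $1/m$ of $\bxi_*$ and $W(\sigma_*,\sigma_{\gamma_m})\le1/m$; this is exactly \eqref{W_2_0}. The only point requiring care is ensuring $\|x_m\|_2\to\infty$, which follows because $x_m = [n_m\zetab_m]$ for some $\zetab_m\in\S^{d-1}$ with $n_m\to\infty$.

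For part \ref{alternative_R_def_b}, the inclusion $\bigcup_{\bxi}\RR_\infty^\bxi\subset\RR_\infty$ is immediate: if $\sigma_*\in\RR_\infty^{\bxi_*}$, then by part \ref{alternative_R_def_a} there is a sequence $\gamma_k\in\PP(0,x_k)$ with $\|x_k\|_2\to\infty$ and $W(\sigma_*,\sigma_{\gamma_k})\to0$; since $\sigma_{\gamma_k}\in\RR_{\|x_k\|_2}$, we get $\liminf_{n\to\infty}W(\sigma_*,\RR_n) = 0$, i.e.\ $\sigma_*\in\RR_\infty$. For the reverse inclusion, suppose $\sigma_*\in\RR_\infty$, so there are $n_k\to\infty$ and $\gamma_k\in\PP(0,x_k)$ with $\|x_k\|_2 = n_k$ and $W(\sigma_*,\sigma_{\gamma_k})\to0$. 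The unit vectors $\zetab_k \coloneqq x_k/n_k$ lie in the compact sphere $\S^{d-1}$, so after passing to a subsequence $\zetab_k\to\bxi_*$ for some $\bxi_*\in\S^{d-1}$. This subsequence now satisfies all three conditions in \eqref{W_2_0}, so part \ref{alternative_R_def_a} gives $\sigma_*\in\RR_\infty^{\bxi_*}\subset\bigcup_{\bxi}\RR_\infty^\bxi$.

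The main (and really only) obstacle is purely bookkeeping: reconciling the two slightly different ways direction is encoded — the $\eps$-neighborhood formulation with $[n\zetab]$ in \eqref{prelimit_to_R}--\eqref{R_infty_def} versus the ``$x_k/\|x_k\|_2\to\bxi$'' formulation in \eqref{W_2_0} — and absorbing the $O(\sqrt d)$ discrepancy between $x_k$ and $[n_k\zetab_k]$ when rescaling. Neither affects the limits, since $\sqrt d/n_k\to 0$ and $W$ is continuous in total mass, but it should be spelled out. Everything else is a routine diagonal-sequence argument, and no probabilistic input is needed — consistent with the lemma's claim that the statements ``always hold.''
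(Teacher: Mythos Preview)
Your proposal is correct and follows essentially the same approach as the paper's proof: a direct verification from the definitions, with a diagonal extraction for the ``only if'' direction of \ref{alternative_R_def_a} and compactness of $\S^{d-1}$ for the reverse inclusion in \ref{alternative_R_def_b}. One small remark: the compactness of $\Sigma_M$ from Lemma \ref{compactness_lemma} and the total-mass bound you mention in your plan are not actually needed anywhere in this argument (and the paper does not invoke them here); the only compactness used is that of the sphere, exactly as you do in part \ref{alternative_R_def_b}.
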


\begin{proof}
First we prove \ref{alternative_R_def_a}.
Assume \eqref{W_2_0} holds.
The second limit in \eqref{W_2_0} shows that for every $\eps>0$, the measure $\sigma_{\gamma_k}$ belongs to $\RR_{\|x_k\|_2}^{\bxi_*,\eps}$ for all large $k$.
Therefore, the first and third limits give $\liminf_{n\to\infty} W(\sigma_*,\RR_n^{\bxi_*,\eps})=0$ for every $\eps>0$, and so $\sigma_*\in\RR^{\bxi_*}_\infty$.

Now assume $\sigma_*\in\RR^{\bxi_*}_\infty$.
We wish to construct $(x_k)_{k\geq1}$ for which \eqref{W_2_0} holds.
Given any $k\geq1$, because $\sigma_*\in\RR^{\bxi_*}_\infty$, we can choose $\eps\in(0,k^{-1}]$ and $n_k\geq k+\sqrt{d}$ such that $W(\sigma_*,\RR^{{\bxi_*},\eps}_{n_k}) \leq k^{-1}$.
This permits us to take $x_k =  [n_k\zetab_k]$ and $\gamma_k \in \PP(0,x_k)$ such that $W(\sigma_*,\sigma_{\gamma_k}) \leq k^{-1}$,
where $\zetab_k$ is some unit vector satisfying $\|\zetab_k-{\bxi_*}\|_2\leq\eps\leq k^{-1}$.
It follows that
\eq{
\|x_k\|_2 = \|[n_k\zetab_k]\|_2 \geq \|n_k\zetab_k\|_2 - \sqrt{d} = n_k - \sqrt{d} \geq k,
}
and also that
\eq{
\Big\|\frac{x_k}{\|x_k\|_2}-{\bxi_*}\Big\|_2
&\leq \frac{\|x_k-n_k\zetab_k\|_2 + \|n_k\zetab_k - \|x_k\|_2\zetab_k\|_2}{\|x_k\|_2} + \|\zetab_k-\bxi_*\|_2 \\
&\leq \frac{2\sqrt{d}+1}{k}.
} 
Sending $k\to\infty$, we see that \eqref{W_2_0} holds.

Now we prove \ref{alternative_R_def_b}.
Since $\RR_n^{\bxi,\eps}\subset\RR_n$ for every $\bxi$ and $\eps$, it is clear that $\RR_\infty$ contains $\bigcup_{\bxi\in\S^{d-1}}\RR_\infty^\bxi$.
Conversely, let us consider any $\sigma\in\RR_\infty$.
By definition, there is a sequence $(x_k)_{k\geq1}$ in $\Z^d$, which tends to $\infty$, admitting $\gamma_k\in\PP(0,x_k)$ such that $W(\sigma,\sigma_{\gamma_k})\to0$ as $k\to\infty$.
By compactness of $\S^{d-1}$, we may pass to a subsequence so that $x_k/\|x_k\|_2$ converges to some $\bxi_*\in\S^{d-1}$.
It thus follows from part \ref{alternative_R_def_a} that $\sigma\in\RR_\infty^{\bxi_*}$.
Hence we also have $\RR_\infty \subset \bigcup_{\bxi\in\S^{d-1}}\RR_\infty^\bxi$, which completes the proof.
\end{proof}

The following result proves Theorem \hyperref[var_form_thm_c]{\ref*{var_form_thm}\ref*{var_form_thm_c}}.

\begin{thm} \label{deterministic_limits_thm}
There are deterministic sets $\RR=\RR(d,\pp)$ and $\RR^\bxi=\RR^\bxi(d,\pp)$, $\bxi\in\S^{d-1}$,
such that
\eq{
\P(\RR_\infty=\RR) = 1 \quad \text{and} \quad \P(\RR^\bxi_\infty=\RR^\bxi) = 1 \quad \text{for all $\bxi\in\S^{d-1}$}.
}
Furthermore, there is a constant 
$C = C(d,\pp)$ such that 
\eeq{ \label{abs_cont_quantitative_2}
\hat\sigma(B) \leq C(\log \pp(B)^{-1})^{-1} \quad \text{for all $\sigma\in\RR^\bxi$, measurable $B\subset[0,1]$}.
}
\end{thm}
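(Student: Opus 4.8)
The determinism of $\RR_\infty$ and $\RR_\infty^\bxi$ is a zero-one law argument: both sets are defined purely in terms of the i.i.d.\ family $(U_e)_{e\in E(\Z^d)}$, and the defining conditions in \eqref{R_infty_def} and \eqref{directionless_defs} are tail events (changing finitely many $U_e$'s cannot affect which limits of the form $\sigma_{\gamma_{n_k}}\Rightarrow\sigma$ are achievable, since one can reroute paths around a finite set of edges at a cost that vanishes after dividing by $\|x_k\|_2$). More carefully, one should argue that for each fixed $\sigma_*\in\Sigma$ and $\bxi_*\in\S^{d-1}$, the event $\{\sigma_*\in\RR_\infty^{\bxi_*}\}$ is measurable with respect to $\bigcap_{n}\sigma(U_e : \|e\|_\infty \ge n)$ — indeed, membership is characterized by Lemma~\ref{alternative_R_def}\ref{alternative_R_def_a} via sequences of paths escaping to infinity, and any such sequence can be taken to avoid any prescribed finite box after a translation that is negligible in the limit. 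By Kolmogorov's zero-one law this event has probability $0$ or $1$. Since $\RR_\infty^{\bxi_*}$ is a (random) closed subset of the compact metric space $(\Sigma_M,W)$ for a suitable deterministic $M$ — here one needs the a~priori mass bound $\langle 1,\sigma\rangle \le M$ for all $\sigma\in\RR_\infty^\bxi$, which follows from the same percolation input used below (paths achieving the right direction cannot be too long) — it is determined by countably many such events (e.g.\ those indexed by a countable dense subset of $\Sigma_M$ together with rational radii), so the whole set is almost surely equal to a deterministic closed set $\RR^\bxi$. The identity $\RR_\infty = \bigcup_\bxi \RR_\infty^\bxi$ from Lemma~\ref{alternative_R_def}\ref{alternative_R_def_b} then gives determinism of $\RR$ as well, with $\RR = \bigcup_\bxi \RR^\bxi$.

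\textbf{For the quantitative absolute-continuity bound \eqref{abs_cont_quantitative_2},} the key is a counting estimate on self-avoiding paths. Fix a measurable $B\subset[0,1]$ with $\pp(B) = q$, and for a path $\gamma$ let $|\gamma|_B = |\{e\in\gamma : U_e\in B\}|$. By Lemma~\ref{alternative_R_def}\ref{alternative_R_def_a}, any $\sigma\in\RR^\bxi$ arises as $W$-limit of $\sigma_{\gamma_k}$ with $\gamma_k\in\PP(0,x_k)$, $\|x_k\|_2\to\infty$; since the map $\sigma\mapsto\hat\sigma(B)$ is not weakly continuous, one must instead bound $\sigma_{\gamma_k}(B') $ for $B'$ open approximations and pass to the limit, or — more robustly — bound $\langle\one_B,\sigma\rangle$ directly by noting $\sigma(B) \le \liminf_k \sigma_{\gamma_k}(\bar B)$ is false, so the cleanest route is: for any closed $B$, $\sigma(B)\le \limsup_k \sigma_{\gamma_k}(B)$ by the portmanteau theorem for weak convergence of the (subprobability) measures, and general measurable $B$ are handled by inner/outer regularity of $\pp$ afterwards. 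Thus it suffices to show, for a deterministic constant $C=C(d,\pp)$,
\[
\P\Big(\exists\, x\in\Z^d,\ \gamma\in\PP(0,x),\ \|x\|_2\ge r,\ |\gamma|_B \ge \frac{C\|x\|_2}{\log q^{-1}}\Big)\ \xrightarrow[r\to\infty]{}\ 0,
\]
together with the deterministic length bound $|\gamma|\le M\|x\|_2$ on the relevant paths (Proposition~\ref{geo_length_cor} when $F(0)<p_\cc(\Z^d)$; but here we are in the abstract $\pp$-setting, so one instead uses a crude bound: paths of length $\ell$ starting at $0$ number at most $(2d)^\ell$, so restricting to $|\gamma|\le M\|x\|_2$ with $M$ chosen below loses nothing after the union bound if we also estimate longer paths).

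\textbf{The core estimate} is a first-moment / Chernoff bound over self-avoiding paths. There are at most $(2d-1)^\ell$ self-avoiding paths of length $\ell$ from $0$, and each has $\ge k$ edges with $U_e\in B$ with probability at most $\binom{\ell}{k}q^k$. Summing,
\[
\P\big(\exists\,\gamma\in\PP(0),\ |\gamma|=\ell,\ |\gamma|_B\ge k\big)\ \le\ (2d-1)^\ell \binom{\ell}{k} q^k\ \le\ (2d-1)^\ell\Big(\frac{e\ell}{k}\Big)^k q^k .
\]
Choosing $k = \lceil \theta\ell\rceil$ with $\theta = \theta(q)$ to be fixed, the right side is at most $\exp\big(\ell[\log(2d-1) + \theta\log(e/\theta) + \theta\log q]\big)$. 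We want the bracket negative, which holds once $\theta\log q^{-1} > \log(2d-1) + \theta\log(e/\theta)$; since $\log(e/\theta) = O(\log q^{-1})$ only mildly, the honest choice is $\theta = A/\log q^{-1}$ for a large constant $A = A(d)$: then $\theta\log q^{-1} = A$, while $\theta\log(e/\theta)\le \theta\log(e\log q^{-1}/A) = o(1)$ as $q\to 0$, so for $q$ below a threshold the bracket is negative, uniformly, with exponential rate $\ge c\ell$. For $q$ bounded away from $0$ the statement \eqref{abs_cont_quantitative_2} is vacuous up to adjusting $C$ (the right side exceeds $1$). A union bound over $\ell\ge r$ and over the at-most-$Cr^{d-1}$ endpoints $x$ with a given $\|x\|_\infty$ then gives summable-in-$r$ probabilities, hence by Borel--Cantelli, almost surely for all large $r$ no path $\gamma$ from $0$ with $|\gamma|\ge r$ has $|\gamma|_B \ge \theta|\gamma|$; combined with $|\gamma|\ge\|x\|_2$ this yields $\sigma_{\gamma_k}(B) = |\gamma_k|_B/\|x_k\|_2 \le \theta|\gamma_k|/\|x_k\|_2$, and one still needs $|\gamma_k| = O(\|x_k\|_2)$. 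In the abstract setting this last bound is not automatic, so one handles long paths separately: a path with $|\gamma|\ge M\|x\|_2$ for $M$ large has, on the same high-probability event, at most $\theta|\gamma|$ edges in $B$ — but that gives $\sigma_\gamma(B)\le \theta M$, not $\theta$. The resolution is that such paths are not needed: the portmanteau step only requires ONE sequence $\gamma_k$ realizing $\sigma$, and by a standard loop-erasure/shortcut argument one may always choose the realizing sequence with $|\gamma_k|\le M_0\|x_k\|_2$ for a deterministic $M_0 = M_0(d,\pp)$ — or simply replace the claimed constant: the uniform bound $|\gamma|_B\le\theta|\gamma|$ on ALL long paths, together with $\hat\sigma(B) = \lim |\gamma_k|_B/|\gamma_k| \le \theta = A/\log q^{-1}$ directly (note we normalize by $|\gamma_k|$, not $\|x_k\|_2$, for $\hat\sigma$!), gives exactly \eqref{abs_cont_quantitative_2} with $C = A$.

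\textbf{The main obstacle} is the interplay between the two normalizations and the lower-semicontinuity of $B\mapsto\sigma(B)$: weak convergence $\sigma_{\gamma_k}\Rightarrow\sigma$ on $[0,1]$ gives $\hat\sigma_{\gamma_k}\Rightarrow\hat\sigma$, and one must transfer the pathwise bound $|\gamma_k|_B\le\theta|\gamma_k|$ (a statement about $\hat\sigma_{\gamma_k}(B)$) to $\hat\sigma(B)$; this works cleanly only for closed $B$ via portmanteau ($\limsup_k \hat\sigma_{\gamma_k}(B)\ge\hat\sigma(B)$ is the wrong direction — one needs $\hat\sigma(B)\le\liminf\hat\sigma_{\gamma_k}(U)$ for open $U\supset B$), so the argument must be run with $B$ replaced by an open neighborhood $U$ with $\pp(U)$ close to $\pp(B)$ using regularity of the Radon measure $\pp$, and the resulting slack absorbed into $C$. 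Carrying out this regularization uniformly — so that the final constant $C$ does not depend on $B$ — is the delicate point, but it is routine given that $\pp$ is Radon and $t\mapsto 1/\log(1/t)$ is continuous and increasing near $0$.
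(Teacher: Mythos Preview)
Your argument for the quantitative bound \eqref{abs_cont_quantitative_2} is essentially the paper's approach (Lemma \ref{abs_cont_lemma}): a Chernoff bound on $\mathrm{Binomial}(\ell,q)$ combined with a union bound over the at most $(2d-1)^\ell$ self-avoiding paths of length $\ell$, then passage to the limit via an open superset of $B$ using outer regularity of $\pp$. Your eventual realization that the bound should be phrased for $\hat\sigma_\gamma(B)=|\gamma|_B/|\gamma|$ (not $\sigma_\gamma(B)$) is exactly right and is what dissolves the length issue.

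Your determinism argument, however, has two genuine gaps. First, there is \emph{no} a priori mass bound $\langle 1,\sigma\rangle\le M$ for $\sigma\in\RR_\infty^\bxi$: the paths in \eqref{prelimit_to_R} are arbitrary self-avoiding paths, not geodesics, so $|\gamma|/\|x\|_2$ is unbounded, and the ``percolation input'' you invoke (Proposition \ref{geo_length_cor}) controls only geodesic lengths. The paper circumvents this by writing $\RR_\infty^\bxi=\bigcup_M(\RR_\infty^\bxi\cap\Sigma_M)$ and showing that each intersection, now a random element of the \emph{compact} Hausdorff space $(\KK(\Sigma_M),\HH)$, is tail-measurable (Claims \ref{measurability_claim_1} and \ref{measurability_claim_2}); a zero-one law for such set-valued random variables (Lemma \ref{deterministic_lemma}) then gives an almost sure value for each $M$. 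Second, you deduce determinism of $\RR_\infty$ from $\RR_\infty=\bigcup_\bxi\RR_\infty^\bxi$, but this is an uncountable union: you only have $\P(\RR_\infty^\bxi=\RR^\bxi)=1$ for each \emph{fixed} $\bxi$, and the paper explicitly leaves open whether this holds simultaneously for all $\bxi$ (Remark \ref{direction_dependent_remark_1}). Instead the paper proves determinism of $\RR_\infty$ by running the same $\Sigma_M$-decomposition argument directly for the directionless sets.
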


\begin{remark} \label{direction_dependent_remark_1}
It follows from Lemma \hyperref[{alternative_R_def_b}]{\ref*{alternative_R_def}\ref*{alternative_R_def_b}} and Theorem \ref{deterministic_limits_thm} that $\RR$ contains $\bigcup_{\bxi\in\S^{d-1}}\RR^\bxi$.
It is natural to suspect that the two sets are in fact equal, or equivalently that $\P(\RR_\infty^\bxi=\RR^\bxi \text{ for all $\bxi\in\S^{d-1}$})=1$.
To prove this statement, it would suffice to show that the map $\S^{d-1}\to\KK(\Sigma)$ given by $\bxi\mapsto\RR_\infty^\bxi$ is almost surely continuous, but doing so is made difficult by the fact that all paths under consideration are self-avoiding.
\end{remark}

The proof of Theorem \ref{deterministic_limits_thm} will require the following general fact.

\begin{lemma} \label{deterministic_lemma}
Let $(\YY,\TTT)$ be a second-countable, Hausdorff topological space, whose \mbox{Borel} sigma-algebra is denoted by $\BB(\TTT)$. \label{borel_2}
Let $(X_i)_{i\geq1}$ be independent random variables on a complete probability space $(\Omega,\FF,\P)$, and let $Y\colon \Omega \to \YY$ be $(\FF,\BB(\TTT))$-measurable. 
If $Y$ is measurable with respect to the tail sigma-algebra associated to the $X_i$'s, then there is $y\in\YY$ such that $\P(Y=y)=1$.
\end{lemma}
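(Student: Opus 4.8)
The plan is to reduce the statement to the standard fact that a real random variable measurable with respect to a tail $\sigma$-algebra is almost surely constant (Kolmogorov's zero--one law), by exploiting second countability to probe $Y$ through a countable family of real-valued measurements. Fix a countable base $\{V_m\}_{m\geq1}$ for the topology $\TTT$. For each $m$, the indicator $\one_{\{Y\in V_m\}}$ is a $\{0,1\}$-valued random variable that is measurable with respect to the tail $\sigma$-algebra of $(X_i)_{i\geq1}$, since $Y$ itself is. By Kolmogorov's zero--one law, each such indicator is almost surely constant; that is, there is $\eps_m\in\{0,1\}$ with $\P(Y\in V_m)=\eps_m$, and on a probability-one event we have $\one_{\{Y\in V_m\}}=\eps_m$. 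Intersecting over the countably many $m$, there is a single probability-one event $\Omega_0$ on which $\one_{\{Y\in V_m\}}=\eps_m$ simultaneously for every $m\geq1$.

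Next I would pin down the value $y$. Let $\GG = \bigcap_{m:\,\eps_m=1} V_m$; this is an intersection of (possibly uncountably many, but indexed by a countable set) open sets containing $Y(\omega)$ for every $\omega\in\Omega_0$, so $\GG$ is nonempty. I claim $\GG$ is a single point. Indeed, suppose $y_1,y_2\in\GG$ with $y_1\neq y_2$. Since $(\YY,\TTT)$ is Hausdorff, there are disjoint open sets $O_1\ni y_1$, $O_2\ni y_2$, and since $\{V_m\}$ is a base, we may pick $V_{m_1}$ with $y_1\in V_{m_1}\subset O_1$. But then $Y(\omega)\in V_{m_1}$ for all $\omega\in\Omega_0$ (because $\eps_{m_1}=1$, as $y_1\in\GG$ forces $V_{m_1}$ to be one of the sets defining $\GG$ only if $\eps_{m_1}=1$; more directly, $\P(Y\in V_{m_1})\geq \P(Y=y_1\text{-ish})$—wait, this needs care). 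Let me instead argue: if $\eps_{m_1}=0$ then $y_1\notin\GG$ is false only if... Actually the clean route is: on $\Omega_0$, $Y(\omega)\in V_m$ iff $\eps_m=1$, so $Y(\omega)\in\bigcap_{\eps_m=1}V_m = \GG$ and $Y(\omega)\notin V_m$ whenever $\eps_m=0$. Hence $Y(\omega)$ lies in $\GG$ and in no $V_m$ with $\eps_m=0$. If $\GG$ contained two distinct points $y_1\neq y_2$, Hausdorffness and the base property give some $V_{m_1}\ni y_1$ with $y_2\notin V_{m_1}$; then $Y(\omega)\in\GG\subset$ (well, $\GG$ need not be inside $V_{m_1}$). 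The correct conclusion: since $Y(\omega)=:y$ is a fixed candidate, pick any $\omega_0\in\Omega_0$ and set $y:=Y(\omega_0)$; for any other $\omega\in\Omega_0$, if $Y(\omega)\neq y$, Hausdorff + base yields $V_{m}\ni y$ with $Y(\omega)\notin V_m$, so $\one_{\{Y\in V_m\}}$ takes both values $1$ (at $\omega_0$) and $0$ (at $\omega$) on $\Omega_0$, contradicting that it equals the constant $\eps_m$ there. Therefore $Y\equiv y$ on $\Omega_0$, i.e. $\P(Y=y)=1$.

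The one genuine subtlety — and the step I expect to require the most care — is measurability: one must check that each $\one_{\{Y\in V_m\}}$ really is tail-measurable and that Kolmogorov's zero--one law applies. This follows because $Y$ is assumed measurable with respect to the tail $\sigma$-algebra $\TT_\infty = \bigcap_{k\geq1}\sigma(X_k,X_{k+1},\dots)$, so $Y^{-1}(V_m)\in\TT_\infty$ for each Borel (in particular open) $V_m$; the zero--one law then gives $\P(Y^{-1}(V_m))\in\{0,1\}$ since the $X_i$ are independent and $(\Omega,\FF,\P)$ is complete (completeness ensures the relevant null sets are handled and that $\Omega_0\in\FF$). Everything else is elementary point-set topology, and no properties of the $X_i$ beyond independence are used.
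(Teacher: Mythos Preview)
Your proposal is correct and follows essentially the same approach as the paper: apply Kolmogorov's zero--one law to each event $\{Y\in V_m\}$ for a countable base $\{V_m\}$, intersect the resulting probability-one events, and use the Hausdorff property plus the base to conclude that $Y$ takes a single value on that intersection. The paper's write-up is more compact (it observes directly that $\bigcap_{i:\,Y\in\OOO_i}\OOO_i=\{Y\}$ and that the index set is almost surely constant), but the content is the same; your middle paragraph's false starts around the set $\GG$ should simply be deleted in favor of the clean ``pick $\omega_0$, set $y=Y(\omega_0)$, separate'' argument you land on.
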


\begin{proof}
By Kolmogorov's zero-one law, we have
\eeq{ \label{0_1_consequence}
\P(Y\in B) \in \{0,1\} \quad \text{for every $B\in\BB(\TTT)$}.
}
Let $\{\OOO_i\}_{i=1}^\infty\subset\TTT$ be a countable base for the topology $\TTT$.
It is trivial that
$Y \in \bigcap_{i :\, Y\in\OOO_i} \OOO_i$.
But because $\TTT$ is Hausdorff, and the $\OOO_i$'s form a base for $\TTT$, we in fact have
$\bigcap_{i :\, Y\in\OOO_i} \OOO_i = \{Y\}$.
By \eqref{0_1_consequence}, the index set in this expression has an almost sure value, and so $Y$ does as well.
\end{proof}

Before proceeding to the proof of Theorem \ref{deterministic_limits_thm}, we state one other very important lemma.
Recall that $\PP(x)$ is the set of all (finite) self-avoiding paths starting at $x\in\Z^d$.

\begin{lemma} \label{abs_cont_lemma}
For any $\beta>1$, let $\mathbf{C}_j = \beta\log j + \log(2d+1)$, $j\geq1$.
\begin{enumerate}[label=\textup{(\alph*)}]
\item \label{abs_cont_lemma_a}
There is $\eps_0 = \eps_0(d)\in(0,1)$ such that the following holds for all $B\subset[0,1]$ such that $\pp(B)<\eps_0$.
Almost surely we have
\begin{subequations} \label{abs_cont_ineq}
\begin{align}
\label{abs_cont_ineq_1}
\hat\sigma_\gamma(B)\leq 2\mathbf{C}_1(\log\pp(B)^{-1})^{-1}\quad \text{for all $\gamma\in\PP(0)$, $|\gamma|$ large enough},
\end{align}
as well as
\begin{align}
\label{abs_cont_ineq_2}
\hat\sigma(B)\leq 2\mathbf{C}_1(\log\pp(B)^{-1})^{-1} \quad \text{for all $\sigma\in\RR_\infty$}.
\end{align}
\end{subequations}
\item \label{abs_cont_lemma_b}
Let $(\eps_j)_{j\geq1}$ be any sequence of numbers in $(0,1)$ such that
\eeq{ \label{eps_sequence_assumption}
\lim_{j\to\infty} \frac{\log j}{\log \eps_j^{-1}} = 0.
}
If $B_j\subset[0,1]$, $j\geq1$, are such that $\pp(B_j) < \eps_j$, then almost surely 
\eeq{ \label{abs_cont_ineq_2_eq}
\hat\sigma_\gamma(B_j) \leq \mathbf{C}_j(\log \eps_j^{-1})^{-1} \quad \text{$\forall$ $\gamma\in\PP(0)$, $|\gamma|\geq1$, and $j$ large enough}.
}
\end{enumerate}
\end{lemma}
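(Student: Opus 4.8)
The plan is a standard union‑bound–and–Borel--Cantelli argument. The starting point is that for a fixed $\gamma\in\PP(0)$ and a fixed measurable $B\subset[0,1]$ the count $N_\gamma(B)\coloneqq|\{e\in\gamma:U_e\in B\}|$ is $\mathrm{Binomial}(|\gamma|,\pp(B))$, while the number of self‑avoiding paths from $0$ of length $\ell$ is at most $(2d+1)^\ell$ (this is the source of the $\log(2d+1)$ in $\mathbf{C}_j$). For part \ref{abs_cont_lemma_a}, writing $p=\pp(B)$ and $a=2\mathbf{C}_1(\log p^{-1})^{-1}$, I would combine a union bound over the paths of length $\ell$ with $\binom{\ell}{k}\le(e\ell/k)^k$ to obtain, for every integer $\ell\ge 1/a$,
\eq{ \P\big(\exists\,\gamma\in\PP(0),\ |\gamma|=\ell,\ \hat\sigma_\gamma(B)\ge a\big)\le\Big[(2d+1)\,(ep/a)^{a}\Big]^{\ell}. }
The key computation is $\log\!\big[(2d+1)(ep/a)^{a}\big]=\log(2d+1)+a(1+\log p-\log a)=-\log(2d+1)+a(1-\log a)$, where I used $a\log p=-2\mathbf{C}_1=-2\log(2d+1)$ and $\mathbf{C}_1=\log(2d+1)$; since $a\downarrow0$ and $a(1-\log a)\downarrow0$ as $p\downarrow0$, one can choose $\eps_0=\eps_0(d)\in(0,1)$ so that $\pp(B)<\eps_0$ forces the base to be at most $(2d+1)^{-1/2}<1$. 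Then $\sum_{\ell\ge L}[(2d+1)(ep/a)^{a}]^{\ell}\to0$ as $L\to\infty$, and because these events decrease in $L$, almost surely there is a random threshold beyond which no $\gamma\in\PP(0)$ violates the bound; this is \eqref{abs_cont_ineq_1}.

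To pass to \eqref{abs_cont_ineq_2} I would combine \eqref{abs_cont_ineq_1} with weak convergence. Given $\sigma\in\RR_\infty$, Lemma \ref{alternative_R_def}\ref{alternative_R_def_a} furnishes $\gamma_k\in\PP(0,x_k)$ with $\|x_k\|_2\to\infty$ and $W(\sigma,\sigma_{\gamma_k})\to0$; the first limit forces $|\gamma_k|=\|x_k\|_2\langle1,\sigma_{\gamma_k}\rangle\to\infty$, and the second gives $\hat\sigma_{\gamma_k}\Rightarrow\hat\sigma$. Using outer regularity of the Radon measure $\pp$, for each $m\ge1$ I would pick an open $B_m\supseteq B$ with $\pp(B_m)<\pp(B)+1/m<\eps_0$; on the (countable) intersection of the probability‑one events given by \eqref{abs_cont_ineq_1} for the sets $B_m$, the portmanteau inequality for open sets yields $\hat\sigma(B)\le\hat\sigma(B_m)\le\liminf_k\hat\sigma_{\gamma_k}(B_m)\le2\mathbf{C}_1\big(\log\pp(B_m)^{-1}\big)^{-1}$, and letting $m\to\infty$ gives \eqref{abs_cont_ineq_2} simultaneously for all of $\RR_\infty$.

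Part \ref{abs_cont_lemma_b} follows the same scheme, but now the union bound must be summed over $j$. Put $p_j=\pp(B_j)<\eps_j$ and $a_j=\mathbf{C}_j(\log\eps_j^{-1})^{-1}$; the hypothesis \eqref{eps_sequence_assumption} forces $\log\eps_j^{-1}\to\infty$, hence $a_j\to0$ (as $\mathbf{C}_j=O(\log j)$), and it also makes $\eps_j$ decay faster than every power of $j^{-1}$. After discarding the trivial range $a_j\ge1$, I would bound $\P(\exists\,\gamma\in\PP(0),\,|\gamma|\ge1,\,\hat\sigma_\gamma(B_j)>a_j)$ by splitting on the length. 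If $|\gamma|<1/a_j$, then $\hat\sigma_\gamma(B_j)>a_j$ forces $N_\gamma(B_j)\ge1$, and the offending edge lies in $\Bbf_{\lceil 1/a_j\rceil}(0)$, a box containing at most $C_d a_j^{-d}$ edges; so this part contributes at most $C_d a_j^{-d}\eps_j\le C_d(\log\eps_j^{-1})^d\,\e^{-\log\eps_j^{-1}}$, and writing $\e^{-\log\eps_j^{-1}}=\e^{-\frac12\log\eps_j^{-1}}\,\e^{-\frac12\log\eps_j^{-1}}$ (the first factor absorbing the polynomial in $\log\eps_j^{-1}$, the second being at most $j^{-(d+2)/2}$ once $\log\eps_j^{-1}\ge(d+2)\log j$) makes it summable in $j$. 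If $|\gamma|=\ell\ge1/a_j$, the same Chernoff and union bound give a contribution at most $\sum_{\ell\ge1}[(2d+1)(ep_j/a_j)^{a_j}]^{\ell}$, and here $\log\!\big[(2d+1)(ep_j/a_j)^{a_j}\big]=\log(2d+1)+a_j(1+\log p_j-\log a_j)<\log(2d+1)-\mathbf{C}_j+a_j(1-\log a_j)=-\beta\log j+a_j(1-\log a_j)$, which is $\le-\beta\log j+1$ for $j$ large, so the geometric sum is at most $2e\,j^{-\beta}$ — summable since $\beta>1$. Borel--Cantelli then yields \eqref{abs_cont_ineq_2_eq}.

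I expect the genuine difficulty to be purely the accounting of constants rather than anything conceptual. In part \ref{abs_cont_lemma_a} the factor $2$ in $a=2\mathbf{C}_1(\log\pp(B)^{-1})^{-1}$ is exactly what creates the margin $\tfrac12\log(2d+1)$ that must swallow the lower‑order term $a(1-\log a)$ once $\pp(B)<\eps_0(d)$; in part \ref{abs_cont_lemma_b} the growth $\mathbf{C}_j=\beta\log j+\log(2d+1)$ with $\beta>1$, together with the hypothesis $\log j=o(\log\eps_j^{-1})$, is precisely what renders both the short‑path and the long‑path probabilities summable over $j$. A minor additional point is that the derivation of \eqref{abs_cont_ineq_2} must accommodate the uncountably many measures in $\RR_\infty$ through a single countable family of open enlargements $B_m$ of $B$.
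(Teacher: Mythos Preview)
Your proposal is correct and follows essentially the same route as the paper: a Chernoff-type tail bound for $\mathrm{Binomial}(|\gamma|,\pp(B))$, a union bound over at most exponentially many self-avoiding paths of each length, and Borel--Cantelli. The constants work out exactly as you say.

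There are two small bookkeeping differences worth noting. First, the paper uses the relative-entropy form of Chernoff, $\P(\mathrm{Binomial}(\ell,\eps)>p\ell)\le e^{-\ell h_\eps(p)}$, rather than your cruder $\binom{\ell}{k}p^k\le(e\ell p/k)^k$; the advantage is that the relative-entropy bound is valid uniformly for \emph{all} $\ell\ge1$, so in part~\ref{abs_cont_lemma_b} the paper treats every length at once and avoids your short-path/long-path split (your split is correct, just unnecessary). Second, for the passage to \eqref{abs_cont_ineq_2} the paper fixes a single open $\OOO\supset B$ with $\pp(\OOO)<\eps$ for one $\eps$ chosen close enough to $\pp(B)$ that $\mathbf{C}_1(\log\eps^{-1})^{-1}\le 2\mathbf{C}_1(\log\pp(B)^{-1})^{-1}$, whereas you take a sequence $B_m$ and send $m\to\infty$; the two are equivalent, and your version arguably makes clearer that a single countable family of almost-sure events handles all of $\RR_\infty$ simultaneously.
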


\begin{proof}
Recall the relative entropy of Bernoulli($p$) with respect to Bernoulli($\eps$):
\eq{
h_\eps(p) \coloneqq (1-p)\log\frac{1-p}{1-\eps}+p\log\frac{p}{\eps} , \quad \eps,p\in(0,1).
}
Set $p_0 = 1 - \sqrt{2d/(2d+1)}$ so that for every $p\in(0,p_0]$ and $\eps\in(0,p]$, we have
\begin{subequations}  \label{p0_choice}
\eeq{\label{p0_choice_1}
0\geq(1-p)\log\frac{1-p}{1-\eps} \geq \log(1-p) \geq \log\sqrt{\frac{2d}{2d+1}}.
}
Next set $\eps_0$ sufficiently small that for all $\eps\in(0,\eps_0]$, we have both
\eeq{\label{e0_choice_1}
\eps \leq \mathbf{C}_1(\log\eps^{-1})^{-1} \quad \text{and} \quad
\mathbf{C}_1(\log\eps^{-1})^{-1}\leq p_0, 
}
\end{subequations}
as well as
\eeq{ \label{e0_choice_2}
\frac{\log\sqrt{(2d+1)2d}}{\log(2d+1)}&\leq\Big(1-\frac{\log \log \eps^{-1}}{\log\eps^{-1}}\Big).
}
We have made these choices so that the following holds for every $\eps\in(0,\eps_0]$:
\eeq{ \label{re_bound_0} 
h_{\eps}(\mathbf{C}_1(\log\eps^{-1})^{-1}) 
&\stackref{p0_choice}{\geq} \log\sqrt{\frac{2d}{2d+1}}+\mathbf{C}_1(\log\eps^{-1})^{-1}\log\frac{\mathbf{C}_1(\log\eps^{-1})^{-1}}{\eps} \\
&\stackrefp{p0_choice}{\geq} \log\sqrt{\frac{2d}{2d+1}} + \mathbf{C}_1\Big(1 - \frac{\log\log\eps^{-1}}{\log\eps^{-1}}\Big) \\
&\stackref{e0_choice_2}{\geq} \log(2d).
}

Now suppose that $B\subset[0,1]$ has mass $\pp(B)<\eps\leq\eps_0$.
Because $\pp$ is outer regular, we can find an open subset $\OOO$ of $[0,1]$ such that $B\subset\OOO$ and $\pp(\OOO)<\eps$.
Given any self-avoiding path $\gamma$ of length $\ell$, a standard Chernoff bound (e.g.~see \cite[pg.~24]{boucheron-lugosi-massart13}) yields the following inequality for all $p\geq\eps$:
\eeq{ \label{binomial_chernoff}
\P(\hat\sigma_\gamma(\OOO)>p)
&= \P(\mathrm{Binomial}(\ell,\pp(\OOO)) > p \ell) \\
&\leq \P(\mathrm{Binomial}(\ell,\eps) > p \ell) 
\leq \e^{-\ell h_{\eps}(p)}.
}
On $\Z^d$, there are at most $2d(2d-1)^{\ell-1}$ self-avoiding paths of length $\ell$ starting at $0$.
Taking a union bound over these paths and applying \eqref{binomial_chernoff}, we find
\eeq{ \label{bc_prep}
\P(\hat\sigma_\gamma(\OOO)>p \text{ for some $\gamma\in\PP(0)$, $|\gamma|=\ell$})
&\leq 2\e^{-\ell[h_\eps(p)-\log(2d-1)]}. 
}
It now follows from \eqref{re_bound_0} and \mbox{Borel}--\mbox{Cantelli} that with probability one, there exists $\ell_0$ large enough that
\eq{
\hat\sigma_\gamma(B)\leq\hat\sigma_\gamma(\OOO) \leq \mathbf{C}_1(\log\eps^{-1})^{-1} \quad \text{for all $\gamma\in\PP(0)$ with $|\gamma|\geq\ell_0$}.
}
Furthermore, as any $\sigma\in\RR_\infty$ arises as the weak limit of some sequence $(\sigma_{\gamma_{n_k}})_{k\geq1}$ with $\gamma_{n_k}\in\PP(0,x_k)$ and $\|x_k\|_2\to\infty$, we have
\eq{
\hat\sigma(B)\leq\hat\sigma(\OOO) \leq \liminf_{k\to\infty} \hat\sigma_{\gamma_{n_k}}(\OOO) \leq \mathbf{C}_1(\log\eps^{-1})^{-1} \quad \text{for all $\sigma\in\RR_\infty$}.
}
The proof of part \ref{abs_cont_lemma_a} is completed by our choosing $\eps$ sufficiently close to $\pp(B)$ that $\mathbf{C}_1(\log\eps^{-1})^{-1} \leq 2\mathbf{C}_1(\log\pp(B)^{-1})^{-1}$. 

For part \ref{abs_cont_lemma_b}, observe that \eqref{eps_sequence_assumption} allows us to replace \eqref{e0_choice_1} with
\eq{
\eps_j \leq \mathbf{C}_j(\log\eps_j^{-1})^{-1} \leq p_0
\quad \text{for all large $j$}.
}
We also replace \eqref{e0_choice_2} with the observation that
\eeq{ \label{e0_choice_3}
\frac{\log\sqrt{(2d+1)2d}}{\log(2d+1)}\vee\frac{\beta+1}{2\beta}&\leq\Big(1-\frac{\log \log \eps_j^{-1}}{\log\eps_j^{-1}}\Big) \quad \text{for all large $j$},
}
so that
\eeq{ \label{re_bound_1} 
h_{\eps_j}(\mathbf{C}_j(\log\eps_j^{-1})^{-1}) 
&\stackref{p0_choice}{\geq} \log\sqrt{\frac{2d}{2d+1}}+\mathbf{C}_j(\log\eps_j^{-1})^{-1}\log\frac{\mathbf{C}_j(\log\eps_j^{-1})^{-1}}{\eps_j} \\
&\stackrefp{p0_choice}{\geq} \log\sqrt{\frac{2d}{2d+1}} + \mathbf{C}_j\Big(1 - \frac{\log\log\eps_j^{-1}}{\log\eps_j^{-1}}\Big) \\
&\stackref{e0_choice_3}{\geq} \frac{\beta+1}{2}\log j + \log(2d).
}
We now have the following for all large $j$:
\eq{
&\P\big(\hat\sigma_\gamma(B_j)> \mathbf{C}_j(\log \eps_j^{-1})^{-1} \text{ for some $\gamma\in\PP(0)$, $|\gamma|\geq1$}\big) \\
&\stackref{bc_prep}{\leq} \sum_{\ell\geq1} 2\exp\big\{-\ell[h_{\eps_j}(\mathbf{C}_j(\log\eps_j^{-1})^{-1})-\log(2d-1)]\big\} \\
&\stackref{re_bound_1}{\leq}\sum_{\ell\geq1} 2\exp\Big\{-\ell\Big[\frac{\beta+1}{2}\log j + \log\frac{2d}{2d-1}\Big]\Big\} \\
&\stackrefp{re_bound_1}{=}2\frac{\exp\big\{-\frac{\beta+1}{2}\log j-\log\frac{2d}{2d-1}\big\}}{1-\exp\big\{-\frac{\beta+1}{2}\log j-\log\frac{2d}{2d-1}\Big\}} 
\leq \frac{4}{j^{(\beta+1)/2}}. 
}
Since $\beta>1$, we can apply \mbox{Borel}--\mbox{Cantelli} once more to obtain \eqref{abs_cont_ineq_2_eq}.
\end{proof}

\begin{proof}[Proof of Theorem \ref{deterministic_limits_thm}]
Our strategy is to reduce to the compact case from Lemma \ref{compactness_lemma}, since there all topological issues will be made easier.
Recalling the set $\Sigma_M\subset\Sigma$ defined in \eqref{Sigma_M_def}, we write
\eeq{\label{first_rewrite}
\RR^\bxi_\infty = \bigcup_{M=\lceil \sqrt{d}\:\rceil}^\infty (\RR^\bxi_\infty\cap\Sigma_M).
}

\renewcommand{\qedsymbol}{$\square$ (Claim)}

\begin{claim} \label{nonempty_claim}
For each $\bxi\in\S^{d-1}$ and $M\geq\sqrt{d}$, the set $\RR^\bxi_\infty\cap\Sigma_M$ is nonempty.
\end{claim}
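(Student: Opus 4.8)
The plan is to produce an explicit subsequential weak limit of empirical measures along near-optimal lattice paths toward $[n\bxi]$, while keeping all these measures inside a fixed compact subset of $\Sigma$ so that a limit point is guaranteed.

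First I would fix, for every integer $n$ large enough that $[n\bxi]\neq 0$, a shortest nearest-neighbor path $\gamma_n\in\PP(0,[n\bxi])$, so that $|\gamma_n|=\|[n\bxi]\|_1$. Using the elementary bound $\|z\|_2\le\|z\|_1\le\sqrt{d}\,\|z\|_2$, valid for all $z\in\Z^d$, the total mass of $\sigma_{\gamma_n}$ is $\langle 1,\sigma_{\gamma_n}\rangle=\|[n\bxi]\|_1/\|[n\bxi]\|_2\in[1,\sqrt{d}]$, so that $\sigma_{\gamma_n}\in\Sigma_{\sqrt{d}}$ for all such $n$. Since $\sqrt{d}\ge 1$, Lemma \ref{compactness_lemma} gives that $(\Sigma_{\sqrt{d}},W)$ is compact; hence some subsequence $(\sigma_{\gamma_{n_k}})_{k\ge1}$ converges in $W$ to a measure $\sigma\in\Sigma_{\sqrt{d}}\subset\Sigma_M$ for every $M\ge\sqrt{d}$.

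It then remains to verify that $\sigma\in\RR^\bxi_\infty$, for which I would invoke Lemma \ref{alternative_R_def}\ref{alternative_R_def_a} with the points $x_k=[n_k\bxi]$ and the paths $\gamma_{n_k}$: the estimate $\|[n\bxi]-n\bxi\|_2\le\sqrt{d}$ yields both $\|x_k\|_2\to\infty$ and $x_k/\|x_k\|_2\to\bxi$, while $W(\sigma,\sigma_{\gamma_{n_k}})\to 0$ supplies the third requirement in \eqref{W_2_0}. Therefore $\sigma\in\RR^\bxi_\infty$, and consequently $\sigma\in\RR^\bxi_\infty\cap\Sigma_M$, proving that this set is nonempty.

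I do not expect a genuine obstacle here; the only point worth flagging is that at this stage $\RR^\bxi_\infty$ is still the random set attached to a fixed realization of $(U_e)_{e\in E(\Z^d)}$, so the argument is run pathwise. But every ingredient — the geometric inequalities, the compactness extraction from Lemma \ref{compactness_lemma}, and the sequential characterization in Lemma \ref{alternative_R_def} — holds for each fixed realization, so nothing needs to be upgraded to obtain the claim.
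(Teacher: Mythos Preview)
Your proof is correct and follows essentially the same approach as the paper: take shortest lattice paths from $0$ to $[n\bxi]$ so that the associated empirical measures have total mass in $[1,\sqrt{d}]$, extract a convergent subsequence via the compactness of $\Sigma_{\sqrt d}$ from Lemma~\ref{compactness_lemma}, and conclude membership in $\RR^\bxi_\infty$ using Lemma~\ref{alternative_R_def}\ref{alternative_R_def_a}. Your added remarks on why $x_k/\|x_k\|_2\to\bxi$ and on the pathwise nature of the argument are accurate and make the reasoning slightly more explicit than the paper's version.
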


\begin{proof}
Assume $M\geq\sqrt{d}$.
For each integer $k\geq\lceil\sqrt{d}\:\rceil$, let $\gamma_k \in \PP(0,k\bxi)$ be such that $|\gamma_k| = \|[k\bxi]\|_1$.
Since
\eq{
\langle 1,\sigma_{\gamma_k}\rangle 
= \frac{|\gamma_k|}{\|[k\bxi]\|_2} 
\leq \sqrt{d}\frac{|\gamma_k|}{\|[k\bxi]\|_1}  
=\sqrt{d} \leq M,
} 
we have $\sigma_{\gamma_k}\in\Sigma_M$.
By the compactness from Lemma \ref{compactness_lemma}, there is a subsequence $(\gamma_{k_j})_{j\geq1}$ such that $\sigma_{\gamma_{k_j}}$ converges to some $\sigma_*\in\Sigma_M$ as $j\to\infty$.
It is now clear that \eqref{W_2_0} holds, and so $\sigma_*$ belongs to $\RR^\bxi_\infty$ by Lemma \hyperref[alternative_R_def_a]{\ref*{alternative_R_def}\ref*{alternative_R_def_a}}.
Indeed, $\RR^\bxi_\infty\cap\Sigma_M$ is nonemtpy.
\end{proof}

\begin{claim} \label{closed_claim}
The sets $\RR_\infty$ and $\RR_\infty^\bxi$, $\bxi\in\S^{d-1}$, are closed in $\Sigma$.
\end{claim}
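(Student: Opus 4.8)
The plan is to read off closedness directly from the sequential characterisation in Lemma \ref{alternative_R_def}\ref{alternative_R_def_a} by means of a standard diagonal extraction. Since $W$ is a metric on $\Sigma$, it suffices to check that $\RR_\infty^\bxi$ and $\RR_\infty$ are sequentially closed.

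First I would treat $\RR_\infty^\bxi$. Fix $\bxi\in\S^{d-1}$ and suppose $(\sigma^{(j)})_{j\geq1}$ is a sequence in $\RR_\infty^\bxi$ with $W(\sigma^{(j)},\sigma)\to0$ for some $\sigma\in\Sigma$; the goal is $\sigma\in\RR_\infty^\bxi$. For each fixed $j$, Lemma \ref{alternative_R_def}\ref{alternative_R_def_a} produces points $x_k^{(j)}\in\Z^d$ and paths $\gamma_k^{(j)}\in\PP(0,x_k^{(j)})$ with $\|x_k^{(j)}\|_2\to\infty$, $x_k^{(j)}/\|x_k^{(j)}\|_2\to\bxi$, and $W(\sigma^{(j)},\sigma_{\gamma_k^{(j)}})\to0$ as $k\to\infty$. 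Hence one can choose an index $k=k(j)$ large enough, and set $y_j\coloneqq x_{k(j)}^{(j)}$ and $\eta_j\coloneqq\gamma_{k(j)}^{(j)}$, so that
\[
\|y_j\|_2\geq j,\qquad \Big\|\tfrac{y_j}{\|y_j\|_2}-\bxi\Big\|_2\leq\tfrac1j,\qquad W(\sigma^{(j)},\sigma_{\eta_j})\leq\tfrac1j.
\]
Then $\|y_j\|_2\to\infty$, $y_j/\|y_j\|_2\to\bxi$, and by the triangle inequality $W(\sigma,\sigma_{\eta_j})\leq W(\sigma,\sigma^{(j)})+1/j\to0$. Applying Lemma \ref{alternative_R_def}\ref{alternative_R_def_a} in the reverse direction with the sequence $(y_j)_{j\geq1}$ yields $\sigma\in\RR_\infty^\bxi$, so $\RR_\infty^\bxi$ is closed.

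For $\RR_\infty$ the argument is verbatim the same, once one records the directionless analogue of Lemma \ref{alternative_R_def}\ref{alternative_R_def_a}: $\sigma\in\RR_\infty$ if and only if there exist $x_k\in\Z^d$ with $\|x_k\|_2\to\infty$ and $\gamma_k\in\PP(0,x_k)$ such that $W(\sigma,\sigma_{\gamma_k})\to0$. This is immediate upon unwinding the condition $\liminf_{n\to\infty}W(\sigma,\RR_n)=0$ and the definition \eqref{directionless_defs} of $\RR_n$ (alternatively, combine parts \ref{alternative_R_def_a} and \ref{alternative_R_def_b} of Lemma \ref{alternative_R_def} with compactness of $\S^{d-1}$). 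The same diagonal choice of $(y_j,\eta_j)$, now without the directional requirement, shows that any $W$-limit of elements of $\RR_\infty$ again lies in $\RR_\infty$.

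I do not expect a genuine obstacle here: Lemma \ref{alternative_R_def}\ref{alternative_R_def_a} already collapses the nested $\limsup_{\eps\searrow0}\liminf_{n\to\infty}$ in the definition of $\RR_\infty^\bxi$ into a single sequential criterion, so what remains is a routine diagonalisation together with the triangle inequality for $W$. The only point needing a moment's attention is that the directional constraint $y_j/\|y_j\|_2\to\bxi$ survive the extraction; this causes no difficulty because for each fixed $j$ the inner sequence $x_k^{(j)}/\|x_k^{(j)}\|_2$ already converges to $\bxi$, so an index $k(j)$ with directional error at most $1/j$ exists.
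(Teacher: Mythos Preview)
Your proof is correct and essentially the same as the paper's: both use Lemma~\ref{alternative_R_def}\ref{alternative_R_def_a} to obtain approximating paths and then perform a diagonal extraction with the triangle inequality. The only cosmetic difference is ordering: the paper treats $\RR_\infty$ first (invoking compactness of $\S^{d-1}$ to pin down a limiting direction) and then specializes to $\RR_\infty^\bxi$, whereas you do $\RR_\infty^\bxi$ first and then note the directionless version.
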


\begin{proof}
Consider any sequence $(\sigma_j)_{j\geq1}$ in $\RR_\infty$ converging to some $\sigma\in\Sigma$.
By Lemma \hyperref[alternative_R_def_b]{\ref*{alternative_R_def}\ref*{alternative_R_def_b}}, each $\sigma_j$ belongs to $\RR_\infty^{\bxi_j}$ for some $\bxi_j\in\S^{d-1}$.
By compactness of $\S^{d-1}$, we may pass to a subsequence so that $\bxi_j$ converges to some $\bxi\in\S^{d-1}$.
For each $j$, we can apply Lemma \hyperref[alternative_R_def_a]{\ref*{alternative_R_def}\ref*{alternative_R_def_a}} to identify a sequence $(x_{j,k})_{k\geq1}$ in $\Z^d$ such that \eqref{W_2_0} holds with $\sigma_*=\sigma_j$, $\bxi_*=\bxi_j$, and some choice of $\gamma_{j,k}\in\PP(0,x_{j,k})$.
By passing to a subsequence, we may assume
\eq{
\|x_{j,k}\|_2 \geq k, \quad \Big\|x_{j,k}/\|x_{j,k}\|_2-\bxi_j\Big\|_2 \leq k^{-1}, \quad \text{and} \quad
W(\sigma_j,\sigma_{\gamma_{j,k}}) \leq k^{-1}
}
for all $k\geq1$.
Setting $y_j = x_{j,j}$, we now have
\eq{
 \lim_{j\to\infty} \|y_j\|_2 = \infty,\quad \lim_{j\to\infty} \frac{y_j}{\|y_j\|_2} = \bxi, \quad \text{and} \quad \lim_{j\to\infty} W(\sigma,\sigma_{\gamma_{j,j}}) = 0.
}
By Lemma \ref{alternative_R_def}, we conclude $\sigma\in\RR_\infty^{\bxi}\subset\RR_\infty$.
Therefore, $\RR_\infty$ is closed.
When $\bxi_j=\bxi$ for every $j$, the same argument shows that $\RR_\infty^\bxi$ is closed.
\end{proof}

Let $\TT$ be the tail sigma-algebra associated to the $U_e$'s. \label{tail_1}

\begin{claim} \label{measurability_claim_1}
For every $M\geq\sqrt{d}$ and $\sigma\in\Sigma_M$, the random variables 
\eq{
W(\sigma,\RR^\bxi_\infty\cap\Sigma_M)\quad\text{and}\quad W(\sigma,\RR_\infty\cap\Sigma_M)
}
are $\TT$-measurable.
\end{claim}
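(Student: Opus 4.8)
The plan is to show that each of the two random variables $W(\sigma,\RR^\bxi_\infty\cap\Sigma_M)$ and $W(\sigma,\RR_\infty\cap\Sigma_M)$ is measurable with respect to the tail $\sigma$-algebra $\TT$ of the family $(U_e)_{e\in E(\Z^d)}$, which by Kolmogorov's zero-one law will then force these distances to be almost surely constant (this is exactly what the subsequent Claim, via Lemma \ref{deterministic_lemma}, will exploit). First I would observe that for any finite set $S\subset E(\Z^d)$, the quantity $W(\sigma,\RR^\bxi_\infty\cap\Sigma_M)$ is unchanged if we alter the variables $(U_e)_{e\in S}$; hence the random variable is measurable with respect to $\sigma\big((U_e)_{e\notin S}\big)$, and since $S$ is arbitrary it is $\TT$-measurable. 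The content of the claim is therefore to verify this invariance under finite modifications, and the work is purely combinatorial/topological.

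The key step is the following approximation argument. Fix a finite set $S$ of edges, say all contained in the box $\Bbf_{r_0}(0)$ for some $r_0$. Suppose $\sigma\in\RR^\bxi_\infty\cap\Sigma_M$; by Lemma \hyperref[alternative_R_def_a]{\ref*{alternative_R_def}\ref*{alternative_R_def_a}} there are $x_k\in\Z^d$ with $\|x_k\|_2\to\infty$, $x_k/\|x_k\|_2\to\bxi$, and paths $\gamma_k\in\PP(0,x_k)$ with $W(\sigma,\sigma_{\gamma_k})\to0$. For each $k$ let $\gamma_k'$ be the path obtained from $\gamma_k$ by excising the portion inside $\Bbf_{r_0}(0)$ and rerouting: more precisely, let $w_k$ be the first vertex of $\gamma_k$ on $\partial\Bbf_{r_0+1}(0)$ (which exists once $\|x_k\|_\infty>r_0+1$), discard the initial segment up to $w_k$, and prepend instead any fixed self-avoiding path $\eta_k$ from $0$ to $w_k$; one can choose $\eta_k$ with $|\eta_k|\le C(r_0,d)$ uniformly, and the edges of $\eta_k$ can be taken to lie inside $\Bbf_{r_0+1}(0)$, but we only need that the number of \emph{changed} edges is $O(1)$. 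Then $|\gamma_k'|-|\gamma_k|$ and the number of edges in the symmetric difference $\gamma_k\triangle\gamma_k'$ are bounded by a constant depending only on $r_0$ and $d$. Since $\sigma_{\gamma_k}$ and $\sigma_{\gamma_k'}$ differ by at most $O(1)$ Dirac masses scaled by $\|x_k\|_2^{-1}\to0$, the estimates \eqref{w_tv_1} and the definition of $W$ on $\Sigma$ give $W(\sigma_{\gamma_k},\sigma_{\gamma_k'})\to0$; hence $W(\sigma,\sigma_{\gamma_k'})\to0$. But $\gamma_k'$ uses \emph{no} edge of $S$ in the sense that its restriction to $S$ is a fixed, prescribed subpath, so $\sigma_{\gamma_k'}$ depends only on $(U_e)_{e\notin S}$ up to a controlled correction; more cleanly, $\sigma_{\gamma_k'}$ is a measurable function of $(U_e)_{e\notin S}$ together with finitely many $U_e$, $e\in S$, whose contribution vanishes in the limit. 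Passing to the limit, $\sigma\in\RR^\bxi_\infty$ can be realized by a sequence whose empirical measures are built only from $(U_e)_{e\notin S}$ in the limit, which shows that the event-level structure of $\RR^\bxi_\infty\cap\Sigma_M$ — and in particular the distance function $\sigma\mapsto W(\sigma,\RR^\bxi_\infty\cap\Sigma_M)$ — is unchanged if $(U_e)_{e\in S}$ is modified.

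I would implement this last point carefully as follows: show that for any realization $\omega$ and any modification $\omega'$ agreeing with $\omega$ off $S$, one has $\RR^\bxi_\infty(\omega)\cap\Sigma_M = \RR^\bxi_\infty(\omega')\cap\Sigma_M$. Indeed the rerouting construction above produces, for any $\sigma$ in the left-hand set, an approximating sequence of paths whose empirical measures converge to $\sigma$ and which, after a further fixed finite modification to also avoid using the $\omega'$-altered values, still converge to $\sigma$; symmetry in $\omega,\omega'$ gives the reverse inclusion. Having set-equality, the distance $W(\sigma,\cdot)$ to these (equal) sets is literally the same function of $\omega$ and $\omega'$, so it is $\sigma\big((U_e)_{e\notin S}\big)$-measurable; intersecting over all finite $S$ gives $\TT$-measurability. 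The directionless case $\RR_\infty\cap\Sigma_M$ is identical, using Lemma \hyperref[alternative_R_def_b]{\ref*{alternative_R_def}\ref*{alternative_R_def_b}} (or simply the same rerouting argument applied to the definition of $\RR_\infty$), with no directional constraint to track.

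The main obstacle I anticipate is the bookkeeping in the rerouting step: one must check that excising the part of $\gamma_k$ near the origin and reconnecting it without disturbing the asymptotic direction $x_k/\|x_k\|_2\to\bxi$ or the total mass bound $\langle1,\sigma_{\gamma_k'}\rangle\le M$ can be done with a number of altered edges bounded uniformly in $k$ (depending only on $r_0$ and $d$), and that self-avoidance is preserved — this is where care is needed, since naively rerouting could create a self-intersection with the tail of $\gamma_k$. The fix is to route $\eta_k$ within $\Bbf_{r_0+1}(0)$ and take $w_k$ to be the first exit point of $\gamma_k$ from that box, so that $\eta_k$ and the retained tail of $\gamma_k$ meet only at $w_k$; since $|\Bbf_{r_0+1}(0)|$ is a fixed constant, the number of changed edges is bounded, and the mass and direction are affected only by an $O(\|x_k\|_2^{-1})$ amount. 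Once this geometric lemma is in place, the measurability claim follows formally.
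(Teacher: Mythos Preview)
There is a genuine gap: you establish (or attempt to establish) that the random set $\RR^\bxi_\infty\cap\Sigma_M$ is \emph{invariant} under changing finitely many $U_e$'s, and then assert that this makes $W(\sigma,\RR^\bxi_\infty\cap\Sigma_M)$ measurable with respect to $\sigma\big((U_e)_{e\notin S}\big)$. But invariance only upgrades to measurability with respect to a sub-$\sigma$-algebra once you already know the random variable is $\FF$-measurable, and you never address this. The quantity $W(\sigma,\RR^\bxi_\infty\cap\Sigma_M)$ is an infimum over an uncountable, \emph{randomly} defined set, so its measurability is not automatic. The paper handles this by first proving the identity
\[
W(\sigma,\RR^\bxi_\infty\cap\Sigma_M) \;=\; \lim_{j\to\infty}\limsup_{\ell\to\infty}\liminf_{n\to\infty} W\big(\sigma,\RR^{\bxi,\ell^{-1}}_n\cap\Sigma_{M+j^{-1}}\big),
\]
where each inner term is a minimum over a finite set and hence manifestly measurable (the paper even writes $W(\sigma,\sigma_\gamma)$ explicitly via order statistics). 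Establishing this identity is the real content of the claim and requires a careful two-sided argument using the compactness of $\Sigma_M$ and Lemma~\ref{alternative_R_def}; only after this does the paper invoke the finite-modification bound to get tail-measurability.

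Separately, your rerouting construction is unnecessary even for the invariance step. If $\omega$ and $\omega'$ agree off a finite edge set $S$ with $|S|=N$, then for the \emph{same} path $\gamma\in\PP(0,x)$ one has $W(\sigma_\gamma(\omega),\sigma_\gamma(\omega'))\le N/\|x\|_2$ directly from \eqref{w_tv_1} and Lemma~\ref{tv_bound_lemma}; this is exactly what the paper uses. Your rerouted path $\gamma_k'$ may still traverse edges of $S$ through the connector $\eta_k$, so the detour buys nothing---you end up appealing to the same ``contribution vanishes in the limit'' argument that already applies to the unmodified $\gamma_k$. The self-avoidance and mass-bound concerns you anticipate therefore do not arise.
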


\begin{proof}
Let $\sigma\in\Sigma_M$ be given.
We prove the claim just for $W(\sigma,\RR_\infty^\bxi\cap\Sigma_M)$, as the argument for $W(\sigma,\RR_\infty\cap\Sigma_M)$ is completely analogous.
The key observation is that
\eeq{ \label{limit_dist}
W(\sigma,\RR^\bxi_\infty\cap\Sigma_M) = \lim_{j\to\infty}\limsup_{\ell\to\infty}\liminf_{n\to\infty} W(\sigma,\RR^{\bxi,\ell^{-1}}_n\cap\Sigma_{M+j^{-1}}),
}
where $j,\ell\in\Z$ and $n\in\R$.
To see this, we prove inequalities in both directions.
On one hand, since $\RR^\bxi_\infty\cap\Sigma_M$ is an element of $\KK(\Sigma_M)$ by Claims \ref{nonempty_claim} and \ref{closed_claim}, we can choose $\sigma_*\in\RR^\bxi_\infty\cap\Sigma_M$ to achieve
$W(\sigma,\sigma_*) = W(\sigma,\RR^\bxi_\infty\cap\Sigma_M)$ (recall the last sentence of Section \ref{topological_preliminaries}).
By virtue of $\sigma_*$ belonging to $\RR^\bxi_\infty$, there exists a sequence 
$(x_k)_{k\geq1}$ in $\Z^d$ and $\gamma_{k}\in\PP(0,x_k)$ such that \eqref{W_2_0} holds.
Let us write $x_k = [n_k\zetab_k]$ for suitable $n_k\in\R$ and $\zetab_k\in\S^{d-1}$, where $\zetab_k\to\bxi$ as $k\to\infty$.
Since $\lim_{k\to\infty}\langle 1,\sigma_{\gamma_{k}}\rangle = \langle 1,\sigma_*\rangle\leq M$,
we have the following: For any positive integers $j$ and $\ell$, the measure $\sigma_{\gamma_k}$ belongs to $\RR^{\bxi,\ell^{-1}}_{n_k}\cap\Sigma_{M+j^{-1}}$ for all large $k$.
Consequently, one inequality for \eqref{limit_dist} can be established:
\eeq{ \label{limsup}
&\limsup_{j\to\infty}\limsup_{\ell\to\infty}\liminf_{n\to\infty} W(\sigma,\RR^{\bxi,\ell^{-1}}_n\cap\Sigma_{M+j^{-1}}) \\
&\leq \lim_{k\to\infty} W(\sigma,\sigma_{\gamma_{k}}) 
= W(\sigma,\sigma_*) 
= W(\sigma,\RR^\bxi_\infty\cap\Sigma_M).
}
For the other direction, we choose positive integers $j_1<j_2<\cdots$ such that
\eq{
&\lim_{k\to\infty}\limsup_{\ell\to\infty}\liminf_{n\to\infty} W(\sigma,\RR^{\bxi,\ell^{-1}}_n\cap\Sigma_{M+j_k^{-1}}) \\
&= \liminf_{j\to\infty}\limsup_{\ell\to\infty}\liminf_{n\to\infty} W(\sigma,\RR^{\bxi,\ell^{-1}}_n\cap\Sigma_{M+j^{-1}}).
}
Next we identify positive integers $\ell_1<\ell_2<\cdots$ such that
\eq{
&\Big|\liminf_{n\to\infty} W(\sigma,\RR_n^{\bxi,\ell_k^{-1}}\cap\Sigma_{M+j_k^{-1}})\\
&\phantom{\Big|}-\limsup_{\ell\to\infty}\liminf_{n\to\infty} W(\sigma,\RR_n^{\bxi,\ell^{-1}}\cap\Sigma_{M+j_k^{-1}})\Big| \leq k^{-1} \quad \text{for each $k\geq1$}.
}
Finally, we choose real numbers $(n_k)_{k\geq1}$ such that $n_k\to\infty$ as $k\to\infty$, and for each $k$ we have both $n_k\geq\sqrt{d}$ and 
\eq{
\Big|W(\sigma,\RR^{\bxi,{\ell_k}^{-1}}_{n_k}\cap\Sigma_{M+j_k^{-1}})
- \liminf_{n\to\infty} W(\sigma,\RR^{\bxi,{\ell_k}^{-1}}_{n}\cap\Sigma_{M+j_k^{-1}})\Big| \leq k^{-1}.
}
Because $\RR^{\bxi,{\ell_k}^{-1}}_{n_k}\cap\Sigma_{M+j_k^{-1}}$ is a finite set, one of its elements $\sigma_k$ must satisfy
\eq{
W(\sigma,\sigma_k) = W(\sigma,\RR^{\bxi,{\ell_k}^{-1}}_{n_k}\cap\Sigma_{M+j_k^{-1}}).
}
By Lemma \ref{compactness_lemma}, we may pass to a subsequence of $(\sigma_k)_{k\geq1}$ in order to assume that $\sigma_{k}$ converges to some $\sigma_*\in\Sigma_M$ as $k\to\infty$.
Because $\ell_k^{-1}\to0$ and $n_k\to\infty$, Lemma \hyperref[alternative_R_def_a]{\ref*{alternative_R_def}\ref*{alternative_R_def_a}} implies $\sigma_*\in\RR_\infty^{\bxi}$.
Together, the four previous displays now yield
\eeq{ \label{liminf}
&\liminf_{j\to\infty}\limsup_{\ell\to\infty}\liminf_{n\to\infty} W(\sigma,\RR^{\bxi,\ell^{-1}}_n\cap\Sigma_{M+j^{-1}}) \\
&=\lim_{k\to\infty} W(\sigma,\sigma_k) 
= W(\sigma,\sigma_*) \geq W(\sigma,\RR^\bxi_\infty\cap\Sigma_M).
}
Now \eqref{limsup} and \eqref{liminf} combine to give \eqref{limit_dist}, as desired.

Our next step is to observe that by definition we have
\eeq{ \label{sup_rep}
W(\sigma,\RR^{\bxi,\eps}_{n}\cap\Sigma_M) &= \inf_{\sigma_\gamma\in\RR^{\bxi,\eps}_n\cap\Sigma_{M}} W(\sigma,\sigma_\gamma). 
}
Using \eqref{W_explicit}, for any $\gamma\in\PP(0,x)$, we can write
\eq{
W(\sigma,\sigma_\gamma) = \Big|\langle 1,\sigma\rangle - \frac{|\gamma|}{\|x\|_2}\Big|
+ \sum_{i = 1}^{|\gamma|} \int_0^{i/|\gamma|} |G^{-1}(u) - U_{(i)}|\ \dd u,
}
where $G$ is the distribution function associated to $\hat\sigma$, and the order statistics of $\{U_e\}_{e\in\gamma}$ have been denoted by
$U_{(1)} \leq U_{(2)} \leq \cdots \leq U_{(|\gamma|)}$.
This is all to demonstrate that $W(\sigma,\sigma_\gamma)$ is a measurable function of $(U_e)_{e\in\gamma}$.
As the infimum in \eqref{sup_rep} is over a finite set, it follows that the random variable $W(\sigma,\RR^{\bxi,\eps}_n\cap\Sigma_{M})$ is measurable with respect to $(U_e)_{e\in E(\Z^d)}$.
In turn, \eqref{limit_dist} now shows the same to be true for $W(\sigma,\RR^\bxi_\infty\cap\Sigma_M)$, since the set $\RR_n^{\bxi,\ell^{-1}}$ only changes value at countably many (deterministic) values of $n$.

To complete the proof of the claim, we must show that $W(\sigma,\RR^\bxi_\infty\cap\Sigma_M)$ has no dependence on any finite subset of $\{U_e\}_{e\in E(\Z^d)}$.
Indeed, suppose that $(U_e')_{e\in E(\Z^d)}$ is such that $U_e' = U_e$ for all but $N$ many $e\in E(\Z^d)$, where $N<\infty$.
Let us write $\sigma_\gamma' \coloneqq \frac{1}{\|x\|_2}\sum_{e\in\gamma}\delta_{U_e'}$ for $\gamma\in\PP(0,x)$, and
define $(\RR^{\bxi,\eps}_n)'$ analogously to $\RR_n^{\bxi,\eps}$.
In light of \eqref{w_tv_1}, Lemma \ref{tv_bound_lemma} (with $I=J=N$ and $K = |\gamma|-N$) offers the bound
\eq{
W(\sigma_\gamma,\sigma_\gamma')
=W(\hat\sigma_\gamma,\hat\sigma_\gamma')  \leq \frac{N}{|\gamma|} \leq \frac{N}{\|x\|_1} \quad \text{for all $\gamma\in\PP(0,x)$}.
}
In particular, for any $\sigma\in\Sigma$, $\eps>0$, and $n,M>\sqrt{d}$, we have
\eq{
\big|W(\sigma,\RR^{\bxi,\eps}_n\cap\Sigma_M)
- W(\sigma,(\RR^{\bxi,\eps}_n)'\cap\Sigma_M)\big| \leq \frac{N}{\inf_{\zetab\in\S^{d-1}}\|[n\zetab]\|_1}
\leq \frac{N}{n-\sqrt{d}},
}
and so
\eq{
\liminf_{n\to\infty} W(\sigma,\RR^{\bxi,\eps}_n\cap\Sigma_M)
&= \liminf_{n\to\infty} W(\sigma,(\RR^{\bxi,\eps}_n)'\cap\Sigma_M).
}
It is now evident from \eqref{limit_dist} that $W(\sigma,\RR^\bxi_\infty\cap\Sigma_M)$ is measurable with respect to the tail sigma-algebra $\TT$.
The proof for $W(\sigma,\RR_\infty\cap\Sigma_M)$ is identical, except that we replace \eqref{limit_dist} by the (simpler) expression
\eq{
W(\sigma,\RR_\infty\cap\Sigma_M) = \lim_{j\to\infty}\liminf_{n\to\infty} W(\sigma,\RR_n\cap\Sigma_{M+j^{-1}}). 
}
\end{proof}

 Let $\BB$ denote the \mbox{Borel} sigma-algebra for the metric space $(\KK(\Sigma_M),\HH)$. \label{borel_3}

\begin{claim} \label{measurability_claim_2}
For $M\geq\sqrt{d}$, the maps $\Omega\to\KK(\Sigma_M)$ given by $\omega\mapsto\RR^\bxi_\infty\cap\Sigma_M$ and $\omega\mapsto\RR_\infty\cap\Sigma_M$ are $(\TT,\BB)$-measurable.
\end{claim}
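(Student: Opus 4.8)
The plan is to deduce the measurability of these set-valued maps from the scalar $\TT$-measurability already proved in Claim \ref{measurability_claim_1}, by exploiting the fact that the Hausdorff metric $\HH$ on $\KK(\Sigma_M)$ is completely determined by the distance functionals $\sigma\mapsto W(\sigma,\cdot)$. First I would use the compactness of $(\Sigma_M,W)$ from Lemma \ref{compactness_lemma} — hence its separability — to fix a countable dense subset $\{\sigma_i\}_{i\geq1}\subseteq\Sigma_M$. Since $\sigma\mapsto W(\sigma,\AA)$ is $1$-Lipschitz for each fixed $\AA\in\KK(\Sigma_M)$, the identity \eqref{hausdorff_reform} restricts to the dense set and gives
\eq{
\HH(\AA_1,\AA_2) = \sup_{i\geq1}\big|W(\sigma_i,\AA_1)-W(\sigma_i,\AA_2)\big| \qquad \text{for all }\AA_1,\AA_2\in\KK(\Sigma_M).
}

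Next I would check that the two maps really take values in $\KK(\Sigma_M)$, so that the statement even makes sense: nonemptiness of $\RR^\bxi_\infty\cap\Sigma_M$ and $\RR_\infty\cap\Sigma_M$ is Claim \ref{nonempty_claim}, and closedness in $\Sigma_M$ follows from Claim \ref{closed_claim} together with the fact that $\Sigma_M$ is closed in $\Sigma$ (the functional $\sigma\mapsto\langle1,\sigma\rangle$ being $W$-continuous, as $W(\sigma_1,\sigma_2)\geq|\langle1,\sigma_1-\sigma_2\rangle|$). Write $\Phi\colon\Omega\to\KK(\Sigma_M)$ for either of the two maps. Then I would recall that in the separable metric space $(\KK(\Sigma_M),\HH)$ the Borel $\sigma$-algebra $\BB$ is generated by the closed balls $\bar B(\AA_0,r)$, and observe, using the displayed formula for $\HH$, that
\eq{
\Phi^{-1}\big(\bar B(\AA_0,r)\big) = \bigcap_{i\geq1}\big\{\omega\in\Omega :\, |W(\sigma_i,\Phi(\omega))-W(\sigma_i,\AA_0)|\leq r\big\}.
}
Each event on the right lies in $\TT$, because $\omega\mapsto W(\sigma_i,\Phi(\omega))$ is $\TT$-measurable by Claim \ref{measurability_claim_1} (applied with $\sigma=\sigma_i$) and $W(\sigma_i,\AA_0)$ is a constant; the intersection is countable. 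Since $\{B\subseteq\KK(\Sigma_M):\Phi^{-1}(B)\in\TT\}$ is a $\sigma$-algebra containing all closed balls, it contains all of $\BB$, which is exactly the claim.

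The argument has no serious obstacle; the only point deserving explicit mention — and the one I would be most careful to spell out rather than wave at — is that the closed balls generate the Borel $\sigma$-algebra of the separable metric space $\KK(\Sigma_M)$ (each open set is a countable union of open balls centred at points of a countable dense set, and each open ball is a countable union of closed balls of smaller radius). Everything else is a transcription of Claims \ref{measurability_claim_1}, \ref{nonempty_claim}, and \ref{closed_claim} through the isometric-type identity \eqref{hausdorff_reform}. Once Claim \ref{measurability_claim_2} is in place, combining it with Lemma \ref{deterministic_lemma} (applied to the separable metrizable space $\KK(\Sigma_M)$ and the $\TT$-measurable map $\Phi$) yields an almost sure value for $\RR^\bxi_\infty\cap\Sigma_M$ and $\RR_\infty\cap\Sigma_M$ for each $M$, and then \eqref{first_rewrite} together with Lemma \hyperref[alternative_R_def_b]{\ref*{alternative_R_def}\ref*{alternative_R_def_b}} upgrades this to the deterministic sets $\RR^\bxi$ and $\RR$ asserted in Theorem \ref{deterministic_limits_thm}, with the quantitative bound coming from Lemma \hyperref[abs_cont_lemma_a]{\ref*{abs_cont_lemma}\ref*{abs_cont_lemma_a}}.
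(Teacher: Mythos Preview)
Your proof is correct and follows essentially the same approach as the paper: both fix a countable dense subset $\{\sigma_i\}$ of $\Sigma_M$, invoke the identity \eqref{hausdorff_reform} to reduce $\HH$ to a countable supremum of differences $|W(\sigma_i,\cdot)-W(\sigma_i,\AA)|$, and then appeal to Claim \ref{measurability_claim_1} for the $\TT$-measurability of each term. The only cosmetic difference is that the paper phrases the conclusion as ``$\omega\mapsto\HH(\Phi(\omega),\AA)$ is $\TT$-measurable for each fixed $\AA$'' rather than your equivalent formulation via preimages of closed balls.
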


\begin{proof}
As in the previous claim, let us provide the argument for only $\omega\mapsto\RR_\infty^\bxi\cap\Sigma_M$, since replacing every $\RR_\infty^\bxi$ with $\RR_\infty$ would prove the claim for the other map.
It suffices to show that for every $\AA\in\KK(\Sigma_M)$ and $\eps>0$, the event
$\{\omega\in\Omega:\, \HH(\RR^\bxi_\infty\cap\Sigma_M,\AA)<\eps\}$ belongs to $\TT$.
In other words, we wish to show that $\HH(\RR^\bxi_\infty\cap\Sigma_M,\AA)$ is a $\TT$-measurable random variable.
Indeed, let $\{\sigma_i\}_{i=1}^\infty$ be a countable dense subset of $\Sigma_M$ (recall from Lemma \ref{compactness_lemma} that $(\Sigma_M,W)$ is a compact metric space and hence separable).
We then have
\eq{
\HH(\RR^\bxi_\infty\cap\Sigma_M,\AA)
&\stackref{hausdorff_reform}{=} \sup_{\sigma\in\Sigma_M} |W(\sigma,\RR^\bxi_\infty\cap\Sigma_M)-W(\sigma,\AA)| \\
&\stackrefp{hausdorff_reform}{=}\sup_{i\geq1} |W(\sigma_i,\RR^\bxi_\infty\cap\Sigma_M)-W(\sigma_i,\AA)|.
}
By Claim \ref{measurability_claim_1}, $W(\sigma_i,\RR^\bxi_\infty\cap\Sigma_M)$ is $\TT$-measurable for each $i$.
Hence the random variable $\HH(\RR^\bxi_\infty\cap\Sigma_M,\AA)$ is $\TT$-measurable as well.
\end{proof}

We can now complete the proof of Theorem \ref{deterministic_limits_thm}.
Once again, we will consider just the direction-specific case; if one deletes $\bxi$ in all superscripts, the following argument also works in the directionless case.
By Lemma \ref{deterministic_lemma} and Claim \ref{measurability_claim_2}, for each $M\geq\sqrt{d}$ the random variable $\RR^\bxi_\infty\cap\Sigma_M$ has an almost sure value, call it $\RR^{\bxi,M}$.
Using this set in \eqref{first_rewrite}, we obtain an almost sure value for $\RR^\bxi_\infty$, namely
$\RR^\bxi \coloneqq \bigcup_{M=\lceil\sqrt{d}\:\rceil}^\infty\RR^{\bxi,M}$.
Our final objective is to show \eqref{abs_cont_quantitative_2}.
So consider any $\sigma\in\RR^\bxi$ and any measurable $B\subset[0,1]$ with $\pp(B)<\eps_0$, where $\eps_0>0$ is the constant from Lemma \hyperref[abs_cont_lemma_a]{\ref*{abs_cont_lemma}\ref*{abs_cont_lemma_a}}.
Since $\sigma$ is almost surely a member of $\RR^\bxi_\infty$, we conclude $\hat\sigma(B)\leq 2\mathbf{C}_1(\log \pp(B)^{-1})^{-1}$ from \eqref{abs_cont_ineq_2}.
To account for $B$ such that $\pp(B)\geq\eps_0$, we simply choose $C\geq 2\mathbf{C}_1$ large enough that 
$C(\log\eps_0^{-1})^{-1} \geq 1$.
\renewcommand{\qedsymbol}{$\square$}
\end{proof}

\section{Properties of sequences converging to constraint set} \label{constraint_properties}
In the previous section, we realized the putative constraint set $\RR^\bxi$ as the almost sure value of $\RR^\bxi_\infty$ from \eqref{R_infty_def}.
Moreover, Lemma \ref{alternative_R_def} gave a description of $\RR^\bxi_\infty$ in terms of limits.
Here we examine how those limits behave under deletions of a small number of edgess, linear functionals, and pushforward operations.
These results will be needed in Chapter \ref{var_form_proof} for the proof of the variational formula.

For a nonempty path $\gamma$ and a proper subset of edges $\gamma'\subsetneq\gamma$, let us write
\eeq{ \label{sigma_minus_def}
\sigma_{\gamma,\gamma'} \coloneqq \frac{1}{\|x-y\|_2}\cdot \frac{|\gamma|}{|\gamma|-|\gamma'|}\sum_{e\in\gamma\setminus\gamma'}\delta_{U_e}, \quad \gamma\in\PP(x,y),\,\gamma'\subsetneq\gamma.
}
Throughout the remainder of the section, $(x_k)_{k\geq1}$ is any sequence in $\Z^d$ such that $\|x_k\|_2\to\infty$, $\gamma_k$ is any element of $\PP(0,x_k)$, and $\gamma_k'$ is any proper subset of $\gamma_k$.

\begin{lemma} \label{modification_lemma}
If $|\gamma_k'|/|\gamma_k|\to0$ as $k\to\infty$,
then
\eq{ 
\lim_{k\to\infty} W(\sigma_{\gamma_{k}},\sigma_{\gamma_{k},\gamma_k'}) = \lim_{k\to\infty} \TV(\hat\sigma_{\gamma_k},\hat\sigma_{\gamma_k,\gamma_k'}) = 0.
}
\end{lemma}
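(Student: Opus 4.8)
The statement compares two probability measures: $\hat\sigma_{\gamma_k}$, the empirical measure of $\{U_e : e \in \gamma_k\}$, and $\hat\sigma_{\gamma_k,\gamma_k'}$, the empirical measure of $\{U_e : e \in \gamma_k \setminus \gamma_k'\}$ — that is, the same path with $|\gamma_k'|$ edges removed. Since the normalization factors in $\sigma_{\gamma,\gamma'}$ are chosen precisely so that $\hat\sigma_{\gamma,\gamma'}$ is the normalized empirical measure on the surviving edges, the first step is to observe that $\langle 1, \sigma_{\gamma_k}\rangle = \langle 1, \sigma_{\gamma_k,\gamma_k'}\rangle = |\gamma_k|/\|x_k\|_2$, so the mass parts cancel and $W(\sigma_{\gamma_k},\sigma_{\gamma_k,\gamma_k'}) = W(\hat\sigma_{\gamma_k},\hat\sigma_{\gamma_k,\gamma_k'})$. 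Then, by the bound \eqref{w_tv_1}, it suffices to control $\TV(\hat\sigma_{\gamma_k},\hat\sigma_{\gamma_k,\gamma_k'})$; in fact both the Wasserstein and total-variation limits will follow from the single estimate $\TV(\hat\sigma_{\gamma_k},\hat\sigma_{\gamma_k,\gamma_k'}) \to 0$.

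The heart of the argument is Lemma \ref{tv_bound_lemma}, which is tailor-made for exactly this situation: write $\gamma_k = (\gamma_k \setminus \gamma_k') \cup \gamma_k'$, set $K = |\gamma_k| - |\gamma_k'|$ (the shared edges), $I = |\gamma_k'|$ (the extra edges appearing in $\hat\sigma_{\gamma_k}$ but not $\hat\sigma_{\gamma_k,\gamma_k'}$), and $J = 0$ (there are no edges in $\hat\sigma_{\gamma_k,\gamma_k'}$ beyond the shared ones). With the values $\{U_e : e \in \gamma_k \setminus \gamma_k'\}$ playing the role of $x_1,\dots,x_K$ and $\{U_e : e \in \gamma_k'\}$ playing the role of $v_1,\dots,v_I$, Lemma \ref{tv_bound_lemma} gives
\[
2\TV(\hat\sigma_{\gamma_k},\hat\sigma_{\gamma_k,\gamma_k'}) \leq K\Big|\frac{1}{I+K}-\frac{1}{K}\Big| + \frac{I}{I+K} = \frac{I}{I+K} + \frac{I}{I+K} = \frac{2|\gamma_k'|}{|\gamma_k|}.
\]
Hence $\TV(\hat\sigma_{\gamma_k},\hat\sigma_{\gamma_k,\gamma_k'}) \leq |\gamma_k'|/|\gamma_k|$, which tends to $0$ by hypothesis. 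Combining this with \eqref{w_tv_1} and the cancellation of mass parts noted above yields $W(\sigma_{\gamma_k},\sigma_{\gamma_k,\gamma_k'}) \to 0$ as well, completing the proof.

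I do not anticipate a genuine obstacle here: this lemma is essentially a bookkeeping consequence of the combinatorial estimate already established in Lemma \ref{tv_bound_lemma}. The only points requiring a moment's care are (i) verifying the matching of normalization constants so that the mass components of $\sigma_{\gamma_k}$ and $\sigma_{\gamma_k,\gamma_k'}$ are literally equal — this is immediate from the definitions \eqref{sigma_gamma} and \eqref{sigma_minus_def} — and (ii) confirming that $\gamma_k' \subsetneq \gamma_k$ guarantees $|\gamma_k| - |\gamma_k'| \geq 1$ so the denominators are nonzero and Lemma \ref{tv_bound_lemma} applies. For small $k$ the ratio $|\gamma_k'|/|\gamma_k|$ could in principle be close to $1$, but since we only assert a limit as $k \to \infty$ and the hypothesis forces this ratio to $0$, there is nothing further to check.
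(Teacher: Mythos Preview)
Your proof is correct and essentially identical to the paper's own argument: apply Lemma~\ref{tv_bound_lemma} with $I=|\gamma_k'|$, $J=0$, $K=|\gamma_k|-|\gamma_k'|$ to get $\TV(\hat\sigma_{\gamma_k},\hat\sigma_{\gamma_k,\gamma_k'})\le |\gamma_k'|/|\gamma_k|$, then use equality of total masses together with \eqref{w_tv_1} to handle the $W$ part. The only cosmetic difference is the order in which the two steps are presented.
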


\begin{proof}
Applying Lemma \ref{tv_bound_lemma} (with $I=|\gamma_k'|$, $J=0$, $K=|\gamma_k|-|\gamma_k'|$) results in
\eq{ 
 \TV(\hat\sigma_{\gamma_{k}},\hat\sigma_{\gamma_{k},\gamma_k'})
\leq \frac{|\gamma_k'|}{|\gamma_{k}|}
\to 0 \quad \text{as $k\to\infty$}.
}
Since $\langle 1,\sigma_{\gamma_k,\gamma_k'}\rangle = |\gamma_k|/\|x_k\|_2 = \langle 1,\sigma_{\gamma_k}\rangle$, we also have
\eq{ 
W(\sigma_{\gamma_{k}},\sigma_{\gamma_{k},\gamma_k'})
= W(\hat\sigma_{\gamma_{k}},\hat\sigma_{\gamma_{k},\gamma_k'})
&\stackref{w_tv_1}{\leq} \TV(\hat\sigma_{\gamma_{k}},\hat\sigma_{\gamma_{k},\gamma_k'}) \to 0 \quad \text{as $k\to\infty$}. 
}
\end{proof}

\begin{lemma} \label{test_fnc_lemma}
For each measurable function $f \colon [0,1]\to\R$, there is a probability-one event $\Omega_f^\Rightarrow$ on which the following statements hold whenever $|\gamma_k'|/|\gamma_k|\to0$ and $\sigma_{\gamma_k}$ converges to some $\sigma\in\RR_\infty$ as $k\to\infty$.
\begin{enumerate}[label=\textup{(\alph*)}]
\item \label{test_fnc_lemma_c}
If 
\eeq{ \label{test_fnc_condition}
\langle \e^{\alpha|f(u)|^{\beta}},\pp\rangle<\infty \quad \text{for some $\alpha>0$ and $\beta>1$},
} 
then
\eeq{ \label{test_fnc_lemma_c_eq}
\langle f,\sigma\rangle = \lim_{k\to\infty} \langle f,\sigma_{\gamma_{k},\gamma_k'}\rangle.
}
\item \label{test_fnc_lemma_d}
Let $f^\pm = \pm f\vee0$.
If $\langle \e^{\alpha f^-(u)^\beta},\pp\rangle<\infty$ for some $\alpha>0$ and $\beta>1$, then
\eeq{ \label{test_fnc_lemma_d_eq}
\langle f,\sigma\rangle \leq \liminf_{k\to\infty} \langle f,\sigma_{\gamma_{k},\gamma_k'}\rangle.
}
\end{enumerate}
\end{lemma}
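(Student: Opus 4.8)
To prove Lemma~\ref{test_fnc_lemma} the plan is to first deduce part~\ref{test_fnc_lemma_c} from part~\ref{test_fnc_lemma_d}, and then to prove part~\ref{test_fnc_lemma_d}. For the reduction: if $f$ satisfies \eqref{test_fnc_condition} with parameters $\alpha,\beta$, then both $f^-$ and $f^+=(-f)^-$ satisfy $\langle\e^{\alpha(f^\pm)^\beta},\pp\rangle\leq 1+\langle\e^{\alpha|f|^\beta},\pp\rangle<\infty$, so applying part~\ref{test_fnc_lemma_d} to $f$ and to $-f$ gives $\langle f,\sigma\rangle\leq\liminf_k\langle f,\sigma_{\gamma_k,\gamma_k'}\rangle$ and $\langle f,\sigma\rangle\geq\limsup_k\langle f,\sigma_{\gamma_k,\gamma_k'}\rangle$, which combine to \eqref{test_fnc_lemma_c_eq}. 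So I concentrate on part~\ref{test_fnc_lemma_d}, splitting $f=f^+-f^-$ and proving separately that $\lim_k\langle f^-,\sigma_{\gamma_k,\gamma_k'}\rangle=\langle f^-,\sigma\rangle$ (this is where the exponential moment of $f^-$ is used) and $\liminf_k\langle f^+,\sigma_{\gamma_k,\gamma_k'}\rangle\geq\langle f^+,\sigma\rangle$ (using only $f^+\geq 0$); subtracting the two yields \eqref{test_fnc_lemma_d_eq}.

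\emph{Construction of $\Omega_f^\Rightarrow$.} Fix $f$ and set $M=\langle\e^{\alpha(f^-)^\beta},\pp\rangle$. For $j\geq 1$ let $B_j=\{u:f^-(u)>j\}$; Markov's inequality gives $\pp(B_j)\leq M\e^{-\alpha j^\beta}$, and because $\beta>1$ one can pick $\eps_j\in(\pp(B_j),1)$ obeying \eqref{eps_sequence_assumption} and with $\log\eps_j^{-1}\asymp j^\beta$, so that $\mathbf{C}_j(\log\eps_j^{-1})^{-1}$ is summable in $j$. I then let $\Omega_f^\Rightarrow$ be the intersection of: the probability-one event of Lemma~\hyperref[abs_cont_lemma_b]{\ref*{abs_cont_lemma}\ref*{abs_cont_lemma_b}} for this sequence $(B_j,\eps_j)$; the probability-one events of Lemma~\hyperref[abs_cont_lemma_a]{\ref*{abs_cont_lemma}\ref*{abs_cont_lemma_a}} for each $B_j$; for each $m,L\in\N$, the probability-one event of Lemma~\hyperref[abs_cont_lemma_a]{\ref*{abs_cont_lemma}\ref*{abs_cont_lemma_a}} applied to the \emph{fixed} set $\{f^\pm\wedge L\neq g^\pm_{m,L}\}$, where $g^\pm_{m,L}\in C([0,1])$ with $0\leq g^\pm_{m,L}\leq L$ is obtained from the bounded function $f^\pm\wedge L$ by Lusin's theorem and Tietze extension so that $\pp(\{f^\pm\wedge L\neq g^\pm_{m,L}\})<1/m$; and finally the event $\{\RR_\infty=\RR\}$ of Theorem~\ref{deterministic_limits_thm}. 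This is a countable intersection, hence of probability one, and depends only on $f$. Observe also that $\sigma_{\gamma_k}\Rightarrow\sigma$ and Lemma~\ref{modification_lemma} give $W(\sigma_{\gamma_k,\gamma_k'},\sigma)\to 0$; that $|\gamma_k|\geq\|x_k\|_1\to\infty$; that $\langle 1,\sigma_{\gamma_k,\gamma_k'}\rangle=\langle 1,\sigma_{\gamma_k}\rangle$ is bounded by some $M_0<\infty$; and that $\hat\sigma_{\gamma_k,\gamma_k'}(B)\leq\frac{|\gamma_k|}{|\gamma_k|-|\gamma_k'|}\hat\sigma_{\gamma_k}(B)\leq 2\hat\sigma_{\gamma_k}(B)$ for $k$ large. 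Thus every estimate below, proved for $\sigma_{\gamma_k}$, transfers to $\sigma_{\gamma_k,\gamma_k'}$.

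\emph{Tail control and the bounded part.} Using the layer-cake identity $\langle(f^--L)^+,\nu\rangle=\int_L^\infty\nu(\{f^->u\})\,\dd u$ together with $\hat\sigma_\gamma(\{f^->u\})\leq\hat\sigma_\gamma(B_{\lfloor u\rfloor})\leq\mathbf{C}_{\lfloor u\rfloor}(\log\eps_{\lfloor u\rfloor}^{-1})^{-1}$ (valid on $\Omega_f^\Rightarrow$ for $u$ large, for all $\gamma\in\PP(0)$ with $|\gamma|\geq 1$) and the corresponding bound for $\sigma\in\RR_\infty$ from Lemma~\hyperref[abs_cont_lemma_a]{\ref*{abs_cont_lemma}\ref*{abs_cont_lemma_a}} and $\{\RR_\infty=\RR\}$, the summability of $\mathbf{C}_j(\log\eps_j^{-1})^{-1}$ (here $\beta>1$ is used) shows: for every $\delta>0$ there is $L$ with $\langle(f^--L)^+,\sigma_{\gamma_k,\gamma_k'}\rangle<\delta$ for all large $k$ and $\langle(f^--L)^+,\sigma\rangle<\delta$; in particular $\langle f^-,\sigma\rangle<\infty$. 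It remains to treat the bounded functions $f^\pm\wedge L$. Fixing $L$ and taking $m$ large (so the relevant masses lie below the threshold $\eps_0$), $g=g^\pm_{m,L}$ agrees with $f^\pm\wedge L$ off a fixed set $B$ of $\pp$-mass $<1/m$ on which $|f^\pm\wedge L-g|\leq L$; hence Lemma~\hyperref[abs_cont_lemma_a]{\ref*{abs_cont_lemma}\ref*{abs_cont_lemma_a}} (applied to $B$, for $\hat\sigma_{\gamma_k}$ with $|\gamma_k|$ large and for $\sigma\in\RR_\infty$) bounds $\langle|f^\pm\wedge L-g|,\sigma_{\gamma_k,\gamma_k'}\rangle$ and $\langle|f^\pm\wedge L-g|,\sigma\rangle$ by $CLM_0(\log m)^{-1}<\delta$. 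Then, for $k$ large,
\[
\langle f^+,\sigma_{\gamma_k,\gamma_k'}\rangle\ \geq\ \langle f^+\wedge L,\sigma_{\gamma_k,\gamma_k'}\rangle\ \geq\ \langle g,\sigma_{\gamma_k,\gamma_k'}\rangle-\delta ,
\]
and since $g$ is bounded and continuous, $W(\sigma_{\gamma_k,\gamma_k'},\sigma)\to 0$ gives $\langle g,\sigma_{\gamma_k,\gamma_k'}\rangle\to\langle g,\sigma\rangle\geq\langle f^+\wedge L,\sigma\rangle-\delta$; taking $\liminf_k$, then $\delta\to 0$, then $L\to\infty$ (monotone convergence) yields $\liminf_k\langle f^+,\sigma_{\gamma_k,\gamma_k'}\rangle\geq\langle f^+,\sigma\rangle$. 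The same approximation, combined now with the tail bound on $(f^--L)^+$, gives $\lim_k\langle f^-,\sigma_{\gamma_k,\gamma_k'}\rangle=\langle f^-,\sigma\rangle$, and subtracting gives part~\ref{test_fnc_lemma_d}, hence the lemma.

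The step I expect to be the main obstacle is the treatment of the bounded-but-merely-measurable functions $f^\pm\wedge L$: the empirical measures $\sigma_{\gamma_k,\gamma_k'}$ are purely atomic while $f$ need not be continuous, so weak convergence cannot be applied to $f$ directly. The resolution is structural: because $f$ is specified \emph{before} the $U_e$'s, one may invoke Lemma~\ref{abs_cont_lemma} for the \emph{fixed} sets $\{f^\pm\wedge L\neq g^\pm_{m,L}\}$ and $\{f^->j\}$ and obtain bounds that are uniform over all self-avoiding paths and over the whole (a.s.\ deterministic) limit set $\RR_\infty$; this uniformity is exactly what lets a single probability-one event $\Omega_f^\Rightarrow$ serve for every sequence $(\gamma_k)$. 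The remaining technical point — carrying all estimates over from $\sigma_{\gamma_k}$ to $\sigma_{\gamma_k,\gamma_k'}$ — is precisely the content of Lemma~\ref{modification_lemma}.
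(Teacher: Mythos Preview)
Your proof is correct and follows essentially the same approach as the paper's: Lusin approximation for the bounded truncations $f^\pm\wedge L$, control of the bad sets via Lemma~\ref{abs_cont_lemma}\ref{abs_cont_lemma_a}, and tail control via the layer-cake formula together with Lemma~\ref{abs_cont_lemma}\ref{abs_cont_lemma_b} applied to the level sets $\{f^->j\}$. The only organizational difference is that you reduce part~\ref{test_fnc_lemma_c} to part~\ref{test_fnc_lemma_d} at the outset and build $\Omega_f^\Rightarrow$ in one shot, whereas the paper first handles bounded $f$, then nonnegative unbounded $f$, and assembles the almost-sure event incrementally; the ingredients and the role of $\beta>1$ in the summability of $\mathbf{C}_j/(\log\eps_j^{-1})$ are identical.
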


\begin{proof}
For brevity, we will write $\sigma_k = \sigma_{\gamma_{k},\gamma_k'}$.
In light of the hypotheses $|\gamma_k'|/|\gamma_k|\to0$ and $\sigma_{\gamma_{k}}\Rightarrow\sigma$, Lemma \ref{modification_lemma} yields
\eeq{ \label{tv_pair_0}
\lim_{k\to\infty} \TV(\hat\sigma_{\gamma_{k}},\hat\sigma_k)
= 0,
}
as well as
\eeq{ \label{W_pair}
\lim_{k\to\infty} W(\sigma_k,\sigma) = 0.
}

Assume for now that $f$ is bounded.
Let $\mathbf{C}_1$ and $\eps_0$ be as in Lemma \ref{abs_cont_lemma}.
It is then possible to choose, for any $\eta>0$, some $\eps\in(0,\eps_0]$ so small that
\eeq{ \label{from_eta}
4\mathbf{C}_1(\log\eps^{-1})^{-1}\|f\|_\infty \leq \eta.
}
By Lusin's theorem (see, for instance, \cite[Thm.~1.10.8]{tao10}), there exists a continuous $\vphi:[0,1]\to\R$ satisfying
\eq{
\pp(\{u\in[0,1]:\vphi(u)\neq f(u)\}) < \eps \quad \text{and} \quad \|\vphi\|_\infty\leq \|f\|_\infty.
}
Let us set $B = \{u\in[0,1]:\vphi(u)\neq f(u)\}$, so that \eqref{abs_cont_ineq} applies on an almost sure event we call $ \Omega_{f,\eta}$, which depends only on the set $B$.
We also have
\eq{ 
\limsup_{k\to\infty} \hat\sigma_k(B)
\stackref{tv_pair_0}{=} \limsup_{k\to\infty}\hat\sigma_{\gamma_{k}}(B) 
\stackref{abs_cont_ineq_1}{\leq} 2\mathbf{C}_1(\log\eps^{-1})^{-1},
}
while \eqref{abs_cont_ineq_2} gives $\hat\sigma(B) \leq 2\mathbf{C}_1(\log \eps^{-1})^{-1}$.
Now, we wish to prove that the following quantity tends to $0$ as $k\to\infty$:
\eq{
|\langle f,\sigma_k-\sigma\rangle|
&= \langle f,\hat\sigma_k\rangle\langle1,\sigma_k\rangle-\langle f,\hat\sigma\rangle\langle1,\sigma\rangle| \\
&= |\langle f,\hat\sigma_k\rangle\langle1,\sigma_k-\sigma\rangle + \langle f,\hat\sigma_k-\hat\sigma\rangle\langle1,\sigma\rangle| \\
&\leq \|f\|_\infty|\langle1,\sigma_k-\sigma\rangle| + \langle f,\hat\sigma_k-\hat\sigma\rangle\langle1,\sigma\rangle.
}
The first term in the final line is easy to control; by the weak convergence $\sigma_k\Rightarrow\sigma$ shown in \eqref{W_pair}, we have $\langle1,\sigma_k-\sigma\rangle\to0$ as $k\to\infty$.
Concerning the second term, we observe that
\eq{
\limsup_{k\to\infty}|\langle f,\hat\sigma_k-\hat\sigma\rangle| 
&\stackrefp{W_pair}{=}\limsup_{k\to\infty} |\langle \vphi,\hat\sigma_k-\hat\sigma\rangle + \langle (f-\vphi)\one_B,\hat\sigma_k-\hat\sigma\rangle| \\
&\stackrefp{W_pair}{\leq}\lim_{k\to\infty} |\langle \vphi,\hat\sigma_k-\hat\sigma\rangle| + 2\|f\|_\infty\limsup_{k\to\infty}|\langle\one_B,\hat\sigma_k-\hat\sigma\rangle| \\
&\stackref{W_pair}{\leq} 0 + 4\mathbf{C}_1(\log\eps^{-1})^{-1}\|f\|_\infty
\stackref{from_eta}{\leq} \eta.
}
Choosing any decreasing sequence $(\eta_j)_{j\geq1}$ converging to zero, we can now conclude that $\langle\sigma_k,f\rangle\to\langle\sigma,f\rangle$ on the almost sure event
$\Omega_f^\Rightarrow \coloneqq \bigcap_{j=1}^\infty \Omega_{f,\eta_{j}}$.

Next consider an unbounded but nonnegative $f \colon [0,1]\to[0,\infty)$.
For each $L>0$, define the bounded function $f_L \coloneqq f \wedge L$.
Then define the event
\eeq{ \label{unbounded_almost_sure}
\Omega_f^\Rightarrow \coloneqq \bigcap_{L=1}^\infty\Omega_{f_L}^\Rightarrow,
}
so that $\P(\Omega_f^\Rightarrow) = 1$, because $\Omega_{f_L}^\Rightarrow$ was just defined in the bounded case and has probability $1$ for every $L$. 
By monotone convergence, we have
\eq{
\lim_{L\to\infty} \langle f_L,\sigma_k\rangle = \langle f,\sigma_k\rangle \quad \text{for each $k$}.
}
Furthermore, 
we have
\eeq{ \label{conclusion_b}
\langle f,\sigma\rangle = \lim_{L\to\infty}\langle f_L,\sigma\rangle =\lim_{L\to\infty} \lim_{k\to\infty} \langle f_L,\sigma_k\rangle \leq \liminf_{k\to\infty} \langle f,\sigma_k\rangle \quad \mathrm{a.s.}
}

Now suppose $f \colon [0,1]\to[0,\infty)$ is unbounded but satisfies $\langle\e^{\alpha f(u)^{\beta}},\pp\rangle<\infty$ for some $\alpha>0$ and $\beta>1$.
In particular, we have
\eq{
\infty>\langle\e^{\alpha f(u)^{\beta}},\pp\rangle 
&= \int_1^\infty \pp(\{u\in[0,1]:\, \e^{\alpha f(u)^{\beta}}\geq t\})\ \dd t \\
&= \int_0^\infty \pp(\{u\in[0,1]:\, f(u) \geq s\})\cdot\alpha\beta s^{\beta-1}\e^{\alpha s^{\beta}}\ \dd s\\
&\geq \sum_{j=0}^\infty \pp(\{u\in[0,1]:\,  f(u) \geq j+1\})\cdot\alpha \beta j^{\beta-1}\e^{\alpha j^{\beta}}.
}
If we write $B_j = \{u\in[0,1] :\, f(u) \geq j+1\}$, then the final line implies that
$\lim_{j\to\infty}\pp(B_j)\e^{\alpha j^{\beta}} = 0$.
It thus follows from Lemma \hyperref[abs_cont_lemma_b]{\ref*{abs_cont_lemma}\ref*{abs_cont_lemma_b}} that on an almost almost sure event we call $\Omega_{f,\mathrm{tail}}$ (depending only on $f$), we have
\eq{
\hat\sigma_{\gamma_{k}}(B_j)
\leq \frac{\mathbf{C}_j}{\alpha j^\beta}\quad \text{for all $k\geq1$ and $j$ large enough}.
}
Consequently,
\eeq{ \label{hat_sigma_Bj}
\hat\sigma_k(B_j) \leq \frac{|\gamma_k|}{|\gamma_k|-|\gamma_k'|}\cdot\frac{\mathbf{C}_j}{\alpha j^\beta}\quad \text{for all $k\geq1$ and $j$ large enough}.
}
In this case, we replace \eqref{unbounded_almost_sure} by
$\Omega_f^{\Rightarrow} \coloneqq \Omega_{f,\mathrm{tail}} \cap \bigcap_{L=1}^\infty \Omega_{f_L}^{\Rightarrow}$.
On this event, we have the following for all large enough $L$:
\eq{
\limsup_{k\to\infty} \langle f,\sigma_k\rangle
&\stackrefp{W_pair,hat_sigma_Bj}{\leq} \limsup_{k\to\infty}[\langle f_L,\sigma_k\rangle + \langle f - f_L,\sigma_k\rangle] \\
&\stackrefp{W_pair,hat_sigma_Bj}{\leq} \langle f_L,\sigma\rangle + \limsup_{k\to\infty}\int_L^\infty \sigma_k(\{u\in[0,1]:\, f(u)\geq t\})\ \dd t\\
&\stackrefp{W_pair,hat_sigma_Bj}{\leq} \langle f_L,\sigma\rangle +\limsup_{k\to\infty} \sum_{j=L-1}^\infty \sigma_k(B_j)  \\
&\stackref{W_pair,hat_sigma_Bj}{\leq} \langle f_L,\sigma\rangle + \langle 1,\sigma\rangle\sum_{j=L-1}^\infty\frac{\mathbf{C}_j}{\alpha j^\beta}.
}
Since $\beta>1$, we can take $L\to\infty$ and conclude
\eq{
\limsup_{k\to\infty} \langle f,\sigma_k\rangle \leq \lim_{L\to\infty} \langle f_L,\sigma\rangle = \langle f,\sigma\rangle,
}
which together with \eqref{conclusion_b} shows $\langle f,\sigma_k\rangle\to\langle f,\sigma\rangle$. 
To complete the proof of part \ref{test_fnc_lemma_c} for general $f \colon [0,1]\to\R$ satisfying $\langle\e^{\alpha|f(u)|^{\beta}},\pp\rangle<\infty$, we apply this conclusion separately to $f^-$ and $f^+$.
Meanwhile, part \ref{test_fnc_lemma_d} is obtained by applying \ref{test_fnc_lemma_c} to $f^-$ and \eqref{conclusion_b} to $f^+$.
\end{proof}

Before proceeding to our last lemma, we record the following consequence of Lemma \ref{test_fnc_lemma}.
It says that within the set $\RR$ from Theorem \ref{deterministic_limits_thm}, weak convergence is equivalent to strong convergence.

\begin{cor} \label{weak_to_strong_cor}
If $(\sigma_{j})_{j\geq1}$ is a sequence in $\RR$ converging (weakly) to $\sigma$, then
\eq{
\lim_{j\to\infty} \langle f,\sigma_j\rangle = \langle f,\sigma\rangle \quad \text{for all measurable $f:[0,1]\to\R$ satisfying \eqref{test_fnc_condition}}.
}
In particular, $\sigma_j(B)\to\sigma(B)$ for every measurable $B\subset[0,1]$.
\end{cor}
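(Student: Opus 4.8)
The plan is to reduce the assertion --- which concerns an abstract sequence in the deterministic set $\RR$ --- to the setting of empirical measures $\sigma_\gamma$ of lattice paths, where Lemma \ref{test_fnc_lemma} applies verbatim, by means of a diagonal argument. Fix a measurable $f$ satisfying \eqref{test_fnc_condition}. By Theorem \ref{deterministic_limits_thm} the event $\{\RR_\infty=\RR\}$ has probability one, and Lemma \ref{test_fnc_lemma} furnishes another probability-one event $\Omega_f^\Rightarrow$; hence $\Omega_0\coloneqq\Omega_f^\Rightarrow\cap\{\RR_\infty=\RR\}$ is nonempty, and since the desired conclusion $\langle f,\sigma_j\rangle\to\langle f,\sigma\rangle$ involves only the deterministic objects $f$, $(\sigma_j)_{j\geq1}$, $\sigma$, it suffices to carry out the argument on one fixed $\omega\in\Omega_0$. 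On this $\omega$ we have $\RR=\RR_\infty$, so each $\sigma_j\in\RR_\infty$, and by the definition \eqref{directionless_defs} (equivalently Lemma \hyperref[alternative_R_def_b]{\ref*{alternative_R_def}\ref*{alternative_R_def_b}} together with \hyperref[alternative_R_def_a]{\ref*{alternative_R_def}\ref*{alternative_R_def_a}}) there exist, for each fixed $j$, paths $\gamma_{j,k}\in\PP(0,x_{j,k})$ with $\|x_{j,k}\|_2\to\infty$ and $W(\sigma_j,\sigma_{\gamma_{j,k}})\to0$ as $k\to\infty$. Applying Lemma \hyperref[test_fnc_lemma_c]{\ref*{test_fnc_lemma}\ref*{test_fnc_lemma_c}} to the sequence $(\gamma_{j,k})_{k\geq1}$ with empty deleted subpaths $\gamma_k'=\varnothing$ (so that $\sigma_{\gamma_k,\gamma_k'}=\sigma_{\gamma_k}$ and the hypothesis $|\gamma_k'|/|\gamma_k|\to0$ is vacuously true), one obtains $\langle f,\sigma_{\gamma_{j,k}}\rangle\to\langle f,\sigma_j\rangle$ as $k\to\infty$.

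Next I would extract a diagonal sequence: for each $j$ choose $k(j)$ large enough that, setting $\tilde\gamma_j\coloneqq\gamma_{j,k(j)}$ and $\tilde x_j\coloneqq x_{j,k(j)}$, one has $\|\tilde x_j\|_2\geq j$, $W(\sigma_j,\sigma_{\tilde\gamma_j})\leq 1/j$, and $|\langle f,\sigma_{\tilde\gamma_j}\rangle-\langle f,\sigma_j\rangle|\leq 1/j$. Then $(\tilde\gamma_j)_{j\geq1}$ is a sequence of paths whose endpoints $\tilde x_j$ escape to infinity, and since $W$ metrizes weak convergence of finite measures on the compact set $[0,1]$, the bound $W(\sigma_{\tilde\gamma_j},\sigma)\leq W(\sigma_{\tilde\gamma_j},\sigma_j)+W(\sigma_j,\sigma)\leq 1/j+W(\sigma_j,\sigma)\to0$ shows $\sigma_{\tilde\gamma_j}\Rightarrow\sigma$, with $\sigma\in\RR_\infty$ (either because $\sigma$ is a weak limit of path measures with diverging endpoints so that \eqref{directionless_defs} applies, or because $\RR=\RR_\infty$ is closed by Claim \ref{closed_claim}). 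A second application of Lemma \hyperref[test_fnc_lemma_c]{\ref*{test_fnc_lemma}\ref*{test_fnc_lemma_c}}, now to $(\tilde\gamma_j)_{j\geq1}$ with empty deleted subpaths, yields $\langle f,\sigma_{\tilde\gamma_j}\rangle\to\langle f,\sigma\rangle$ as $j\to\infty$. Combining this with the triangle inequality $|\langle f,\sigma_j\rangle-\langle f,\sigma\rangle|\leq|\langle f,\sigma_j\rangle-\langle f,\sigma_{\tilde\gamma_j}\rangle|+|\langle f,\sigma_{\tilde\gamma_j}\rangle-\langle f,\sigma\rangle|\leq 1/j+|\langle f,\sigma_{\tilde\gamma_j}\rangle-\langle f,\sigma\rangle|$ and letting $j\to\infty$ gives $\langle f,\sigma_j\rangle\to\langle f,\sigma\rangle$. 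For the final assertion, apply this to $f=\one_B$, which satisfies \eqref{test_fnc_condition} trivially since $\pp$ is a probability measure and $\one_B$ is bounded, so that $\sigma_j(B)=\langle\one_B,\sigma_j\rangle\to\langle\one_B,\sigma\rangle=\sigma(B)$.

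The only real care needed here is bookkeeping with quantifiers rather than any new analytic input. One must use that the probability-one events $\Omega_f^\Rightarrow$ supplied by Lemma \ref{test_fnc_lemma} are indexed by $f$ alone, and not by the approximating paths, so that a single event services both applications above; and one must verify that the diagonal sequence $(\tilde\gamma_j)_{j\geq1}$ still has endpoints tending to infinity and still converges weakly to $\sigma$, so that Lemma \ref{test_fnc_lemma} is legitimately applicable to it. Everything else is immediate from the estimates already contained in Lemma \ref{test_fnc_lemma} and from the identity $\sigma_{\gamma,\varnothing}=\sigma_\gamma$.
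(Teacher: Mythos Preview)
Your proof is correct and follows essentially the same approach as the paper: work on the almost sure event $\Omega_f^\Rightarrow\cap\{\RR_\infty=\RR\}$, approximate each $\sigma_j$ by empirical measures of paths, and run a diagonal argument so that Lemma~\ref{test_fnc_lemma} can be applied twice. Your organization is in fact slightly cleaner than the paper's: by choosing $k(j)$ for each $j$ so that $|\langle f,\sigma_{\tilde\gamma_j}\rangle-\langle f,\sigma_j\rangle|\leq 1/j$, you obtain $\langle f,\sigma_j\rangle\to\langle f,\sigma\rangle$ for the full sequence directly, whereas the paper first extracts a subsequence $(j_\ell)$ along which convergence holds and then invokes the ``every subsequence has a further subsequence'' trick to recover full convergence.
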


\begin{proof}
Let us assume the almost sure event that $\RR_\infty$ is equal to $\RR$, so that $\sigma_{j}\in\RR_\infty$ for each $j$, and thus $\sigma$ also belongs to $\RR_\infty$ by Claim \ref{closed_claim}.
For each $j$, there is some sequence $(n_{j,k})_{k\geq1}$, tending to infinity as $k\to\infty$, which admits $\sigma_{j,k}\in\RR_{n_{j,k}}$ such that $W(\sigma_{j,k},\sigma_{j})\to0$ as $k\to\infty$.
Now let $f:[0,1]\to\R$ be any measurable function that satisfies \eqref{test_fnc_condition}.
On the almost sure event $\Omega_f^\Rightarrow$ from Lemma \ref{test_fnc_lemma}, for every $j$ we have
$\langle f,\sigma_{j,k}\rangle\to\langle f,\sigma_{j}\rangle$  as $k\to\infty$.

We now construct a new sequence $(\rho_\ell)_{\ell\geq1}$ in $\Sigma$ as follows.
First each $\ell$, choose $j_{\ell}$ such that $j_\ell\geq\ell$ and $W(\sigma_{j_\ell},\sigma) \leq \ell^{-1}$.
Following this, select $k_{\ell}$ sufficiently large that all three of the following inequalities hold:
\eq{
W(\sigma_{j_\ell,k_\ell},\sigma_{j_\ell})\leq \ell^{-1},\quad
|\langle f,\sigma_{j_\ell,k_\ell}-\sigma_{j_\ell}\rangle| \leq \ell^{-1}, \quad \text{and} \quad
n_{j_{\ell},k_\ell} \geq \ell.
}
Setting $\rho_\ell = \sigma_{j_\ell,k_\ell}$, we have $W(\rho_\ell,\sigma)\leq 2\ell^{-1}$, and thus $W(\rho_\ell,\sigma)\to0$ as $\ell\to\infty$.
Moreover, since $\rho_\ell$ belongs to $\RR_{n_{j_\ell,k_\ell}}$ and $n_{j_\ell,k_\ell}\to\infty$ as $\ell\to\infty$,
we have $\langle f,\rho_\ell\rangle\to\langle f,\sigma\rangle$ on the event $\Omega_f^\Rightarrow$.
On the other hand, we also know $\langle f,\rho_\ell-\sigma_{j_\ell}\rangle\to0$.
We are thus left to conclude that $\langle f,\sigma_{j_\ell}\rangle\to\langle f,\sigma\rangle$, which is a deterministic statement.
Finally, notice that if $(\sigma_{j})_{j\geq1}$ were replaced by any of its subsequences, we could have fashioned the same argument to find a further subsequence $(\sigma_{j_\ell})_{\ell\geq1}$ such that  $\langle f,\sigma_{j_\ell}\rangle\to\langle f,\sigma\rangle$ as $\ell\to\infty$.
Therefore, we have proved $\langle f,\sigma_{j}\rangle\to\langle f,\sigma\rangle$ as $j\to\infty$.
\end{proof}

Our last lemma concerns the pushforwards of measures belonging to a convergent sequence.
It ensures that the resulting sequence of pushfoward measures also converges.
The statements are trivial if $\tau$ is bounded and continuous, but would be false in general without restricting to an almost sure event.
Recall the definitions of $\tau_*(\sigma)$ and $\tau_*^+(\sigma)$ from \eqref{push_def} and \eqref{pos_push_def}.

\begin{lemma} \label{pushforward_lemma}
For each measurable $\tau \colon [0,1]\to\R$,
there exists a probability-one event $\Omega_\tau^*$ on which the following holds.
Whenever $\sigma_{\gamma_k}$ converges to some $\sigma\in\RR_\infty$ as $k\to\infty$, we have
\eq{
\tau_*(\sigma_{\gamma_k}) \Rightarrow \tau_*(\sigma), \quad \hat\tau_*(\sigma_{\gamma_k})\Rightarrow\hat\tau_*(\sigma), \quad \tau_*^+(\sigma_{\gamma_k})\Rightarrow\tau_*^+(\sigma) \quad \text{as $k\to\infty$}.
}
If $\tau_*(\sigma)$ is not an atom at zero, then also
\eq{
\hat\tau_*^+(\sigma_{\gamma_k}) \Rightarrow \hat\tau_*^+(\sigma) \quad \text{as $k\to\infty$}.
}
\end{lemma}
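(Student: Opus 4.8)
The plan is to deduce every assertion from the single–test-function statement in Lemma~\hyperref[test_fnc_lemma_c]{\ref*{test_fnc_lemma}\ref*{test_fnc_lemma_c}}, applied with $\gamma_k'=\varnothing$ (so that $\sigma_{\gamma_k,\gamma_k'}=\sigma_{\gamma_k}$ by \eqref{sigma_minus_def} and the requirement $|\gamma_k'|/|\gamma_k|\to0$ is automatic), and then to intersect the resulting probability-one events over a countable family of test functions. Concretely, I would first fix a countable set $\Phi$ consisting of a uniformly dense subset of the compactly supported continuous functions on $\R$ together with the constant function $1$; a standard portmanteau/approximation argument shows that for a uniformly tight sequence $(\mu_k)$ of finite Borel measures on $\R$ and a finite measure $\mu$, one has $\mu_k\Rightarrow\mu$ as soon as $\langle\varphi,\mu_k\rangle\to\langle\varphi,\mu\rangle$ for every $\varphi\in\Phi$ (the case $\varphi\equiv1$ giving convergence of total masses). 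For $\varphi\in\Phi$ the functions $\varphi\circ\tau$ and $(\varphi\circ\tau)\one_{\{\tau(u)\neq0\}}$, as well as $\one_{\{\tau(u)\neq0\}}$, are bounded and measurable on $[0,1]$ and so trivially satisfy the exponential-moment condition \eqref{test_fnc_condition}. I would then define $\Omega_\tau^*$ to be the intersection of the corresponding events $\Omega_f^\Rightarrow$ furnished by Lemma~\ref{test_fnc_lemma}, together with the probability-one events supplied by Lemma~\hyperref[abs_cont_lemma_a]{\ref*{abs_cont_lemma}\ref*{abs_cont_lemma_a}} for the sets $B_m\coloneqq\{u\in[0,1]:\,|\tau(u)|>m\}$, $m$ ranging over the integers with $\pp(B_m)<\eps_0$; being a countable intersection of probability-one events, $\Omega_\tau^*$ has probability one.

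Working on $\Omega_\tau^*$, assume $\sigma_{\gamma_k}\to\sigma\in\RR_\infty$ in the metric $W$ on $\Sigma$, so that $\langle1,\sigma_{\gamma_k}\rangle\to\langle1,\sigma\rangle\in[1,\infty)$ and $M_0\coloneqq\sup_k\langle1,\sigma_{\gamma_k}\rangle<\infty$. The first step is uniform tightness of $(\tau_*(\sigma_{\gamma_k}))_k$ and $(\tau_*^+(\sigma_{\gamma_k}))_k$: since $\tau$ is real-valued, $\pp(B_m)\to0$ as $m\to\infty$, and for $m$ large \eqref{abs_cont_ineq_1} gives $\hat\sigma_{\gamma_k}(B_m)\le 2\mathbf{C}_1(\log\pp(B_m)^{-1})^{-1}$ for all $k$ with $|\gamma_k|$ large enough (and $|\gamma_k|\to\infty$); multiplying by $M_0$ and treating the finitely many remaining $k$ individually, the masses $\tau_*(\sigma_{\gamma_k})(\{|t|>m\})=\sigma_{\gamma_k}(B_m)$ are uniformly small for $m$ large, and $\tau_*^+(\sigma_{\gamma_k})(\{|t|>m\})\le\sigma_{\gamma_k}(B_m)$ gives the same for the positive parts. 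Next, for each $\varphi\in\Phi$, Lemma~\hyperref[test_fnc_lemma_c]{\ref*{test_fnc_lemma}\ref*{test_fnc_lemma_c}} with $f=\varphi\circ\tau$ yields $\langle\varphi,\tau_*(\sigma_{\gamma_k})\rangle=\langle\varphi\circ\tau,\sigma_{\gamma_k}\rangle\to\langle\varphi\circ\tau,\sigma\rangle=\langle\varphi,\tau_*(\sigma)\rangle$ by \eqref{push_def}, and with $f=(\varphi\circ\tau)\one_{\{\tau\neq0\}}$ it yields $\langle\varphi,\tau_*^+(\sigma_{\gamma_k})\rangle\to\langle\varphi,\tau_*^+(\sigma)\rangle$ by \eqref{pos_push_def}; the choice $f=\one_{\{\tau\neq0\}}$ records $\langle1,\tau_*^+(\sigma_{\gamma_k})\rangle=\sigma_{\gamma_k}(\{u:\tau(u)\neq0\})\to\sigma(\{u:\tau(u)\neq0\})=\langle1,\tau_*^+(\sigma)\rangle$. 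Combining tightness with convergence against $\Phi$ (and of total masses) gives the first and third claimed weak convergences $\tau_*(\sigma_{\gamma_k})\Rightarrow\tau_*(\sigma)$ and $\tau_*^+(\sigma_{\gamma_k})\Rightarrow\tau_*^+(\sigma)$.

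The normalized statements then follow by dividing by convergent positive scalars. Since $\langle1,\sigma\rangle\ge1>0$ and $\langle1,\tau_*(\sigma)\rangle=\langle1,\sigma\rangle$, the measures $\hat\tau_*(\sigma_{\gamma_k})=\tau_*(\sigma_{\gamma_k})/\langle1,\sigma_{\gamma_k}\rangle$ converge weakly to $\tau_*(\sigma)/\langle1,\sigma\rangle=\hat\tau_*(\sigma)$. For the final claim, observe that $\tau_*(\sigma)$ failing to be an atom at zero is exactly the statement $\sigma(\{u:\tau(u)\neq0\})>0$, i.e.\ $\langle1,\tau_*^+(\sigma)\rangle>0$; since $\langle1,\tau_*^+(\sigma_{\gamma_k})\rangle\to\langle1,\tau_*^+(\sigma)\rangle>0$, the normalization $\hat\tau_*^+(\sigma_{\gamma_k})=\tau_*^+(\sigma_{\gamma_k})/\langle1,\tau_*^+(\sigma_{\gamma_k})\rangle$ is the genuine one for all large $k$, and it converges weakly to $\tau_*^+(\sigma)/\langle1,\tau_*^+(\sigma)\rangle=\hat\tau_*^+(\sigma)$.

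I expect the only genuinely delicate point to be the tightness step, and this is where the structure of $\RR_\infty$ is essential: because the pushforward measures live on the non-compact space $\R$, convergence of integrals against a countable family of bounded continuous functions does not by itself determine the weak limit, and one must exclude escape of mass to infinity. The quantitative absolute-continuity estimate of Lemma~\hyperref[abs_cont_lemma_a]{\ref*{abs_cont_lemma}\ref*{abs_cont_lemma_a}} (which underlies Theorem~\hyperref[var_form_thm_c]{\ref*{var_form_thm}\ref*{var_form_thm_c}}) supplies exactly this, since $\pp(B_m)\to0$ forces a uniform-in-$k$ bound on $\hat\sigma_{\gamma_k}(B_m)$ that vanishes with $m$. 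Everything else is bookkeeping with Lemma~\ref{test_fnc_lemma} and elementary operations on weakly convergent finite measures.
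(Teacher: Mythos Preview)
Your proof is correct and follows essentially the same route as the paper's: both reduce to a countable family of bounded test functions $f$ (compositions with $\tau$, possibly multiplied by $\one_{\{\tau\neq0\}}$) and invoke Lemma~\ref{test_fnc_lemma}\ref{test_fnc_lemma_c} with $\gamma_k'=\varnothing$. The only organizational difference is that you isolate tightness as a separate step via Lemma~\ref{abs_cont_lemma}\ref{abs_cont_lemma_a} applied to the sets $B_m=\{|\tau|>m\}$, whereas the paper absorbs the tail control into the same framework by including the indicators $\one_{A_L}=\one_{\{|\tau|\leq L\}}$ among its test functions and reading off $\langle\one_{\Omega\setminus A_L},\sigma_k\rangle\to\langle\one_{\Omega\setminus A_L},\sigma\rangle$ from the $f_i\equiv1$ case.
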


\begin{proof}
Recall that for every $L>0$, the space $C^0([-L,L])$ of continuous functions $f \colon  [-L,L]\to\R$, is separable with respect to the norm
\eq{
\|f\|_{\infty,L} \coloneqq \sup_{t\in[-L,L]} |f(t)|.
}
Furthermore, every element of $C^0([-L,L])$ can be easily extended to a bounded, continuous function on $\R$ with the same sup-norm.
Therefore, it is possible to identify a countable collection $\{f_i\}_{i=1}^\infty$ of bounded, continuous functions on $\R$ such that $\{f_i\}_{i=1}^\infty$ is dense with respect to $\|\cdot\|_{\infty,L}$ for every $L>0$.
Let us assume $f_1\equiv1$.

Now let $\tau \colon  [0,1]\to\R$ be given, and define the sets
\eq{
A_L \coloneqq \{u\in[0,1]:|\tau(u)|\leq L\}, \quad L\geq0.
}
For brevity, let us write $\sigma_k = \sigma_{\gamma_k}$.
On the almost sure event
\eeq{ \label{tau_star_almost_sure}
\Omega_\tau^* \coloneqq \bigcap_{i=1}^\infty\bigcap_{L=0}^\infty\Big(\Omega^\Rightarrow_{(f_i\circ\tau)\one_{A_L}}\cap\Omega^\Rightarrow_{(f_i\circ\tau)\one_{A_L}\one_{\Omega\setminus A_0}}\Big),
}
where $\Omega_{(\cdot)}^\Rightarrow$ is as in Lemma \ref{test_fnc_lemma}, we have the following limit whenever $\sigma_k$ converges as $k\to\infty$ to some $\sigma\in\RR_\infty$:
\begin{subequations}
\eeq{
\lim_{k\to\infty} \langle(f_i\circ \tau)\one_{A_L},\sigma_k\rangle = \langle (f_i\circ \tau)\one_{A_L},\sigma\rangle 
\quad \text{for every $i$ and $L$}.
}
In particular, the case $i=1$ yields
 \label{choices}
\eeq{
 \label{i_1_case}
\lim_{k\to\infty} \langle\one_{A_L},\sigma_k\rangle = \langle\one_{A_L},\sigma\rangle \quad \text{for every $L$}.
}
Now consider any bounded, continuous $f \colon  \R\to\R$.
Given any $\eps>0$, we can take an integer $L$ so large that
\eeq{ \label{L_choice}
\langle\one_{\Omega\setminus A_L},\sigma\rangle \leq \frac{\eps}{\|f\|_\infty}.
}
Next we choose $i$ such that 
\eeq{\label{i_choice}
\|f - f_i\|_{\infty,L} \leq \frac{\eps}{\langle 1,\sigma\rangle}.
}
\end{subequations}
Several applications of the triangle inequality yield
\eeq{ \label{several_triangles} 
&|\langle f\circ\tau,\sigma_k\rangle - \langle f\circ\tau,\sigma\rangle| \\
&\leq \langle |f\circ\tau|\one_{\Omega\setminus A_L},\sigma_k+\sigma\rangle
+ \langle |f\circ\tau-f_i\circ\tau|\one_{A_L},\sigma_k+\sigma\rangle \\
&\hphantom{\leq}
+ |\langle (f_i\circ\tau)\one_{A_L},\sigma_k\rangle-\langle (f_i\circ\tau)\one_{A_L},\sigma\rangle| \\
&\leq \|f\|_\infty\langle\one_{\Omega\setminus A_L},\sigma_k+\sigma\rangle + \|f-f_i\|_{\infty,L}\langle1,\sigma_k+\sigma\rangle \\
&\hphantom{\leq}
+ |\langle (f_i\circ\tau)\one_{A_L},\sigma_k\rangle-\langle (f_i\circ\tau)\one_{A_L},\sigma\rangle|,
}
and so \eqref{choices} shows
\eq{
\limsup_{k\to\infty} |\langle f\circ\tau,\sigma_k\rangle-\langle f\circ\tau,\sigma\rangle| \leq 2\eps+2\eps+0 =4\eps.
}
As $\eps$ is arbitrary, we in fact have
\eq{
\lim_{k\to\infty}\langle f,\tau_*(\sigma_k)\rangle
= \lim_{k\to\infty}\langle f\circ\tau,\sigma_k\rangle
= \langle f\circ\tau,\sigma\rangle
= \langle f,\tau_*(\sigma)\rangle.
}
As these equalities hold for every bounded, continuous $f \colon \R\to\R$, we have shown that $\tau_*(\sigma_k)\Rightarrow\tau_*(\sigma)$ as $k\to\infty$ on the event $\Omega_\tau^*$ from \eqref{tau_star_almost_sure}.
Since $\langle1,\sigma_k\rangle\to\langle 1,\sigma\rangle\geq1$, it immediately follows that $\hat\tau_*(\sigma_k)\Rightarrow\hat\tau_*(\sigma)$.

Next notice that \eqref{choices} and \eqref{several_triangles} remain true if every instance of $\one_{A_L}$ and $\one_{\Omega\setminus A_L}$ is replaced by $\one_{A_L}\one_{\Omega\setminus A_0}$ and $\one_{\Omega\setminus A_L}\one_{\Omega\setminus A_0}$, respectively.
Therefore, 
\eq{
\lim_{k\to\infty}\langle f,\tau_*^+(\sigma_k)\rangle
= \lim_{k\to\infty}{\langle (f\circ\tau)\one_{\Omega\setminus A_0},\sigma_k\rangle} 
=  {\langle(f\circ\tau)\one_{\Omega\setminus A_0},\sigma\rangle} 
= \langle f,\tau_*^+(\sigma)\rangle.
}
Hence $\tau_*^+(\sigma_k)\Rightarrow\tau_*^+(\sigma)$ on the event $\Omega_\tau^*$.
Finally, if $\langle \one_{\Omega\setminus A_0},\sigma\rangle > 0$, then this string of equalities can be modified to show
\eq{
\lim_{k\to\infty}\langle f,\hat\tau_*^+(\sigma_k)\rangle
= \lim_{k\to\infty}\frac{\langle (f\circ\tau)\one_{\Omega\setminus A_0},\sigma_k\rangle}{\langle\one_{\Omega\setminus A_0},\sigma_k\rangle}
=  \frac{\langle(f\circ\tau)\one_{\Omega\setminus A_0},\sigma\rangle}{\langle\one_{\Omega\setminus A_0},\sigma\rangle}
= \langle f,\hat\tau_*^+(\sigma)\rangle.
}
That is, $\hat\tau_*^+(\sigma_k)\Rightarrow\hat\tau^+_*(\sigma)$.
\end{proof}

\chapter{Proof of Variational Formula} \label{var_form_proof}

In order to return to the original setup from Chapters \ref{var_form_sec}--\ref{modification_sec}, we fix the Radon measure $\pp$ from Chapter \ref{constraint_sec} to be Lebesgue measure $\Lambda$ on $[0,1]$.
That is, $(U_e)_{e\in E(\Z^d)}$ is a collection of i.i.d.~uniform $[0,1]$-valued random variables.
Recall that $\LL=\tau_*(\Lambda)$ is the law of $\tau(U_e)\geq0$, with distribution function $F$.

\section{Statements of most general results}
Here we state the variational formula and our most general results regarding convergence to its minimizers.
In the next section, we will use the two theorems given below to prove the results from Chapters \ref{intro} and \ref{var_form_sec}.
First we address the critical and supercritical cases.

\begin{thm} \label{var_formula_thm_super}
Assume $F(0)\geq p_\cc(\Z^d)$.

\begin{enumerate}[label=\textup{(\alph*)}]

\item \label{var_formula_thm_super_a}
For every $\bxi\in\S^{d-1}$, we have
\eeq{ \label{linear_functional_0}
\inf_{\sigma\in\RR^\bxi}\langle\tau,\sigma\rangle=\inf_{\sigma\in\RR}\langle\tau,\sigma\rangle = 0 \quad \text{for every $\bxi\in\S^{d-1}$}.
}

\item \label{var_formula_thm_super_b}
If $F(0)>p_\cc(\Z^d)$, then $\langle \tau,\sigma\rangle=0$ for some $\sigma\in\RR^\bxi$.

\item \label{var_formula_thm_super_c} 
Under the coupling \eqref{function_coupling}, there is a probability-one event on which the following statements hold.

\begin{enumerate}[label=\textup{(c\arabic*)}]

\item \label{var_formula_thm_super_c1}
For any sequence of paths $\gamma_k\in\PP(0,x_k)$ and $\gamma_k'\subset\gamma_k$ such that
\eeq{ \label{geodesic_like_1}
\lim_{k\to\infty} \|x_k\|_2 = \infty, \quad
\lim_{k\to\infty} \frac{|\gamma_k'|}{|\gamma_k|} = 0, \quad
\lim_{k\to\infty} \frac{T(\gamma_k\setminus\gamma_k')}{\|x_k\|_2} = 0,
}
we have
\eq{
\hat\nu_{\gamma_k}\Rightarrow\delta_0 \quad \text{as $k\to\infty$}.
}

\item \label{var_formula_thm_super_c2}
For any infinite geodesic $\Gamma$, we have
\eq{
\hat\nu_{\Gamma^{(\ell)}}\Rightarrow\delta_0 \quad \text{as $\ell\to\infty$}.
}

\end{enumerate}
\end{enumerate}
\end{thm}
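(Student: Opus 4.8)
The plan is to treat parts \ref{var_formula_thm_super_a}, \ref{var_formula_thm_super_b}, and \ref{var_formula_thm_super_c} in turn, since \ref{var_formula_thm_super_a} feeds directly into the others. For \ref{var_formula_thm_super_a}: by Theorem \ref{time_constant_thm}, $F(0)\geq p_\cc(\Z^d)$ forces $\mu_\bxi(\LL)=0$, and by Lemma \hyperref[alternative_R_def_b]{\ref*{alternative_R_def}\ref*{alternative_R_def_b}} together with Theorem \ref{deterministic_limits_thm} we have $\RR^\bxi\subset\RR$, so $\inf_{\sigma\in\RR}\langle\tau,\sigma\rangle\leq\inf_{\sigma\in\RR^\bxi}\langle\tau,\sigma\rangle$. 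For the reverse chain of inequalities I would exhibit measures in $\RR^\bxi$ along which $\langle\tau,\sigma\rangle$ is small. The natural construction is to take paths $\gamma_k\in\Geo(0,[k\bxi])$ (using Theorem \ref{geo_exist_thm}\ref{geo_exist_thm_c} when $F(0)>p_\cc(\Z^d)$, or else approximating with near-geodesics); Proposition \ref{replacement_thm} says that after deleting a sublinear number of edges $\gamma_k'$, the $\wt T$-passage time of $\gamma_k\setminus\gamma_k'$ is $o(\|x_k\|_2)$. Passing to a subsequential weak limit $\sigma_k=\sigma_{\gamma_k}\Rightarrow\sigma\in\RR^\bxi$ (via compactness, Lemma \ref{compactness_lemma}, and Lemma \hyperref[alternative_R_def_a]{\ref*{alternative_R_def}\ref*{alternative_R_def_a}}), Lemma \ref{test_fnc_lemma}\ref{test_fnc_lemma_d} applied to $f=\tau$ (which is nonnegative, so the hypothesis on $\tau^-$ is vacuous) gives $\langle\tau,\sigma\rangle\leq\liminf_k\langle\tau,\sigma_{\gamma_k,\gamma_k'}\rangle = \liminf_k T(\gamma_k\setminus\gamma_k')/\|x_k\|_2 = 0$. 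Since $\tau\geq0$ the infimum is exactly $0$, proving both \ref{var_formula_thm_super_a} and \ref{var_formula_thm_super_b} (in the strictly supercritical case the minimizer is attained because geodesics genuinely exist).

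For \ref{var_formula_thm_super_c1}, fix the almost sure event $\Omega_\tau^\Rightarrow$ from Lemma \ref{test_fnc_lemma} (applied to $f=\tau$ and also to $f=\one_{[0,1]}$, i.e. the constant function). Given a sequence satisfying \eqref{geodesic_like_1}, I would argue by subsequences: from any subsequence extract a further one along which $\sigma_{\gamma_k}\Rightarrow\sigma$ for some $\sigma\in\RR^\bxi$ (compactness plus Lemma \ref{alternative_R_def}; note $\langle1,\sigma_{\gamma_k}\rangle=|\gamma_k|/\|x_k\|_2$ stays bounded because geodesic-like paths to $[k\bxi]$ have linearly bounded length — here one uses that in the $F(0)\ge p_\cc$ regime geodesics, when they exist, can be taken of length $O(\|x_k\|_1)$, or one restricts to the deterministic bound $|\gamma_k|\ge\|x_k\|_1$ for the normalization and an a priori upper bound; this is the one place needing a little care). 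On this subsequence, Lemma \ref{test_fnc_lemma}\ref{test_fnc_lemma_c} (hypothesis trivially satisfied since $\tau$ is the coupling function, but really we only need \ref{test_fnc_lemma_d}) gives $\langle\tau,\sigma\rangle\leq\liminf_k T(\gamma_k\setminus\gamma_k')/\|x_k\|_2=0$, hence $\langle\tau,\sigma\rangle=0$. By Lemma \ref{pushforward_lemma} on the event $\Omega_\tau^*$, $\hat\nu_{\gamma_k}=\hat\tau_*(\sigma_{\gamma_k})\Rightarrow\hat\tau_*(\sigma)$. But $\langle\tau,\sigma\rangle=0$ with $\tau\geq0$ and $\sigma$ a positive measure of mass $\geq1$ forces $\tau=0$ $\sigma$-a.e., i.e. $\tau_*(\sigma)$ is supported at $0$, so $\hat\tau_*(\sigma)=\delta_0$. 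Since every subsequence has a further subsequence with $\hat\nu_{\gamma_k}\Rightarrow\delta_0$, the full sequence converges to $\delta_0$.

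For \ref{var_formula_thm_super_c2}, let $\Gamma\in\Geo_\infty$ with vertices $x_0,x_1,\dots$. Set $\gamma_k=\Gamma^{(\ell_k)}$ for any increasing sequence $\ell_k$; by Proposition \ref{addition_thm} (with $\tau^\pert\equiv0$, $\af=0$), $\wt T(\Gamma^{(\ell_k)})/n_k\to\mu_{\zetab_k}$ where $x_{\ell_k}=x_0+[n_k\zetab_k]$, and by part \ref{var_formula_thm_super_a}/Theorem \ref{time_constant_thm} every $\mu_\zetab=0$; thus $T(\Gamma^{(\ell_k)})/\|x_{\ell_k}\|_2\to0$. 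Taking $\gamma_k'=\varnothing$, the hypotheses of \ref{var_formula_thm_super_c1} hold, so $\hat\nu_{\Gamma^{(\ell_k)}}\Rightarrow\delta_0$; as this holds for every increasing $(\ell_k)$, we conclude $\hat\nu_{\Gamma^{(\ell)}}\Rightarrow\delta_0$ as $\ell\to\infty$. One takes the intersection of $\Omega_\tau^\Rightarrow$, $\Omega_\tau^*$, and the probability-one events from Propositions \ref{replacement_thm} and \ref{addition_thm} to get the single probability-one event on which all of \ref{var_formula_thm_super_c} holds. The main obstacle I anticipate is the bookkeeping in \ref{var_formula_thm_super_c1} around the normalization $\langle1,\sigma_{\gamma_k}\rangle$: one must ensure the limiting $\sigma$ has finite (indeed $\geq1$) mass so that $\hat\tau_*(\sigma)$ is a genuine probability measure, which is why paths in \eqref{geodesic_like_1} are assumed geodesic-like rather than arbitrary — and verifying the pushforward convergence of Lemma \ref{pushforward_lemma} applies (i.e. that $\tau_*(\sigma)$ is or isn't an atom at $0$ is handled correctly) is the delicate point, though here since the limit \emph{is} $\delta_0$ we only need the first three convergences in that lemma, which hold unconditionally on $\Omega_\tau^*$.
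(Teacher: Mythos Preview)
Your argument for \ref{var_formula_thm_super_c1} has a genuine gap at exactly the place you flag: the hypotheses \eqref{geodesic_like_1} allow \emph{arbitrary} paths $\gamma_k\in\PP(0,x_k)$, so there is no bound on $\langle1,\sigma_{\gamma_k}\rangle=|\gamma_k|/\|x_k\|_2$, and your compactness extraction of a limit $\sigma\in\Sigma$ simply fails. (Even restricting to geodesics would not save this, since in the critical case $F(0)=p_\cc(\Z^d)$ geodesics can have superlinear length; cf.~Example \ref{critical_length}.) The paper sidesteps compactness entirely with a direct, elementary argument: writing $T(\gamma_k\setminus\gamma_k')/\|x_k\|_2 = \big[(|\gamma_k|-|\gamma_k'|)/\|x_k\|_2\big]\cdot\langle t,\hat\nu_{\gamma_k,\gamma_k'}\rangle$ and noting the bracketed factor is bounded \emph{below} by $1$ (since $|\gamma_k|\geq\|x_k\|_1\geq\|x_k\|_2$ and $|\gamma_k'|/|\gamma_k|\to0$), one gets $\langle t,\hat\nu_{\gamma_k,\gamma_k'}\rangle\to0$, whence $\hat\nu_{\gamma_k,\gamma_k'}\Rightarrow\delta_0$ by Markov's inequality, and then $\hat\nu_{\gamma_k}\Rightarrow\delta_0$ by the total-variation bound of Lemma \ref{tv_bound_lemma}. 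The same direct argument handles \ref{var_formula_thm_super_c2} once Proposition \ref{addition_thm} gives $T(\Gamma^{(\ell)})/n_\ell\to0$.

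There is a second, related issue in your treatment of \ref{var_formula_thm_super_a} for the critical case $F(0)=p_\cc(\Z^d)$. ``Approximating with near-geodesics'' does not furnish paths of linearly bounded length, so the compactness route to a limit in $\RR^\bxi$ is again unavailable (indeed the paper leaves open whether \ref{var_formula_thm_super_b} holds critically, precisely for this reason; see Remark \ref{length_remark}). The paper instead perturbs $\tau$ to a subcritical $\tau^{(\eps)}=\tau+\eps\one_{B_\eps}$ by moving mass $\eps$ off the zero set, invokes Theorem \hyperref[var_formula_thm_sub_b]{\ref*{var_formula_thm_sub}\ref*{var_formula_thm_sub_b}} to get a minimizer $\sigma^{(\eps)}\in\RR^\bxi$ with $\langle\tau^{(\eps)},\sigma^{(\eps)}\rangle=\mu_\bxi(\tau^{(\eps)})$, and concludes via $\inf_{\sigma\in\RR^\bxi}\langle\tau,\sigma\rangle\leq\mu_\bxi(\tau^{(\eps)})\to0$ using \cite[Thm.~6.9]{kesten86}. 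For the strictly supercritical case \ref{var_formula_thm_super_b}, your geodesic-based approach can be made to work with additional input on linear geodesic length, but the paper builds the required linearly-long near-zero-weight paths explicitly from the infinite open cluster $\OO$ via the Antal--Pisztora chemical-distance estimate \eqref{ap96}, which is more self-contained.
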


\begin{remark} \label{length_remark}
It may be that the conclusion of Theorem \hyperref[var_formula_thm_super_b]{\ref*{var_formula_thm_super}\ref*{var_formula_thm_super_b}} holds even if $F(0)=p_\cc(\Z^d)$, 
but we have been unable to prove this in general because critical FPP does not necessarily admit geodesics with bounded linear length; recall Example \ref{critical_length}.
For \eqref{minimizers_def} to hold, it would suffice to produce paths $\gamma_k\in\PP(0,x_k)$, not necessarily geodesics, such that \eqref{geodesic_like_1} holds, $x_k/\|x_k\|_2\to\bxi$ as $k\to\infty$, and  $|\gamma_k|/\|x_k\|_2$ is uniformly bounded so that Lemma \ref{compactness_lemma} applies.
\end{remark}

In the subcritical setting, we allow our passage times to be perturbed. 
Given measurable functions $\tau:[0,1]\to[0,\infty)$ and $\tau^\pert:[0,1]\to\R$, \label{pert_fnc_2}
we assume the coupling
\eeq{ \label{function_coupling_2}
\tilde\tau_e = \tilde\tau(U_e), \quad e\in E(\Z^d), \quad \text{where} \quad \tilde\tau \coloneqq \tau+\tau^\pert.
}
Under this coupling, passage times can be expressed in the notation of \eqref{sigma_minus_def}:
\eeq{ \label{coupling_statistics}
\frac{\wt T(\gamma\setminus\gamma')}{\|x-y\|_2} = \frac{|\gamma|-|\gamma'|}{|\gamma|}\langle \tilde\tau,\sigma_{\gamma,\gamma'}\rangle \quad \text{for all $\gamma\in\PP(x,y)$, $\gamma'\subsetneq\gamma$}.
}
To avoid writing the dreadful notation $\hat{\tilde\tau}_*$ (that is, the normalization of a pushforward measure under $\tilde\tau$) more than once, we replace it with $\check\tau_*$. \label{check_def}

\begin{thm} \label{var_formula_thm_sub}
Assume $F(0)<p_\cc(\Z^d)$ and $\|\tau^\pert\|_\infty\leq \af $, where $\af >0$ is the constant from Proposition \ref{negative_thm}.

\begin{enumerate}[label=\textup{(\alph*)}]

\item \label{var_formula_thm_sub_a}
For every $\bxi\in\S^{d-1}$, the time constant from \eqref{shell_to_mu} is given by
\eeq{ \label{linear_functional}
\mu_\bxi(\tilde\tau) = \inf_{\sigma\in\RR^\bxi} \langle \tilde\tau,\sigma\rangle.
}

\item \label{var_formula_thm_sub_b} 
For every $\bxi\in\S^{d-1}$, the set of minimizers is nonempty:
\eq{
\RR^\bxi_{\tilde\tau} \coloneqq \{\sigma\in\RR^\bxi :\, \langle\tilde\tau,\sigma\rangle=\mu_\bxi(\tilde\tau)\} \neq \varnothing.
}

\item \label{var_formula_thm_sub_c}
Under the coupling \eqref{function_coupling_2}, there is a probability-one event $\Omega_{\tilde\tau}$ on which the following two statements hold for every $\bxi\in\S^{d-1}$. 

\begin{enumerate}[label=\textup{(c\arabic*)}]

\item \label{var_formula_thm_sub_c1}
For any sequence of paths $\gamma_k\in\PP(0,x_k)$ and $\gamma_k'\subset\gamma_k$ satisfying
\eeq{ \label{geodesic_like_2}
\lim_{k\to\infty} \|x_k\|_2 = \infty,\ 
\lim_{k\to\infty} \frac{x_k}{\|x_k\|_2} = \bxi,\ 
\lim_{k\to\infty} \frac{|\gamma_k'|}{|\gamma_k|} = 0,\
\lim_{k\to\infty} \frac{\wt T(\gamma_k\setminus\gamma_k')}{\|x_k\|_2} = \mu_\bxi,
}
there is a subsequence $(\gamma_{{k_j}})_{j\geq1}$ such that
\eq{ 
\sigma_{\gamma_{{k_j}}}\Rightarrow\sigma \quad \text{as $j\to\infty$}, \quad \text{for some $\sigma\in\RR_\infty^\bxi$ with $\langle\tilde\tau,\sigma\rangle = \mu_\bxi(\tilde\tau)$},
}
in which case
\eeq{ \label{to_minimizers_push}
\nu_{\gamma_{{k_j}}}\Rightarrow {\tilde\tau}_*(\sigma), \quad
\hat\nu_{\gamma_{{k_j}}}\Rightarrow \check{\tau}_*(\sigma), \quad
\nu_{\gamma_{{k_j}}}^+\Rightarrow {\tilde\tau}_*^+(\sigma), \quad
\hat\nu_{\gamma_{{k_j}}}^+\Rightarrow \check{\tau}_*^+(\sigma).
}

\item \label{var_formula_thm_sub_c2}
For any increasing sequence of nonnegative integers $(\ell_k)_{k\geq1}$ and any $\Gamma\in\Geo_\infty(\bxi)$, there is a subsequence $(\ell_{k_j})_{j\geq1}$ such that
\eq{ 
\sigma_{\Gamma^{(\ell_{k_j})}}\Rightarrow\sigma \quad \text{as $j\to\infty$}, \quad \text{for some $\sigma\in\RR_\infty^\bxi$ with $\langle\tilde\tau,\sigma\rangle = \mu_\bxi(\tilde\tau)$},
}
in which case
\eq{ 
\nu_{\Gamma^{(\ell_{k_j})}}\Rightarrow \tau_*(\sigma), \quad
\hat\nu_{\Gamma^{(\ell_{k_j})}}\Rightarrow \check{\tau}_*(\sigma), \quad
\nu_{\Gamma^{(\ell_{k_j})}}^+\Rightarrow \tau_*^+(\sigma), \quad
\hat\nu_{\Gamma^{(\ell_{k_j})}}^+\Rightarrow \check{\tau}_*^+(\sigma).
}
\end{enumerate}
\end{enumerate}
\end{thm}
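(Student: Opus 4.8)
\textbf{Proof plan for Theorem \ref{var_formula_thm_sub}.}
The plan is to build everything on the machinery already assembled: the deterministic constraint set $\RR^\bxi$ from Theorem \ref{deterministic_limits_thm}, the almost-sure convergence of adjusted passage times from Proposition \ref{replacement_thm} and Proposition \ref{addition_thm}, and the test-function and pushforward lemmas (Lemmas \ref{test_fnc_lemma} and \ref{pushforward_lemma}). First I would fix $\bxi$ and work under the coupling \eqref{function_coupling_2}, abbreviating $\wt T$ for the perturbed passage time and noting that by Proposition \hyperref[negative_thm_c]{\ref*{negative_thm}\ref*{negative_thm_c}} the limit $\mu_\bxi = \mu_\bxi(\tilde\tau)$ exists.

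For part \ref{var_formula_thm_sub_a}, I would prove the two inequalities separately. For ``$\leq$'': given any $\sigma\in\RR^\bxi$, by Lemma \hyperref[alternative_R_def_a]{\ref*{alternative_R_def}\ref*{alternative_R_def_a}} there are $\gamma_k\in\PP(0,x_k)$ with $x_k/\|x_k\|_2\to\bxi$, $\|x_k\|_2\to\infty$, and $\sigma_{\gamma_k}\Rightarrow\sigma$; since $\|\tau^\pert\|_\infty\le\af$ and $\langle\e^{\alpha\tilde\tau^-(u)^\beta},\Lambda\rangle<\infty$ holds trivially (because $\tilde\tau^-\le\af$ is bounded), Lemma \hyperref[test_fnc_lemma_d]{\ref*{test_fnc_lemma}\ref*{test_fnc_lemma_d}} applied with $f=\tilde\tau$ gives $\langle\tilde\tau,\sigma\rangle\le\liminf_k\langle\tilde\tau,\sigma_{\gamma_k}\rangle = \liminf_k \wt T(\gamma_k)/\|x_k\|_2$, and since $\gamma_k$ need not be optimal this liminf is $\ge\mu_\bxi$ only after replacing $\gamma_k$ by geodesics---so actually for the ``$\leq$'' direction I would instead take $\gamma_k\in\Geo(0,x_k)$ and use Proposition \hyperref[replacement_thm_b]{\ref*{replacement_thm}\ref*{replacement_thm_b}} together with Lemma \ref{modification_lemma}: deleting $A_0$ edges at the start and $A_n$ at the end gives $\wt T(\gamma_k^{(a_0,a_1)})/\|x_k\|_2\to\mu_\bxi$, the deleted fraction is $o(1)$, and a subsequence has $\sigma_{\gamma_k^{(a_0,a_1)}}\Rightarrow\sigma\in\RR^\bxi_\infty=\RR^\bxi$ by compactness (Lemma \ref{compactness_lemma}, using the linear length bound from Proposition \ref{geo_length_cor}) and Lemma \hyperref[alternative_R_def_a]{\ref*{alternative_R_def}\ref*{alternative_R_def_a}}; then Lemma \hyperref[test_fnc_lemma_c]{\ref*{test_fnc_lemma}\ref*{test_fnc_lemma_c}} (valid since $\tilde\tau$ has a moment of all orders, being bounded below and a shift of the possibly-unbounded $\tau$---one needs $\langle\e^{\alpha|\tilde\tau|^\beta},\Lambda\rangle$ which may fail, so here one truncates via Lemma \hyperref[test_fnc_lemma_d]{\ref*{test_fnc_lemma}\ref*{test_fnc_lemma_d}} and monotone convergence exactly as in the proof of that lemma) yields $\langle\tilde\tau,\sigma\rangle\le\mu_\bxi$, hence $\inf_{\sigma\in\RR^\bxi}\langle\tilde\tau,\sigma\rangle\le\mu_\bxi$. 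For ``$\geq$'': for \emph{every} sequence of paths $\gamma_k\in\PP(0,n_k\bxi)$ one has $\wt T(\gamma_k)/\|n_k\bxi\|_2\ge\wt T(0,n_k\bxi)/\|n_k\bxi\|_2$, whose liminf is $\mu_\bxi$ after the same edge-deletion adjustment (via Proposition \hyperref[negative_thm_b]{\ref*{negative_thm}\ref*{negative_thm_b}} to pass between $\wt T$ and $\wh T$); thus any $\sigma\in\RR^\bxi$, arising as $\sigma_{\gamma_k}\Rightarrow\sigma$, satisfies $\langle\tilde\tau,\sigma\rangle = \lim_k\langle\tilde\tau,\sigma_{\gamma_k}\rangle\ge\mu_\bxi$, giving the reverse inequality and simultaneously showing $\RR^\bxi_{\tilde\tau}\ni\sigma$ for the $\sigma$ constructed above---this proves part \ref{var_formula_thm_sub_b}.

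For part \ref{var_formula_thm_sub_c}, I would intersect the probability-one events $\Omega^\Rightarrow_{\tilde\tau}$ and $\Omega^\Rightarrow_\tau$ (and their truncations) from Lemma \ref{test_fnc_lemma}, the event $\Omega^*_{\tilde\tau}$ (and $\Omega^*_\tau$) from Lemma \ref{pushforward_lemma}, and the full-probability events of Propositions \ref{replacement_thm} and \ref{addition_thm}, calling the intersection $\Omega_{\tilde\tau}$. On this event: for (c1), given $\gamma_k,\gamma_k'$ with \eqref{geodesic_like_2}, the hypothesis $\wt T(\gamma_k\setminus\gamma_k')/\|x_k\|_2\to\mu_\bxi$ together with \eqref{coupling_statistics} forces $\langle\tilde\tau,\sigma_{\gamma_k,\gamma_k'}\rangle\to\mu_\bxi$; by the abstract absolute-continuity bound \eqref{abs_cont_ineq} the family $(\hat\sigma_{\gamma_k,\gamma_k'})$ is tight and the masses $\langle1,\sigma_{\gamma_k,\gamma_k'}\rangle$ are bounded (here I would note $\langle1,\sigma_{\gamma_k,\gamma_k'}\rangle = |\gamma_k|/\|x_k\|_2$ is bounded because $\wt T(\gamma_k\setminus\gamma_k')\ge\af|\gamma_k\setminus\gamma_k'|$ from the bound $\Lvc$-type argument, or directly from Proposition \ref{geo_length_cor} applied to near-geodesics), so a subsequence has $\sigma_{\gamma_{k_j},\gamma_{k_j}'}\Rightarrow\sigma$; by Lemma \ref{modification_lemma}, $\sigma_{\gamma_{k_j}}\Rightarrow\sigma$ as well, and $\sigma\in\RR^\bxi_\infty$ by Lemma \hyperref[alternative_R_def_a]{\ref*{alternative_R_def}\ref*{alternative_R_def_a}}; Lemma \ref{test_fnc_lemma} gives $\langle\tilde\tau,\sigma\rangle=\mu_\bxi(\tilde\tau)$ (so $\sigma\in\RR^\bxi_{\tilde\tau}$), and Lemma \ref{pushforward_lemma} (applied once with $\tilde\tau$ and once with $\tau$) delivers the four weak convergences in \eqref{to_minimizers_push}. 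For (c2), given $\Gamma\in\Geo_\infty(\bxi)$, Proposition \ref{addition_thm} gives $\wt T(\Gamma^{(\ell)})/(x_\ell\cdot\bxi)\to\mu_\bxi$, hence $\wt T(\Gamma^{(\ell_k)})/\|x_{\ell_k}\|_2\to\mu_\bxi$, and since $\Gamma^{(\ell_k)}$ is an honest geodesic between its endpoints its length is linear (Proposition \ref{geo_length_cor}), so the same tightness-plus-subsequence argument applies verbatim with $\gamma_k=\Gamma^{(\ell_k)}$ and $\gamma_k'=\varnothing$.

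\textbf{Main obstacle.} The delicate point is handling the possibly-unbounded $\tau$ inside the test-function and pushforward steps without any moment assumption on $\LL$: Lemma \ref{test_fnc_lemma} as stated needs an exponential moment condition that $\tau$ (hence $\tilde\tau$) may not satisfy, so I would rely only on its one-sided version \hyperref[test_fnc_lemma_d]{\ref*{test_fnc_lemma}\ref*{test_fnc_lemma_d}} (whose hypothesis $\langle\e^{\alpha\tilde\tau^-(u)^\beta},\Lambda\rangle<\infty$ is automatic) combined with a monotone-truncation argument---exactly mirroring the unbounded-$f$ portion of the proof of Lemma \ref{test_fnc_lemma}---to get the inequality $\langle\tilde\tau,\sigma\rangle\le\liminf_k\langle\tilde\tau,\sigma_{\gamma_k,\gamma_k'}\rangle$, and then upgrade it to an equality using the crucial input that the liminf of $\wt T(\gamma_k\setminus\gamma_k')/\|x_k\|_2$ over \emph{all} competing paths is already $\ge\mu_\bxi$. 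In other words, the unboundedness is absorbed by the same one-sided-inequality-plus-optimality trick that underlies part \ref{var_formula_thm_sub_a}, which is why it is essential that the whole theorem (variational formula, nonemptiness, and convergence to minimizers) be proved in a single interlocking argument rather than sequentially.
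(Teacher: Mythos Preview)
Your overall architecture matches the paper's: prove the two inequalities in \eqref{linear_functional} separately, use Lemma~\ref{eta_choice_lemma}/compactness to extract subsequential limits from geodesic-like paths, identify the limit as an element of $\RR^\bxi_\infty$ via Lemma~\hyperref[alternative_R_def_a]{\ref*{alternative_R_def}\ref*{alternative_R_def_a}}, and then read off \ref{var_formula_thm_sub_b} and \ref{var_formula_thm_sub_c} from the same construction, with Lemma~\ref{pushforward_lemma} handling the pushforwards. A couple of smaller points are missing or wrong---your mass bound ``$\wt T(\gamma_k\setminus\gamma_k')\ge\af|\gamma_k\setminus\gamma_k'|$'' is false as stated (not every edge has $\tilde\tau_e\ge\af$; the paper instead uses finiteness of $\Qvc(0;\af,\qf)$ to ensure at least a $(1-\qf)$-fraction of edges do), and in \ref{var_formula_thm_sub_c2} you have not addressed that $\Gamma$ need not start at $0$, so $\sigma_{\Gamma^{(\ell)}}$ is not \emph{a priori} of the form covered by Lemma~\hyperref[alternative_R_def_a]{\ref*{alternative_R_def}\ref*{alternative_R_def_a}} (the paper attaches a fixed finite path from $0$ to $\Gamma$). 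But these are repairable.

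The genuine gap is in the lower bound $\inf_{\sigma\in\RR^\bxi}\langle\tilde\tau,\sigma\rangle\ge\mu_\bxi(\tilde\tau)$ when $\tau$ is unbounded. You write ``any $\sigma\in\RR^\bxi$, arising as $\sigma_{\gamma_k}\Rightarrow\sigma$, satisfies $\langle\tilde\tau,\sigma\rangle = \lim_k\langle\tilde\tau,\sigma_{\gamma_k}\rangle$'', but this equality is exactly Lemma~\hyperref[test_fnc_lemma_c]{\ref*{test_fnc_lemma}\ref*{test_fnc_lemma_c}}, which requires \eqref{test_fnc_condition} and may fail. Your proposed workaround in the ``Main obstacle'' paragraph does not close this: the one-sided inequality \eqref{test_fnc_lemma_d_eq} gives only $\langle\tilde\tau,\sigma\rangle\le\liminf_k\langle\tilde\tau,\sigma_{\gamma_k}\rangle$, and knowing that $\liminf_k\wt T(\gamma_k)/\|x_k\|_2\ge\mu_\bxi$ yields $\langle\tilde\tau,\sigma\rangle\le(\text{something}\ge\mu_\bxi)$, which is vacuous. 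There is no ``upgrade to equality'' available here without an independent proof that $\langle\tilde\tau,\sigma\rangle\ge\mu_\bxi$. The paper resolves this by a different route: it first proves the inequality \eqref{inf_direction_2} assuming $\tau$ is \emph{bounded} (so that Lemma~\hyperref[test_fnc_lemma_c]{\ref*{test_fnc_lemma}\ref*{test_fnc_lemma_c}} legitimately applies), and then passes to general $\tau$ by truncation $\tilde\tau_L=\tilde\tau\wedge L$ together with the continuity of the time constant $\mu_\bxi(\tilde\tau_L)\to\mu_\bxi(\tilde\tau)$ from \cite[Thm.~6.9]{kesten86}. This external input is essential and absent from your plan.
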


\begin{remark} \label{direction_dependent_remark_2}
The almost sure event $\Omega_{\tilde\tau}$ in part \ref{var_formula_thm_sub_c} does not depend on the direction $\bxi$.
In contrast, the almost sure event $\Omega^\bxi_\tau$ from Theorem \ref{minimizers_thm}  \textit{does} depend on $\bxi$, but this is only because of Theorem \ref{deterministic_limits_thm}.
Therefore, this dependence could be removed if one shows $\P(\RR_\infty^\bxi=\RR^\bxi \text{ for all $\bxi\in\S^{d-1}$})=1$; see Remark \ref{direction_dependent_remark_1}.
\end{remark}

\section{Proofs of main theorems} \label{main_final_proof_sec}
We first assume Theorems \ref{var_formula_thm_super} and \ref{var_formula_thm_sub} and use them to establish Theorems \ref{zero_thm}, \ref{zero_thm_infty}, \ref{var_form_thm}, \ref{minimizers_thm} and Proposition \ref{slightly_negative_lemma}.

\begin{proof}[Proof of Theorem \ref{var_form_thm} and Proposition \ref{slightly_negative_lemma}]
First note that Proposition \hyperref[slightly_negative_lemma_a]{\ref*{slightly_negative_lemma}\ref*{slightly_negative_lemma_a}} was proved in Proposition \ref{negative_thm}.
Meanwhile, Theorem \hyperref[var_form_thm_a]{\ref*{var_form_thm}\ref*{var_form_thm_a}} and Proposition \hyperref[slightly_negative_lemma_c]{\ref*{slightly_negative_lemma}\ref*{slightly_negative_lemma_c}} are established by Theorems \hyperref[var_formula_thm_super_a]{\ref*{var_formula_thm_super}\ref*{var_formula_thm_super_a}} and \hyperref[var_formula_thm_sub_a]{\ref*{var_formula_thm_sub}\ref*{var_formula_thm_sub_a}}.
(In the case $F(0)\geq p_\cc(\Z^d)$, recall from Theorem \ref{time_constant_thm} or Proposition \ref{negative_thm} that $\mu_\bxi=0$ for all $\bxi\in\S^{d-1}$.)
Similarly, Theorem \hyperref[var_form_thm_b]{\ref*{var_form_thm}\ref*{var_form_thm_b}} is stated in \hyperref[var_formula_thm_super_b]{\ref*{var_formula_thm_super}\ref*{var_formula_thm_super_b}} and \hyperref[var_formula_thm_sub_b]{\ref*{var_formula_thm_sub}\ref*{var_formula_thm_sub_b}}.
Finally, Theorem \hyperref[var_form_thm_c]{\ref*{var_form_thm}\ref*{var_form_thm_c}} is given by Theorem \ref{deterministic_limits_thm}.
\end{proof}

\begin{proof}[Proof of Theorem \ref{zero_thm}]
Apply Theorem \hyperref[var_formula_thm_super_c1]{\ref*{var_formula_thm_super}\ref*{var_formula_thm_super_c1}}
to the subpath $\gamma_k\setminus\gamma_k' = \gamma_k^{(a_0(\gamma_k),a_1(\gamma_k))}$, where $a_0(\gamma_k)$ and $a_1(\gamma_k)$ are obtained from Proposition \ref{replacement_thm}.
\end{proof}

\begin{proof}[Proof of Theorem \ref{zero_thm_infty}]
This is exactly Theorem \hyperref[var_formula_thm_super_c2]{\ref*{var_formula_thm_super}\ref*{var_formula_thm_super_c2}}.
\end{proof}

\begin{proof}[Proof of Theorem \ref{minimizers_thm}]
Let $\Omega^\bxi$ be the event that $\RR_\infty^\bxi = \RR^\bxi$, which occurs with probability one by Theorem \ref{deterministic_limits_thm}.
On the event $\Omega^\bxi_\tau \coloneqq \Omega^\bxi\cap\Omega_\tau$, the conclusions of Theorems \hyperref[minimizers_thm_a]{\ref*{minimizers_thm}\ref*{minimizers_thm_a}} and \hyperref[minimizers_thm_b]{\ref*{minimizers_thm}\ref*{minimizers_thm_b}} follow from Theorems \hyperref[var_formula_thm_sub_c1]{\ref*{var_formula_thm_sub}\ref*{var_formula_thm_sub_c1}} and \hyperref[var_formula_thm_sub_c2]{\ref*{var_formula_thm_sub}\ref*{var_formula_thm_sub_c2}}, respectively, where in \ref{var_formula_thm_sub_c1} we take $\gamma_k\setminus\gamma_k' = \gamma_k^{(a_0(\gamma_k),a_1(\gamma_k))}$.
Once again, $a_0(\gamma_k)$ and $a_1(\gamma_k)$ are obtained from Proposition \ref{replacement_thm}.
\end{proof}

Now we turn to proving Theorems \ref{var_formula_thm_super} and \ref{var_formula_thm_sub}. 
We will consider Theorem \ref{var_formula_thm_sub} first, since the proof of Theorem \hyperref[var_formula_thm_super_a]{\ref*{var_formula_thm_super}\ref*{var_formula_thm_super_a}} will require \hyperref[var_formula_thm_sub_c]{\ref*{var_formula_thm_sub}\ref*{var_formula_thm_sub_b}}.

\begin{proof}[Proof of Theorem \ref{var_formula_thm_sub}]
Our first step is to prove the upper bound for \eqref{linear_functional}.
Suppose that $\tau$ is bounded, so that Proposition \hyperref[replacement_thm_d]{\ref*{replacement_thm}\ref*{replacement_thm_d}} applies.
Consider any $\sigma\in\RR^\bxi_\infty$. 
By Lemma \hyperref[alternative_R_def_a]{\ref*{alternative_R_def}\ref*{alternative_R_def_a}}, there is some sequence of paths $\gamma_{k}\in\PP(0,x_k)$ such that 
\eqref{W_2_0} holds with $\bxi_*=\bxi$ and $\sigma_*=\sigma$.
It thus follows from Lemma \hyperref[test_fnc_lemma_c]{\ref*{test_fnc_lemma}\ref*{test_fnc_lemma_c}} that
\eq{
\langle{\tilde\tau},\sigma\rangle 
\stackref{test_fnc_lemma_c_eq}{=} \lim_{k\to\infty} \langle{\tilde\tau},\sigma_{\gamma_{k}}\rangle
\stackref{coupling_statistics}{=} \lim_{k\to\infty} \frac{\wt T(\gamma_{k})}{\|x_k\|_2}
\geq \lim_{k\to\infty} \frac{\wt T(0,x_k)}{\|x_k\|_2}
\stackref{adjusted_convergence}{=} \mu_\bxi({\tilde\tau}) \quad \mathrm{a.s.}
}
We have thus shown
\eeq{ \label{inf_direction_2}
\inf_{\sigma\in\RR^\bxi_\infty}\langle{\tilde\tau},\sigma\rangle \geq \mu_\bxi({\tilde\tau}) \quad \mathrm{a.s.}
}
To recover this inequality for general $\tau$, we reduce to the bounded case.
Suppose toward a contradiction that there were some $\sigma\in\RR^\bxi_\infty$ such that $\mu_\bxi({\tilde\tau}) >  \langle{\tilde\tau},\sigma\rangle$.
Let us write $\delta$ for the difference $\mu_\bxi({\tilde\tau})-\langle{\tilde\tau},\sigma\rangle$.
By \cite[Thm.~6.9]{kesten86}, there exists $L>0$ large enough that $\mu_\bxi({\tilde\tau}) - \mu_\bxi({\tilde\tau}_L)<\delta$, where
${\tilde\tau}_L \coloneqq {\tilde\tau} \wedge L$.\footnote{The theorem referenced deals only with $\bxi=\mathbf{e}_1$ and nonnegative $\tau$, but its proof works just as well for general directions and for functions $\tilde\tau$ that are uniformly bounded from below, provided one has Lemma \ref{eta_choice_lemma}.\label{also_for_negative_fn}}
We then have
\eq{
\mu_\bxi({\tilde\tau}_L) - \langle{\tilde\tau}_L,\sigma\rangle 
> \mu_\bxi({\tilde\tau})-\delta-\langle{\tilde\tau}_L,\sigma\rangle
= \langle\tilde\tau,\sigma\rangle-\langle\tilde\tau_L,0\rangle
\geq0,
}
which contradicts \eqref{inf_direction_2} in the bounded case.
Hence \eqref{inf_direction_2} must hold in general.

To obtain the reverse inequality, we consider any sequence of paths $\gamma_k\in\PP(0,x_k)$ that satisfies \eqref{geodesic_like_2}.
(Such a sequence almost surely exists by Proposition \ref{replacement_thm}.)
By Lemma \ref{eta_choice_lemma}, the quantity $\Qvc(0;\af ,\qf )$ is almost surely finite, in which case
\eq{
|\{e\in\gamma_k:\, \tilde\tau_e \leq \af \}| \leq \qf<1  \quad \text{for all large enough $k$}.
} 
In particular, we have
\eq{
\mu_\bxi(\tilde\tau)
\stackref{geodesic_like_2}{=}\lim_{k\to\infty}\frac{\wt T(\gamma_k\setminus\gamma_k')}{\|x_k\|_2} 
&\stackrefp{geodesic_like_2}{\geq} \limsup_{k\to\infty}\frac{(1-\qf )|\gamma_k|-|\gamma_k'|}{\|x_k\|_2}\af  \\
&\stackref{geodesic_like_2}{=} \af (1-\qf )\limsup_{k\to\infty}\frac{|\gamma_k|}{\|x_k\|_2} \quad \mathrm{a.s.},
}
and so
\eq{
\limsup_{k\to\infty} \langle1,\sigma_{\gamma_k}\rangle 
= \limsup_{k\to\infty}\frac{|\gamma_k|}{\|x_k\|_2}
\leq \frac{\mu_\bxi(\tilde\tau)}{\af (1-\qf )} \quad \mathrm{a.s.}
}
Consequently, Lemma \ref{compactness_lemma} guarantees the existence 
of a subsequence $(\sigma_{\gamma_{{k_j}}})_{j\geq1}$ that converges to some $\sigma\in\Sigma$ as $j\to\infty$.
In light of \eqref{geodesic_like_2}, Lemma \hyperref[alternative_R_def_a]{\ref*{alternative_R_def}\ref*{alternative_R_def_a}} tells us that $\sigma$ belongs to $\RR^\bxi_\infty$. 
Therefore, Lemma \hyperref[test_fnc_lemma_d]{\ref*{test_fnc_lemma}\ref*{test_fnc_lemma_d}} gives
\eeq{ \label{unbounded_case}
\langle {\tilde\tau},\sigma\rangle
&\stackref{test_fnc_lemma_d_eq}{\leq} \lim_{j\to\infty}\langle{\tilde\tau},\sigma_{\gamma_{k_j},\gamma'_{k_j}}\rangle \\
&\stackrefpp{coupling_statistics}{test_fnc_lemma_d_eq}{=} \lim_{j\to\infty} \frac{|\gamma_{k_j}|}{|\gamma_{k_j}|-|\gamma_{k_j}'|}\frac{\wt T(\gamma_{k_j}\setminus\gamma_{k_j}')}{\|x_{k_j}\|_2} \\
&\stackrefpp{geodesic_like_2}{test_fnc_lemma_d_eq}{=}\lim_{j\to\infty} \frac{\wt T(\gamma_{k_j}\setminus\gamma'_{k_j})}{\|x_{k_j}\|_2}
\stackref{geodesic_like_2}{=}\mu_\bxi({\tilde\tau}) \quad \mathrm{a.s.}
}
Since $\sigma$ belongs to $\RR_\infty^\bxi$, we conclude
\eeq{ \label{inf_direction_1}
\inf_{\sigma\in\RR^\bxi_\infty} \langle\tilde\tau,\sigma\rangle\leq \mu_\bxi(\tilde\tau) \quad \mathrm{a.s.}
}
On the other hand, we have already verified \eqref{inf_direction_2}, and so \eqref{unbounded_case} forces equality: $\langle{\tilde\tau},\sigma\rangle=\mu_\bxi({\tilde\tau})$.
With this observation, we have proved claim \ref{var_formula_thm_sub_c1}, and then \eqref{to_minimizers_push} follows from Lemma \ref{pushforward_lemma} (recall from Proposition \hyperref[negative_thm_c]{\ref*{negative_thm}\ref*{negative_thm_c}} that $\mu_\bxi>0$, and so $\tilde\tau_*(\sigma)$ is not an atom at zero).
Since $\RR^\bxi_\infty$ is almost surely equal to $\RR^\bxi$  by Theorem \ref{deterministic_limits_thm}, the two inequalities \eqref{inf_direction_2} and \eqref{inf_direction_1} together prove part \ref{var_formula_thm_sub_a}.
Our extraction of the limit $\sigma$ establishes \ref{var_formula_thm_sub_b}.

All that remains to prove is claim \ref{var_formula_thm_sub_c2}.
Consider any $\Gamma\in\Geo_\infty(\bxi)$.
Label the vertices traversed by $\Gamma$ as $x_0,x_1,\dots$ so that $\Gamma^{(\ell)}\in\Geo(x_0,x_\ell)$.
Recall from \eqref{direction_implication} that $(x_{\ell}\cdot\bxi)/\|x_{\ell}-x_0\|_2\to1$ as $\ell\to\infty$, and so
Proposition \ref{addition_thm} gives
\eeq{ \label{time_constant_infinite_0}
\lim_{\ell\to\infty} \langle {\tilde\tau},\sigma_{\Gamma^{(\ell)}}\rangle
\stackref{coupling_statistics}{=} \lim_{\ell\to\infty} \frac{\wt T(\Gamma^{(\ell)})}{\|x_{\ell}-x_0\|_2}
= \lim_{\ell\to\infty} \frac{\wt T(\Gamma^{(\ell)})}{x_{\ell}\cdot\bxi}
\stackref{single_infinite}{=} \mu_\bxi \quad \mathrm{a.s.}
}
Next, by applying \mbox{Borel}--\mbox{Cantelli} with Proposition \ref{geo_length_cor}, we obtain
\eq{
\limsup_{\ell\to\infty}\frac{|\Gamma^{(\ell)}|}{\ell} \leq \mf  \quad \mathrm{a.s.}
}
We can then appeal to Lemma \ref{compactness_lemma} once more in order to deduce the following: Any increasing sequence of positive integers $(\ell_k)_{k\geq1}$ contains a subsequence $(\ell_{k_j})_{j\geq1}$ such that, for some $\sigma\in\Sigma_{\mf }$, we have
\eeq{ \label{weak_conv_infinite_0}
\sigma_{\Gamma^{(\ell_{k_j})}}\Rightarrow\sigma \quad \text{as $j\to\infty$}.
}
While it is not immediate from the definition \eqref{R_infty_def} that $\sigma$ belongs to $\RR_\infty^\bxi$, we can easily verify this membership as follows.

Consider any self-avoiding path $\pi$ that starts at $0$ and terminates upon reaching a vertex of $\Gamma$.
Say the first intersection of $\pi$ and $\Gamma$ is at the vertex $x_{\ell_*}$, and label the vertices of $\pi$ as $0=y_{\ell_*-|\pi|},y_{\ell_*-|\pi|-1},\dots,y_{\ell_*}$.
Replacing the initial segment $\Gamma^{(\ell_*)}$ by $\pi$, we obtain an infinite self-avoiding path $\bar\Gamma$ starting at $0$ and traversing the vertices $(y_\ell)_{\ell\geq\ell_*-|\pi|}$, where $y_\ell=x_\ell$ for all $\ell\geq\ell_*$.
While $\bar\Gamma$ may not be geodesic, the limits \eqref{time_constant_infinite_0} and \eqref{weak_conv_infinite_0} still hold if $\Gamma$ is replaced by $\bar\Gamma$, and $x$'s were replaced by $y$'s, since we have only changed a fixed number of edges.
Therefore, Lemma \hyperref[alternative_R_def_a]{\ref*{alternative_R_def}\ref*{alternative_R_def_a}} shows that $\sigma\in\RR^\bxi_\infty$, while Lemma \hyperref[test_fnc_lemma_d]{\ref*{test_fnc_lemma}\ref*{test_fnc_lemma_d}} yields
\eq{
\mu_\bxi \stackref{inf_direction_2}{\leq}\langle{\tilde\tau},\sigma\rangle
\stackref{weak_conv_infinite_0,test_fnc_lemma_d_eq}{\leq}\lim_{j\to\infty} \langle\tilde\tau,\sigma_{\bar\Gamma^{(\ell_{k_{j}})}}\rangle
\stackref{coupling_statistics}{=}\lim_{j\to\infty} \frac{\wt T(\bar\Gamma^{(\ell_{k_j})})}{y_{\ell_{k_j}}\cdot\bxi}
\stackref{time_constant_infinite_0}{=} \mu_\bxi \quad \mathrm{a.s.}
}
Hence $\langle{\tilde\tau},\sigma\rangle = \mu_\bxi$ (in particular, $\check{\tau}_*(\sigma)\neq\delta_0$).
Consequently, Lemma \ref{pushforward_lemma} yields the following as $j\to\infty$:
\eq{
\nu_{\Gamma^{(\ell_{{k_{j}}})}}\Rightarrow {\tilde\tau}_*(\sigma), \quad
\hat\nu_{\Gamma^{(\ell_{{k_{j}}})}}\Rightarrow \check{\tau}_*(\sigma), \quad
\nu_{\Gamma^{(\ell_{{k_{j}}})}}^+\Rightarrow {\tilde\tau}_*^+(\sigma), \quad
\hat\nu_{\Gamma^{(\ell_{{{k_j}}})}}^+\Rightarrow \check{\tau}_*^+(\sigma).
}
This last display and \eqref{weak_conv_infinite_0} together prove part \ref{var_formula_thm_sub_c2}.
\end{proof}

\begin{proof}[Proof of Theorem \ref{var_formula_thm_super}]
Given that $\tau$ is nonnegative, the lower bound for \eqref{linear_functional_0} is trivial:
$\inf_{\sigma\in\RR^\bxi}\langle\tau,\sigma\rangle \geq 0$.
In establishing the upper bound, let us first consider the case $F(0) > p_\cc(\Z^d)$.

As we did in Chapter \ref{modification_sec}, call an edge $e$ \textit{open} if $\tau_e=0$.
We write $x\leftrightarrow y$ if there exists $\gamma\in\PP(x,y)$ containing only open edges, and define $D_\oo(x,y)$ to be the minimum length of such a path $\gamma$.
Because $F(0)>p_\cc(\Z^d)$,
the subgraph of $\Z^d$ induced by the open edges has a unique infinite component, which we call $\OO$.
Let $E(\OO)$ denote the set of edges in $\OO$.
By \cite[Thm.~1.1]{antal-pisztora96}, there are positive constants $C_1$, $C_2$, $c_3$, whose values depend only on $F(0)$, such that
\eeq{ \label{ap96}
\P(x\leftrightarrow y,\, D_\oo(x,y)\geq C_1\|x-y\|_2) \leq C_2\e^{-c_3\|x-y\|_2} \quad \text{$\forall$ $x,y\in\Z^d$}.
}
Now consider any $\bxi\in\S^{d-1}$, and let $\zetab_k\in\S^{d-1}\cap\Q^d$ be such that $\zetab_k\to\bxi$ as $k\to\infty$.
For each $k$, let $b_k$ be a positive integer such that $b_k\zetab_k\in\Z^d$, and let us assume 
\eq{
b_{k+1} \geq b_{k}\quad \text{and} \quad
\e^{-c_3b_k} \leq (k+1)^{-2} \quad \text{for all $k\geq1$}.
}
For $\ell\geq1$, define $y_{k,\ell} = \ell b_k\zetab_k$ and note that $\|y_{k,\ell}\|_2 = \ell b_k$.
For any $x_0\in\Z^d$, the estimate \eqref{ap96} gives
\eq{
&\sum_{k,\ell=1}^\infty \P(y_{k,\ell}\leftrightarrow x_0, D_\oo(x_0,y_{k,\ell})\geq C_1\|x_0-y_{k,\ell}\|_2)
\leq C_2\sum_{k,\ell=1}^\infty\e^{-c_3(\|y_{k,\ell}\|_2-\|x_0\|_2)} \\
&\qquad= C_2\e^{c_3\|x_0\|_2}\sum_{k=1}^\infty \frac{\e^{-c_3b_k}}{1-\e^{-c_3b_k}} 
\leq \frac{4C_2\e^{c_3\|x_0\|_2}}{3}\sum_{k=1}^\infty(k+1)^{-2}<\infty.
}
By Borel--Cantelli, we conclude that for any $x_0$ and all $k$ large enough (depending on $x_0$), we have
\eeq{ \label{good_Do}
D_\oo(x_0,y_{k,\ell})< C_1\|x_0-y_{k,\ell}\|_2 \quad\text{whenever $x_0,y_{k,\ell}\in\OO$}.
}
At the same time, by ergodicity with respect to translations of $\Z^d$, the following is true almost surely:
For each $k$, there are infinitely many $\ell\geq1$ such that $y_{k,\ell}\in\OO$.
Therefore, we can inductively define a sequence $(x_k)_{k\geq1}$ in $\Z^d$ as follows.
Let $\ell_1 \geq 1$ be such that $y_{1,\ell_1}\in\OO$, and set $x_1 = y_{1,\ell_1}$.
For $k\geq2$, choose $\ell_k$ such that $\ell_k>\ell_{k-1}$ and $y_{k,\ell_k}\in\OO$; then set $x_k = y_{k,\ell_k}$.
In particular, we have $\|x_k\|_2=\ell_kb_k\geq k$ and $x_k/\|x_k\|_2 = \zetab_k\to\bxi$ as $k\to\infty$.

We next define a corresponding sequence of paths $\gamma_k\in\PP(0,x_k)$ as follows.
Choose any $x_0\in\OO$ and any $\gamma_0\in\PP(0,x_0)$.
Continuing from $x_0$, append to $\gamma_0$ any path in $\OO$ that reaches $x_k$ in minimal length; after removing any loops, call the resulting path $\gamma_k\in\PP(0,x_k)$.
By construction, we have
\eq{
\limsup_{k\to\infty} \frac{|\gamma_k|}{\|x_k\|_2} \leq 
\limsup_{k\to\infty} \frac{|\gamma_0| + D_\oo(x_0,x_k)}{\|x_k\|_2}
\stackref{good_Do}{\leq} C_1.
}
Therefore, by Lemma \ref{compactness_lemma}, there is some subsequence $(k_j)_{j\geq1}$ such that $\sigma_{\gamma_{k_j}}$ converges to some $\sigma\in\Sigma_{C_1}$ as $j\to\infty$.
From Lemma \hyperref[alternative_R_def_a]{\ref*{alternative_R_def}\ref*{alternative_R_def_a}}, it is clear that $\sigma\in\RR_\infty^\bxi$.
On the other hand, we have
\eq{
\limsup_{k\to\infty}\frac{|\{e\in\gamma_k : \tau_e > 0\}|}{\|x_k\|_2}
\leq \limsup_{k\to\infty} \frac{|\gamma_0|}{\|x_k\|_2} = 0,
}
and so $\hat\tau_*(\sigma_{\gamma_k})\Rightarrow\delta_0$ as $k\to\infty$.
By Lemma \ref{pushforward_lemma}, it follows that $\tau_*(\sigma)=\delta_0$.
In particular, we have $\langle\tau,\sigma\rangle=0$.
Since $\RR_\infty^\bxi$ is almost surely equal to $\RR^\bxi$ by Theorem \ref{deterministic_limits_thm}, we have completed the proof of parts \ref{var_formula_thm_super_a} and \ref{var_formula_thm_super_b} in the supercritical case.

Now suppose $F(0)=p_\cc(\Z^d)$, and again consider any $\bxi\in\S^{d-1}$.
Set $B= \{u \in [0,1] :\, \tau(u) = 0\}$ so that $\Lambda(B) = p_\cc(\Z^d) > 0$.
Given any $\eps\in(0,p_\cc(\Z^d))$, choose a subset $B_\eps\subset B$ such that $\Lambda(B_\eps) = \eps$ (e.g.~see \cite{hourieh14}).
Now write $\tau^{(\eps)} = \tau + \eps\one_{B_\eps}$ so that $\P(\tau^{(\eps)}=0) = p_\cc(\Z^d)-\eps$.
By Theorem \hyperref[var_formula_thm_sub_b]{\ref*{var_formula_thm_sub}\ref*{var_formula_thm_sub_b}}, there is $\sigma^{(\eps)}\in\RR^\bxi$ such that $\langle \sigma^{(\eps)},\tau^{(\eps)}\rangle= \mu_\bxi(\tau^{(\eps)})$.
But of course $\langle\sigma,\tau\rangle\leq\langle\sigma,\tau^{(\eps)}\rangle$ for every $\sigma\in\RR^\bxi$, and so
\eq{
\inf_{\sigma\in\RR^\bxi}\langle \sigma,\tau\rangle \leq \liminf_{\eps\searrow0}\langle\sigma^{(\eps)},\tau^{(\eps)}\rangle =\liminf_{\eps\searrow0} \mu_\bxi(\tau^{(\eps)}).
}
The proof of part \ref{var_formula_thm_super_a} is completed by invoking \cite[Thm.~6.9]{kesten86}, which shows that $\mu_\bxi(\tau^{(\eps)})\to\mu_\bxi(\tau)=0$ as $\eps\to0$.

Finally, we turn our attention to part \ref{var_formula_thm_super_c}.
Consider any sequence of paths $\gamma_k\in\Geo(0,x_k)$ that satisfies \eqref{geodesic_like_1}.
Mimicking the notation $\sigma_{\gamma,\gamma'}$ from \eqref{sigma_minus_def}, we define 
\eeq{ \label{nu_minus_def} 
\hat\nu_{\gamma,\gamma'} \coloneqq \frac{1}{|\gamma|-|\gamma'|}\sum_{e\in\gamma\setminus\gamma'} \delta_{\tau_e}, \quad \gamma'\subsetneq\gamma.
}
Now the last equation in \eqref{geodesic_like_1} reads as
\eeq{\label{first_pass}
\lim_{k\to\infty} \frac{|\gamma_k|-|\gamma_k'|}{\|x_k\|_2} \langle t, \hat\nu_{\gamma_k,\gamma_k'}\rangle = 0.
}
Since $|\gamma_k'|/|\gamma_k|\to0$ as $k\to\infty$, we trivially have
\eq{
\liminf_{k\to\infty}\frac{|\gamma_k|-|\gamma_k'|}{\|x_k\|_2} = 
\liminf_{k\to\infty}\frac{|\gamma_k|}{\|x_k\|_2} 
\geq \liminf_{k\to\infty}\frac{|\gamma_k|}{\|x_k\|_1}\geq 1,
}
which forces the following from \eqref{first_pass}:
\eq{ 
\lim_{k\to\infty} \langle t, \hat\nu_{\gamma_k,\gamma_k'}\rangle = 0.
}
It can now be easily deduced from Markov's inequality that $\hat\nu_{\gamma_k,\gamma_k'}\Rightarrow\delta_0$.
At the same time, 
Lemma \ref{tv_bound_lemma} (with $I=|\gamma_k'|$, $J=0$, $K=|\gamma_k|-|\gamma_k'|$) gives
\eq{
\TV(\hat\nu_{\gamma_k},\hat\nu_{\gamma_k,\gamma_k'})
\leq |\gamma_k'|/|\gamma_k|.
}
Since $|\gamma_k'|/|\gamma_k|\to0$ and $\hat\nu_{\gamma_k,\gamma_k'}\Rightarrow\delta_0$, we conclude
$\hat\nu_{\gamma_k}\Rightarrow\delta_0$. 

A similar argument goes through for any infinite geodesic $\Gamma$.
Label the vertices traversed by $\Gamma$ (in the order traversed) as $x_0,x_1,\dots$ so that $\Gamma^{(\ell)}\in\Geo(x_0,x_\ell)$, and take $n_\ell$ such that $x_\ell = x_0+[n_\ell\bxi_\ell]$ for some $\bxi_\ell\in\S^{d-1}$.
By Proposition \ref{addition_thm}, we have
\eeq{ \label{time_constant_infinite}
0=\lim_{\ell\to\infty}\frac{T(\Gamma^{(\ell)})}{n_\ell} = \lim_{\ell\to\infty} \frac{\ell}{n_\ell}\langle t,\hat\nu_{\Gamma^{(\ell)}}\rangle \quad \mathrm{a.s.}
}
Since $\|x_\ell-x_{\ell-1}\|_2=1$ for all $\ell\geq0$, we clearly have $\liminf_{\ell\to\infty} \ell/n_\ell \geq 1$, and so \eqref{time_constant_infinite} forces
$\lim_{\ell\to\infty} \langle t,\hat\nu_{\Gamma^{(\ell)}}\rangle = 0$.
Now Markov's inequality yields $\hat\nu_{\Gamma^{(\ell)}}\Rightarrow\delta_0$.
\end{proof}

\section{Remaining proofs} \label{length_lemma_proof}
Here we tie up a few loose ends: verification of Theorem \ref{zero_length_thm}, Lemma \ref{good_to_length}, and Theorem \ref{derivative_ineq_thm}.
For $x\in\Z^d$ and $S\subset\Z^d$, let us write $\Geo(x,S)$ for the set of self-avoiding paths $\gamma\in\PP(x,y)$ such that $y\in S$ and $T(\gamma) = \inf_{z\in S}T(x,z)$.
Recall from \eqref{box_defs} that $\Bbf_r(x)$ is the box of radius $r$ centered at $x\in\Z^d$, and $\partial \Bbf_r(x)$ denotes its boundary.

\begin{proof}[Proof of Theorem \ref{zero_length_thm}]
Let us first assume $F(0) > p_\cc(\Z^d)$.
As before, call an edge $e$ \textit{open} if $\tau_e=0$.
The subgraph of $\Z^d$ induced by the open edges has a unique infinite component, which we call $\OO$.
Let $E(\OO)$ denote the set of edges in $\OO$.
By \cite[Thm.~1]{zhang95}, there exists a constant $c = c(\LL,d)>0$ such that for any positive integer $r$, we have
\eeq{ \label{zhang_estimate}
\P(\exists\, \gamma\in\Geo(x,\partial \Bbf_r(x)) :\, \gamma\cap E(\OO)=\varnothing) \leq \e^{-cr^{1/d}}.
}
Observe that if every $\gamma_1\in \Geo(x,\partial \Bbf_{r_1}(x))$ and every $\gamma_2\in\Geo(y,\partial \Bbf_{r_2}(y))$ intersects $\OO$, and $\|x-y\|_\infty\geq r_1+r_2$, then every $\gamma\in\Geo(x,y)$ must remain entirely in $\OO$ between $\partial \Bbf_{r_1}(x)$ and $\partial \Bbf_{r_2}(x)$.
In this case, we have $|\gamma\setminus E(\OO)| \leq (2r_1+1)^d+(2r_2+1)^d$.
Of course, $\gamma\setminus E(\OO)$ contains as a subset all the edges $e\in\gamma$ such that $\tau_e>0$, and so
$|\gamma_+|\leq (2r_1+1)^d+(2r_2+1)^d$.

To utilize these observations, we take any $\eps\in(0,1/d)$.
Invoking \eqref{zhang_estimate} with a union bound over $\partial \Bbf_r(x)\cup\{0\}$, we have
\eq{
\P\Big(\bigcup_{x\in\partial \Bbf_{r}(0)\cup\{0\}} \{\exists\, \gamma\in\Geo(x,\partial \Bbf_{\lceil r^\eps\rceil}(x)) :\, \gamma\cap E(\OO)=\varnothing\}\Big)\leq Cr^{d-1}\e^{-cr^{\eps/d}},
}
where $C$ is a constant depending only $d$.
Applying Borel--Cantelli, we conclude that on an almost sure event, the event displayed above occurs for only finitely many $r$.
By the discussion in the previous paragraph, it follows that whenever $\|x\|_\infty$ is sufficiently large, we have
\eq{
|\gamma_+| \leq 2(2\lceil \|x\|_\infty^\eps\rceil+1)^d \quad \text{for every $\gamma\in\Geo(0,x)$}.
}
By our choice of $\eps$, the upper bound seen here is $o(\|x\|_\infty)$.
Therefore, we do have the claimed \eqref{zero_length_thm_eq}.

Now we turn to the case $F(0)=F(h)=p_\cc(\Z^d)$ for some $h>0$.
Consider any $x_k\in\mathbf{S}_{n_k}$ (recall the definition of $\mathbf{S}_n$ from \eqref{Sn_def}) and any $\gamma_k\in\Geo(0,x_k)$.
Let $A_0$, $(A_n)_{n\in[1,\infty)}$, $a_0(\cdot)$, and $a_1(\cdot)$ be as in Proposition \ref{replacement_thm}.
That is, $[a_0(\gamma_k)+a_1(\gamma_k)]/n_k\to0$ as $k\to\infty$ by \hyperref[replacement_thm_a]{\ref*{replacement_thm}\ref*{replacement_thm_a}}, while \hyperref[replacement_thm_b]{\ref*{replacement_thm}\ref*{replacement_thm_b}} gives
\eq{
\lim_{k\to\infty} \frac{T(\gamma_k^{(a_0(\gamma_k),a_1(\gamma_k))})}{n_k}=0 \quad \mathrm{a.s.}
}
Since $\tau_e\geq h$ whenever $\tau_e>0$, the following inequality is trivial:
\eq{
T(\gamma_k^{(a_0(\gamma_k),a_1(\gamma_k))}) \geq h(|\gamma_k|_+-a_0(\gamma_k)-a_1(\gamma_k)).
}
We conclude that
\eq{
0 \geq h\limsup_{k\to\infty} \frac{|\gamma_k|_+-a_0(\gamma_k)-a_1(\gamma_k)}{n_k}
= h\limsup_{k\to\infty} \frac{|\gamma_k|_+}{\|x_k\|_2}.
}
Since $h>0$, we again conclude that \eqref{zero_length_thm_eq} holds.
\end{proof}

\begin{proof}[Proof of Lemma \ref{good_to_length}]
Assume a coupling of the form \eqref{function_coupling_2}, as well as the occurrence of the almost sure events from Proposition \ref{replacement_thm} and Theorem \hyperref[var_formula_thm_sub_c]{\ref*{var_formula_thm_sub}\ref*{var_formula_thm_sub_c}}.
Consider any sequence of $\gamma_{n_k}\in\Geo(0,n_k\bxi)$ such that $\hat\nu_{\gamma_{n_k}}$ converges weakly to some $\hat\nu$ as $k\to\infty$.
By Proposition \ref{replacement_thm} and Theorem \hyperref[var_formula_thm_sub_c1]{\ref*{var_formula_thm_sub}\ref*{var_formula_thm_sub_c1}}, every subsequence of $(\gamma_{n_k})_{k\geq1}$ admits a further subsequence $(\gamma_{n_{k_j}})_{j\geq1}$ such that $\sigma_{\gamma_{n_{k_j}}}\Rightarrow\sigma$ as $j\to\infty$, for some $\sigma\in\RR^\bxi_\infty$ satisfying $\langle\tau,\sigma\rangle=\mu_\bxi(\tau)$.
We deduce that
\eq{
\lim_{j\to\infty} \frac{|\gamma_{n_{k_j}}|}{n_{k_j}} 
=\lim_{j\to\infty} \frac{|\gamma_{n_{k_j}}|}{\|[n_{k_j}\bxi]\|_2} 
&\stackrefp{to_minimizers_push}{=} \lim_{j\to\infty} \langle 1,\nu_{\gamma_{n_{k_j}}}\rangle\\
&\stackref{to_minimizers_push}{=} \langle 1,\tau_*(\sigma)\rangle
= \frac{\langle t,\tau_*(\sigma)\rangle}{\langle t,\hat\tau_*(\sigma)\rangle}
= \frac{\langle \tau,\sigma\rangle}{\langle t,\hat\nu\rangle}
= \frac{\mu_\bxi(\tau)}{\langle t,\hat\nu\rangle}.
}
As this holds for an arbitrary subsequence of $(\gamma_{n_k})_{k\geq1}$, we conclude
\eq{
\lim_{k\to\infty} \frac{|\gamma_{n_k}|}{n_k} = \frac{\mu_\bxi(\tau)}{\langle t,\hat\nu\rangle}.
}
The second part of the lemma is proved in exactly the same way, replacing all instances of $\hat\nu_\gamma$, $\hat\nu$, $\tau_*(\sigma)$, $|\gamma|$ with $\hat\nu_\gamma^+$, $\hat\nu^+$, $\tau_*^+(\sigma)$, $|\gamma|_+$, respectively.
\end{proof}

\begin{remark} \label{good_to_length_generality_remark}
The conclusions of Lemma \ref{good_to_length} also hold for any sequence satisfying \eqref{geodesic_like_2}.
We restricted our attention to geodesics because the primary purpose of Lemma \ref{good_to_length} is to realize \eqref{good_to_length_implication}.
\end{remark}

\begin{proof}[Proof of Theorem \ref{derivative_ineq_thm}]
First we address the subcritical regime in which $F(0)< p_\cc(\Z^d)$.
Consider any sequence of $\gamma_n\in\Geo(0,n\bxi)$.
Recall that
$|\gamma_n|/\|[n\bxi]\|_2$ can be expressed as $\langle 1,\sigma_{\gamma_n}\rangle$.
Since $\bxi$ is a unit vector, we also have $\|[n\bxi]\|_2/n\to1$ as $n\to\infty$.
Finally, recall the notation $\RR_\tau^\bxi = \{\sigma\in\RR^\bxi : \langle\tau,\sigma\rangle=\mu_\bxi(\tau)\}$.
On the almost sure event from Theorem \ref{minimizers_thm}, specifically \eqref{optimizer_weak_limit}, we can use an argument of subsequences to obtain
\eq{
\inf_{\sigma\in\RR^\bxi_\tau}\langle1,\sigma\rangle \leq \liminf_{n\to\infty} \frac{|\gamma_n|}{n} \leq \limsup_{n\to\infty} \frac{|\gamma_n|}{n} \leq \sup_{\sigma\in\RR^\bxi_\tau}\langle1,\sigma\rangle.
}
Now \eqref{derivative_ineq_thm} follows from Lemma \hyperref[one_d_lemma_c]{\ref*{one_d_lemma}\ref*{one_d_lemma_c}} with $\psi\equiv 1$.

Next consider the case when $F(0)\geq p_\cc(\Z^d)$. 
By Proposition \ref{slightly_negative_lemma_2}, we formally have $\mu_\bxi(\tau-h)=-\infty$ for all $h>0$, which leads to $D^-\mu_\bxi(\tau+h)\big|_{h=0}=\infty$.
This means the left-derivative inequality \eqref{derivative_ineq_b} is trivial.
Regarding the right derivative, if
$\liminf_{n\to\infty} n^{-1}\barbelow{N}_n^\bxi = \infty$ with probability one
(which includes the possibility that $\Geo(0,n\bxi)$ is empty for all large $n$),
then \eqref{derivative_ineq_a} is also trivial.
Otherwise, we can (with positive probability)  identify a sequence of $\gamma_{n_k}\in\PP(0,n_k\bxi)$, where $n_k\to\infty$ as $k\to\infty$, such that
\eq{
\lim_{k\to\infty} \frac{|\gamma_{n_k}|}{n_k} = \liminf_{n\to\infty}\frac{1}{n}\barbelow{N}_n^\bxi< \infty.
}
Therefore, Lemma \ref{compactness_lemma} allows us to pass to a further subsequence and assume  $\sigma_{\gamma_{n_k}}$ converges to some $\sigma_*\in\RR^\bxi$, which necessarily satisfies $\langle\tau,\sigma_*\rangle=0$ by Theorem \ref{zero_thm}.
In this case, we have
\eq{
\inf_{\sigma\in\RR^\bxi_\tau}\langle 1,\sigma\rangle 
\leq\langle1,\sigma_*\rangle
=\lim_{k\to\infty} \frac{|\gamma_{n_k}|}{n_k}
= \liminf_{n\to\infty} \frac{1}{n}\barbelow{N}_n^\bxi.
}
Once more, Lemma \ref{one_d_lemma} completes the proof, in this case by \ref{derivative_ineq_minimizers_a} with $h=0$.
\end{proof}

\chapter{Proof of Empirical Measure Convergence in Tree Case} \label{tree_proof}
In this section, we prove Theorem \ref{tree_thm}.
As in Chapter \ref{var_form_sec}, let $(U_x)_{x\in\T_d}$ be a family of i.i.d.~$[0,1]$-valued random variables \label{U_x_def_2}
distributed according to a Radon probability measure $\pp$.
Assume these variables are supported on a complete probability space $(\Omega,\FF,\P)$.
Given a measurable function $\tau\colon [0,1]\to\R$ whose negative part $\tau^- = -\tau\vee 0$ satisfies
\eeq{ \label{negative_side_moment_assumption}
\langle \e^{\alpha\tau^-(u)^\beta},\pp\rangle < \infty \quad \text{for some $\alpha>0$ and $\beta>1$},
}
we couple the FPP model \eqref{tree_FPP_model} to $(U_x)_{x\in\T_d}$ by $\tau_x = \tau(U_x)$ (recall from Section \ref{coupling_env_sub} that when $\pp$ is equal to Lebesgue measure $\Lambda$, any law for $\tau_x$ can be realized in this way).
Therefore, the empirical measures
\eq{
\hat\nu_x = \frac{1}{|x|}\sum_{0<y\leq x} \delta_{\tau_y} \quad \text{and} \quad
 \hat\sigma_x = \frac{1}{|x|}\sum_{0<y\leq x}\delta_{U_y}
}
are related via the pushforward operation: $\hat\nu_x = \tau_*(\hat\sigma_x)$.
Once we define
\eeq{ \label{prelimit_tree} 
\RR_n^{\T_d} \coloneqq \{\hat\sigma_x :\, x\in\T_d, |x|=n\}\subset\hat\Sigma, \quad n\in\{1,2,\dots\}.
}
all the arguments of Chapters \ref{constraint_sec} and \ref{var_form_proof} remain valid in the tree case.
The only notable difference is that the length of a path is no longer relevant and so the proofs actually become simpler.
Moreover, Propositions \ref{replacement_thm} and \ref{addition_thm} are not needed, as Theorem \ref{tree_time_constant_thm} gives almost sure convergence to the time constant (rather than just in probability).
In summary, we may use as starting points the following results:
\begin{itemize}
\item The set
$\RR_\infty^{\T_d} \coloneqq \big\{\hat\sigma\in\hat\Sigma :\, \liminf_{n\to\infty}  W(\hat\sigma,\RR_n^{\T_d})=0\big\}$
\label{R_infty_tree_def}
has an almost sure value $\RR^{\T_d} = \RR^{\T_d}(\pp)$, every $\hat\sigma\in\RR^{\T_d}$ is absolutely continuous with respect to $\pp$, and Corollary \ref{weak_to_strong_cor} applies.
Furthermore, $\RR^{\T_d}$ is compact because it is a closed subset of $\hat\Sigma$.
(In the lattice case, $\RR$ was closed in $\Sigma$, which is isometric to $\hat \Sigma\times[1,\infty)$ and thus non-compact.)
\item When $\pp=\Lambda$, the time constant $\mu_{\T_d}$ from \eqref{tree_time_constant_def} is given by
\eq{
\mu_{\T_d}(\tau) = \inf_{\hat\sigma\in\RR^{\T_d}} \langle \tau,\sigma\rangle,
}
and the set of minimizers $\RR_\tau^{\T_d}$ is \textit{always} nonempty (cf.~Remark \ref{length_remark}).
\item Almost surely, every sequence of $x_{n_k} \in \Geo_{n_k}$ (or more generally, any sequence with $T(x_{n_k})/n_k\to\mu_{\T_d}(\tau)$) admits a subsequence $(x_{n_{k_\ell}})_{\ell\geq1}$ such that
$\hat\sigma_{x_{n_{k_\ell}}}$ converges weakly to some $\hat\sigma\in\RR_\tau^{\T_d}$ as $\ell\to\infty$,
in which case
$\hat\nu_{x_{n_{k_\ell}}} \Rightarrow \tau_*(\hat\sigma)$.
The inequality in \eqref{unbounded_case} relies on Lemma \ref{test_fnc_lemma}, which explains the moment assumption \eqref{negative_side_moment_assumption} made in Theorem \ref{tree_thm}.
\end{itemize}
Given these facts, we need only show the following two propositions.
Recall that $\mathfrak{b}$ denotes the essential infimum of the random variable $\tau_x$.

\begin{prop} \label{tree_prop_1}
The set $\RR^{\T_d}(\pp)$ is equal to $\{\hat\sigma\in\hat\Sigma :\, \KL{\hat\sigma}{\pp} \leq \log d\}$.
\end{prop}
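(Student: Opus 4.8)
The plan is to prove the set equality $\RR^{\T_d}(\pp) = \{\hat\sigma\in\hat\Sigma :\, \KL{\hat\sigma}{\pp} \leq \log d\}$ by a double inclusion, using the large deviations principle (LDP) for the empirical measure of i.i.d.\ samples. Recall that $\RR_n^{\T_d}$ consists of the empirical measures $\hat\sigma_x = \frac1n\sum_{0<y\leq x}\delta_{U_y}$ over the $d^n$ vertices $x$ at generation $n$; the number of such vertices grows like $d^n = \e^{n\log d}$. For a single fixed lineage (a path from the root to generation $n$), the $U_y$'s along it are i.i.d.\ samples from $\pp$, so by Sanov's theorem the probability that $\hat\sigma_x$ is close to a given $\hat\sigma$ decays like $\e^{-n\KL{\hat\sigma}{\pp}}$. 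Balancing this against the $\e^{n\log d}$ branches is exactly what produces the threshold $\log d$.

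First I would prove the inclusion $\RR^{\T_d}(\pp) \subseteq \{\KL{\hat\sigma}{\pp}\leq\log d\}$. Fix a weakly open neighborhood $G$ of some $\hat\sigma$ with $\KL{\hat\sigma}{\pp} > \log d + 2\eta$ for some $\eta>0$; by lower semicontinuity of relative entropy (in the weak topology) one can shrink $G$ so that $\KL{\rho}{\pp} > \log d + \eta$ for all $\rho\in G$. By Sanov's upper bound, the probability that the empirical measure of $n$ i.i.d.\ $\pp$-samples lands in $G$ is at most $\e^{-n(\log d + \eta/2)}$ for $n$ large. A union bound over the $d^n$ vertices at generation $n$ gives $\P(\exists\,x, |x|=n, \hat\sigma_x\in G) \leq d^n\e^{-n(\log d+\eta/2)} = \e^{-n\eta/2}$, which is summable. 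By Borel--Cantelli, almost surely $\RR_n^{\T_d}\cap G = \varnothing$ for all large $n$, hence $\hat\sigma\notin\RR_\infty^{\T_d}$. Since $\RR^{\T_d}$ is the almost sure value of $\RR_\infty^{\T_d}$, we get $\hat\sigma\notin\RR^{\T_d}$. Taking a countable dense family of such $\hat\sigma$ (or covering the complement of $\{\KL{\cdot}{\pp}\leq\log d\}$ by countably many such open sets, using second countability of $\hat\Sigma$) yields the inclusion. This step also reproves the absolute continuity and the entropy bound mentioned in the bullet points and in the remark after Theorem~\ref{var_form_thm}.

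For the reverse inclusion $\{\KL{\hat\sigma}{\pp}\leq\log d\} \subseteq \RR^{\T_d}(\pp)$, I would fix $\hat\sigma$ with $\KL{\hat\sigma}{\pp} < \log d$ (the boundary case then follows since $\RR^{\T_d}$ is closed and $\{\KL{\cdot}{\pp}\leq\log d\}$ is the closure of its sublevel sets in the relevant sense, or by an explicit approximation $\hat\sigma_t = (1-t)\hat\sigma + t\pp$ with $t\searrow 0$). Fix a weakly open neighborhood $G\ni\hat\sigma$; by continuity of entropy along the interior we may assume $\sup_{\rho\in G}\KL{\rho}{\pp} < \log d - \eta$ after shrinking. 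By Sanov's lower bound, for a single lineage $\P(\hat\sigma_x\in G)\geq \e^{-n(\log d - \eta/2)}$ for large $n$, i.e.\ the expected number of vertices at generation $n$ with $\hat\sigma_x\in G$ is at least $d^n\e^{-n(\log d-\eta/2)} = \e^{n\eta/2}\to\infty$. To upgrade ``expectation large'' to ``at least one vertex, almost surely, for infinitely many $n$'', I would use a second-moment / branching argument: the tree structure makes the events along disjoint subtrees conditionally independent, so one can restrict attention to the $d^{\lfloor n/2\rfloor}$ subtrees rooted at generation $\lfloor n/2\rfloor$ and show that, with probability tending to $1$, at least one of them contains a generation-$n$ vertex whose empirical measure lies in $G$ (conditioning on the first-half lineage contributing a measure close to $\hat\sigma$ as well). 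Averaging appropriately and applying Borel--Cantelli along a subsequence $n_k$ spaced out enough that the failure probabilities are summable, one concludes that almost surely $\RR_{n_k}^{\T_d}\cap G\neq\varnothing$ infinitely often, hence $W(\hat\sigma, \RR_{n_k}^{\T_d})\to 0$, so $\hat\sigma\in\RR_\infty^{\T_d}=\RR^{\T_d}$. Again one runs this over a countable base of neighborhoods and a countable dense set of $\hat\sigma$'s and uses that $\RR^{\T_d}$ is closed.

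The main obstacle is the second-moment argument in the reverse inclusion: Sanov gives the right \emph{expected} count, but one needs genuine concentration to guarantee a surviving branch almost surely. The cleanest route is to exploit that subtrees are independent once their roots are fixed, reducing the problem to a statement about a sum of $d^{m}$ i.i.d.\ indicators (one per depth-$m$ subtree root, $m=\lfloor n/2\rfloor$) each with success probability $\geq \e^{-m(\log d - \eta')}$ conditionally on a favorable first half; since the mean of this sum diverges exponentially, the probability it is zero decays exponentially and Borel--Cantelli applies. Care is needed to choose the split point and the neighborhoods so that ``the first half contributes $\approx\hat\sigma$'' and ``a good second half exists'' can be combined to yield $\hat\sigma_x\in G$ for a generation-$n$ vertex $x$; this is where the weak topology (metrized by $W$ on the compact space $\hat\Sigma$) and the convexity of entropy along mixtures keep the bookkeeping manageable. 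The rest is routine.
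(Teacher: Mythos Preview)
Your first inclusion is correct and matches the paper exactly: Sanov's upper bound together with lower semicontinuity of $\KL{\cdot}{\pp}$, a union bound over the $d^n$ vertices at level $n$, and Borel--Cantelli.

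The reverse inclusion is where your sketch has a gap. You claim that splitting at depth $m=\lfloor n/2\rfloor$ produces $d^m$ \emph{i.i.d.}\ indicators, one per depth-$m$ subtree. But the event you actually care about---that some generation-$n$ descendant $x$ of $v$ has $\hat\sigma_x\in G$---depends on the samples $U_y$ along the path from the root to $v$, and these are shared among different depth-$m$ vertices $v$. If instead you define the indicator purely in terms of the second-half empirical measure (so that the indicators really are i.i.d.), then ``the indicator is $1$'' becomes ``some vertex in the subtree below $v$ has good second-half empirical measure,'' which is the same existence problem at scale $n-m$; the argument is circular. A genuine second-moment bound on $Z_n=\#\{x:|x|=n,\ \hat\sigma_x\in G\}$ can be carried out via a many-to-two computation, but it is more delicate than what you wrote and is not what the paper does.

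The paper instead uses an embedded supercritical Galton--Watson process. Given $\hat\sigma$ with $\KL{\hat\sigma}{\pp}\leq\log d$, pass to $\hat\rho=(1-\alpha)\hat\sigma+\alpha\pp$ with $\KL{\hat\rho}{\pp}<\log d$ (the same approximation you propose for the boundary case). Choose $n_0$ so that, by Sanov's lower bound, a single lineage of length $n_0$ has block-empirical measure within $\eps$ of $\hat\rho$ with probability at least $\e^{-n_0(\log d-\kappa)}$. Declare $z$ (with $|z|=|x|+n_0$) a GW-child of $x$ if the empirical measure of the segment from $x$ to $z$ lies within $\eps$ of $\hat\rho$. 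The mean offspring is $d^{n_0}\e^{-n_0(\log d-\kappa)}=\e^{n_0\kappa}>1$, so the process is supercritical; since the GW processes started from vertices in disjoint subtrees are independent, almost surely some starting vertex $x_0$ has a surviving lineage $z_0<z_1<\cdots$. Each block along this lineage has empirical measure within $\eps$ of $\hat\rho$, so by convexity of $W(\hat\rho,\cdot)$ the full empirical measure $\hat\sigma_{z_k}$ is eventually within $\eps$ of $\hat\rho$ as well. A subsequential limit lies in $\RR_\infty^{\T_d}$ and within $2\eps$ of $\hat\sigma$; closedness of $\RR_\infty^{\T_d}$ finishes. The point is that supercriticality plus independence across infinitely many disjoint subtrees gives survival almost surely, with no second-moment computation needed.
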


\begin{prop} \label{tree_prop_2}
When $\pp = \Lambda$, the pushforward measure $\tau_*(\hat\sigma)$ is the same for every $\hat\sigma\in\RR_\tau^{\T_d}$.
More precisely, $\tau_*(\hat\sigma) = \delta_{\mathfrak{b}}$ if $\P(\tau_x=\mathfrak{b})\geq 1/d$; otherwise $\RR_\tau^{\T_d}=\{\sigma_\star\}$, where $\sigma_\star$ is the unique solution to \eqref{minimum_RE}.
\end{prop}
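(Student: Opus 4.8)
By Proposition \ref{tree_prop_1} the deterministic set $\RR^{\T_d}$ coincides with the relative-entropy sublevel set $\SSS\coloneqq\{\hat\sigma\in\hat\Sigma:\KL{\hat\sigma}{\Lambda}\le\log d\}$, which is convex and on which $\hat\sigma\mapsto\KL{\hat\sigma}{\Lambda}$ is \emph{strictly} convex (since $t\mapsto t\log t$ is). Recall that $\mu_{\T_d}(\tau)$ is finite (Theorem \ref{tree_time_constant_thm}) and, by the facts recorded above, $\RR_\tau^{\T_d}=\SSS\cap\{\hat\sigma:\langle\tau,\hat\sigma\rangle=\mu_{\T_d}(\tau)\}$ is nonempty; as the intersection of a convex set with an affine hyperplane it is convex. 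For $\hat\sigma\in\RR_\tau^{\T_d}$ the quantity $\langle\tau,\hat\sigma\rangle$ equals the finite number $\mu_{\T_d}(\tau)$; more generally, the Donsker--Varadhan bound $\langle f,\hat\sigma\rangle\le\KL{\hat\sigma}{\Lambda}+\log\langle\e^{f},\Lambda\rangle$ with $f=\alpha(\tau^-)^{\beta}$ and the standing moment hypothesis show $\langle\tau^-,\hat\sigma\rangle<\infty$ for every $\hat\sigma\in\SSS$, so $\langle\tau,\cdot\rangle$ is well-defined on $\SSS$ with values in $(-\infty,\infty]$. The whole argument then rests on a dichotomy for a minimizer $\hat\sigma\in\RR_\tau^{\T_d}$: either the budget is saturated, $\KL{\hat\sigma}{\Lambda}=\log d$, or there is slack, $\KL{\hat\sigma}{\Lambda}<\log d$.

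Suppose first $\P(\tau_x=\mathfrak{b})\ge 1/d$ (so $\mathfrak{b}$ is finite). I would show $\mu_{\T_d}(\tau)=\mathfrak{b}$. The inequality $\ge$ is immediate because each $\hat\sigma\in\RR^{\T_d}$ is absolutely continuous with respect to $\Lambda$, hence gives no mass to the $\Lambda$-null set $\{\tau<\mathfrak{b}\}$, so $\langle\tau,\hat\sigma\rangle\ge\mathfrak{b}$. For $\le$, let $B=\{u:\tau(u)=\mathfrak{b}\}$; then $\Lambda(B)=\P(\tau_x=\mathfrak{b})\ge 1/d$, so $\Lambda|_B/\Lambda(B)$ has relative entropy $\log(1/\Lambda(B))\le\log d$, hence lies in $\RR^{\T_d}$, and it has $\langle\tau,\cdot\rangle=\mathfrak{b}$. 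Consequently every $\hat\sigma\in\RR_\tau^{\T_d}$ has $\langle\tau-\mathfrak{b},\hat\sigma\rangle=0$ with a $\hat\sigma$-a.e.\ nonnegative integrand, so $\tau=\mathfrak{b}$ $\hat\sigma$-a.e., i.e.\ $\tau_*(\hat\sigma)=\delta_{\mathfrak{b}}$, uniformly over $\RR_\tau^{\T_d}$.

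Now suppose $\P(\tau_x=\mathfrak{b})<1/d$. The crux is that here \emph{no} minimizer has slack: $\KL{\hat\sigma}{\Lambda}=\log d$ for all $\hat\sigma\in\RR_\tau^{\T_d}$. Granting this, if $\hat\sigma_0\neq\hat\sigma_1$ both lay in $\RR_\tau^{\T_d}$, their midpoint would lie in the convex set $\RR_\tau^{\T_d}$ yet have relative entropy $<\log d$ by strict convexity --- impossible; hence $\RR_\tau^{\T_d}=\{\hat\sigma_\star\}$ is a singleton and the ``same pushforward'' claim is trivial. To prove the crux, argue by contradiction from $\hat\sigma\in\RR_\tau^{\T_d}$ with $\KL{\hat\sigma}{\Lambda}<\log d$. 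First, $\mu_{\T_d}(\tau)>\mathfrak{b}$: were $\mu_{\T_d}(\tau)=\mathfrak{b}$, the argument of the previous paragraph would force $\hat\sigma(B)=1$ and hence $\KL{\hat\sigma}{\Lambda}\ge\log(1/\Lambda(B))>\log d$. Since $\essinf_\Lambda\tau=\mathfrak{b}<\mu_{\T_d}(\tau)$, choose a set $S$ with $\Lambda(S)>0$ on which $\tau$ is bounded and $\tau\le\mu_{\T_d}(\tau)-\delta$ for some $\delta>0$ (when $\mathfrak{b}=-\infty$ one takes $S$ inside a deep level set $\{-M-1\le\tau<-M\}$), and set $\hat\sigma_\epsilon\coloneqq(1-\epsilon)\hat\sigma+\epsilon\,\Lambda|_S/\Lambda(S)$. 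By convexity of relative entropy, $\KL{\hat\sigma_\epsilon}{\Lambda}\le(1-\epsilon)\KL{\hat\sigma}{\Lambda}+\epsilon\log(1/\Lambda(S))\to\KL{\hat\sigma}{\Lambda}<\log d$, so $\hat\sigma_\epsilon\in\RR^{\T_d}$ for $\epsilon$ small; and $\langle\tau,\hat\sigma_\epsilon\rangle=(1-\epsilon)\mu_{\T_d}(\tau)+\epsilon\langle\tau,\Lambda|_S/\Lambda(S)\rangle\le\mu_{\T_d}(\tau)-\epsilon\delta<\mu_{\T_d}(\tau)$, contradicting $\mu_{\T_d}(\tau)=\inf_{\RR^{\T_d}}\langle\tau,\cdot\rangle$. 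Finally, to match the statement's phrasing, $\hat\sigma_\star$ is the unique solution of \eqref{minimum_RE}: it is feasible with objective $\log d$, and any feasible measure with objective $\le\log d$ lies in $\RR^{\T_d}$, hence in $\RR_\tau^{\T_d}=\{\hat\sigma_\star\}$, which simultaneously identifies $\log d$ as the minimum value and $\hat\sigma_\star$ as its unique minimizer.

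The main obstacle is the slack-elimination step in the second case: one must exhibit a bona fide competitor $\hat\sigma_\epsilon\in\RR^{\T_d}$ with strictly smaller $\langle\tau,\cdot\rangle$ while respecting the entropy budget. Using a \emph{mixture} with a measure concentrated where $\tau$ is small makes the entropy accounting trivial (plain convexity of $\KL{\cdot}{\Lambda}$ suffices), and the prior identification $\mu_{\T_d}(\tau)>\mathfrak{b}$ is exactly what makes such a measure available; the only genuine care needed is the degenerate case $\mathfrak{b}=-\infty$ and the (routine) verification that $\langle\tau,\hat\sigma_\epsilon\rangle$ splits additively, which uses the $\tau^-$-moment bound recorded at the outset.
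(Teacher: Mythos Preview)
Your proof is correct and follows essentially the same approach as the paper's: both split into the cases $\P(\tau_x=\mathfrak{b})\ge 1/d$ (where $\mu_{\T_d}=\mathfrak{b}$ forces $\tau_*(\hat\sigma)=\delta_{\mathfrak{b}}$) and $\P(\tau_x=\mathfrak{b})<1/d$ (where a mixture competitor $(1-\eps)\hat\sigma+\eps\Lambda_S$ shows every minimizer saturates the entropy budget, whereupon strict convexity of $\KL{\cdot}{\Lambda}$ yields uniqueness). You are more explicit than the paper about several points---the verification that $\mu_{\T_d}=\mathfrak{b}$ in the first case (which the paper calls a routine exercise), the Donsker--Varadhan bound ensuring $\langle\tau,\cdot\rangle$ is well-defined on $\SSS$, the handling of $\mathfrak{b}=-\infty$, and the final identification with \eqref{minimum_RE}---but these are refinements rather than departures from the paper's argument.
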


Concerning the first of these two results, the following argument was suggested by L. Addario-Berry.

\begin{proof}[Proof of Proposition \ref{tree_prop_1}]
Since $\RR_\infty^{\T_d}$ is almost surely equal to $\RR^{\T_d}$, it suffices to show the following equivalence for every $\hat\sigma\in\hat\Sigma$:
\eq{
\P(\hat\sigma\in\RR^{\T_d}_\infty) = 1 \quad \iff \quad \KL{\hat\sigma}{\pp}\leq\log d.
}
By a classical result of Donsker and Varadhan \cite[Thm.~4.5]{donsker-varadhan76}, we have the following large deviations principle commonly known as Sanov's theroem.
For every closed set $A\subset\hat\Sigma$,
\eeq{ \label{closed_sanov}
\limsup_{|x|\to\infty} \frac{1}{|x|}\log\P(\hat\sigma_x\in A) \leq -\inf_{\hat\sigma\in A} \KL{\hat\sigma}{\pp},
}
and for every open set $B\subset\hat\Sigma$,
\eeq{ \label{open_sanov}
\liminf_{|x|\to\infty} \frac{1}{|x|}\log\P(\hat\sigma_x\in B) \geq -\inf_{\hat\sigma\in B} \KL{\hat\sigma}{\pp}.
}
Now suppose $\KL{\hat\sigma}{\pp}>\log d$, and take $\eps>0$ such that $\KL{\hat\sigma}{\pp}\geq\log d+3\eps$.
Since $\KL{\cdot}{\pp}$ is lower semi-continuous with respect to weak convergence, there is $\delta>0$ such that
\eq{
\KL{\hat\rho}{\pp} \geq \log d + 2\eps \quad \text{whenever} \quad \text{$ W(\hat \sigma,\hat\rho) \leq \delta$}.
}
Applying \eqref{closed_sanov}, we find
\eq{
\limsup_{|x|\to\infty}\frac{1}{|x|}\log \P( W(\hat\sigma,\hat\sigma_x)\leq\delta) \leq -(\log d + 2\eps),
}
which is enough to imply that
\eq{
\limsup_{|x|\to\infty} \frac{\P(W(\hat\sigma,\hat\sigma_x)\leq\delta)}{\e^{-(\log d+\eps)|x|}}= 0.
}
This last observation, together with a union bound over $x\in\T_d$ belonging to the $n^\text{th}$ generation, shows
\eq{
\limsup_{n\to\infty} \frac{\P( W(\hat\sigma,\RR^{\T_d}_n)\leq\delta)}{\e^{-n\eps}} \leq
\limsup_{|x|\to\infty} \frac{d^{|x|}\cdot\P( W(\hat\sigma,\hat\sigma_x)\leq\delta)}{\e^{-|x|\eps}} = 0.
}
By \mbox{Borel}--\mbox{Cantelli}, we conclude that $\liminf_{n\to\infty}  W(\hat\sigma,\RR_n^{\T^d})$ is almost surely at least $\delta$,
which implies $\hat\sigma\notin\RR^{\T_d}$.

On the other hand, suppose $\KL{\hat\sigma}{\pp} \leq \log d$.
Let $\eps>0$ be given.
We claim there is some $\hat\rho\in\hat\Sigma$ such that $ W(\hat\sigma,\hat\rho)\leq \eps$ and $\KL{\hat\rho}{\pp} < \log d$.
Indeed, let $\alpha\in(0,1)$ be sufficiently small that $ W(\hat\sigma,(1-\alpha)\hat\sigma + \alpha\pp) \leq \eps$.
By the convexity of $\KL{\cdot}{\pp}$, we have
\eq{
\KL{(1-\alpha)\hat\sigma+\alpha\pp}{\pp} &\leq (1-\alpha)\KL{\hat\sigma}{\pp} + \alpha\KL{\pp}{\pp}  \\
&= (1-\alpha)\KL{\hat\sigma}{\pp} < \log d,
}
thus making $\hat\rho = (1-\alpha)\hat\sigma+\alpha\pp$ the desired measure.
Now choose $\kappa>0$ such that $\KL{\hat\rho}{\pp}\leq \log d - 2\kappa$, so that \eqref{open_sanov} gives
\eq{
\liminf_{|x|\to\infty}\frac{1}{|x|}\log\P( W(\hat\rho,\hat\sigma_x)<\eps) \geq -\log d +2\kappa.
}
So let $n_0$ be sufficiently large that $e^{n_0\kappa}>1$ and
\eq{
\P( W(\hat\rho,\hat\sigma_x)<\eps) \geq \e^{-n_0(\log d - \kappa)} \quad \text{for $|x|=n_0$}.
}
We then have
\eq{
\E|\{x\in\T_d :\, |x|=n_0,  W(\hat\rho,\hat\sigma_x) < \eps\}| \geq d^{n_0}\e^{-n_0(\log d-\kappa)} > 1.
}
Therefore, each $x_0\in\T_d$ can be viewed as the root of some supercritical Galton--Watson process $\mathrm{GW}^{\hat\rho,\eps}_{x_0}$, defined as follows.
For any $x\in\T_d$, the $\mathrm{GW}^{\hat\rho,\eps}$-children of $x$ (not to be confused with the $\T_d$-children of $x$) are those $\T_d$-descendants $z> x$ such that
\eq{
|z| = |x| + n_0\quad\text{and} \quad W\bigg(\hat \rho,\frac{1}{n_0}\sum_{x<y\leq z} \delta_{U_y}\bigg) < \eps.
}
Furthermore, if we write $\T_d(x)$ for the subset of $\T_d$ consisting of $x$ and all its $\T_d$-descendants, then $\mathrm{GW}^{\hat\rho,\eps}_{x_1},\dots,\mathrm{GW}^{\hat\rho,\eps}_{x_n}$ are independent whenever $\T_d(x_1),\dots,\T_d(x_n)$ are disjoint.
Consequently, there is almost surely some $x_0\in\T_d$ for which $\mathrm{GW}^{\hat\rho,\eps}_{x_0}$ avoids extinction. 
That is, there is a sequence $x_0=z_0< z_1< z_2<\cdots$
such that $|z_k| = |z_0| + kn_0$, and
\eq{
 W\bigg(\hat \rho,\frac{1}{n_0}\sum_{z_{k-1}<y\leq z_k} \delta_{Y_y}\bigg) < \eps \quad \text{for every $k\geq1$}.
}
Since $ W(\hat\rho,\cdot)$ is convex, it follows that
\eq{
\limsup_{k\to\infty}  W(\hat\rho,\hat\sigma_{z_k}) \leq \limsup_{k\to\infty}\Big( \frac{|z_0|}{|z_k|} W(\hat\rho,\hat\sigma_{z_0}) + \frac{|z_k|-|z_0|}{|z_k|}\eps\Big) = \eps.
}
By compactness, $(\hat\sigma_{z_k})_{k\geq0}$ admits some subsequence converging to some $\hat\sigma_\infty\in\RR_\infty$, thereby giving
\eq{
 W(\hat\sigma,\RR_\infty^{\T_d}) \leq  W(\hat\sigma,\hat\rho) +  W(\hat\rho,\hat\sigma_\infty) \leq 2\eps.
}
As $\eps$ is arbitrary and $\RR_\infty^{\T_d}$ is closed, we conclude that $\hat \sigma$ almost surely belongs to $\RR_\infty^{\T_d}$.
\end{proof}

\begin{proof}[Proof of Proposition \ref{tree_prop_2}]
Now we assume $\pp=\Lambda$.
It is a routine exercise that $\mu_{\T_d}=\mathfrak{b}$ if and only if $\P(\tau_x=\mathfrak{b})\geq1/d$.
Furthermore, by definition we have $\tau(u)\geq \mathfrak{b}$ for Lebesgue-almost every $u\in[0,1]$.
Therefore, if $\hat\sigma\in\hat\Sigma$ is such that $\langle\hat\sigma,\tau\rangle=\mathfrak{b}$, we must have $\tau_*(\hat\sigma) = \delta_{\mathfrak{b}}$.
In particular, when $\mu_{\T_d}=\mathfrak{b}$, it follows that $\tau_*(\hat\sigma) = \delta_{\mathfrak{b}}$ for every $\hat\sigma\in\RR_\tau^{\T_d}$.
So the proposition holds in this case.

Otherwise, we have $\mu_{\T_d}>\mathfrak{b}$, and so there is a set $B\subset[0,1]$ of positive Lebesgue measure such that $\tau(u)<\mu_{\T_d}(\tau)$ for all $u\in B$.
Let $\Lambda_B$ be the uniform probability measure on $B$, and note that
\eq{
\KL{\Lambda_B}{\Lambda} = -\log\Lambda(B) < \infty.
}
Now suppose that $\hat\sigma$ is any element of $\hat\Sigma$ such that $\KL{\hat\sigma}{\Lambda}<\log d$.
Since $\KL{\cdot}{\Lambda}$ is convex wherever it is finite (strictly so, in fact), we can choose $\eps\in(0,1)$ sufficiently small that $\KL{(1-\eps)\hat\sigma+\eps\Lambda_B}{\Lambda}\leq\log d$.
Therefore, both $\hat\sigma$ and $(1-\eps)\hat\sigma+\eps\Lambda_B$ are candidate measures in the variational formula \eqref{linear_functional_tree}, which means
\eq{
\mu_{\T_d}(\tau)
\leq \langle \tau,(1-\eps)\hat\sigma+\eps\Lambda_B\rangle
= (1-\eps)\langle\tau,\hat\sigma\rangle + \eps\langle\tau,\Lambda_B\rangle
< \langle\tau,\hat\sigma\rangle.
}
We have thus shown that any minimizer $\hat\sigma\in\RR_\tau^{\T_d}$ must satisfy $\KL{\hat\sigma}{\Lambda}=\log d$.
Since $\RR_\tau^{\T_d}$ is a convex set, the strict convexity of $\KL{\cdot}{\Lambda}$ now implies $\RR_\tau^{\T_d}$ can contain at most one element.
On the other hand, we know $\RR_\tau^{\T_d}$ is nonempty, and so $\RR_\tau^{\T_d} = \{\sigma_\star\}$, where $\langle\tau,\sigma_*\rangle = \mu_{\T_d}(\tau)$ by definition, and the relative entropy $\KL{\sigma_\star}{\Lambda}=\log d$ is minimal by the discussion from before.
\end{proof}

Our final result is unrelated to FPP but follows from what we have done.

\begin{thm} \label{weak_to_strong_thm}
Let $\pp$ be a Radon probability measure on $[0,1]$.
Suppose $(\hat\sigma_j)_{j\geq1}$ is a sequence of Borel probability measures such that $\KL{\hat\sigma_j}{\pp}$ is uniformly bounded in $j$ by a finite constant.
If $\hat\sigma_j$ converges weakly to $\hat\sigma$ as $j\to\infty$, then 
\eq{
\lim_{j\to\infty}\langle f,\hat\sigma_j\rangle = \langle f,\sigma\rangle \quad \text{for every measurable $f\colon [0,1]\to\R$ satisfying \eqref{test_fnc_condition}}.
}
In particular, $\hat\sigma_j(B)\to\hat\sigma(B)$ for every measurable $B\subset[0,1]$.
\end{thm}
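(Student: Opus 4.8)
The plan is to reduce Theorem \ref{weak_to_strong_thm} to the probabilistic machinery already developed, by constructing a first-passage percolation model on a $d$-ary tree whose associated constraint set $\RR^{\T_d}(\pp)$ is precisely the relevant sublevel set of relative entropy. Concretely, given the uniform bound $\sup_j \KL{\hat\sigma_j}{\pp} =: M < \infty$, I would choose an integer $d\geq 2$ with $\log d \geq M$, so that by Proposition \ref{tree_prop_1} we have $\hat\sigma_j \in \RR^{\T_d}(\pp)$ for every $j$, and also $\hat\sigma\in\RR^{\T_d}(\pp)$ since this set is closed in $\hat\Sigma$ (it is the sublevel set of a weakly lower semicontinuous functional) and $\hat\sigma_j\Rightarrow\hat\sigma$.

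Once both the sequence and its weak limit live inside $\RR^{\T_d}(\pp) = \RR^{\T_d}$, the conclusion is exactly Corollary \ref{weak_to_strong_cor} applied in the tree setting. That corollary states that weak convergence within $\RR$ upgrades to $\langle f,\sigma_j\rangle\to\langle f,\sigma\rangle$ for every measurable $f$ satisfying the exponential-moment condition \eqref{test_fnc_condition}, and as the discussion at the start of Section \ref{tree_proof} makes explicit, Corollary \ref{weak_to_strong_cor} ``applies'' in the tree case with $\RR_n$ replaced by $\RR_n^{\T_d}$ from \eqref{prelimit_tree} and with the same proof (indeed simpler, since path lengths play no role). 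The final sentence of the theorem, $\hat\sigma_j(B)\to\hat\sigma(B)$ for measurable $B\subset[0,1]$, then follows by taking $f = \one_B$, which is bounded and hence trivially satisfies \eqref{test_fnc_condition}.

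A couple of small points need care. First, Corollary \ref{weak_to_strong_cor} and the constructions of Section \ref{constraint_sec} are stated for a general Radon probability measure $\pp$ on $[0,1]$, so no specialization to Lebesgue measure is needed here; this is exactly the reason the excerpt flagged that it ``maintain[s] this level of generality.'' Second, one must make sure the integer $d$ can genuinely be chosen with $\log d\geq M$ — this is immediate since $M<\infty$ — and that the tree FPP apparatus (the sets $\RR_n^{\T_d}$, $\RR_\infty^{\T_d}$, Theorem \ref{deterministic_limits_thm}, Lemma \ref{test_fnc_lemma}) does not secretly require a nondegeneracy hypothesis on $\pp$; inspecting those statements, the only standing assumption is that $\pp$ is Radon, which is given.

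The main obstacle — really the only nontrivial input — is Proposition \ref{tree_prop_1}, the identification $\RR^{\T_d}(\pp) = \{\hat\sigma\in\hat\Sigma : \KL{\hat\sigma}{\pp}\leq\log d\}$, together with the transfer of Corollary \ref{weak_to_strong_cor} to the tree. Both of these are granted by the excerpt, so from the standpoint of this proof they are black boxes; the remaining work is purely the bookkeeping of choosing $d$ and invoking the right statements, which is routine. Thus the proof is short: fix $d$ with $\log d\geq\sup_j\KL{\hat\sigma_j}{\pp}$, observe via Proposition \ref{tree_prop_1} that $\{\hat\sigma_j\}_{j}\cup\{\hat\sigma\}\subset\RR^{\T_d}$, and apply the tree analogue of Corollary \ref{weak_to_strong_cor}.
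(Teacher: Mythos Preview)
Your proposal is correct and follows essentially the same approach as the paper: choose $d$ with $\log d \geq \sup_j \KL{\hat\sigma_j}{\pp}$, invoke Proposition~\ref{tree_prop_1} to place the sequence and its limit inside $\RR^{\T_d}(\pp)$, and apply Corollary~\ref{weak_to_strong_cor} in the tree setting. The paper's proof is only three sentences and is exactly this argument; your additional remarks on why $\hat\sigma$ belongs to the closed set and on the generality of $\pp$ are accurate elaborations.
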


\begin{proof}
Suppose $\KL{\hat\sigma_j}{\pp}\leq C<\infty$ for all $j$.
Choose a positive integer $d$ such that $\log d\geq C$, so that $\hat\sigma_j\in\RR^{\T_d}(\pp)$ by Proposition \ref{tree_prop_1}.
Moreover, if $\hat\sigma_j\Rightarrow\hat\sigma$ as $j\to\infty$, then $\hat\sigma$ also belongs to $\RR^{\T_d}(\pp)$.
Therefore, the proof is completed by recalling Corollary \ref{weak_to_strong_cor}.
\end{proof}

\appendix
\chapter*{List of Symbols} \label{list_of_symbols}

\begin{longtable}{lp{0.85in}p{0.6\textwidth}}
{\sf Notation} & {\sf pg.~or (eq.)} & {\sf Description ($*$ means w.r.t.~$\tilde\tau_e$ when applicable)} \\ \hlineB{3}
\rowcolor{gray!25}\multicolumn{3}{l}{\sf Universal objects}\\
$(\Omega,\FF,\P)$ & \pageref{cps_def} & complete probability space on which all random variables are defined \\
$E(\Z^d)$ & \pageref{edges_def} & undirected edges of integer lattice $\Z^d$ \\
$\S^{d-1}$ & \pageref{unit_sphere_def} &unit sphere in $\R^d$ \\
$\mathbf{e}_i$    & \pageref{sbv_def} &  $i^\text{th}$ standard basis vector in $\R^d$ \\
$p_\cc(\Z^d)$ &\pageref{pczd_def} &critical probability for bond percolation on $\Z^d$ \\
$\T_d$ & \pageref{tree_d_def} & infinite complete $d$-ary tree \\
$\Lambda$ & \pageref{lebesgue_meas_def} &Lebesgue measure on $[0,1]$ \\
$\pp$ & \pageref{radon_meas_def} &Radon prob.~measure on $[0,1]$, often equal to $\Lambda$ \\
$\BBB_\bv$    & \pageref{bv_banach_space} &   sequences with bounded variation \\
$\ell^1(\N)$ &\pageref{ell1_banach_space} & summable sequences \vspace{0.5\baselineskip}\\

\rowcolor{gray!25}\multicolumn{3}{l}{\sf Random weights and random environment}\\
$\tau_e$ & \pageref{edge_weights_def} &nonnegative edge-weight, $e\in E(\Z^d)$ \\
$\tau_e^\pert$ &\pageref{pert_edge_weights_def} & real-valued edge-weight, $|\tau_e^\pert| \leq \af$ \\
$\tilde\tau_e$ &\pageref{tilde_edge_weights_def} & real-valued edge-weight, equal to $\tau_e+\tau_e^\pert$ \\
$\tau_x$ & \pageref{vertex_weight_def} & real-valued vertex-weight, $x\in\T_d$ \\
$U_e$, $U_x$ & \pageref{U_e_def_1}, \pageref{U_e_def_2} &$[0,1]$-valued, $\pp$-distributed random variable \\
$\TT$ &\pageref{tail_1} &tail sigma-algebra associated to $U_e$'s \\
$\mathfrak{T}_z$ &\eqref{shift_operator} &shift transformation of random environment \\
$\tau$ & \pageref{tau_measurable_def_1}, \pageref{tau_measurable_def_2} &coupling map, nonnegative in lattice setting \\
$\tau^\pert$ & \pageref{pert_fnc_1}, \pageref{pert_fnc_2} &coupling map, real-valued \\
$\tilde\tau$ & \pageref{pert_fnc_1}, \eqref{function_coupling_2} &coupling map, equal to $\tau+\tau^\pert$ \\
$F$    & \eqref{distribution_fnc_def} &  cumulative distribution function for $\tau_e$ \\
$\LL$ & \pageref{weight_law_def_1}, \pageref{weight_law_def_2} & law of $\tau_e$ or $\tau_x$ \\
$\LL\oplus h$ & \pageref{variable_law_1} & law of $\tau_e+h$ \\
$\LL\oplus h\one_{\{t>0\}}$ & \pageref{variable_law_2} & law of $\tau_e+h\one_{\{\tau_e>0\}}$ \\
$\PPP$    & \pageref{PPP_def} &space of Borel probability measures on $[0,\infty)$   \\
$\PPP_\mathrm{emp}(\bxi)$    & \pageref{good_def} &edge-weight laws satisfying Definition \ref{good_def}   \\
$\PPP_\mathrm{emp}^\infty(\bxi)$    & \pageref{good_def_infty}   &edge-weight laws satisfying Definition \ref{good_def_infty}   \\
$\PPP_\mathrm{length}(\bxi)$   & \pageref{good_length_def} &edge-weight laws satisfying Definition \ref{good_length_def} \\
$\mathfrak{b}$ & \pageref{nonzero_atom_eg}, \pageref{bfrak_def}  & essential infimum of $\LL$ \\
$\tf$ & \pageref{tfrak_def} & large constant depending on $\LL$ and $d$ \\ 
$\af$, $\qf$, $\mathfrak{s}$, $\mf$ &\pageref{eta_choice_lemma}, \pageref{T_hat_upper}, \pageref{geo_length_cor} &special constants depending on $\LL$ and $d$ \\
$C$, $c$ &\pageref{C_c_def} & large/small positive constants depending on $\LL$ and $d$, values can change from line to line \vspace{0.5\baselineskip} \\

\rowcolor{gray!25}\multicolumn{3}{l}{\sf Paths and their properties; geometry of $\Z^d$ }\\
$[x]$    & \pageref{integer_approx_def} & $\Z^d$-approximation of $x\in\R^d$   \\
$\mathbf{S}_n$    & \eqref{Sn_def} &   $\Z^d$-approximation of circle in $\R^d$ of radius $n$ \\
$\Bbf_r(x)$ &\eqref{box_def_a} &box of radius $r$ centered at $x$   \\
$\partial\Bbf_r(x)$ &\eqref{box_def_b} &boundary of $\Bbf_r(x)$   \\
$\PP(x,y)$    & \pageref{path_xy_def} & set of self-avoiding paths between $[x]$ and $[y]$  \\
$\PP(x)$    & \pageref{path_x_def} & set of self-avoiding paths starting at $x$  \\
$\Gamma^{(\ell)}$ & \eqref{sub_of_infinite} &first $\ell$ edges in the infinite path $\Gamma$ \\
$\gamma^{(a_0,a_1)}$ &\eqref{delete_edges_def} &$\gamma$ without its first $a_0$ edges and last $a_1$ edges \\
$|\gamma|$ &\pageref{gamma_length_def} &number of edges in $\gamma$ \\
$|\gamma|_0$ &\eqref{gamma_length_0_def} &number of zero-weight edges in $\gamma$ \\
$|\gamma|_+$ & \eqref{gamma_length_0_def} &number of positive-weight edges in $\gamma$ \\
$|x|$ &\pageref{gen_number_def} &generation number of $x\in\T_d$ \\
$\SS(x)$ & \pageref{shell_def} &shell around $x$, a random subset of $\Z^d$ \\
$\SS_\mathrm{int}(x)$ & \pageref{interior_shell_def} &``interior" of $\SS(x)$ \\
$\Rvc(x)$ &\eqref{radius_x_def} &max.~distance from $x$ to an element of $\SS(x)$ \\ 
$\Avc(x)$    & \eqref{LQ_def} &max.~length of s.a.~path before hitting $\SS(x)$ \\
$\OO$    & \pageref{infinite_open_cluster}, \pageref{infinite_open_cluster_2} &unique infinite cluster formed by open edges   \\
$\WW$    & \pageref{infinite_white_cluster} &unique infinite cluster formed by white vertices   \\
$D_\oo(x,y)$  & \eqref{chemical_def_1} &min.~length of path $x\leftrightarrow y$ using open edges  \\
$D_\ww(x,y)$  & \eqref{chemical_def_2} &min.~length of path $x\leftrightarrow y$ using white vertices \vspace{0.1\baselineskip} \\

\rowcolor{gray!25}\multicolumn{3}{l}{\sf Passage times and geodesics} \\
$T(\gamma)$    & \eqref{fpp_def} &sum of edge-weights $\tau_e$, $e\in\gamma$   \\
$\wt T(\gamma)$    & \eqref{fpp_def_perturbed} &sum of edge-weights $\tilde\tau_e$, $e\in\gamma$   \\
$T(x,y)$    &  \eqref{fpp_def} &first-passage time between $[x]$ and $[y]$, w.r.t.~$\tau_e$  \\
$\wt T(x,y)$    &  \eqref{fpp_def_perturbed} &first-passage time between $[x]$ and $[y]$, w.r.t.~$\tilde\tau_e$  \\
$\wh T(x,y)$ &\eqref{wh_T_def}$^*$ &first-passage time between $\SS([x])$ and $\SS([y])$ \\
$T_n$ &\eqref{tree_FPP_model} &first-passage time to $n^\text{th}$ generation of $\T_d$ \\
$\mu_\bxi$    & \eqref{time_constant_def}, \eqref{shell_to_mu}$^*$ &time constant in $\bxi$-direction \\
$\mu_{\T_d}$    & \eqref{tree_time_constant_def} &time constant on $\T_d$   \\
$B_0$ &\pageref{limit_shape_def} &FPP limit shape \\
$\Lvc(x,h)$    & \eqref{LQ_def} &max.~length of s.a.~path with $\wt T(\gamma)<h|\gamma|$ \\
$\Qvc(x,h;q)$    & \eqref{LQ_def} &max.~length of s.a.~path having $\geq q$ fraction of edges with weight $\tilde\tau_e\leq h$ \\
$\Delta_\mathrm{int}(x)$ &\eqref{deltas_def} &quantity to control difference between $\wt T$ and $\wh T$ \\
$\Delta_\mathrm{ext}(x)$ &\eqref{deltas_def} &correction term in subadditivity inequalities \\
$\Geo(x,y)$ &\pageref{geo_def}$^*$ &set of all geodesics between $[x]$ and $[y]$ \\
$\Geo_\infty$ &\pageref{geo_infty_def}$^*$ &set of all infinite geodesics \\
$\Geo_\infty(\bxi)$ &\pageref{geo_infty_bxi_def}$^*$ &set of all $\bxi$-directed infinite geodesics \\
$\Geo_n$ &\eqref{geo_n_def} &level-$n$ nodes in $\T_d$ with min.~passage time \\
$\barbelow{N}_n^\bxi$, $\barabove{N}_n^\bxi$ &\eqref{Ln_def} &min./max. length of geodesic $0\leftrightarrow[n\bxi]$ \vspace{0.5\baselineskip} \\

\rowcolor{gray!25}\multicolumn{3}{l}{\sf Empirical measures and their limits} \\
$\delta_t$ &\pageref{dirac_def} &Dirac delta measure at $t\in\R$ \\
$\langle f,\nu\rangle$ &\eqref{f_nu_def} &integral of $f$ with respect to finite measure $\nu$ \\
$\hat\nu$    & \pageref{normalization_def} &normalization of $\nu$ to a probability measure  \\
$\nu_\gamma$, $\sigma_\gamma$    & \eqref{nu_def}$^*$, \eqref{sigma_gamma} &partially normalized empirical measure along $\gamma$, w.r.t.~$\tau_e$ and $U_e$, respectively   \\
$\nu_{\gamma,\gamma'}$, $\sigma_{\gamma,\gamma'}$    &  \eqref{nu_minus_def}$^*$, \eqref{sigma_minus_def} &partially normalized empirical measure along subpath $\gamma\setminus\gamma'$, w.r.t.~$\tau_e$ and $U_e$, respectively   \\
$\nu_\gamma^+$    & \eqref{nu_def}$^*$ &partially normalized empirical measure along $\gamma$, w.r.t.~$\tau_e\neq 0$   \\
$\hat\nu_x$ &\eqref{empirical_measures_tree} &empirical measure along path to $x\in\T_d$, w.r.t.~$\tau_y$ \\
$\hat\sigma_x$ &\pageref{empirical_measures_tree} &empirical measure along path to $x\in\T_d$, w.r.t.~$U_y$ \\
$\tau_*(\sigma)$    & \eqref{push_def} &pushforward of $\sigma$ by map $\tau$  \\
$\hat\tau_*(\sigma)$    & \pageref{push_def} &normalization of $\tau_*(\sigma)$  \\
$\check\tau_*(\sigma)$    & \pageref{check_def} &normalization of $\tilde\tau_*(\sigma)$  \\
$\tau_*^+(\sigma)$    & \eqref{pos_push_def} &pushforward of $\sigma$ by $\tau$, restricted to $\{\tau\neq0\}$  \\
$\hat\tau_*^+(\sigma)$    & \pageref{push_def} &normalization of $\tau_*^+(\sigma)$  \\
$\check\tau_*^+(\sigma)$    & \pageref{check_def} &normalization of $\tilde\tau_*^+(\sigma)$  \\
$\Sigma$ &\pageref{Sigma_def} &space of finite, positive Borel measures on $[0,1]$ with total mass at least $1$ \\
$\hat\Sigma$ &\pageref{hat_Sigma_def} &space of Borel probability measures on $[0,1]$ \\
$\Sigma_M$ &\eqref{Sigma_M_def} &set of $\sigma\in\Sigma$ with total mass at most $M$  \\
$W(\cdot,\cdot)$ &\eqref{wasserstein_def} &Wasserstein distance, suitably extended to $\Sigma$ \\
$\TV(\cdot,\cdot)$ &\eqref{w_tv_1} &total variation distance, a metric on $\hat\Sigma$ \\
$\KL{\cdot}{\cdot}$    & \eqref{KL_div_def} &Kullback--Leibler divergence (relative entropy)   \\
$\KK(\Sigma_*)$ &\pageref{KK_def} &set of nonempty, closed subsets of $\Sigma_*$ \\
$\HH$ &\eqref{hausdorff_def} &Hausdorff distance on $\KK(\Sigma_*)$ \\
$\RR^\bxi_\infty$ &\eqref{R_infty_def} &all limits of empirical measures in $\bxi$-direction \\
$\RR_\infty$ &\eqref{directionless_defs} &union of $\RR^\bxi_\infty$ over $\bxi\in\S^{d-1}$ \\
$\RR_n^{\bxi,\eps}$ &\eqref{prelimit_to_R} &prelimit of $\RR^\bxi_\infty$ \\
$\RR_n$ &\eqref{directionless_defs} &prelimit of $\RR_\infty$ \\
$\RR^\bxi$ &\pageref{var_form_thm}, \pageref{deterministic_limits_thm} &constraint set in variational formula for $\mu_\bxi$, almost sure value of $\RR_\infty^\bxi$ \\
$\RR$ &\pageref{deterministic_limits_thm} &almost sure value of $\RR_\infty$ \\
$\RR^{\T_d}_\infty$ &\pageref{R_infty_tree_def} &all possible limits of empirical measures for $\T_d$ \\
$\RR_n^{\T_d}$ &\eqref{prelimit_tree} &prelimit of $\RR_\infty^{\T_d}$ \\
$\RR^{\T_d}$ &\pageref{R_infty_tree_def} &almost sure value of $\RR^{\T_d}_\infty$ \\ 
$\RR^\bxi_{\tau}$ &\eqref{minimizers_def} &minimizers in variational formula for $\mu_\bxi(\tau)$ \\
$\RR^{\T_d}_\tau$ &\eqref{minimizers_tree_def} &minimizers in variational formula for $\mu_{\T_d}(\tau)$
\end{longtable}

\backmatter
\bibliographystyle{amsplain}
\bibliography{empirical.bib}
\printindex

\end{document}